\numberwithin{equation}{subsection}
\newtheorem{theorem}{Theorem}[section]
\newtheorem{keytheorem}{Theorem}
\newtheorem{lemma}[theorem]{Lemma}
\newtheorem{proposition}[theorem]{Proposition}
\newtheorem{corollary}[theorem]{Corollary}
\theoremstyle{definition}
\newtheorem*{example}{Example}
\newtheorem*{examples}{Examples}
\newtheorem*{definition}{Definition}
\newtheorem*{remark}{Remark}
\newtheorem*{remarks}{Remarks}
\newtheorem*{convention}{Convention}
\DeclareMathOperator{\init}{in}
\DeclareMathOperator{\minit}{wk-in}
\DeclareMathOperator{\inv}{inv}
\DeclareMathOperator{\ord}{ord}
\DeclareMathOperator{\resord}{res.ord}
\DeclareMathOperator{\coeff}{coeff}
\DeclareMathOperator{\topp}{top}
\DeclareMathOperator{\chara}{char}
\DeclareMathOperator{\Sing}{Sing}
\DeclareMathOperator{\Dir}{Dir}
\DeclareMathOperator{\Spec}{Spec}
\DeclareMathOperator{\Diff}{Diff}
\newcommand{\N}{\mathbb{N}}
\newcommand{\F}{\mathcal{F}}
\newcommand{\G}{\mathcal{G}}
\newcommand{\OO}{\mathcal{O}}
\newcommand{\OWa}{\mathcal{O}_{W,a}}
\newcommand{\OWap}{\mathcal{O}_{W',a'}}
\newcommand{\OXa}{\mathcal{O}_{X,a}}
\newcommand{\hOWa}{\widehat{\mathcal{O}}_{W,a}}
\newcommand{\hOWb}{\widehat{\mathcal{O}}_{W,b}}
\newcommand{\hOWap}{\widehat{\mathcal{O}}_{W',a'}}
\newcommand{\mWa}{m_{W,a}}
\newcommand{\IXa}{I_{X,a}}
\newcommand{\hIXa}{\widehat{I}_{X,a}}
\newcommand{\hIXap}{\widehat{I}_{X',a'}}
\newcommand{\w}{\omega}
\newcommand{\wh}{\widehat}
\newcommand{\wt}{\widetilde}
\newcommand{\ol}{\overline}
\newcommand{\mon}{_{\textnormal{mon}}}
\newcommand{\X}{\mathcal{X}}
\newcommand{\W}{\mathcal{W}}
\newcommand{\st}{^{\textnormal{st}}}
\newcommand{\x}{\textnormal{\bf x}}
\newcommand{\wk}{^{\textnormal{wk}}}
\newcommand{\Eg}{E_a}
\newcommand{\Egold}{\Eg^{\textnormal{old}}}
\newcommand{\Egp}{E'_{a'}}
\newcommand{\Ebo}{E_{>o}}
\newcommand{\Ego}{E_{o}}
\newcommand{\HH}{\mathcal{H}}
\newcommand{\iv}{i}
\newcommand{\ul}{\lceil}
\newcommand{\ur}{\rceil}
\newcommand{\jz}{b}
\newcommand{\jzz}{a}
\newcommand{\ii}{\textnormal{\textbf{i}}}
\newcommand{\Dnew}{D_{\textnormal{new}}}
\newcommand{\ivX}{\iv_{\X}}
\newcommand{\ivXa}{\iv_{\X}(a)}
\newcommand{\ivXap}{\iv_{\X'}(a')}
\newcommand{\Xa}{_\X(a)}
\newcommand{\Xap}{_{\X'}(a')}
\newcommand{\eps}{\varepsilon}
\newcommand{\Comp}{\textnormal{Comp}}
\newcommand{\gp}{_{\textnormal{sr}}}
\newcommand{\Xmax}{X_{\max}}
\newcommand{\vp}{\varphi}
\newcommand{\codim}{\textnormal{codim}}
\newcommand{\sd}{\frac{s}{d!}}
\newcommand{\age}{\textnormal{age}}
\newcommand{\lab}{\textnormal{lab}}
\newcommand{\corner}{_{\ord}}
\newcommand{\Hom}{\textnormal{Hom}}
\newcommand{\wsd}{\frac{\wt s}{d!}}
\newcommand{\Ioc}{I_{\geq(o,c)}}
\newcommand{\Ni}{\N_\infty}
\newcommand{\ry}{r_y}
\newcommand{\Jc}{J_{\geq c}}
\newcommand{\Jcc}{(J_{-1})_{\geq c!}}
\newcommand{\FF}{\mathscr{F}}
\newcommand{\m}{\mathfrak{m}}
\newcommand{\Ea}{\Eg}
\newcommand{\Eap}{E'_{a'}}
\newcommand{\mult}{\textnormal{mult}}
\newcommand{\cid}{\frac{c-i}{c!}d}
\newcommand{\qd}{\frac{q}{c!}d}
\newcommand{\dijf}{\partial_{y^j}\partial_{z^i}(f)}
\newcommand{\IX}{\mathcal{I}_X}
\newcommand{\rad}{\textnormal{rad}}
\newcommand{\y}{\upsilon}
\newcommand{\B}{\Big}
\newcommand{\D}{\delta}
\newcommand{\wc}{\widehat}
\newcommand{\td}{\frac{\wc s}{d!}}
\definecolor{darkblue}{rgb}{0,0,0.4}
\begin{document}

\title{A New Proof for the Embedded Resolution of Surface Singularities in Arbitrary Characteristic}
\author{Stefan Perlega}
\date{}
\maketitle


\tableofcontents

\chapter{Introduction} \label{chapter_introduction}

The goal of this thesis is to exhibit a new proof for the resolution of singular algebraic surfaces $X$ embedded in a regular $3$-dimensional ambient variety $W$ over an algebraically closed field of arbitrary characteristic. The resolution process that we devise makes use of an upper semicontinuous resolution invariant which prescribes the centers of blowup and strictly decreases during each iteration of the process.

While the core of the proof makes essential use of the low dimension of $W$, several new techniques that are introduced in this thesis work both independently of dimension and characteristic. Most techniques were developed in an attempt to generalize fundamental concepts that were first introduced by Hironaka \cite{Hironaka_Annals} in his seminal proof of embedded resolution of singularities over fields of characteristic zero and its more recent revisions and simplifications \cite{Villamayor_Constructiveness}, \cite{BM_Canonical_Desing}, \cite{EV_Good_Points}, \cite{EH}. The resolution invariant which is used in this thesis can thus be seen as an effort to adapt the invariant which is successfully used in characteristic zero to the setting of arbitrary characteristic. 

In this introduction, resolutions of singularities will be defined and the modern philosophy of proving resolution by use of an upper semicontinuous invariant will be explained in some detail. Further, it will be explained how the results established in this thesis relate to the proof of resolution of singularities over fields of characteristic zero and an overview of the contents of the remaining chapters will be given. The final section of this chapter will provide a list of notational conventions used throughout the thesis and of results that the reader is assumed to be familiar with.

\section{Resolution of singularities} \label{section_resolution_of_singularities}

 Resolution of singularities is a much-studied problem in algebraic geometry and a lot of excellent literature has been published on it. Introductions to the subject can be found in \cite{Lipman_Introduction}, \cite{Ha_BAMS_1}, \cite{Cutkosky_Book}, \cite{Kollar_Book}, \cite{Ha_Obergurgl_2014}.

\begin{definition}
 Let $X$ be a variety. A \emph{resolution of singularities} of $X$ is a regular variety $X'$ together with a proper birational morphism $\pi:X'\to X$. 
\end{definition}

 The setting that we are mostly interested in is that of a regular variety $W$ (often called the ambient variety or ambient space) and a closed subvariety $X\subseteq W$ whose singularities we want to resolve. In this case, we cannot expect to find a proper birational morphism $\pi:W'\to W$ such that both $W'$ and the pre-image $\pi^{-1}(X)$ are regular. Instead, we only ask for the strict transform of $X$ to be regular and that the singularities of $\pi^{-1}(X)$ are of a particularly simple form, as will be explained in the following two definitions.

\begin{definition}
 Let $W$ be a regular variety and $X\subseteq W$ a closed subset. Denote for a point $a\in W$ by $\IXa$ the stalk of the ideal sheaf $\IX$ defining $X$ at $a$.
 
 We say that $X$ does have \emph{simple normal crossings} at a closed point $a\in X$ if there is a regular system of parameters $x_1,\ldots,x_n$ for the local ring $\OWa$ such that the ideal $\IXa$ is generated by monomials in $x_1,\ldots,x_n$. In particular, if $X$ has simple normal crossings at $a$, then every irreducible component of $X$ that contains $a$ is regular at this point.
 
 If $X$ has simple normal crossings at all of its closed points, we say that $X$ has simple normal crossings. This implies that all components of $X$ are regular. If all components of $X$ have codimension $1$ in $W$, we call $X$ a \emph{simple normal crossings divisor}. 
 
 We use the same terminology for subsets $X\subseteq\Spec(\hOWa)$ where $\hOWa$ denotes the completion of the local ring $\OWa$ with respect to its maximal ideal.
\end{definition}

\begin{definition}
 Let $W$ be a regular variety and $X\subseteq W$ a closed subvariety. An \emph{embedded resolution of singularities} of $X$ is given by a regular variety $W'$ and a proper birational morphism $\pi:W'\to W$ that have the following properties:
 \begin{enumerate}[(1)]
  \item The strict transform $X'$ of $X$ is regular.
  \item The total transform $X^*=\pi^{-1}(X)$ of $X$ has simple normal crossings.
  \item The morphism $\pi:W'\to W$ is given as a composition of blowups at regular centers.
  \item The morphism $\pi$ is an isomorphism outside the singular locus $\Sing(X)$ of $X$. Thus, it induces an isomorphism $W'\setminus\pi^{-1}(\Sing(X))\cong W\setminus\Sing(X)$.
 \end{enumerate}
\end{definition}

\section{State of the art in resolution of singularities} \label{section_state_of_the_art}

Hironaka proved the embedded resolution of singularities of algebraic varieties of arbitrary dimension over fields of characteristic zero \cite{Hironaka_Annals}. Since then, many researchers have contributed towards strengthening Hironaka's result and simplifying its proof \cite{Villamayor_Constructiveness}, \cite{Villamayor_Patching}, \cite{BM_A_Simple_Proof}, \cite{BM_Canonical_Desing}, \cite{EV_Good_Points}, \cite{BV_Strengthening}, \cite{EH}, \cite{Wlodarczyk}. 

Over fields of positive characteristic, Abhyankar was the first to prove the resolution of surface singularities \cite{Abhyankar_56}. Later, he also gave proofs for the embedded resolution of surface singularities over fields of positive characteristic and non-embedded resolution of three-folds over fields of characteristic $p>5$ \cite{Abhyankar_3_Folds}. His result was improved only recently by Cossart and Piltant who proved the non-embedded resolution of three-folds over fields of arbitrary characteristic \cite{Cossart_Piltant_1}, \cite{Cossart_Piltant_2}.

Whether an embedded resolution of the singularities is possible for threefolds and all higher dimensional varieties over fields positive characteristic is so far not known.

In the last decade, several groups of researchers have devised programs whose ultimate goal it is to prove the embedded resolution of singularities in arbitrary dimension and characteristic.

Villamayor formulated the approach to replace the concept of restriction to hypersurfaces of maximal contact by projections and the use of elimination algebras in \cite{Villamayor_Hypersurface}. In \cite{BV_Simplification}, Bravo and Villamayor used these techniques to define an invariant independently of dimension and characteristic which enabled them to prove a simplification of singularities. This simplification is commonly referred to as a reduction to the \emph{monomial case}. (Although the same terminology is used, it has to be pointed out that this monomial case is different from the monomial case that is studied in this thesis.) In characteristic zero, this monomial case can be resolved, leading to a proof of embedded resolution of singularities. Benito and Villamayor showed in \cite{Benito_Villamayor} how to resolve this monomial case in the surface case in arbitrary characteristic, thus establishing a new proof for the embedded resolution of surfaces.

Kawanoue and Matsuki have developed a new approach towards resolution in arbitrary dimension and characteristic that is called the Idealistic Filtration Program \cite{Kawanoue_IF_1}, \cite{Kawanoue_Matsuki}, \cite{Kawanoue_Obergurgl}. In this program, hypersurfaces of maximal contact are replaced by so-called leading generator systems. The biggest difference between these is that hypersurfaces of maximal contact are always regular, but the elements of a leading generator system in positive characteristic may define singular hypersurfaces. Kawanoue and Matsuki were able to show that the resulting invariants lead to embedded resolution of singularities in characteristic zero and to a reduction to a monomial case (which is again different from the two monomial cases mentioned so far) in positive characteristic. In ambient dimension $3$, they were able to resolve this monomial case \cite{Kawanoue_Matsuki_Surfaces}.

\section{Resolution via an upper semicontinuous invariant} \label{section_resolution_via_usc_inv}

With the definition of embedded resolution of singularities in mind, our philosophy is to see resolution as an \emph{iterative process}. Roughly speaking, this means that we repeatedly blow up regular centers which are contained in the singular locus of our variety and which have simple normal crossings with the exceptional components produced by previous blowups until embedded resolution is achieved.

To make this more precise, let us consider triples $\X=(W,X,E)$ of the kind where $W$ is a regular variety, $X\subseteq W$ a closed subvariety and $E$ a simple normal crossings divisor on $W$. We say that a closed subvariety $Z\subseteq W$ is a \emph{permissible center} of blowup for the triple $\X$ if the following properties hold:
\begin{itemize}
 \item $Z$ is contained in $\Sing(X)\cup E$.
 \item The union $Z\cup E$ has simple normal crossings. In particular, $Z$ is regular.
\end{itemize}
The blowup $\pi:W'\to W$ of $W$ along a permissible center $Z$ induces a new triple $\X'=(W',X',E')$ where $X'=\ol{\pi^{-1}(X\setminus Z)}$ is the strict transform of $X$ and $E'=E\st\cup \Dnew$ where $E\st$ is the strict transform of $E$ and $\Dnew=\pi^{-1}(Z)$ is the exceptional divisor of the blowup $\pi$. It follows from basic properties of blowups that the triple $\X'$ is of the same kind as $\X$. Thus, $W'$ is a regular variety of the same dimension as $W$ and $E'$ is a simple normal crossings divisor on $W'$. (More information on the properties of blowups can be found in Lectures IV and V of \cite{Ha_Obergurgl_2014}.)

To prove embedded resolution of the singularities of an embedded variety $X\subseteq W$, we start with the triple $(W,X,\emptyset)$ and have to find a sequence of blowups in permissible centers so that we eventually obtain a triple $(W',X',E')$ with the properties that $X'$ is regular and $X'\cup E'$ forms a simple normal crossings divisor on $W'$. We call such a triple $(W',X',E')$ \emph{resolved}.

To show that embedded resolution of resolution by this approach is possible, we have to solve two problems: The first problem is the \emph{choice of center}. For each triple $\X$ we have to prescribe a permissible center $Z$ that will be blown up in this iteration of the resolution process. The second problem is to show that this process \emph{terminates}. That is, proving that we always obtain a resolved triple after finitely many iterations of the resolution process. 

Both of these problems can be solved simultaneously by finding a suitable \emph{resolution invariant}. The idea is to define for every triple $\X=(W,X,E)$ a map $\mu_{\X}:X\to\Gamma$ that measures for each point $a\in X$ how far $\X$ is away from being resolved locally at $a$. We require the map $\mu_\X$ to be a local geometric invariant.
Since we want to be able to compare the singularities of $X$ in different points and measure their improvement under blowup, we consider the set $\Gamma$ with a well-ordering $\leq$. Further, we require that the invariant $\mu_\X$ is \emph{upper semicontinuous}. This means that for each value $\gamma\in\Gamma$, the set 
\[X_{\geq\gamma}=\{a\in X:\mu_\X(a)\geq \gamma\}\]
is closed. Thus, $\mu_\X$ defines a stratification of $X$ into locally closed strata. If $\mu_{\X}(a)=\min\Gamma$ for a point $a\in X$, then we require the triple $\X$ to be locally resolved at $a$.

The problem of choosing the center is solved by always blowing up the locus of points at which $\X$ is farthest from being resolved. Thus, we choose as a center the set of points $a\in X$ at which the invariant $\mu_{\X}$ is maximal in the sense that $\mu_{\X}(b)\leq\mu_{\X}(a)$ holds for all points $b\in X$. This set is called the \emph{top locus} of $X$ with respect to the invariant $\mu_{\X}$. It is closed by upper semicontinuity of $\mu_{\X}$. We require that the top locus always constitutes a permissible center of blowup for the triple $\X$. The problem of showing that the process terminates reduces to the problem of verifying that the maximal value of the invariant strictly decreases during each iteration of the resolution process. Since $\Gamma$ is well-ordered, the invariant can only decrease finitely many times, thus leading to a resolved triple in finitely many steps.

We sum up this discussion in the following definitions and proposition:

\begin{definition}
 Fix a positive integer $n>0$ and an algebraically closed field $K$. Let $\W$ be the class of triples $\X=(W,X,E)$ such that $W$ is an $n$-dimensional regular variety over $K$, $X\subseteq W$ is a closed subvariety and $E$ is a simple normal crossings divisor on $W$.

Let $(\Gamma,\leq)$ be a well-ordered set and $\mu$ a collection of maps $\mu_{\X}:X\to\Gamma$ for each triple $\X\in \W$. We say that $\mu$ is a \emph{resolution invariant} for $n$-dimensional ambient varieties over the field $K$ if it fulfills the following five properties:
\begin{enumerate}[(1)]
 \item \emph{The map $\mu$ is a local geometric invariant.}
 
 Let $\X=(W,X,E)$ and $\X_1=(W_1,X_1,E_1)$ be two triples in $\W$ which are are locally isomorphic in points $a\in W$ and $a_1\in W_1$ in the following sense: There is an isomorphism of the local rings $\OO_{W,a}$ and $\OO_{W_1,a_1}$ that locally maps $X$ and $X_1$, as well as $E$ and $E_1$ into each other. Then $\mu_{\X}(a)=\mu_{\X_1}(a_1)$.
 
 \item \emph{If $\mu_{\X}$ is minimal, the triple $\X$ is locally resolved.}
 
 Let $\X=(W,X,E)\in\W$ be a triple and $a\in X$. If $\mu_{\X}(a)=\min\Gamma$, then $X$ is regular at $a$ and the union $X\cup E$ has simple normal crossings at $a$.
 
 \item \emph{Each map $\mu_{\X}$ is upper semicontinuous.}
 
 For each triple $\X=(W,X,E)\in\W$ and each value $\gamma\in\Gamma$ the set 
 \[X_{\geq\gamma}=\{a\in X:\mu_{\X}(a)\geq\gamma\}\]
 is closed.
 
 \item \emph{The top locus of $\mu_\X$ is a permissible center of blowup for $\X$.}
 
 Let $\X=(W,X,E)\in\W$ be a triple. By property (3), there is an element $\gamma_{\max}\in\Gamma$ such that $\gamma_{\max}=\max\{\mu_\X(a):a\in X\}$. Set 
 \[X_{\max}=\{a\in X:\mu_{\X}(a)=\gamma_{\max}\}.\]
 Then $X_{\max}$ is a permissible center of blowup for the triple $\X$.
 
 \item \emph{Blowing up the top locus makes $\mu$ decrease.}
 
 Let $\X=(W,X,E)\in\W$ be a non-resolved triple. Let $\pi:W'\to W$ be the blowup of $W$ with center $X_{\max}$. Let $\X'=(W',X',E')\in\W$ be the triple which is induced by the blowup $\pi$. Let $a\in X_{\max}$ and $a'\in \pi^{-1}(a)\cap X'$. Then $\mu_{\X'}(a')<\mu_{\X}(a)$.
 
 Since the blowup $\pi$ is an isomorphism outside the center, for all points $a\notin\Xmax$ and $a'\in\pi^{-1}(a)$ the equality $\mu_{\X'}(a')=\mu_{\X}(a)$ holds by property (1). Thus,
 \[\max\{\mu_{\X'}(a'):a'\in X'\}<\max\{\mu_\X(a):a\in X\}.\]
\end{enumerate}
\end{definition}

With this definition of a resolution invariant, the following proposition is straightforward to prove:

\begin{proposition}
 Let $n>0$ be a positive integer and $K$ an algebraically closed field. If a resolution invariant $\mu$ for $n$-dimensional ambient varieties over $K$ exists, then each singular variety $X$ which is embedded in an $n$-dimensional regular variety $W$ over $K$ has an embedded resolution of singularities that is given by repeatedly blowing up the top locus of $\mu$.
\end{proposition}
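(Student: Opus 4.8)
The plan is to turn the five axioms in the definition of a resolution invariant into the bookkeeping of an iterative resolution procedure, and then invoke well-foundedness of the order $\leq$ on $\Gamma$ to see that the procedure halts.

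First I would set up the iteration. Starting from a singular embedded variety $X\subseteq W$, form the triple $\X_0=(W,X,\emptyset)\in\W$. Given a triple $\X_k=(W_k,X_k,E_k)$ which is not resolved, property (3) guarantees that $\mu_{\X_k}$ attains a maximal value $\gamma_{\max}^{(k)}\in\Gamma$ on $X_k$; property (2) ensures that this maximal value is strictly larger than $\min\Gamma$, since otherwise $X_k$ would be regular and $X_k\cup E_k$ would have simple normal crossings at every closed point, i.e.\ $\X_k$ would be resolved. Let $Z_k=(X_k)_{\max}$ be the top locus. By property (4), $Z_k$ is a permissible center of blowup for $\X_k$; in particular $Z_k\subseteq\Sing(X_k)\cup E_k$, and $Z_k\cup E_k$ has simple normal crossings, so $Z_k$ is regular. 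Let $\pi_k:W_{k+1}\to W_k$ be the blowup along $Z_k$ and $\X_{k+1}=(W_{k+1},X_{k+1},E_{k+1})$ the induced triple, which lies again in $\W$ by the discussion preceding the definition. This defines, canonically, a (possibly infinite) sequence of blowups at regular permissible centers.

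Next I would prove termination. Property (5) gives $\max\{\mu_{\X_{k+1}}(a'):a'\in X_{k+1}\}<\max\{\mu_{\X_k}(a):a\in X_k\}$ whenever $\X_k$ is not resolved. Hence the sequence $\gamma_{\max}^{(0)}>\gamma_{\max}^{(1)}>\cdots$ is strictly decreasing in $(\Gamma,\leq)$. Since $\leq$ is a well-ordering, there is no infinite strictly decreasing sequence, so the process must stop after finitely many steps, say at $\X_N$; and it can only stop because $\X_N$ is resolved. Composing, we get a proper birational morphism $\pi=\pi_0\circ\cdots\circ\pi_{N-1}:W_N\to W$ which is a composition of blowups at regular centers, with $X_N$ regular and $X_N\cup E_N$ a simple normal crossings divisor on $W_N$. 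Finally, since each $Z_k$ is contained in $\Sing(X_k)\cup E_k$ and $E_0=\emptyset$, an induction on $k$ shows that every blowup is an isomorphism away from the preimage of $\Sing(X)$, so $\pi$ restricts to an isomorphism over $W\setminus\Sing(X)$ — and $X_N$ is exactly the strict transform $X'$ of $X$. Thus $\pi:W_N\to W$ is an embedded resolution of singularities of $X$ in the sense of the definition in Section~\ref{section_resolution_of_singularities}.

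The statement is genuinely ``straightforward'' given the axioms, so there is no deep obstacle; the one point that needs a little care is the bookkeeping in property (5), namely the observation that blowing up the top locus not only decreases $\mu$ at points above the center but leaves it unchanged at all other points (via property (1), since the blowup is a local isomorphism there), so that the \emph{maximum} of the invariant strictly decreases rather than merely some pointwise values. One should also be a touch careful that ``resolved'' is being tested at all closed points while $\mu_{\X}$ is only asserted to detect regularity and normal crossings pointwise; here property (2) applied at every point of $X_N$ where $\mu$ is minimal, together with the fact that $\mu_{\X_N}$ is everywhere equal to $\min\Gamma$ once the maximum has descended to $\min\Gamma$, closes the gap. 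Beyond that, the argument is a direct unwinding of the definitions plus well-foundedness.
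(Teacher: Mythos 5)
Your proof is correct, and since the paper declares this proposition ``straightforward to prove'' and gives no explicit argument, your write-up is exactly the unwinding of the five axioms that the paper has in mind: build the sequence of triples, use (3) and (4) to get a permissible center from the top locus at each stage, use (5) to force strict decrease of the maximal value of the invariant, invoke well-foundedness of $(\Gamma,\leq)$ to conclude termination, and apply (2) at the terminal stage. The two details you flag — that (5) together with (1) gives strict decrease of the \emph{maximum} rather than merely pointwise decrease, and the induction showing the centers always lie over $\Sing(X)$ so that $\pi$ restricts to an isomorphism over $W\setminus\Sing(X)$ — are precisely the points that deserve explicit mention, and you have handled both correctly.
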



\section{Results of the thesis} \label{section_content_of_thesis}

 As mentioned before, the resolution of surfaces over fields of arbitrary characteristic is not a new result. Apart from the ones already mentioned, there are several other noteworthy proofs. Lipman was able to prove (non-embedded) resolution for all excellent surfaces in \cite{Lipman_Surfaces} by using normalization and point-blowups. Hironaka developed a particularly short and simple proof for the resolution of excellent surfaces embedded in a regular $3$-dimensional ambient space in \cite{Hironaka_Bowdoin}. His resolution invariant is defined via the Newton polygon of the defining equation. The most general result on surfaces is due to Cossart, Jannsen and Saito \cite{CJS}, who proved canonical resolution of singularities with boundaries for all excellent surfaces. Their result implies both the embedded and the non-embedded case.
 
 All of these proofs at some point employ techniques which essentially make use of the low dimension. It was impossible so far to generalize any of the proofs in such a way that they can be applied to higher-dimensional varieties. On the other hand, Hironaka's proof for embedded resolution of singularities in characteristic zero has so far resisted all attempts to generalize it to the setting of positive characteristic. The main problem that prevents this is the absence of hypersurfaces of maximal contact over fields of positive characteristic.
 
 Over fields of characteristic zero, hypersurfaces of maximal contact are used to apply induction over the dimension of the ambient variety when proving the embedded resolution of singularities. In other words, they provide a way of (locally) reducing the resolution problem in an $n$-dimensional ambient space to a resolution problem in an $(n-1)$-dimensional ambient space. This technique is often called \emph{descent in dimension}. The specific construction that will be used for the descent in dimension in this thesis is the \emph{coefficient ideal} which will be introduced in Section \ref{section_coeff_ideals}. 
 
 Due to the absence of hypersurfaces of maximal contact, descent in dimension via restriction to a regular hypersurface is usually not used in proofs of resolution over fields of arbitrary characteristic. It has been suggested though (\cite{Abhyankar_Plane_Curves}, \cite{EH}, \cite{Ha_BAMS_2}) to perform the descent in dimension independently of characteristic in the following way: Instead of only using hypersurfaces of maximal contact, the set of all regular local (or formal) hypersurfaces is considered. We then assign to each hypersurface an invariant which measures how well this hypersurface is suited for the descent in dimension, that is to say, how well the singularity of the variety is preserved when restricting it to this hypersurface via the coefficient ideal construction. The hypersurface which maximizes this invariant is then used to define the resolution invariant. Over fields of characteristic zero, this maximum is realized by hypersurfaces of maximal contact. Hence, this technique can be seen as a generalization of maximal contact.
 
 There are several serious problems which appear when using this approach to generalize the usual resolution invariant from characteristic zero to the setting of positive characteristic. The resulting invariant is not upper semicontinuous and can increase under blowup, even if the center consists only of a closed point (\cite{Moh}, \cite{Ha_BAMS_2}). This and related problems will be discussed in detail in Chapter \ref{chapter_pathologies}.
 
 The main goal of this thesis is to show how these problems can be overcome under the restriction that $X$ is a surface which is embedded in a regular $3$-dimensional ambient space. Instead of considering at a point the set of all regular formal hypersurfaces, we will consider the set of all formal flags $\F$, consisting of a regular curve $\F_1$ and a regular surface $\F_2$ that contains $\F_1$. The invariant that is assigned to each flag $\F$ to evaluate how well it is suited for the descent in dimension makes use of the geometric configuration of the union $\F_1\cup(\F_2\cap E)$ where $E$ denotes the exceptional locus produced by previous blowups. The details of this construction will be described in Section \ref{section_modifying_the_residual_order}. The rigorous definition of the resolution invariant $\ivX$ will be given in Chapter \ref{chapter_invariant}. 
 
 Many of the results that we will prove along the way are not specific to the surface case and hold true independently of dimension and characteristic. Most importantly, this includes the \emph{cleaning} techniques that we will develop in Chapter \ref{chapter_cleaning}. These can be understood as a characteristic-free generalization of both the Tschirnhausen transformation that is used over fields of characteristic zero to construct hypersurfaces of maximal contact and the often-studied cleaning of purely inseparable equations of the form
 \[z^{p^e}+F(x_1,\ldots,x_n)=0\]
 over a field of characteristic $p>0$ where all $p^e$-th powers in the expansion of $F$ can be eliminated by a coordinate change $z\mapsto z+g(x_1,\ldots,x_n)$. The purpose of these cleaning techniques is to construct hypersurfaces and flags which maximize the invariants that are associated to them via the coefficient ideal construction. Hence, these techniques will form an integral part of our proof.
 
 Unfortunately, it is not clear how to generalize the resolution invariant that is presented in this thesis to higher-dimensional varieties. It is possible that our construction of the invariant via flags is ultimately restricted to the surface case or would have to undergo a major modification before it can be applied to higher dimensions. Still, we hope that the techniques that are developed in this thesis will be useful for researchers attempting to prove the resolution of singularities in higher dimensions and that the idea behind the construction of the presented invariant will inspire new resolution invariants that make use of the descent in dimension via formal flags and the coefficient ideal construction.
 
 As a final note it has to be mentioned that the proof presented in this thesis is neither the shortest, easiest, nor the most elegant proof for the resolution of surface singularities. Some of the involved results are quite technical and have lengthy proofs. The purpose of this thesis was not to develop the best or the most general proof for the resolution of surface singularities, but to seriously put the aforementioned idea of using local flags which maximize certain associated invariants to the test by using it to prove resolution of surfaces in arbitrary characteristic.

\section{Structure of the thesis} \label{section_structure_of_thesis}

In Chapter 2, several basic techniques and methods for the resolution of singularities are introduced. Most of these will be well-known to the expert reader. An object which is not commonly found in proofs of resolution of singularities are the multi-valued weighted order functions which are introduced in Section \ref{section_weighted_order}. The definition of the coefficient ideal in Section \ref{section_coeff_ideals} and the ensuing discussion of its properties are fundamental to all of the latter chapters.

Chapter 3 gives an exposition of the known pathologies which appear when trying to generalize the usual resolution invariant from characteristic zero to the setting of positive characteristic. It is then explained how these problems can be overcome in the surface case. The definition of the flag invariant $\inv(\F)$ that is used to determine which flags are used for the descent in dimension is described in detail and motivated by various examples. Further, the problem of upper semicontinuity is discussed.

Chapter 4 and 5 concern themselves with invariants associated to coefficient ideals and their behavior under coordinate changes. The results of these chapters are independent of both dimension and characteristic.

In Chapter 4, we will show that under certain conditions, the associated invariants of coefficient ideals are unchanged under all coordinate changes that preserve the geometric objects which are involved in their definition. This is then applied to show that the flag invariant $\inv(\F)$ is well-defined and does not depend on a particular choice of a regular system of parameters as long as the parameters are chosen subordinately to the flag $\F$.

In Chapter 5, it is described how the associated invariants can be maximized by changing the regular hypersurface with respect to which the coefficient ideal is defined. The techniques developed in this chapter will be referred to as cleaning techniques. They will be applied to construct flags which maximize the flag invariant.

Chapter 6 contains a number of rather technical results which are needed for our proof of resolution of surfaces. Most of these involve invariants associated to coefficient ideals and the cleaning techniques that were introduced before.

In the remaining three chapters, the new proof for the embedded resolution of surface singularities embedded in a $3$-dimensional ambient space is presented.

In Chapter 7, the resolution invariant $\iv_\X$ is defined. The biggest technical difficulty to overcome here is to show that there always exists a flag which maximizes the flag invariant $\inv(\F)$. It is also shown that $\iv_\X$ is a local geometric invariant and that if it attains its minimal value only at points at which $\X$ is locally resolved.

Chapter 8 and 9 form the core of the proof. In Chapter 8 it is proved that the invariant $\iv_\X$ is upper semicontinuous and that its top locus always constitutes a permissible center of blowup. Finally, in Chapter 9 it is shown that the invariant $\iv_\X$ strictly decreases whenever its top locus is blown up. Thus, $\iv_\X$ fulfills all properties that we required of a resolution invariant in Section \ref{section_resolution_via_usc_inv}.

\section{Acknowledgements}

First and foremost, I want to thank my advisor Herwig Hauser, without whom this thesis would have never seen the light of day. He initially awoke my interest in the fascinating and beautiful problem that is the resolution of singularities. Since then, he has helped me to deepen my understanding and gradually develop my own view on this intricate subject in countless private seminars, exchanges of e-mails and enthusiastic discussions both at the university and in coffee shops all over Vienna. He has also shown unending patience and good will in supporting me throughout this long project. During the many times that I got stuck and felt like this thesis would never reach completion, he has always been a great source of encouragement. 

Another big thanks goes out to my second mentor and good friend Hiraku Kawanoue. In numerous discussions both in Vienna and in Kyoto, he opened my eyes for many of the fine subtleties of the resolution problem. He was always eager to hear about any progress I made in my work and to discuss all promising new ideas down to the last detail. It was in discussions with him that the proof eventually assumed its final form.

I am also indebted to several other great mathematicians whom I met and who were so kind to share their insights on the resolution problem with me. In particular, I want to thank Orlando Villamayor, Santiago Encinas, Josef Schicho, Bernd Schober, Ana Bravo, Dale Cutkosky, Daniel Panazzolo, Heisuke Hironaka, Anne Frühbis-Krüger, Edward Bierstone and Kenji Matsuki for everything they taught me.

I am also grateful to have had a number of fantastic office colleagues during my time as a PhD student who were always willing to lend a sympathetic ear. Thank you, Markus, Christopher, Alberto and Hana, for the many helpful discussions and all the fun we had. 

\section{Notation, conventions and prerequisites} \label{section_conventions}

 In this section we will list all notational conventions that will be used throughout this thesis. At the end of the section, we will further provide a list of concepts and results that the reader is assumed to be familiar with, along with references.\\

\emph{Geometric objects}

\begin{itemize}
 \item The letter $K$ will always denote a field. Unless stated otherwise, $K$ is always assumed to be algebraically closed and of arbitrary characteristic.
 \item A \emph{variety} is an integral separated scheme of finite type over an algebraically closed field $K$.
 \item Whenever we consider a closed subset $X$ of a scheme $W$, we consider it to be endowed with the \emph{reduced induced closed subscheme structure}. (See \cite{Hartshorne} Ex. 3.2.6.) The induced ideal sheaf of $X$ will be denoted by $\IX$.
 \item In this sense, a \emph{subvariety} $X$ of a variety $W$ is an irreducible closed subset of $W$.
 \item A \emph{curve} is an irreducible closed subset of dimension $1$. Similarly, a \emph{hypersurface} is an irreducible closed subset of codimension $1$.
 \item Consider a variety $W$ and the blowup $\pi:W'\to W$ along a closed subset $Z$. We will call $\pi^{-1}(Z)$ the \emph{exceptional divisor} of the blowup $\pi$. For a closed subset $X\subseteq W$, we call $X^*=\pi^{-1}(X)$ the \emph{total transform} and $X'=\ol{\pi^{-1}(X\setminus Z)}$ the \emph{strict transform} of $X$. The \emph{weak transform} will be defined in Section \ref{section_order_function}.
\end{itemize}

\emph{Rings and ideals}

\begin{itemize}
 \item All rings that we consider will be commutative and unitary.
 \item For a $K$-algebra $R$, we denote by $\Omega_{R/K}$ its \emph{module of relative differentials}.
 \item For elements $x_1,\ldots,x_n$ of a ring $R$, we denote the ideal generated by these elements by $(x_1,\ldots,x_n)$.
 \item Often, the expression $(x_1,\ldots,x_n)$ will instead denote the ordered tuple in $R^n$. Whenever there is a possible risk of confusion, we specify in the text which of the two is meant.
 \item For an ideal $I$, we denote by $\rad(I)$ its radical.
\end{itemize}

\emph{Local and formal objects}

\begin{itemize}
 \item Let $W$ be a variety and $a\in W$ a (not necessarily closed) point. We denote by $\OWa$ the local ring of $W$ at $a$.
 \item We denote by $\mWa$ the maximal ideal of $\OWa$. For an affine open neighborhood $U=\Spec(R)$ of $a$, we denote the prime ideal of $R$ that corresponds to the point $a$ also by $\mWa$.
 \item We denote by $\hOWa$ the completion of the local ring $\OWa$ with respect to its maximal ideal.
 \item For a variety $W$, a closed subset $X\subseteq W$ and a point $a\in W$ we denote by $\IXa$ the stalk of the ideal sheaf $\IX$ at $a$. We set $\hIXa=\hOWa\cdot \IXa$.
 \item Curves and hypersurfaces in $\Spec(\OWa)$ will be referred to as \emph{local} curves and hypersurfaces.
 \item Similarly, curves and hypersurfaces in $\Spec(\hOWa)$ will be referred to as \emph{formal} curves and hypersurfaces.
\end{itemize}

\emph{Multi-indices}

\begin{itemize}
 \item The symbol $\N$ denotes the non-negative integers. We set $\Ni=\N\cup\{\infty\}$.
 \item For a multi-index $\alpha=(\alpha_1,\ldots,\alpha_n)\in\N^n$, we denote $|\alpha|=\sum_{i=1}^n\alpha_i$.
 \item Let $\alpha,\beta$ be multi-indices in $\N^n$ or $\Ni^n$. The expressions $\alpha<\beta$ and $\alpha\leq\beta$ are always understood with respect to the lexicographic order.
 \item For multi-indices $\alpha=(\alpha_1,\ldots,\alpha_n)$ and $\beta=(\beta_1,\ldots,\beta_n)$, we denote $\binom{\alpha}{\beta}=\prod_{i=1}^n\binom{\alpha_i}{\beta_i}$.
 \item An ordered tuple of elements $(x_1,\ldots,x_n)$ of a ring will usually be denoted by $\x$. For a multi-index $\alpha\in\N^n$, we denote $\x^\alpha=\prod_{i=1}^n x_i^{\alpha_i}$.
\end{itemize}

\emph{Prerequisites}

\begin{itemize}
 \item Particular concepts of commutative algebra that will be used throughout the thesis are completions of local rings, regular local rings, regular systems of parameters, the Cohen structure theorem for regular local rings which contain a field and the module of relative differentials. All of these can be found in \cite{Eisenbud_Com_Alg} and \cite{Hartshorne}.
 \item Concepts of algebraic geometry that will be used extensively are blowups and regularity. Introductions to blowups can be found in \cite{Hartshorne} II.7 and \cite{Ha_Obergurgl_2014} Lecture IV and V. Regularity (non-singularity) and its relation with the module of differentials is explained in \cite{Hartshorne} II.8. 
 \item Results about the power series ring $R=K[[x_1,\ldots,x_n]]$ in $n$ variables over a field $K$ that will be assumed are the Weierstrass preparation theorem and the inverse function theorem which characterizes all regular systems of parameters for $R$. 
 \item To be able to understand the proof presented in this thesis, it is not necessary for the reader to have a profound knowledge of any of the proofs for embedded resolution of singularities over fields of characteristic zero. It has to be mentioned though that an understanding of one of these proofs (in particular, how hypersurfaces of maximal contact are used) is certainly useful to understand how the techniques presented in this thesis relate to the known results in characteristic zero.
\end{itemize}

\chapter{Basic concepts and techniques for the resolution of singularities} \label{chapter_basics}

\section{The order function} \label{section_order_function}

 Since we are following the approach of proving resolution of singularities via an upper semicontinuous invariant, it is natural to look for geometric invariants which measure as precisely as possible how complex the singularity of a variety $X$ in a point $a\in X$ is. The most basic such invariant is the order function. It generalizes the idea of measuring up to which degree a polynomial's Taylor expansion vanishes at a closed point. The order function is so fundamental to the embedded resolution of singularities that it appears in virtually all proofs in some form.
 
 In this section we are going to define the order function and state several of its most important properties. In particular, we will see that the order function itself is not sufficiently fine to constitute a suitable resolution invariant, but it is natural to look for resolution invariants which are refinements the order function. Since all stated results are well-known, we are going to give references for most proofs.

\begin{definition}
 Let $R$ be a ring, $J\subseteq R$ an ideal and $p\subseteq R$ a prime ideal. We define the \emph{order of $J$ at $p$} as
 \[\ord_pJ=\sup\{n\in\N: R_pJ\subseteq R_pp^n\}\in\N_\infty.\]
 If $R$ is a local ring with maximal ideal $m$, we will usually just write $\ord J$ for $\ord_m J$. For an element $f\in R$ we will denote by $\ord_pf$ the order of the principal ideal $(f)$ at $p$.
 
 Let $W$ be a scheme, $X\subseteq W$ a closed subset and $a\in W$ a (not necessarily closed) point. Then we define the \emph{order of $X$ at $a$} as the order of the ideal $I_{X,a}$ at the maximal ideal of the local ring $\OO_{W,a}$. We denote it by $\ord_aX$. The ambient scheme $W$ is suppressed in this notation.
\end{definition}

\begin{examples}
 \begin{enumerate}[(1)]
  \item Consider the $n$-dimensional affine space $W=\Spec(K[x_1,\ldots,x_n])$ over a field $K$ and a hypersurface $X=V(f)$ defined by a polynomial $f\in K[x_1,\ldots,x_n]$. Then the order $\ord_aX$ at a closed point $a\in X$ coincides with the highest degree up to which the Taylor expansion of $f$ vanishes at $a$.
 
 Notice that $\ord_aX\geq 2$ holds if and only if both $f$ and all of its partial derivatives $\frac{\partial f}{\partial x_i}$ vanish at $a$. Thus, $a$ is a singular point of $X$ if and only if $\ord_aX\geq2$ holds. We will see later that this is false in general for subvarieties of codimension bigger than $1$.
 \item Let $c\in\mathbb Z$ be an integer $c\neq0$ and $p$ a prime number. Then $\ord_pc$ is the biggest natural number $n\in\N$ such that $p^n$ divides $c$.
 \end{enumerate}
\end{examples}

 The order function exhibits a number of good properties which make it useful for the embedded resolution of singularities. Most importantly, it is upper semicontinuous (Proposition \ref{order_is_usc}) and it does not increase under blowup as long as the center is regular and contained in its top locus (Proposition \ref{ord_under_blowup}). On the downside, the top locus defined by the order function is in general not regular and thus, does not constitute a permissible center of blowup. Also, even if it is regular, blowing up the top locus does not necessarily make the order function decrease. Hence, the order function itself does not satisfy the properties that we required for a resolution invariant in Section \ref{section_resolution_via_usc_inv}. In other words, the order function's measure of singularity is not sufficiently precise for our purposes.
 
 This suggests to consider as a resolution invariant a suitable \emph{refinement} of the order function. That is, a map $\mu_X:X\to \N_\infty\times\Gamma$ of the form 
 \[\mu_X(a)=(\ord_a X,\wt\mu_X(a))\]
 for some well-ordered set $(\Gamma,\leq)$ where $\N_\infty\times\Gamma$ is considered with the induced lexicographic order. Notice that the second component $\wt\mu_X$ of the invariant $\mu_X$ is only needed to measure improvement of a singularity under blowup if the order function remains constant.
 

\begin{proposition} \label{order_basic_properties}
 Let $W$ be a regular variety and $X\subseteq W$ a closed proper subset. Let $a\in W$ be any point.
 \begin{enumerate}[(1)]
  \item $\ord_aX<\infty$.
  \item If $\ord_aX\geq2$, then $a$ is a singular point of $X$.
  \item If $X$ is a hypersurface and $\ord_aX=1$, then $a$ is a regular point of $X$.
 \end{enumerate}
\end{proposition}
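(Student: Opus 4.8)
The plan is to reduce everything to the local ring $\OWa$ and its completion $\hOWa$, which by the Cohen structure theorem is isomorphic to a power series ring $K[[x_1,\ldots,x_n]]$ since $W$ is regular and contains the algebraically closed field $K$. Throughout, I write $d=\ord_a X$, which by definition equals $\ord_{\hmWa}\hIXa$ (the order does not change under completion, since $\gr_{\mWa}\OWa\cong\gr_{\hmWa}\hOWa$). Part (1), finiteness, is the first thing to establish: since $X$ is a \emph{proper} closed subset and $W$ is a variety (hence $\OWa$ is a domain), the ideal $\IXa$ is nonzero, so it contains a nonzero element $f$; then $\ord_a X\le\ord_a f<\infty$ because the power series ring is a domain (in fact Noetherian with $\bigcap_n\hmWa^n=0$ by Krull's intersection theorem), so no nonzero power series is divisible by $\hmWa^n$ for all $n$.

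For part (2), I would argue by contrapositive: suppose $a$ is a regular point of $X$. Then after passing to a suitable regular system of parameters $x_1,\ldots,x_n$ for $\OWa$ adapted to $X$, the ideal $\IXa$ is generated (at least after completion, or already locally by regularity) by some of the coordinates, say $\IXa=(x_1,\ldots,x_k)$ where $k=\codim_a X$. Since $X$ is a proper subset, $k\ge1$, and such an ideal has order exactly $1$ (it contains $x_1\in\mWa\setminus\mWa^2$). Hence $\ord_a X=1<2$, contradicting $\ord_a X\ge2$. The one subtlety is justifying that regularity of $X$ at $a$ really lets one choose coordinates so that $\IXa$ becomes a coordinate ideal; this is the standard fact that a regular local quotient ring of a regular local ring is cut out by part of a regular system of parameters, which follows from comparing the embedding dimensions of $\OWa$ and $\OO_{X,a}$, and I would cite \cite{Eisenbud_Com_Alg} or \cite{Hartshorne} II.8 for it.

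For part (3), assume $X$ is a hypersurface with $\ord_a X=1$. Order $1$ means $\IXa\not\subseteq\mWa^2$, so there is $f\in\IXa$ with $\ord_a f=1$, i.e. $f\in\mWa\setminus\mWa^2$; such an $f$ can be completed to a regular system of parameters $f=x_1,x_2,\ldots,x_n$ of $\OWa$. Then $(f)\subseteq\IXa$, and $(f)=(x_1)$ defines a regular (indeed smooth) hypersurface through $a$ of codimension $1$. Since $X$ is irreducible of codimension $1$ and $V(f)$ is an irreducible codimension-$1$ closed set containing $X$ locally — both having the same dimension — they must agree locally at $a$, so $\IXa=(f)$ up to radical, and $\OO_{X,a}\cong\OWa/(x_1)$ is regular. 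Thus $a$ is a regular point of $X$.

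I expect the main obstacle to be the bookkeeping in part (3): one must be careful that ``$X$ is a hypersurface'' is being used in the sense of the paper (an irreducible closed subset of codimension $1$, with the reduced structure), so that from $f\in\IXa$ with $\ord_a f=1$ one can conclude $\IXa=(f)$ locally rather than merely $\rad(\IXa)=\rad(f)$; the reducedness of the induced structure on $X$ together with irreducibility and the fact that $(f)$ is already prime (being generated by a member of a regular system of parameters) is what closes this gap. Parts (1) and (2) are essentially immediate once the Cohen structure theorem and the characterization of regular quotients are invoked.
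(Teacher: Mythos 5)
Your proof is correct and follows essentially the same route as the paper's: (1) via Krull's intersection theorem, (2) by the contrapositive using that a regular quotient of a regular local ring is cut out by part of a regular system of parameters, and (3) by producing a regular parameter generating $\IXa$. The only cosmetic difference is in (3): you pick $f$ of order $1$ first and then show $(f)=\IXa$ by comparing height-one primes, whereas the paper invokes directly that a height-one prime in a regular local ring is principal and then observes the generator has order $1$; also, the detour through the Cohen structure theorem is unnecessary for (1)–(2), since Krull's intersection theorem and the structure of regular quotients already hold in $\OWa$ itself.
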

\begin{proof}
 (1): Assume that $\ord_aX=\infty$. This implies that $I_{X,a}\subseteq \bigcap_{n=1}^\infty m_{W,a}^n$. By Krull's intersection theorem, this implies that $I_{X,a}=0$ which is a contradiction to $X$ being a proper subset of $W$.
 
 (2): Assume that $a$ is a regular point of $X$. Then there exists a regular system of parameters $x_1,\ldots,x_n$ for $\OWa$ such that $\IXa=(x_1,\ldots,x_k)$ for some integer $1\leq k\leq n$. Since $x_i\in \mWa\setminus\mWa^2$, it is clear that $\IXa\not\subseteq\mWa^2$. Thus, $\ord_aX=\ord\IXa=1$.
 
 (3): Since $\OWa$ is a regular local ring and the ideal $\IXa$ has height $1$ by assumption, we know that it is generated by a single element $x_1$. Since $\ord_aX=1$, it is clear that $x_1\in\mWa\setminus\mWa^2$. Thus, there are elements $x_2,\ldots,x_n\in\mWa$ where $n=\dim(\OWa)$ such that $x_1,\ldots,x_n$ is a regular system of parameters for $\OWa$. Consequently, $\OXa=\OWa/\IXa$ is a regular local ring.
\end{proof}

\begin{remark}
 The assertion of Proposition \ref{order_basic_properties} (3) is false if the local codimension of $X$ is bigger than $1$. For example, consider the curve $X=V(y^2-x^3,z)$ embedded in the $3$-dimensional affine space $W=\Spec(K[x,y,z])$. Let $a$ be the origin of $W$. Then $\ord_aX=1$, although $a$ is a singular point of $X$. This example also makes clear that the order of $X$ greatly depends on the ambient space $W$.
 
 This is one of the reasons why many authors consider instead of the order function more sophisticated invariants to measure the complexity of the singularity of $X$ at a point, such as the multiplicity or the Hilbert-Samuel function (Cf. \cite{Bennett}, \cite{Villamayor_Constructiveness}, \cite{BM_Canonical_Desing} for the Hilbert-Samuel Function and \cite{BV_Multiplicity} for multiplicity). Both of these are similar to the order function in the sense that they have to be further refined to obtain a suitable resolution invariant.
\end{remark}

\begin{proposition} \label{order_is_usc}
 Let $W$ be a regular variety and $X\subseteq W$ a closed subset. Let $c\geq0$ be a non-negative integer. Then the set 
 \[X_{\geq c}=\{a\in X:\ord_aX\geq c\}\]
 is closed. In other words, the order function $\ord X:X\to\N_\infty$ is upper semicontinuous.
\end{proposition}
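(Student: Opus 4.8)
The plan is to exhibit $X_{\geq c}$ as the zero locus of an ideal built from $\IX$ by applying differential operators, which makes closedness automatic. Since being closed is local on $W$, we may assume $W=\Spec(R)$ with $R$ a regular --- hence, as $K$ is algebraically closed, smooth --- finitely generated $K$-algebra, and write $X=V(J)$ for an ideal $J=(f_1,\ldots,f_r)\subseteq R$. For $c=0$ there is nothing to prove, so assume $c\geq 1$.

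The key input is the differential characterization of the order: for every prime $\mathfrak p\subseteq R$,
\[\ord_{\mathfrak p}J\geq c\quad\Longleftrightarrow\quad D(f_i)\in\mathfrak p\text{ for all }i\in\{1,\ldots,r\}\text{ and all }D\in\Diff^{(\leq c-1)}_{R/K},\]
where $\Diff^{(\leq c-1)}_{R/K}$ is the module of $K$-linear differential operators on $R$ of order at most $c-1$. Granting this, let $\Delta^{c-1}(J)$ be the ideal generated by $f_1,\ldots,f_r$ together with all $D(f_i)$ for $D$ of order at most $c-1$; then the equivalence reads $X_{\geq c}=V(\Delta^{c-1}(J))$, which is closed, and gluing over an affine cover of $W$ finishes the argument. (It is enough to apply the operators to a fixed generating set of $J$, by the generalized Leibniz rule for differential operators, so the right-hand condition really is equivalent to $\Delta^{c-1}(J)\subseteq\mathfrak p$.)

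It remains to prove the displayed equivalence, and this is the only step requiring real care, especially in positive characteristic. Using smoothness of $R$ over $K$, after shrinking we may pick $x_1,\ldots,x_n\in R$ whose differentials freely generate $\Omega_{R/K}$, together with the associated Hasse--Schmidt operators $D^{(\alpha)}\in\Diff^{(\leq|\alpha|)}_{R/K}$, $\alpha\in\N^n$, determined by $D^{(\alpha)}(x^\beta)=\binom{\beta}{\alpha}x^{\beta-\alpha}$; the $D^{(\alpha)}$ with $|\alpha|\leq j$ form an $R$-basis of $\Diff^{(\leq j)}_{R/K}$. Localizing at $\mathfrak p$ one may arrange that $x_1,\ldots,x_k$ restrict to a regular system of parameters of $R_{\mathfrak p}$ while $x_{k+1},\ldots,x_n$ become units; then, expanding an element of $JR_{\mathfrak p}$ along powers of $x_1,\ldots,x_k$ and applying the operators $D^{(\alpha)}$ supported on $\{1,\ldots,k\}$, one checks that $\ord_{\mathfrak p}J<c$ exactly when some operator of order $<c$ sends a generator of $J$ to an element outside $\mathfrak p$. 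The subtle point --- and the main obstacle --- is that in characteristic $p>0$ one genuinely needs the divided-power operators $D^{(\alpha)}$ rather than the iterated partials $\partial^\alpha$, since the latter kill $p$-th power contributions (for instance $\partial_{x_1}^{p}(x_1^{p})=0$ while $D^{(pe_1)}(x_1^{p})=1$); this is precisely why the Jacobian-type argument from characteristic zero fails here, and it is also why one wants $R$ smooth \emph{over $K$}, so that enough operators defined over $K$ are available even at a non-closed point $\mathfrak p$ whose residue field may be imperfect. The characterization itself is classical and may simply be cited; see the references given above for the order function.
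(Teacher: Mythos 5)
The paper does not prove this proposition itself but simply cites Hironaka (the classical proof) and Kawanoue, Lemma 1.2.3.1, for the modern proof via differential operators; your argument is exactly a sketch of that modern proof, in particular it uses the same characterization $X_{\geq c}=V(\Diff^{c-1}_W(\IX))$ that the paper later records as Proposition~\ref{ord_via_diff_ops} and attributes to the same source. Your sketch is correct in outline and identifies the right subtlety (the need for divided-power / Hasse--Schmidt operators in positive characteristic), and you yourself end by noting the key input is classical and can be cited, so the content is the same as the paper's.
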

\begin{proof}
The classical proof of this theorem was given in \cite{Hironaka_Annals} Corollary 1, p. 220. A modern proof using differential operators can be found in \cite{Kawanoue_IF_1} Lemma 1.2.3.1. 
\end{proof}

\begin{corollary} \label{max_ord_exists}
 Let $W$ be a regular variety and $X\subseteq W$ a proper closed subset. There exists an integer $c\in\N$ such that 
 \[c=\max\{\ord_aX:a\in X\}.\]
\end{corollary}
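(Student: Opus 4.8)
The plan is to combine the upper semicontinuity of the order function (Proposition \ref{order_is_usc}) with the pointwise finiteness of the order (Proposition \ref{order_basic_properties}(1)) by means of a Noetherianity argument. Since $W$ is a variety, it is a Noetherian scheme, so the closed subset $X$, endowed with the subspace topology, is a Noetherian topological space; in particular $X$ satisfies the descending chain condition on closed subsets.

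First I would record that for every integer $c\geq 0$ the set $X_{\geq c}=\{a\in X:\ord_aX\geq c\}$ is closed in $X$ by Proposition \ref{order_is_usc}, and that these sets form a descending chain $X=X_{\geq 0}\supseteq X_{\geq 1}\supseteq X_{\geq 2}\supseteq\cdots$. By the descending chain condition there is an index $N$ with $X_{\geq N}=X_{\geq N+1}=\cdots$, hence $\bigcap_{c\geq 0}X_{\geq c}=X_{\geq N}$.

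Next I would argue that this stabilized set is empty. Indeed, if a point $a$ lay in $\bigcap_{c}X_{\geq c}$, then $\ord_aX\geq c$ for all $c\in\N$, i.e. $\ord_aX=\infty$, contradicting Proposition \ref{order_basic_properties}(1), which applies precisely because $X$ is a proper closed subset of $W$. Therefore $X_{\geq N}=\emptyset$, and there is a smallest integer $N_0\geq 1$ with $X_{\geq N_0}=\emptyset$ (note $N_0\geq 1$ since $X_{\geq 0}=X\neq\emptyset$). Setting $c:=N_0-1$, the set $X_{\geq c}$ is nonempty, so some point has order $\geq c$, while $X_{\geq c+1}=\emptyset$ forces $\ord_bX\leq c$ for every $b\in X$; hence $c=\max\{\ord_aX:a\in X\}$.

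The argument is essentially routine; the only point that requires a moment's care is the verification that the stabilized set is empty, which is exactly where the finiteness statement of Proposition \ref{order_basic_properties}(1) — and thus the hypothesis that $X$ is a \emph{proper} closed subset — enters. One should also keep in mind the harmless observation that $X$ is nonempty, without which the maximum in the statement would be taken over the empty set.
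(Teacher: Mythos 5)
Your argument is correct and is exactly the natural deduction the paper has in mind: the paper states this as a corollary of Proposition \ref{order_is_usc} without further proof, and your expansion via the descending chain of closed sets $X_{\geq c}$, Noetherianity, and the finiteness of $\ord_aX$ from Proposition \ref{order_basic_properties}~(1) is the intended route. The side remark that $X$ should be nonempty for the maximum to make sense is a reasonable and harmless observation.
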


\begin{definition}
 Let $W$ be a regular variety and $X\subseteq W$ a closed subset. Set $c=\max_{a\in X}\ord_aX$. Then the set
 \[\topp(X)=\{a\in X:\ord_aX=c\}\]
 is called the \emph{top locus} of $X$ with respect to the order function (or only top locus). Notice that $\topp(X)$ is closed by Proposition \ref{order_is_usc}.
\end{definition}

 In Section \ref{section_diff_ops} we will see a method how to construct the defining of ideal of $\topp(X)$ from the defining ideal of $X$ via differential operators.

\begin{definition}
 Let $W$ be a regular variety and $X\subseteq W$ a closed subset. A subvariety $Z\subseteq W$ is said to be a \emph{permissible center of blowup} for $X$ with respect to the order function if $Z$ is regular and $Z\subseteq\topp(X)$ holds. A blowup $\pi:W'\to W$ along such a center will be called \emph{order-permissible}.
\end{definition}

Consider now a regular variety $W$ and a closed subset $X\subseteq W$. Let $\pi:W'\to W$ be the blowup of $W$ along a center $Z$ that is permissible with respect to the order function. Denote by $E=\pi^{-1}(Z)$ the exceptional divisor of $\pi$. Let $a\in\topp(X)$ be a closed point and $a'\in\pi^{-1}(a)$ a closed point lying over $a$. Set $c=\max_{a\in X}\ord_aX$. 

Denote by $\vp:\OWa\to\OWap$ the induced map between the local rings. Since the exceptional divisor $E$ is a hypersurface on $W'$, the ideal $I_{E,a'}$ is generated by a single element $x_E\in\hOWap$. The local defining ideals of the total transform $X^*$ and the strict transform $X'$ of $X$ at $a'$ are given by
\[I_{X^*,a'}=(\IXa)^*:=\OWap\cdot \vp(\IXa),\]
respectively
\[I_{X',a'}=(\IXa)\st:=\Big(x_E^{-\ord_{I_{Z,a}}(f)}\cdot\vp(f):f\in\IXa\Big).\]
Since the ideal $(\IXa)\st$ is quite difficult to handle, a simpler ideal is often considered instead which is called the \emph{weak transform} of $\IXa$. It is defined as
\[(I_{X,a})\wk:=x_E^{-c}\cdot(\IXa)^*.\]
The weak transform of an ideal $J$ will also often just be denoted by $J'$. Notice that strict transform and weak transform coincide in the case that $X$ is a hypersurface. 

\begin{proposition} \label{ord_under_blowup}
 For an order-permissible blowup, the following inequalities hold:
 \[\ord (\IXa)\st\leq \ord (\IXa)\wk\leq \ord\IXa.\]
 In particular,
 \[\ord_{a'}X'\leq\ord_aX.\]
\end{proposition}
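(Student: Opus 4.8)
The plan is to prove the chain $\ord(\IXa)\st\le\ord(\IXa)\wk\le\ord\IXa$ of inequalities of ideals in $\hOWap$; the geometric statement $\ord_{a'}X'\le\ord_aX$ then follows immediately, since $\ord_{a'}X'=\ord(\IXa)\st$ and $\ord_aX=\ord\IXa$. Write $c=\ord_aX=\ord\IXa$ and, for brevity, $J=\IXa$. If $a\notin Z$ the blowup $\pi$ is an isomorphism near $a$, the element $x_E$ is a unit in $\hOWap$, and all three transforms of $J$ coincide with the image of $J$; so we may assume $a\in Z$. The key preliminary fact is that the order of $X$ along the center is again $c$: localising $\OWa$ at the prime $I_{Z,a}$ produces the local ring of $W$ at the generic point $\zeta$ of $Z$, hence $\ord_{I_{Z,a}}J=\ord_\zeta X$, and $\ord_\zeta X=c$ because $\zeta\in Z\subseteq\topp(X)$ and $\topp(X)$ is precisely the locus where the order attains its maximum $c$. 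Consequently $\ord_{I_{Z,a}}(f)\ge c$ for every $f\in J$; since $Z$ is regular, ordinary and symbolic powers of $I_{Z,a}$ coincide, so $f\in I_{Z,a}^{\,\ord_{I_{Z,a}}(f)}$, and as $I_{Z,a}$ generates the ideal $(x_E)$ of the exceptional divisor this gives that $x_E^{\,c}$ divides $\vp(f)$ for all $f\in J$; in particular $J\wk=x_E^{-c}\vp(J)$ is a genuine ideal of $\hOWap$.

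For the inequality $\ord(\IXa)\st\le\ord(\IXa)\wk$ it is enough to prove the inclusion $J\wk\subseteq J\st$, because for ideals $I\subseteq I'$ one has $\ord I'\le\ord I$. Every generator $x_E^{-c}\vp(f)$ of $J\wk$ equals $x_E^{\,\ord_{I_{Z,a}}(f)-c}\cdot\bigl(x_E^{-\ord_{I_{Z,a}}(f)}\vp(f)\bigr)$, with non-negative exponent by the previous paragraph, hence is an $\hOWap$-multiple of the generator $x_E^{-\ord_{I_{Z,a}}(f)}\vp(f)$ of $J\st$; this yields $J\wk\subseteq J\st$.

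For the inequality $\ord(\IXa)\wk\le\ord\IXa=c$, choose $f\in J$ with $\ord_af=c$; this is possible because $\ord J=c$. Since $a\in Z$, $c=\ord_{I_{Z,a}}J\le\ord_{I_{Z,a}}(f)\le\ord_af=c$, so $\ord_{I_{Z,a}}(f)=c$ and the corresponding generator of $J\wk$ is exactly the strict transform $\wt f:=x_E^{-c}\vp(f)$ of $f$; it therefore suffices to show $\ord_{a'}\wt f\le c$. Choose a regular system of parameters $x_1,\dots,x_n$ of $\OWa$ with $I_{Z,a}=(x_1,\dots,x_d)$; then $a'$ lies in an affine chart of the blowup in which, after renumbering, $x_E=x_1$, the map $\vp$ is the substitution $x_1\mapsto x_1$, $x_j\mapsto x_1x_j$ for $2\le j\le d$, $x_j\mapsto x_j$ for $j>d$, and $\hOWap=K[[x_1,x_2-\lambda_2,\dots,x_d-\lambda_d,x_{d+1},\dots,x_n]]$ for suitable constants $\lambda_j\in K$. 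Writing $f=\sum_\alpha c_\alpha x^\alpha$ and using $f\in I_{Z,a}^c=(x_1,\dots,x_d)^c$, so that $\alpha_1+\dots+\alpha_d\ge c$ for every monomial occurring in $f$, one gets $\wt f=\sum_\alpha c_\alpha x_1^{(\alpha_1+\dots+\alpha_d)-c}x_2^{\alpha_2}\cdots x_n^{\alpha_n}$. Reducing $\wt f$ modulo $(x_1,x_{d+1},\dots,x_n)$ kills every term except those with $\alpha_1+\dots+\alpha_d=c=|\alpha|$, which are exactly the terms of the initial form $\init_a f$; the image is the dehomogenisation $P(x_2,\dots,x_d):=(\init_a f)(1,x_2,\dots,x_d)$, a nonzero polynomial of total degree at most $c$. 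Since passing to the quotient by part of a regular system of parameters cannot decrease the order, $\ord_{a'}\wt f\le\ord_{(\lambda_2,\dots,\lambda_d)}P\le\deg P\le c$, as required. Together with the previous paragraph this completes the chain of inequalities.

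The bookkeeping parts — the reduction to $a\in Z$ and the inclusion $J\wk\subseteq J\st$ — are immediate. The real content, and the step that I expect to cost the most care, is the final computation: it amounts to the classical statement that the strict transform of a hypersurface of order $c$ under a permissible blowup again has order at most $c$. Its proof relies on choosing coordinates subordinate to the regular center, on the equality of ordinary and symbolic powers of $I_{Z,a}$ (which holds because $Z$ is regular), and on the observation that the weak transform, read modulo the strict transform of the center, is merely the dehomogenisation of the initial form. One should also verify that the blowup behaves well with respect to completion, so that the explicit chart description of $\hOWap$ and of $\vp$ used above is legitimate; alternatively one may first reduce to the formal setting, where this description is transparent.
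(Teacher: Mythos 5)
Your proof is correct. The paper gives no argument of its own here — it simply cites Proposition 8.13 of \cite{Ha_Obergurgl_2014} — so there is nothing internal to compare against, but yours is the standard argument one expects to find behind that citation: reduce to $a\in Z$; use the coincidence of ordinary and symbolic powers of $I_{Z,a}$ (since $Z$ is regular) to get both that the weak transform is a genuine ideal and that $J\wk\subseteq J\st$; and bound $\ord_{a'}$ of the weak transform of an element $f$ of minimal order $c$ by reducing modulo $(x_1,x_{d+1},\dots,x_n)$, which leaves precisely the dehomogenization $P=(\init f)(1,x_2,\dots,x_d)$ of the initial form. Two minor points you could tighten. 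First, the equality of ordinary and symbolic powers is Corollary \ref{cor_symb_powers_of_reg_ideals} of the present paper, so you may cite it rather than reargue it. Second, the inequality $\ord_{(\lambda_2,\dots,\lambda_d)}P\le\deg P\le c$ tacitly uses $P\neq 0$; this is true because the monomials $x_2^{\alpha_2}\cdots x_d^{\alpha_d}$ obtained from the distinct $\alpha$ with $|\alpha|=c$ and $\alpha_j=0$ for $j>d$ are pairwise distinct, so setting $x_1=1$ in the nonzero homogeneous polynomial $\init f$ cannot annihilate it — a one-line observation worth writing down. Everything else, including the identification $\ord_{I_{Z,a}}J=\ord_\zeta X=c$ via the generic point $\zeta\in Z\subseteq\topp(X)$, the inclusion $J\wk\subseteq J\st$, and the caveat about compatibility of the chart description with completion, is cleanly handled.
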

\begin{proof}
 This is shown in \cite{Ha_Obergurgl_2014} Prop. 8.13.
\end{proof}

\begin{definition}
 A closed point $a'\in\pi^{-1}(a)$ that fulfills $\ord_{a'}X'=\ord_aX$ is called an \emph{equiconstant point} over $a$.
\end{definition}

 Since the order function fails to measure improvement of the singularity under blowup at equiconstant points, we will need to develop techniques beyond the order function which enable us to measure improvement at these points. In Section \ref{section_directrix} we will take a first step towards this goal by narrowing down the locus on the pre-image $\pi^{-1}(a)$ inside which equiconstant points can appear. In Section \ref{section_coeff_ideals} we will then introduce a technique to measure improvement at equiconstant points by considering an ideal which is defined on an ambient space of lower dimension than $W$, called the coefficient ideal.
 
 In the remainder of this section we will state a result which ensures that the order along regular subvarieties can be computed without passing to the localization and use this to verify that the order along regular subvarieties is preserved when passing to the completion.

\begin{lemma}
 Let $R$ be a ring, $J\subseteq R$ an ideal and $p\subseteq R$ a prime ideal. Then
 \[\ord_pJ=\sup\{n\in\N: J\subseteq p^{(n)}\}\]
 where $p^{(n)}=R_pp^n\cap R$ denotes the $n$-th symbolic power of $p$.
\end{lemma}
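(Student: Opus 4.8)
The plan is to show that the two index sets whose suprema appear on the two sides of the claimed equality are literally the same set of natural numbers; the equality of suprema is then immediate. Recall that by definition $\ord_pJ=\sup\{n\in\N:R_pJ\subseteq R_pp^n\}$, where $R_pJ$ denotes the extension of $J$ along the localization map $\lambda:R\to R_p$, and that $p^{(n)}=R_pp^n\cap R=\lambda^{-1}(R_pp^n)$ is the contraction of $R_pp^n$. The whole lemma is therefore an exercise in unwinding the interplay between extension and contraction of ideals under localization; the only subtlety worth flagging is that $\lambda$ need not be injective, so the inclusion ``$J\subseteq p^{(n)}$'' must be read as ``$\lambda(J)\subseteq R_pp^n$'', which is exactly what the definition $p^{(n)}=\lambda^{-1}(R_pp^n)$ encodes.

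First I would fix $n\in\N$ and prove the equivalence $R_pJ\subseteq R_pp^n\iff J\subseteq p^{(n)}$. For the forward direction: if $R_pJ\subseteq R_pp^n$, then for every $x\in J$ we have $\lambda(x)\in R_pJ\subseteq R_pp^n$, hence $x\in\lambda^{-1}(R_pp^n)=p^{(n)}$, so $J\subseteq p^{(n)}$. For the converse: if $J\subseteq p^{(n)}$, then $\lambda(J)\subseteq R_pp^n$; since $R_pp^n$ is an $R_p$-submodule of $R_p$ and $R_pJ$ is by definition the $R_p$-submodule of $R_p$ generated by $\lambda(J)$, it follows that $R_pJ\subseteq R_pp^n$.

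Having established this equivalence for each $n$, the sets $\{n\in\N:R_pJ\subseteq R_pp^n\}$ and $\{n\in\N:J\subseteq p^{(n)}\}$ coincide, and hence so do their suprema in $\N_\infty$. This gives $\ord_pJ=\sup\{n\in\N:J\subseteq p^{(n)}\}$, as claimed. There is no real obstacle here: the argument is a direct manipulation of the adjunction between extension and contraction, and the only point requiring a moment's care is the non-injectivity of $\lambda$, which is already accounted for by defining $p^{(n)}$ as a contraction rather than as ``$p^n$ viewed inside $R_p$ and intersected naively''.
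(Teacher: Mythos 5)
Your proof is correct and takes the same approach the paper implicitly relies on: the paper's proof is just the one-liner ``This is immediate from the definition of $\ord_pJ$,'' and you have spelled out precisely the extension--contraction equivalence that makes that assertion immediate.
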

\begin{proof}
 This is immediate from the definition of $\ord_pJ$.
\end{proof}

\begin{proposition} \label{hochster}
 Let $R$ be a Noetherian domain and $p\subseteq R$ a prime ideal that is generated by a regular sequence. Then $p^n=p^{(n)}$ for all $n\geq0$.
\end{proposition}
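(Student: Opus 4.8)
The plan is to establish the nontrivial inclusion $p^{(n)}\subseteq p^n$ by induction on $n$; the reverse inclusion $p^n\subseteq p^{(n)}$ holds for every ideal of every ring, since $p^n\subseteq R_pp^n\cap R$ directly from the definition, and uses none of the hypotheses. So the whole content lies in showing $p^{(n)}\subseteq p^n$, and it is here that the regular-sequence assumption enters, through the structure of the associated graded ring $\gr_p(R)$.

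First I would record two elementary observations. From $p^n\subseteq p^{n-1}$ one obtains $R_pp^n\subseteq R_pp^{n-1}$ and hence $p^{(n)}\subseteq p^{(n-1)}$ straight from the definition of the symbolic power; this lets the induction proceed inside $p^{n-1}$. And because $p$ is prime, $p^{(1)}=\{x\in R:sx\in p\text{ for some }s\notin p\}=p$, which disposes of the cases $n=0$ and $n=1$.

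The key input is the classical description of the associated graded ring of a ring along an ideal generated by a regular sequence: if $p=(f_1,\ldots,f_k)$ with $f_1,\ldots,f_k$ an $R$-regular sequence, then the natural surjection of graded $R/p$-algebras
\[(R/p)[T_1,\ldots,T_k]\longrightarrow \gr_p(R)=\bigoplus_{m\geq0}p^m/p^{m+1},\qquad T_i\mapsto f_i\bmod p^2,\]
is an isomorphism; this is a standard consequence of the acyclicity of the Koszul complex on a regular sequence (see, e.g., \cite{Eisenbud_Com_Alg}). Since $R/p$ is a domain, it follows in particular that every graded piece $p^{m}/p^{m+1}$ is a free, hence torsion-free, $R/p$-module.

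With this in hand the induction step is formal. Assume $p^{(n-1)}=p^{n-1}$, and let $x\in p^{(n)}$. By the first observation $x\in p^{(n-1)}=p^{n-1}$, so $x$ has a well-defined class $\ol x$ in $p^{n-1}/p^n$. Choose $s\notin p$ with $sx\in p^n$; then $\ol s\cdot\ol x=0$ in $p^{n-1}/p^n$, where $\ol s\neq 0$ in $R/p$ since $s\notin p$. As $R/p$ is a domain and $p^{n-1}/p^n$ is torsion-free over it, multiplication by $\ol s$ is injective on $p^{n-1}/p^n$, forcing $\ol x=0$, i.e. $x\in p^n$. Hence $p^{(n)}\subseteq p^n$, which completes the induction. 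The only nonroutine step is the associated-graded computation, which is exactly where the regular-sequence hypothesis is consumed; everything else is bookkeeping with symbolic powers.
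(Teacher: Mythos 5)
Your proof is correct. The paper itself does not supply an argument here; it simply cites Hochster \cite{Hochster_73}, so there is no in-text proof for you to have matched or diverged from. Your self-contained argument is the standard one and is sound: the reverse inclusion $p^n\subseteq p^{(n)}$ is immediate, the base cases $n=0,1$ are handled correctly (using that $p$ is prime for $p^{(1)}=p$), and the induction step rests on the isomorphism $\gr_p(R)\cong(R/p)[T_1,\ldots,T_k]$, valid since a regular sequence is quasi-regular, which makes each $p^{n-1}/p^n$ a free and hence torsion-free $R/p$-module. The torsion-freeness argument with $\bar{s}\cdot\bar{x}=0$ and $\bar s\neq0$ in the domain $R/p$ then forces $\bar x=0$, closing the induction.

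One small observation, not an error: your argument never uses that $R$ itself is a domain, only that $p$ is prime (so that $R/p$ is a domain). The hypothesis ``Noetherian domain'' in the statement is therefore stronger than what your proof consumes, and indeed the result holds for any Noetherian ring. This is harmless, but worth noticing.
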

\begin{proof}
 This is proved in \cite{Hochster_73}, Application (2.1), p. 57.
\end{proof}

\begin{corollary} \label{cor_symb_powers_of_reg_ideals}
 Let $R$ be a regular local ring and $p\subseteq R$ a prime ideal such that $R/p$ is regular. Then for each ideal $J\subseteq R$ the following equality holds:
 \[\ord_pJ=\sup\{n\in\N: J\subseteq p^n\}.\]
\end{corollary}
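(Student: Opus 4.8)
The plan is to obtain the statement by feeding Proposition \ref{hochster} into the Lemma that precedes it. That Lemma already gives $\ord_pJ=\sup\{n\in\N: J\subseteq p^{(n)}\}$, so it suffices to prove that the ordinary and symbolic powers of $p$ coincide, i.e. $p^n=p^{(n)}$ for all $n\geq 0$; substituting this equality into the Lemma then yields the claimed formula $\ord_pJ=\sup\{n\in\N:J\subseteq p^n\}$ directly.

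To apply Proposition \ref{hochster} I must check its two hypotheses for our $R$ and $p$. First, $R$ is a Noetherian domain: a regular local ring is Noetherian by definition and is an integral domain (a standard fact, following e.g.\ from the Cohen structure theorem, or by induction on $\dim R$ using that $\mathfrak{m}/\mathfrak{m}^2$ has the expected dimension). Second, $p$ must be generated by a regular sequence. This is exactly where the hypothesis that $R/p$ is regular is used: it is a standard consequence of the structure theory of regular local rings that $R/p$ is regular if and only if $p$ is generated by elements $x_1,\dots,x_k$ forming part of a regular system of parameters $x_1,\dots,x_n$ of $R$. Since a regular local ring is Cohen--Macaulay, any initial segment of a regular system of parameters is a regular sequence, so these $x_1,\dots,x_k$ constitute a regular sequence generating $p$.

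With both hypotheses verified, Proposition \ref{hochster} gives $p^n=p^{(n)}$ for every $n\geq 0$, and combining this with the Lemma completes the proof. I expect the only point requiring any care to be the second hypothesis above — that $R/p$ regular forces $p$ to be generated by part of a regular system of parameters, and hence by a regular sequence; everything else is formal bookkeeping. This is the step I would state with a precise reference (it can be extracted from the prerequisites on regular systems of parameters and the Cohen structure theorem already cited in Section \ref{section_conventions}).
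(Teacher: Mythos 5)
Your proof is correct and is exactly the argument the paper intends: the corollary is placed immediately after the Lemma computing $\ord_p J$ via symbolic powers and after Proposition~\ref{hochster}, and the paper implicitly proves it by combining the two, just as you do. Your verification of the hypotheses of Proposition~\ref{hochster} (that a regular local ring is a Noetherian domain, and that $R/p$ regular forces $p$ to be generated by part of a regular system of parameters, hence by a regular sequence) is precisely the content one would want to make this explicit.
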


%
%

\begin{lemma} \label{completion_preserves_order}
 Let $(R,m)$ be a regular local ring and $J\subseteq R$ an ideal. Denote by $\wh R$ the $m$-adic completion of $R$ and by $\wh J=\wh RJ$ the extension of $J$ in $\wh R$.
 
 Let $p\subseteq R$ be a prime ideal such that $R/p$ is regular. Set $\wh p=\wh Rp$. Then $\ord_pJ=\ord_{\wh p}\wh J$.
\end{lemma}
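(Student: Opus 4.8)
The plan is to use Corollary \ref{cor_symb_powers_of_reg_ideals} on both $R$ and $\wh R$ in order to replace symbolic powers by ordinary powers on each side, and then to match the two resulting suprema using the faithful flatness of the completion map. On the base ring this is immediate: since $(R,m)$ is regular local and $R/p$ is regular, Corollary \ref{cor_symb_powers_of_reg_ideals} gives
\[\ord_p J=\sup\{n\in\N: J\subseteq p^n\}.\]

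Next I would check that the hypotheses of Corollary \ref{cor_symb_powers_of_reg_ideals} survive passage to the completion. The ring $\wh R$ is again a regular local ring, because the $m$-adic completion of a regular local ring is regular local. For the quotient, the key point is the canonical identification $\wh R/\wh p\cong\wh{R/p}$, the $\bar m$-adic completion of $R/p$ (where $\bar m=m/p$), which follows from $\wh R/\wh p=\varprojlim_n R/(m^n+p)=\varprojlim_n (R/p)/\bar m^{\,n}$. Since $R/p$ is regular local, its completion $\wh{R/p}$ is regular local, so $\wh R/\wh p$ is a regular local ring; in particular $\wh p$ is prime, so $\ord_{\wh p}\wh J$ is defined, and Corollary \ref{cor_symb_powers_of_reg_ideals} applies to $\wh R$ and $\wh p$, yielding
\[\ord_{\wh p}\wh J=\sup\{n\in\N:\wh J\subseteq\wh p^{\,n}\}.\]

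It then remains to show $J\subseteq p^n\iff\wh J\subseteq\wh p^{\,n}$ for every $n$. Here I would invoke that $\wh R$ is faithfully flat over $R$, so that for any ideals $I\subseteq I'$ of $R$ one has $\wh R I\cap R=I$ and hence $I\subseteq I'\iff\wh R I\subseteq\wh R I'$; applying this with $I=J$, $I'=p^n$, and using that extension commutes with powers ($\wh R p^n=(\wh R p)^n=\wh p^{\,n}$) and that $\wh R J=\wh J$, gives the desired equivalence and therefore $\ord_pJ=\ord_{\wh p}\wh J$. There is no serious obstacle in this argument; the only step requiring a little care is the identification $\wh R/\wh p\cong\wh{R/p}$, which is precisely what guarantees that the regularity hypothesis needed to apply Corollary \ref{cor_symb_powers_of_reg_ideals} is inherited by the completion, so that on both sides one only has to compare ordinary powers rather than symbolic ones.
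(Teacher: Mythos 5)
Your proof is correct and follows essentially the same route as the paper: apply Corollary \ref{cor_symb_powers_of_reg_ideals} on each side to replace symbolic powers by ordinary powers, then compare via faithful flatness of the completion. The paper makes the verification that $\wh R/\wh p$ is regular transparent by choosing a regular system of parameters $x_1,\ldots,x_m$ with $p=(x_1,\ldots,x_k)$ (so that $\wh p=(x_1,\ldots,x_k)$ and the quotient is visibly a power series ring), and leaves the faithful-flatness equivalence $J\subseteq p^n\iff\wh J\subseteq\wh p^n$ as "clear"; you instead spell out both the isomorphism $\wh R/\wh p\cong\wh{R/p}$ and the flatness step. Same argument, just differing in which step is made explicit.
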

\begin{proof}
 Let $x_1,\ldots,x_m$ be a regular system of parameters for $R$ such that $p=(x_1,\ldots,x_k)$ for an index $k\leq p$.
 
 Let $n\in\N$. By Corollary \ref{cor_symb_powers_of_reg_ideals} we know that $\ord_pJ\geq n$ holds if and only if $J\subseteq (x_1,\ldots,x_k)^n$ holds. Clearly, this is equivalent to $\wh J\subseteq (x_1,\ldots,x_k)^n$. Hence, $\ord_pJ=\ord_{\wh p}\wh J$.
\end{proof}

\section{Multi-valued weighted order functions} \label{section_weighted_order}

 In this section we are going to introduce a generalization of the order function as a map from the power series ring $K[[\x]]$ to $\Ni$. Here, $K$ denotes an arbitrary field. 

 Consider the power series ring $R=K[[\x]]$ with parameters $\x=(x_1,\ldots,x_n)$. Denote for each element $f\in R$ its expansion by $f=\sum_{\alpha\in\N}c_{f,\alpha}\x^\alpha$ with $c_{f,\alpha}\in K$. Then the order function $\ord:K[[\x]]\to\N_\infty$ can be described as
 \[\ord f=\min\B\{\sum_{i=1}^n\alpha_i:\alpha\in\N^n,c_{f,\alpha}\neq0\B\}\]
 for $f\neq0$. By assigning multi-valued weights $c_1,\ldots,c_n\in\N^k$ to the variables $x_1,\ldots,x_n$, we can define more general maps $\w:K[[\x]]\to\N^k_\infty$ which are defined by 
 \[\w(f)=\min\B\{\sum_{i=1}^n\alpha_i\cdot c_i:\alpha\in\N^n,c_{f,\alpha}\neq0\B\}\]
 for $f\neq0$ where $\N^k_\infty$ is considered with the lexicographic order. We set $\w(0)=(\infty,\ldots,\infty)$.
 
 These maps are called (multi-valued) weighted order functions. Notice that, unlike the usual order function, the definition of $\w$ is dependent on the chosen parameter system for $K[[\x]]$. While this seems like a disadvantage at first, assigning weights to the variables offers a great deal of flexibility for defining new invariants that behave similarly to the order function. They will be used in situations in which some of the parameters of $K[[\x]]$ have a distinguished role. (For example, if these parameters define a geometric object with respect to which the invariant is defined.)
 
 The usage of weighted order functions for the resolution of singularities is not common. Their application to coefficient ideals is a unique feature of this thesis that plays a key role for the definition of our resolution invariant for surface singularities. It is still unexplored how weighted order functions could be used to prove resolution of higher-dimensional varieties.

\begin{definition}
 Let $R=K[[\x]]$ with $\x=(x_1,\ldots,x_n)$. A map $\w:K[[\x]]\to\N^k_\infty$ is called a \emph{weighted order function} that is defined on the parameters $\x$ if the following hold:
 \begin{itemize}
 \item Let $f\in R$ be a non-zero element with expansion $f=\sum_{\alpha\in\N}c_{\alpha}\x^\alpha$. Then
 \[\w(f)=\min\B\{\sum_{i=1}^n\alpha_i\cdot\w(x_i):\alpha\in\N^n,c_{\alpha}\neq0\B\}\]
 where $\N_\infty^k$ is considered with the lexicographic order.
 \item $\w(0)=(\infty,\ldots,\infty)$.
 \end{itemize}
 For an ideal $J\subseteq R$ we define 
 \[\w(J)=\min\{\w(f):f\in J\}.\]
 
 If it is clear from the context with respect to which parameters $\w$ is defined, we will just call $\w$ a weighted order function on $K[[\x]]$.
\end{definition}

\begin{remarks}
\begin{enumerate}[(1)]
 \item It is easy to see that a weighted order function $\w:K[[\x]]\to\N^k_\infty$ is a valuation. Hence, for elements $f,g\in K[[\x]]$ the following hold:
 \begin{itemize}
  \item $\w(f+g)\geq \min\{\w(f),\w(g)\}$ and equality holds if $\w(f)\neq\w(g)$.
  \item $\w(f\cdot g)=\w(f)+\w(g)$.
  \item $\w(f)=(\infty,\ldots,\infty) \iff f=0$.
 \end{itemize}
 \item A weighted order function $\w:K[[\x]]\to\N_\infty^k$ that is defined on the parameters $\x=(x_1,\ldots,x_n)$ is uniquely prescribed by the images $\w(x_1),\ldots,\w(x_n)$.
 \item Notice that $\w(u)=(0,\ldots,0)$ for all units $u\in K[[\x]]^*$.
 \item If an ideal $J$ is generated by elements $f_1,\ldots,f_m$, it is clear that $\w(J)=\min\{\w(f_1),\ldots,\w(f_m)\}$.
 \item Some weighted order functions can also be described as orders at certain prime ideals $p\in\Spec(K[[\x]])$. For example, the weighted order function $\w:K[[\x]]\to\N_\infty$ which is defined by $\w(x_i)=1$ for a certain index $i$ and $\w(x_j)=0$ for $j\neq i$ coincides with the order at the prime ideal $(x_i)$. This specific weighted order is called the \emph{$x_i$-order} and will be denoted by $\ord_{(x_i)}$.
 
 Most weighted order functions cannot be defined as the order at a prime ideal. For example, consider $R=K[[x,y]]$ and the weighted order function $\w:K[[x,y]]\to\N_\infty$ defined by $\w(x)=1$ and $\w(y)=2$. Then there is no prime ideal $p\in\Spec(K[[x,y]])$ such that the maps $\w$ and $\ord_p$ coincide.

\end{enumerate}

\end{remarks}

\begin{definition}
 Let $R=K[[\x]]$ and let $\w:K[[\x]]\to\N_\infty^k$ be a weighted order function that is defined on $\x$. For an element $f\neq0$ with expansion $f=\sum_{\alpha\in\N^n}c_\alpha\x^\alpha$ we define the \emph{initial form of $f$} with respect to $\w$ and $\x$ as
 \[\init_\w(f)=\sum_{\substack{\alpha\in\N^n\\ \w(\x^\alpha)=\w(f)}}c_\alpha \x^\alpha.\]
 Set $\init_\w(0)=0$. The dependence of the initial form on $\x$ will be suppressed in the notation. We say that $f$ is \emph{weighted homogeneous} with respect to $\w$ and $\x$ if $\init_\w(f)=f$ holds.
 
 For an ideal $J\subseteq R$ we define the \emph{weak initial ideal of $J$} with respect to $\w$ and $\x$ as
 \[\minit_\w(J)=(\init_\w(f):f\in J,\w(f)=\w(J)).\]
 
 We will write $\init(f)$ and $\minit(J)$ for the initial form and the weak initial ideal with respect to the usual order function. Notice that, although the order function itself is coordinate-independent, $\init(f)$ and $\minit(J)$ are dependent on the choice of parameters $\x$ for $R$.
 
 We will write $\init_{(x_i)}(f)$ and $\minit_{(x_i)}(J)$ for the initial form and weak initial ideal with respect to the $x_i$-order.
\end{definition}

\begin{remark}
 The definition of the weak initial ideal is non-standard. A similar object which appears way more often in the literature is the (strict) initial ideal of $J$ which is defined as
 \[\init(J)=(\init(f):f\in J).\]
 This is generally a much more complicated ideal than the weak initial ideal of $J$. For example, if $J$ is generated by elements $f_1,\ldots,f_k$, then the weak initial ideal of $J$ is generated by the initial forms $\init(f_i)$ of those elements $f_i$ that fulfill $\ord f_i=\ord J$. Finding generators for the usual initial ideal $J$ is much more complicated and leads to the theory of standard bases.
 
 For our purposes, the weak initial ideal with respect to weighted order functions is a very useful object and it will be used in the definition of our resolution invariant.
\end{remark}

\begin{lemma} \label{double_weighted order function}
 Let $R=K[[\x]]$ with $\x=(x_1,\ldots,x_n)$ and consider two weighted order functions $\w_1:K[[\x]]\to\Ni^k$ and $\w_2:K[[\x]]\to\Ni^l$ which are defined on the parameters $\x$. Let $\y:K[[\x]]\to\N_\infty^{k+l}$ be the weighted order function that is defined on $\x$ by setting $\y(x_i)=(\w_1(x_i),\w_2(x_i))$ for $i=1,\ldots,n$.
 
 Then 
 \[\y(f)=(\w_1(f),\w_2(\init_{\w_1}(f)))\]
 for elements $f\in R$ and 
 \[\y(J)=(\w_1(J),\w_2(\minit_{\w_1}(J)))\]
 for ideals $J\subseteq R$.
\end{lemma}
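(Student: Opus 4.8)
The plan is to reduce everything to the definitions of the two weighted order functions and of the lexicographic order; the only structural fact I need is that a lexicographic minimum over a nonempty subset of $\Ni^m$ is attained and can be computed one coordinate at a time (first minimize the first entry, then minimize the second entry among the tuples realizing that first minimum), which is legitimate because $\Ni^m$ is well-ordered under the lexicographic order for every $m$.

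First I would establish the formula for an element $f\in R$. If $f=0$ the identity is immediate from $\init_{\w_1}(0)=0$ and $\w_1(0)=\w_2(0)=(\infty,\ldots,\infty)$, so assume $f=\sum_\alpha c_\alpha\x^\alpha\neq0$. By definition $\y(f)=\min\{\sum_i\alpha_i\,\y(x_i):c_\alpha\neq0\}$, and since $\y(x_i)=(\w_1(x_i),\w_2(x_i))$ each summand is the pair $(\w_1(\x^\alpha),\w_2(\x^\alpha))$. Computing the lexicographic minimum of this set of pairs: the minimum of the first entries over $\{\alpha:c_\alpha\neq0\}$ is $\w_1(f)$ by definition of $\w_1$; the indices $\alpha$ with $c_\alpha\neq0$ attaining this value are exactly those appearing in the support of $\init_{\w_1}(f)$; and the minimum of $\w_2(\x^\alpha)$ over that support is precisely $\w_2(\init_{\w_1}(f))$ by definition of $\w_2$ applied to $\init_{\w_1}(f)$. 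Hence $\y(f)=(\w_1(f),\w_2(\init_{\w_1}(f)))$.

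Next, for an ideal $J\subseteq R$ I would feed the element formula into $\y(J)=\min\{\y(f):f\in J\}=\min\{(\w_1(f),\w_2(\init_{\w_1}(f))):f\in J\}$ and again read off the lexicographic minimum coordinatewise. The minimum of the first entries is $\w_1(J)$, attained exactly by those $f\in J$ with $\w_1(f)=\w_1(J)$, and among these the second entry is minimized to $\min\{\w_2(\init_{\w_1}(f)):f\in J,\ \w_1(f)=\w_1(J)\}$. It remains to identify this last quantity with $\w_2(\minit_{\w_1}(J))$. By definition $\minit_{\w_1}(J)$ is the ideal generated by the family $\{\init_{\w_1}(f):f\in J,\ \w_1(f)=\w_1(J)\}$. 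Since $\w_2$ is a valuation taking values in $\Ni^l$, in particular $\w_2(r)\geq(0,\ldots,0)$ for all $r\in R$, any $R$-linear combination $g=\sum_j r_j\,\init_{\w_1}(f_j)$ of generators satisfies $\w_2(g)\geq\min_j\big(\w_2(r_j)+\w_2(\init_{\w_1}(f_j))\big)\geq\min_j\w_2(\init_{\w_1}(f_j))$; conversely each generator lies in $\minit_{\w_1}(J)$. Therefore $\w_2(\minit_{\w_1}(J))=\min\{\w_2(\init_{\w_1}(f)):f\in J,\ \w_1(f)=\w_1(J)\}$, which is exactly the second coordinate computed above, so $\y(J)=(\w_1(J),\w_2(\minit_{\w_1}(J)))$.

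I do not anticipate a serious obstacle; the calculation is essentially an unwinding of definitions. The one place where a little care is warranted is the step identifying $\w_2$ of the ideal $\minit_{\w_1}(J)$ with the minimum of $\w_2$ over its natural (possibly infinite) generating family — for which I would spell out the one-line valuation estimate above rather than only invoke the finitely-generated special case, and note that the infimum is actually attained because $\Ni^l$ is well-ordered.
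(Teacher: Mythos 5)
Your proof is correct and follows essentially the same route as the paper's: compute the lexicographic minimum coordinate by coordinate, first for elements and then for ideals. The paper's proof takes the final identification $\w_2(\minit_{\w_1}(J))=\min\{\w_2(\init_{\w_1}(f)):f\in J,\ \w_1(f)=\w_1(J)\}$ for granted, whereas you spell out the valuation estimate and well-ordering that justify it; this is a welcome clarification of a step the paper leaves implicit, not a different argument.
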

\begin{proof}
 Let $f\in R$ be an element with expansion $f=\sum_{\alpha\in\N^n}c_\alpha\x^\alpha$. Then
 \[\y(f)=\min\B\{\B(\sum_{i=1}^n\alpha_i\cdot\w_1(x_i),\sum_{i=1}^n\alpha_i\cdot\w_2(x_i)\B):\alpha\in\N^n,c_\alpha\neq0\B\}\]
 \[=\min\B\{\B(\w_1(f),\sum_{i=1}^n\alpha_i\cdot\w_2(x_i)\B):\alpha\in\N^n,c_\alpha\neq0,\w_1(\x^\alpha)=\w_1(f)\B\}\]
 \[=(\w_1,\w_2(\init_{\w_1}(f))).\]
 
 Further, let $J\subseteq R$ be an ideal. Then
 \[\y(J)=\min\{(\w_1(f),\w_2(\init_{\w_1}(f))):f\in J\}\]
 \[=\min\{(\w_1(J),\w_2(\init_{\w_1}(f)):f\in J,\w_1(f)=\w_1(J)\}\]
 \[=(\w_1(J),\w_2(\minit_{\w_1}(J))).\]
\end{proof}

 

\section{Differential operators} \label{section_diff_ops}

In the following, we will briefly discuss some important results about differential operators. Our main reference for the theory of differential operators is Chapter 1 of \cite{Kawanoue_IF_1} which gives a very accessible account on this topic and its application to the resolution of singularities over fields of arbitrary characteristic. Here, $K$ always denotes an algebraically closed field.


\begin{definition}
 Let $R$ be a $K$-algebra. Fix a non-negative integer $n\in\N$. Consider the multiplication map $\mu:R\otimes_KR\to R$ defined by $\mu(x\otimes y)=xy$. Set $I=\ker(\mu)$ and $D^n(R)=R\otimes_KR/I^{n+1}$. Let $d_n:R\to D^n(R)$ be the map $d_n(x)=\overline{1\otimes x}$. Set
 \[\Diff^n_{R/K}=\{\varphi\circ d_n:\varphi\in\Hom_R(D^n(R),R)\}.\]
 Notice that $\Diff^n_{R/K}$ is an $R$-module via the multiplication $a\cdot(\vp\circ d_n)=(a\cdot\vp)\circ d_n$.
 
 The elements $d\in \Diff^n_{R/K}$ are $K$-linear maps $d:R\to R$ which are called \emph{differential operators on $R$ of degree $\leq n$}. 
\end{definition}

\begin{lemma} \label{composition_of_diff_ops}
 Let $R$ be be a $K$-algebra and $n,m\in\N$. Further, let $d\in\Diff^n_{R/K}$ and $d'\in\Diff^m_{R/K}$ be differential operators. Then $d'\circ d\in\Diff^{n+m}_{R/K}$.
\end{lemma}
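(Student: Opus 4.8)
The plan is to use the standard commutator (Grothendieck) characterization of differential operators rather than to argue directly with the modules $D^n(R)$. Recall that $\Diff^0_{R/K}$ consists exactly of the $R$-linear endomorphisms of $R$, that is, the multiplications by elements of $R$, and that for $n\geq 1$ a $K$-linear map $d\colon R\to R$ lies in $\Diff^n_{R/K}$ if and only if for every $a\in R$ the commutator $[d,a]:=d\circ a-a\circ d$ lies in $\Diff^{n-1}_{R/K}$; here $a$ also denotes the multiplication operator $x\mapsto ax$. This equivalence is part of the standard theory and is proved in the reference \cite{Kawanoue_IF_1} used throughout this section. If one prefers a self-contained argument, one shows that a $K$-linear map $d$ factors through $D^n(R)=R\otimes_K R/I^{n+1}$ precisely when the induced map $R\otimes_K R\to R$ (acting on the right tensor factor) kills $I^{n+1}$, and then translates the relation $I^{n+1}\cdot(1\otimes x)\equiv 0$ into the vanishing of the $(n+1)$-fold iterated commutators $[\,\dots[[d,a_0],a_1],\dots,a_n]$.

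Granting this characterization, the key computational input is the elementary identity
\[[d'\circ d,\,a]=[d',a]\circ d+d'\circ[d,a],\]
valid for arbitrary $K$-linear maps $d,d'\colon R\to R$ and any multiplication operator $a$; this is a one-line verification by expanding both sides.

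With these two facts in hand I would prove the lemma by induction on $n+m$. In the base case $n=m=0$ the composition of two $R$-linear maps is again $R$-linear, so $d'\circ d\in\Diff^0_{R/K}$. For the inductive step, let $d\in\Diff^n_{R/K}$ and $d'\in\Diff^m_{R/K}$ with $n+m\geq 1$, and fix $a\in R$. By the commutator identity, $[d'\circ d,a]=[d',a]\circ d+d'\circ[d,a]$. Now $[d',a]\in\Diff^{\max(m-1,0)}_{R/K}$ and $d\in\Diff^n_{R/K}$, so the induction hypothesis (or, when $m=0$, the fact that $[d',a]=0$) gives $[d',a]\circ d\in\Diff^{n+m-1}_{R/K}$; symmetrically $d'\circ[d,a]\in\Diff^{n+m-1}_{R/K}$. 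Hence $[d'\circ d,a]\in\Diff^{n+m-1}_{R/K}$ for every $a\in R$, which by the characterization means exactly that $d'\circ d\in\Diff^{n+m}_{R/K}$. The only genuine obstacle is bridging from the tensor-product definition given above to the commutator characterization; once that is done (or simply quoted from the cited literature), the remainder is a routine double induction.
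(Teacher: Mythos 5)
The paper gives no argument of its own here; it simply cites Kawanoue's Lemma 1.1.2.1(6) for the fact that degrees of differential operators add under composition. Your proof, by contrast, is the standard self-contained argument: translate the definition via the module $D^n(R)=R\otimes_K R/I^{n+1}$ into Grothendieck's inductive commutator characterization ($\Diff^0_{R/K}$ is the $R$-linear maps, and $d\in\Diff^n_{R/K}$ precisely when $[d,a]\in\Diff^{n-1}_{R/K}$ for all $a\in R$), then use the Leibniz-type identity $[d'\circ d,a]=[d',a]\circ d+d'\circ[d,a]$ together with induction on $n+m$. The identity is a one-line expansion and the induction is handled correctly, including the degenerate cases $n=0$ or $m=0$ where the relevant commutator vanishes. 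The only thing your proof genuinely depends on is the equivalence between the principal-parts definition used in the paper and the commutator characterization; you flag this honestly, and it is indeed a standard fact (proved by observing that $I$ is generated over $R$ by the elements $1\otimes a-a\otimes 1$, and that multiplication by such an element converts the condition of killing $I^{n+1}$ into the condition that each $[d,a]$ kills $I^n$), so there is no hidden circularity. The trade-off is clear: the paper's citation is maximally brief, while your argument is self-contained modulo a standard equivalence and exposes the underlying inductive structure.
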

\begin{proof}
 \cite{Kawanoue_IF_1} Lemma 1.1.2.1. (6).
\end{proof}

\begin{lemma} \label{sheaf_of_diff_ops}
 Let $W$ be a variety over a field $K$ and $n\in\N$ a non-negative integer. Then there exists a coherent sheaf $\Diff_W^n$ of differential operators with the following properties:
 \begin{enumerate}[(1)]
  \item For each affine open subset $U=\Spec(R)$ of $W$, we have $\Diff_W^n(U)=\Diff^n_{R/K}$.
  \item For each point $a\in W$, the stalk of the sheaf $\Diff_W^n$ at $a$ has the form $\{\Diff_W^n\}_a=\Diff^n_{\OWa/K}$.
  \item For each closed point $a\in W$, the identity $\Diff^n_{\OWa/K}\otimes_{\OWa}\hOWa\cong\Diff^n_{\hOWa/K}$ holds.
 \end{enumerate}
\end{lemma}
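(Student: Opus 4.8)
The plan is to produce $\Diff_W^n$ by gluing the presheaf $U=\Spec(R)\mapsto\Diff^n_{R/K}$ over an affine cover of $W$; the only nontrivial point is that $\Diff^n_{-/K}$ is compatible with localization and yields a \emph{coherent} sheaf. Both follow from one algebraic fact: if $R$ is essentially of finite type over $K$, then $D^n(R)=R\otimes_KR/I^{n+1}$ is a finitely presented $R$-module. Indeed $I=\ker\mu$ satisfies $I/I^2\cong\Omega_{R/K}$, which is a finitely generated $R$-module, so $I$ is generated over $R\otimes_KR$ by finitely many elements $z_1,\dots,z_m$; hence $D^n(R)$ is generated over $R$ (acting through the left tensor factor) by the images of the monomials $z^\beta$ with $|\beta|\le n$, so it is finitely generated, and finitely presented since $R$ is Noetherian. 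Since $d_n(R)$ generates $D^n(R)$ as an $R$-module, the map $\varphi\mapsto\varphi\circ d_n$ identifies $\Diff^n_{R/K}$ with $\Hom_R(D^n(R),R)$, which is therefore a finitely generated $R$-module.

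First I would check the localization compatibility: for $f\in R$ the element $f\otimes 1-1\otimes f$ lies in $I$, hence is nilpotent in $D^n(R)$, so inverting $f$ through the left factor agrees with inverting it through either factor; this gives $D^n(R_f)\cong D^n(R)\otimes_RR_f$. Combining this with the fact that $\Hom_R(D^n(R),-)$ commutes with localization (as $D^n(R)$ is finitely presented and $R$ is Noetherian) yields $\Diff^n_{R_f/K}\cong(\Diff^n_{R/K})_f$. Therefore, on each affine open $\Spec(R)\subseteq W$ the presheaf $D(f)\mapsto\Diff^n_{R_f/K}$ is the quasi-coherent sheaf attached to the finitely generated $R$-module $\Diff^n_{R/K}$; these patch to a coherent sheaf $\Diff_W^n$ satisfying (1) by construction, and (2) follows because the stalk at $a$ is the filtered colimit of $\Diff^n_{\OO_W(U)/K}$ over affine neighbourhoods $U\ni a$, which by the localization compatibility just shown equals $\Diff^n_{\OWa/K}$.

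For (3), write $R=\OWa$ with $a$ closed and $\wh R=\hOWa$ its $\mWa$-adic completion, which is flat over $R$. Since $D^n(R)$ is finitely presented over $R$, flat base change for $\Hom$ gives $\Diff^n_{R/K}\otimes_R\wh R\cong\Hom_{\wh R}\!\big(D^n(R)\otimes_R\wh R,\ \wh R\big)$, so it suffices to identify $D^n(R)\otimes_R\wh R$ with $D^n(\wh R)$, i.e.\ to show that forming $D^n$ commutes with completion. This is the step I expect to be the main obstacle, since $R\otimes_KR$ does not behave naively under $\mWa$-adic completion. The approach I would take is: every $d\in\Diff^n_{R/K}$ satisfies $d(\mWa^k)\subseteq\mWa^{k-n}$ for $k\ge n$ (induction on $n$ using the commutator characterization of differential operators), so $d$ is $\mWa$-adically continuous and extends uniquely to $\wh R$, which provides the natural comparison map $D^n(R)\otimes_R\wh R\to D^n(\wh R)$. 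To see it is an isomorphism, I would use the Cohen structure theorem to write $\wh R$ as a quotient of a power series ring $\wh S=K[[x_1,\dots,x_N]]$, for which $\Diff^n_{\wh S/K}$ is free on the Hasse--Schmidt derivatives $\partial^{[\alpha]}$, $|\alpha|\le n$ — the completions of the corresponding operators on the polynomial ring $S$ — so the claim holds for the pair $(S,\wh S)$ and then descends along $\wh S\twoheadrightarrow\wh R$, with injectivity following from faithful flatness of $\wh R$ over $R$. Alternatively, one may simply invoke the compatibility of sheaves of principal parts and differential operators with flat base change from \cite{Kawanoue_IF_1}, Chapter~1, or from EGA~IV.
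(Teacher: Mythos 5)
The paper does not give a proof of this lemma; it simply cites Kawanoue \cite{Kawanoue_IF_1}, Corollary 1.1.2.2. So there is no ``paper proof'' to compare against, and your write-up is the only actual argument on the table. Parts (1) and (2) of your sketch are essentially correct, modulo one sloppy step: the inference ``$I/I^2\cong\Omega_{R/K}$ is finitely generated, \emph{so} $I$ is finitely generated'' is not valid in this generality. What you actually need (and what is true) is that $R\otimes_K R$ is Noetherian because $R$ is essentially of finite type over $K$; alternatively, finite generation of $I/I^2$ already suffices, because each subquotient $I^k/I^{k+1}$ of the filtration on $D^n(R)$ is a quotient of $\mathrm{Sym}^k_R(I/I^2)$, hence finitely generated, without ever needing $I$ itself to be finitely generated.

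There is, however, a genuine error in your argument for (3). You reduce the statement to the claim that $D^n(R)\otimes_R\wh R\cong D^n(\wh R)$, i.e.\ that forming the module of principal parts commutes with completion, and you later say ``the claim holds for the pair $(S,\wh S)$'' with $S$ a polynomial ring and $\wh S=K[[x_1,\dots,x_N]]$. This is false, even for $N=1$: the surjection $D^1(\wh S)\twoheadrightarrow\Omega_{\wh S/K}$ shows that $D^n(\wh S)$ for $n\ge 1$ has a quotient $\Omega_{K[[x]]/K}$ that is \emph{not} finitely generated over $K[[x]]$ (the universal derivation is not $\m$-adically continuous, and the kernel of $\Omega_{K[[x]]/K}\to K[[x]]\,dx$ is a nonzero $x$-divisible module). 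Since $D^n(R)\otimes_R\wh R$ is finitely generated, the two sides cannot be isomorphic. What \emph{is} true is the statement about $\Diff^n$: the natural map $\Diff^n_{\wh R/K}\to\Hom_{\wh R}(D^n(R)\otimes_R\wh R,\wh R)\cong\Diff^n_{R/K}\otimes_R\wh R$ is an isomorphism even though the underlying map $D^n(R)\otimes_R\wh R\to D^n(\wh R)$ is not. The continuity observation you make ($d(\mWa^k)\subseteq\mWa^{k-n}$) is exactly the right tool, but it should be used to work directly with $\Diff^n$ — showing that every $K$-linear differential operator of order $\le n$ on $\wh R$ is $\m$-adically continuous and hence (via a Cohen presentation $\wh S\twoheadrightarrow\wh R$ and the explicit free basis of $\Diff^n_{\wh S/K}$ by divided-power derivatives) lies in the image of the comparison map — rather than to prove a false statement about $D^n$.
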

\begin{proof}
 \cite{Kawanoue_IF_1} Corollary 1.1.2.2.
\end{proof}

 The following result is the main reason why we are interested in differential operators. It provides us with a method of computing the defining ideal of the top locus $\topp(X)$ of a closed set $X$ from the defining ideal of $X$.

\begin{proposition} \label{ord_via_diff_ops}
 Let $W$ be a regular variety and $X\subseteq W$ a closed subset. Set $X_{\geq c}=\{a\in W:\ord_aX\geq c\}$ for a positive integer $c>0$. Then
 \[X_{\geq c}=V(\Diff_W^{c-1}(\IX)).\]
\end{proposition}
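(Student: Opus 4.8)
The plan is to prove the two inclusions $X_{\geq c}\subseteq V(\Diff_W^{c-1}(\IX))$ and $V(\Diff_W^{c-1}(\IX))\subseteq X_{\geq c}$ separately, reducing everything to a pointwise statement about the local rings $\OWa$ at closed points and then, via Lemma \ref{sheaf_of_diff_ops}(3), to a statement in the completion $\hOWa\cong K[[x_1,\ldots,x_n]]$ where one can compute with Taylor expansions. The key local lemma to establish is: for a closed point $a\in W$, one has $\ord_a X\geq c$ if and only if $d(f)\in\mWa$ for every $f\in\IXa$ and every differential operator $d\in\Diff^{c-1}_{\OWa/K}$. Granting this, the proposition follows by noting that $V(\Diff_W^{c-1}(\IX))$ is a closed subset whose closed points are exactly those $a$ with $d(f)$ vanishing at $a$ for all generators, and that a closed subset is determined by its closed points since $W$ is a variety over an algebraically closed field (or, more carefully, by checking the ideal-theoretic statement $\Diff_W^{c-1}(\IX)_a$ has order-$\geq 1$ locus matching $X_{\geq c}$ at every point).

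First I would set up the passage to completions: fix a closed point $a$, choose a regular system of parameters $x_1,\ldots,x_n$ for $\OWa$, and use the Cohen structure theorem to identify $\hOWa$ with $K[[x_1,\ldots,x_n]]$. Since $K$ is algebraically closed (hence perfect), the differential operators $\Diff^{c-1}_{\hOWa/K}$ are freely generated as a module over $\hOWa$ by the divided-power operators $\tfrac{1}{\beta!}\partial^\beta:=\prod_i \tfrac{1}{\beta_i!}\partial_{x_i}^{\beta_i}$ for $|\beta|\leq c-1$ (this is the standard description, e.g.\ in \cite{Kawanoue_IF_1} Chapter 1; it works in all characteristics precisely because one uses Hasse--Schmidt / divided-power derivatives rather than ordinary ones). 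For an element $f=\sum_\alpha c_\alpha x^\alpha$, the operator $\tfrac{1}{\beta!}\partial^\beta$ sends it to $\sum_\alpha \binom{\alpha}{\beta} c_\alpha x^{\alpha-\beta}$, and its value at $a$ (i.e.\ its constant term) is $c_\beta$. Hence $\ord_a f=\ord\widehat{(f)}\geq c$ exactly when $c_\beta=0$ for all $|\beta|\leq c-1$, i.e.\ exactly when $(\tfrac{1}{\beta!}\partial^\beta)(f)\in\hmWa$ for all such $\beta$. This gives the local lemma at the level of a single power series; extending it to an ideal $\hIXa$ is immediate because $\hIXa\subseteq\hmWa^c$ iff every element of $\hIXa$ has order $\geq c$, and by $\OWa$-linearity it suffices to check the operators on a generating set.

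Next I would descend back from $\hOWa$ to $\OWa$: by Lemma \ref{sheaf_of_diff_ops}(3), $\Diff^{c-1}_{\OWa/K}\otimes_{\OWa}\hOWa\cong\Diff^{c-1}_{\hOWa/K}$, and by Lemma \ref{completion_preserves_order} (applied with $p=\mWa$) completion preserves the order, so $\ord_a X\geq c$ is equivalent to $\hIXa\subseteq\hmWa^c$, which by the previous paragraph is equivalent to $d(f)\in\hmWa\cap\OWa=\mWa$ for all $f$ in a generating set of $\IXa$ and all $d$ in a generating set of $\Diff^{c-1}_{\OWa/K}$. Finally I would globalize: $\Diff_W^{c-1}(\IX)$ is the coherent ideal sheaf locally generated by $\{d(f)\}$ over generators $d,f$, and $V(\Diff_W^{c-1}(\IX))$ is the set of points where this ideal is contained in the maximal ideal; by the local equivalence just proved, for every (not necessarily closed) point $a$ this holds iff $\ord_a X\geq c$, which is the claim. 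One should remark that the statement for non-closed points also follows, e.g.\ by Corollary \ref{cor_symb_powers_of_reg_ideals} or by the same divided-power computation after localizing.

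\textbf{Main obstacle.} The delicate point is the positive-characteristic behaviour of differential operators: ordinary partial derivatives $\partial_{x_i}^k$ do \emph{not} detect order $\geq c$ once $k\geq p=\chara K$, because $\partial_{x_i}^p$ kills $x_i^p$. The fix --- and the thing that must be invoked carefully --- is that $\Diff^{c-1}_{R/K}$ for $R=K[[\x]]$ over a perfect (here algebraically closed) field is genuinely larger: it contains the divided-power / Hasse--Schmidt operators $\tfrac{1}{\beta!}\partial^\beta$, which act on monomials by the binomial coefficients $\binom{\alpha}{\beta}$ and whose constant terms read off \emph{all} Taylor coefficients, including those of $p$-th power monomials. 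So the essential input is the precise structure theorem for $\Diff^n_{\OWa/K}$ from \cite{Kawanoue_IF_1}, together with Lemma \ref{sheaf_of_diff_ops}(3) to move between $\OWa$ and its completion; once these are in hand the argument is a short computation with binomial coefficients, the only subtlety being to recall that in characteristic $p$ one has $\binom{\alpha}{\beta}\equiv\prod_i\binom{\alpha_i}{\beta_i}\pmod p$ and that for the constant term one only needs $\beta=\alpha$, for which $\binom{\alpha}{\alpha}=1$.
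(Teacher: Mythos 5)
The paper does not supply its own proof of this proposition; it cites \cite{Kawanoue_IF_1} Lemma 1.2.3.1. Your argument is a correct reconstruction of the standard proof that appears there, using the divided-power operators $\partial_{\x^\beta}$ in the completion, and it correctly isolates the positive-characteristic subtlety (ordinary derivations fail once $k\geq p$; the full ring $\Diff^{c-1}$ contains the Hasse--Schmidt operators, and these read off all Taylor coefficients). The passage $\OWa\to\hOWa$ via Lemma \ref{sheaf_of_diff_ops}(3) and Lemma \ref{completion_preserves_order}, followed by the explicit computation $\partial_{\x^\beta}(\x^\alpha)=\binom{\alpha}{\beta}\x^{\alpha-\beta}$ with constant term $\binom{\beta}{\beta}=1$ when $\alpha=\beta$, is exactly what makes the argument work.

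Two small points worth tightening. First, the reduction from all of $\IXa$ to a generating set is \emph{not} a consequence of ``$\OWa$-linearity'' of the operators: differential operators are not $R$-linear in their argument. What you actually need is the generalized Leibniz rule in Proposition \ref{basis_for_diff_ops}(3), $\partial_{\x^\beta}(gf)=\sum_{\gamma+\delta=\beta}\partial_{\x^\gamma}(g)\partial_{\x^\delta}(f)$, which shows that if every $\partial_{\x^\delta}(f_i)$ with $|\delta|\leq c-1$ lies in $\hmWa$ for a generating set $\{f_i\}$, then so does $\partial_{\x^\beta}(\sum g_if_i)$ for any $g_i$. Second, your handling of non-closed points is slightly compressed: Proposition \ref{basis_for_diff_ops} in the paper is stated only for the local ring at a \emph{closed} point (or its completion / an affine coordinate ring), not for $\OO_{W,\xi}$ at a non-closed $\xi$ directly, so one should either verify that $\Diff^{c-1}$ localizes compatibly and rerun the argument at $\xi$, or argue that two closed subsets of a finite-type scheme over an algebraically closed field coincide once their closed points do, using that $V(\Diff^{c-1}(\IX))$ is a priori closed and deducing closedness of $X_{\geq c}$ from that. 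Either way the gap is small and easily filled.
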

\begin{proof}
 \cite{Kawanoue_IF_1} Lemma 1.2.3.1.
\end{proof}

\begin{remark}
 Notice that $\Diff_W^{c-1}(\IX)$ defines a not necessarily reduced closed subscheme structure on the set $X_{\geq c}$. While it is not immediately clear what purpose this additional structure serves, we will see one application in Section \ref{section_woah_maximal_contact} where it will provide us with a sufficient criterion for a local hypersurface to have maximal contact with $X$.
\end{remark}

In the following two propositions, we consider the following setting: Let $W$ be a regular variety over a field $K$. Let $R$ be either the coordinate ring of an affine open subset $U\subseteq W$, the local ring $\OWa$ at a closed point $a\in W$ or its completion $\hOWa$. In particular, the results hold true for the power series ring $R=K[[\x]]$.

\begin{proposition} \label{basis_for_diff_ops}
 Assume that there are elements $x_1,\ldots,x_n\in R$ with $n=\dim(R)$ such that the module of differentials $\Omega_{R/K}$ is freely generated by $dx_1,\ldots,dx_n$. Set $\x=(x_1,\ldots,x_n)$.
 
 Let $m>0$ be a positive integer. Then there are elements $\partial_{\x^\alpha}\in\Diff^m_{R/K}$ for all multi-indices $\alpha\in\N^n$ with $|\alpha|\leq m$ such that the following hold:
 \begin{enumerate}[(1)]
  \item The $R$-module $\Diff^m_{R/K}$ is freely generated by the elements $\partial_{\x^\alpha}\in\Diff^m_{R/K}$ with $\alpha\in\N^n$ and $|\alpha|\leq m$.
  \item $\partial_{\x^\alpha}(\x^\beta)=\binom{\beta}{\alpha}\x^{\beta-\alpha}$ for all $\beta\in\N^n$.
  \item $\partial_{\x^\alpha}(f\cdot g)=\sum_{\substack{\beta,\gamma\in\N^n\\ \beta+\gamma=\alpha}}\partial_{\x^\beta}(f)\partial_{\x^\gamma}(g)$ for elements $f,g\in R$.
 \end{enumerate}
\end{proposition}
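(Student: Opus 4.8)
\emph{Strategy.} The plan is to reduce everything to the structure of the module of principal parts $D^m(R)=R\otimes_K R/I^{m+1}$, which under the hypothesis on $\Omega_{R/K}$ becomes free. Write $\xi_i$ for the class of $1\otimes x_i-x_i\otimes 1$ in $D^m(R)$ and, for $\alpha\in\N^n$, let $\xi^\alpha=\xi_1^{\alpha_1}\cdots\xi_n^{\alpha_n}$, the product being taken in the ring $D^m(R)$; note that $\xi^\alpha\xi^\beta=\xi^{\alpha+\beta}$ if $|\alpha+\beta|\le m$ and $\xi^\alpha\xi^\beta=0$ otherwise, since each $\xi_i\in I$. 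The one genuinely structural step is to show that $\{\xi^\alpha:|\alpha|\le m\}$ is a free basis of $D^m(R)$ as a left $R$-module. Indeed, the class of $\xi_i$ in $I/I^2\cong\Omega_{R/K}$ is $dx_i$, so by hypothesis $I/I^2$ is free on $dx_1,\dots,dx_n$; since $I^{m+1}=0$ in $D^m(R)$ the $I$-adic filtration is finite with associated graded $\bigoplus_{k=0}^m\mathrm{Sym}^k_R(\Omega_{R/K})$, which is free on the $\xi^\alpha$ with $|\alpha|\le m$, and lifting this basis through the finite filtration gives the claim; alternatively this freeness can be quoted from \cite{Kawanoue_IF_1} or from the standard theory of principal parts. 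This is the only place where both $n=\dim R$ and the freeness of $\Omega_{R/K}$ are used; for $R=\OWa$ one invokes that the local ring of a variety over an algebraically closed field at a closed point is essentially smooth over $K$, so $\Omega_{R/K}$ is free of rank $n$.

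\emph{Properties (1) and (2).} Since $\overline{a\otimes b}=a\cdot\overline{1\otimes b}=a\cdot d_m(b)$, the module $D^m(R)$ is generated over $R$ by $d_m(R)$, hence $\varphi\mapsto\varphi\circ d_m$ is an isomorphism $\Hom_R(D^m(R),R)\xrightarrow{\sim}\Diff^m_{R/K}$. Let $\{\varphi_\alpha\}_{|\alpha|\le m}$ be the $R$-basis of $\Hom_R(D^m(R),R)$ dual to $\{\xi^\alpha\}_{|\alpha|\le m}$ and put $\partial_{\x^\alpha}:=\varphi_\alpha\circ d_m$; this yields a free $R$-basis of $\Diff^m_{R/K}$, which is (1). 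For (2), using that $d_m$ is a homomorphism of $K$-algebras and that $\overline{1\otimes x_i}=x_i\cdot\overline{1\otimes 1}+\xi_i$ (with $x_i$ acting via the left $R$-structure, which is central in $D^m(R)$), one computes $d_m(\x^\beta)=\prod_{i=1}^n(x_i+\xi_i)^{\beta_i}=\sum_{\gamma\le\beta}\binom{\beta}{\gamma}\x^{\beta-\gamma}\xi^\gamma$; applying $\varphi_\alpha$, i.e.\ reading off the coefficient of $\xi^\alpha$, gives $\partial_{\x^\alpha}(\x^\beta)=\binom{\beta}{\alpha}\x^{\beta-\alpha}$, with $\binom{\beta}{\alpha}$ interpreted as $0$ unless $\alpha\le\beta$.

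\emph{Property (3).} By construction $d_m(f)=\sum_{|\alpha|\le m}\partial_{\x^\alpha}(f)\,\xi^\alpha$ for every $f\in R$, where the coefficients $\partial_{\x^\alpha}(f)\in R$ act through the central left $R$-structure. Applying once more that $d_m$ is multiplicative, I would expand $d_m(fg)=d_m(f)\,d_m(g)$ with this formula and the product rule $\xi^\beta\xi^\gamma=\xi^{\beta+\gamma}$ (zero if $|\beta+\gamma|>m$), and compare the coefficients of $\xi^\alpha$ on both sides; this gives exactly $\partial_{\x^\alpha}(fg)=\sum_{\beta+\gamma=\alpha}\partial_{\x^\beta}(f)\,\partial_{\x^\gamma}(g)$, which is (3).

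\emph{Main obstacle.} Everything after the free-basis structure of $D^m(R)$ is pure bookkeeping, so that structure result is the crux of the argument. In the special cases $R=K[[\x]]$ and $R$ a polynomial ring it can be bypassed by defining $\partial_{\x^\alpha}$ directly as the Hasse--Schmidt derivative $\x^\beta\mapsto\binom{\beta}{\alpha}\x^{\beta-\alpha}$ extended $K$-linearly (continuously, in the power series case), verifying $\partial_{\x^\alpha}\in\Diff^{|\alpha|}_{R/K}$ via the standard commutator characterization of differential operators, obtaining (2) tautologically and (3) from the componentwise Vandermonde identity $\sum_{\delta+\eta=\alpha}\binom{\beta}{\delta}\binom{\gamma}{\eta}=\binom{\beta+\gamma}{\alpha}$; but the spanning part of the freeness statement in (1) then still reduces to the same structure of $D^m(R)$, so I expect this point — the free-basis theorem for $D^m(R)$ in the required generality — to be the only real difficulty.
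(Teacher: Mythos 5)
Your proof is correct. The paper offers no argument of its own here — its proof is a bare citation to \cite{Kawanoue_IF_1} Lemma 1.2.1.2 — and your derivation via the module of principal parts $D^m(R)$, its free $R$-basis $\{\xi^\alpha\}_{|\alpha|\le m}$ obtained from smoothness of $R$ over $K$ (which you correctly identify as the only place the hypothesis on $\Omega_{R/K}$ and $n=\dim R$ enters), and the dual-basis construction of $\partial_{\x^\alpha}$ together with $d_m(f)=\sum_{|\alpha|\le m}\partial_{\x^\alpha}(f)\,\xi^\alpha$ and multiplicativity of $d_m$, is exactly the standard argument behind that citation.
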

\begin{proof}
 \cite{Kawanoue_IF_1} Lemma 1.2.1.2.
\end{proof}

\begin{remarks}
 \begin{enumerate}[(1)]
  \item Notice that the definition of each map $\partial_{\x^\alpha}:R\to R$ depends on the entire set of parameters $x_1,\ldots,x_n$. For example, let $R=K[[x,y]]$. Now consider another regular system of parameters $x_1,y_1$ for $R$ where $x_1=x$ and $y_1=y+x$. Although $x=x_1$, the maps $\partial_x:R\to R$ and $\partial_{x_1}:R\to R$ are different since
  \[\partial_{x}(y)=0,\]
  but
  \[\partial_{x_1}(y)=\partial_{x_1}(y_1-x_1)=-1.\]
  \item Over a field of characteristic zero, any differential operator $d\in\Diff^n_{R/K}$ can be expressed as an $R$-linear combination of compositions of derivations (\cite{Kawanoue_IF_1} Remark 1.2.1.3. (1)).
  
  Over a field of characteristic $p>0$, the differential operator $\partial_{x_i^p}:R\to R$ can not be expressed in such a way. This is is since every derivation $\partial:R\to R$ fulfills 
  \[\partial(x_i^p)=p\cdot x_i^{p-1}\partial(x_i)=0,\] but $\partial_{x_i^p}(x_i^p)=1$. Thus, the theory of differential operators is much better equipped to deal with algebras over fields of positive characteristic than the theory of derivations. 
 \end{enumerate}
\end{remarks}

The last result in this section is specific to the situation of positive characteristic and provides a criterion for checking whether an ideal is generated by $p$-th powers.

\begin{proposition} \label{p_th_powers_via_diff_ops}
 Assume that $\chara(K)=p>0$. Set $q=p^e$ for a positive integer $e>0$. Let $J\subseteq R$ be an ideal. Then the ideal $J$ is generated by $q$-th powers in $R$ if and only if $\Diff^{q-1}_{R/K}(J)=J$.
\end{proposition}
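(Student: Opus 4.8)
The plan is to prove both implications using the explicit basis $\{\partial_{\x^\alpha}\}$ of $\Diff^{q-1}_{R/K}$ provided by Proposition \ref{basis_for_diff_ops}, together with the multiplicative rule $\partial_{\x^\alpha}(f\cdot g)=\sum_{\beta+\gamma=\alpha}\partial_{\x^\beta}(f)\partial_{\x^\gamma}(g)$ and the value $\partial_{\x^\alpha}(\x^\beta)=\binom{\beta}{\alpha}\x^{\beta-\alpha}$. First I would fix a regular system of parameters $x_1,\ldots,x_n$ for $R$ such that $\Omega_{R/K}$ is freely generated by $dx_1,\ldots,dx_n$; such a system exists in each of the cases listed before the proposition (coordinate ring of an affine open subset, a local ring at a closed point, its completion, or the power series ring), so the hypotheses of Proposition \ref{basis_for_diff_ops} are met. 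Note also that since $\Diff^{0}_{R/K}$ contains the identity we always have $J\subseteq\Diff^{q-1}_{R/K}(J)$, so the content of both implications is really about the reverse inclusion and about whether differentiating $J$ produces nothing new.

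For the direction ``$J$ generated by $q$-th powers $\Rightarrow$ $\Diff^{q-1}_{R/K}(J)=J$'': write a generator of $J$ as $g=\sum_i h_i f_i^q$ with $h_i\in R$, and apply $\partial_{\x^\alpha}$ for $|\alpha|\leq q-1$. Using the product rule repeatedly it suffices to show $\partial_{\x^\alpha}(f^q)\in J$ for each $f$ and each $|\alpha|\leq q-1$; indeed I would show the stronger fact $\partial_{\x^\alpha}(f^q)=0$ whenever $0<|\alpha|<q$, and $\partial_{\x^0}(f^q)=f^q\in J$. The vanishing is most cleanly seen by expanding $f=\sum_\beta c_\beta\x^\beta$ and using that $f^q=\sum_\beta c_\beta^q\x^{q\beta}$ over a field of characteristic $p$ (Frobenius), then invoking $\partial_{\x^\alpha}(\x^{q\beta})=\binom{q\beta}{\alpha}\x^{q\beta-\alpha}$ and observing that $\binom{q\beta_i}{\alpha_i}\equiv 0\bmod p$ whenever $0<\alpha_i<q$, by Lucas' theorem (the base-$p$ digits of $\alpha_i$ below the $e$-th place must all be zero for the binomial not to vanish, forcing $q\mid\alpha_i$); since $|\alpha|<q$ not every $\alpha_i$ can be a multiple of $q$ unless $\alpha=0$. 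Hence $\Diff^{q-1}_{R/K}(J)\subseteq J$.

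For the converse, assume $\Diff^{q-1}_{R/K}(J)=J$ and pick any $f\in J$. I would argue that every monomial $\x^\beta$ appearing in the expansion of $f$ has all exponents $\beta_i$ divisible by $q$, so that $f\in R^{[q]}\cdot R$ where $R^{[q]}$ denotes the subring of $q$-th powers; more precisely, extracting from $f$ the monomials lying in a fixed residue class of $\N^n$ modulo $q\N^n$ is itself obtained by applying a suitable $R$-linear combination of the $\partial_{\x^\alpha}$'s followed by multiplication, so each such ``graded piece'' lies in $J$ and one reduces to showing a single monomial $\x^\beta\in J$ with some $\beta_i\not\equiv 0\bmod q$ leads to a contradiction. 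If some $\beta_i\not\equiv 0\bmod q$, choose $\alpha$ with $0<\alpha_i<q$ minimal so that $\binom{\beta_i}{\alpha_i}\not\equiv 0\bmod p$ (possible: take $\alpha_i$ to be the lowest nonzero base-$p$ digit region of $\beta_i$) and $\alpha_j=0$ otherwise; then $\partial_{\x^\alpha}(f)\in J$ has strictly smaller order in the $x_i$-direction, and iterating produces a unit in $J$, i.e. $J=R$, which is trivially generated by $q$-th powers. Otherwise all exponents are divisible by $q$ and $f\in R^{[q]}$, so $J\subseteq R^{[q]}\cdot J$; running this over a generating set of $J$ shows $J$ is generated by $q$-th powers.

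The main obstacle is the bookkeeping in the converse direction: one must carefully extract the ``$\bmod q\N^n$ graded components'' of an arbitrary power series and argue they remain inside $J$, and then handle the induction that peels off the non-$q$-divisible part of the exponents without losing membership in $J$ — the combinatorics of $\binom{\beta_i}{\alpha_i}\bmod p$ via Lucas' theorem is the crux, and I would isolate it as a small lemma about when $\partial_{\x^\alpha}$ annihilates a monomial. (Alternatively, one can phrase the whole argument intrinsically: $R$ is free as a module over its subring $R^{[q]}$ with basis the monomials $\x^\gamma$, $0\le\gamma_i<q$, and $\Diff^{q-1}_{R/K}(J)=J$ is exactly the condition that $J$ is a ``graded'' submodule for this decomposition concentrated in the $\gamma=0$ component — this is essentially the content of \cite{Kawanoue_IF_1}, to which I would defer for the streamlined version.)
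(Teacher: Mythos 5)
The paper offers no argument here — it simply cites Kawanoue (Prop.\ 1.3.1.2 of \cite{Kawanoue_IF_1}) — so your proposal is a genuine expansion, and the question is whether it closes. Your forward direction does: the vanishing $\partial_{\x^\alpha}(f^q)=0$ for $0<|\alpha|<q$ is exactly Lemma~\ref{diffops_on_p_th-powers} (already extracted in the paper from Lucas), and the product rule then collapses $\partial_{\x^\alpha}\bigl(\sum_i h_if_i^q\bigr)$ to $\sum_i\partial_{\x^\alpha}(h_i)f_i^q\in J$, since every cross term $\partial_{\x^\beta}(h_i)\partial_{\x^\gamma}(f_i^q)$ with $0<\gamma\leq\alpha$ vanishes outright.

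The converse, however, has two real gaps. First, your end-game is wrong: a monomial $\x^\beta\in J$ with some $\beta_i\not\equiv 0\bmod q$ is \emph{not} a contradiction, and iterating $\partial_{\x^\alpha}$ with $\alpha_i$ the lowest base-$p$ block of $\beta_i$ does \emph{not} produce a unit. Take $J=(x^q)$: one checks $\Diff^{q-1}_{R/K}(J)=J$, yet $x^{q+1}\in J$ with exponent $\not\equiv 0\bmod q$, and your $\partial_{x}(x^{q+1})=(q+1)x^q$ lands you on $x^q$ (the low digits are now zero, so the procedure halts) — a $q$-th power, not a unit. The correct target is different: writing $f=\sum_{0\le\gamma<q}f_\gamma\x^\gamma$ with $f_\gamma$ in the subring $R^{[q]}=K[[x_1^q,\ldots,x_n^q]]$, one must show each $f_\gamma\in J$, so that $J$ is generated by the elements $f_\gamma$, which are genuine $q$-th powers because $K$ is perfect. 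Saying $J$ is ``concentrated in the $\gamma=0$ component'' in your final parenthetical repeats the same overstatement; the right formulation is that $J$ is a graded $R^{[q]}$-submodule generated by its degree-$0$ part $J\cap R^{[q]}$. Second, there is a degree gap hiding in ``applying a suitable $R$-linear combination of the $\partial_{\x^\alpha}$'s'': extracting $f_\gamma$ requires the operators $\partial_{\x^\delta}$ for all $\delta$ with $0\le\delta_i<q$, and for $n\ge 2$ the degree $|\delta|$ can be as large as $n(q-1)\ge q$, so these do not lie in $\Diff^{q-1}_{R/K}$. One must iterate the hypothesis: since $\partial_{\x^\delta}=\partial_{x_1^{\delta_1}}\circ\cdots\circ\partial_{x_n^{\delta_n}}$ (immediate from Proposition~\ref{basis_for_diff_ops}(2) applied to monomials) with each factor of degree $\delta_i\le q-1$, the equality $\Diff^{q-1}_{R/K}(J)=J$ propagates to $\partial_{\x^\delta}(J)\subseteq J$ for all such $\delta$. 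With that in hand, the $R^{[q]}$-linearity of these operators (again Lemma~\ref{diffops_on_p_th-powers} via the product rule) gives $\partial_{\x^\gamma}(f)=f_\gamma+\sum_{\delta>0,\,\gamma+\delta<q}\binom{\gamma+\delta}{\gamma}f_{\gamma+\delta}\x^\delta$, and downward induction on $|\gamma|$ yields $f_\gamma\in J$. Your sketch would have to be repaired along these lines to actually prove the converse.
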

\begin{proof}
 \cite{Kawanoue_IF_1} Proposition 1.3.1.2.
\end{proof}

\section{Coordinate expressions of blowup maps} \label{section_coord_expression_of_blowups}

 In this section we will outline the useful concept of coordinate expressions of blowup maps between local rings. More background and proofs for the given statements can be found in Lecture IV of \cite{Ha_Obergurgl_2014}.

 Let $W$ be a regular $n$-dimensional variety over a field $K$ and $Z\subseteq W$ a regular subvariety. Let $a\in Z$ be a closed point and $\x=(x_1,\ldots,x_n)$ a regular system of parameters for $\OWa$ such that $I_{Z,a}=(x_1,\ldots,x_k)$. Let $\pi:W'\to W$ be the blowup of $W$ with center $Z$. Denote by $\Dnew=\pi^{-1}(Z)$ the exceptional divisor of the blowup.
 
 Let $i$ be an index in the range $1\leq i\leq k$. The set $\pi^{-1}(a)\setminus V(x_i)\st$ where $V(x_i)\st$ denotes the strict transform of $V(x_i)$ is called the \emph{$x_i$-chart} over $a$ with respect to the parameters $\x$. The $x_i$-charts for $i=1,\ldots,k$ form an open cover of the pre-image $\pi^{-1}(a)$.

 Further, there is an isomorphism between $\pi^{-1}(a)$ and $\mathbb{P}^{k-1}_K$ with the following properties:
 
 Let $a'\in\pi^{-1}(a)$ be a closed point with projective coordinates $(t_1:\ldots:t_k)$. Then $a'$ lies in the $x_i$-chart for an index $i\in\{1,\ldots,k\}$ if and only if $t_i\neq0$. The affine coordinates of $a'$ in the $x_i$-chart are $(\frac{t_1}{t_i},\ldots,\frac{t_k}{t_i})$. Now fix an index $i$ and assume that $a'$ lies in the $x_i$-chart. Then there is an \emph{induced regular system of parameters} $x_1',\ldots,x_n'$ for $\OWap$ such that the local ring map $\OWa\to\OWap$ which is induced by $\pi$ is given by
 \[\begin{array}{ll}
 x_i\mapsto x_i', &\\ 
 x_j\mapsto x_i'(x_j'+\frac{t_j}{t_i}) & \text{for $1\leq j\leq k$, $j\neq i$,} \\
 x_l\mapsto x_l' & \text{for $k<l\leq n$.}
\end{array}\]
 The exceptional divisor $\Dnew$ is defined in $\OWap$ by the principal ideal $(x_i')$.

 Obviously, calculations are easiest when $t_j=0$ for $j\neq i$ and hence, the images of the parameters $x_1,\ldots,x_n$ are monomials in the induced parameters $x_1',\ldots,x_n'$. In this case, we say that the local blowup map $\OWa\to\OWap$ is \emph{monomial} with respect to these parameters, or equivalently, that $a'$ is the origin of the $x_i$-chart with respect to the parameters $\x$.
 
 The $x_i$-charts and the induced projective coordinates on $\pi^{-1}(a)$ only depend on the parameters $x_i$ up to their equivalence class modulo $\mWa^2$. Thus, all of the results in this section still hold after passing to the completion. In particular, a regular system of parameters $\x=(x_1,\ldots,x_n)$ for $\hOWa$ with $\wh I_{Z,a}=(x_1,\ldots,x_k)$ induces an open cover of $\pi^{-1}(a)$ by $x_i$-charts and the ring maps $\hOWa\to\hOWap$ are of the form stated above.
 
 After fixing a point $a'\in\pi^{-1}(a)$, it is always possible to change the parameters $\x$ in such a way that the local blowup map $\OWa\to\OWap$ (or $\hOWa\to\hOWap$) becomes monomial. Set $\wt\x=(\wt x_1,\ldots,\wt x_n)$ where
 \[\begin{array}{ll}
 \wt x_i=x_i, &\\ 
 \wt x_j=x_j-\frac{t_j}{t_i}x_i & \text{for $1\leq j\leq k$, $j\neq i$,} \\
 \wt x_l=x_l & \text{for $k<l\leq n$.}
\end{array}\]
 Then $I_{Z,a}=(\wt x_1,\ldots,\wt x_k)$ and $a'$ is the origin of the $\wt x_i$-chart with respect to $\wt\x$. Thus, the induced map $\OWa\to\OWap$ is then given by
\[\begin{array}{ll}
 \wt x_j\mapsto x_i'x_j' & \text{for $1\leq j\leq k$, $j\neq i$,} \\
 \wt x_l\mapsto x_l' & \text{for $l=i$ and $k<l\leq n$.}
\end{array}\]

\begin{remark}
 Although we just stated that it is always possible to choose parameters for $\OWa$ in such a way that the local blowup map $\OWa\to\OWap$ is monomial, this technique is not always useful. In some situations we will consider invariants that are defined via a certain choice of regular parameters $\x$ for $\OWa$. A typical example for this is that some of the parameters $\x$ define a given local geometric object. Showing that such invariants behave well under blowup is usually only possible when the local blowup map $\OWa\to\OWap$ is monomial with respect to $\x$. Although the coordinate change $x_i\mapsto \wt x_i$ described above makes the map monomial, we might not be able to define our invariant with respect to the parameters $\wt\x$. Again, a typical example for this is that there is a given geometric object which is defined by some of the parameters $\x$, but not by the parameters $\wt\x$.
 
 This problem will appear again in Section \ref{section_residual_order_under_blowup} when discussing the increase of the residual order under blowup in positive characteristic.
\end{remark}

\section{Directrix, $\tau$-invariant and adjacent hypersurfaces} \label{section_directrix}

 As established in Section \ref{section_order_function}, we are looking for a refinement of the order function that enables us to measure improvement of singularities under order-permissible blowups at equiconstant points. In this section, we will develop techniques to narrow down the location of equiconstant points $a'$ lying over a closed point $a$. 
 The techniques introduced in this section go back to Zariski \cite{Zariski_1944} and Hironaka \cite{Hironaka_Annals}. They appear in various forms in most approaches to the embedded resolution of singularities.
 
 The following example conveys the main idea of the technique that we will employ.

\begin{example}
 Let $W$ be a regular variety, $X\subseteq W$ a hypersurface and $a\in X$ a closed point. Set $c=\ord_aX$. Let $\x=(x_1,\ldots,x_n)$ be a regular system of parameters for the completed local ring $\hOWa$. Let the ideal $\wh I_{X,a}$ be generated by a single element $f$. We can write $f$ as 
 \[f=F+H\]
 where $F=\init(f)$ is the initial form of $f$ and $H\in K[[\x]]$ is a power series of order $\ord H>c$.
 
 Consider now the blowup $\pi:W'\to W$ of $W$ at the point $a$ and let $a'\in\pi^{-1}(a)$ be a closed point that is contained in the $x_1$-chart. Let $a'$ have the affine coordinates $(t_2,\ldots,t_n)$ in the $x_1$-chart. Denote the induced parameters for $\hOWap$ again by $\x=(x_1,\ldots,x_n)$. Then the ideal $\hIXap$ is generated by the element
 \[f'=x_1^{-c}\cdot f(x_1,x_1(x_2+t_2),\ldots,x_1(x_n+t_n))\]
 \[=\underbrace{F(1,x_2+t_2,\ldots,x_n+t_n)}_{=:F'}+x_1\cdot \wt H\]
 for some element $\wt H\in\hOWap$. Since $x_1$ does not appear in $F'$, there can be no cancellation of terms between $F'$ and $x_1\cdot \wt H$. Consequently, $\ord_{a'}X'\leq \ord F'$.
 
 Assume now that $x_1$ appears in a monomial with non-zero coefficient in the initial form of $f$. If either $\init(f)=x_1^c$ or $t_2=\ldots=t_n=0$, it is clear that $\ord F'<c$ must hold and thus, $a'$ is not an equiconstant point. In general though, cancellation of terms can happen in the expansion of $F'$. For example, if $\init(f)=(x_2-t_2x_1)^c$, then $\ord F'=c$. Notice that in this case, $x_1$ can be eliminated from the initial form of $F$ by making a change of coordinates $\wt x_2=x_2-t_2x_1$. 
\end{example}

 In this section we will show that if $x_1,\ldots,x_k$ are parameters for $\OWa$ which are \emph{minimal} with the property that they generated the weak initial ideal of $\IXa$, then no equiconstant points are contained in the $x_i$-chart for $i=1,\ldots,k$ for all order-permissible blowups.
 
 To this end, we will first show that the residues of a set of parameters $x_1,\ldots,x_k$ which minimally generate the weak initial ideal of $\IXa$ span a well-defined linear subspace of the Zariski cotangent space $\mWa/\mWa^2$. This space is called the \emph{directrix} and its dimension is called (Hironaka's) \emph{$\tau$-invariant}. Both appear in many proofs of resolution of singularities in some form. 

 Finally, we will show in Proposition \ref{directrix_under_blowup} that the directrix prescribes a locus that contains all equiconstant points $a'$ lying over a closed point $a$.

\begin{convention}
\begin{itemize}
 \item Let $R$ be a local ring with maximal ideal $m$. The residues of elements $x_i\in m$ in the cotangent space $m/m^2$ will always be denoted by $\ol x_i$.
 \item Let $R$ be a Noetherian local ring with maximal ideal $m$. Let $\wh R$ denote the completion of $R$ with respect to $m$ and $\wh m$ the maximal ideal of $\wh R$. Then the cotangent spaces $m/m^2$ and $\wh m/\wh m^2$ will usually be identified with each other.
\end{itemize}
\end{convention}

\begin{lemma} \label{technical_directrix_lemma}
 Let $(R,m)$ be a regular local ring that contains a field $K$ and $J\subseteq R$ an ideal of order $c=\ord J$. Let $D\subseteq m/m^2$ be a $K$-linear subspace. Let $y_1,\ldots,y_k\in m$ be elements such that $D=(\ol y_1,\ldots,\ol y_k)$.
 
 Assume that the inclusion $J\subseteq (y_1,\ldots,y_k)^c+m^{c+1}$ holds. Then for any other set of elements $z_1,\ldots,z_l\in m$ with $D=(\ol z_1,\ldots,\ol z_l)$ the inclusion $J\subseteq (z_1,\ldots,z_l)^c+m^{c+1}$ also holds.
\end{lemma}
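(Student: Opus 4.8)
The plan is to reduce the statement to a claim about $c$-th powers of individual elements and then to expand directly. First I would note that it suffices to show, for every $f\in J$, that $f$ lies in the ideal $I:=(z_1,\ldots,z_l)^c+m^{c+1}$. By hypothesis $f\in(y_1,\ldots,y_k)^c+m^{c+1}$, and since $(y_1,\ldots,y_k)^c=\sum_{|\alpha|=c}R\cdot y^\alpha$ with $y^\alpha=y_1^{\alpha_1}\cdots y_k^{\alpha_k}$, we may write $f=\sum_{|\alpha|=c}a_\alpha y^\alpha+h$ with $a_\alpha\in R$ and $h\in m^{c+1}\subseteq I$. As $I$ is an ideal, it is then enough to prove that $y^\alpha\in I$ for every multi-index $\alpha$ with $|\alpha|=c$. (Note that the hypothesis $\ord J=c$ is not actually needed for the argument; only the inclusion $J\subseteq(y_1,\ldots,y_k)^c+m^{c+1}$ is used.)

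Next I would use the assumption $D=(\ol y_1,\ldots,\ol y_k)=(\ol z_1,\ldots,\ol z_l)$ inside $m/m^2$. Since each $\ol y_i$ lies in $D$, there are scalars $\mu_{ij}\in K$ with $\ol y_i=\sum_{j=1}^l\mu_{ij}\ol z_j$; using the inclusion $K\subseteq R$ to lift these scalars, we obtain $y_i=\sum_{j=1}^l\mu_{ij}z_j+w_i$ for suitable $w_i\in m^2$. Substituting this into $y^\alpha=\prod_{i=1}^k y_i^{\alpha_i}$ and expanding the product, exactly one term is $\prod_{i=1}^k\bigl(\sum_{j}\mu_{ij}z_j\bigr)^{\alpha_i}$, which is a homogeneous polynomial of degree $|\alpha|=c$ in $z_1,\ldots,z_l$ with coefficients in $K\subseteq R$ and therefore lies in $(z_1,\ldots,z_l)^c$. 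Every other term of the expansion is a product of $c$ factors, at least one of which is some $w_i\in m^2$ while the remaining $c-1$ factors lie in $m$; such a term lies in $m^2\cdot m^{c-1}=m^{c+1}$. Hence $y^\alpha\in(z_1,\ldots,z_l)^c+m^{c+1}=I$, and combining with the first paragraph finishes the proof.

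I expect the only point needing care to be the bookkeeping in the expansion of $\prod_i y_i^{\alpha_i}$: one must verify that each "mixed" monomial — one in which at least one $w_i$ occurs — genuinely has order $\ge c+1$, which holds because replacing a single factor of a degree-$c$ monomial of $m^c$ by an element of $m^2$ raises the order by at least one. Everything else is formal. One could alternatively phrase the whole argument on the associated graded ring $\gr_m(R)\cong K[\ol x_1,\ldots,\ol x_n]$, where the condition says that the degree-$c$ homogeneous parts of the elements of $J$ of order $c$ lie in the ideal generated by $D$, and $D$ generates the same graded ideal whether described via the $\ol y_i$ or the $\ol z_j$; but the direct substitution above seems shorter. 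Finally, the same computation with the roles of the $y_i$ and $z_j$ interchanged gives the reverse implication, so the property "$J\subseteq(y_1,\ldots,y_k)^c+m^{c+1}$" depends only on the subspace $D$ — which is exactly what is needed to make the directrix well defined.
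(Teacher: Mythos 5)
Your proof is correct and takes essentially the same route as the paper's: write each $y_i$ as a $K$-linear combination of the $z_j$ plus an error term in $m^2$, substitute, expand, and observe that the pure part lies in $(z_1,\ldots,z_l)^c$ while every mixed term lands in $m^{c+1}$. Your preliminary reduction --- noting that since $I=(z_1,\ldots,z_l)^c+m^{c+1}$ is an ideal it suffices to show $y^\alpha\in I$ for $|\alpha|=c$ --- is a clean streamlining that also sidesteps the paper's (unneeded) normalization of the coefficients $c_\alpha$ to lie in $K$, and your side observation that the hypothesis $\ord J=c$ plays no role is also accurate.
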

\begin{proof}
 For all indices $i=1,\ldots,k$ we can write $y_i=\sum_{j=1}^l\lambda_{i,j}z_j+Q_i$ for constants $\lambda_{i,j}\in K$ and elements $Q_i\in m^2$. By assumption, $J$ is generated by elements of the form 
 \[f=\sum_{\substack{\alpha\in\N^k\\|\alpha|=c}}c_\alpha \prod_{i=1}^k y_i^{\alpha_i}+H_{c+1}\]
 with $c_\alpha\in K$ and $H_{c+1}\in m^{c+1}$. Consequently,
 \[f=\sum_{\substack{\alpha\in\N^k\\|\alpha|=c}}c_\alpha \prod_{i=1}^k \B(\sum_{j=1}^k\lambda_{i,j}z_j+Q_i\B)^{\alpha_i}+H_{c+1}\]
 \[=\sum_{\substack{\beta\in\N^l\\|\beta|=c}}\wt c_\beta \prod_{j=1}^l z_j^{\beta_j}+\wt H_{c+1}\]
 for certain constants $\wt c_\beta\in K$ and an element $\wt H_{c+1}\in m^{c+1}$. This proves that $J\subseteq (z_1,\ldots,z_l)^c+m^{c+1}$.
\end{proof}

\begin{definition}
 Let $R$ be a regular local ring that contains a field $K$ and $J\subseteq R$ an ideal of order $c=\ord J$. Denote the maximal ideal of $R$ by $m$. A $K$-linear subspace $D$ of $m/m^2$ is said to \emph{generate $\minit(J)$} if there are elements $y_1,\ldots,y_k\in m$ such that $D=(\ol y_1,\ldots,\ol y_k)$ and $J\subseteq (y_1,\ldots,y_k)^c+m^{c+1}$. 
\end{definition}

\begin{lemma} \label{directrix_exists}
 Let $(R,m)$ be a regular local ring containing a field $K$ and $J\subseteq R$ an ideal of order $c=\ord J$. Let $D_1,D_2\subseteq m/m^2$ be two $K$-linear subspaces which both generate $\minit(J)$. Then $D_1\cap D_2$ also generates $\minit(J)$.
\end{lemma}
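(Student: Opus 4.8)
The plan is to realize $D_1 \cap D_2$ as the span of a carefully chosen set of linear forms and then apply Lemma \ref{technical_directrix_lemma} with respect to those forms. Since both $D_1$ and $D_2$ generate $\minit(J)$, we may fix elements $y_1, \ldots, y_k \in m$ whose residues form a basis of $D_1$ together with elements $y_{k+1}, \ldots, y_s$ whose residues complete this to a basis of $D_1 + D_2$, chosen so that a suitable subset of the $\ol{y}_i$ also spans $D_2$ — concretely, by linear algebra we can arrange a regular system of parameters $x_1, \ldots, x_n$ for $R$ extending these so that $D_1 \cap D_2 = (\ol{x}_1, \ldots, \ol{x}_r)$, $D_1 = (\ol{x}_1, \ldots, \ol{x}_r, \ol{x}_{r+1}, \ldots, \ol{x}_p)$, and $D_2 = (\ol{x}_1, \ldots, \ol{x}_r, \ol{x}_{p+1}, \ldots, \ol{x}_q)$ for appropriate indices $r \leq p \leq q$. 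The key point is that the variables indexed $r+1, \ldots, p$ (witnessing $D_1$) and those indexed $p+1, \ldots, q$ (witnessing $D_2$) involve disjoint sets of the chosen coordinates.

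First I would invoke Lemma \ref{technical_directrix_lemma}: since $D_1$ generates $\minit(J)$, we get $J \subseteq (x_1, \ldots, x_p)^c + m^{c+1}$, and since $D_2$ does too, $J \subseteq (x_1, \ldots, x_r, x_{p+1}, \ldots, x_q)^c + m^{c+1}$. The heart of the argument is to combine these two containments to deduce $J \subseteq (x_1, \ldots, x_r)^c + m^{c+1}$, which (again by Lemma \ref{technical_directrix_lemma}, applied in reverse to any spanning set of $D_1 \cap D_2$) shows that $D_1 \cap D_2$ generates $\minit(J)$. To carry this out, take $f \in J$ and write it using the first containment as $f = \sum_{|\alpha| = c} c_\alpha x^\alpha + H$ with $\alpha$ ranging over multi-indices supported on $\{1, \ldots, p\}$ and $H \in m^{c+1}$. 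I then want to argue that any monomial $x^\alpha$ actually occurring (i.e., with $c_\alpha \neq 0$) whose support meets $\{r+1, \ldots, p\}$ must in fact be absorbed, forcing $\alpha$ to be supported on $\{1, \ldots, r\}$.

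The main obstacle — and the step requiring the most care — is precisely this absorption argument, because the two expressions for $f$ are only congruences modulo $m^{c+1}$, not genuine equalities, so one cannot naively "compare coefficients." The clean way around this is to pass to the associated graded ring $\gr_m(R) \cong K[\ol{x}_1, \ldots, \ol{x}_n]$, where the condition "$J \subseteq (\text{span})^c + m^{c+1}$" translates to: every degree-$c$ piece of the initial ideal of $J$ lies in the ideal generated by the corresponding linear forms. Then the statement becomes the purely polynomial fact that if a homogeneous degree-$c$ polynomial $g$ in $K[\ol{x}_1, \ldots, \ol{x}_n]$ lies in both $(\ol{x}_1, \ldots, \ol{x}_p)$ and $(\ol{x}_1, \ldots, \ol{x}_r, \ol{x}_{p+1}, \ldots, \ol{x}_q)$, then — because these two linear ideals have disjoint "extra" variable sets — it lies in their intersection $(\ol{x}_1, \ldots, \ol{x}_r)$. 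This last fact follows by monomial bookkeeping: expanding $g$ in monomials, membership in the first ideal kills all monomials supported entirely on $\{x_{q+1}, \ldots\} \cup \{x_{p+1},\dots,x_q\}$ away from $\{x_1,\dots,x_p\}$, and membership in the second kills the rest outside $\{x_1,\dots,x_r\}$; one checks no monomial can survive both unless its support meets $\{x_1, \ldots, x_r\}$. Lifting back through $\gr_m$ and applying Lemma \ref{technical_directrix_lemma} one more time to re-express the containment in terms of an arbitrary generating set of $D_1 \cap D_2$ completes the proof.
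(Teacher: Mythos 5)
Your proof takes essentially the same route as the paper's: choose a regular system of parameters so that $D_1$, $D_2$, and $D_1\cap D_2$ are spanned by coordinate prefixes with disjoint ``extra'' variables beyond the common prefix, invoke Lemma~\ref{technical_directrix_lemma} to obtain the two containments of $J$, and observe that the degree-$c$ part of any $f\in J$ is thereby forced to involve only the shared variables $x_1,\ldots,x_r$. Your explicit passage to the associated graded ring $\gr_m(R)$ makes rigorous what the paper phrases more loosely as a ``power series expansion,'' but the underlying monomial argument is identical.
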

\begin{proof}
 We may assume that the regular system of parameters $\x=(x_1,\ldots,x_n)$ for $R$ is chosen in such a way that $D_1\cap D_2=(\ol x_1,\ldots,\ol x_r)$, $D_1=(\ol x_1,\ldots,\ol x_k)$ and $D_2=(\ol x_1,\ldots,\ol x_r,\ol x_{k+1},\ldots,\ol x_m)$ for certain integers $r\leq k\leq m\leq n$. Let $f\in J$ be an element with power series expansion $f=\sum_{\alpha\in\N^n}c_\alpha \x^\alpha$. Let $\alpha=(\alpha_1,\ldots,\alpha_n)\in\N^n$ be a multi-index with $|\alpha|=c$. Since $D_1$ generates $\minit(J)$, we know that $c_\alpha=0$ if there is an index $i>k$ such that $\alpha_i\neq0$. On the other hand, since $D_2$ also generates $\minit(J)$, we know that $c_\alpha=0$ if $\alpha_i\neq0$ for an index $i>r$ that does not fulfill $k<i\leq m$. In total, we know that $c_\alpha\neq0$ implies that $\alpha_i=0$ for all indices $i>r$. Consequently, $J\subseteq (x_1,\ldots,x_r)^c+m^{c+1}$.
\end{proof}

\begin{definition}
 Let $(R,m)$ be a regular local ring that contains a field $K$ and $J\subseteq R$ an ideal of order $c=\ord J$. By Lemma \ref{directrix_exists} there is a minimal $K$-linear subspace $D$ of $m/m^2$ with the property that $D$ generates $\minit(J)$. We call $D$ the \emph{directrix of $J$} and denote it by $\Dir(J)=D$. Further, we define $\tau(J)=\dim_K(\Dir(J))$.
 
 Let $W$ be a regular variety, $X\subseteq W$ a closed subset and $a\in W$ a closed point. Then we define $\Dir_a(X)=\Dir(\IXa)$ and $\tau_a(X)=\tau(\IXa)$.
\end{definition}

\begin{lemma} \label{directrix_and_top_locus}
 Let $W$ be a regular $n$-dimensional variety, $X\subseteq W$ a closed subset and $Z\subseteq W$ a permissible center of blowup with respect to the order function. Let $a\in Z$ be a closed point. Let $x_1,\ldots,x_n$ be a regular system of parameters for $\OWa$ (or $\hOWa$) and $m\leq n$ an index such that $I_{Z,a}=(x_1,\ldots,x_m)$ (resp. $\wh I_{Z,a}=(x_1,\ldots,x_m)$) holds.
 
 Then $\Dir_a(X)\subseteq(\ol x_1,\ldots,\ol x_m)$. In particular, $\tau_a(X)\leq\codim_a(Z)$. 
\end{lemma}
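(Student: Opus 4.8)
Write $c=\ord_a X=\ord\IXa$. Since $a\in Z$ and $Z$ is permissible, $a\in\topp(X)$, so $c$ is in fact the maximum of the order function on $X$ and $\topp(X)=X_{\geq c}$. The plan is to show directly that the subspace $D_0:=(\ol x_1,\ldots,\ol x_m)$ of $\mWa/\mWa^2$ generates $\minit(\IXa)$ in the sense of the definition preceding Lemma \ref{directrix_exists}; once this is established, the minimality property characterising the directrix (Lemma \ref{directrix_exists} and the ensuing definition) forces $\Dir_a(X)=\Dir(\IXa)\subseteq D_0$, and since $\dim_K D_0=m=\codim_a(Z)$ the inequality $\tau_a(X)=\dim_K\Dir_a(X)\leq\codim_a(Z)$ follows at once.

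So the real content is the containment $\IXa\subseteq (x_1,\ldots,x_m)^c+\mWa^{c+1}$. First I would translate the hypothesis $Z\subseteq\topp(X)$ into a statement about the order of $X$ along $Z$. As $\topp(X)=X_{\geq c}$ is closed by Proposition \ref{order_is_usc} and contains $Z$, it contains the generic point $\eta$ of the irreducible set $Z$; hence $\ord_\eta X\geq c$. Unwinding the definitions of the order at a point and of $\OO_{W,\eta}=(\OWa)_{I_{Z,a}}$, this says precisely that $\ord_{I_{Z,a}}\IXa\geq c$.

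Next I would upgrade this symbolic-power statement to an honest power of $I_{Z,a}$. A permissible center is regular, so $\OWa/I_{Z,a}=\OO_{Z,a}$ is a regular local ring, and Corollary \ref{cor_symb_powers_of_reg_ideals} applies with $p=I_{Z,a}$, giving $\ord_{I_{Z,a}}\IXa=\sup\{n\in\N:\IXa\subseteq I_{Z,a}^n\}$. Combined with the previous step this yields $\IXa\subseteq I_{Z,a}^c=(x_1,\ldots,x_m)^c\subseteq(x_1,\ldots,x_m)^c+\mWa^{c+1}$, which is exactly the assertion that $D_0$ generates $\minit(\IXa)$. This completes the argument in the local case.

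For the completion case one runs the same argument with $\OWa$, $\IXa$, $I_{Z,a}$ replaced by $\hOWa$, $\hIXa$, $\wh I_{Z,a}$, using Lemma \ref{completion_preserves_order} to see $\ord_{\wh I_{Z,a}}\hIXa=\ord_{I_{Z,a}}\IXa\geq c$ before invoking Corollary \ref{cor_symb_powers_of_reg_ideals} for $\hOWa$; under the standing identification $\mWa/\mWa^2=\hmWa/\hmWa^2$ this gives the identical conclusion. (Equivalently, one checks that a subspace $D$ generates $\minit(\IXa)$ iff it generates $\minit(\hIXa)$, by lifting and extending generators along the faithfully flat map $\OWa\to\hOWa$.) The only step requiring a moment's care is the passage from the hypothesis on $Z$ — phrased via closed points — to the order at the generic point; but this is immediate from closedness of $\topp(X)$, so I expect no genuine obstacle here.
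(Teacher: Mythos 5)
Your proposal is correct and follows essentially the same route as the paper's own proof: translate $Z\subseteq\topp(X)$ into $\ord_{I_{Z,a}}\IXa\geq c$, invoke Corollary \ref{cor_symb_powers_of_reg_ideals} to get $\IXa\subseteq(x_1,\ldots,x_m)^c$, conclude that $(\ol x_1,\ldots,\ol x_m)$ generates $\minit(\IXa)$ and hence contains the directrix, and handle the completed case via Lemma \ref{completion_preserves_order}. You merely spell out (correctly) a step the paper states in one line — passing through the generic point of $Z$ to identify $\ord_{I_{Z,a}}\IXa$ with the order of $X$ along $Z$.
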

\begin{proof}
  Set $c=\ord_aX$. Since $Z$ is contained in $\topp(X)$, we know that $\ord_{I_{Z,a}}I_{X,a}=c$. Further, since $Z$ is regular, we know by Corollary \ref{cor_symb_powers_of_reg_ideals} that $I_{X,a}\subseteq (x_1,\ldots,x_m)^c$. Thus, $\Dir_a(X)\subseteq (\ol x_1,\ldots,\ol x_m)$ holds. The result for the completion follows from Lemma \ref{completion_preserves_order}.
\end{proof}

\begin{proposition} \label{directrix_under_blowup}
 Let $W$ be a regular $n$-dimensional variety, $X\subseteq W$ a closed subset and $Z\subseteq W$ a permissible center with respect to the order function. Consider the blowup $\pi:W'\to W$ of $W$ with center $Z$ and the strict transform $X'$ of $X$. Let $a\in Z$ and $a'\in\pi^{-1}(a)$ be closed points.
 
 Further, let $x_1,\ldots,x_n$ be a regular system of parameters for $\OWa$ (or $\hOWa$) and $m,k$ integers with $k\leq m\leq n$ such that $I_{Z,a}=(x_1,\ldots,x_m)$ (resp. $\wh I_{Z,a}=(x_1,\ldots,x_m)$) and $\ol x_1,\ldots,\ol x_k$ form a basis of $\Dir_a(X)$.
 
 If $a'$ is an equiconstant point, then $a'$ is not contained in the $x_i$-chart for $i=1,\ldots,k$. In other words, the strict transform of $V(x_1,\ldots,x_k)$ contains all equiconstant points $a'$ lying over $a$.
\end{proposition}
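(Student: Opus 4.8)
The plan is to reduce to the case where $x_1,\ldots,x_n$ is a regular system of parameters for the completion $\hOWa\cong K[[x_1,\ldots,x_n]]$ (the case of $\OWa$ follows, since $\ord$ and hence the directrix are unchanged under completion, cf.\ Lemma \ref{completion_preserves_order}), and then to argue by contradiction. Put $J=\hIXa$ and $c=\ord_aX=\ord J$, and suppose $a'$ is equiconstant but lies in the $x_i$-chart for some $i\le k$; after relabeling $x_1,\ldots,x_k$ we may assume $i=1$. Following the coordinate description of blowups from Section \ref{section_coord_expression_of_blowups}, let $x_1',\ldots,x_n'$ be the induced parameters of $\hOWap$, so that the local blowup map $\varphi$ satisfies $\varphi(x_1)=x_1'$, $\varphi(x_j)=x_1'(x_j'+t_j)$ for $2\le j\le m$ and $\varphi(x_l)=x_l'$ for $l>m$, where $(t_2,\ldots,t_m)$ are the affine coordinates of $a'$ in the $x_1$-chart and $V(x_1')$ is the exceptional divisor. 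Since $Z$ is permissible with respect to the order function, Corollary \ref{cor_symb_powers_of_reg_ideals} gives $J\subseteq(x_1,\ldots,x_m)^c$; hence the weak transform $J\wk=(x_1')^{-c}\varphi(J)\hOWap$ is a well-defined ideal of $\hOWap$ and $\ord_{a'}X'\le\ord J\wk$ by Proposition \ref{ord_under_blowup}.

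The technical core is the estimate that for any $f\in J$ with $\ord f=c$ --- writing $f=F+H$ with $F=\init(f)$ a form of degree $c$ and $\ord H\ge c+1$ --- the element $(x_1')^{-c}\varphi(f)$ of $J\wk$ has order at $a'$ at most $\ord_{a'}F'$, where $F':=F(1,x_2'+t_2,\ldots,x_k'+t_k)$. To prove it I would first use that $\Dir(J)=(\ol x_1,\ldots,\ol x_k)$ generates $\minit(J)$, so that $J\subseteq(x_1,\ldots,x_k)^c+\mWa^{c+1}$ and $F$ is a degree-$c$ form in $x_1,\ldots,x_k$ alone; a direct substitution then yields $\varphi(F)=(x_1')^cF'$ with $F'\in K[x_2',\ldots,x_k']$ involving none of $x_1',x_{m+1}',\ldots,x_n'$. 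For the tail $H\in(x_1,\ldots,x_m)^c\cap\mWa^{c+1}$, a monomial-by-monomial inspection shows that every monomial appearing in $(x_1')^{-c}\varphi(H)$ is divisible by $x_1'$ or by some $x_l'$ with $l>m$. Hence the $K$-algebra homomorphism $\psi$ sending every variable except $x_2',\ldots,x_k'$ to zero annihilates $(x_1')^{-c}\varphi(H)$ and fixes $F'$, so $\psi((x_1')^{-c}\varphi(f))=F'$; since passing to such a quotient cannot decrease the order, $\ord_{a'}((x_1')^{-c}\varphi(f))\le\ord_{a'}F'$. I expect this bookkeeping --- verifying that no cancellation in the weak transform can raise its order back to $c$ --- to be the main obstacle.

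It then remains to close the loop. Equiconstancy forces $\ord_{a'}X'=c$, so combining the two estimates gives $\ord_{a'}F'\ge c$ for \emph{every} $f\in J$ with $\ord f=c$. For a fixed such $f$, the polynomial $F(1,x_2'+t_2,\ldots,x_k'+t_k)$ vanishes to order $\ge c$ at the origin while $\deg F=c$; a Taylor expansion of $F(1,y_2,\ldots,y_k)$ at $(t_2,\ldots,t_k)$ together with re-homogenization then forces $F=Q(\ell_2,\ldots,\ell_k)$ for some homogeneous polynomial $Q$ of degree $c$, with $\ell_j:=x_j-t_jx_1$. Thus every initial form occurring in $\minit(J)$ lies in $(\ell_2,\ldots,\ell_k)^c$, whence $J\subseteq(\ell_2,\ldots,\ell_k)^c+\mWa^{c+1}$; that is, the subspace $(\ol\ell_2,\ldots,\ol\ell_k)$ of $\mWa/\mWa^2$, which has dimension $k-1$, also generates $\minit(J)$. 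This contradicts the minimality of $\Dir(J)$ (Lemma \ref{directrix_exists}), which has dimension $k$. Therefore no equiconstant point over $a$ lies in the $x_i$-chart for $i=1,\ldots,k$; equivalently, the strict transform of $V(x_1,\ldots,x_k)$ contains every equiconstant point over $a$.
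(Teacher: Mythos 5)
Your proof is correct and follows essentially the same route as the paper's: both argue by contradiction, use equiconstancy to force $\init(f)\in(x_2-t_2x_1,\ldots,x_k-t_kx_1)^c$ for every $f\in\hIXa$ of order $c$, and conclude that a $(k-1)$-dimensional subspace of $\mWa/\mWa^2$ generates $\minit(\hIXa)$, contradicting the minimality of $\Dir_a(X)$. The only difference is cosmetic: the paper first replaces $x_j$ by $x_j-t_jx_1$ so that $a'$ becomes the origin of the $x_1$-chart and expands $f'$ as a power series in $x_1$, while you keep $a'$ general, isolate the contribution of $\init(f)$ via a quotient map, and perform the translation/rehomogenization at the end.
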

\begin{proof}
 Set $c=\ord_aX$. For the sake of simplicity, we will denote the induced parameters for $\OWap$ again by $x_1,\ldots,x_n$.
 
 We can assume without loss of generality that $a'$ lies is the origin of the $x_1$-chart and the local blowup map $\vp:\OWa\to\OWap$ is given by $\vp(x_1)=x_1$, $\vp(x_i)=x_1 x_i$ for $1<i\leq m$ and $\vp(x_j)=x_j$ for $j>m$. Let $f\in\IXa$ be an element such that $\ord f=c$. Let $f$ have the power series expansion $f=\sum_{\alpha\in\N^n}c_\alpha x^\alpha$. Then $f'=x_1^{-c}\vp(f)\in I_{X',a'}$. 
 
 For an index $\alpha\in\N^n$ set $\alpha_+=(\alpha_1,\ldots,\alpha_m)\in\N^m$ and $\alpha_-=(\alpha_2,\ldots,\alpha_n)\in\N^{n-1}$. We can write $f$ as $f=\sum_{i\geq c} f_i$ where 
 \[f_i=\sum_{\substack{\alpha\in\N^n\\|\alpha_+|=i}}c_\alpha \x^\alpha.\]
 Then $f'$ can be written as $f'=\sum_{i\geq c}x_1^{i-c}f_i'$ where $f_i'\in K[[x_2,\ldots,x_n]]$ has the form
 \[f_i'=\sum_{\substack{\alpha\in\N^n\\|\alpha_+|=i}}c_\alpha \x_-^{\alpha_-}.\]
 Here, $\x_-=(x_2,\ldots,x_n)$. Since $\ord I_{X',a'}=c$, we know that $\ord f'\geq c$. In particular, $\ord f_c'\geq c$.
 
 Since $\Dir(I_{X,a})=(\ol x_1,\ldots,\ol x_k)$, we know that 
 \[f_c=\init(f)\in K[x_1,\ldots,x_k].\]
 Assume that $x_1$ appears in a monomial with non-zero coefficient in the expansion of $\init(f)$. This implies that $\ord f_c'<c$ which is a contradiction. Hence, $\init(f)\in(x_2,\ldots,x_k)^c$. Since $f\in\IXa$ was chosen arbitrarily with the property $\ord f=c$, this implies that $I_{X,a}\subseteq (x_2,\ldots,x_k)^c+m_{W,a}^{c+1}$. Consequently, $\Dir_a(X)\subseteq (\ol x_2,\ldots,\ol x_k)$ which contradicts our assumption.
\end{proof}

\begin{remarks}
\begin{enumerate}[(1)]
 \item It is clear from Proposition \ref{directrix_under_blowup} that the bigger the number $\tau_a(X)$ is, the narrower is the locus inside of which equiconstant points can appear over $a$. In particular, if $\tau_a(X)=\dim(W)$, there can be no equiconstant points lying over $a$. In other words, this guarantees that the order function decreases under the next order-permissible blowup.
 \item The $\tau$-invariants exhibits another good property which will not be proven here: In the setting of Proposition \ref{directrix_under_blowup}, $\tau_{a'}(X')\geq \tau_a(X)$ holds for equiconstant points $a'\in\pi^{-1}(a)$. Furthermore, it is possible to choose the parameters $x_1,\ldots,x_n$ for $\OWa$ in such a way that $(\ol x_1',\ldots,\ol x_k')\subseteq\Dir_{a'}(X')$ where $\x'=(x_1',\ldots,x_n')$ denotes the induced parameters for $\OWap$.
 
 Thus, it would be feasible to consider the invariant $\mu:X\to\N^2$, $\mu_a(X)=(\ord_X(a),\dim(W)-\tau_a(X))$ as a refinement of the order function. By what we have stated so far, this invariant does not increase under blowup of regular centers contained in its top locus. Still, it will remain constant in many situations. Thus, we would have to further refine this invariant to find a resolution invariant that always decreases under blowup. 
 \item In this thesis, the $\tau$-invariant will not be used as a component of the resolution invariant, but rather as a tool that is useful when analyzing the behavior of the order function under blowup. 
\end{enumerate}

\end{remarks}


\begin{definition}
 Let $W$ be a regular variety, $X\subseteq W$ a closed subset and $a\in X$ a closed point. Let $H$ be either a regular local or a regular formal hypersurface at $a$. Then $H$ is said to be \emph{adjacent to $X$ at $a$} if $H=V(z)$ and $\ol z\in\Dir_a(X)$.
\end{definition}

\begin{corollary}
 Let $W$ be a regular variety, $X\subseteq W$ a closed subset and $Z\subseteq W$ a center that is permissible with respect to the order function. Consider the blowup $\pi:W'\to W$ of $W$ with center $Z$ and the strict transform $X'$ of $X$. Let $a\in Z$ be a closed point.
 
 Let $H$ be a regular hypersurface that is adjacent to $X$ at $a$ and locally contains $Z$. Then the strict transform $H'$ of $H$ is regular and contains all equiconstant points $a'\in\pi^{-1}(a)$.
\end{corollary}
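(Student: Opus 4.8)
The plan is to reduce the corollary to a single invocation of Proposition \ref{directrix_under_blowup}; the only preparatory work is to produce a regular system of parameters for $\OWa$ (or $\hOWa$, in the formal case, where one argues verbatim) that is adapted at the same time to $H$, to the center $Z$ and to the directrix $\Dir_a(X)$. Write $H=V(z)$. Since $H$ locally contains $Z$ we have $z\in I_{Z,a}$, and since $H$ is a regular hypersurface $\ol z\neq 0$ in $\mWa/\mWa^2$; by hypothesis $\ol z\in\Dir_a(X)$. Let $V\subseteq\mWa/\mWa^2$ denote the image of $I_{Z,a}$; as $Z$ is regular, $\dim_K V=\codim_a(Z)=:m$, and Lemma \ref{directrix_and_top_locus} (which rests on $Z\subseteq\topp(X)$ via Corollary \ref{cor_symb_powers_of_reg_ideals}) gives $\Dir_a(X)\subseteq V$, so in particular $\ol z\in V$.

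First I would extend $\ol z=\ol x_1$ to a basis $\ol x_1,\ldots,\ol x_k$ of $\Dir_a(X)$, then to a basis $\ol x_1,\ldots,\ol x_m$ of $V$, choosing all representatives $x_1=z,x_2,\ldots,x_m$ inside $I_{Z,a}$ — this is possible precisely because $\Dir_a(X)\subseteq V$. Since $I_{Z,a}$ is generated by a regular sequence of length $m$, Nakayama's lemma yields $I_{Z,a}=(x_1,\ldots,x_m)$; completing to a full regular system of parameters $x_1,\ldots,x_n$, I have arranged that $z=x_1$, that $I_{Z,a}=(x_1,\ldots,x_m)$, and that $\ol x_1,\ldots,\ol x_k$ is a basis of $\Dir_a(X)$. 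This is exactly the hypothesis of Proposition \ref{directrix_under_blowup}, which therefore applies and shows that every equiconstant point $a'\in\pi^{-1}(a)$ lies outside the $x_i$-chart for $i=1,\ldots,k$, in particular outside the $x_1$-chart $\pi^{-1}(a)\setminus V(x_1)\st$. Hence $a'\in V(x_1)\st=V(z)\st=H'$. (Equivalently, $V(x_1,\ldots,x_k)\st\subseteq V(x_1)\st=H'$, and the former set contains all equiconstant points by that proposition.)

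It remains to verify that $H'$ is regular. Since $Z$ and $H$ are both regular with $Z\subseteq H$, the restriction $\pi|_{H'}\colon H'\to H$ is the blowup of $H$ along the regular center $Z$, so $H'$ is regular; alternatively I would check this in charts using Section \ref{section_coord_expression_of_blowups}. In the $x_i$-chart over $a$ with $1\leq i\leq m$ the total transform of $H=V(x_1)$ is generated by $x_i'(x_1'+\tfrac{t_1}{t_i})$ and $\ord_{I_{Z,a}}(x_1)=1$, so the strict transform $H'$ is locally $(x_1'+\tfrac{t_1}{t_i})$: this is the unit ideal when $t_1\neq 0$ (then $H'$ misses $a'$) and $(x_1')$ otherwise, while for $i=1$ it is the unit ideal; in each case $H'$ is regular at $a'$. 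Thus $H'$ is regular at every point of $\pi^{-1}(a)$, and away from $\pi^{-1}(a)$ over a neighbourhood of $a$ it is isomorphic to $H$.

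The only genuine obstacle is the bookkeeping of the first two paragraphs: assembling one regular system of parameters that meets all three compatibility conditions simultaneously. That this is possible is exactly the content of the inclusion $\Dir_a(X)\subseteq V$ supplied by Lemma \ref{directrix_and_top_locus}; without it one could not guarantee that $z$ is usable as a parameter defining $Z$ while the remaining directrix directions still lie among the generators of $I_{Z,a}$. Granting this, the statement is an immediate consequence of Proposition \ref{directrix_under_blowup} together with the standard behaviour of strict transforms of regular hypersurfaces that contain the center.
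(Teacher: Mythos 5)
Your proof is correct and is exactly the argument the paper leaves implicit: the corollary is stated without proof as an immediate consequence of Proposition \ref{directrix_under_blowup}, and your parameter-fitting argument (pick $x_1=z$, extend to a basis of $\Dir_a(X)$ and then to a basis of the image $V$ of $I_{Z,a}$, using $\Dir_a(X)\subseteq V$ from Lemma \ref{directrix_and_top_locus}) is the correct way to place oneself in the hypotheses of that proposition. The regularity of $H'$ via $\pi|_{H'}$ being the blowup of the regular $H$ along the regular $Z$ is also the standard argument.
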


\section{Hypersurfaces of maximal contact} \label{section_woah_maximal_contact}

 Hypersurfaces of maximal contact are one of the most important techniques that is used in the proof of embedded resolution of singularities over fields of characteristic zero. They provide a method of locally reducing the resolution problem in an $n$-dimensional ambient space to a resolution problem in an $(n-1)$-dimensional ambient space, thus allowing to use induction on dimension of the ambient space. In this section we will outline what hypersurfaces of maximal contact are and discuss their relationship with differential operators. More information on the much-studied subject of maximal contact can be found in \cite{Giraud_Maximal_Contact}, \cite{AHV_Maximal_Contact}, \cite{Wlodarczyk} and more generally, all proofs of embedded resolution of singularities in characteristic zero.
 
 This section is different from the other sections in this chapter since most of it does not feature techniques that will be used later on in our proof for the embedded resolution of surface singularities. The reason for this is that hypersurfaces of maximal contact need not exist over fields of positive characteristic. Still, we feel that hypersurfaces of maximal contact are such an important concept for the embedded resolution of singularities that it is necessary to include them in our presentation.
 
 Let $W$ be a regular variety over a field $K$ of characteristic zero, $X\subseteq W$ a closed subset and $a\in X$ a closed point of order $c=\ord_aX$. In this section, $X_{\geq c}$ will always denote the closed set 
 \[X_{\geq c}=\{b\in X:\ord_bX\geq c\}.\]
 
 

\begin{definition}
  A regular hypersurface $H$ is said to have \emph{maximal contact} with $X$ at $a$ if the following two properties hold:
 \begin{itemize}
  \item The hypersurface $H$ contains $X_{\geq c}$ locally at $a$.
  \item Consider any finite sequence of order-permissible blowups 
  \[W^{(m)}\to\cdots W^{(1)}\to W.\]
  Let $a^{(m)},\ldots,a^{(1)},a$ be a sequence of equiconstant points $a^{(i)}\in W^{(i)}$ lying over each other. Then $a^{(i)}\in H^{(i)}$ where $H^{(i)}$ denotes the $i$-th strict transform of $H$.
 \end{itemize}
\end{definition}

 The next proposition gives sufficient criteria for a regular local hypersurface to have maximal contact with $X$ at $a$ and clarifies how the property of maximal contact is related to the ideal $\Diff^{c-1}_{\OWa/K}(\IXa)$ and its radical.

\begin{proposition} \label{max_contact_prop}
 Let $H=V(z)$ with $z\in\OWa$ be a regular local hypersurface. Consider the following statements:
 \begin{enumerate}[(1)]
  \item There is an element $f\in\IXa$ which has an expansion $f=\sum_{i\geq0}f_iz^i$ with respect to a regular system of parameters $(\x,z)$ for $\OWa$ such that $\ord f_c=0$ and $f_{c-1}=0$.
  \item $z\in\Diff^{c-1}_{\OWa/K}(\IXa)$.
  \item $H$ has maximal contact with $X$ at $a$.
  \item $H$ contains $X_{\geq c}$ locally at $a$.
  \item $z\in\rad(\Diff^{c-1}_{\OWa/K}(\IXa))$.
 \end{enumerate}
 The following implications hold:
 \[(1)\underset{\centernot{\Longleftarrow}}{\implies} (2)\underset{\centernot{\Longleftarrow}}{\implies} (3)\underset{\centernot{\Longleftarrow}}{\implies} (4)\iff (5).\]
\end{proposition}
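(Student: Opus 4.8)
The statement is the chain $(1)\Rightarrow(2)\Rightarrow(3)\Rightarrow(4)\iff(5)$ together with three examples refuting the reverse arrows $(2)\Rightarrow(1)$, $(3)\Rightarrow(2)$, $(4)\Rightarrow(3)$. The implication $(3)\Rightarrow(4)$ is immediate, being the first clause in the definition of maximal contact. So the substantive work is: the differential computation for $(1)\Rightarrow(2)$; the identification $(4)\iff(5)$ through Proposition~\ref{ord_via_diff_ops}; the blowup argument for $(2)\Rightarrow(3)$; and the three examples.

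For $(1)\Rightarrow(2)$ I would pass to $\hOWa=K[[\x,z]]$ (returning to $\OWa$ at the end by Lemma~\ref{sheaf_of_diff_ops}(3) and faithful flatness) and apply to the $f$ of~(1) the operator $\partial_{z^{c-1}}\in\Diff^{c-1}$ from Proposition~\ref{basis_for_diff_ops}; since the $f_i$ do not involve $z$ it sends $f_iz^i\mapsto\binom{i}{c-1}f_iz^{i-c+1}$, so, using $f_{c-1}=0$ and $\chara K=0$,
\[
\partial_{z^{c-1}}(f)=z\cdot\B(c\,f_c+\sum_{i>c}\tbinom{i}{c-1}f_iz^{\,i-c}\B),
\]
where the bracketed factor is a unit because $\ord f_c=0$ makes $f_c$ a unit and $c\neq0$; hence $z\in\big(\partial_{z^{c-1}}(f)\big)\subseteq\Diff^{c-1}_{\hOWa/K}(\hIXa)$, which is~(2). (The hypothesis $f_{c-1}=0$ is exactly what removes the $z^0$-term; without it one would first have to eliminate $f_{c-1}$, which in characteristic zero is the Tschirnhausen translation.)

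For $(4)\iff(5)$ I would combine $X_{\geq c}=V(\Diff_W^{c-1}(\IX))$ (Proposition~\ref{ord_via_diff_ops}), Lemma~\ref{sheaf_of_diff_ops}, and the reduced structure on $X_{\geq c}$ to see that the local ideal of $X_{\geq c}$ at $a$ equals $\rad\big(\Diff^{c-1}_{\OWa/K}(\IXa)\big)$; then $H=V(z)\supseteq X_{\geq c}$ near $a$ is precisely $z\in\rad\big(\Diff^{c-1}_{\OWa/K}(\IXa)\big)$. For $(2)\Rightarrow(3)$, the first clause is immediate from $(2)\Rightarrow(5)\Rightarrow(4)$; for the second I would induct on the length of the order-permissible blowup sequence. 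The inductive step rests on the compatibility of differential operators with order-permissible blowup at equiconstant points: if $\pi\colon W^{(1)}\to W$ has regular centre $Z\subseteq X_{\geq c}$, $a^{(1)}$ is equiconstant over $a$, and $z'=x_E^{-1}\vp(z)$ is the strict transform of $z$ (the exponent is $1$ because $Z\subseteq V(z)$ near $a$ forces $\ord_{I_{Z,a}}z=1$), then $z'\in\Diff^{c-1}_{\mathcal{O}_{W^{(1)},a^{(1)}}/K}(I_{X^{(1)},a^{(1)}})$ — which I would reduce to the weak transform (contained in the strict transform, with $\Diff^{c-1}$ monotone in the ideal) and prove from the coordinate description of $\pi$ in Section~\ref{section_coord_expression_of_blowups} via the Leibniz rule of Proposition~\ref{basis_for_diff_ops}(3). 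Granting it: $z'$ vanishes on $X^{(1)}_{\geq c}$, hence at $a^{(1)}$, so $a^{(1)}\in V(z')=H^{(1)}$; $H^{(1)}$ is regular as the strict transform of the regular hypersurface $H$ along the regular centre $Z\subseteq H$; and~(2) is reproduced at $(X^{(1)},a^{(1)})$, so the induction closes. (Alternatively, $z\in\Diff^{c-1}(\IXa)$ with $\ord_az=1$ forces $\ol z\in\Dir_a(X)$, so $H$ is adjacent to $X$ at $a$ and one can feed Proposition~\ref{directrix_under_blowup} and its corollary into the induction instead.)

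For the non-implications (all in $\mathbb{A}^2$ over a field of characteristic zero): \textbf{$(2)\not\Rightarrow(1)$} is witnessed by $X=V(zw)$, where $c=2$ and $\Diff^1(\IXa)=(z,w)\ni z$, yet $\minit(\IXa)=(zw)$ forces every order-$2$ element of $\IXa$ to have initial form a nonzero multiple of $zw$, so no choice of parameters completing $z$ makes $z^2$ occur and~(1) fails. \textbf{$(3)\not\Rightarrow(2)$} is witnessed by $X=V(x^2-y^3)$ with $z=y$: here $\topp(X)=X_{\geq2}=\{a\}$ and one blowup drops the order below $2$ everywhere, so there are no equiconstant points and $V(y)$ vacuously has maximal contact, but $\Diff^1(\IXa)=(x,y^2)$ is not radical and does not contain $y$, so~(2) fails. \textbf{$(4)\not\Rightarrow(3)$} is witnessed by $X=V(x^2-y^5)$ with $z=y$: then $X_{\geq2}=\{a\}\subseteq V(y)$, but blowing up $a$ produces an equiconstant point at the origin of the $y$-chart (where the strict transform is $x^2-y^3$), which the strict transform of the line $V(y)$ misses — it meets the exceptional curve at the origin of the $x$-chart instead. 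The main obstacle is the inductive step of $(2)\Rightarrow(3)$, namely the compatibility of the ideals $\Diff^{c-1}$ with order-permissible blowup at equiconstant points; this is also the one place where $\chara K=0$ is used essentially (through the derivation $\partial_z$ in the coordinate expression of the blowup), and manufacturing the $(4)\not\Rightarrow(3)$ example is precisely what makes the diagram a chain of implications rather than of equivalences.
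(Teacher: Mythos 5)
Your proposal is correct and takes essentially the same approach as the paper: the same $\partial_{z^{c-1}}(f)=uz$ computation for $(1)\Rightarrow(2)$, the same three counterexamples (up to renaming coordinates), and $(4)\iff(5)$ via Proposition~\ref{ord_via_diff_ops}; for $(2)\Rightarrow(3)$ the paper simply cites Koll\'ar's book (Theorem~3.80), while you sketch the inductive blowup argument underlying that citation, which is the same mathematics. One small gap worth flagging: in your justification of $(2)\not\Rightarrow(1)$ you assert that ``no choice of parameters completing $z$ makes $z^2$ occur,'' but this is false as stated --- taking $x_1=w+z$ gives $\init(zw)=x_1 z-z^2$, so $z^2$ does appear in the initial form. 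What property~$(1)$ (for $c=2$) would actually require is $\ord f_2=0$ \emph{together with} $f_1=0$, which forces $\init(f)=\lambda z^2+\mu x_1^2$ with $\lambda\neq 0$ and vanishing $x_1 z$-coefficient; since every nonzero multiple of $\init(zw)=z\cdot\ol{w}$ has nonvanishing $x_1 z$-coefficient (as $\ol{w}$ is not proportional to $\ol z$ in $\mWa/\mWa^2$), this is impossible, and $(1)$ fails. Your example is correct, but the one-line reason you gave does not establish it. A secondary nit: you say the inductive step of $(2)\Rightarrow(3)$ is ``the one place'' where characteristic zero enters, but your $(1)\Rightarrow(2)$ computation already divides by $c$, so that implication uses $\chara K=0$ as well.
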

\begin{proof}
 $(1)\implies(2)$: 
 We can compute that
 \[\partial_{z^{c-1}}(f)=\sum_{i\geq c-1}\binom{i}{c-1}f_iz^{i-(c-1)}=c\cdot f_cz+z^2\cdot\sum_{i\geq c+1}\binom{i}{c-1}f_iz^{i-(c+1)}=uz\]
 for a unit $u\in\OWa^*$. 
 This proves that $z\in\Diff^{c-1}_{\OWa/K}(\IXa)$.
 
 $(2)\implies(3)$: This is proven in \cite{Kollar_Book} Theorem 3.80, p. 171.
 
 
 $(3)\implies(4)$: This is included in the definition of maximal contact.
 
 $(4)\iff(5)$: This follows from Proposition \ref{ord_via_diff_ops}.
 
 $(2)\centernot\implies(1)$: Consider the origin $a=(0,0)$ on the curve $X=V(xy)$ in $W=\Spec(K[x,y])$. Then $\Diff_{\OWa/K}^{1}(\IXa)=(x,y)$. Thus, the local hypersurface $H=V(y)$ fulfills property $(2)$, but not $(1)$.
 
 $(3)\centernot\implies(2)$: Consider the origin $a=(0,0)$ on the curve $X=V(y^2-x^3)$ in $W=\Spec(K[x,y])$. Then $\Diff_{\OWa/K}^{1}(\IXa)=(x^2,y)$. It is easy to check that blowing up the point $a$ resolves the curve $X$. Hence, there are no equiconstant points. In other words, all regular hypersurfaces have maximal contact with $X$ at $a$. Hence, the local hypersurface $H=V(x)$ fulfills property $(3)$, but not $(2)$.
 
 $(5)\centernot\implies(3)$: Consider the origin $a=(0,0)$ on the curve $X=V(y^2-x^5)$ in $W=\Spec(K[x,y])$. Then $\Diff_{\OWa/K}^{1}(\IXa)=(y,x^4)$ and $\rad(\Diff_{\OWa/K}^{1}(\IXa))=(y,x)$. When blowing up the point $a$, there is exactly one equiconstant point $a'$ lying over $a$ which is the origin of the $y$-chart. Hence, the local hypersurface $H=V(x)$ fulfills property $(5)$, but not $(3)$.
\end{proof}

\begin{remarks}
 \begin{enumerate}[(1)]
  \item If $K$ is a field of characteristic zero, it is easy to see that the ideal $\Diff_{\OWa/K}^{c-1}(\IXa)$ always contains an element $\partial_{\x^\alpha}(f)$ of order $1$. Hence, the regular local hypersurface $H=V(\partial_{\x^\alpha}(f))$ has maximal contact with $X$ at $a$. Moreover, it is clear that there is an open neighborhood $U$ of $a$ such that the hypersurface $H$ has maximal contact with $X$ at all points $b\in X_{\geq c}\cap U$.
  \item If $c$ is not divisible by the characteristic of $K$, it is always possible to construct a regular formal hypersurface $H\subseteq\Spec(\hOWa)$ fulfilling property (1) of Proposition \ref{max_contact_prop}. By Lemma \ref{z_regular_after_generic_coord_change} we can choose a regular system of parameters $(\x,z)$ for $\OWa$ such that there is an element $f\in\hIXa$ with power series expansion $f=\sum_{i\geq0}f_iz^i$ where $f_i\in K[[\x]]$ and $\ord f_c=0$. By the Weierstrass preparation theorem, we may even assume that $f$ has the form $f=z^c+\sum_{i<c}f_iz^i$.
  
  Now set $z_1=z+\frac{1}{c}f_{c-1}$. It easy to see that $f$ has a power series expansion $f=z_1^c+\sum_{i=0}^{c-2}\wt f_iz_1^i$ with $\wt f_i\in K[[\x]]$. Thus, the regular formal hypersurface $H=V(z_1)$ fulfills property (1) of Proposition \ref{max_contact_prop}. Apart from having maximal contact with $X$ at $a$, the formal hypersurface $H$ also has the strong property that it maximizes certain invariants associated to coefficient ideals, as it will be explained in Chapter \ref{chapter_cleaning}. 
  
  The coordinate change $z\mapsto z_1$ described above is called \emph{Tschirnhausen transformation}. The concept was introduced by Abhyankar and Zariski \cite{Zariski_Abhyankar_55} and was one of the key techniques used in Hironaka's proof of resolution in characteristic zero \cite{Hironaka_Annals}. Obviously, the Tschirnhausen transformation cannot be applied whenever $c$ is divisible by the characteristic of $K$.
  \item Over fields of positive characteristic, it is known that there are generally no regular local (or formal) hypersurfaces fulfilling any of the properties (1)-(5) in Proposition \ref{max_contact_prop}. Narasimhan \cite{Narasimhan} was the first to give an example of a hypersurface $X$ in a $4$-dimensional ambient space $W$ over a field of characteristic $2$ whose top locus is a curve that is locally at a closed point $a$ not contained in any regular hypersurface. In particular, there is no hypersurface which has maximal contact with $X$ at $a$.
  
  Since we want to treat the cases of characteristic zero and positive characteristic uniformly, hypersurfaces of maximal contact will not be used in our proof for the resolution of surfaces.
 \end{enumerate}
\end{remarks}

\section{Coefficient ideals} \label{section_coeff_ideals}

 In this section we introduce the construction which is most fundamental to our approach to the embedded resolution of singularities. Coefficient ideals will play an important role in all ensuing chapters of this thesis. They are our main tool to measure improvement of singularities under blowup at equiconstant points.
 
 Coefficient ideals are a well-known technique for proving embedded resolution of singularities over fields of characteristic zero where they are used with respect to hypersurfaces of maximal contact. In our approach, we use coefficient ideals independently of the characteristic of the ground field. Since hypersurfaces of maximal contact do generally not exist over fields of positive characteristic, we consider coefficient ideals with respect to all regular formal hypersurfaces. The problem of deriving significant invariants from these coefficient ideals that can be used to measure improvement at equiconstant points will be discussed in detail in Chapter \ref{chapter_pathologies}.

 \subsection{Descent in dimension}

 Let $W$ be a regular hypersurface, $X\subseteq W$ a closed subset and $a\in X$ a closed point. 
 Consider the blowup $\pi:W'\to W$ of a center $Z$ that is permissible for $X$ with respect to the order function. As we established in Section \ref{section_directrix}, there exists locally at $a$ a regular hypersurface $H$ that contains $Z$ and whose strict transform $H'$ is regular and contains all equiconstant points $a'$ lying over $a$. We can choose a regular system of parameters $(\x,z)=(x_1,\ldots,x_n,z)$ for the completed  local ring $\hOWa$ such that $H=V(z)$. The coefficient ideal will be defined as an ideal $J_{-1}$ of the ring $\hOWa/(z)$ which is constructed from $\wh I_{X,a}$. This ideal will provide us with a more sophisticated way to measure the complexity of the singularity of $X$ at $a$ than the order function alone provides. Now let $a'\in\pi^{-1}(a)$ be an equiconstant point lying over $a$. Since $a'$ lies on the strict transform $H'$ of $H$, the point $a'$ is not contained in the $z$-chart. Let $(\x',z')$ be the induced parameters for $\hOWap$ in some $x_i$-chart. Then $H'=V(z')$. To measure improvement, we will associate to the \emph{weak transform} $(\hIXa)'$ of the ideal $\hIXa$ at $a'$ again a coefficient ideal $J_{-1}'$ in the ring $\hOWap/(z')$ and compare it with $J_{-1}$.
 
 This sort of technique is commonly called \emph{descent in dimension} since it reduces the problem of measuring improvement under blowup of a singularity in an $n$-dimensional ambient space $W$ to an $(n-1)$-dimensional ambient space $H$. Notice that hypersurfaces of maximal contact are naturally perfectly suited for the descent in dimension since they allow us to use above construction to measure improvement not only for a single blowup, but for any sequence of order-permissible blowups.
 
 Recall that the weak transform $(\hIXa)'$ of $\hIXa$ does generally not coincide with the completed defining ideal $\hIXap$ of the strict transform $X'$ of $X$ at $a'$. It is an established technique in proofs of embedded resolution of singularities over fields of characteristic zero to first prove a resolution statement for the weak transform and derive from this the resolution statement for the strict transform of $X$ (Cf. \cite{EH}, \cite{EV_Char_Zero}). In the case that $X$ is a hypersurface, the weak and strict transform coincide. 

 The following example demonstrates the idea underlying the descent in dimension:
 
\begin{example}
 Let $W$ be a regular variety, $X\subseteq W$ a hypersurface and $a\in X$ a closed point. Set $c=\ord_aX$. Let $(\x,z)$ be a regular system of parameters for $\hOWa$ and assume that the ideal $\hIXa$ is generated by an element $f$ of the form
 \[f=z^c+F(\x)\]
 where $F\in K[[\x]]$ is a power series of order $\ord F>c$. Clearly, the order function only takes into account the term $z^c$ while ignoring the remainder $F(\x)$.
 
 Now consider the blowup $\pi:W'\to W$ at the point $a$ and let $a'\in\pi^{-1}(a)$ be an equiconstant closed point. Since the hypersurface $H=V(z)$ is adjacent to $X$ at $a$, the point $a'$ is not contained in the $z$-chart. Denote the induced parameters for $\hOWap$ in some $x_i$-chart again by $(\x,z)$. Then the ideal $\hIXap$ in $\hOWap$ is generated by an element $f'$ of the form
 \[f'=z^c+F'(\x)\]
 where $F'(\x)\in K[[\x]]$ is a power series with order $\ord F'\geq c$.
 
 It is clear that to measure the improvement of the singularity under this blowup, we have to compare the power series $F(\x)$ and $F'(\x)$.
\end{example}

\subsection{Definition and main properties}

\begin{definition}
 Let $R$ be the ring of formal power series in $(n+1)$ variables over an arbitrary field $K$ and $J\subseteq R$ an ideal. Let $(\x,z)$ be a regular system of parameters for $R$. Let each element $f\in J$ have the expansion $f=\sum_{i\geq0}f_iz^i$ with $f_i\in K[[\x]]$. Let $c\in\N$ be a non-negative integer.
 
 The \emph{coefficient ideal} of $J$ with respect to $c$ and the parameter system $(\x,z)$ is defined as the ideal
 \[\coeff_{(\x,z)}^c(J)=\B(f_i^{\frac{c!}{c-i}}:f\in J,i<c\B)\]
 in the ring $R/(z)\cong K[[\x]]$. Set $\coeff_{(\x,z)}^0(J)=0$.
 
 We will sometimes say that the coefficient ideal $\coeff_{(\x,z)}^c(J)$ is defined with respect to the regular hypersurface $V(z)\subseteq\Spec(K[[\x,z]])$.
 
\end{definition}

\begin{remarks}
\begin{enumerate}[(1)]
 \item There is a way to extend our definition of the coefficient ideal to more rings than just the power series ring by using differential operators. Let $W$ be a regular variety over a field $K$ and $a\in W$ a closed point. Let $R$ be either the coordinate ring of an affine open neighborhood of $a$, the local ring $\OWa$ or its completion $\hOWa$. Assume that the module of differentials $\Omega_{R/K}$ is freely generated by elements $dx_1,\ldots,dx_n,dz$. Then we can define for an ideal $J\subseteq R$ and a number $c>0$ the coefficient ideal of $J$ with respect to $c$ and $(\x,z)=(x_1,\ldots,x_n,z)$ as
 \[\coeff_{(\x,z)}^c(J)=(\partial_{z^i}(f)^{\frac{c!}{c-i}}:f\in J,i<c)/(z)\subseteq R/(z).\]
 In the case that $R=\hOWa=K[[\x,z]]$, this coincides with above definition of the coefficient ideal. Notice though, that the coordinate-dependence of the definition of the coefficient ideal prevents us from defining anything like a sheaf of coefficient ideals on $W$.
 \item Coefficient ideals have been defined by several authors in different ways. (Cf. \cite{BM_Canonical_Desing} 4.18, p. 246, \cite{EV_Good_Points} 4.14, p. 122, \cite{EH} p. 829, \cite{Wlodarczyk} 3.6.1, p. 801, \cite{Kollar_Book} 3.54, p. 156, \cite{AFK_Modified_Coeff_Ideal}) While these versions of the coefficient ideal differ in definition, they all fulfill the same purpose.
 
 One particular variant of the definition of the coefficient ideal is to set 
 \[\coeff_{(z)}^c(J)=(\partial(f)^{\frac{c!}{c-i}}:f\in J,\partial\in\Diff^i_{R/K},i<c)/(z)\subseteq R/(z).\]
 This definition has the big advantage that, unlike our definition, it only depends on the principal ideal $(z)$ and not on the entire regular parameter system for $R$. The drawback of this definition is that it does not behave as well under blowup as the version of the coefficient ideal that we use.
 \item In the setting of positive or arbitrary characteristic, other constructions than the coefficient ideal have been proposed for the descent in dimension. (Cf. \cite{Villamayor_Hypersurface}, \cite{BV_Elimination} and \cite{Kawanoue_IF_1}, \cite{Kawanoue_Matsuki}) These generally follow the approach of first \emph{enlarging} the ideal $J$ as much as possible (for example, by applying differential operators or integral closure) before reducing the dimension of the ambient space.
 
 Our definition of the coefficient ideal can be seen as the opposite approach. We enlarge the ideal by only using differential operators of the form $\partial_{z^i}$ with $i<c$ instead of all differential operators up to degree $c-1$. This yields a coefficient ideal that is in a certain sense still very closely related to the original ideal. This is helpful in particular when trying to show that improvement of the coefficient ideal under blowup ultimately leads to a decrease of the order of the ideal $J$. On the downside, the resulting coordinate-dependence of our coefficient ideal makes it a quite complicated object to handle in general.
 \item We will use the following notational conventions for coefficient ideals: A positive index (as in $J_n,J_{2,\x},\ldots$) will be used to indicate the dimension of the ring inside which the coefficient ideal is defined. On the other hand, a negative index (as in $J_{-1}$) is used to indicate the codimension. 
\end{enumerate}
\end{remarks}

 The coefficient ideal of the ideal $\wh I_{X,a}$ with respect to $c=\ord_aX$ and a regular formal hypersurface $H=V(z)\subseteq\Spec(\hOWa)$ exhibits a number of good properties that make it suited for the descent in dimension. Most importantly, it behaves well under oder-permissible blowups at equiconstant points $a'$ under the three conditions that the weak transform $(\hIXa)'$ of $\hIXa$ is considered, that the center of blowup is locally contained in the hypersurface $H$ and that the point $a'$ is contained in the strict transform $H$. This will be shown in Lemma \ref{coeff_ideal_under_blowup}. The behavior of the invariants associated to the coefficient ideal under blowup will be investigated in more detail in Section \ref{section_weighted_orders_under_blowup}.
 
 Another important property will be shown in Proposition \ref{top_locus_defined_via_coeff_ideal}. It says that under the condition that the locus $X_{\geq c}=\{a\in X:\ord_aX\geq c\}$ is at the point $a$ locally contained (on the level of the completion) in a regular hypersurface $H$, its defining ideal $\wh I_{X_{\geq c},a}$ can be computed from the coefficient ideal of $\wh I_{X,a}$ with respect to $H$.
 
 As both of these statements already suggest, the coefficient ideal of $\hIXa$ with respect to a regular hypersurface $H$ only encodes significant geometric information if the hypersurface $H$ is chosen in a particularly good way. One could also phrase this by saying that the coefficient ideal construction is only as strong as our ability to construct regular hypersurfaces which locally approximate $X$ very well. In characteristic zero, this is accomplished by hypersurfaces of maximal contact. Our goal in the subsequent chapters is to construct regular formal hypersurfaces independently of characteristic which, even if they do not have maximal contact with $X$, enable us to define coefficient ideals that carry relevant information for the resolution of singularities.
 
 As already mentioned, our definition of the coefficient ideal is highly dependent on the choice of the parameters $(\x,z)$. The only change of coordinates that trivially leaves the coefficient ideal invariant are permutations of the parameters $x_1,\ldots,x_n$. In Proposition \ref{coeff_ideal_well_def}, we will show that multiplication of $z$ with a unit also leaves the coefficient ideal invariant. The effect of coordinate changes $x_i\mapsto x_i+g(\x,z)$ and multiplication of the parameters $x_i$ with units will be investigated in detail in Chapter \ref{chapter_invariance}. In Chapter \ref{chapter_cleaning}, we will then investigate the effect of coordinate changes $z\mapsto z+g(\x)$ which move the underlying hypersurface $H$.

\subsection{Behavior of the coefficient ideal under blowup}

\begin{lemma} \label{coeff_ideal_under_blowup}
 Let $R=K[[\x,z]]$ with $\x=(x_1,\ldots,x_n)$ and $J\subseteq R$ an ideal of order $c=\ord J$. Let $k\leq n$ be such that $\ord_{(x_1,\ldots,x_k,z)}J=c$. Consider the blowup map $\pi:R\to R$ with center $(x_1,\ldots,x_k,z)$ in the $x_1$-chart that is given by
 \[\begin{array}{ll}
 \pi(x_1)=x_1, &\\
 \pi(x_i)=x_1(x_i+t_i) & \text{for $1<i\leq k$,}\\ 
 \pi(x_j)=x_j & \text{for $j>k$,} \\
 \pi(z)=x_1z. &
\end{array}\]
 for certain constants $t_2,\ldots,t_k\in K$. Let $J^*=R\pi(J)$ be the total transform and $J'=x_1^{-c}J^*$ the weak transform of $J$. Assume that $\ord J'=c$. 
 
 
 Further, set $J_{-1}=\coeff^c_{(\x,z)}(J)$ and $J_{-1}'=\coeff_{(\x,z)}^c(J')$. Denote the induced map $K[[\x]]\to K[[\x]]$ again by $\pi$. Let $J_{-1}^*=K[[\x]]\pi(J_{-1})$ be the total transform of $J_{-1}$. Then the following inclusion holds:
 \[x_1^{-c!}J_{-1}^*\subseteq J_{-1}'.\]
\end{lemma}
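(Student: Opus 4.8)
The plan is to track how the expansion coefficients $f_i$ of an element $f\in J$ transform under the chart map and to compare them directly with the coefficients of the weak transform $f'=x_1^{-c}\pi(f)$. Since $J_{-1}=\coeff_{(\x,z)}^c(J)$ is generated, as an ideal of $K[[\x]]$, by the elements $f_i^{c!/(c-i)}$ with $f\in J$ and $i<c$, and $\pi$ is a ring homomorphism, the total transform $J_{-1}^{*}=K[[\x]]\pi(J_{-1})$ is generated by the elements $\pi\big(f_i^{c!/(c-i)}\big)=\pi(f_i)^{c!/(c-i)}$. Thus it suffices to show that $x_1^{-c!}\pi(f_i)^{c!/(c-i)}\in J_{-1}'$ for every such generator (in particular, that this element lies in $K[[\x]]$ at all).

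First I would extract the structural consequence of the hypothesis $\ord_{(x_1,\dots,x_k,z)}J=c$. The ideal $(x_1,\dots,x_k,z)$ is part of a regular system of parameters, hence generated by a regular sequence, so Corollary \ref{cor_symb_powers_of_reg_ideals} gives $J\subseteq(x_1,\dots,x_k,z)^{c}$. Expanding in $z$ and inspecting the monomials of $(x_1,\dots,x_k,z)^{c}$, one sees that every $f\in J$ with expansion $f=\sum_{i\geq0}f_iz^i$ satisfies $f_i\in(x_1,\dots,x_k)^{c-i}$ for all $i<c$ (and of course $f_i\in K[[\x]]$ for $i\geq c$).

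The heart of the argument is then a direct computation. Applying $\pi$ gives $\pi(f)=\sum_{i\geq0}x_1^{i}\pi(f_i)z^i$. Since $\pi$ maps $(x_1,\dots,x_k)$ into $(x_1)$, the membership $f_i\in(x_1,\dots,x_k)^{c-i}$ forces $\pi(f_i)=x_1^{c-i}\tilde f_i$ with $\tilde f_i\in K[[\x]]$ for $i<c$; in particular $x_1^{c}$ divides $\pi(f)$, which incidentally re-proves that $f'=x_1^{-c}\pi(f)$ lies in $R$. Reading off the coefficient of $z^i$ in $f'=\sum_{i\geq0}x_1^{i-c}\pi(f_i)z^i$ (each term lying in $R$ by the preceding sentence) shows that $f'_i=\tilde f_i$ for $i<c$, i.e.\ $\pi(f_i)=x_1^{c-i}f'_i$. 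Raising to the power $c!/(c-i)$, which is an integer because $1\leq c-i\leq c$, yields $\pi(f_i)^{c!/(c-i)}=x_1^{c!}(f'_i)^{c!/(c-i)}$, hence
\[
x_1^{-c!}\pi(f_i)^{c!/(c-i)}=(f'_i)^{c!/(c-i)}\in\coeff_{(\x,z)}^c(J')=J_{-1}',
\]
since $f'\in J'$ and $i<c$. As these elements generate $x_1^{-c!}J_{-1}^{*}$ over $K[[\x]]$, the inclusion $x_1^{-c!}J_{-1}^{*}\subseteq J_{-1}'$ follows.

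I expect the one genuinely delicate step to be the middle one: squeezing out of the order hypothesis the precise statement $f_i\in(x_1,\dots,x_k)^{c-i}$ for the individual coefficients, and then recognizing that the chart map $\pi$ turns this ideal membership into an honest power of $x_1$. Everything afterwards is monomial bookkeeping. Note that the hypothesis $\ord J'=c$ is not actually needed for this inclusion; it serves to guarantee that $J_{-1}'$ is the coefficient ideal of $J'$ with respect to its own order, which is relevant for the companion statements about the associated invariants rather than here.
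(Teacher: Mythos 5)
Your proof is correct and follows the same route as the paper's: expand $f=\sum_i f_iz^i$, observe that the coefficients of the weak transform satisfy $\pi(f_i)=x_1^{c-i}f_i'$ for $i<c$, raise to the $c!/(c-i)$-th power, and read off the inclusion. The one difference is that the paper simply asserts $f_i'=x_1^{i-c}\pi(f_i)\in K[[\x]]$, whereas you justify it by deducing $f_i\in(x_1,\ldots,x_k)^{c-i}$ from the hypothesis $\ord_{(x_1,\ldots,x_k,z)}J=c$ via Corollary \ref{cor_symb_powers_of_reg_ideals}; your closing remark that $\ord J'=c$ is not needed for the stated inclusion is also correct.
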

\begin{proof}
 
 Let each element $f\in J$ have an expansion $f=\sum_{i\geq0}f_{i}z^i$ with $f_{i}\in K[[\x]]$. Then $f'=x_1^{-c}\pi(f)\in J'$. Further, we can compute that
 \[f'=x_1^{-c}\sum_{i\geq0}\pi(f_{i})\pi(z^i)=\sum_{i\geq0}\underbrace{x_1^{i-c}\pi(f_{i})}_{=:f_{i'}}z^i\]
 where $f_{i}'\in K[[\x]]$. Notice that for $i<c$ the equality
 \[x_1^{-c!}\pi\B(f_i^{\frac{c!}{c-i}}\B)=(f_{i}')^{\frac{c!}{c-i}}\]
 holds. This proves the claimed inclusion.
\end{proof}

\begin{remarks}
\begin{enumerate}[(1)]
 \item The following example shows that in the statement of Lemma \ref{coeff_ideal_under_blowup}, $x_1^{-c!}J_{-1}^*$ is generally a proper subset of $J_{-1}'$: 
 
 Let $J$ be the ideal generated by the element
 \[f=y^2z+x^4\]
 in the ring $R=K[[w,x,y,z]]$ where $K$ is a field of characteristic zero. Then it can be computed that
 \[J_{-1}=\coeff_{(w,x,y,z)}^3(J)=(x^8,y^6,x^4y^4).\]
 Now consider the point-blowup map $\pi:R\to R$ in the origin of the $w$-chart, given by $\pi(w)=w$, $\pi(x)=wx$, $\pi(y)=wy$, $\pi(z)=wz$. The weak transform $J'$ of $J$ is generated by the element
 \[f'=y^2z+wx^4.\]
 It can be computed that
 \[J_{-1}'=\coeff_{(w,x,y,z)}^3(J')=(w^2x^8,y^6,wx^4y^4).\]
 On the other hand, the total transform $J_{-1}^*$ of $J_{-1}$ has the form
 \[J_{-1}^*=(w^8x^8,w^6y^6,w^8x^4y^4)=(w^6)\cdot (w^2x^8,y^6,w^2x^4y^4).\]
 Hence,
 \[w^{-6}J_{-1}^*\neq J_{-1}'.\]
 
 While this seems to suggest at first sight that the coefficient ideal might not be well-suited to systematically measure improvement under blowup, we will prove in Section \ref{section_weighted_orders_under_blowup} that all relevant invariants (such as weighted orders) of the ideals $J_{-1}'$ and $x_1^{-c!}J_{-1}^*$ coincide.

 \item The order of the coefficient ideal is not an invariant that behaves well under blowup. 
 
 For example, consider the ideal $J\subseteq K[[x,y,z]]$ generated by the element
 \[f=z^2+y^5+x^9.\]
 Its coefficient ideal is $J_{-1}=\coeff_{(x,y,z)}^2(J)=(y^5+x^9)$. Hence, $\ord J_{-1}=5$.
 
 Under a point-blowup that is monomial in the $x$-chart, the weak transform $J'$ of $J$ is generated by
 \[f'=z^2+x^3(y^5+x^4).\]
 Thus, $J_{-1}'=\coeff_{(x,y,z)}^2(J')=(x^3(y^5+x^4))$ and $\ord J_{-1}'=7$. The order of the coefficient ideal increases under this blowup.
 
 Consequently, the order of the coefficient ideal itself is not suited as a resolution invariant. We will discuss from Section \ref{section_residual_order} onwards how to derive invariants from the coefficient ideal which are more suited for the usage as resolution invariants.
\end{enumerate}
\end{remarks}

\subsection{Computing $X_{\geq c}$ from the coefficient ideal}

 The following proposition should be viewed in the light of the correspondence between differential operators and the locus $X_{\geq c}=\{a\in X:\ord_aX\geq c\}$ that was stated in Proposition \ref{ord_via_diff_ops}. It tells us that if a regular formal hypersurface $H$ contains $X_{\geq c}$ locally at $a$, then the completed local defining ideal of $X_{\geq c}$ can be computed from the coefficient ideal. This will enable us in certain situations to determine from the fact that the coefficient ideal is of a certain (simple) form that the locus $X_{\geq c}$ has a particularly good shape locally at $a$. These results will be proved in Section \ref{section_form_of_top_locus}.

\begin{proposition} \label{top_locus_defined_via_coeff_ideal}
 Let $R=K[[\x,z]]$ and $J\subseteq R$ an ideal. Set $J_{-1}=\coeff_{(\x,z)}^c(J)$ for a positive integer $c>0$. Further, set 
 \[J_{\geq c}=\rad(\Diff_{R/K}^{c-1}(J))\]
 and
 \[(J_{-1})_{\geq c!}=\rad(R\cdot\Diff_{K[[\x]]/K}^{c!-1}(J_{-1}))\]
 Then the following hold:
 \begin{enumerate}[(1)]
  \item $\Jc\subseteq\Jcc+(z)$.
  \item If $z\in\Jc$, then $\Jc=\Jcc+(z)$.
 \end{enumerate}
\end{proposition}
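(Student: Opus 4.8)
The plan is to transport everything across the reduction $R\to\ol R:=R/(z)\cong K[[\x]]$, using the description of order loci by differential operators (Proposition \ref{ord_via_diff_ops}) on both $R$ and $\ol R$. I regard $J_{-1}$ as an ideal of $\ol R$ and, via $K[[\x]]\hookrightarrow K[[\x,z]]$, also as a subset of $R$. The bridge between the two sides is the identity
\[\partial_{\x^\alpha z^j}(f)\equiv\partial_{\x^\alpha}(f_j)\pmod{(z)},\]
valid for $f=\sum_{i\geq0}f_iz^i$ with $f_i\in\ol R$ and for the basis operators of Proposition \ref{basis_for_diff_ops} attached to $(\x,z)$; it is immediate from $\partial_{\x^\alpha z^j}(\x^\beta z^i)=\binom{\beta}{\alpha}\binom{i}{j}\x^{\beta-\alpha}z^{i-j}$ once the terms with $i>j$ are killed modulo $(z)$. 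I will use freely that a differential operator of degree $\leq d$ lowers $\ord_p$ by at most $d$, that $\ord_p(g^m)=m\,\ord_p(g)$ (localisations of $R$ and of $K[[\x]]$ at primes being regular local rings), and --- a routine power-series computation --- that $\Jcc+(z)$ is a radical ideal.

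For part (1) I would prove the stronger inclusion $\Diff_{R/K}^{c-1}(J)\subseteq\Jcc+(z)$; taking radicals then gives $\Jc=\rad\bigl(\Diff_{R/K}^{c-1}(J)\bigr)\subseteq\Jcc+(z)$. A generator of $\Diff_{R/K}^{c-1}(J)$ has the form $\partial_{\x^\alpha z^j}(f)$ with $f=\sum_i f_iz^i\in J$ and $|\alpha|+j\leq c-1$; in particular $j<c$, so $f_j^{c!/(c-j)}\in J_{-1}$. For a prime $q\subseteq\ol R$ containing $\Diff_{K[[\x]]/K}^{c!-1}(J_{-1})$ one has $\ord_qJ_{-1}\geq c!$ by Proposition \ref{ord_via_diff_ops}, hence $\tfrac{c!}{c-j}\ord_q(f_j)\geq c!$, so $\ord_q(f_j)\geq c-j$ and $\ord_q\bigl(\partial_{\x^\alpha}(f_j)\bigr)\geq(c-j)-|\alpha|\geq1$, i.e. $\partial_{\x^\alpha}(f_j)\in q$. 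Thus $\partial_{\x^\alpha}(f_j)$ lies in $\rad\bigl(\Diff_{K[[\x]]/K}^{c!-1}(J_{-1})\bigr)\subseteq\Jcc$, and by the bridging identity $\partial_{\x^\alpha z^j}(f)\in\partial_{\x^\alpha}(f_j)+(z)\subseteq\Jcc+(z)$.

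For part (2), granting (1) it remains to show $\Jcc\subseteq\Jc$ under the hypothesis $z\in\Jc$. Since $\Jc$ is radical this reduces to $\Diff_{K[[\x]]/K}^{c!-1}(J_{-1})\subseteq\Jc=\rad\bigl(\Diff_{R/K}^{c-1}(J)\bigr)$, i.e. to showing that every prime $p\subseteq R$ with $p\supseteq\Diff_{R/K}^{c-1}(J)$ contains $\Diff_{K[[\x]]/K}^{c!-1}(J_{-1})$. Fix such a $p$. By Proposition \ref{ord_via_diff_ops}, $\ord_pJ\geq c$; and $z\in\Jc\subseteq p$, so $(z)\subseteq p$ and $q:=p/(z)$ is a prime of $\ol R$. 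I claim $\ord_qJ_{-1}\geq c!$: for $f=\sum_i f_iz^i\in J$ and $0\leq j<c$, the operator $\partial_{z^j}$ has degree $\leq j$, so $\partial_{z^j}(f)\in p^{(c-j)}$; its image under $R\to\ol R$ equals $f_j$ (the remaining terms of $\partial_{z^j}(f)=\sum_{i\geq j}\binom{i}{j}f_iz^{i-j}$ being divisible by $z$); and since $(z)\subseteq p$, this reduction carries $p^{(c-j)}$ into $q^{(c-j)}$. Hence $\ord_q(f_j)\geq c-j$, so $\ord_q\bigl(f_j^{c!/(c-j)}\bigr)\geq c!$, and as $J_{-1}$ is generated by such elements, $\ord_qJ_{-1}\geq c!$. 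By Proposition \ref{ord_via_diff_ops} again, $\Diff_{K[[\x]]/K}^{c!-1}(J_{-1})\subseteq q\subseteq p$. Letting $p$ range over all such primes gives $\Diff_{K[[\x]]/K}^{c!-1}(J_{-1})\subseteq\Jc$, hence $\Jcc\subseteq\Jc$, and together with $(z)\subseteq\Jc$ this yields $\Jc=\Jcc+(z)$.

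The step I expect to be delicate is the order bookkeeping across $R\to R/(z)$ in part (2): that $\ord_pJ\geq c$ together with $z\in p$ forces $\ord_q(f_j)\geq c-j$ on the coefficients $f_j$ of elements of $J$. This is exactly the link between the order of $J$ and the order of its coefficient ideal, and the clean way to obtain it is the one above --- apply $\partial_{z^j}$ and reduce modulo $z$ --- so that the statement about $\Diff_{R/K}^{c-1}(J)$ becomes one about the coefficients $f_j$ directly. The remaining ingredients (the bridging identity, the behaviour of $\ord$ under differential operators and under taking powers, and the radicality of $\Jcc+(z)$) are routine.
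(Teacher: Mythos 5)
Your argument is correct, but it takes a genuinely different route from the paper. The paper never mentions primes or orders at all: it works purely at the level of generators of the differential ideals. For (1) it proves $\partial_{\x^\alpha}(f_k)\in\Jcc$ by induction on $|\alpha|$, using the Leibniz rule $\partial_{\x^A}(f_k^d)=\sum_{\beta_1+\cdots+\beta_d=A}\prod_i\partial_{\x^{\beta_i}}(f_k)$ with $A=d\alpha$, observing that each summand other than $(\partial_{\x^\alpha}(f_k))^d$ has some $|\beta_i|<|\alpha|$ and is killed by the induction hypothesis, and then uses that $\Jcc$ is radical. For (2) it applies the same Leibniz rule to $\partial_{\x^\alpha}(f_i^d)$ with $|\alpha|<c!$, uses a pigeonhole argument to find a factor $\partial_{\x^{\beta_j}}(f_i)$ with $|\beta_j|<c-i$, and then lands it in $\Jc$ via $\partial_{\x^{\beta_j}}\partial_{z^i}(f)=\partial_{\x^{\beta_j}}(f_i)+zF$ and the hypothesis $z\in\Jc$. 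You instead go prime-by-prime: you translate ``$g\in\rad(\Diff^{\bullet}(\cdot))$'' into ``$g$ lies in every prime where the relevant order is large'' via Proposition~\ref{ord_via_diff_ops}, then verify the order bounds using multiplicativity of $\ord_q$ in the regular local ring $K[[\x]]_q$, the principle that a degree-$\leq d$ differential operator drops $\ord_q$ by at most $d$ at \emph{any} prime $q$ (itself obtainable by iterating Proposition~\ref{ord_via_diff_ops}, since $\partial'\circ\partial$ stays in $\Diff^{\leq c-1}$), and, in (2), the observation that reduction modulo $(z)\subseteq p$ carries $p^{(n)}$ into $q^{(n)}$ for $q=p/(z)$. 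Your route is more valuation-theoretic and arguably more conceptual, and it gives the cleaner-looking statement $\Diff_{R/K}^{c-1}(J)\subseteq\Jcc+(z)$ directly; but it buys this at the cost of several side facts that the paper's argument never needs (symbolic powers, order bounds at non-maximal primes, radicality of $\Jcc+(z)$ --- this last one does hold, since $(RI+(z))\cap K[[\x]]=I$ for $I=\Diff_{K[[\x]]/K}^{c!-1}(J_{-1})$ gives $\rad_R(RI)+(z)=\rad_R(RI+(z))$, but you should not have waved it off as a one-line computation). The paper's proof is more elementary and self-contained precisely because it avoids passing through the order interpretation entirely.
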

\begin{proof}
 Let each element $f\in J$ have the expansion $f=\sum_{i\geq0}f_iz^i$ with $f_i\in K[[\x]]$.
 
 (1): By Proposition \ref{basis_for_diff_ops}, we know that $\Diff_{R/K}^{c-1}(J)$ is generated by elements of the form $\partial_{\x^\alpha}\partial_{z^k}(f)$ with $|\alpha|+k<c$. Notice that
 \[\partial_{\x^\alpha}\partial_{z^k}(f)=\sum_{i\geq k}\partial_{\x^\alpha}(f_i)\binom{i}{k}z^{i-k}\]
 \[=\partial_{\x^\alpha}(f_k)+zF_{k,\alpha}\]
 for certain elements $F_{k,\alpha}\in R$. Thus, it suffices to show that
 \[\partial_{\x^\alpha}(f_k)\in \Jcc\]
 for all indices $\alpha\in\N^n$ and $k\in\N$ with $|\alpha|+k<c$.
 
 To this end, let $k$ be fixed and use induction over $|\alpha|$. For $|\alpha|=0$, this is clear by the definition of the coefficient ideal. So let $\alpha\in\N^n$ be such that $|\alpha|>0$ and assume that
 \[\partial_{\x^\beta}(f_k)\in \Jcc\]
 for all multi-indices $\beta\in\N^n$ with $|\beta|<|\alpha|$. Set $d=\frac{c!}{c-k}$ and $A=d\cdot\alpha$. Since $k+|\alpha|<c$, we know that $|A|<c!$. Consequently, 
 \[\partial_{\x^A}(f_k^{d})\in\Diff_{K[[\x]]/K}^{c!-1}(J_{-1}).\]
 But we know by Proposition \ref{basis_for_diff_ops} (3) that
 \[\partial_{\x^A}(f_k^{d})=\sum_{\substack{\beta_1,\ldots,\beta_{d}\in\N^n\\ \beta_1+\ldots+\beta_{d}=A}}\partial_{\x^{\beta_1}}(f_k)\cdots \partial_{\x^{\beta_{d}}}(f_k)\]
 Now let $\beta_1,\ldots,\beta_{d}\in\N^n$ be multi-indices such that $\sum_{i=1}^{d}\beta_i=A$. This implies that either $\beta_1=\ldots=\beta_{d}=\alpha$ or there is an index $i$ such that $|\beta_i|<|\alpha|$. By the induction hypothesis, this implies that
 \[(\partial_{\x^\alpha}(f_k))^{d}\in\Jcc.\]
 Consequently, $\partial_{\x^\alpha}(f_k)\in\Jcc$.
 
 (2): We know that $\Diff_{K[[\x]]/K}^{c!-1}(J_{-1})$ is generated by elements of the form $\partial_{\x^\alpha}\big(f_i^{\frac{c!}{c-i}}\big)$ with $i<c$ and $|\alpha|<c!$. As argued before, we can write this as
 \[\partial_{\x^\alpha}(f_i^{d})=\sum_{\substack{\beta_1,\ldots,\beta_{d}\in\N^n\\ \beta_1+\ldots+\beta_{d}=\alpha}}\partial_{\x^{\beta_1}}(f_i)\cdots \partial_{\x^{\beta_{d}}}(f_i)\]
 where $d=\frac{c!}{c-i}$. Notice that $\beta_1+\ldots+\beta_d=\alpha$ implies that there is an index $1\leq j\leq d$ such that $|\beta_j|<c-i$. Since
 \[\partial_{\x^{\beta_j}}\partial_{z^i}(f)=\partial_{\x^{\beta_j}}(f_i)+zF_{i,\beta_j}\]
 for some element $F_{i,\beta_j}$ and $z\in \Jc$ by assumption, this proves that $\Jcc\subseteq\Jc$. Hence, we have proved the assertion.
\end{proof}

\subsection{Stability under multiplication of $z$ with a unit}

 The following lemma will be useful whenever we consider how the expansion of an element $f\in K[[\x,z]]$ changes under coordinate changes in $z$.

\begin{lemma} \label{coordinate_change_g_z}
 Let $R=K[[\x,z]]$ and $f\in R$ an element. Let $f$ have the expansion $f=\sum_{i\geq0}f_iz^i$ with $f_i\in K[[\x]]$. Consider a change of coordinates $z\mapsto \wt z$ that will be specified in the following. Let $f$ have the expansion $f=\sum_{i\geq0}\wt f_i\wt z^i$ with $\wt f_i\in K[[\x]]$. The following hold:
 \begin{enumerate}[(1)]
  \item If $z=\wt z+g$ with $g\in K[[\x]]$, then
 \[\wt f_i=\sum_{k\geq i}\binom{k}{i}f_kg^{k-i}.\]
 \item If $z=u\wt z$ for a unit $u\in R^*$ with expansion $u=\sum_{j\geq0}u_jz^j$, then
 \[\wt f_i=\sum_{k=0}^i f_k\sum_{\substack{\alpha\in\N^k\\|\alpha|=i-k}}u_\alpha\]
 where $u_\alpha=\prod_{o=1}^k u_{\alpha_o}$.
 \end{enumerate}
\end{lemma}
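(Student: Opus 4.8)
The plan is to prove both parts by the same routine: substitute the coordinate change into the defining expansion $f=\sum_{i\geq 0}f_iz^i$, re-expand everything in powers of $\wt z$, and then read off $\wt f_i$ using uniqueness of the expansion of $f$ in $K[[\x]][[\wt z]]$. This uniqueness is available because $(\x,\wt z)$ is again a regular system of parameters for $R$ — which is where one implicitly uses that $g\in(x_1,\dots,x_n)$ in part (1) and that $u$ is a unit in part (2). So there is no conceptual content beyond a careful bookkeeping of which monomials contribute to each power of $\wt z$.

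For part (1) I would write
\[
f=\sum_{k\geq 0}f_k z^k=\sum_{k\geq 0}f_k(\wt z+g)^k=\sum_{k\geq 0}\sum_{i=0}^{k}\binom{k}{i}f_kg^{k-i}\,\wt z^i
\]
by the binomial theorem, and then interchange the two summations. Since $g$ has no constant term, $g^{k-i}\wt z^i$ lies in $\m_R^{k}$, so for every $N$ only finitely many summands fall outside $\m_R^{N}$ and the rearrangement is legitimate. Collecting the coefficient of $\wt z^i$ then gives exactly $\wt f_i=\sum_{k\geq i}\binom{k}{i}f_kg^{k-i}$.

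For part (2) I would expand $u$ in powers of the new variable as $u=\sum_{j\geq 0}u_j\wt z^j$ with $u_j\in K[[\x]]$ and $u_0\in K[[\x]]^{*}$ — these $u_j$ are the coefficients entering the claimed formula. Then $z^k=(u\wt z)^k=u^k\wt z^k$, and the multinomial expansion of $u^k$ gives
\[
z^k=\B(\sum_{j\geq 0}u_j\wt z^j\B)^{k}\wt z^k=\sum_{\alpha\in\N^k}u_\alpha\,\wt z^{\,k+|\alpha|},\qquad u_\alpha=\prod_{o=1}^{k}u_{\alpha_o},
\]
once more a summable family because the terms with $|\alpha|>0$ lie in ever higher powers of $\m_R$. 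Substituting into $f=\sum_k f_k z^k$, the monomials contributing to $\wt z^i$ are precisely those indexed by pairs $(k,\alpha)$ with $0\leq k\leq i$ and $|\alpha|=i-k$, which yields $\wt f_i=\sum_{k=0}^{i}f_k\sum_{\alpha\in\N^k,\,|\alpha|=i-k}u_\alpha$.

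The only step that needs any care is the justification of these rearrangements of infinite sums, and the main obstacle — such as it is — is keeping the index bookkeeping straight in part (2); I would guard against errors there by first writing $z^k$ as a single sum over multi-indices $\alpha\in\N^k$ before substituting and collecting. Everything else is immediate.
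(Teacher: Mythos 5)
Your proof is correct and follows the paper's own computation step for step: substitute, expand (binomial theorem in (1), multinomial expansion in (2)), and collect the coefficient of each power of $\wt z$; the extra care you take in justifying the rearrangement of infinite sums is harmless but not strictly required for formal power series. One thing you got right that the statement obscures: in (2) you expand $u$ in powers of $\wt z$ as $u=\sum_{j\geq0}u_j\wt z^j$ and say these $u_j$ are the ones in the claimed formula, whereas the lemma as written expands $u$ in powers of $z$—the formula is in fact only valid for the $\wt z$-expansion (for $u=1+z$, $f=z$ the $z$-coefficients give $\wt f_3=0$ while the true value is $1$), and the paper's own proof silently makes the same switch, so you have implicitly corrected a typo in the lemma statement.
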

\begin{proof}
 (1): The proof is a computation.
 \[f=\sum_{i\geq0}f_iz^i=\sum_{i\geq0}f_i(\wt z+g)^i\]
 \[=\sum_{i\geq0}f_i\sum_{k=0}^i\binom{i}{k}g^{i-k}\wt z^k=\sum_{k\geq0}\sum_{i\geq k}\binom{i}{k}f_ig^{i-k}\wt z^k.\]
 (2): Again, the proof is a computation.
 \[f=\sum_{i\geq0}f_i(\sum_{j\geq0}u_j\wt z^k)^i\wt z^i\]
 \[=\sum_{i\geq0}f_i\sum_{\alpha\in\N^i}u_\alpha \wt z^{i+|\alpha|}\]
 \[=\sum_{k\geq0}\sum_{i=0}^kf_i\sum_{\substack{\alpha\in\N^i\\|\alpha|=k-i}}u_\alpha \wt z^k.\]
\end{proof}

\begin{proposition} \label{coeff_ideal_well_def}
 Let $R=K[[\x,z]]$, $J\subseteq R$ an ideal and $c\in\N$ a non-negative integer. Let $u\in R^*$ be a unit and set $\wt z=uz$. Then $\coeff_{(\x,z)}^c(J)=\coeff^c_{(\x,\wt z)}(J)$.
\end{proposition}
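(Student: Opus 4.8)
The plan is to reduce the statement to the case of a single unit factor handled by Lemma~\ref{coordinate_change_g_z}(2), and to show directly that the two coefficient ideals have the same generators up to taking radicals in the sense required by the definition. Write each $f\in J$ as $f=\sum_{i\geq 0} f_i z^i$ with $f_i\in K[[\x]]$, and as $f=\sum_{i\geq 0}\wt f_i\wt z^i$ with $\wt f_i\in K[[\x]]$, where $z=u\wt z$ for a unit $u$ with expansion $u=\sum_{j\geq 0}u_j z^j$; note $u_0\in K^*$ since $u$ is a unit. By Lemma~\ref{coordinate_change_g_z}(2) we have the explicit formula $\wt f_i=\sum_{k=0}^i f_k\sum_{|\alpha|=i-k,\,\alpha\in\N^k}u_\alpha$. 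The key observation is that the $k=i$ term of this sum is $f_i u_0^{\,i}$, so modulo the lower-index coefficients $f_0,\dots,f_{i-1}$ the coefficient $\wt f_i$ is just $f_i$ multiplied by the unit $u_0^{\,i}$.

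First I would prove the inclusion $\coeff^c_{(\x,\wt z)}(J)\subseteq\coeff^c_{(\x,z)}(J)$ by induction on $i$ (for $i<c$), showing that $\wt f_i^{\,c!/(c-i)}\in\coeff^c_{(\x,z)}(J)$ for every $f\in J$. For $i=0$ we have $\wt f_0=f_0$, so this is immediate. For the inductive step, using the formula above write $\wt f_i = f_i u_0^{\,i} + \sum_{k<i} f_k w_{k,i}$ with $w_{k,i}\in K[[\x]]$ the appropriate power-series coefficients. Raising to the power $d_i:=c!/(c-i)$ and expanding the multinomial, every term is a product of factors, each of which is (a unit or power series multiple of) some $f_k$ with $k\leq i$; grouping, the pure term $f_i^{\,d_i}u_0^{\,i d_i}$ lies in $\coeff^c_{(\x,z)}(J)$ by definition, and every mixed term contains a factor $f_k$ with $k<i$ raised to a positive power, so a suitable power of it lies in $\coeff^c_{(\x,z)}(J)$ by the inductive hypothesis applied to $f_k^{\,c!/(c-k)}$; since the coefficient ideal is an ideal of the power series ring and the exponents of the $f_k$ can be matched against $c!/(c-k)$ using $k<i<c$, the mixed terms lie in $\coeff^c_{(\x,z)}(J)$ as well. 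Hence $\wt f_i^{\,d_i}\in\coeff^c_{(\x,z)}(J)$, completing the induction. The reverse inclusion follows by symmetry: $z=u^{-1}\cdot(-1)\cdot(-\wt z)$, or more cleanly $z = u^{-1}\wt z$ with $u^{-1}$ again a unit, so interchanging the roles of $z$ and $\wt z$ gives $\coeff^c_{(\x,z)}(J)\subseteq\coeff^c_{(\x,\wt z)}(J)$, and equality follows.

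The main obstacle I expect is the careful bookkeeping in the inductive step: one must check that in the multinomial expansion of $\wt f_i^{\,d_i}$ each mixed term, which is a product $\prod f_{k_j}^{\,e_j}$ with $\sum e_j = d_i$ and at least one $k_j<i$, can genuinely be absorbed into $\coeff^c_{(\x,z)}(J)$. The point is that if $k<i<c$ then $c!/(c-k)$ divides $d_i$ (both are of the form $c!/(c-\text{index})$, and $c-k > c-i$, so $c!/(c-k)$ is a divisor of $c!$ but one needs $d_i/(c!/(c-k))$ to be a positive integer, which holds since $c!/(c-i)$ is a multiple of $c!/(c-k)$ as $(c-k)\mid(c-i)!\cdot(\text{stuff})$—in any case $c!/(c-k)\le c!/(c-i)=d_i$), so a high enough power of $f_k$ appears. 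This is precisely the kind of elementary divisibility argument about the exponents $c!/(c-i)$ that already underlies the definition of the coefficient ideal, so it should go through; it is just somewhat tedious. The case $c=0$ is trivial since both ideals are $0$ by definition.
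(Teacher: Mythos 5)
Your approach and the paper's agree on the reduction to Lemma~\ref{coordinate_change_g_z}(2), but they diverge immediately afterward, and unfortunately your version has a genuine gap. You attempt to show $\wt f_i^{\,c!/(c-i)}\in\coeff^c_{(\x,z)}(J)$ by expanding the multinomial and absorbing each term into the ideal individually. The problem is that a mixed term $\prod_{k\le i} f_k^{e_k}$ with $\sum_k e_k = c!/(c-i)$ and at least one $e_k>0$ for $k<i$ need \emph{not} have any single exponent $e_k$ reaching the threshold $c!/(c-k)$, so it need not be a multiple of any of the basic generators $f_k^{c!/(c-k)}$. Concretely, take $c=3$ and $i=2$: the formula gives $\wt f_2 = f_1 u_1 + f_2 u_0^2$, and expanding $\wt f_2^{\,6}$ produces terms such as $f_1^2 f_2^4\,u_1^2 u_0^8$. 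Here $e_1=2<3=c!/(c-1)$ and $e_2=4<6=c!/(c-2)$, so $f_1^2 f_2^4$ is not visibly a multiple of $f_0^2$, $f_1^3$, or $f_2^6$; if $f_0,f_1,f_2$ are independent variables of $K[[\x]]$, this term genuinely fails to lie in the ideal they generate. Your divisibility observation that $c!/(c-k)\le c!/(c-i)$ for $k<i$ is true but irrelevant to this obstruction: divisibility of the total exponent does not control the individual exponents appearing in a mixed multinomial term.

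The deeper issue is that your argument implicitly treats $\coeff^c_{(\x,z)}(J)$ as if it were generated only by the powers of the coefficients of the single fixed element $f$, whereas the definition quantifies over \emph{all} $f\in J$. The paper exploits exactly this: since $J$ is an ideal, $v\cdot f\in J$ for any $v\in R$. Choosing $v=\sum_k v_k z^k$ with $v_{i-k}=\sum_{\alpha\in\N^k,\,|\alpha|=i-k}u_\alpha$ for $k\le i$ (and $v_k=0$ otherwise), one computes directly that the $i$-th $z$-coefficient of $v\cdot f$ is precisely $\wt f_i$. Hence $\wt f_i^{\,c!/(c-i)}$ is one of the defining generators of $\coeff^c_{(\x,z)}(J)$ coming from the element $vf\in J$, and no multinomial expansion is needed at all. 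This trick also sidesteps any dependence on the characteristic of $K$, which would otherwise creep into an attempt to isolate individual multinomial terms by varying the unit. To repair your proof, you would need to introduce this auxiliary element $vf\in J$ and work with its coefficients rather than with the coefficients of $f$ alone.
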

\begin{proof}
 Let $f\in J$ have the expansion $f=\sum_{i\geq0}f_iz^i$ with $f_i\in K[[\x]]$. By Lemma \ref{coordinate_change_g_z} (2) we know that $f$ has the expansion $f=\sum_{i\geq0}\wt f_i\wt z^i$ where
 \[\wt f_i=\sum_{k=0}^i f_k\sum_{\substack{\alpha\in\N^k\\|\alpha|=i-k}}u_\alpha.\]
 Consequently,
 \[\coeff^c_{(\x,\wt z)}(J)=(\wt f_i^{\frac{c!}{c-i}}:f\in J,i<c).\]
 Fix an element $f\in J$ and an integer $i<c$. We want to show that $\wt f_i^{\frac{c!}{c-i}}\in\coeff_{(\x,z)}(J)$. To this end, define a power series $v=\sum_{k\geq0}v_k z^k$ where 
 \[v_{i-k}=\sum_{\substack{\alpha\in\N^k\\|\alpha|=i-k}}u_\alpha\]
 for $k=0,\ldots,i$ and $v_k=0$ for $k>i$. Then the power series $v\cdot f$ has the expansion 
 \[v\cdot f=\sum_{j\geq0}\sum_{k=0}^jf_kv_{j-k}z^j.\]
 In particular, the $i$-th term in the expansion of $v\cdot f$ is
 \[\sum_{k=0}^if_kv_{i-k}=\sum_{k=0}^i f_k\sum_{\substack{\alpha\in\N^k\\|\alpha|=i-k}}u_\alpha=\wt f_i.\]
 This proves that $\coeff^c_{(\x,\wt z)}(J)\subseteq\coeff^c_{(\x,z)}(J)$. By a symmetric argument, equality of the coefficient ideals holds.
\end{proof}

\begin{remark}
 The analogue of Proposition \ref{coeff_ideal_well_def} is not true when applied to the remaining parameters $\x=(x_1,\ldots,x_n)$. To see this, consider the following example:
 
 Let $R=K[[x,y,z]]$ and 
 \[J=(z^2+y(y+x)).\]
Then 
\[\coeff_{(x,y,z)}^2(J)=(y(y+x)).\] 
 
 Now consider a change of coordinates $x=x_1(1+z)$. Notice that $x$ and $x_1$ define the same residue modulo $(z)$. Then 
 \[J=(z^2+x_1yz+y(y+x_1)).\] 
 Consequently, 
 \[x_1^2y^2\in\coeff_{(x_1,y,z)}^2(J).\]
 This proves that $\coeff^2_{(x,y,z)}(J)\neq\coeff^2_{(x_1,y,z)}(J)$.
\end{remark}

\subsection{Relationship with the order function and the directrix}

 The remaining results in this section will establish the basic relationship between the coefficient ideal, the order function and the directrix.

\begin{lemma} \label{coeff_is_zero}
 Let $R=K[[\x,z]]$ and $J\subseteq R$ an ideal of order $\ord J=c$. Then $\coeff^c_{(\x,z)}(J)=0$ holds if and only if $J=(z^c)$.
\end{lemma}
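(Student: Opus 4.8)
The statement is an "if and only if", so I would prove the two implications separately. The easy direction is that $J = (z^c)$ implies $\coeff^c_{(\x,z)}(J) = 0$: writing the single generator $f = z^c$ in the form $f = \sum_{i\geq 0} f_i z^i$ we have $f_c = 1$ and $f_i = 0$ for all $i < c$, so every element $g \in J$ is a multiple $g = h \cdot z^c$ with $h \in K[[\x,z]]$, and expanding $h = \sum_{j} h_j z^j$ gives $g = \sum_{j} h_j z^{c+j}$, i.e. $g_i = 0$ for $i < c$. Hence all the generators $g_i^{c!/(c-i)}$ of $\coeff^c_{(\x,z)}(J)$ with $i < c$ vanish, so the coefficient ideal is zero.

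For the converse, suppose $\coeff^c_{(\x,z)}(J) = 0$. By definition this means that for every $f \in J$, writing $f = \sum_{i\geq 0} f_i z^i$ with $f_i \in K[[\x]]$, we have $f_i^{c!/(c-i)} = 0$ in $K[[\x]]$ for all $i < c$; since $K[[\x]]$ is a domain this forces $f_i = 0$ for all $i < c$. Therefore every element of $J$ lies in the ideal $(z^c)$ of $R = K[[\x,z]]$, i.e. $J \subseteq (z^c)$. It remains to show the reverse inclusion $(z^c) \subseteq J$, or equivalently that $z^c \in J$. Here I would use the hypothesis $\ord J = c$: there exists some $f \in J$ with $\ord f = c$. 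By the previous paragraph this $f$ has the form $f = \sum_{i \geq c} f_i z^i = z^c \cdot \big(f_c + \sum_{i > c} f_i z^{i-c}\big)$, and since $\ord f = c$ while $f \in (z^c)$ and the factor $z^c$ already contributes order $c$, the remaining factor $u := f_c + \sum_{i>c} f_i z^{i-c}$ must have order $0$, i.e. $u$ is a unit in $R$. (If $u$ were not a unit then $\ord u \geq 1$ and $\ord f = c + \ord u > c$, a contradiction.) Hence $z^c = u^{-1} f \in J$, giving $(z^c) \subseteq J$ and so $J = (z^c)$.

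The only slightly delicate point is the argument that $u$ is a unit: one must be careful that $f_c$ need not itself be a unit as an element of $K[[\x]]$, but $f_c + \sum_{i>c} f_i z^{i-c}$ as an element of $R = K[[\x,z]]$ has order equal to $\ord f - c = 0$ because multiplication by the nonzerodivisor $z^c$ is order-additive in the power series ring; concretely, $\ord_R(f) = \ord_R(z^c) + \ord_R(u) = c + \ord_R(u)$, forcing $\ord_R(u) = 0$, so $u$ has nonzero constant term and is invertible. I do not expect any real obstacle here — it is essentially bookkeeping with power series expansions and the fact that $K[[\x]]$ and $K[[\x,z]]$ are domains — so the main thing to get right is simply to invoke $\ord J = c$ at the correct moment to produce the element $f$ with a unit as its "$z^c$-cofactor."
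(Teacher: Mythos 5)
Your proposal is correct and follows essentially the same route as the paper's proof: deduce $f_i=0$ for $i<c$ from $\coeff^c_{(\x,z)}(J)=0$, conclude $J\subseteq(z^c)$, and use $\ord J=c$ to get the reverse inclusion. You simply make explicit the last step (that an element of order $c$ in $(z^c)$ has a unit as its $z^c$-cofactor), which the paper dismisses with ``it is clear.''
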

\begin{proof}
 
 By the definition of the coefficient ideal it is clear that $J=(z^c)$ implies that $\coeff^c_{(\x,z)}(J)=0$.
 
 Now assume that $\coeff^c_{(\x,z)}(J)=0$. Let $f\in J$ have expansion $f=\sum_{i\geq0}f_iz^i$. Then by the definition of the coefficient ideal, $f_i=0$ for $i<c$. Hence, $J\subseteq (z^c)$. But since $\ord J=c$, it is clear that $J=(z^c)$. 
\end{proof}

\begin{lemma} \label{ord_coeff}
 Let $R=K[[\x,z]]$, $J\subseteq R$ an ideal and $c>0$ an integer. Set $J_{-1}=\coeff^c_{(\x,z)}(J)$. Let each element $f\in J$ have an expansion $f=\sum_{i\geq0}f_iz^i$ with $f_i\in K[[\x]]$. Then
 \[\ord J_{-1}=\min_{\substack{f\in J\\ i<c}}\frac{c!}{c-i}\ord f_i.\]
 Consequently, for all elements $f\in J$ and indices $i\geq0$ the inequality
 \[\ord f_i\geq \frac{c-i}{c!}\ord J_{-1}\]
 holds.
\end{lemma}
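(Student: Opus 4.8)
The plan is to recognize that this lemma is little more than a repackaging of two standard facts: the order function on $K[[\x]]$ is a valuation, and the order of an ideal equals the minimum of the orders of the members of any generating family. First I would recall that $J_{-1}=\coeff_{(\x,z)}^c(J)$ is, by definition, the ideal of $K[[\x]]$ generated by the family $\bigl\{f_i^{c!/(c-i)}:f\in J,\ i<c\bigr\}$. Then I would note that for any ideal $I=(a_\lambda:\lambda\in\Lambda)$ in $K[[\x]]$ one has $\ord I=\min_\lambda\ord a_\lambda$: every element of $I$ is a finite sum $\sum_j g_j a_{\lambda_j}$, and since $\ord$ is a valuation we get $\ord\bigl(\sum_j g_j a_{\lambda_j}\bigr)\geq\min_j\bigl(\ord g_j+\ord a_{\lambda_j}\bigr)\geq\min_\lambda\ord a_\lambda$, while conversely each $a_\lambda$ lies in $I$, so $\ord I\leq\ord a_\lambda$; the minimum exists because $\N_\infty$ is well-ordered. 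This is exactly the statement already noted in the excerpt for finitely many generators, extended verbatim to arbitrary families.

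Applying this with the generators $f_i^{c!/(c-i)}$, and using once more that $\ord$ is a valuation (hence $\ord(h^n)=n\,\ord h$), yields
\[\ord J_{-1}=\min_{\substack{f\in J\\ i<c}}\ord\bigl(f_i^{c!/(c-i)}\bigr)=\min_{\substack{f\in J\\ i<c}}\frac{c!}{c-i}\,\ord f_i,\]
which is the first assertion. (In the degenerate case $J=(z^c)$ one has $J_{-1}=0$ by Lemma \ref{coeff_is_zero}, and both sides equal $\infty$ under the usual conventions.) For the consequence I would fix $f\in J$ and $i\geq0$ and split into two cases: if $i<c$, then $c!/(c-i)$ is a positive integer, so the displayed equality gives $\frac{c!}{c-i}\ord f_i\geq\ord J_{-1}$, hence $\ord f_i\geq\frac{c-i}{c!}\ord J_{-1}$; if $i\geq c$, then $\frac{c-i}{c!}\ord J_{-1}\leq0\leq\ord f_i$, so the inequality is automatic.

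Since every step is an immediate invocation of the valuation property of $\ord$, there is no genuine obstacle here; the only points worth a sentence in the write-up are the passage from finitely many to arbitrarily many generators in the identity $\ord I=\min_\lambda\ord a_\lambda$ and the bookkeeping of the range $i\geq c$ in the second assertion, both of which are dealt with above.
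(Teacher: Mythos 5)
Your proof is correct and matches the paper's approach: the paper simply declares the lemma "immediate from the definition of the coefficient ideal," and what you have written is exactly the standard spelling-out of why — $\ord$ is a valuation, the order of an ideal equals the minimum of the orders over any generating family, and $\ord(h^n)=n\,\ord h$. The only addition worth noting is your explicit handling of the degenerate case $J_{-1}=0$ and the trivial range $i\geq c$, both of which are harmless and consistent with the paper's conventions.
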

\begin{proof}
 This is immediate from the definition of the coefficient ideal.
\end{proof}

\begin{lemma} \label{ogeqc!}
 Let $R=K[[\x,z]]$, $J\subseteq R$ an ideal and $c\leq \ord J$ a positive integer. Then 
 \[\ord\coeff^c_{(\x,z)}(J)\geq c!.\]
\end{lemma}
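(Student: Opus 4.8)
The plan is to reduce the statement to an elementary inequality on the orders of the coefficient series $f_i$, using the formula for $\ord\coeff^c_{(\x,z)}(J)$ furnished by Lemma \ref{ord_coeff}. Writing each $f\in J$ as $f=\sum_{i\geq0}f_iz^i$ with $f_i\in K[[\x]]$, that lemma gives
\[\ord\coeff^c_{(\x,z)}(J)=\min_{\substack{f\in J\\ i<c}}\frac{c!}{c-i}\ord f_i,\]
so it is enough to verify that $\frac{c!}{c-i}\ord f_i\geq c!$ for every $f\in J$ and every $i<c$; since $c!/(c-i)>0$ this is equivalent to $\ord f_i\geq c-i$.

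First I would record that the order in $R=K[[\x,z]]=K[[\x]][[z]]$ can be read off the $z$-expansion term by term: for $f=\sum_{i\geq0}f_iz^i$ one has $\ord f=\min_{i\geq0}\bigl(\ord f_i+i\bigr)$, because the monomials occurring in $f_iz^i$ for distinct $i$ are pairwise distinct, so no cancellation of lowest-degree terms can occur. In particular $\ord f_i+i\geq\ord f$ for all $i$.

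Then I would invoke the hypothesis $c\leq\ord J$, which says $\ord f\geq c$ for every $f\in J$. Combining the two observations yields $\ord f_i\geq\ord f-i\geq c-i$ for all $i<c$ (indeed for all $i$), and substituting back into the displayed formula gives $\ord\coeff^c_{(\x,z)}(J)\geq c!$, as desired.

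I do not expect a genuine obstacle: the argument is essentially a two-line computation. The only point deserving an explicit word is the term-by-term computation of $\ord f$ from the $z$-expansion, and that is immediate once one recalls that $\ord f$ is the least total degree of a monomial appearing in $f$ and that the supports of $f_iz^i$ for different $i$ are disjoint.
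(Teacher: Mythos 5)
Your proof is correct and takes essentially the same route as the paper: both invoke Lemma \ref{ord_coeff} and the observation that $\ord f\geq c$ forces $\ord f_i\geq c-i$. You simply spell out the disjoint-support argument behind $\ord f_i\geq c-i$, which the paper leaves implicit.
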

\begin{proof}
 Let each element $f\in J$ have an expansion $f=\sum_{i\geq0}f_iz^i$ with $f_i\in K[[\x]]$. Since $\ord f\geq c$, we know that $\ord f_i\geq c-i$. Then by Lemma \ref{ord_coeff},
 \[\ord\coeff^c_{(\x,z)}(J)=\min_{\substack{f\in J\\ i<c}}\frac{c!}{c-i}\underbrace{\ord f_i}_{\geq c-i}\geq c!.\]
\end{proof}

\begin{lemma} \label{coeff_ideal_and_directrix}
 Let $R=K[[\x,z]]$ and $J\subseteq R$ an ideal of order $c=\ord J$. The following two statements are equivalent:
 \begin{enumerate}[(1)]
  \item $\ord\coeff^c_{(\x,z)}(J)>c!$.
  \item $\tau(J)=1$ and $\Dir(J)=(\ol z)$.
 \end{enumerate}
\end{lemma}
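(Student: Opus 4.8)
The plan is to prove the two implications separately, in each case translating the numerical condition on $\ord\coeff^c_{(\x,z)}(J)$ through Lemma \ref{ord_coeff} into information about the orders of the coefficients $f_i$ in the expansions $f=\sum_{i\geq0}f_iz^i$ of elements $f\in J$, and then comparing this with the description of the directrix via the inclusion $J\subseteq(z)^c+m^{c+1}$, where $m$ denotes the maximal ideal of $R$. Throughout I write $J_{-1}=\coeff^c_{(\x,z)}(J)$ and recall from Lemma \ref{ogeqc!} that $\ord J_{-1}\geq c!$ always holds, so the content of the claim is exactly to decide when this inequality is strict (tacitly, $c=\ord J\geq1$, as everywhere in this section).

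For $(1)\Rightarrow(2)$, assume $\ord J_{-1}>c!$. By Lemma \ref{ord_coeff} this forces $\tfrac{c!}{c-i}\ord f_i>c!$, hence $\ord f_i>c-i$, for every $f\in J$ and every $i<c$. I would then pick an arbitrary $f\in J$ with $\ord f=c$ and read off $\init(f)$: the homogeneous degree-$c$ part of $f_iz^i$ is the degree-$(c-i)$ part of $f_i$ times $z^i$, which vanishes for $i<c$ by the order estimate, while for $i=c$ it is the constant term of $f_c$ times $z^c$. Hence $\init(f)\in K\cdot z^c\subseteq(z)^c$, and an element of $J$ of order $>c$ lies in $m^{c+1}$ trivially; therefore $J\subseteq(z)^c+m^{c+1}$. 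Consequently the line $(\ol z)$ generates $\minit(J)$, so $\Dir(J)\subseteq(\ol z)$ by minimality of the directrix (Lemma \ref{directrix_exists} and the definition). Since $\Dir(J)=0$ would give $J\subseteq m^{c+1}$, contradicting $\ord J=c$, we conclude $\tau(J)=1$ and $\Dir(J)=(\ol z)$.

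For $(2)\Rightarrow(1)$, assume $\tau(J)=1$ and $\Dir(J)=(\ol z)$. By Lemma \ref{technical_directrix_lemma}, applied to a regular system of parameters for which the directrix is spanned by $\ol z$, one obtains $J\subseteq(z)^c+m^{c+1}$. The key elementary fact I would isolate is that any $g\in(z)^c+m^{c+1}$ with $\ord g=c$ satisfies $\init(g)\in K\cdot z^c$: writing $g=az^c+h$ with $a\in R$ and $h\in m^{c+1}$, and splitting off the constant term $a_0$ of $a$, gives $g=a_0z^c+(\text{element of }m^{c+1})$. Now argue by contradiction: if some $f\in J$ had $\ord f_{i_0}\leq c-i_0$ for an index $i_0<c$, then since $f_{i_0}z^{i_0}$ is a summand of $f$ and $\ord f\geq c$ we would in fact have $\ord f_{i_0}=c-i_0$ and $\ord f=c$, so $\init(f)$ would contain a nonzero monomial $\x^\alpha z^{i_0}$ with $|\alpha|=c-i_0\geq1$ (no cancellation is possible, as contributions to $\init(f)$ with a fixed $z$-exponent come only from a single $f_j$), contradicting $\init(f)\in K\cdot z^c$. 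Hence $\ord f_i>c-i$ for all $f\in J$ and $i<c$, and Lemma \ref{ord_coeff} gives $\ord J_{-1}=\min_{f,\,i<c}\tfrac{c!}{c-i}\ord f_i\geq\min_{i<c}\tfrac{c!}{c-i}(c-i+1)>c!$, with the degenerate case $J=(z^c)$ (where $J_{-1}=0$, $\ord J_{-1}=\infty$) also covered.

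There is no serious obstacle here: once the dictionary relating $\ord J_{-1}$, the orders of the coefficients $f_i$, and the inclusion $J\subseteq(z)^c+m^{c+1}$ is set up, the argument is bookkeeping. The one point that deserves a moment of care is the verification that a homogeneous form of degree $c$ lying in $(z)^c+m^{c+1}$ must be a scalar multiple of $z^c$ — which rests on the fact that the degree-$c$ part of $(z)^c$ is exactly $K\cdot z^c$ — but this is essentially the same computation that already underlies Lemma \ref{technical_directrix_lemma}.
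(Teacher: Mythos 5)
Your proof is correct and follows essentially the same route as the paper: translate $\ord\coeff^c_{(\x,z)}(J)>c!$ via Lemma~\ref{ord_coeff} into the strict bound $\ord f_i>c-i$ for all $f\in J$, $i<c$, and then identify that condition with $\init(f)\in K\cdot z^c$ for every $f\in J$ of order $c$, which is the content of $\Dir(J)=(\ol z)$. You prove $(1)\Rightarrow(2)$ directly where the paper argues by contrapositive, and you make explicit the no-cancellation remark and the non-degeneracy $\Dir(J)\neq 0$, but these are spelled-out versions of steps the paper leaves implicit rather than a different argument.
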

\begin{proof}
 $(2)\implies (1)$: Let each element $f\in J$ have the expansion $f=\sum_{i\geq0}f_iz^i$ with $f_i\in K[[\x]]$. It is clear that $\ord f_i\geq c-i$. Now assume that there is an index $i<c$ such that $\ord f_i=c-i$. Then $\ord f=c$ and $\init(f)\neq z^c$. But this is a contradiction to the fact that $\Dir(J)=(\ol z)$. Hence, $\ord f_i>c-i$ for all $f\in J$ and $i<c$. Thus, by Lemma \ref{ord_coeff} we conclude that $\ord\coeff^c_{(\x,z)}(J)>c!$.
 
 $(1)\implies (2)$: Assume that $\Dir(J)\neq(\ol z)$. Then there is an element $f\in J$ such that $\ord f=c$ and $\init(f)\neq z^c$. Let $f$ have the expansion $f=\sum_{i\geq0}f_iz^i$ with $f_i\in K[[\x]]$. Then there is an index $i<c$ such that $\ord f_i=c-i$. Hence, by Lemma \ref{ord_coeff} we know that $\ord\coeff_{(\x,z)}^c(J)=c!$. 
\end{proof}

\begin{lemma} \label{coeff_ideal_of_powers}
 Let $R=K[[\x,z]]$, $J\subseteq R$ an ideal and $c,n>0$ positive integers. Then
 \[\ord \coeff_{(\x,z)}^{nc}(J^n)=\frac{(nc)!}{c!}\ord \coeff_{(\x,z)}^c(J).\]
\end{lemma}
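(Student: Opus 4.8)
The plan is to translate both sides into statements about orders of coefficients via Lemma~\ref{ord_coeff} and then prove two matching inequalities. Write $M:=\ord\coeff_{(\x,z)}^c(J)$, so by Lemma~\ref{ord_coeff} one has $M=\min\{\tfrac{c!}{c-i}\ord f_i:f\in J,\ i<c\}$ (with $f=\sum_{i\ge0}f_iz^i$, $f_i\in K[[\x]]$), and $M=\infty$ precisely when $J\subseteq(z^c)$. In that degenerate case $J^n\subseteq(z^{nc})$, so $\coeff_{(\x,z)}^{nc}(J^n)=0$ and both sides are $\infty$; hence I would from now on assume $M<\infty$, so $M\in\N$, and set $\delta_i:=\tfrac{(c-i)M}{c!}$ for $i\in\N$ (a nonnegative rational for $i<c$, nonpositive for $i\ge c$). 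By Lemma~\ref{ord_coeff} applied to $J^n$ and $nc$, it then remains to prove $\min\{\tfrac{(nc)!}{nc-j}\ord g_j:g\in J^n,\ j<nc\}=\tfrac{(nc)!}{c!}M$.

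First I would establish the inequality $\ge$. Every $g\in J^n$ is an $R$-linear combination of $n$-fold products $h_1\cdots h_n$ with $h_l\in J$, so for $j<nc$ its $z^j$-coefficient $g_j$ is a sum of terms $(a)_{m_0}\cdot\prod_{l=1}^n(h_l)_{m_l}$ with $m_0+\cdots+m_n=j$, where $(\cdot)_{m}$ denotes the $z^m$-coefficient. The ``consequently'' part of Lemma~\ref{ord_coeff} gives $\ord(h_l)_{m_l}\ge\delta_{m_l}$ (trivially so when $m_l\ge c$), and $\ord(a)_{m_0}\ge0$; since $\sum_{l=1}^n(c-m_l)=nc-(j-m_0)\ge nc-j$, each such term has $\x$-order at least $\tfrac{nc-j}{c!}M$, whence $\ord g_j\ge\tfrac{nc-j}{c!}M$ and $\tfrac{(nc)!}{nc-j}\ord g_j\ge\tfrac{(nc)!}{c!}M$. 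Taking the minimum gives $\ge$.

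The reverse inequality is the crux, and the difficulty I anticipate is cancellation among the lowest-order terms when one forms an $n$-th power. I would defuse this by choosing the minimiser carefully: let $i_0$ be the \emph{smallest} index $i<c$ for which some $f\in J$ attains $\tfrac{c!}{c-i}\ord f_i=M$, and fix such an $f^*$, so $\ord f^*_{i_0}=\delta_{i_0}\in\N$. Minimality of $i_0$ forces $\tfrac{c!}{c-i}\ord f^*_i>M$ for every $i<i_0$; as $c!\ord f^*_i$ and $(c-i)M$ are integers, this sharpens to $\ord f^*_i\ge\delta_i+\tfrac1{c!}$ for $i<i_0$, while $\ord f^*_i\ge\delta_i$ holds for all $i$. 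Then I would take $g:=(f^*)^n\in J^n$ and examine its $z^{ni_0}$-coefficient $\bigl((f^*)^n\bigr)_{ni_0}=\sum_{i_1+\cdots+i_n=ni_0}\prod_{l=1}^n f^*_{i_l}$: the tuple $(i_0,\dots,i_0)$ contributes $(f^*_{i_0})^n$ of $\x$-order $n\delta_{i_0}$, whereas any other tuple summing to $ni_0$ must have an entry $<i_0$ and therefore, using $\sum_l\delta_{i_l}=\tfrac{M}{c!}(nc-ni_0)=n\delta_{i_0}$, contributes a term of $\x$-order $\ge n\delta_{i_0}+\tfrac1{c!}>n\delta_{i_0}$. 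Hence no cancellation occurs at the bottom, so $\ord\bigl((f^*)^n\bigr)_{ni_0}=n\delta_{i_0}=\tfrac{(nc-ni_0)M}{c!}$; since $ni_0<nc$, this element of $J^n$ witnesses $\ord\coeff_{(\x,z)}^{nc}(J^n)\le\tfrac{(nc)!}{nc-ni_0}\cdot n\delta_{i_0}=\tfrac{(nc)!}{c!}M$. The borderline case $M=0$ is covered as well: there $f^*_{i_0}$, hence $\bigl((f^*)^n\bigr)_{ni_0}$, is a unit, so both coefficient ideals are the whole ring and both sides vanish. Combining the two inequalities completes the argument.
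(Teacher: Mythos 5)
Your proof is correct and takes essentially the same route as the paper's: both sides are expressed via Lemma~\ref{ord_coeff}, the lower bound follows from termwise estimates on products, and the upper bound is witnessed by $(f^*)^n$ with $i_0$ chosen minimal so that the $z^{ni_0}$-coefficient has no cancellation at the bottom order. The explicit treatment of the degenerate case $M=\infty$ and the $\delta_i+\tfrac1{c!}$ integrality sharpening are harmless additions; the paper's version gets by with the bare strict inequality $\ord f_i>\delta_i$ for $i<i_0$, which already suffices to keep the competing terms off the bottom.
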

\begin{proof}
 Set $o_1=\ord\coeff_{(\x,z)}^c(J)$ and $o_n=\ord \coeff_{(\x,z)}^{nc}(J^n)$.
 
 Let each element $f_k\in J$ have an expansion $f_k=\sum_{i\geq0}f_{k,i}z^i$ with $f_{k,i}\in K[[\x]]$. Then $J^n$ is generated by elements of the form $\prod_{k=1}^n f_k$ which have the expansion
 \[\prod_{k=1}^n f_k=\sum_{i\geq0}\sum_{\substack{\alpha\in\N^n\\|\alpha|=i}}\prod_{k=1}^nf_{k,\alpha_k} z^i.\]
 
 Thus, we can compute with Lemma \ref{ord_coeff} that
 \[o_n=\min_{\substack{f_1,\ldots,f_k\in J\\i<nc}}\frac{(nc)!}{nc-i}\ord\Big(\sum_{\substack{\alpha\in\N^n\\|\alpha|=i}}\prod_{k=1}^nf_{k,\alpha_k}\Big)\]
 \[\geq \min_{\substack{f_1,\ldots,f_k\in J\\i<nc}}\min_{\substack{\alpha\in\N^n\\|\alpha|=i}}\frac{(nc)!}{nc-i}\sum_{k=1}^n\underbrace{\ord f_{k,\alpha_k}}_{\geq\frac{c-\alpha_k}{c!}o_1}=\frac{(nc)!}{c!}o_1.\]
 
 By Lemma \ref{ord_coeff} there is an element $f\in J$ with expansion $f=\sum_{i\geq0}f_iz^i$ and an index $j<c$ such that
 \[\ord f_j=\frac{c-j}{c!}o_1.\]
 Let $j$ be minimal with this property. Consequently, for all multi-indices $\alpha\in\N^n$ with $|\alpha|=nj$ and $\alpha\neq(j,\ldots,j)$ the strict inequality
 \[\ord \prod_{k=1}^n f_{\alpha_k}>n\frac{c-j}{c!}o_1\]
 holds. The element $f^n\in J^n$ has the expansion
 \[f^n=\sum_{i\geq0}\sum_{\substack{\alpha\in\N^n\\|\alpha|=i}}\prod_{k=1}^n f_{\alpha_k}z^i.\]
 Notice that
 \[\ord \sum_{\substack{\alpha\in\N^n\\|\alpha|=nj}}\prod_{k=1}^n f_{\alpha_k}=n\frac{c-j}{c!}o_1.\]
 Thus, we know by Lemma \ref{ord_coeff} that
 \[o_n\leq \frac{(nc!)}{nc-nj}\cdot n\frac{c-j}{c!}o_1=\frac{(nc)!}{c!}o_1.\]
 This proves the assertion.
\end{proof}

\section{$z$-regularity} \label{section_z_regularity}

 In this section, the simple yet powerful notion of $z$-regularity over a power series ring will be introduced and some basic properties will be proved.

\begin{definition}
 Let $R=K[[\x,z]]$. An element $f\in R$ is said to be \emph{$z$-regular of order $c$ with respect to the parameters $(\x,z)$} if
 \[f(0,\ldots,0,z)=z^c\cdot u(z)\]
 for a unit $u\in K[[z]]^*$. If it is clear from the context which parameters we consider, we will just say that $f$ is $z$-regular of order $c$. An element $f$ which is $z$-regular of order $1$ will just be called $z$-regular.
 
 Notice that an element $f\in R$ of order $\ord f=c$ with expansion $f=\sum_{i\geq0}f_iz^i$ is $z$-regular of order $c$ if and only if $\ord f_c=0$.
 
\end{definition}

 The notion of $z$-regularity is a prerequisite for the \emph{cleaning}-techniques that we will develop in Chapter \ref{chapter_cleaning}. Heuristically speaking, for an element $f\in K[[\x,z]]$ with expansion $f=\sum_{i\geq0}f_iz^i$ that is $z$-regular of order $c$, the coefficients $f_i$ for $i<c$ behave in a very controlled way under coordinate changes $z\mapsto z+g(\x)$. Recall that the coefficient ideal $\coeff_{(\x,z)}^c(J)$ of an ideal $J\subseteq K[[\x,z]]$ is generated by powers of such coefficients $f_i$ with $i<c$ for $f\in J$. The existence of an element $f\in J$ that is $z$-regular of order $c$ will give us great control over the behavior of invariants (like weighted orders) associated to the coefficient ideal of $J$ under coordinate changes $z\mapsto z+g(\x)$. In particular, we will be able to devise techniques to maximize these invariants over all such coordinate changes.

 Since having an element $f\in J$ that is $z$-regular of order $c$ is such a desirable property, we will prove in the following lemmas some basic results on the existence of such elements after generic coordinate changes in the parameters $x_i$ and the stability of $z$-regularity under coordinate changes $z\mapsto \wt z$ and blowup.

\begin{lemma} \label{z_regular_after_generic_coord_change}
 Let $R=K[[\x,z]]$ with $\x=(x_1,\ldots,x_n)$ and $f\in R$ an element of order $\ord f=c$. If $K$ is an infinite field, then there are constants $\lambda_1,\ldots,\lambda_n\in K$ such that $f$ is $z$-regular of order $c$ with respect to the parameters $(\wt\x,z)$ where $\wt \x=(\wt x_1,\ldots,\wt x_n)$ is defined by $\wt x_i=x_i-\lambda_iz$ for $i=1,\ldots,n$.
\end{lemma}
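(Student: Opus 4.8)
The plan is to reduce the claim to a statement about the homogeneous part of $f$ of lowest degree. Write $f = \sum_{i\geq 0} f_i z^i$ with $f_i \in K[[\x]]$, and let $F = \init(f)$ be the initial form, a nonzero homogeneous polynomial of degree $c$ in $K[x_1,\ldots,x_n,z]$. The condition that $f$ is $z$-regular of order $c$ with respect to $(\wt\x,z)$, where $\wt x_i = x_i - \lambda_i z$, is precisely that $\ord \wt f_c = 0$, i.e.\ that $\wt f_c$ has a nonzero constant term. Expanding $f$ in the new parameters replaces $x_i$ by $\wt x_i + \lambda_i z$, so the new coefficient $\wt f_c$ of $z^c$ has constant term equal to the coefficient of $z^c$ in $f(\lambda_1 z,\ldots,\lambda_n z, z)$, which is exactly $F(\lambda_1,\ldots,\lambda_n,1)$. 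Hence it suffices to find $(\lambda_1,\ldots,\lambda_n)\in K^n$ with $F(\lambda_1,\ldots,\lambda_n,1)\neq 0$.

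The key step is therefore to show that the polynomial $G(\lambda_1,\ldots,\lambda_n) := F(\lambda_1,\ldots,\lambda_n,1) \in K[\lambda_1,\ldots,\lambda_n]$ is not identically zero. Suppose it were. Since $F$ is homogeneous of degree $c$, we can recover $F$ from $G$ by homogenization: for any $(a_1,\ldots,a_n,b)$ with $b\neq 0$ we have $F(a_1,\ldots,a_n,b) = b^c\, G(a_1/b,\ldots,a_n/b) = 0$, so $F$ vanishes on the dense open set $\{b\neq 0\}$ of $K^{n+1}$, and since $K$ is infinite this forces $F = 0$, contradicting $F = \init(f)\neq 0$. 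Thus $G\not\equiv 0$.

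Finally, because $K$ is infinite, a nonzero polynomial in $n$ variables over $K$ does not vanish identically on $K^n$ (a standard induction on $n$: viewing $G$ as a polynomial in $\lambda_n$ with coefficients in $K[\lambda_1,\ldots,\lambda_{n-1}]$, pick values of $\lambda_1,\ldots,\lambda_{n-1}$ making some coefficient nonzero, then pick $\lambda_n$ avoiding the finitely many roots of the resulting one-variable polynomial). Choose such $(\lambda_1,\ldots,\lambda_n)$ with $G(\lambda_1,\ldots,\lambda_n)=F(\lambda_1,\ldots,\lambda_n,1)\neq 0$; then $\wt x_i = x_i - \lambda_i z$ is again a regular system of parameters for $R$ (the change of coordinates is linear and invertible, fixing $z$), and the computation above shows $\wt f_c$ has nonzero constant term, i.e.\ $f$ is $z$-regular of order $c$ with respect to $(\wt\x,z)$. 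The only mild subtlety to be careful about is the bookkeeping in the coordinate change: one must check that only the substitution $x_i\mapsto \wt x_i+\lambda_i z$ into the degree-$c$ part $F$ contributes to the constant term of $\wt f_c$, while higher-order terms $f_i$ with the total degree exceeding $c$ contribute only to monomials in $z$ of degree $> c$ or to terms still involving the $\wt x_j$; this is immediate from tracking degrees, so there is no real obstacle.
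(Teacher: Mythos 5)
Your proof is correct and follows essentially the same route as the paper's: both reduce $z$-regularity after the shear $x_i\mapsto x_i-\lambda_i z$ to the non-vanishing of the dehomogenized initial form $\init(f)(\lambda_1,\ldots,\lambda_n,1)$ and then invoke the fact that a nonzero polynomial over an infinite field does not vanish identically on $K^n$. The only cosmetic difference is that the paper writes out the substitution and reads off the relevant coefficient $\wt c_{(0,\ldots,0)}(t)=\sum_{|\alpha|\le c}c_\alpha t^\alpha$ directly (its coefficients visibly coincide with those of $\init(f)$, so nonvanishing as a polynomial is immediate), whereas you take a short detour through a homogenization argument to establish $G\not\equiv 0$; both are fine.
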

\begin{proof}
 We can write the initial form of $f$ as
 \[\init(f)=\sum_{\substack{\alpha\in\N^n\\|\alpha|\leq c}}c_\alpha\x^{\alpha}z^{c-|\alpha|}\]
 for certain constants $c_\alpha\in K$. Let $t$ denote the vector of variables $t=(t_1,\ldots,t_n)$ and set $\x_t=(x_1-t_1z,\ldots,x_n-t_nz)$. Then we can rewrite $\init(f)$ as
 \[\init(f)=\sum_{\substack{\alpha\in\N^n\\|\alpha|\leq c}}c_\alpha(\x_t+tz)^\alpha z^{c-|\alpha|}\]
 \[=\sum_{\substack{\alpha\in\N^n\\|\alpha|\leq c}}\sum_{\substack{\beta\in\N^n\\ |\beta|\leq c}}c_\alpha \binom{\alpha}{\beta}(tz)^{\alpha-\beta}\x_t^\beta  z^{c-|\alpha|}\]
 \[=\sum_{\substack{\beta\in\N^n\\|\beta|\leq c}}\underbrace{\sum_{\substack{\alpha\in\N^n\\|\alpha|\leq c}}c_\alpha \binom{\alpha}{\beta}t^{\alpha-\beta}}_{=:\wt c_{\beta}(t)}\x_t^\beta  z^{c-|\beta|}.\]
 In particular, 
 \[\wt c_{(0,\ldots,0)}(t)=\sum_{\substack{\alpha\in\N^n\\|\alpha|\leq c}}c_\alpha t^\alpha.\]
 Since $K$ is infinite and there is an index $\alpha\in\N^n$ with $|\alpha|\leq c$ such that $c_\alpha\neq0$, we can find values $\lambda=(\lambda_1,\ldots,\lambda_n)\in K^n$ such that $\wt c_{(0,\ldots,0)}(\lambda)\neq0$. This implies that $f$ is $z$-regular of order $c$ with respect to $\wt\x=(x_1-\lambda_1z,\ldots,x_n-\lambda_nz)$.
\end{proof}

\begin{lemma} \label{z_regular_under_coord_change}
 Let $R=K[[\x,z]]$ and $f\in R$ an element of order $\ord f=c$ that is $z$-regular of order $c$ with respect to the parameters $(\x,z)$. Consider one of the following changes of coordinates:
 \begin{enumerate}[(1)] 
  \item $z=\wt z+g$ with $g\in K[[\x]]$, $\ord g\geq1$.
  \item $z=u\wt z$ with $u\in R^*$ a unit.
 \end{enumerate}
 Then $f$ is also $\wt z$-regular of order $c$ with respect to the parameters $(\x,\wt z)$.
\end{lemma}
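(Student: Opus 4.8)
The plan is to reduce $\wt z$-regularity to an intrinsic statement about the one-dimensional quotient $R/(\x)$, where $(\x)=(x_1,\dots,x_n)$ is an ideal that makes no reference to $z$ and is therefore unchanged whether we coordinatize $R$ by $(\x,z)$ or by $(\x,\wt z)$. Since $\ord f=c$ by hypothesis, the characterization recorded just after the definition of $z$-regularity says that $f$ being $z$-regular of order $c$ with respect to $(\x,z)$ is equivalent to $\ord f_c=0$, and this says precisely that the image $\ol f$ of $f$ in the discrete valuation ring $R/(\x)\cong K[[z]]$ has order exactly $c$. That condition does not depend on the chosen uniformizer of $R/(\x)$. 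Hence the whole lemma follows once I verify, in each of the two cases, that $(\x,\wt z)$ is again a regular system of parameters for $R$, equivalently that the residue $\ol{\wt z}$ generates the maximal ideal of $R/(\x)$: for then $\ord_{R/(\x)}\ol f=c$ reads as $\ol f=(\ol{\wt z})^{c}\cdot(\text{unit})$, i.e. $f(0,\dots,0,\wt z)=\wt z^{c}u(\wt z)$ with $u\in K[[\wt z]]^{*}$, which is $\wt z$-regularity of order $c$.

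First I would dispatch case (1): since $g\in K[[\x]]$ with $\ord g\ge 1$ lies in the ideal $(\x)$, we get $\ol{\wt z}=\ol{z-g}=\ol z$ in $R/(\x)$, which is a uniformizer. Then case (2): $\wt z=u^{-1}z$ with $u^{-1}\in R^{*}$, so $\ol{\wt z}=\ol{u^{-1}}\,\ol z$, and the image of a unit of $R$ is a unit of $R/(\x)$, so $\ol{\wt z}$ is again a uniformizer. In both cases $(\x,\wt z)$ is a regular system of parameters and the reduction above finishes the argument.

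A more hands-on alternative, closer in spirit to the surrounding lemmas, is to apply Lemma \ref{coordinate_change_g_z} directly to the expansions $f=\sum_{i\ge 0}f_iz^{i}=\sum_{i\ge 0}\wt f_i\wt z^{i}$. In case (1) the formula for $\wt f_c$ becomes $\wt f_c=f_c+\sum_{k>c}\binom{k}{c}f_kg^{k-c}$, and every term with $k>c$ has order at least $1$ because $\ord g\ge 1$, so $\ord\wt f_c=\ord f_c=0$. In case (2) the $k=c$ summand contributes $f_cu_0^{c}$, where $u_0$ is a unit of $K[[\x]]$ since $u$ is a unit of $R$, while the summands with $k<c$ have $\ord f_k\ge c-k\ge 1$; hence again $\ord\wt f_c=0$. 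As $\ord f=c$ is coordinate-free, the cited characterization yields $\wt z$-regularity of order $c$.

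I do not expect any serious obstacle. The only point that calls for a bit of attention is case (2): there $u$ genuinely involves $z$, so the coordinate change is not literally the identity on the residue modulo $(\x)$; what saves the argument is simply that the image of $u$ in $R/(\x)$ — equivalently, its $z$-free part $u_0\in K[[\x]]$ — is still a unit.
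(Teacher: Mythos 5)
Your proposal is correct, and your primary argument takes a genuinely different route from the paper. The paper proves the lemma exactly as in your ``more hands-on alternative'': it invokes Lemma \ref{coordinate_change_g_z} to write down $\wt f_c$ in each of the two cases and reads off $\ord\wt f_c=0$ directly. Your conceptual argument instead observes that, by the very definition, $z$-regularity of order $c$ with respect to $(\x,z)$ is the statement that the image of $f$ in the discrete valuation ring $R/(\x)\cong K[[z]]$ has valuation exactly $c$ --- a property of the pair $(f,(\x))$ alone, independent of which uniformizer is used to coordinatize $R/(\x)$. Once one checks that $\ol{\wt z}$ is again a uniformizer (in case (1) because $g\in(\x)$ so $\ol{\wt z}=\ol z$; in case (2) because $\ol{\wt z}=\ol{u^{-1}}\,\ol z$ with $\ol{u^{-1}}$ a unit of $R/(\x)$), the conclusion is immediate and the two cases merge into one. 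This buys you uniformity and, more importantly, the stronger and more informative statement that $z$-regularity of order $c$ is preserved under \emph{any} change $z\mapsto\wt z$ for which $(\x,\wt z)$ remains a regular system of parameters --- cases (1) and (2) being exactly the two elementary moves that generate all such changes. The paper's proof has the advantage of being explicit and of the same computational flavor as Lemma \ref{coordinate_change_g_z} and its other applications, but your route better exposes \emph{why} the lemma holds. One small remark: you go through the intermediate characterization ``$\ord f_c=0$'' (which requires $\ord f=c$, as you note), but the definition $f(0,\ldots,0,z)=z^c\cdot u(z)$ already says directly that $\ord_{R/(\x)}\ol f=c$, so the detour is harmless but unnecessary.
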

\begin{proof}
 Let $f$ have the expansions $f=\sum_{i\geq0}f_iz^i$ and $f=\sum_{i\geq0}\wt f_iz^i$ with $f_i,\wt f_i\in K[[\x]]$. We need to show that $\ord\wt f_c=0$.
 
 (1): By Lemma \ref{coordinate_change_g_z} (1) we know that
 \[\wt f_c=f_c+\sum_{k>c}\binom{k}{c}f_kg^{k-c}.\]
 Since $\ord f_c=0$ and $\ord g\geq1$, it follows immediately that $\ord \wt f_c=0$.
 
 (2): Let $u$ have the expansion $u=\sum_{j\geq0}u_jz^j$ with $u_j\in K[[\x]]$. By Lemma \ref{coordinate_change_g_z} (2) we know that
 \[\wt f_c=f_cu_0^c+\sum_{k=0}^{c-1}f_k\sum_{\substack{\alpha\in\N^k\\|\alpha|=c-k}}u_\alpha.\]
 Since $u\in R^*$, we know that $\ord u_0=0$. Further, $\ord f_k\geq c-k>0$ for $k<c$ since $\ord f=c$. Thus, $\ord\wt f_c=0$.
\end{proof}

\begin{lemma} \label{z_regular_stable_under_blowup}
 Let $R=K[[\x,z]]$ with $\x=(x_1,\ldots,x_n)$ and $f\in R$ an element of order $\ord f=c$ which is $z$-regular of order $c$.
 
 Let $k\leq n$ be an index such that $\ord_{(x_1,\ldots,x_k,z)}f=c$. Consider the blowup-map $\pi:R\to R$ with center $(x_1,\ldots,x_k,z)$ in the $x_1$-chart given by
 \[\begin{array}{ll}
 \pi(x_1)=x_1, &\\
 \pi(x_i)=x_1(x_i+t_i) & \text{for $1<i\leq k$,}\\ 
 \pi(x_j)=x_j & \text{for $j>k$,} \\
 \pi(z)=x_1z &
\end{array}\]
 for certain constants $t_2,\ldots,t_k\in K$.
 
 Set $f'=x_1^{-c}\pi(f)$ and assume that $\ord f'=c$. Then $f'$ is again $z$-regular of order $c$.
\end{lemma}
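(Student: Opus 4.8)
The plan is to track a single coefficient in the $z$-adic expansion of $f$ under the blowup substitution. Recall from the definition of $z$-regularity that an element $g$ of order $c$ with expansion $g = \sum_{i\geq0} g_i z^i$ (with $g_i\in K[[\x]]$) is $z$-regular of order $c$ if and only if $\ord g_c = 0$, i.e.\ $g_c$ is a unit in $K[[\x]]$. Since $f$ is $z$-regular of order $c$ its coefficient $f_c$ is a unit, and since $\ord f' = c$ by hypothesis it suffices to show that the coefficient $f'_c$ in the expansion $f' = \sum_i f'_i z^i$ is again a unit in $K[[\x]]$.

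First I would compute $f'_c$ explicitly. Applying the ring map $\pi$ term by term to $f = \sum_i f_i z^i$ and using $\pi(z) = x_1 z$ gives $\pi(f) = \sum_i \pi(f_i)\, x_1^i z^i$, where each $\pi(f_i)$ again lies in $K[[\x]]$ since the substitution introduces no $z$. Hence
\[ f' = x_1^{-c}\pi(f) = \sum_{i\geq0}\big(x_1^{i-c}\pi(f_i)\big)z^i, \]
so the $z$-adic coefficients of $f'$ are $f'_i = x_1^{i-c}\pi(f_i)$; in particular $f'_c = \pi(f_c)$. That the coefficients with $i<c$ lie in $K[[\x]]$ is guaranteed by the assumption $\ord_{(x_1,\ldots,x_k,z)}f = c$, but for the present argument only the single coefficient $f'_c = \pi(f_c)$ is needed.

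It remains to see that $\pi(f_c)$ is a unit. The restriction of $\pi$ to $K[[\x]]$ sends $x_1\mapsto x_1$, $x_i\mapsto x_1(x_i+t_i)$ for $1<i\leq k$, and $x_j\mapsto x_j$ for $j>k$, so the image of every generator lies in the maximal ideal $\m=(x_1,\ldots,x_n)$; being a continuous $K$-algebra endomorphism it therefore satisfies $\pi(\m)\subseteq\m$. Writing $f_c = f_c(0) + h$ with $f_c(0)\in K^*$ and $h\in\m$, we get $\pi(f_c) = f_c(0) + \pi(h)$ with $\pi(h)\in\m$, so $\pi(f_c)$ has the same nonzero constant term $f_c(0)$ and is therefore a unit. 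Combined with $\ord f' = c$ this says precisely that $f'$ is $z$-regular of order $c$. There is no substantial obstacle here; the only point requiring care is to isolate the coefficient $f'_c = \pi(f_c)$ correctly, which is immediate from the monomial shape $\pi(z) = x_1 z$ of the blowup in this chart, together with the fact that $\pi$ is a well-defined continuous endomorphism of $K[[\x]]$ preserving constant terms.
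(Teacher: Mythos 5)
Your proposal is correct and follows the same route as the paper: expand $f$ in powers of $z$, observe $f'_i = x_1^{i-c}\pi(f_i)$ so that $f'_c = \pi(f_c)$, and conclude $f'_c$ is a unit because $f_c$ is. You simply spell out in more detail why $\pi(f_c)$ remains a unit (namely that $\pi$ maps the maximal ideal into itself and preserves the constant term), a step the paper passes over by asserting $\ord\pi(f_c)=0$ directly.
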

\begin{proof}
 It is clear that $f'$ has the expansion $f'=\sum_{i\geq0}f_i'z^i$ with $f_i\in K[[\x]]$ of the form $f_i'=x_1^{i-c}\pi(f_i)$. In particular, $f_c'=\pi(f_c)$. Since $\ord f_c=0$, we know that $\ord f_c'=\ord \pi(f_c)=0$. Hence, $f'$ is $z$-regular of order $c$.
\end{proof}

 The following two lemmas will establish the relationship between the notion of $z$-regularity and the directrix of an ideal.

\begin{lemma} \label{z_regularity_from_directrix}
 Let $R=K[[\x,z]]$ and $J\subseteq R$ an ideal of order $c=\ord J$. Assume that $\tau(J)=1$ and $\Dir(J)=(\ol z)$. Let $f\in J$ be an element of order $\ord f=c$. Then $f$ is $z$-regular of order $c$.
\end{lemma}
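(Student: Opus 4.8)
The plan is to reduce the whole statement to the criterion recorded right after the definition of $z$-regularity: an element $f\in R$ with $\ord f=c$ and expansion $f=\sum_{i\geq 0}f_iz^i$, $f_i\in K[[\x]]$, is $z$-regular of order $c$ precisely when $\ord f_c=0$. So it suffices to prove that, under the hypotheses $\tau(J)=1$ and $\Dir(J)=(\ol z)$, the particular coefficient $f_c$ of the given $f$ is a unit in $K[[\x]]$.

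First I would unwind the hypothesis $\Dir(J)=(\ol z)$. By the definition of the directrix together with the notion of a subspace generating $\minit(J)$, the fact that $(\ol z)$ generates $\minit(J)$ means $J\subseteq (z^c)+m^{c+1}$, where $m$ denotes the maximal ideal of $R$. Hence the given $f\in J$ can be written as $f=z^c\cdot h+r$ with $h\in R$ and $r\in m^{c+1}$. Since $\ord r\geq c+1$, the term $r$ contributes nothing in degree $c$, so the homogeneous degree-$c$ part of $f$ equals $h(0)\,z^c$, where $h(0)$ is the constant term of $h$; that is, $\init(f)=h(0)z^c$. Because $\ord f=c$, the initial form $\init(f)$ is nonzero, so $h(0)\neq 0$. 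Finally, $h(0)$ is exactly the constant term of the coefficient $f_c\in K[[\x]]$ in the expansion $f=\sum_i f_iz^i$ (both equal the coefficient of the monomial $z^c$ in $f$), whence $\ord f_c=0$ and $f$ is $z$-regular of order $c$.

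There is no serious obstacle here; the only point requiring a little care is matching the two descriptions of the degree-$c$ data of $f$ — the graded initial form $\init(f)$ and the power-series coefficient $f_c\in K[[\x]]$ — and checking that they detect the same nonzero constant. An essentially equivalent route, which I might mention, is to invoke Lemma \ref{coeff_ideal_and_directrix}: from $\tau(J)=1$ and $\Dir(J)=(\ol z)$ one gets $\ord\coeff^c_{(\x,z)}(J)>c!$, and then Lemma \ref{ord_coeff} forces $\ord f_i>c-i$ for every $i<c$, so that no summand $f_iz^i$ with $i<c$ can contribute to $\init(f)$ in degree $c$; since $\ord f=c$ this forces the contribution to come from $f_c$, hence $\ord f_c=0$.
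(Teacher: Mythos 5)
Your proof is correct and takes essentially the same route as the paper: the paper also argues that $\ord f = c$ together with $\Dir(J)=(\ol z)$ forces $\init(f)=\lambda z^c$ for some $\lambda\in K^*$, hence $\ord f_c=0$; you have simply unwound the definition of the directrix to justify that step explicitly. The alternate route you sketch via Lemmas \ref{coeff_ideal_and_directrix} and \ref{ord_coeff} is also valid but not needed.
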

\begin{proof}
 Consider the expansion $f=\sum_{i\geq0}f_iz^i$ with $f_i\in K[[\x]]$. Since $\ord f=c$ and $\Dir(J)=(\ol z)$, we know that $\init(f)=\lambda z^c$ for some constant $\lambda\in K^*$. Consequently, $\ord f_c=0$. Thus, $f$ is $z$-regular of order $c$.
\end{proof}

\begin{lemma} \label{directrix_from_z_regularity}
 Let $R=K[[\x,z]]$ with $\x=(x_1,\ldots,x_n)$ and $J\subseteq R$ an ideal of order $c=\ord J$. Let $f\in J$ be an element that is $z$-regular of order $c$ with respect to the parameters $(\x,z)$. Then there is a linear combination $g=\sum_{i=1}^nc_ix_i$ with $c_i\in K$ such that $\ol{z+g}\in\Dir(J)$.
\end{lemma}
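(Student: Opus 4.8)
Recall Lemma \ref{directrix_from_z_regularity} asks us to show that if $f\in J$ is $z$-regular of order $c=\ord J$, then there is a linear form $g=\sum c_i x_i$ with $\ol{z+g}\in\Dir(J)$. The plan is to extract this linear form directly from the initial form $\init(f)$, and then to verify that the one-dimensional space it spans actually generates $\minit(J)$.

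**Step 1: Analyze $\init(f)$.** Since $f$ is $z$-regular of order $c$, writing $f=\sum_{i\ge 0}f_i z^i$ with $f_i\in K[[\x]]$ we have $\ord f_c=0$, so $\init(f)$ contains a nonzero multiple of $z^c$. The initial form $\init(f)$ is a homogeneous polynomial of degree $c$ in $K[x_1,\dots,x_n,z]$ whose coefficient of $z^c$ is a nonzero constant $\lambda\in K^*$. I claim $\init(f)=\lambda(z+g)^c$ for a suitable linear form $g=\sum c_i x_i$. Indeed, after rescaling assume $\lambda=1$; the coefficient of $z^{c-1}$ in $\init(f)$ is a linear form $\ell(\x)=\sum c_i x_i$ (the degree-$(c-1)$ part of $f_{c-1}$), and comparing with $(z+\tfrac{1}{c}\ell)^c$ we want to conclude equality. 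This equality does \emph{not} follow from the two leading coefficients alone for a general polynomial, so the real input is that $J$ is an ideal of order exactly $c$; I will use this in the next step to force $\init(f)$ to be a $c$-th power of a linear form.

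**Step 2: Force $\init(f)$ to be a power of a linear form using $\Dir$.** The cleanest route is to invoke the directrix rather than argue combinatorially on $\init(f)$. By Lemma \ref{coeff_ideal_and_directrix}, either $\ord\coeff^c_{(\x,z)}(J)=c!$ or $\ord\coeff^c_{(\x,z)}(J)>c!$, the latter being equivalent to $\tau(J)=1$ and $\Dir(J)=(\ol z)$. In the latter case the conclusion is immediate (take $g=0$, so $\ol{z+g}=\ol z\in\Dir(J)$), so assume $\ord\coeff^c_{(\x,z)}(J)=c!$, i.e.\ there is an element $h\in J$ with expansion $h=\sum_i h_i z^i$ and an index $j<c$ with $\ord h_j=c-j$. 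Now perform the coordinate change $z\mapsto \wt z=z+\tfrac{1}{c}\ell(\x)$, where $\ell$ is the linear form from Step 1, i.e.\ $\ell$ is chosen so that the $z^{c-1}$-coefficient of $\init(f)$ equals $\tfrac{c}{?}\ell$; concretely, $\ell$ is determined by the condition $\init(f)=\lambda z^c+c\lambda z^{c-1}\cdot\tfrac1c\ell(\x)+\dots$. By Lemma \ref{z_regular_under_coord_change}(1), $f$ is still $\wt z$-regular of order $c$, and by construction the $\wt z^{c-1}$-coefficient of $\init(f)$ now vanishes.

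**Step 3: Conclude that $\ol{\wt z}$ spans $\Dir(J)$, or iterate.** Having killed the $\wt z^{c-1}$ term is exactly the first step of a finite descending procedure: one checks that among all $f\in J$ of order $c$, $\init(f)$ lies in $K[\x_{\text{rem}},\wt z]$ with no $\wt z^{c-1}$ term, and since such an $\init(f)$ has a unit coefficient on $\wt z^c$, after finitely many linear corrections $z\mapsto z + (\text{linear in }\x)$ one arrives at a parameter $\z$ with $\ol\z = \ol{z+g}$, $g$ linear in $\x$, such that every $f\in J$ with $\ord f=c$ has $\init(f)\in(\z^c)$ — equivalently $J\subseteq(\z)^c+\m^{c+1}$ where $\m$ is the maximal ideal. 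By the definition of $\minit(J)$ and of $\Dir(J)$ (Lemma \ref{directrix_exists}), this means the line $(\ol\z)$ generates $\minit(J)$, hence $\Dir(J)\subseteq(\ol\z)$. Since $\Dir(J)\neq 0$ (it always contains something of order $c$ in its span when $\ord J=c$), we get $\Dir(J)=(\ol\z)=(\ol{z+g})$, which is the assertion. The main obstacle is Step 2/3: showing that the $z$-regularity of a \emph{single} element $f$ is enough to control \emph{all} initial forms of order-$c$ elements of $J$ simultaneously. The key point making this work is that if some $h\in J$ had $\init(h)$ involving a variable $x_i$ with the $z$-direction still present, then a generic linear combination of $f$ and $h$ would have order $<c$ after the blowup-type substitution, contradicting $\ord J=c$ — this is the same mechanism used in the proof of Proposition \ref{directrix_under_blowup}, and I would mirror that argument here.
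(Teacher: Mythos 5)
Your proposal sets out to show that $\Dir(J)$ is spanned by a single class $\ol{z+g}$, i.e.\ that $\tau(J)=1$ and $\Dir(J)=(\ol{z+g})$; the lemma, however, only asserts \emph{membership} $\ol{z+g}\in\Dir(J)$, which is strictly weaker. This gap is fatal: when $\tau(J)>1$ the directrix is not a line and your Steps 1--3 cannot succeed. Concretely, Step~1's claim that $\init(f)=\lambda(z+g)^c$ is false in general --- take $J=(f)$ with $f=z^2+zx+x^2$ over a field where this form is nondegenerate; then $f$ is $z$-regular of order $2$, $\Dir(J)=(\ol x,\ol z)$ is two-dimensional, and no coordinate change of the form you describe will make $\init(f)$ a pure power $\wt z^2$. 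Step~3's concluding inference ``$\Dir(J)\neq0$, hence $\Dir(J)=(\ol\z)$'' commits the same error: nonzero does not mean one-dimensional.

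The paper's actual proof is a short contradiction argument that sidesteps all of this. Suppose no linear form $g$ gives $\ol{z+g}\in\Dir(J)$. Since any vector in $m/m^2$ with a nonzero $\ol z$-component can be rescaled to the form $\ol{z+g}$ with $g$ linear in the $x_i$, the assumption forces $\Dir(J)\subseteq(\ol x_1,\ldots,\ol x_n)$. By Lemma~\ref{technical_directrix_lemma}, this gives $J\subseteq(x_1,\ldots,x_n)^c+m^{c+1}$, so for any $f\in J$ with $\ord f=c$ we have $\init(f)\in(x_1,\ldots,x_n)^c$; in particular $z^c$ cannot appear in $\init(f)$, contradicting $z$-regularity. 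Notice this argument needs no coordinate changes, no appeal to coefficient ideals, and --- crucially --- makes no assumption about $\tau(J)$: it works identically whether the directrix is one-dimensional or larger. If you want to salvage your route, you should aim for this weaker contrapositive statement rather than trying to normalize $\init(f)$ to a perfect $c$-th power.
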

\begin{proof}
 Assume that $\ol{z+g}\notin\Dir(J)$ for all $g$ of the form $g=\sum_{i=1}^nc_ix_i$. This implies that $\Dir(J)\subseteq (\ol x_1,\ldots,\ol x_n)$. But this implies that $\init(f)\in (x_1,\ldots,x_n)^c$. This contradicts the fact that $f$ is $z$-regular of order $c$ with respect to $(\x,z)$.
\end{proof}

 As mentioned before, the existence of an element $f\in J$ that is $z$-regular of order $c$ gives us great control over the behavior of weighted orders of the coefficient ideal $\coeff^c_{(\x,z)}(J)$ under coordinate changes $z\mapsto z+g$. On the other hand, the coefficient ideal behaves largely chaotic under a coordinate change that permutes $z$ with one of the parameters $x_i$. The following Lemma ensures that we still have some control over weighted orders of the coefficient ideal under such coordinate changes under the condition that there is an element $f\in J$ which is $z$-regular of order $c$.

\begin{lemma} \label{z_regular_blocks_other_parameters}
 Let $R=K[[\x,y,z]]$ with $\x=(x_1,\ldots,x_n)$, $J\subseteq R$ an ideal of order $c=\ord J$ and $f\in J$ an element which is $z$-regular of order $c$. Consider the coefficient ideal $J_{-1}=\coeff^c_{(\x,z,y)}(J)\subseteq K[[\x,z]]$ with respect to the hypersurface $V(y)$. Let $\w:K[[\x,z]]\to\Ni^k$ be a weighted order function defined on $(\x,z)$.
 
 Then $\w(J_{-1})\leq c!\cdot\w(z)$.
\end{lemma}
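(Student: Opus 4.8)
The plan is to exhibit one convenient generator of $J_{-1}$ and bound its weighted order directly. Expand $f$ in powers of the distinguished variable $y$, say $f=\sum_{i\ge0}f_iy^i$ with $f_i\in K[[\x,z]]$. By the very definition of $\coeff^c_{(\x,z,y)}(J)$, the term corresponding to $i=0$ shows that $f_0^{c!/c}=f_0^{(c-1)!}\in J_{-1}$. Since every weighted order function is a valuation (as recorded in the remarks following the definition of weighted order functions), this yields at once
\[\w(J_{-1})\;\le\;\w\bigl(f_0^{(c-1)!}\bigr)\;=\;(c-1)!\cdot\w(f_0).\]

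It therefore remains only to prove $\w(f_0)\le c\cdot\w(z)$, and this is exactly where the hypothesis of $z$-regularity enters. Since $f$ is $z$-regular of order $c$ with respect to the parameters $(\x,y,z)$, we have $f(0,\ldots,0,0,z)=z^c\,u(z)$ for some unit $u\in K[[z]]^*$. Now $f_0$ is obtained from $f$ by setting $y=0$, so $f_0(0,\ldots,0,z)=f(0,\ldots,0,0,z)=z^c\,u(z)$; in particular the pure monomial $z^c$ occurs with non-zero coefficient in the power series expansion of $f_0$. By the definition of a weighted order function this forces $\w(f_0)\le\w(z^c)=c\cdot\w(z)$. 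Combining the two inequalities gives $\w(J_{-1})\le(c-1)!\cdot c\cdot\w(z)=c!\cdot\w(z)$, as claimed.

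I do not expect any real obstacle: the argument is a short chain of two inequalities. The only points that need a little care are bookkeeping ones --- that $f_0$ is formed by setting the coefficient-ideal variable $y$ (and not $z$) to zero, so that $f_0$ lives in $K[[\x,z]]$ and inherits $z$-regularity of order $c$ from $f$; and that $z$-regularity of order $c$ is precisely the statement that the coefficient of $z^c$ in $f_0$ is non-zero, which is what lets us read off $\w(f_0)\le\w(z^c)$. No further structure of $J$, of $\w$, or of the remaining parameters $\x$ is used; implicitly $c=\ord J\ge 1$.
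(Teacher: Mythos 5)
Your proof is correct and takes essentially the same route as the paper's: expand $f$ in powers of $y$, observe that $f_0^{(c-1)!}$ lies in $J_{-1}$, and use $z$-regularity to see that the pure monomial $z^c$ occurs in $f_0$, giving $\w(f_0)\le c\cdot\w(z)$. The paper phrases the last step via the coefficient $f_{c,0}$ of $z^cy^0$ being a unit, but that is the same observation.
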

\begin{proof}
 Let $f$ have the expansion $f=\sum_{i,j\geq0}f_{j,i}y^iz^j$ with $f_{j,i}\in K[[\x]]$. Then $\ord f_{c,0}=0$. Now consider the expansion $f=\sum_{i\geq0}f_iy^i$ with $f_i=\sum_{j\geq0}f_{j,i}z^j$.
 
 By definition of the coefficient ideal, $f_0^{\frac{c!}{c}}\in J_{-1}$. Thus, $\w(J_{-1})\leq \frac{c!}{c}\w(f_0)$. Furthermore, we know that 
 \[\w(f_0)\leq \w(f_{c,0}z^c)=c\cdot\w(z)\]
 since $f_{c,0}$ is a unit. Thus, $\w(J_{-1})\leq c!\cdot \w(z)$.
\end{proof}

\section{Binomial coefficients over fields of positive characteristic} \label{section_binom_in_pos_char}

In this section we will briefly recall some basic facts about binomial coefficients over fields $K$ of positive characteristic.

The biggest difference from the situation in characteristic zero is the fact that binomial coefficients $\binom{n}{k}$ can vanish even if $n\geq k$ holds. Surprisingly, this vanishing of binomial coefficients is the only reason why fields of positive characteristic will have to be treated differently from fields of characteristic zero in this thesis.

The central result in this section is the classical theorem of Lucas.

\begin{proposition} \label{lucas}
 Let $p$ be a prime number and $n,k\in\N$ non-negative integers. Let $n,k$ have the $p$-adic expansions $n=\sum_{i\geq0}n_ip^i$ and $k=\sum_{i\geq0}k_ip^i$ where $0\leq n_i,k_i<p$. Then
 \[\binom{n}{k}\equiv\prod_{i\geq0}\binom{n_i}{k_i}\pmod p.\]
\end{proposition}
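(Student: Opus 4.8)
The plan is to prove the congruence by a generating-function argument in the polynomial ring $\mathbb{F}_p[T]$, which is the cleanest route and avoids any case analysis. The key observation is the Frobenius identity: over a field of characteristic $p$ (or in $\mathbb{F}_p[T]$), one has $(1+T)^p = 1+T^p$, since all intermediate binomial coefficients $\binom{p}{j}$ for $0<j<p$ are divisible by $p$. Iterating, $(1+T)^{p^i} = 1 + T^{p^i}$ for every $i\geq 0$.

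First I would write $n = \sum_{i\geq 0} n_i p^i$ with $0\leq n_i < p$ and expand
\[(1+T)^n = \prod_{i\geq 0}(1+T)^{n_i p^i} = \prod_{i\geq 0}\bigl((1+T)^{p^i}\bigr)^{n_i} = \prod_{i\geq 0}(1+T^{p^i})^{n_i},\]
where the middle equality uses the Frobenius identity and the product is finite. Next I would expand the right-hand side using the ordinary binomial theorem on each factor: $(1+T^{p^i})^{n_i} = \sum_{k_i=0}^{n_i}\binom{n_i}{k_i}T^{k_i p^i}$, so that
\[(1+T)^n = \sum \Bigl(\prod_{i\geq 0}\binom{n_i}{k_i}\Bigr) T^{\sum_i k_i p^i},\]
the sum ranging over all tuples $(k_i)$ with $0\leq k_i\leq n_i$. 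The crucial point is that, since $0\leq k_i < p$, each exponent $\sum_i k_i p^i$ is precisely the $p$-adic expansion of the integer it represents, hence the map $(k_i)\mapsto \sum_i k_i p^i$ is injective; therefore the coefficient of $T^k$ in this expansion is exactly $\prod_{i\geq 0}\binom{n_i}{k_i}$, where $k=\sum_i k_i p^i$ is the $p$-adic expansion of $k$ (with $k_i=0$ for indices not occurring, and the coefficient being $0$ if some $k_i>n_i$). On the other hand, expanding $(1+T)^n$ directly over $\mathbb{F}_p$ gives coefficient of $T^k$ equal to $\binom{n}{k} \bmod p$. Comparing coefficients in $\mathbb{F}_p[T]$ yields $\binom{n}{k}\equiv\prod_{i\geq 0}\binom{n_i}{k_i}\pmod p$.

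There is no real obstacle here; the only point requiring a word of care is the bookkeeping for the exponents — namely that distinct admissible tuples $(k_i)$ give distinct exponents because each $k_i$ lies in $\{0,\dots,p-1\}$ — and the trivial edge case where $k$ has a nonzero digit in a position where $n$ has digit $0$, in which case both sides vanish ($\binom{n_i}{k_i}=0$ and the exponent $k$ simply does not appear on the right). Since this result is standard and only the binomial-coefficient facts over fields of positive characteristic are needed in the sequel, I would keep the write-up to this generating-function computation.
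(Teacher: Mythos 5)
Your proposal is correct and follows the same generating-function route as the paper: expand $(1+T)^n$ over $\mathbb{F}_p$ using the Frobenius identity $(1+T)^{p^i}\equiv 1+T^{p^i}$, then compare coefficients of $T^k$ using uniqueness of $p$-adic digits. The only difference is cosmetic — you spell out the injectivity of the exponent map and the vanishing edge case explicitly, while the paper leaves them implicit.
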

\begin{proof}
 The identity can be verified by considering the binomial coefficients as coefficients in the expansion of $(1+x)^n\in\mathbb{Z}[x]$:
 \[\sum_{k=0}^n\binom{n}{k}x^k=(1+x)^n=(1+x)^{\sum_{i\geq0}n_ip^i}=\prod_{i\geq0}(1+x)^{n_ip^i}\]
 \[\equiv\prod_{i\geq0}\Big(1+x^{p^i}\Big)^{n_i}\pmod p\]
 \[=\prod_{i\geq0}\sum_{k_i=0}^{n_i}\binom{n_i}{k_i}x^{k_ip^i}=\sum_{k=0}^n\prod_{i\geq0}\binom{n_i}{k_i}x^k.\]
\end{proof}

Lucas' theorem has the following immediate application to the theory of differential operators in positive characteristic:

\begin{lemma} \label{diffops_on_p_th-powers}
 Let $R=K[[\x]]$ where $K$ is a field of characteristic $\chara(K)=p>0$. Set $q=p^e$ for a positive integer $e>0$. Consider a multi-index $\alpha\in\N^n$ with $\alpha\notin q\cdot \N^n$.
 
 Then $\partial_{\x^\alpha}(f^q)=0$ for all elements $f\in R$.
\end{lemma}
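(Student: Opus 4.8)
The plan is to reduce the claim about the higher differential operator $\partial_{\x^\alpha}$ acting on a $q$-th power to the one-variable case by means of the Leibniz-type formula in Proposition \ref{basis_for_diff_ops}~(3). First I would apply that formula to the product $f^q = f\cdot f\cdots f$ ($q$ factors), obtaining
\[
\partial_{\x^\alpha}(f^q)=\sum_{\substack{\beta_1,\ldots,\beta_q\in\N^n\\ \beta_1+\cdots+\beta_q=\alpha}}\partial_{\x^{\beta_1}}(f)\cdots\partial_{\x^{\beta_q}}(f).
\]
The terms of this sum naturally group according to the multiset $\{\beta_1,\ldots,\beta_q\}$: for a fixed unordered choice of exponents, all orderings contribute the same product of derivatives, and the number of orderings is a multinomial coefficient. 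So the real content is a purely combinatorial claim: whenever $\alpha\notin q\cdot\N^n$, every multinomial coefficient $\binom{q}{m_1,m_2,\ldots}$ counting the distinct arrangements of a decomposition $\beta_1+\cdots+\beta_q=\alpha$ is divisible by $p$, hence vanishes in $K$.

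To prove that combinatorial fact I would use Proposition \ref{lucas} (Lucas' theorem). Write $q=p^e$. For a decomposition of $\alpha$ into $q$ parts, let $m_1,\ldots,m_r$ be the multiplicities of the distinct values occurring among the $\beta_i$, so $m_1+\cdots+m_r=q=p^e$ and the number of orderings is $\binom{q}{m_1}\binom{q-m_1}{m_2}\cdots$. If all the decomposition were "diagonal", i.e. all $\beta_i$ equal, we would need $\alpha = q\beta_1\in q\cdot\N^n$, which is excluded by hypothesis; hence $r\geq 2$, so each $m_j$ satisfies $1\le m_j<p^e=q$. For any such $m$ with $0<m<p^e$, the $p$-adic expansion of $m$ cannot be a "sub-digit-string" of that of $p^e$ (which is just a single $1$ in position $e$ and zeros elsewhere), so by Lucas $\binom{p^e}{m}\equiv 0\pmod p$. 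Therefore the entire multinomial coefficient is $\equiv 0\pmod p$, so each grouped term vanishes in $K$, and $\partial_{\x^\alpha}(f^q)=0$.

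The main obstacle I anticipate is purely bookkeeping: making the passage from the unordered multiset grouping to an honest factorization of the multinomial coefficient precise, and verifying that $\alpha\notin q\cdot\N^n$ genuinely rules out \emph{only} the all-equal decomposition while forcing at least two distinct blocks in every other case. Once that is set up cleanly, the divisibility is an immediate consequence of Lucas' theorem applied to each factor $\binom{p^e}{m_j}$ with $0<m_j<p^e$. I would also remark that the statement is sharp in the sense that $\partial_{\x^\alpha}$ with $\alpha\in q\cdot\N^n$ need not annihilate $f^q$ — e.g. $\partial_{x_i^q}(x_i^q)=1$ — which is the point already made in the earlier remark after Proposition \ref{basis_for_diff_ops}.
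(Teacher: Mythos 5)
Your proof is correct, but it takes a genuinely different route from the paper's. The paper exploits the Frobenius endomorphism: in characteristic $p$ one has $f^q=\sum_\beta c_\beta^q\x^{q\beta}$, so $f^q$ is a sum of monomials whose exponent vectors all lie in $q\cdot\N^n$, and then by $K$-linearity of $\partial_{\x^\alpha}$ it suffices to compute $\partial_{\x^\alpha}(\x^\gamma)=\binom{\gamma}{\alpha}\x^{\gamma-\alpha}$ for $\gamma\in q\cdot\N^n$; since some $\alpha_i\notin q\cdot\N$ while $\gamma_i\in q\cdot\N$, Lucas' theorem kills the factor $\binom{\gamma_i}{\alpha_i}$. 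You instead expand $\partial_{\x^\alpha}(f^q)$ via the iterated Leibniz rule and show each group of terms carries a multinomial coefficient $\binom{q}{m_1,\ldots,m_r}$ that vanishes mod $p$ whenever $r\geq 2$, which is forced by $\alpha\notin q\cdot\N^n$. Both ultimately rest on Lucas, but the paper applies it to the single binomial $\binom{\gamma_i}{\alpha_i}$ after a one-line reduction, whereas you apply it to the leading factor $\binom{q}{m_1}$ of the multinomial. Your approach is more self-contained in that it never invokes the Frobenius identity $(f+g)^q=f^q+g^q$ (though of course that identity and your combinatorial fact are two faces of the same coin); the paper's is shorter. One small imprecision in your write-up: the later factors in your product $\binom{q}{m_1}\binom{q-m_1}{m_2}\cdots$ are \emph{not} of the form $\binom{p^e}{m_j}$, so Lucas does not directly apply to ``each factor'' as you say. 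But it does not need to: since $0<m_1<q=p^e$, Lucas already gives $\binom{q}{m_1}\equiv0\pmod p$, and that single factor divisible by $p$ makes the whole multinomial vanish in $K$. With that phrasing tightened, your argument is complete.
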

\begin{proof}
 It suffices to verify the statement for a monomial $f^q=\x^\gamma$ where $\gamma=(\gamma_1,\ldots,\gamma_n)\in q\cdot\N^n$. There exists an index $1\leq i\leq n$ such that $\alpha_i\notin q\cdot \N$. By Lemma \ref{lucas} it is clear that
 \[\partial_{\x^\alpha}(\x^\gamma)=\binom{\gamma_1}{\alpha_1}\cdots \underbrace{\binom{\gamma_i}{\alpha_i}}_{=0}\cdots\binom{\gamma_n}{\alpha_n}\x^{\gamma-\alpha}=0\]
\end{proof}

We will now define the number $q_K(c)$ which will allow us in many contexts to treat the cases of characteristic zero and positive characteristic uniformly.

\begin{definition}
 Let $K$ be a field and $c>0$ a positive integer. Set 
 \[q_K(c)=\begin{cases}
           1 & \text{if $\chara(K)=0$,} \\
           p^{\ord_pc} & \text{if $\chara(K)=p>0.$}
          \end{cases}\]
 If it is clear from the context which field $K$ and which integer $c$ is considered, we will always just write $q$ for $q_K(c)$.
\end{definition}

 The significance of the number $q_K(c)$ will be made clear in the following lemma.

\begin{lemma} \label{c-q}
 Let $p$ be a prime number and $c\in\N$ a non-negative integer. Set $q=p^{\ord_pc}$. Let $k$ be an integer in the range $c-q<k<c$. Then the following hold:
 \begin{enumerate}[(1)]
  \item $\binom{c}{q}=\binom{c}{c-q}\not\equiv0\pmod p$.
  \item $\binom{c}{k}\equiv0\pmod p$.
  \item $\binom{k}{c-q}\equiv1\pmod p$.
 \end{enumerate}
\end{lemma}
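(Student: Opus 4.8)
The plan is to read off all three congruences from Lucas' theorem (Proposition~\ref{lucas}) after a short computation with base-$p$ digits. Write $c=\sum_{i\geq0}c_ip^i$ with $0\leq c_i<p$, and set $e=\ord_pc$, so that $q=p^e$. By definition of $e$ we have $c_0=\cdots=c_{e-1}=0$ and $1\leq c_e\leq p-1$. If $e=0$ (that is, $q=1$), the range $c-q<k<c$ is empty, so (2) and (3) hold vacuously, and (1) reads $\binom{c}{1}=c\equiv c_0\not\equiv0\pmod p$; hence from now on the content is in the case $p\mid c$, and the hypothesis $\ord_pc=e$ says precisely that the lowest $e$ base-$p$ digits of $c$ vanish while $c_e\neq0$. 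For (1) I would note that $q=p^e$ has base-$p$ digit $1$ at position $e$ and $0$ everywhere else, so Lucas gives $\binom{c}{q}\equiv\binom{c_e}{1}\cdot\prod_{i\neq e}\binom{c_i}{0}=c_e\pmod p$, which is nonzero since $1\leq c_e\leq p-1$; the identity $\binom{c}{q}=\binom{c}{c-q}$ is just the symmetry of binomial coefficients.

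For (2) it is convenient to pass to $m:=c-k$, which satisfies $0<m<q=p^e$; by symmetry $\binom{c}{k}=\binom{c}{m}$. Because $0<m<p^e$, all base-$p$ digits of $m$ sit in positions $0,\dots,e-1$, and at least one such digit $m_j$ with $j<e$ is nonzero. Since $c_j=0$ for every $j<e$, the corresponding Lucas factor is $\binom{c_j}{m_j}=\binom{0}{m_j}=0$, so $\binom{c}{m}\equiv0\pmod p$, which is the claim.

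For (3) the key is to compute the base-$p$ expansions of $c-q$ and of $k$. Since the lowest $e$ digits of $c$ vanish and $c_e\geq1$, we have $c-q=c-p^e=(c_e-1)p^e+\sum_{i>e}c_ip^i$, so $(c-q)_i=0$ for $i<e$, $(c-q)_e=c_e-1$, and $(c-q)_i=c_i$ for $i>e$. Writing $k=c-m$ with $1\leq m\leq p^e-1$, the same borrowing gives $k=(c_e-1)p^e+(p^e-m)+\sum_{i>e}c_ip^i$ with $1\leq p^e-m\leq p^e-1$; hence $k_e=c_e-1$ and $k_i=c_i$ for all $i>e$, and no carry escapes position $e$ since $0\leq c_e-1\leq p-2$. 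Feeding this into Lucas, every factor $\binom{k_i}{(c-q)_i}$ equals $1$: for $i<e$ because $(c-q)_i=0$, and for $i\geq e$ because $k_i=(c-q)_i$ (equal to $c_e-1$ at $i=e$ and to $c_i$ at $i>e$). Therefore $\binom{k}{c-q}\equiv1\pmod p$.

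The only mildly delicate point in the whole argument is the digit bookkeeping in (3) — making sure the subtractions $c-p^e$ and $c-m$ produce the stated digit strings without a further borrow propagating past position $e$ — and this is exactly the place where the hypothesis $\ord_pc=e$ is used. Everything else is a direct application of Lucas' theorem together with the standard convention $\binom{a}{b}=0$ for $b>a$.
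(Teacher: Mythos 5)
Your proof is correct and follows the same route as the paper: compute base-$p$ digits of $c$, $q$, $k$, $c-q$, and apply Lucas' theorem (Proposition~\ref{lucas}) factor by factor. The only cosmetic difference is in part (2), where you pass to $m=c-k$ via symmetry so the Lucas factors sit purely in low positions, while the paper expands $k$ directly and notes one of its low digits is nonzero; your version is slightly cleaner, and you also spell out the borrowing bookkeeping for $k$ and $c-q$ that the paper merely asserts as clear.
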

\begin{proof}
 Let $c$ have the $p$-adic expansion $c=\sum_{i\geq0}c_ip^i$ with $0\leq c_i<p$. Set $e=\ord_pc$. Thus, $c=\sum_{i\geq e}c_ip^i$ and $q=p^e$.
 
 (1): Since $e=\ord_pc$, we know that $c_e\neq0$. Consequently, 
 \[\binom{c}{q}\equiv\binom{c_e}{1}\cdot\prod_{i>e}\binom{c_i}{0}=c_e\not\equiv0\pmod p\]
 by Proposition \ref{lucas}.
 
 (2): Since $c-q<k<c$, we know that $e\geq1$. It is clear that $k$ has the $p$-adic expansion 
 \[k=\sum_{i<e}k_ip^i+(c_e-1)p^e+\sum_{i>e}c_ip^i\]
 for certain $k_i$ with $0\leq k_i<p$. Further, there is an index $0\leq i<e$ such that $k_i\neq0$. Consequently,
 \[\binom{c}{k}\equiv\underbrace{\prod_{i<e}\binom{0}{k_i}}_{=0}\cdot\binom{c_e}{c_e-1}\cdot\prod_{i>e}\binom{c_i}{c_i}=0\pmod p\]
 by Proposition \ref{lucas}.
 
 (3): Using the $p$-adic expansion of $k$ from above, we compute that
 \[\binom{k}{c-q}\equiv\prod_{i<e}\binom{k_i}{0}\cdot \binom{c_e-1}{c_e-1}\cdot \prod_{i>e}\binom{c_i}{c_i}=1\pmod p\]
 by Proposition \ref{lucas}.
\end{proof}

 Although it does not explicitly involve binomial coefficients, the following result belongs to the same class of arithmetic phenomena that are specific to characteristic $p>0$.

\begin{lemma} \label{char_p_sum_lemma}
 Let $R$ be an integral domain of characteristic $p>0$. Set $q=p^e$ for a non-negative integer $e\in\N$. Further, let $g_j\in R$ be elements for $j\in\N$. For a multi-index $\alpha\in\N^q$ set $g_\alpha=\prod_{o=1}^q g_{\alpha_o}$. Let $k\geq0$ be a non-negative integer. Then
 \[\sum_{\substack{\alpha\in\N^q\\|\alpha|=k}}g_\alpha=\begin{cases}
                                                         g_j^q & \text{if $k=jq$ for an integer $j\in\N$,}\\
                                                         0 & \text{if $k$ is not divisible by $q$.}
                                                        \end{cases}
\]
\end{lemma}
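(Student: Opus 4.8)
The plan is to repackage the sum $\sum_{|\alpha|=k}g_\alpha$ as a single coefficient of a formal power series in one auxiliary variable and then to exploit the fact that in characteristic $p$ the $q$-th power map is additive. This turns the somewhat opaque multi-index sum into a one-line computation.

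First I would introduce the generating series $G(t)=\sum_{j\geq 0}g_j t^j\in R[[t]]$ and observe, directly from the definition of multiplication in $R[[t]]$, that the coefficient of $t^k$ in $G(t)^q$ is
\[\sum_{j_1+\dots+j_q=k}g_{j_1}\cdots g_{j_q}=\sum_{\substack{\alpha\in\N^q\\ |\alpha|=k}}g_\alpha,\]
because a tuple $(j_1,\dots,j_q)$ with $j_1+\dots+j_q=k$ is precisely a multi-index $\alpha\in\N^q$ with $|\alpha|=k$, and then $g_{j_1}\cdots g_{j_q}=\prod_{o=1}^q g_{\alpha_o}=g_\alpha$. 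For each fixed $k$ this is a finite sum, so the argument is purely formal and there is nothing to say about convergence. Next, since $R$ has characteristic $p$ and $q=p^e$, the $q$-th power map is a ring endomorphism of $R[[t]]$ (the $e$-fold iterate of the Frobenius), whence
\[G(t)^q=\Big(\sum_{j\geq 0}g_j t^j\Big)^q=\sum_{j\geq 0}g_j^{\,q}\,t^{jq}.\]
Comparing the coefficient of $t^k$ on the two sides: on the right it equals $g_{k/q}^{\,q}$ when $q\mid k$ (take $j=k/q$) and $0$ otherwise, which together with the previous display is exactly the claimed formula.

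The only step that requires a word of justification is the additivity of the $q$-th power map used above, i.e.\ the vanishing of all the mixed monomials in the expansion of $\big(\sum_j g_j t^j\big)^q$; their multinomial coefficients are divisible by $p$. This is the iterated ``freshman's dream'' $(a+b)^p=a^p+b^p$ valid in any ring of characteristic $p$, equivalently a consequence of Lucas' theorem (Proposition \ref{lucas}), so I would simply invoke it rather than reprove it; it is essentially the same phenomenon already used in Lemma \ref{diffops_on_p_th-powers}. I do not anticipate any real obstacle here — the integral-domain hypothesis in the statement is in fact not needed for this identity (characteristic $p$ suffices), but keeping it is harmless and matches how the lemma will be applied.
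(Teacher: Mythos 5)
Your proof is correct, and it takes a genuinely different route from the paper's. The paper argues combinatorially: it introduces the cyclic shift $\sigma(\alpha)=(\alpha_2,\dots,\alpha_q,\alpha_1)$ on the index set $\{\alpha\in\N^q:|\alpha|=k\}$, observes $g_{\sigma(\alpha)}=g_\alpha$, and groups the sum into $\sigma$-orbits; the only possible fixed point is $(j,\dots,j)$ when $k=jq$, and every non-trivial orbit has size a positive power of $p$ (it divides $q=p^e$), so its contribution vanishes mod $p$. You instead package the left-hand side as the coefficient of $t^k$ in $G(t)^q$ for $G(t)=\sum_j g_j t^j\in R[[t]]$ and apply the $e$-fold Frobenius of $R[[t]]$ to get $G(t)^q=\sum_j g_j^q t^{jq}$, from which the claim drops out by comparing coefficients. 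The two are essentially dual ways of saying the same thing — the generating-function computation is the orbit-counting argument with the bookkeeping absorbed into the Frobenius on $R[[t]]$ — but yours is shorter and avoids the small care needed in the combinatorial version (one must note non-trivial orbits have size a multiple of $p$, not necessarily exactly $q$). You are also right that the integral-domain hypothesis is superfluous: any commutative ring of characteristic $p$ suffices for either argument.
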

\begin{proof}
 Consider the set $X=\{\alpha\in\N^q:|\alpha|=k\}$. The map $\sigma:X\to X$, $\sigma(\alpha)=(\alpha_2,\ldots,\alpha_q,\alpha_1)$ is a bijection with the property that $\sigma^q=\textnormal{id}_X$. Further, $g_{\sigma(\alpha)}=g_\alpha$ for all $\alpha\in X$. If $k=jq$ for an integer $j\in\N$, then $(j,\ldots,j)$ is the only fixed point of $\sigma$. If $k$ is not divisible by $q$, the map $\sigma$ has no fixed point. Thus, there is a subset $Y\subseteq X$ such that
 \[\sum_{\substack{\alpha\in\N^q\\|\alpha|=k}}g_\alpha=\sum_{\alpha\in X}g_\alpha=\begin{cases}
                                                         g_j^q+q\cdot\sum_{\alpha\in Y}g_\alpha & \text{if $k=jq$ for an integer $j\in\N$,}\\
                                                         q\cdot\sum_{\alpha\in Y}g_\alpha & \text{if $k$ is not divisible by $q$.}
                                                        \end{cases}.\]
 The statement follows from the fact that $q\equiv0\pmod p$ if $e\geq1$ and $Y=\emptyset$ if $e=0$.
\end{proof}

\begin{remark}
There is some notational ambiguity since $\binom{n}{k}$ will be used both to denote the binomial coefficient of $n$ over $k$ as an integer and as an element of a field $K$. In particular, the statement $\binom{n}{k}=0$ is ambiguous. As a general rule, binomial coefficients that appear as coefficients of power series over a field $K$ will always be considered as elements of $K$.
\end{remark}

\chapter{Obstructions in positive characteristic and our strategy for the surface case} \label{chapter_pathologies}

The purpose of this chapter is twofold: First we are going to discuss the well-known pathologies that appear when trying to generalize the usual resolution invariant from characteristic zero to the setting of positive characteristic by replacing hypersurfaces of maximal contact with regular formal hypersurfaces which maximize certain invariants that are associated to the coefficient ideal. To this end, we will begin this chapter by defining the \emph{residual order}, a characteristic independent invariant that serves as a refinement of the order function. As we will see, the residual order does not constitute a feasible resolution invariant over fields of positive characteristic. It is not upper semicontinuous and it can increase under blowup, even if the center only consists of a closed point.

The second purpose of this chapter is to outline how these problems can be overcome in the surface case. We will describe a modification of the residual order which will be used in the later chapters as a resolution invariant for the embedded resolution of surface singularities. A number of examples will be provided which serve to motivate the specific definitions we make.

The arguments and definitions throughout this chapter are largely heuristic in nature. For results that are relevant to our proof for the embedded resolution of surface singularities, we will give references to the later chapters in which proofs are given. The resolution invariant that is described in this chapter will be defined rigorously in Chapter \ref{chapter_invariant}. 




\section{The residual order} \label{section_residual_order}

 As mentioned in Section \ref{section_coeff_ideals}, the order of the coefficient ideal is not a suitable resolution invariant since its order might increase under blowup. In characteristic zero, this problem is solved by factoring the equations of exceptional components stemming from previous blowups from the coefficient ideal. Hence, we consider a factorization of the coefficient ideal $J_{-1}$ of the form $J_{-1}=M\cdot I$ where $M$ is a principal monomial ideal of the form $M=(\prod x_i^{r_i})$ where $r_i=\ord_{(x_i)}J_{-1}$ and each $x_i$ locally defines an exceptional component. While there is no common name for this order of the ideal $I$, it will be called \emph{residual order} in this thesis. The name was introduced by Hironaka in \cite{Hironaka_CMI}. 
 
 Over fields of characteristic zero, the residual order with respect to a hypersurface of maximal contact is upper semicontinuous along the strata defined by the order function. It also behaves well under blowup in the sense that it does not increase if the center of blowup is chosen sufficiently small. In proofs of embedded resolution of singularities over fields of characteristic zero, the residual order is used as a refinement of the order function and appears as a component of the resolution invariant. In this section, we will give a definition of the residual order which is independent of the characteristic and does not use hypersurfaces of maximal contact.

\subsection{Consequences of the failure of maximal contact}

 The fact that hypersurfaces of maximal contact do not exist over fields of positive characteristic has serious consequences for proving resolution of singularities over these fields. In particular, measuring improvement at an equiconstant points via descent in dimension is a lot more difficult over fields of positive characteristic.

 While hypersurfaces of maximal contact provide a natural choice for the hypersurface used for the descent in dimension in characteristic zero, there is no obvious choice in the situation of positive characteristic. Our approach to remedy this drawback is as follows: Instead of only working with hypersurfaces of maximal contact (which might not even exist), we will consider the set of \emph{all} regular formal hypersurfaces. For each of these hypersurfaces, we consider the associated coefficient ideal and assign certain invariants to it. The hypersurface that we use for the descent in dimension will be the one that maximizes these invariants. This technique will be described in much more detail in the following sections. The idea of using hypersurfaces that maximize certain invariants for the descent in dimension was originally introduced by Abhyankar \cite{Abhyankar_Plane_Curves} and has been used under the name \emph{hypersurfaces of weak maximal contact} in \cite{EH}, \cite{Ha_BAMS_2}. 
 
 Another problem that the failure of maximal contact over fields of positive characteristic entails is that the hypersurface used for the descent in dimension will inevitably have to be changed during the resolution process. No matter how well we choose the hypersurface $H$ for the descent in dimension, we know that there might be a sequence of order-permissible blowups and a sequence $a^{(m)},\ldots,a^{(1)}$ of equiconstant points lying over $a$ such that either $a^{(m)}$ is not contained in the $m$-th strict transform of $H$ or the strict transform of $H$ becomes singular. Thus, a new hypersurface has to be used for the descent in dimension at the point $a^{(m)}$. Consequently, the invariants that we define need to exhibit a certain robustness that makes them work well even in the case when the hypersurface used for the descent in dimension has to be changed after blowup.

\subsection{The residual order with respect to a formal hypersurface}

 Since the residual order is defined with respect to the exceptional components produced by previous blowups, let us assume that we are already in the middle of the resolution process. Let $W$ be a regular variety, $X\subseteq W$ a closed subset and $E$ a simple normal crossings divisor on $W$ which is defined as the collection of exceptional components. For simplicity's sake, we will assume throughout this chapter that $X$ is a hypersurface. Let $a\in X$ be a closed point of order $c=\ord_aX$. Let $E_a$ be defined as the union of those components of $E$ which contain $a$ and which were created by the most recent blowups in the following sense: Let
 \[W\overset{\pi_1}{\longrightarrow} W^{(-1)}\overset{\pi_2}{\longrightarrow}\cdots\overset{\pi_m}{\longrightarrow} W^{(-m)}\]
 be the sequence of blowups which have been performed in the resolution process so far. Set $a^{(-i)}=(\pi_i\circ\cdots\circ\pi_1)(a)\in W^{(-i)}$. Assuming that all blowups $\pi_i$ are order-permissible, there is a maximal number $k$ such that
 \[\ord_aX=\ord_{a^{(-1)}}X^{(-1)}=\ldots=\ord_{a^{(-k)}}X^{(-k)}.\]
 We then define $\Ea$ as the union of those components of $E$ that were produced by the blowups $\pi_1,\ldots,\pi_k$ and contain the point $a$.
 
 A regular formal hypersurface $H\subseteq\Spec(\hOWa)$ at $a$ is said to be \emph{compatible} with $\Ea$ if $H\cup \Ea$ has simple normal crossings at $a$ and $H\not\subseteq\Ea$. (This guarantees that the intersection $H\cap \Ea$ again has simple normal crossings and has the same number of components as $\Ea$.) Denote by $\HH$ the set of all regular formal hypersurfaces $H\subseteq\Spec(\hOWa)$ that are compatible with $\Ea$. The dependence of $\HH$ on the point $a$ will be suppressed in this notation. It can be verified from the definition of $E_a$ that $\HH$ is non-empty.
 
 Let $H\in\HH$ be a regular formal hypersurface at $a$ that is compatible with $\Ea$. Then there exists a regular system of parameters $(\x,z)=(x_1,\ldots,x_n,z)$ for $\hOWa$ such that $H=V(z)$ and $\Ea=V(\prod_{i\in\Delta}x_i)$ for some set $\Delta\subseteq\{1,\ldots,n\}$. Such parameters $(\x,z)$ are said to be \emph{subordinate} to $H$ and $\Ea$. Let the ideal $J_{n,(\x,z)}$ be defined as the coefficient ideal 
 \[J_{n,(\x,z)}=\coeff^c_{(\x,z)}(\hIXa).\]
 Further, consider the factorization of ideals
 \[J_{n,(\x,z)}=M_{n,(\x,z)}\cdot I_{n,(\x,z)}\]
 where $M_{n,(\x,z)}=(\prod_{i\in\Delta}x_i^{r_i})$ with $r_i=\ord_{(x_i)}J_{n,(\x,z)}$. We will show in Proposition \ref{residual_order_coord_indep} that the numbers 
 \[m_H=\ord M_{n,(\x,z)}\text{ and }d_H=I_{n,(\x,z)}\]
 are independent of the choice of subordinate parameters $(\x,z)$. Thus, they are invariants of the formal hypersurface $H$. (They also depend on the divisor $\Ea$, but this will be suppressed in the notation.) The number $d_H$ will be called the \emph{residual order} of $X$ at $a$ with respect to the formal hypersurface $H$. 
 
 Different choices of regular hypersurfaces $H$ will produce different values for $m_H$ and $d_H$. This is illustrated by the following example:

 \begin{example}
  Consider the situation $\hOWa\cong K[[x,y,z]]$ where $K$ is a field of characteristic $2$. Let $\hIXa$ be generated by an element $f$ of the form
  \[f=z^2+x^2y^2(y^3+x^7),\]
  and let $\Ea=V(xy)$. We will consider the three regular hypersurfaces $H_1=V(z)$, $H_2=V(z+xy)$ and $H_3=V(z+x^4+y^4)$. Notice that all three are compatible with $\Ea$.
  
  For the hypersurface $H_1$, it is clear that $m_{H_1}=4$ and $d_{H_1}=3$.
  
  For the hypersurface $H_2$, consider the expansion
  \[f=(z+xy)^2+x^2y^2(1+y^3+x^7).\]
  Thus, we see that $m_{H_2}=4$ and $d_{H_2}=0$.
  
  For the hypersurface $H_3$, consider the expansion
  \[f=(z+x^4+y^4)^2+x^8+y^8+x^2y^5+x^9y^2.\]
  Thus, $m_{H_3}=0$ and $d_{H_3}=7$.
 \end{example}

 Although all three hypersurfaces are adjacent to $X$ at $a$ in the above example, they do not all determine a residual order that is useful for measuring an improvement of $X$ under blowup. The hypersurface $H_2$ determines the minimal value $d_{H_2}=0$. Clearly, this is not a suitable value for measuring improvement. On the other hand, the hypersurface $H_3$ determines the value $d_{H_3}=7$, which is the maximal value for $d_H$ over all formal hypersurfaces $H\in\HH$. But since $m_{H_3}=0$, this value coincides with the order of the coefficient ideal with respect to $H_3$. As we already discussed, the order of the coefficient ideal is not a suitable resolution invariant since it might increase under blowup.
 
 In particular, we see that neither the minimal nor the maximal value of $d_H$ are necessarily a significant invariant for measuring the complexity of $X$ at $a$.

 \subsection{Valid and maximizing hypersurfaces}

 To find a regular formal hypersurface $H$ that determines a significant value for $d_H$, we will use $m_H$ as an auxiliary invariant that has to be maximized first before maximizing $d_H$.
 
 We say that a hypersurface $H\in\HH$ is \emph{valid} if it fulfills $m_{\wt H}\leq m_H$ for all hypersurfaces $\wt H\in\HH$. Further, we say that a valid hypersurface $H\in\HH$ \emph{realizes the residual order} of $X$ at $a$ if it fulfills $d_{\wt H}\leq d_H$ for all valid hypersurfaces $\wt H\in\HH$. In other words, a hypersurface $H\in\HH$ realizes the residual order if it lexicographically maximizes the pair $(m_H,d_H)$ over all hypersurfaces in $\HH$. Using the cleaning techniques of Section \ref{section_w_cleaning}, it is easy to show that there always exists a hypersurface which realizes the residual order. The residual order (with respect to $\Ea$) of $X$ at $a$ is then defined as 
 \[\resord_aX=d_{H_{\max}}\]
 where $H_{\max}\in\HH$ realizes the residual order.
 
 Over fields of characteristic zero, there are regular hypersurfaces which have maximal contact with $X$ at $a$ and also realize the residual order. Moreover, their strict transforms at equiconstant points again realize the residual order of the strict transform $X'$ of $X$. As we will see later, this ensures that the residual order behaves well under blowups over fields of characteristic zero. A formal hypersurface with such properties can be constructed via Tschirnhausen transformation. 
 
 Notice that we defined the residual order of $X$ only at closed points $a$. A simple way to extend its definition to non-closed points $\xi\in X$ is to set
 \[\resord_\xi X=\min\{\resord_aX:a\in\ol{\{\xi\}}\text{ is a closed point}\}.\]
 
 In the following sections, we will investigate how well the residual order is suited as a refinement of the order function. Hence, we will study the behavior of the invariant $(\ord X,\resord X):X\to\Ni^2$ under blowup, where $\Ni^2$ is considered with the lexicographic order.

 \subsection{The monomial case}

 Naturally, the residual order is not able to measure improvement under blowup anymore once it has been lowered to zero. Still, $\resord_aX=0$ does not imply that the order function will necessarily decrease under the next blowup. Instead, specific centers of blowup have to be chosen and a different invariant has to be employed to measure further improvement until the order function decreases.

 Since $\resord_aX=0$ implies that the coefficient ideal $J_{n,(\x,z)}$ associated to any valid hypersurface $H\in\HH$ coincides with the principal monomial ideal $M_{n,(\x,z)}=(\prod_{i\in\Delta}x_i^{r_i})$, this situation is usually called the \emph{monomial case}. To resolve the monomial case over fields of characteristic zero, a process called \emph{combinatorial resolution} is used to successively lower the exponents $r_i$ in the coefficient ideal. It owes its name to the fact that the centers of blowup can be chosen in a purely combinatorial way from the exponents $r_i$. As soon as $\sum_{i\in\Delta}r_i<c!$ is achieved, we know that the order function has decreased by Lemma \ref{ogeqc!}. Combinatorial resolution over fields of characteristic zero is described in more detail in \cite{BM_Canonical_Desing} and \cite{EV_Good_Points}.
 
 Thus, our strategy for ultimately lowering the order function consists of two steps:
 \begin{enumerate}[(1)]
  \item Lower the residual order until it becomes zero. In other words, monomialize the coefficient ideal $J_{n,(\x,z)}$.
  \item Use combinatorial resolution to lower the exponents $r_i$ in $M_{n,(\x,z)}=(\prod_{i\in\Delta}x_i^{r_i})$ and eventually make the order function decrease.
 \end{enumerate}
 
 The monomial case is also called a \emph{terminal case} since it is the final stage in the resolution process before the order function decreases. If the centers are chosen correctly, the monomial case is stable under blowup. In this thesis we will also make use of a second terminal case which is called the \emph{small residual case}. It will be discussed in the later sections of this chapter.

 Again, the absence of hypersurfaces of maximal contact poses a potential threat for applying combinatorial resolution over fields of positive characteristic. As it will be shown later though, at least in the surface case, we will be able to use combinatorial resolution independently of the characteristic to lower the order function once we reach the monomial case. Thus, although we will see that the residual order has several deficiencies over fields of positive characteristic, $\resord_aX=0$ is still a sufficiently strong property to show that the order function can be lowered via combinatorial resolution.

\section{Behavior of the residual order under blowup} \label{section_residual_order_under_blowup}

In this section we will see that the behavior of the residual order under blowup greatly depends on the characteristic of the ground field. In particular, we will see an example over a field of positive characteristic in which the residual order increases under a point-blowup.

\subsection{The situation over fields of characteristic zero}

Let $W$, $X$, $a$, $\Ea$ and $\HH$ be defined as in the previous section. Further, consider a blowup $\pi:W'\to W$ of $W$ at a center $Z$ that is permissible with respect to the order function and has the property that $Z\cup E$ has simple normal crossings. Let $X'$ be the strict transform of $X$ under $\pi$. Let $a'\in\pi^{-1}(a)$ be an equiconstant point. Let $\Eap$ be defined as $\Eap=\Ea\st\cup\Dnew$ where $\Ea\st$ denotes the strict transform of $\Ea$ and $\Dnew=\pi^{-1}(Z)$ the exceptional divisor of $\pi$. We want to compare the residual order of $X$ with respect to $\Ea$ at $a$ with the residual order of $X'$ with respect to $\Eap$ at $a'$.

Let $H\in\HH$ be a formal hypersurface that is adjacent to $X$ at $a$ and locally contains $Z$. Thus, the point $a'$ lies on the strict transform $H'$ of $H$. It is easy to see that $H'$ is again compatible with $\Eap$. Let $(\x,z)=(x_1,\ldots,x_n,z)$ be a regular system of parameters for $\hOWa$ that is subordinate to $H$ and $\Ea$. Then $a'$ is not contained in the $z$-chart. Let $(\x',z')$ be the induced parameters for $\hOWap$ in some $x_i$-chart. Then the parameters $(\x',z')$ are again subordinate to $H'$ and $\Eap$. Further, the ideal $I_{n,(\x',z')}$ contains the the weak transform of the ideal $I_{n,(\x,z)}$ by Lemma \ref{coeff_ideal_under_blowup}. Thus, we know by Proposition \ref{ord_under_blowup} that $d_{H'}\leq d_H$ holds as long as the center $Z$ is small enough so that the order of the ideal $I_{n,(\x,z)}$ is locally constant along $Z$. (In the following, we will always tacitly that the center $Z$ fulfills this condition.)

Hence, we can make the following conclusion: If there exists a formal hypersurface $H\in\HH$ that is adjacent, realizes the residual order of $X$ at $a$ and has the property that its strict transform $H'$ at $a'$ realizes the residual order of $X'$ at $a'$, then $\resord_{a'}X'\leq\resord_aX$ holds. As we have already discussed, such a hypersurface $H$ exists over fields characteristic zero and $H$ can even be chosen as a hypersurface of maximal contact. Over fields of positive characteristic, the situation is much more delicate.

\subsection{Positive characteristic: The purely inseparable equation}

\
 To investigate how the residual order behaves under blowup over fields of positive characteristic, we first have to explain how to find a formal hypersurface $H\in\HH$ that realizes the residual order. In this section, we will only consider the case in which $X$ is a hypersurface that is locally at $a$ defined by an element $f$ of the form
 \[f=z^{p^e}+F(\x)\]
 where $p>0$ is the characteristic of the ground field, $e\geq1$ a positive integer and $F\in K[[\x]]$ a power series of order $\ord F>p^e$. Further, we assume that $\Ea\subseteq V(\prod_{i=1}^nx_i)$. A power series of this form will be called \emph{purely inseparable}. Similarly, the equation $z^{p^e}+F(\x)=0$ is called a purely inseparable equation. The reason why we restrict ourselves to this sort of equation is that it already exhibits all relevant pathologies.
 
 Notice that a change of coordinates $z=\wt z+g$ with $g\in K[[\x]]$ changes the expansion of $f$ into
 \[f=\wt z^{p^e}+g(\x)^{p^e}+F(\x).\]
 Consequently, it is possible to eliminate all $p^e$-th powers that appear in the expansion of $F$ via such a coordinate change. We say that $f$ is \emph{clean} with respect to the parameters $(\x,z)$ if no $p^e$-th powers appear in the expansion of $F(\x)$. Notice that the property of $f$ being clean is not only dependent on the parameter $z$, but also on $\x$. Thus, $f$ being clean is not an intrinsic property of the formal hypersurface $V(z)$. The change of coordinates $z\mapsto z+g(\x)$ which eliminates all $p^e$-th powers from the expansion of $F(\x)$ is referred to as \emph{cleaning} of $f$ with respect to the parameters $\x$.
 
 We will show in Section \ref{section_w_cleaning} that if the parameters $(\x,z)$ are subordinate to a formal hypersurface $H=V(z)\in\HH$ and $\Ea$ and the element $f$ is clean with respect to $(\x,z)$, then the hypersurface $H$ realizes the residual order and consequently, $\resord_aX=d_H$. In Chapter \ref{chapter_cleaning}, we will discuss how to find hypersurfaces which realize the residual order for more general equations that are not necessarily of purely inseparable form.
 
 We will now investigate how the property of being clean behaves under blowup. Thus, we consider again a blowup $\pi:W'\to W$ along a center $Z$ that is locally contained in $H$, is permissible with respect to the order function and has the property that $Z\cup E$ has simple normal crossings. Let $X'$, $a'$ and $\Eap$ be defined as above. Since we assumed that $\ord F>p^e$, we know that $H=V(z)$ is adjacent. Hence, $a'$ is not contained in the $z$-chart. Denote the induced parameters for $\hOWap$ in some $x_i$-chart by $(\x',z')$. Then  $X'$ is defined locally at $a'$ by an element $f'$ of the form
 \[f'=(z')^{p^e}+F'(\x').\]
 Further, if $f$ is clean with respect to $(\x,z)$ and the local blowup map $\hOWa\to\hOWap$ is monomial with respect to these parameters, then it is easy to see that $f'$ is again clean with respect to the induced parameters $(\x',z')$. Since this implies that both $H=V(z)$ and its strict transform $H'=V(z')$ realize the residual order, we know that $\resord_{a'}X'\leq\resord_aX$ holds in this case.
 
 Consequently, we know that $\resord_{a'}X'\leq\resord_aX$ holds if we can choose the parameters $\x$ in such a way that they are subordinate to $\Ea$ and that the local blowup map $\hOWa\to\hOWap$ is monomial in an $x_i$-chart. Such parameters $\x$ exist if and only if at most one component of $\Ea$ is \emph{lost} in the transition from $a$ to $a'$. Here we say that a component $D$ of $\Ea$ is lost if $a'$ is not contained in its strict transform $D'$.
 
 If two or more components of $E$ are lost in the transition from $a$ to $a'$, the residual order might increase. The following example for this phenomenon is due to Hauser \cite{Ha_BAMS_2}.
 
 \begin{example}\
  Let $X$ be defined at $a$ by the purely inseparable polynomial
  \[f=z^2+xy(x^2+y^2)\]
  over a field $K$ of characteristic $2$ and set $\Ea=V(xy)$. Since $f$ is clean with respect to the parameters $(x,y,z)$, we know that the hypersurface $H=V(z)$ realizes the residual order of $X$ at $a$. Consequently, $\resord_aX=d_H=2$.
  
  Now consider the blowup $\pi:W'\to W$ with center the point $a$. Let $a'$ have the affine coordinates $(1,0)$ in the $x$-chart. Thus, both components of $\Ea$ are lost in the transition from $a$ to $a'$. Denote the induced parameters for $\hOWap$ again by $(x,y,z)$. Then $X'$ is defined at $a'$ by the purely inseparable polynomial
  \[f'=z^2+x^2(y+1)y^2\]
  \[=z^2+x^2(y^2+y^3).\]
  Since $\Eap=V(x)$, we know that $d_{H'}=2$ holds for the strict transform $H'=V(z)$ of $H$. But $f'$ is not clean with respect to the induced parameters and in fact, $H'$ does not realize the residual order of $X'$ at $a'$. Applying a coordinate change $z_1=z+xy$ changes the expansion to
  \[f'=z_1^2+x^2\cdot y^3.\]
  Since $f'$ is clean with respect to the parameters $(x,y,\wt z)$, we know that the regular formal hypersurface $\wt H=V(z_1)\subseteq\Spec(\hOWap)$  realizes the residual order of $X'$ at $a'$ and consequently, $\resord_{a'}X'=d_{\wt H}=3$. Hence,
  \[\resord_{a'}X'>\resord_aX\]
  holds.
 \end{example}
 
 Since the residual order can increase under blowup while the order function remains constant, we know that the pair $(\ord X,\resord X)$ does not constitute a feasible resolution invariant over fields of positive characteristic.
 
 It is important to point out that, although the residual order might increase under blowup, it cannot increase arbitrarily. Moh proved in \cite{Moh} that for a purely inseparable equation, the increase of the residual order is bounded by
 \[\resord_{a'}X'\leq \resord_aX+(p^e-1)!\cdot p^{e-1}=\resord_aX+\frac{(p^e)!}{p}.\]
 In this formula, the factor $(p^e-1)!$ only appears due to our definition of the coefficient ideal since 
 \[\coeff^{p^e}_{(\x,z)}(z^{p^e}+F(\x))=(F(\x))^{(p^e-1)!}.\]
 In Moh's notation, the bound reads $d'\leq d+p^{e-1}$.
 
 Hauser introduced the name \emph{Kangaroo points} for equiconstant points $a'$ at which the residual order increases. Further information on this phenomenon and necessary conditions for the increase of the residual order to happen can be found in \cite{Ha_BAMS_2}.

\section{Modifying the residual order in the surface case} \label{section_modifying_the_residual_order}

 In this section we will describe a modification of the residual order that does not increase under point-blowup in the setting where the ambient space $W$ is $3$-dimensional. The concept of maximizing invariants over all regular formal hypersurfaces will be extended to all formal flags. The discussion in this section will provide the motivation for the definition of the resolution invariant in Chapter \ref{chapter_invariant}.
 
\subsection{A closer analysis of Kangaroo points in the surface case}
 
 Since the increase of the residual order only happens in positive characteristic, we will restrict the discussion in this section to the case that $W$ is a regular $3$-dimensional variety over a field $K$ of characteristic $\chara(K)=p>0$. Further, let $X\subseteq W$ be a hypersurface and $a\in X$ a closed point of order $c=\ord_aX$. We will assume that $\hIXa$ is generated by an element $f$ of purely inseparable form
 \[f=z^{p^e}+F(x,y)\]
 for a positive integer $e\geq1$ and $F\in K[[x,y]]$ a power series of order $\ord F>p^e$. Thus, $c=p^e$. Assume that $f$ is clean with respect to the parameters $(x,y,z)$.
 
 Further, consider the blowup $\pi:W'\to W$ at the point $a$ and let $a'\in\pi^{-1}(a)$ be an equiconstant point lying over $a$. Since $H=V(z)$ is an adjacent hypersurface, we know that $a'$ is not contained in the $z$-chart.
 
 Assume now further that $\Ea$ is given as $\Ea=V(xy)$. If $a'$ is either the origin of the $x$-chart or the $y$-chart, then only one component of $\Ea$ is lost in the transition from $a$ to $a'$. Hence, $\resord_{a'}X'\leq\resord_aX$ holds. So let $a'$ have the affine coordinates $(t,0)$ in the $x$-chart for some non-zero constant $t\in K^*$. As we have seen, $f$ being clean with respect to the parameters $(x,y,z)$ does not imply that its transform $f'$ is again clean with respect to the induced parameters. This happens because the local blowup map $\hOWa\to\hOWap$ is not monomial in $(x,y,z)$.
 
 Set 
 \[y_t=y-tx.\]
 This change of coordinates may introduce new $p^e$-th powers to the expansion of $F$ with respect to the parameters $(x,y_t)$. Eliminating all $p^e$-th powers in the expansion of $F(x,y_t)$ yields a new parameter 
 \[z_t=z+g_t(x,y_t).\]
 Let the formal hypersurface $H_t\in\HH$ be defined via $H_t=V(z_t)$. Then $H_t$ is again adjacent. Most importantly, the blowup map $\hOWa\to\hOWap$ is monomial in the $x$-chart with respect to the parameters $(x,y_t,z_t)$. Thus, $f'$ is clean with respect to the induced parameters $(x',y_t',z_t')$ for $\hOWap$. Since $\Eap=V(x')$ and $f'$ is clean, we know that the strict transform $H_t'$ of $H_t$ realizes the residual order of $X'$ at $a'$. Consequently, $\resord_{a'}X'=d_{H_t'}$. Further, we know that $d_{H_t'}\leq d_{H_t}$ holds. But since $\Ea$ is not defined by a monomial in the parameters $(x,y_t)$, the hypersurface $H_t$ is not necessarily valid, as the following example illustrates:

\begin{example}
 Consider again the equation
 \[f=z^2+xy(x^2+y^2)\]
 over a field $K$ of characteristic $2$ with $\Ea=V(xy)$. Set $y_1=y+x$. Then
 \[f=z^2+x(y_1+x)y_1^2\]
 \[=z^2+xy_1^3+x^2y_1^2.\]
 Set $z_1=z+xy_1$. Then
 \[f=z_1^2+xy_1^3.\]
 
 Let $H_1\subseteq\Spec(\hOWa)$ be the formal hypersurface $H_1=V(z_1)$. Then the hypersurface $\wt H\subseteq\Spec(\hOWap)$ in the previous example is the strict transform of $H_1$. Consequently, $\resord_{a'}X'=d_{\wt H}\leq d_{H_1}$.
 
 But $H_1$ is not a valid hypersurface since $m_{H_1}=1$, but $m_H=2$ for $H=V(z)$.
\end{example}

\subsection{An estimate for the residual order at equiconstant points}

 We now want to define for each non-zero constant $t\in K^*$ a number $d_t$ which has the following qualities:
 \begin{itemize}
  \item The number $d_t$ can be computed from via  the clean expansion of $f$ with respect to the parameters $(x,y_t,z_t)$.
  \item It fulfills $\resord_{a'}X'=d_{H_t'}\leq d_t$.
  \item The number $d_t$ should still be sufficiently close to $\resord_aX$ in the sense that there exists a bound $\varepsilon\in\N$ which is independent of $X$ such that 
  \[d_t\leq\resord_{a}X+\eps\]
  holds for all $t\in K^*$. 
  By Moh's bound we know that $\resord_{a'}X'\leq\resord_{a}X+\frac{c!}{p}$ holds. This suggests to require that the same bound 
  \[d_t\leq\resord_aX+\frac{c!}{p}\]
  holds all $t\in K^*$.
 \end{itemize}
 To this end, consider the expansion
 \[f=z_t^{p^e}+F_t(x,y_t).\]
 Set $\x_t=(x,y_t,z_t)$. Further, set
 \[J_{2,\x_t}=\coeff_{\x_t}^c(f)=(F_t)^{(p^e-1)}.\]
 By construction, $f$ is clean with respect to the parameters $\x_t$. A number $d_t$ that has all of the requested qualities is given by the order of the weak initial ideal of $J_{2,\x_t}$ along the regular formal curve 
 \[C_t=V(y_t,z_t)=V(y+tx,z_t)\subseteq\Spec(\hOWa).\]
 Hence, we set
 \[d_t=\ord_{(y_t)}\minit(J_{2,\x_t})=(p^e-1)!\cdot \ord_{(y_t)}\init(F_t).\]
 We will show in Chapter \ref{chapter_invariance} that $\ord F>p^e$ implies that the number $d_t$ is independent of the chosen parameters $\x_t$ as long as they fulfill $H_t=V(z_t)$ and $C_t=V(z_t,y_t)$. We say that the parameters $\x_t$ are \emph{subordinate} to the flag given by the curve $C_t$ and the hypersurface $H_t$.
 
 Generally, a \emph{formal flag} $\F$ at $a$ consists of a regular curve $\F_1$ and a regular hypersurface $\F_2$ in $\Spec(\hOWa)$ such that $\F_1\subseteq\F_2$ holds.
 
 Consider now the flag $\F$ given by $\F_2=H_t$ and $\F_1=C_t$ as above. Notice that the flag $\F$ has a particular geometric configuration with respect to $\Ea$. The hypersurface $\F_2$ is compatible with $\Ea$ and thus, the intersection 
 \[\F_2\cap \Ea=V(z_t,xy)=V(z_t,x)\cup V(z_t,y)\]
 is the union of two regular formal curves with simple normal crossings. The curve $\F_1$ meets each component of $\F_2\cap\Ea$ transversally, but the union $\F_1\cup(\F_2\cap\Ea)$ does not have simple normal crossings since
 \[\F_1\cup(\F_2\cap\Ea)=V(z_t,x)\cup V(z_t,y)\cup V(z_t,y+tx).\]
 
 
\subsection{Modifying the residual order via orders along formal curves}
 
 Let us now return to the more general situation that $W$ is a $3$-dimensional variety over a field $K$ of characteristic $p>0$, $X\subseteq W$ a hypersurface and $E\subseteq W$ a simple normal crossings divisor. Let $a\in X$ be a closed point of order $\ord_aX=c$ and denote by $\Ea$ the union of those components of $E$ that contain $a$.
 
 We want to define a modification $d_X(a)$ of the residual order which fulfills
 \[\resord_aX\leq d_X(a)\leq \resord_aX+\frac{c!}{p}\]
 and 
 \[d_{X'}(a')\leq d_X(a)\]
 for all equiconstant points $a'$ lying over $a$ under the blowup of the point $a$. To this end, we will now define a particular class of formal flags with similar properties to the flag we used to define $d_t$.
 
 Let $\FF$ denote the set of all formal flags $\F$ at $a$ with the property that the hypersurface $\F_2$ is compatible with $\Ea$. Clearly, $\FF\neq\emptyset$ is equivalent to the fact that $\Ea$ has less than three components. By induction on the previous blowups which have been performed in the resolution process, we can assume that this is the case. If we want to emphasize the dependence of $\FF$ on the point $a$, we will denote it by $\FF(a)$
 
 Further, let $\FF_1$ denote the set of flags $\F\in\FF$ with the following properties:
 \begin{itemize}
  \item The union $\F_1\cup(\F_2\cap\Ea)$ is not simple normal crossings.
  \item For all components $D$ of $\Ea$, the curves $\F_1$ and $\F_1\cap D$ meet transversally at $a$.
 \end{itemize}
 
 As we will show in Lemma \ref{form_of_associated_components} (5), $\FF_1\neq\emptyset$ is equivalent to $\Ea$ having exactly two components.
 
 Let $\F\in\FF_1$ be a  flag and $\x=(x,y,z)$ a regular system of parameters for $\hOWa$ that fulfills $\F_2=V(z)$ and $\F_1=V(y,z)$. We say that such parameters are subordinate to $\F$. To ease notation, set $\x=(x,y,z)$. Notice that we do not require (and it is not possible) that $\Ea$ is defined by a monomial in $x$ and $y$. Set
 \[J_{2,\x}=\coeff^c_{(x,y,z)}(\hIXa)\]
 and define the numbers
 \[m_{\F,\x}=\ord J_{2,\x},\]
 \[d_{\F,\x}=\ord_{(y)}\minit(J_{2,\x}).\]
 
 While the numbers $m_{\F,\x}$ and $d_{\F,\x}$ might depend on a chosen subordinate parameter system $\x=(x,y,z)$, we ignore this dependence for the time being and treat them as if they are invariants of the flag $\F$.
 
 Now consider the blowup $\pi:W'\to W$ at the point $a$ and let $a'$ be an equiconstant point over $a$. Recall that $\resord_{a'}X'\leq\resord_aX$ holds if one or less components of $\Ea$ are lost at $a'$. On the other hand, if two components of $\Ea$ are lost at $a'$, then there is a flag $\F\in\FF_1(a)$ such that $a'$ is contained in the strict transforms of $\F_2$ and $\F_1$. Further, $\F$ can even be chosen in such a way that it maximizes $m_{\F,\x}$ over all flags in $\FF_1(a)$ and the inequality
 \[\resord_{a'}X'\leq d_{\F,\x}\]
 is fulfilled. 
 
 A flag $\F\in\FF_1$ which maximizes $m_{\F,\x}$ over all flags in $\FF_1$ will be called \emph{valid}. If $\hIXa$ is generated by a purely inseparable power series $z^{p^e}+F(x,y)$ and the parameters $(x,y,z)$ are subordinate to a flag $\F\in\FF_1$, then the flag is valid if $f$ is clean. As the following example shows, the value $d_{\F,\x}$ is not significant if a flag $\F\in\FF_1$ is not valid:
 
 \begin{example}
  Let $\hIXa$ be generated by the purely inseparable polynomial
  \[f=z^2+x^7y(x+y)^2\]
  over a field of characteristic $2$ and let $\Ea=V(xy)$. Then $\resord_aX=2$.
  
  Set $y_1=y+x$. Then
  \[f=z^2+x^7(y_1+x)y_1^2\]
  \[=(z+x^4y_1)^2+x^7y_1^3.\]
  Set $z_1=z+x^4y_1$ and let $\F\in\FF_1$ be the flag $\F_2=V(z_1)$, $\F_1=V(z_1,y_1)$. Set $\x=(x,y_1,z_1)$. Then $m_{\F,\x}=10$ and $d_{\F,\x}=3$. The flag $\F$ is valid.
  
  Now consider the flag $\G\in\FF_1$ defined by $\G_2=V(z_1+y_1^4)$, $\G_1=V(z_1+y_1^4,y_1)$. The expansion of $f$ with respect to the subordinate parameters is
  \[f=(z_1+y_1^4)^2+y_1^8+x^7y_1^3.\]
  Set $\wt\x=(x,y_1,z_1+y_1^4)$. Then $m_{\G,\wt\x}=8$ and $d_{\G,\wt\x}=8$. In particular, the estimate $d_{\G,\wt\x}\leq\resord_aX+\frac{c!}{p}$ is not fulfilled. The flag $\G$ is invalid.
 \end{example}

 \begin{remark}
  We could define a preliminary modification of the residual order as
  
  \[\wt d_X(a)=\sup(\{\resord_aX\}\cup\{d_{\F,\x}:\text{$\F\in\FF_1(a)$ is valid}\}).\]
  This invariant fulfills
  \[\resord_{a'}X'\leq \wt d_X(a)\]
  at all equiconstant points $a'$ lying over $a$.
  
  Further, the modification $\wt d_X(a)$ is within the range
  \[\resord_aX\leq \wt d_X(a)\leq \resord_a(X)+\frac{c!}{p}.\]
  
  Still, the invariant $\wt d_X(a)$ might increase under blowup, as the following example shows:
  
  Let $\hIXa$ be generated by the purely inseparable polynomial
  \[f=z^2+x(x+y^2)^2,\]
  and set $\Ea=V(x)$. Since $\Ea$ has only one component, we know that $\FF_1(a)=\emptyset$ and thus, $\wt d_X(a)=\resord_aX=2$.
 
   Now consider the blowup at the point $a$ and let $a'$ be the origin of the $y$-chart over $a$. Then $\hIXap$ is generated by the element
  \[f'=z^2+xy(x+y)^2,\]
  where $(x,y,z)$ denote the induced parameters for $\hOWap$. Further, $\Eap=V(xy)$. Consider the flag $\F\in\FF_1(a')$ that is given by $\F_2=V(z+xy_1)$ and $\F_1=V(z+xy_1,y_1)$ where $y_1=y+x$. Since $f'$ has the expansion
  \[f'=(z+xy_1)^2+xy_1^3,\]
  the flag $\F$ is valid and $d_\F=3$. Hence, $\wt d_{X'}(a')>\wt d_X(a)$.
 \end{remark}

 %

\subsection{Associated multiplicity $n_\F$ and the definition of $d_\F$ for all flags}

To correctly define the modification $d_X(a)$ of the residual order, we have to define the invariants $m_\F$ and $d_\F$ for all flags $\F\in\FF$.

 To classify the flags in $\FF$, we will define for each flag $\F\in\FF$ the \emph{associated multiplicity} $n_\F$. If the union $\F_1\cup(\F_2\cap\Ea)$ has simple normal crossings, we set $n_\F=0$. Otherwise, we set
 \[n_\F=\max\{\mult_a(\F_1,\F_2\cap D):\text{$D$ is a component of $\Ea$}\}\]
 where $\mult_a(.,.)$ denotes the intersection multiplicity of two curves at $a$.
 
 Clearly, $\FF_1$ consists of the flags $\F\in \FF$ with $n_\F=1$. Now let $\F\in\FF$ be a flag with $n_\F\geq2$. We will show in Lemma \ref{form_of_associated_components} that this implies that there exists exactly one component $D$ of $\Ea$ with the property that $n_\F=\mult_a(\F_1,\F_2\cap D)$. (The intersection multiplicity for any other component is always $1$.) This component of $\Ea$ is called the \emph{associated component} of $\F$ and will be denoted by $D_\F$.
 
 Now let $\F\in\FF$ be any flag with $n_\F>0$. Set $n=n_\F$ and let $\x=(x,y,z)$ be parameters that are subordinate to $\F$. (Again, we do not require that the parameters $\x$ are subordinate to $\Ea$.) Let $J_{2,\x}$ be defined as 
 \[J_{2,\x}=\coeff^c_{(x,y,z)}(\hIXa).\]
 Define the weighted order function $\w_n:K[[x,y]]\to\Ni$ by $\w_n(x)=1$ and $\w_n(y)=n$. Then we set
 \[m_{\F,\x}=\w_n(J_{2,\x}),\]
 \[d_{\F,\x}=\ord_{(y)}\minit_{\w_n}(J_{2,\x}).\]
 Notice that $d_{\F,\x}$ equals the order of the weak initial ideal of $J_{2,\x}$ with respect to $\w_n$ along the formal curve $\F_1$. We set
 \[m_\F=\begin{cases}
         m_{\F,\x} & \text{if $d_{\F,\x}\geq nc!$,}\\
         nc! & \text{if $d_{\F,\x}<nc!$.}
        \end{cases}
       \]
 \[d_\F=\begin{cases}
         d_{\F,\x} & \text{if $d_{\F,\x}\geq c!$,}\\
        d_{\F,\x} & \text{if $0<d_{\F,\x}<c!$ and $c!\nmid m_\F$,}\\
         -1 & \text{if $0<d_{\F,\x}<c!$ and $c!\mid m_\F$,}\\
         -1 & \text{if $d_{\F,\x}=0$.}
        \end{cases}
 \]
 As we will show in Proposition \ref{m_d_coord_indep}, the numbers $m_\F$ and $d_\F$ are independent of the chosen subordinate parameter system $\x$. Hence, they are invariants of the flag $\F$.
 
 If $n_\F=0$ holds, the numbers $m_\F$ and $d_\F$ are defined differently. Let $\F\in\FF$ be a flag with $n_\F=0$ and $\x=(x,y,z)$ a regular system of parameters that is subordinate to $\F$ and $\Ea$. Then we set 
\[m_\F=m_{\F_2}=\ord M_{2,\x},\]
\[d_\F=d_{\F_2}=\ord I_{2,\x}\]
 where the ideals $M_{2,\x}$ and $I_{2,\x}$ are defined as in Section \ref{section_residual_order} via the factorization
 \[J_{2,\x}=M_{2,\x}\cdot I_{2,\x}.\]
 Notice that these numbers are invariants of the hypersurface $\F_2$ and do not depend on the curve $\F_1$ as long as $\F_1$ is chosen in such a way that $n_\F=0$. They are also independent of the chosen subordinate parameters $\x$ by Proposition \ref{residual_order_coord_indep}.
 







 
\subsection{Definition of $d_X(a)$}
 
 Two flags $\F,\G\in\FF$ are said to be \emph{comparable} if their associated multiplicity and their associated component (if applicable) coincide.
 
 A flag $\F\in\FF$ is said to be \emph{valid} if all comparable flags $\G\in\FF$ fulfill the inequality $m_\G\leq m_\F$.
 
 We define our modification $d_X(a)$ of the residual order as
 \[d_X(a)=\sup\{d_\F:\text{$\F\in\FF$ is valid}\}.\]
 
 We will show in Proposition \ref{maximizing_flags_exist} that the supremum in this definition is actually a maximum.
 
 Notice that $d_X(a)\geq \resord_aX$ holds naturally since
 \[\resord_aX=\max\{d_\F:\text{$\F\in\FF$ with $n_\F=0$ is valid}\}.\]

\subsection{Behavior of $d_X(a)$ under blowup}
 
 The details for all of the statements in this section will be provided in Section \ref{section_inv_drops_for_d>0}.
 
 Consider the blowup $\pi:W'\to W$ at the point $a$ and let $a'$ be an equiconstant point lying over $a$. By definition of the invariant, the inequality
 \[d_{X'}(a')\leq d_X(a)\]
 is equivalent to the fact that for each valid flag $\G\in \FF(a')$ there exists a valid flag $\F\in\FF(a)$ which fulfills
 \[d_\G\leq d_\F.\]
 
 To prove this, we first have to consider how the different types of flags transform under blowup. To this end, let $\F\in\FF(a)$ be a formal flag with the property that the point $a'$ is contained in the strict transform $\F_1'$ of the curve $\F_1$. (Thus, $a'$ is also contained in the strict transform of $\F_2$). Denote by $\F'$ the formal flag at $a'$ that is defined by the strict transform $\F_2'$ of $\F_2$ and the strict transform $\F_1'$ of $\F_1$. It is called the \emph{induced flag} at $a'$. As we will show in Proposition \ref{flag_invs_under_blowup}, $\F'\in\FF(a')$ holds.
 
 According to their type, the flags $\F\in\FF(a)$ behave in the following way under blowup:
 \begin{itemize}
  \item For a flag $\F$ with $n_\F>1$, the property $a'\in\F_1'$ implies that the point $a'$ also lies on the strict transform $(D_\F)'$ of the associated component $D_\F$. Any other component of $\Ea$ is lost in the transition from $a$ to $a'$. The induced flag $\F'$ then fulfills $n_{\F'}=n_\F-1$. If $n_{\F'}>1$, then $D_{\F'}=(D_\F)'$. Further, $d_{\F'}=d_\F$ holds.
  \item For a flag $\F$ with $n_\F=1$, the property $a'\in\F_1'$ implies that both components of $\Ea$ are lost in the transition from $a$ to $a'$. Further, the induced flag $\F'$ fulfills $n_{\F'}=0$ and, if $d_\F\neq-1$ holds, then $d_{\F'}\leq d_\F$.
  \item For a flag $\F$ with $n_\F=0$, the property $a'\in\F_1'$ implies that at most one component of $\Ea$ is lost in the transition from $a$ to $a'$. Also, the induced flag $\F'$ fulfills $n_{\F'}=0$ and $d_{\F'}\leq d_\F$.
 \end{itemize}
 
 It is easy to see that not all valid flags $\G\in\FF(a')$ are induced by flags $\F\in\FF(a)$. This poses only a minor problem if there is a flag $\F\in\FF(a)$ such that its induced flag $\F'$ is comparable to $\G$. In this case, we will be able be able to construct a valid flag $\F\in\FF(a)$ such that the chain of inequalities
 \[d_\G\leq d_{\F'}\leq d_\F\]
 holds.
 
 The problem is more serious if there is no flag $\F\in\FF(a)$ such that the induced flag $\F'$ is comparable to $\G$. This holds if and only if $n_\G>1$ and $D_\G=\Dnew$ where $\Dnew=\pi^{-1}(a)$ denotes the exceptional divisor of the blowup $\pi$. In fact, for all such flags $\G$ the image $\pi(\G_1)$ of the curve $\G_1$ is a singular curve in $\Spec(\hOWa)$.
 
 To estimate $d_\G$ for these kinds of flags, a modified version of Moh's bound will be used. We will show that for all valid flags $\G\in\FF(a')$ with $n_\G>1$ and $D_\G=\Dnew$, there exists a valid flag $\F\in\FF(a)$ such that
 \[d_\G\leq \frac{1}{n_\G}d_\F+\eps\]
 holds, where
 \[\eps=\begin{cases}
         0 & \text{if $\chara(K)=0$ or $c!\nmid m_\G$,}\\
         \frac{c!}{p} & \text{if $\chara(K)=p>0$ and $c!\mid m_\G$.}
        \end{cases}
\]
 If $\eps=0$, this already implies that $d_\G<d_\F$. So consider the case that $\eps=\frac{c!}{p}$. Since we know that $n_\G\geq2$ and $p\geq2$, we can compute that
 \[d_\G\leq \frac{1}{2}d_\F+\frac{c!}{2}.\]
 Since $c!\mid m_\G$ holds by assumption and we may assume that $d_\G\neq -1$, the estimate $d_\G\geq c!$ holds by definition. Hence, it follows from above inequality that $d_\G\leq d_\F$. (Using refined techniques, it is actually possible to show that $d_\G<d_\F$ holds in this case.) 
 
 
 
 \subsection{The small residual case}
 
 By what we have seen so far, the inequality $d_\F\leq d_{\F'}$ for the induced flag $\F'$ holds in all but one case. Namely, if $\F\in\FF(a)$ is a flag with $n_\F=1$ and $d_\F=-1$. Then the induced flag $\F'\in\FF(a')$ fulfills $d_{\F'}>d_\F$.
 
 A closer analysis of this case reveals that coefficient ideal with respect to the induced flag $\F'\in\FF(a')$ has a very good property in this case.
 
 If $d_\F=-1$ because $d_{\F,\x}=0$, then $d_{\F'}=0$ holds for the induced flag. On the other hand, assume that $d_\F=-1$ because $0<d_{\F,\x}<c!$ and $c!\mid m_\F$. Let $\x'$ be subordinate parameters for the induced flag $\F'$ and denote them by $\x'=(x,y,z)$. Then it can be shown that the coefficient ideal $J_{2,\x'}$ has the form
 \[J_{2,\x'}=(y^{mc!})\cdot I\]
 for a positive integer $m>0$ and an ideal $I\subseteq K[[x,y]]$ with $\ord_{(y)}I=0$ and $0<\ord I<c!$.
 
 In this case, we say that $X'$ is in the \emph{small residual case} at $a'$. The small residual case is a terminal case similar to the monomial case that can be resolved by combinatorial resolution. This follows from the simple fact that blowing up the regular curve $V(y,z)$ changes the form of the coefficient ideal to
 \[(y^{(m-1)c!})\cdot I\]
 where $I$ is the same ideal as before. Repeating this $m$ times, we end up with the situation that the coefficient ideal equals $I$. By Lemma \ref{ogeqc!}, this implies that the order function has decreased. 
 
 An exact definition of the small residual case is given in Section \ref{section_monom_s_r_case}. Terminal cases which are similar, but more general than the small residual cases have been used by other authors. (Cf. the notion of \emph{exceptional and good} in \cite{EV_Good_Points} p. 116 and Abhyankar's notion of \emph{good points} in \cite{Abhyankar_Good_Points}.)

 
 


\subsection{The flag invariant $\inv(\F)=(d_\F,n_\F,s_\F)$}

 Combining the results we have stated so far, we conclude that
 \[d_{X'}(a')\leq d_X(a)\]
 holds at all equiconstant points $a'$ over $a$ at which $X'$ is not in a terminal case.

 Although the invariant $d_X(a)$ does not increase under point-blowups at equiconstant points, it will still remain constant in many cases. To measure improvement under point-blowups until we reach a terminal case that can be resolved via combinatorial resolution, we have to further refine our invariant.
 
 As a first step, we consider the pair $(d_\F,n_\F)$ for each valid flag $\F\in\FF$. By the results we established so far, the strict inequality
 \[(d_{\F'},n_{\F'})<(d_\F,n_\F)\]
 holds for the induced flag $\F'\in\FF(a')$ in all cases except if either $n_\F=0$ and $d_{\F'}=d_\F$ or $n_\F=1$ and $d_\F=-1$. Following the discussion in the previous section, only the first of these cases is relevant.
 
 Thus, it remains to measure improvement in the case $n_{\F'}=n_\F=0$ and $d_{\F'}=d_\F$.
 
 To this end, we define $s_\F$ as the order of the \emph{second coefficient ideal} $J_{1,\x}$ as follows: Let $\F\in\FF$ be a flag with $n_\F=0$ and subordinate parameters $\x=(x,y,z)$. Recall the definition of $M_{2,\x}$, $I_{2,\x}$ and $d_\F$. Then we set  $s_\F=\ord J_{1,\x}$ where
 \[J_{1,\x}=\begin{cases}
             \coeff_{(x,y)}^{d_\F}(I_{2,\x}) & \text{if $d_\F\geq c!$,}\\
             \coeff_{(x,y)}^{d_\F(c!-d_\F)}(I_{2,\x}^{c!-d_\F}+M_{2,\x}^{d_\F}) & \text{if $0<d_\F<c!$,}\\
             0 & \text{if $d_\F=0$.}
            \end{cases}
\]
It will will be shown in Proposition \ref{slope_coord_indep} that $s_\F$ only depends on the flag $\F$ and not on the choice of subordinate parameters $\x$. The definition of $J_{1,\x}$ for $0<d_\F<c!$ makes use of the \emph{companion ideal}, a well-known technique used in the proof of resolution of singularities in characteristic zero (Cf. \cite{EH}). It ensures that the value of $s_\F$ is always independent of the choice of $\x$.

For flags $\F\in\FF$ with $n_\F>0$, we set $s_\F=0$. Hence, no second coefficient ideal is considered for these types of flags.

The invariant $s_\F$ behaves the following way under blowup: Let $\F\in\FF(a)$ be a flag with $n_\F=0$. Then the induced flag $\F'\in\FF(a')$ fulfills $n_{\F'}=0$ and $d_{\F'}\leq d_\F$. If the equality $d_{\F'}=d_\F$ holds, then $s_{\F'}<s_\F$ holds under the condition that $s_\F<\infty$. 

Consequently, we define for each flag $\F\in\FF$ the \emph{flag invariant}
\[\inv(\F)=(d_\F,n_\F,s_\F).\]
Then we define
\[\inv_X(a)=\sup\{\inv(\F):\text{$\F\in\FF$ is valid}\}.\]

It will be shown in Proposition \ref{maximizing_flags_exist} that the supremum in this definition is actually a maximum. Thus, there exists a \emph{maximizing} flag $\F\in\FF(a)$ such that $\F$ is valid and $\inv(\F)=\inv_X(a)$.

There are two terminal cases in which we will set $\inv_X(a)$ to be the trivial value $(0,0,0)$ instead of the flag invariant $\inv(\F)$ of a maximizing flag $\F$. One of them is the small residual case which was already discussed. The other terminal case is the \emph{monomial case}. The exact definitions of both terminal cases will be given in Section \ref{section_monom_s_r_case}. 

We say that $X$ is in the monomial case at $a$ if there exists a regular system of parameters $\x=(x,y,z)$ for $\hOWa$ with $\Ea\subseteq V(xy)$ such that the coefficient ideal $J_{2,\x}$ is a principal monomial ideal of the form
\[J_{2,\x}=(x^{r_x}y^{r_y})\]
and an additional maximality condition is fulfilled. This maximality condition is the existence of an element $f\in\hIXa$ which is $\ord$-clean with respect to the coefficient ideal $J_{2,\x}$, a notion that will be introduced in Chapter \ref{chapter_cleaning}. It ensures that the order of $J_{2,\x}$ is maximal over all coordinate changes $z\mapsto z+g(x,y)$.

We will show in Lemma \ref{inv_finite} that if $X$ is \emph{not} in a terminal case at $a$, then $\inv_X(a)=(d_\F,n_\F,s_\F)\in\N^3$ (hence, no component is infinite) and $\inv_X(a)>(0,0,0)$. This guarantees that successively lowering this invariant under point-blowups eventually leads to one of the terminal cases.

By combining all statements about the behavior of $\inv(\F)$ under blowups that we made so far, we can make the following conclusion: Let $W'\to W$ be the blowup at the point $a$ and let $a'\in\pi^{-1}(a)$ be an equiconstant point. If $X$ is not in a terminal case at $a$ and $X'$ is not in a terminal case at $a'$, then the strict inequality
\[\inv_{X'}(a')<\inv_X(a)\]
holds.

Since we set $\inv_X(a)=(0,0,0)$ in the terminal cases, we consider these cases to be \emph{better} than the case in which $\inv_X(a)=\inv(\F)>(0,0,0)$. Indeed, both terminal cases can be resolved via combinatorial resolution. To measure the improvement during combinatorial resolution, we will use additional invariants that will not be defined here.


Thus, we can reformulate our strategy to eventually lower the order function for the surface case in the following way:

 \begin{enumerate}[(1)]
  \item Lower the invariant $\inv_X(a)$ until it becomes $(0,0,0)$.
  \item As soon as $\inv_X(a)=(0,0,0)$ is achieved, we have reached a terminal case. Then combinatorial resolution can be applied to lower the order function.
 \end{enumerate}

\section{Failure of upper semicontinuity of the residual order} \label{section_generic_down}

 Over fields of characteristic zero, hypersurfaces of maximal contact can be used to show that the map $(\ord X,\resord X):X\to\Ni^2$ is upper semicontinuous.
 
 On the other hand, the function is generally not upper semicontinuous over fields of positive characteristic. This is illustrated by the following example:

 \begin{example}
  Consider the ambient space $W=\Spec(K[x,y,z])$ over an algebraically closed field $K$ of characteristic $\chara(K)=p>0$ and the hypersurface $X=V(f)\subseteq W$ defined by the polynomial
  \[f=z^p+xy^{np}.\]
  for some positive integer $n>0$. Further, let $E$ be given as $E=V(x)$. The top locus of $X$ consists of the line $V(z,y)$.
  
  Since $f$ is clean with respect to the parameters $(x,y,z)$, the hypersurface $H_0=V(z)$ realizes the residual order of $X$ at the origin $a_0=(0,0,0)$ and 
  \[\resord_{a_0}X=d_H=(p-1)!\cdot np\]
  holds.
  
  Now consider a closed point on $\topp(X)$ with affine coordinates $a_t=(t,0,0)$ for some $t\in K^*$. Set $x_t=x-t$. The expansion of $f$ at $a_t$ with respect to the local parameters $(x_t,y,z)$ is
  \[f=z^p+x_ty^{np}+ty^{np}.\]
  Set $z_t=z+t^\frac{1}{p}y^n$ and let $H_t$ be the regular hypersurface $H_t=V(z_t)$. Notice that the $p$-th root of $t$ exists since $K$ is algebraically closed. With respect to the parameters $(x_t,y,z_t)$, the expansion of $f$ is
  \[f=z_t^p+x_ty^{np}.\]
  Hence, $X$ is pointwise trivial along the line $V(y,z)$ in the strong sense that all local rings $\OO_{X,a}$ for $a\in V(y,z)$ are isomorphic.
  
  Since the expansion of $f$ is clean with respect to the parameters $(x_t,y,z_t)$, the hypersurface $H_t$ realizes the residual order of $X$ at $a_t$. Consequently,
  \[\resord_{a_t}X=d_{H_t}=(p-1)!\cdot (np+1)\]
  for all non-zero constants $t\in K^*$, a bigger value than at the special point $a_0$.
  
  Consequently, the function $(\ord X,\resord X):X\to\N^2$ is not upper semicontinuous.
 \end{example}

 Another important thing to observe in above example is the fact that at each closed point of the top locus, a different regular hypersurface has to be used to maximize the residual order. This is in stark contrast to the situation in characteristic zero where hypersurfaces of maximal contact which realize the residual order of $X$ are known to exist on open neighborhoods. 
 
 We will now explain how the problem of upper semicontinuity will be handled in the surface case. To this end, we will first analyze the situation over fields of characteristic zero to get a picture of how our invariant \emph{should} behave.
 
 Under the condition that $W$ is a regular $3$-dimensional ambient variety over a field $K$ and $X\subseteq W$ is a hypersurface, we know that $\topp(X)$ is at most $1$-dimensional. Since upper semicontinuity is clear at isolated points of $\topp(X)$, let us consider a curve $C$ lying inside $\topp(X)$.
 
 If the field $K$ is of characteristic zero, one can use hypersurfaces of maximal contact to show that there exists a non-negative integer $m\in\N$ such that the following hold:
 \begin{enumerate}[(1)]
  \item $\resord_aX\geq m$ for all points $a\in C$.
  \item For all but finitely many points $a\in C$ the equality $\resord_aX=m$ holds.
  \item If the equality $\resord_aX=m$ holds for a closed point $a\in C$, then $J_{2,\x}(a)=(y^m)$ where $\x=(x,y,z)$ are regular parameters for $\hOWa$ which are subordinate to a hypersurface $H$ which realizes the residual order of $X$ at $a$. In particular, $J_{2,\x}(a)$ is a principal monomial ideal.
  \item If $C$ is singular at $a$, then $\resord_aX>m$.
 \end{enumerate}
 An important consequence of these properties, other than the residual order being upper semicontinuous on $C$, is that the coefficient ideal $J_{2,\x}(a)$ is a principal monomial ideal at almost all points $a\in C$. This is important since we only want to blow up curves once we have reached a terminal case. The above tells us that $X$ is already in the monomial case at all but finitely many points $a\in C$ and once we have lowered the residual order of $X$ at all points of $C$ to $m$, then the curve $C$ is regular and $X$ is in the monomial case at all of its points. Hence, we can proceed to apply combinatorial resolution.
 
 As we can see from the previous example, these properties do generally not hold if the characteristic of $K$ is positive. Yet, there is another thing to observe from the example. Namely, at each closed point $a_t\in\topp(X)$, $X$ is in the small residual case. Since we set $\inv_a(X)=(0,0,0)$ whenever $X$ is in a terminal case at a point $a$, this shows that the previously defined invariant $\inv(X)$ is (trivially) upper semicontinuous on $\topp(X)$.
 
 Surprisingly, the addition of the small residual case to the terminal cases already suffices to make the map $(\ord X,\inv(X)):X\to\Ni^4$ upper semicontinuous. As we will show in Proposition \ref{key_proposition_for_usc}, for a curve $C$ as before, the hypersurface $X$ is in a terminal case at all but finitely many points $a\in C$. Naturally, this suffices to show that $\inv(X)$ is upper semicontinuous on $C$. Further, we will show that if $C$ is singular at a point $a$, then $\inv_a(X)>(0,0,0)$ holds.
 
 The actual resolution invariant $\ivX$ that we will define in Chapter \ref{chapter_invariant} will not only consist of the order function and $\inv(X)$, but also of the \emph{combinatorial pair}, an invariant used to measure further improvement under blowup once we have reached a terminal case. It will require some additional effort to show that this combinatorial pair is also upper semicontinuous. The details for all of these statements and their proofs can be found in Chapter \ref{chapter_usc}.

\chapter{Coordinate-independence of invariants associated to coefficient ideals} \label{chapter_invariance}

Our definition of the coefficient ideal comes with the drawback that it is highly coordinate dependent. This poses a problem since we want to use certain invariants associated to coefficient ideals as components of our resolution invariant. Different choices of parameters might yield different invariants. One way to solve this problem would be to prescribe in each situation a particular regular system of parameters with respect to which the coefficient ideal is considered. We will not follow this approach since prescribing fixed parameters seems little geometric.

Instead, our approach is to always choose parameters which are \emph{subordinate} to certain geometric objects, like regular hypersurfaces or flags. Different choices of subordinate parameters will generally yield different coefficient ideals. But if the invariants associated to those coefficient ideals are the same for all subordinate parameters, we can consider them as invariants of the underlying geometric objects.

In this chapter we will investigate under which conditions certain invariants associated to coefficient ideals are invariants of geometric objects. The results will then be applied to show that the invariants $m_\F$, $d_\F$ and $s_\F$ which were defined in Chapter \ref{chapter_pathologies} only depend on the flag $\F$ and not on a particular choice of subordinate parameters.

Most of the argumentation in the chapter is rather technical. The main results that will be used in Chapter \ref{chapter_invariant} to ensure that the resolution invariant is well defined are Proposition \ref{residual_order_coord_indep}, Proposition \ref{m_d_coord_indep} and Proposition \ref{slope_coord_indep}.

\section{Coordinate-independence of weighted orders of coefficient ideals} \label{section_invariance_of_weighted_orders}

 In this section we will investigate how weighted orders of the coefficient ideal behave under coordinate changes that stabilize the underlying hypersurface. 
 
 To be more precise, consider the power series ring $R=K[[\x,z]]$ with $\x=(x_1,\ldots,x_n)$ and let $J\subseteq R$ be an ideal. Let $H\subseteq\Spec(R)$ be the regular hypersurface $H=V(z)$. Further, let $\w:K[[\x]]\to\Ni^k$ be a weighted order function defined on the parameters $\x$. Let $J_{-1}$ be the coefficient ideal $J_{-1}=\coeff_{(\x,z)}^c(J)$ with respect to the hypersurface $H$ and a positive integer $c>0$.
 
 Using the inverse function theorem and the Weierstrass preparation theorem, it is easy to see that coordinate changes $(\x,z)\mapsto (\wt \x,\wt z)$ which stabilize the hypersurface $H$ in the sense that $H=V(\wt z)$ are composed of the following types of simple coordinate changes:
 \begin{itemize}
  \item Triangular coordinate changes $x_i\mapsto x_i+g(\x_-,z)$ where $\x_-=(x_1,\ldots,x_{i-1},x_{i+1},\ldots,x_n)$.
  \item Multiplications with units $x_i\mapsto u x_i$ where $u\in R^*$.
  \item Transpositions of variables $x_i\mapsto x_j$, $x_j\mapsto x_i$.
  \item Multiplications with units $z\mapsto u z$ where $u\in R^*$.
 \end{itemize}
 A transposition of $x_i$ and $x_j$ clearly leaves the weighted order $\w$ of $J_{-1}$ unchanged if $\w(x_i)=\w(x_j)$ holds. For a multiplication of the parameter $z$ with a unit, we know by Proposition \ref{coeff_ideal_well_def} that this coordinate change stabilizes the coefficient ideal $J_{-1}$.
 
 In the main result of this section, Proposition \ref{invariance_of_w}, we will prove that multiplication of a parameter $x_i$ with a unit also leaves the weighted order of $J_{-1}$ unchanged, but a triangular coordinate change $x_i\mapsto x_i+g(\x_-,z)$ only does so under certain conditions.
 
 The result will then be applied in Proposition \ref{residual_order_coord_indep} and Proposition \ref{m_d_coord_indep} to show that the residual order and the invariants $m_\F$ and $d_\F$ as they were defined in Chapter \ref{chapter_pathologies} are independent of the choice of subordinate parameters.
 
 As a preparation for proving Proposition \ref{invariance_of_w}, we will state two lemmas. Lemma \ref{w_coeff} provides us with a technique to compute the weighted order of the coefficient ideal $J_{-1}$ from the weighted orders of the coefficients of elements $f$ of $J$. This technique will be used many times throughout the entire thesis. In Lemma \ref{coordinate_change_g} we will give formulae how the power series expansion of an element $f\in R$ changes under coordinate changes in $x_i$. To make notation more transparent, the parameter $x_i$ will be denoted by $y$ in the following.

\begin{lemma} \label{w_coeff}
 Let $R=K[[\x,y,z]]$, $J\subseteq R$ an ideal and $c>0$ a positive integer. Let $\w$ be a weighted order function on $K[[\x,y]]$ that is defined on the parameters $(\x,y)$.
 
 Further, let each element $f\in J$ have the expansions $f=\sum_{i\geq0}f_iz^i$ and $f=\sum_{i,j\geq0}f_{i,j}y^jz^i$ with $f_i\in K[[\x,y]]$ and $f_{i,j}\in K[[\x]]$.
 
 Set $J_{-1}=\coeff_{(\x,y,z)}^c(J)$.
 
 Then
 \[\w(J_{-1})=\min_{\substack{f\in J\\i<c}}\frac{c!}{c-i}\w(f_i)\]
 \[=\min_{f\in J}\min_{\substack{i<c\\j\geq0}}\frac{c!}{c-i}(\w(f_{i,j})+j\w(y)).\]
 Consequently, for each element $f\in J$ and all indices $i,j\geq0$ the inequalities
 \[\w(f_i)\geq \frac{c-i}{c!}\w(J_{-1}),\]
 \[\w(f_{i,j})\geq \frac{c-i}{c!}\w(J_{-1})-j\w(y)\]
 hold.
 
 Further, if $G\subseteq J$ is a generating set for $J$, then
 \[\w(J_{-1})=\min_{\substack{f\in G\\i<c}}\frac{c!}{c-i}\w(f_i).\]
\end{lemma}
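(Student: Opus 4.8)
The statement is a direct generalization of Lemma \ref{ord_coeff} (which handled the case of the usual order function) and the proof will follow the same lines, with care taken about the lexicographic nature of $\w$. The plan is to work from the definition $J_{-1}=\coeff_{(\x,y,z)}^c(J)=\big(f_i^{c!/(c-i)}:f\in J,\ i<c\big)\subseteq K[[\x,y]]$, and to use the fact (Remark after the definition of weighted order functions, item (4)) that the weighted order of an ideal equals the minimum of the weighted orders of a generating set. First I would establish the first displayed equality. Since $\w$ is a valuation, $\w(f_i^{c!/(c-i)})=\frac{c!}{c-i}\w(f_i)$, and because the generators of $J_{-1}$ are exactly the elements $f_i^{c!/(c-i)}$ for $f\in J$, $i<c$, we immediately get $\w(J_{-1})=\min_{f\in J,\ i<c}\frac{c!}{c-i}\w(f_i)$. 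Here one must note that $\frac{c!}{c-i}$ is a positive integer, so scalar multiplication by it is an order-preserving operation on $\Ni^k$ with the lexicographic order, which makes the ``min'' interchange legitimate.

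Next I would derive the second equality. Expanding $f_i=\sum_{j\geq0}f_{i,j}y^j$ with $f_{i,j}\in K[[\x]]$, the valuation property gives $\w(f_i)=\min_{j\geq0}\w(f_{i,j}y^j)=\min_{j\geq0}\big(\w(f_{i,j})+j\,\w(y)\big)$, where again equality of the minimum with $\w$ of the sum uses that $\w$ is a valuation (and that distinct monomials $y^j$ contribute distinct ``leading'' behaviour so no cancellation at the minimum can occur — more precisely, the standard fact that $\w\big(\sum_j g_j\big)=\min_j \w(g_j)$ whenever the minimum is attained uniquely, combined with the observation that one can always pass to the unique minimizing index). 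Substituting this into the first equality yields the claimed double-minimum expression. The two displayed inequalities for $\w(f_i)$ and $\w(f_{i,j})$ are then simply rearrangements of the fact that each term in the minimum is $\geq \w(J_{-1})$: from $\frac{c!}{c-i}\w(f_i)\geq \w(J_{-1})$ we get $\w(f_i)\geq \frac{c-i}{c!}\w(J_{-1})$, and likewise for $f_{i,j}$ after subtracting $j\,\w(y)$.

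Finally, the statement about a generating set $G\subseteq J$: since $J=(G)$, every $f\in J$ is an $R$-linear combination $f=\sum_k a_k g_k$ with $g_k\in G$, and the $i$-th coefficient $f_i$ in the $z$-expansion is $\sum_{k}\sum_{l\leq i}(a_k)_{i-l}(g_k)_l$ (using Lemma \ref{coordinate_change_g_z}-type bookkeeping, or simply the product rule for power series), so $\w(f_i)\geq\min_k\min_{l\leq i}\w((g_k)_l)$; feeding this into the first equality shows $\w(J_{-1})\geq \min_{g\in G,\ i<c}\frac{c!}{c-i}\w(g_i)$, and the reverse inequality is trivial since $G\subseteq J$. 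The main (and only mild) obstacle is bookkeeping the lexicographic-order subtleties: one must check that multiplying a value in $\Ni^k$ by a positive integer and subtracting a fixed value $j\,\w(y)$ are monotone with respect to $\leq$ on $\Ni^k$, so that all the ``min'' manipulations are valid; this is routine but should be stated explicitly. Everything else is a formal computation identical in spirit to the proof of Lemma \ref{ord_coeff}.
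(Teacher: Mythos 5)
Your proposal is correct and is essentially the spelled-out version of what the paper dismisses as "obvious by the definitions of the coefficient ideal and weighted order functions." The only small wrinkle: for the identity $\w(f_i)=\min_j(\w(f_{i,j})+j\w(y))$, you invoke a valuation-cancellation argument ("minimum attained uniquely"), but this is not what is needed — it follows directly from the definition of a weighted order function as a minimum over the (disjoint) monomial support, since the summands $f_{i,j}y^j$ for distinct $j$ have distinct $y$-degree and thus cannot cancel.
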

\begin{proof}
 This is obvious by the definitions of the coefficient ideal and weighted order functions.
\end{proof}

%
%
%
%

\begin{lemma} \label{coordinate_change_g}
 Let $R=K[[\x,y,z]]$ and $f\in R$ an element. Consider a change of coordinates $y\mapsto\wt y$ that will be specified in the following.
 
 Let $f$ have the expansions $f=\sum_{i,j\geq0}f_{i,j}y^jz^i$ and $f=\sum_{i,j\geq0}\wt f_{i,j}\wt y^jz^i$ with $f_{i,j},\wt f_{i,j}\in K[[\x]]$. Then the following hold:
 \begin{enumerate}[(1)]
  \item If $y=\wt y+g$ where $g\in K[[\x,z]]$ has the expansion $g=\sum_{i\geq0}g_iz^i$ with $g_i\in K[[\x]]$, then
  \[\wt f_{i,j}=\sum_{\substack{0\leq k\leq i\\l\geq j}}\binom{l}{j}f_{k,l}\sum_{\substack{\alpha\in\N^{l-j}\\|\alpha|=i-k}}g_\alpha\]
  where $g_\alpha=\prod_{o=1}^{l-j}g_{\alpha_o}$.
  \item If $y=u\wt y$ where $u\in R^*$ is a unit with expansion $u=\sum_{i,j\geq0}u_{i,j}\wt y^j z^i$ where $u_{i,j}\in K[[\x]]$, then
  \[\wt f_{i,j}=\sum_{\substack{0\leq k\leq i\\0\leq l\leq j}}f_{k,l}\sum_{\substack{\alpha\in\N^l\\|\alpha|=i-k}}\sum_{\substack{\beta\in\N^l\\|\beta|=j-l}}u_{\alpha,\beta}\]
  where $u_{\alpha,\beta}=\prod_{o=1}^l u_{\alpha_o,\beta_o}$.
 \end{enumerate}
\end{lemma}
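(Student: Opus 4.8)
The statement to prove is a pair of explicit formulae for how the coefficients $f_{i,j}$ in the double expansion of $f \in K[[\x,y,z]]$ transform under the substitutions $y = \wt y + g$ (with $g \in K[[\x,z]]$) and $y = u\wt y$ (with $u$ a unit). Both are pure bookkeeping identities obtained by substituting and re-collecting terms; the only subtlety is keeping the multi-index notation straight.

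\begin{proof}
 Both formulae are established by substituting the coordinate change into the given expansion $f = \sum_{i,j\geq0} f_{i,j}\,y^j z^i$ and re-indexing the resulting double sum so that the coefficient of $\wt y^j z^i$ can be read off. Since $f_{i,j} \in K[[\x]]$ does not involve $y$ or $z$, the substitution only affects the monomials $y^j z^i$, and since $z$ is untouched in both coordinate changes, the power $z^i$ contributes separately; all the work is in expanding $y^j$.

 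(1): Here $y = \wt y + g$ with $g = \sum_{i\geq0} g_i z^i$, $g_i \in K[[\x]]$. Substituting,
 \[f = \sum_{k,l\geq0} f_{k,l}\,(\wt y + g)^l\, z^k.\]
 Expand $(\wt y+g)^l = \sum_{j=0}^l \binom{l}{j}\wt y^j g^{l-j}$ by the binomial theorem, and then expand the power $g^{l-j}$ of the power series $g$ in $z$ using the standard multinomial-type identity $g^{m} = \sum_{\alpha\in\N^{m}} g_\alpha z^{|\alpha|}$ where $g_\alpha = \prod_{o=1}^{m} g_{\alpha_o}$ (this is just writing $g^m = (\sum_i g_i z^i)^m$ and collecting by total degree in $z$). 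Putting these together,
 \[f = \sum_{k,l\geq0}\sum_{j=0}^l\sum_{\alpha\in\N^{l-j}} \binom{l}{j} f_{k,l}\, g_\alpha\, \wt y^j\, z^{k+|\alpha|}.\]
 Now fix $i,j\geq0$ and collect the coefficient of $\wt y^j z^i$: this forces $l \geq j$, $k \leq i$, and $|\alpha| = i-k$, which gives exactly
 \[\wt f_{i,j} = \sum_{\substack{0\leq k\leq i\\ l\geq j}}\binom{l}{j} f_{k,l}\sum_{\substack{\alpha\in\N^{l-j}\\ |\alpha|=i-k}} g_\alpha,\]
 as claimed.

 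(2): Here $y = u\wt y$ with $u = \sum_{i,j\geq0} u_{i,j}\wt y^j z^i$ a unit, $u_{i,j}\in K[[\x]]$. Then $y^l = u^l \wt y^l$, and we must expand the $l$-th power of the double power series $u$. Writing $u^l = (\sum_{i,j} u_{i,j}\wt y^j z^i)^l = \sum_{\alpha,\beta\in\N^l} u_{\alpha,\beta}\, \wt y^{|\beta|} z^{|\alpha|}$ with $u_{\alpha,\beta} = \prod_{o=1}^l u_{\alpha_o,\beta_o}$ (again just expanding the product and collecting by the total exponents of $\wt y$ and $z$), substitution gives
 \[f = \sum_{k,l\geq0} f_{k,l}\sum_{\alpha,\beta\in\N^l} u_{\alpha,\beta}\,\wt y^{l+|\beta|}\, z^{k+|\alpha|}.\]
 Collecting the coefficient of $\wt y^j z^i$ forces $l\leq j$, $k\leq i$, $|\beta| = j-l$, $|\alpha| = i-k$, yielding
 \[\wt f_{i,j} = \sum_{\substack{0\leq k\leq i\\ 0\leq l\leq j}} f_{k,l}\sum_{\substack{\alpha\in\N^l\\ |\alpha|=i-k}}\sum_{\substack{\beta\in\N^l\\ |\beta|=j-l}} u_{\alpha,\beta},\]
 which is the asserted formula. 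In both cases the rearrangements of the (formal) sums are justified because only finitely many terms contribute to each fixed monomial $\wt y^j z^i$, so no convergence issue arises.
\end{proof}

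The approach above is the whole plan: there is no genuine obstacle, only the need to be careful with the combinatorics of the multi-indices $\alpha$ (and $\beta$) and with the ranges of summation when reading off a fixed coefficient. The one point worth double-checking is the claimed identity $(\sum_i g_i z^i)^m = \sum_{\alpha \in \N^m} g_\alpha z^{|\alpha|}$ and its two-variable analogue for $u^l$ — these are the standard "power of a power series" expansions, and the notation $g_\alpha = \prod_o g_{\alpha_o}$, $u_{\alpha,\beta} = \prod_o u_{\alpha_o,\beta_o}$ in the statement is precisely set up to make them hold verbatim.
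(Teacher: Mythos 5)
Your proof is correct and follows essentially the same route as the paper's: substitute the coordinate change, expand $(\wt y+g)^l$ by the binomial theorem and $g^{l-j}$ (resp. $u^l$) via the multi-index power-series expansion, and read off the coefficient of $\wt y^j z^i$. No differences worth noting.
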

\begin{proof}
 (1): The proof is a computation.
 \[f=\sum_{i,j\geq0}f_{i,j}y^jz^i=\sum_{i,j\geq0}f_{i,j}(\wt y+\sum_{k\geq0}g_kz^k)^jz^i\]
 \[=\sum_{i,j\geq0}f_{i,j}\sum_{l=0}^j\binom{j}{l}\wt y^l(\sum_{k\geq0}g_kz^k)^{j-l}z^i\]
 \[=\sum_{i,j\geq0}f_{i,j}\sum_{l=0}^j\binom{j}{l}\wt y^l\sum_{\alpha\in\N^{j-l}}g_\alpha z^{|\alpha|+i}\]
 \[=\sum_{k,l\geq0}\sum_{\substack{0\leq i\leq k\\ j\geq l}}\binom{j}{l}f_{i,j}\sum_{\substack{\alpha\in\N^{j-l}\\|\alpha|=k-i}}g_\alpha \wt y^l z^k.\]
 
 (2):  Again, the proof is a computation.
 \[f=\sum_{i,j\geq0}f_{i,j}y^jz^i=\sum_{i,j\geq0}f_{i,j}(\sum_{k,l\geq0}u_{k,l}\wt y^l z^k)^j\wt y^j z^i\]
 \[=\sum_{i,j\geq0}f_{i,j}\sum_{\substack{\alpha\in\N^j\\ \beta\in\N^j}}u_{\alpha,\beta}\wt y^{j+|\beta|}z^{i+|\alpha|}\]
 \[=\sum_{k,l\geq0}\sum_{\substack{0\leq i\leq k\\0\leq j\leq l}}f_{i,j}\sum_{\substack{\alpha\in\N^j\\|\alpha|=k-i}}\sum_{\substack{\beta\in\N^j\\|\beta|=l-j}}u_{\alpha,\beta}\wt y^l z^k.\]
\end{proof}


\begin{proposition} \label{invariance_of_w}
 Let $R=K[[\x,y,z]]$ with $\x=(x_1,\ldots,x_n)$, $J\subseteq R$ an ideal and $c>0$ a positive integer.
 
 Let $\w$ be a weighted order function on $K[[\x,y]]$ that is defined on the parameters $(\x,y)$.
 
 Set $J_{-1}=\coeff^c_{(\x,y,z)}(J)$.
 
 Consider a coordinate change $y\mapsto \wt y$ of one of the following types:
 \begin{enumerate}[(1)]
  \item $y=\wt y+g$ for an element $g\in K[[\x,z]]$ with $\ord g\geq1$. Let $g$ have the expansion $g=\sum_{i\geq0}g_iz^i$ with $g_i\in K[[\x]]$.
  
  We require in this case that the following two properties hold:
  \begin{itemize}
   \item $\w(J_{-1})\geq c!\cdot\w(y)$.
   \item $\w(g_0)\geq \w(y)$.
  \end{itemize}
  \item $y=u\wt y$ for a unit $u\in R^*$.
 \end{enumerate}
 Set $\wt J_{-1}=\coeff_{(\x,\wt y,z)}^c(J)$ and let the weighted order function $\wt\w$ on $K[[\x,\wt y]]$ be defined by $\wt\w(x_i)=\w(x_i)$ for all $i=1,\ldots,n$ and $\wt\w(\wt y)=\w(y)$.
 
 Then $\w(J_{-1})=\wt \w(\wt J_{-1})$.
\end{proposition}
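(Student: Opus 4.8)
The plan is to verify the equality $\w(J_{-1}) = \wt\w(\wt J_{-1})$ separately for the two types of coordinate change, in each case using Lemma \ref{w_coeff} to reduce the comparison of weighted orders to a comparison of weighted orders of the power series coefficients $f_{i,j}$ and $\wt f_{i,j}$, for which Lemma \ref{coordinate_change_g} gives explicit formulae. Since a coordinate change and its inverse are of the same type (the inverse of $y \mapsto \wt y + g$ is again triangular, and the inverse of $y \mapsto u\wt y$ is again a multiplication by a unit), it suffices in each case to prove the inequality $\wt\w(\wt J_{-1}) \geq \w(J_{-1})$; applying this to the inverse change then yields the reverse inequality and hence equality. (One must check that the hypotheses of case (1) are preserved under passing to the inverse change — in particular that $\w(\wt J_{-1}) \geq c!\cdot\w(y)$, which will follow from the inequality we prove, and that the analogue of $\w(g_0) \geq \w(y)$ holds for the inverse; this bookkeeping is routine once the main estimate is in place.)

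For case (2), multiplication by a unit $u = \sum u_{k,l}\wt y^l z^k$: by Lemma \ref{coordinate_change_g} (2) we have $\wt f_{i,j} = \sum_{k\leq i,\, l\leq j} f_{k,l}\sum_{|\alpha|=i-k}\sum_{|\beta|=j-l} u_{\alpha,\beta}$. Each summand has $\w$-weight at least $\w(f_{k,l}) + \sum_o \w(u_{\alpha_o,\beta_o})$; since $u$ is a unit we cannot say each $u_{k,l}$ has weight $0$, but we do not need that: the key point is that $\w(u_{\alpha_o,\beta_o}) \geq 0$ always, so $\w(\wt f_{i,j}) \geq \min_{k\leq i,\,l\leq j}\w(f_{k,l})$. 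Combining with Lemma \ref{w_coeff} applied in both coordinate systems — taking the weighted order to be $\w$ of the $f_{i,j}$ shifted by $j\w(y)$ and likewise for the $\wt f_{i,j}$ shifted by $l\wt\w(\wt y) = l\w(y)$ — one checks that the factor $\frac{c!}{c-i}$ in $i$ and the bookkeeping of the $y$-exponent shift combine to give $\wt\w(\wt J_{-1}) \geq \w(J_{-1})$. This case is the easier one.

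Case (1), the triangular change $y = \wt y + g$ with $g = \sum g_i z^i \in K[[\x,z]]$, $\ord g \geq 1$, is the main obstacle. Here the difficulty is that $g$ genuinely involves $z$, so the expansion in $z$ gets reshuffled and the coefficient $\wt f_{i,j}$ picks up contributions from $f_{k,l}$ with $k \leq i$ — i.e. from \emph{lower} $z$-degree, which carries the larger factor $\frac{c!}{c-k} > \frac{c!}{c-i}$ in the coefficient ideal and therefore could a priori decrease the weighted order. The hypothesis $\w(J_{-1}) \geq c!\cdot\w(y)$ is exactly what is needed to absorb this: by Lemma \ref{w_coeff} it gives $\w(f_{k,l}) \geq \frac{c-k}{c!}\w(J_{-1}) - l\w(y)$, and one wants to show that after combining with the factors $g_\alpha$ (whose $\w$-weight is $\geq \w(g_0)\cdot(\text{number of factors}) \geq \w(y)\cdot(l-j)$ by the hypothesis $\w(g_0) \geq \w(y)$, since any $g_i$ with $i\geq 1$ has $\w(g_i)\geq 0$ and contributes positive $z$-degree), the resulting term $\binom{l}{j} f_{k,l}\sum_{|\alpha|=i-k} g_\alpha$ appearing in $\wt f_{i,j}$ (Lemma \ref{coordinate_change_g} (1)) satisfies $\frac{c!}{c-i}(\w(\wt f_{i,j}) + j\w(y)) \geq \w(J_{-1})$. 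The computation is: $\w$ of that term is $\geq \w(f_{k,l}) + \w(g_0)(l-j) \geq \frac{c-k}{c!}\w(J_{-1}) - l\w(y) + (l-j)\w(y) = \frac{c-k}{c!}\w(J_{-1}) - j\w(y)$; then $\frac{c!}{c-i}\bigl(\w(\wt f_{i,j}) + j\w(y)\bigr) \geq \frac{c-k}{c-i}\w(J_{-1}) \geq \w(J_{-1})$ since $k \leq i$. Taking the minimum over all $i,j$ and over $f \in J$ and invoking Lemma \ref{w_coeff} in the $\wt y$-coordinates gives $\wt\w(\wt J_{-1}) \geq \w(J_{-1})$. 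Applying this to the inverse change (checking the two hypotheses survive, using the inequality just proved for the first and a direct inspection of the inverse of $g$ for the second) completes the proof. The subtle point throughout is to track carefully, via the binomial coefficients $\binom{l}{j}$, that the condition $\ord g \geq 1$ forces $g_\alpha$ to contribute enough $z$-degree that the combinatorics of the $z$-reshuffling never produces a term violating the bound — and here the hypothesis $\w(J_{-1}) \geq c!\cdot\w(y)$ is indispensable, since without it a single $f_{k,l}$ of small weight could be promoted to a dangerous coefficient of $\wt J_{-1}$.
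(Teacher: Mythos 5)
Your case (2) matches the paper's computation. Case (1), however, contains a genuine gap at precisely the point where the hypothesis $\w(J_{-1})\geq c!\cdot\w(y)$ is supposed to enter. You assert $\w(g_\alpha)\geq \w(g_0)\cdot(l-j)$ for $\alpha\in\N^{l-j}$ with $|\alpha|=i-k$, but this is false in general. Since the entries of $\alpha$ sum to $i-k$, at most $i-k$ of the $l-j$ factors $g_{\alpha_o}$ can have $\alpha_o\geq1$, so at least $(l-j)-(i-k)$ of them equal $g_0$; for the remaining factors with $\alpha_o\geq1$ all you know is $\w(g_{\alpha_o})\geq0$, not $\w(g_{\alpha_o})\geq\w(g_0)$. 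The correct bound is therefore $\w(g_\alpha)\geq\bigl((l-j)-(i-k)\bigr)\w(g_0)$, which is weaker by $(i-k)\w(g_0)$.

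With the corrected bound, your chain of estimates yields
\[\frac{c!}{c-i}\bigl(\w(\wt f_{i,j})+j\w(y)\bigr)\geq\frac{c-k}{c-i}\w(J_{-1})-\frac{c!\,(i-k)}{c-i}\w(y),\]
and it is the surplus $\frac{i-k}{c-i}\w(J_{-1})=\frac{c-k}{c-i}\w(J_{-1})-\w(J_{-1})$ that must absorb the deficit $\frac{c!(i-k)}{c-i}\w(y)$; this is exactly where $\w(J_{-1})\geq c!\cdot\w(y)$ is used. The tell-tale sign that something was off in your proposal is that your arithmetic never actually invokes this hypothesis, even though your prose declares it ``indispensable''; the incorrect bound on $\w(g_\alpha)$ silently removed the need for it. Aside from this error, your overall strategy --- reduce to coefficient-wise estimates via Lemma~\ref{w_coeff} and Lemma~\ref{coordinate_change_g}, prove one inequality, then apply it to the inverse coordinate change --- is the same as the paper's, and the remark you make about checking the hypotheses survive under inversion is routine but correct.
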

\begin{proof}
 By abuse of notation, we will denote the weighted order functions $\wt\w$ and $\w$ by the same letter. By Lemma \ref{w_coeff} there exists an element $f\in J$ with expansion $f=\sum_{i,j\geq0}\wt f_{i,j}\wt y^jz^i$ where $\wt f_{i,j}\in K[[\x]]$ and indices $i<c$, $j\geq0$ such that
 \[\w(\wt J_{-1})=\frac{c!}{c-i}(\w(\wt f_{i,j})+j\w(y)).\]
 Let the element $f$ have the expansion $f=\sum_{i,j\geq0}f_{i,j}y^jz^i$ with $f_{i,j}\in K[[\x]]$. We will now make use of the formulas for $\wt f_{i,j}$ we derived in Lemma \ref{coordinate_change_g}.
 
 (1): In this case we know that
 \[\wt f_{i,j}=\sum_{\substack{0\leq k\leq i\\l\geq j}}\binom{l}{j}f_{k,l}\sum_{\substack{\alpha\in\N^{l-j}\\|\alpha|=i-k}}g_\alpha.\]
 Thus, we can use Lemma \ref{w_coeff} to compute that
 \[\w(\wt J_{-1})=\frac{c!}{c-i}(\w(\wt f_{i,j})+j\w(y))\]
 \[\geq \frac{c!}{c-i}\min_{\substack{0\leq k\leq i\\l\geq j}}\min_{\substack{\alpha\in\N^{l-j}\\|\alpha|=i-k}}(\w(f_{k,l})+j\w(y)+\w(g_\alpha))\]
 \[\geq \frac{c!}{c-i}\min_{\substack{0\leq k\leq i\\l\geq j}}\min_{\substack{\alpha\in\N^{l-j}\\|\alpha|=i-k}}\Big(\frac{c-k}{c!}\w(J_{-1})-(l-j)\w(y)+\w(g_\alpha)\Big)\]
 \[=\w(J_{-1})+\frac{c!}{c-i}\min_{\substack{0\leq k\leq i\\l\geq j}}\min_{\substack{\alpha\in\N^{l-j}\\|\alpha|=i-k}}\Big(\underbrace{\frac{i-k}{c!}\w(J_{-1})-(l-j)\w(y)+\w(g_\alpha)}_{=:G(k,l,\alpha)}\Big).\]
 If we can show for all indices $k\leq i$, $l\geq j$ and $\alpha\in\N^{l-j}$ with $|\alpha|=i-k$ that $G(k,l,\alpha)\geq0$ holds, then we have proved that $\w(\wt J_{-1})\geq\w(J_{-1})$. It then follows by a symmetric argument that $\w(\wt J_{-1})=\w(J_{-1})$. 
 
 So let $k\leq i$, $l\geq j$ and $\alpha\in\N^{l-j}$ be indices with $|\alpha|=i-k$. Thus, we know that
 \[\w(g_\alpha)\geq ((l-j)-(i-k))\w(g_0)\geq ((l-j)-(i-k))\w(y).\]
 Further, we know by assumption that
 \[\frac{i-k}{c!}\w(J_{-1})\geq (i-k)\w(y).\]
 Together, this proves that $G(k,l,\alpha)\geq0$. 
 
 (2): In this case we know that
 \[\wt f_{i,j}=\sum_{\substack{0\leq k\leq i\\0\leq l\leq j}}f_{k,l}\sum_{\substack{\alpha\in\N^l\\|\alpha|=i-k}}\sum_{\substack{\beta\in\N^l\\|\beta|=j-l}}u_{\alpha,\beta}.\]
 Thus, we can compute by Lemma \ref{w_coeff} that
 \[\w(\wt J_{-1})=\frac{c!}{c-i}(\w(\wt f_{i,j})+j\w(y))\]
 \[\geq \frac{c!}{c-i}\min_{\substack{0\leq k\leq i\\0\leq l\leq j}}\min_{\substack{\alpha\in\N^l\\|\alpha|=i-k}}\min_{\substack{\beta\in\N^l\\|\beta|=j-l}}\Big(\w(f_{k,l})+j\w(y))+\underbrace{\w(u_{\alpha,\beta})}_{\geq0}\Big)\]
 \[\geq \frac{c!}{c-i}\min_{\substack{0\leq k\leq i\\0\leq l\leq j}}(\frac{c-k}{c!}\w(J_{-1})+(j-l)\w(y))\]
 \[=\w(J_{-1})+\frac{c!}{c-i}\min_{\substack{0\leq k\leq i\\0\leq l\leq j}}\Big(\underbrace{\frac{i-k}{c!}\w(J_{-1})}_{\geq0}+\underbrace{(j-l)\w(y)}_{\geq0}\Big)\geq \w(J_{-1}).\]
 By a symmetric argument we conclude that $\w(\wt J_{-1})=\w(J_{-1})$ holds.
\end{proof}

\begin{example}
 The following example shows that the previous proposition does not hold if the conditions for the coordinate change of type (1) are not fulfilled.
 
 Let $R=K[[x,y,z]]$ and consider the weighted order function $\w:K[[x,y]]\to\Ni$ defined by $\w(x)=1$ and $\w(y)=2$. Let $J$ be the ideal generated by the element
 \[f=z^2+xy.\]
 Then $J_{-1}=\coeff_{(x,y,z)}^2(J)=(xy)$ and $\w(J_{-1})=3<2\cdot\w(y)$. Consider the change of coordinates $y=\wt y+z$. Then
 \[f=z^2+xz+x\wt y\]
 and $\wt J_{-1}=\coeff_{(x,\wt y,z)}^2(J)=(x^2,x\wt y)$. Define the weighted order function $\wt\w:K[[x,y_1]]\to\Ni$ by $\wt\w(x)=1$ and $\wt\w(\wt y)=2$. Then $\wt\w(\wt J_{-1})=2\neq\w(J_{-1})$.
 
 Notice that $V(z,y)$ and $V(z,\wt y)$ define the same curve in $\Spec(R)$.
 \end{example}

 Using this last proposition, we can now prove that the residual order with respect to a regular hypersurface and some related numerals are independent of the choice of subordinate parameters.
 

\begin{proposition} \label{residual_order_coord_indep}
 Let $R$ be the power series ring in $(n+1)$ variables over a field $K$, $J\subseteq R$ an ideal and $c>0$ a positive integer with $\ord J\geq c$.
 
 Define for each regular system of parameters $(\x,z)$ for $R$ the coefficient ideal
 \[J_{n,(\x,z)}=\coeff_{(\x,z)}^c(J).\]
 Let $H\subseteq\Spec(R)$ be a regular hypersurface. Then the following hold:
 \begin{enumerate}[(1)]
  \item There exists a number $o_H\in\Ni$ such that for all regular systems of parameters $(\x,z)$ for $R$ with the property that $H=V(z)$, the equality
  \[\ord J_{n,(\x,z)}=o_H\]
  holds.
  \item Let $D\subseteq\Spec(R)$ be a regular hypersurface distinct from $H$ such that $H\cup D$ has simple normal crossings.
  
  Then there exists a number $r_{H,D}\in\Ni$ such that for all regular systems of parameters $(\x,z)$ for $R$ with the property that $H=V(z)$ and $D=V(x_j)$ for some index $j$, the equality
  \[\ord_{(x_j)} J_{n,(\x,z)}=r_{H,D}\]
  holds.
  
  Further, if $r_{H,D}\geq c!$ and $D_1\subseteq\Spec(R)$ is another regular hypersurface such that $H\cup D_1$ has simple normal crossings and $H\cap D=H\cap D_1$, then $r_{H,D}=r_{H,D_1}$.
  \item Let $E\subseteq\Spec(R)$ be a simple normal crossings divisor such that $H\not\subseteq E$ and $H\cup E$ has simple normal crossings.
  
  Then there exists a number $d_{H,E}$ such that for all regular system of parameters $(\x,z)$ for $R$ with the property that $H=V(z)$ and $E=V(\prod_{i\in\Delta}x_i)$ for some subset $\Delta\subseteq\{1,\ldots,n\}$ the following holds:
  
  Let $J_{n,(\x,z)}$ have the factorization
  \[J_{n,(\x,z)}=M_{n,(\x,z)}\cdot I_{n,(\x,z)}\]
  with $M_{n,(\x,z)}=(\prod_{i\in\Delta}x_i^{r_i})$ and $r_i=\ord_{(x_i)}J_{n,(\x,z)}$. Then
  \[\ord I_{n,(\x,z)}=d_{H,E}.\]
 \end{enumerate}

\end{proposition}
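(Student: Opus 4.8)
The plan is to reduce all three statements to \textbf{Proposition \ref{invariance_of_w}} by analyzing how an arbitrary change of parameters preserving the relevant geometric data decomposes into the elementary coordinate changes listed at the start of Section \ref{section_invariance_of_weighted_orders}. First I would fix notation: given two regular systems of parameters $(\x,z)$ and $(\wt\x,\wt z)$ both adapted to the hypersurface $H$ (that is, $H=V(z)=V(\wt z)$), the composite coordinate change is, by the inverse function theorem and Weierstrass preparation, a finite composition of (i) triangular changes $x_i\mapsto x_i+g(\x_{\neq i},z)$, (ii) unit multiplications $x_i\mapsto ux_i$, (iii) transpositions $x_i\leftrightarrow x_j$, and (iv) a unit multiplication $z\mapsto uz$. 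Since the order $\ord J_{n,(\x,z)}$ is just the weighted order $\w(J_{n,(\x,z)})$ for the weight function with $\w(x_i)=1$ for all $i$ and $\w$ trivial on nothing else, I would apply Proposition \ref{invariance_of_w}(1) and (2) to the elementary changes of types (i) and (ii). Here the crucial point is that \emph{all} weights equal $1$, so the side condition $\w(g_0)\geq\w(y)$ in part (1) of that proposition is automatic (any non-unit $g_0$ has order $\geq 1$), and the condition $\w(J_{-1})\geq c!\cdot\w(y)=c!$ is exactly Lemma \ref{ogeqc!} combined with the hypothesis $\ord J\geq c$. Transpositions preserve $\ord$ trivially because they permute variables of equal weight, and type (iv) is handled by Proposition \ref{coeff_ideal_well_def}. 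This proves (1) with $o_H:=\ord J_{n,(\x,z)}$ for any choice of subordinate parameters.

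For (2), I would argue similarly but with the weighted order function $\w$ defined by $\w(x_j)=1$ and $\w(x_i)=0$ for $i\neq j$ (so $\w=\ord_{(x_j)}$ in the notation of Section \ref{section_weighted_order}). Now the subtlety is that a coordinate change preserving both $H=V(z)$ and $D=V(x_j)$ must also fix the variable $x_j$ up to a unit and up to adding elements of $(z)$; concretely, such a change decomposes into: triangular changes among the $x_i$ with $i\neq j$, unit multiplications of the $x_i$ with $i\neq j$, unit multiplication of $x_j$, a triangular change $x_j\mapsto x_j+g$ with $g\in(z)\cap K[[\x_{\neq j},z]]$ having $\w(g)\geq 1$ in this weight (so again the side condition of Proposition \ref{invariance_of_w}(1) holds since $\w(g_0)=\w(g\bmod z)\geq\w(x_j)=1$, as $g_0$ is a non-unit or zero), transpositions of the $x_i$ with $i\neq j$, and $z\mapsto uz$. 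Applying Proposition \ref{invariance_of_w} to each (the hypothesis $\w(J_{-1})\geq c!\cdot\w(x_j)=c!$ is again Lemma \ref{ogeqc!}) gives a well-defined $r_{H,D}:=\ord_{(x_j)}J_{n,(\x,z)}$. The "Further" clause — that $r_{H,D}=r_{H,D_1}$ whenever $r_{H,D}\geq c!$ and $H\cap D=H\cap D_1$ — is exactly the scenario where $D_1=V(x_j+h)$ with $h\in(z)$ for suitable parameters, so the difference $x_j\mapsto x_j+h$ is a triangular change of type (1) whose side condition $\w(J_{-1})\geq c!\cdot\w(x_j)$ is precisely the hypothesis $r_{H,D}\geq c!$; this is the one place where that inequality hypothesis is genuinely used, and I expect it to be the main delicate point in the write-up.

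For (3), I would first observe that the monomial factor $M_{n,(\x,z)}=(\prod_{i\in\Delta}x_i^{r_i})$ has $\ord M_{n,(\x,z)}=\sum_{i\in\Delta}r_i$, and by part (2) each $r_i=r_{H,D_i}$ depends only on $H$ and the component $D_i=V(x_i)$ of $E$, hence only on $H$ and $E$; so $\ord M_{n,(\x,z)}$ is independent of subordinate parameters. Since $\ord J_{n,(\x,z)}=\ord M_{n,(\x,z)}+\ord I_{n,(\x,z)}=o_H$ by part (1), we get $\ord I_{n,(\x,z)}=o_H-\sum_{i\in\Delta}r_{H,D_i}=:d_{H,E}$, which is manifestly independent of the choice of $(\x,z)$. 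The only thing to check carefully here is that a coordinate change respecting $H$ and $E=V(\prod_{i\in\Delta}x_i)$ permutes the set $\{D_i\}_{i\in\Delta}$ of components (so the multiset $\{r_{H,D_i}\}$ is well-defined) — this follows since each $D_i$ is an irreducible component of $E$ and the change is an automorphism of $\Spec(R)$ carrying $E$ to itself. The main obstacle overall is bookkeeping: verifying that the decomposition into elementary moves genuinely preserves the stated geometric data at each stage and that the side conditions of Proposition \ref{invariance_of_w} hold for every move; once that is set up, each individual step is an immediate citation.
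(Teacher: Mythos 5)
Your approach to parts (1) and (3) is essentially identical to the paper's: reduce to Proposition~\ref{invariance_of_w} via a decomposition into elementary coordinate changes, handle the unit multiplication in $z$ by Proposition~\ref{coeff_ideal_well_def}, and obtain the side conditions from Lemma~\ref{ogeqc!} and weight-zero trivialities; the formula $d_{H,E}=o_H-\sum_{i\in\Delta}r_{H,V(x_i)}$ in part (3) is exactly the paper's.

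There is, however, a genuine gap in your proof of the first claim of part (2). You state that a coordinate change preserving both $H=V(z)$ and $D=V(x_j)$ may alter $x_j$ ``up to a unit and up to adding elements of $(z)$,'' and accordingly you include a triangular change $x_j\mapsto x_j+g$ with $g\in(z)\cap K[[\x_{\neq j},z]]$ in the decomposition for well-definedness of $r_{H,D}$. This is false: if both $x_j$ and $\wt x_j$ generate the ideal of $D$, then $\wt x_j=ux_j$ for a unit $u$, and any nonzero additive term $g\notin(x_j)$ moves the hypersurface $D$ (for example, $\wt x=x+z$ in $K[[x,y,z]]$ defines a hypersurface distinct from $V(x)$). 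The paper's proof of (2) therefore only checks the triangular changes $x_i\mapsto x_i+g(\x_-,z)$ for $i\neq j$, together with unit multiplications, transpositions and $z$-scaling. Moreover, your parenthetical justification that ``the hypothesis $\w(J_{-1})\geq c!\cdot\w(x_j)=c!$ is again Lemma~\ref{ogeqc!}'' is incorrect for $\w=\ord_{(x_j)}$: that condition says $\ord_{(x_j)}J_{n,(\x,z)}\geq c!$, which is exactly the additional hypothesis $r_{H,D}\geq c!$ of the ``Further'' clause and is in general not a consequence of Lemma~\ref{ogeqc!}, since the $x_j$-order of the coefficient ideal can be (and usually is) strictly smaller than its order. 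You do later locate this hypothesis correctly as the crux of the ``Further'' clause, and once the extraneous move is deleted from the first-claim decomposition the rest of your argument becomes the paper's proof; but as written, the verification of the first claim of (2) silently invokes a hypothesis that is only available for the second.
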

\begin{proof}
 (1): To prove the result, it suffices to show that the following types of coordinate changes stabilize the order of the coefficient ideal:
 \begin{enumerate}[(i)]
  \item Triangular coordinate changes $x_i\mapsto x_i+g(\x_-,z)$ where $\x_-=(x_1,\ldots,x_{i-1},x_{i+1},\ldots,x_n)$ and $\ord g\geq1$.
  \item Multiplications with units $x_i\mapsto u x_i$ where $u\in R^*$.
  \item Transpositions of variables $x_i\mapsto x_j$, $x_j\mapsto x_i$.
  \item Multiplications with units $z\mapsto u z$ where $u\in R^*$.
 \end{enumerate}
 Consider a coordinate change of type (i). By Lemma \ref{ogeqc!} we know that
 \[\ord J_{n,(\x,z)}\geq c!=c!\cdot\ord(x_i).\]
 Let $g$ have the expansion $g=\sum_{i\geq0}g_iz^i$ with $g_i\in K[[\x_-]]$. Then it is clear that 
 \[\ord g_0\geq \ord g\geq 1=\ord(x_i).\]
 Hence, the coordinate change stabilizes the order of the coefficient ideal by Proposition \ref{invariance_of_w} (1). 
 
 By Proposition \ref{invariance_of_w} (2) also coordinate changes of type (ii) leave the order of the coefficient ideal invariant.
 
 Coordinate changes of type (iii) obviously leave the coefficient ideal itself invariant.
 
 By Proposition \ref{coeff_ideal_well_def} coordinate changes of type (iv) leave the coefficient ideal itself invariant as well.
 
 (2): Following the same reasoning as above, we only have to show that a triangular coordinate change of the form $x_i\mapsto x_i+g(\x_-,z)$ for an index $i\neq j$ fulfills the conditions of Proposition \ref{invariance_of_w} (1). But this is clear since
 \[\ord_{(x_j)}J_{n,(\x,z)}\geq 0=c!\cdot \ord_{(x_j)}(x_i)\]
 and 
 \[\ord_{(x_j)}g_0\geq 0=\ord_{(x_j)}(x_i).\]
 
 Now assume that $r_{H,D}\geq c!$. We then have to show that a triangular coordinate change of the form $x_j\mapsto x_j+g$ with $\ord_{(z)} g\geq1$ fulfills the conditions of Proposition \ref{invariance_of_w} (1). Since $g_0=0$, this is immediate from
 \[\ord_{(x_j)}J_{n,(\x,z)}=r_{H,D}\geq c!=c!\cdot\ord_{(x_j)}(x_j).\]

 (3): This follows from (1) and (2) since $d_{H,E}$ can be expressed as
 \[d_{H,E}=o_H-\sum_{i\in\Delta} r_{H,V(x_i)}.\]
\end{proof}

 As a second application, we will prove that the invariants $m_\F$ and $d_\F$ that were introduced in Section \ref{section_modifying_the_residual_order} for flags $\F$ with $n_\F>0$ are well-defined.

\begin{proposition} \label{m_d_coord_indep}
 Let $R$ be the power series ring in $3$ variables over a field $K$, $J\subseteq R$ an ideal and $c,n>0$ positive integers.
 
 Define for each regular system of parameters $\x=(x,y,z)$ the coefficient ideal
 \[J_{2,\x}=\coeff_{(x,y,z)}^c(J).\]
 
 Let $\F$ be a flag consisting of a regular hypersurface $\F_2$ and a regular curve $\F_1$ in $\Spec(R)$ which fulfill $\F_1\subseteq\F_2$. A regular system of parameters $\x=(x,y,z)$ is said to be subordinate to $\F$ if $\F_2=V(z)$ and $\F_1=V(z,y)$.
 
 Let $\w_n:K[[x,y]]\to\N$ be the weighted order function defined on $(x,y)$ by $\w_n(x)=1$ and $\w_n(y)=n$. Define for each subordinate system of parameters $\x$ the numbers
 \[m_{\F,\x}=\w_n(J_{2,\x}),\]
 \[d_{\F,\x}=\ord_{(y)}\minit_{\w_n}(J_{2,\x}).\]
 Then the following hold:
 \begin{enumerate}[(1)]
  \item The number $m_\F$ which is defined via 
  \[m_\F=\begin{cases}
         m_{\F,\x} & \text{if $m_{\F,\x}\geq nc!$,} \\
         nc! & \text{if $m_{\F,\x}< nc!$}
        \end{cases}
\]
 is independent of the choice of subordinate parameters $\x$.

  \item The number $d_\F$ which is defined via 
 \[d_\F=\begin{cases}
         d_{\F,\x} & \text{if $d_{\F,\x}\geq c!$,}\\
         d_{\F,\x} & \text{if $0<d_{\F,\x}<c!$ and $c!\nmid m_\F$,}\\
         -1 & \text{if $0<d_{\F,\x}<c!$ and $c!\mid m_\F$,}\\
         -1 & \text{if $d_{\F,\x}=0$.}
        \end{cases}
\]  
  is independent of the choice of subordinate parameters $\x$.
 \end{enumerate}
\end{proposition}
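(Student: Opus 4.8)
The plan is to follow the strategy of the proof of Proposition \ref{residual_order_coord_indep}: a change between two subordinate parameter systems will be decomposed into elementary coordinate changes, and the effect of each one on the relevant numerals will be controlled. The key simplification is to combine $m_{\F,\x}$ and $d_{\F,\x}$ into a single weighted order. Let $\ord_{(y)}$ denote the $y$-order on $K[[x,y]]$ and let $\y\colon K[[x,y]]\to\Ni^2$ be the weighted order function with $\y(x)=(1,0)$ and $\y(y)=(n,1)$. Applying Lemma \ref{double_weighted order function} with $\w_1=\w_n$ and $\w_2=\ord_{(y)}$ gives
\[\y(J_{2,\x})=\bigl(\w_n(J_{2,\x}),\,\ord_{(y)}(\minit_{\w_n}(J_{2,\x}))\bigr)=(m_{\F,\x},d_{\F,\x})\]
for every subordinate system $\x$, so it suffices to understand how $\y(J_{2,\x})$ varies with $\x$. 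Here I would record two elementary facts. First, since $\ord J\geq c$, Lemma \ref{ogeqc!} gives $\ord J_{2,\x}\geq c!$, hence $m_{\F,\x}=\w_n(J_{2,\x})\geq c!$ and therefore $\y(J_{2,\x})\geq(c!,0)$ (lexicographically). Second, a monomial $x^iy^j$ occurring in a $\w_n$-initial form of $\w_n$-degree $m_{\F,\x}$ satisfies $nj\leq i+nj=m_{\F,\x}$, so $d_{\F,\x}\leq m_{\F,\x}/n$; in particular $d_{\F,\x}\leq c!$ whenever $m_{\F,\x}\leq nc!$, with $d_{\F,\x}<c!$ if $m_{\F,\x}<nc!$.

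Exactly as in Proposition \ref{residual_order_coord_indep}, a change carrying one subordinate system to another is a composition of elementary moves, each again preserving both $\F_2=V(z)$ and $\F_1=V(y,z)$, of the following types: (a) $z\mapsto uz$ with $u\in R^{*}$; (b) $x\mapsto ux$ or $y\mapsto uy$ with $u\in R^{*}$; (c) $x\mapsto x+g(y,z)$ with $\ord g\geq1$; and (d) $y\mapsto y+g(x,z)$ where $g\in K[[x,z]]$ satisfies $g(x,0)=0$. (Transpositions exchanging $x$ and $y$ do not occur, since they would change $\w_n$; the restriction $g(x,0)=0$ in (d) is what is forced by preserving $\F_1$; a general admissible change $y\mapsto ay+bz$ with $a$ a unit is reassembled from type-(b) and type-(d) moves by first peeling off the $y$-dependent part of $bz$ as an extra unit multiplication $y\mapsto y\bigl(1+zc(x,y,z)\bigr)$, and similarly for the free variable $x$.) A move of type (a) leaves $J_{2,\x}$ itself unchanged by Proposition \ref{coeff_ideal_well_def}; a move of type (b) leaves $\y(J_{2,\x})$ unchanged by Proposition \ref{invariance_of_w}(2); and for a move of type (c) the hypotheses of Proposition \ref{invariance_of_w}(1), with the changed variable in the role of ``$y$'', read $\y(J_{2,\x})\geq c!\cdot\y(x)=(c!,0)$ and $\y(g_0)\geq\y(x)=(1,0)$, both automatic by the two facts above. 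Thus moves of types (a)--(c) preserve $\y(J_{2,\x})$ unconditionally.

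The only move carrying a genuine hypothesis is type (d): here the changed variable is in the role of ``$y$'', so Proposition \ref{invariance_of_w}(1) requires $\y(J_{2,\x})\geq c!\cdot\y(y)=(nc!,c!)$, while $\y(g_0)=\y(0)$ is automatic since $g(x,0)=0$. This yields a dichotomy. If $\y(J_{2,\x})\geq(nc!,c!)$ for one subordinate system, then — because every move applied along the decomposition leaves $\y(J_{2,\,\cdot\,})$ unchanged and hence $\geq(nc!,c!)$, so that type-(d) moves are always admissible — the value $\y(J_{2,\x})$ is literally the same element of $\Ni^2$ for all subordinate systems; then $m_{\F,\x}\geq nc!$, so the defining formula gives the same $m_\F$, and since both the pair $(m_{\F,\x},d_{\F,\x})$ and the condition $c!\mid m_\F$ are now constant, the defining formula for $d_\F$ gives a single value as well. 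If instead $\y(J_{2,\x})<(nc!,c!)$ for one system, then $m_{\F,\x}\leq nc!$, hence $m_\F=nc!$ and $c!\mid m_\F$, while $d_{\F,\x}<c!$ by the second elementary fact, so $d_\F=-1$; applying the same reasoning to the inverse change shows that no subordinate system $\wt\x$ can satisfy $\y(J_{2,\wt\x})\geq(nc!,c!)$, since that would force $\y(J_{2,\x})\geq(nc!,c!)$, so $m_\F=nc!$ and $d_\F=-1$ uniformly. In either case $m_\F$ and $d_\F$ depend only on $\F$, as asserted.

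The step I expect to be the main obstacle is the verification of the decomposition into the elementary moves listed above: in particular, reassembling a general admissible change of $y$ (and of the free variable $x$) from moves of types (a)--(d) while keeping the flag $\F$ fixed at every intermediate stage, so that $J_{2,\,\cdot\,}$, $\w_n$ and $\y$ are defined throughout. Once this bookkeeping is in place, the proof is a direct application of Propositions \ref{coeff_ideal_well_def} and \ref{invariance_of_w} and Lemmas \ref{double_weighted order function} and \ref{ogeqc!}, with the truncations built into the definitions of $m_\F$ and $d_\F$ absorbing precisely the single case where Proposition \ref{invariance_of_w}(1) does not apply.
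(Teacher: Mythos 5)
Your proof is correct and follows essentially the same strategy as the paper's: decompose flag-preserving coordinate changes into the elementary moves, interpret $(m_{\F,\x},d_{\F,\x})$ as $\y_n(J_{2,\x})$ via Lemma \ref{double_weighted order function}, verify the hypotheses of Propositions \ref{coeff_ideal_well_def} and \ref{invariance_of_w} for each move, and use the truncations to absorb the one case ($\y_n(J_{2,\x})<(nc!,c!)$) where the $y$-translation move escapes Proposition \ref{invariance_of_w}(1). The only deviation is cosmetic — you run both parts through a single $\y_n$-analysis instead of first treating $m_\F$ with $\w_n$ alone — and the Weierstrass decomposition you flag as a possible obstacle is exactly the one the paper invokes without further justification at the start of Section \ref{section_invariance_of_weighted_orders}.
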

\begin{proof}
 Let $\x=(x,y,z)$ be a regular system of parameters that is subordinate to $\F$. Coordinate changes that stabilize the flag $\F$ are composed of the following types of coordinate changes:
 \begin{enumerate}[(i)]
  \item Triangular coordinate changes $x\mapsto x+g(y,z)$ with $g\in K[[y,z]]$, $\ord g\geq1$.
  \item Triangular coordinate changes $y\mapsto y+g(x,z)$ with $g\in K[[x,z]]$, $\ord_{(z)} g\geq1$.
  \item Multiplications of $x,y$ and $z$ with units.
 \end{enumerate}
 By Proposition \ref{coeff_ideal_well_def} and Proposition \ref{invariance_of_w} (2), coordinate changes of type (iii) always leave the numbers $m_{\F,\x}$ and $d_{\F,\x}$ invariant.
 
 For the coordinate changes of type (i) and (ii) we have to verify that the conditions of Proposition \ref{invariance_of_w} (1) are fulfilled.
 
 (1): Assume first that $m_{\F,\x}\geq nc!$. For a coordinate change of type (i), the conditions of Proposition \ref{invariance_of_w} (1) are fulfilled since
 \[\w_n(J_{2,\x})\geq nc!\geq c!=c!\cdot \w_n(x)\]
 and
 \[\w_n(g_0)=\ord g_0\geq 1=\w_n(x).\]
 For a coordinate change of type (ii), notice that $g_0=0$ since $\ord_{(z)}g\geq1$. Thus, we can compute that
 \[\w_n(J_{2,\x})\geq nc!=c!\cdot \w_n(y)\]
 and 
 \[\w_n(g_0)=\infty\geq\w_n(y).\]
 Hence, $m_\F$ is independent of $\x$ whenever $m_{\F,\x}\geq nc!$. But from this, it also follows that $m_\F$ is independent of $\x$ if $m_{\F,\x}<nc!$.
 
 (2): Define the weighted order function $\y_n:K[[x,y]]\to\Ni^2$ via $\y_n(x)=(1,0)$ and $\y_n(y)=(n,1)$. Then, by Lemma \ref{double_weighted order function}, we know that
 \[\y_n(J_{2,\x})=(m_{\F,\x},d_{\F,\x}).\]
 As a preparation for showing that $d_\F$ is independent of $\x$, we will first show that, $\y_n(J_{2,\x})$ (and hence also $d_{\F,\x}$) is independent of the choice of subordinate parameters $\x$ under the condition that $\y_n(J_{2,\x}\geq (nc!,c!)$ holds. 
 
 So assume that $\y_n(J_{2,\x})\geq (nc!,c!)$. For a coordinate change of type (i), we can compute that
 \[\y_n(J_{2,\x})\geq (nc!,c!)\geq (c!,0)=c!\cdot \y_n(x)\]
 and 
 \[\y_n(g_0)\geq (\w_n(g_0),0)=(\ord g_0,0)\geq (1,0)=\y_n(x).\]
 For a coordinate change of type (ii), we compute that
 \[\y_n(J_{2,\x})\geq (nc!,c!)=c!\cdot\y_n(y)\]
 and 
 \[\y_n(g_0)=(\infty,\infty)\geq\y_n(y).\]
 
 Thus, the conditions of Proposition \ref{invariance_of_w} (1) are fulfilled. Further, notice that the inequality 
 \[d_{\F,\x}\leq\frac{1}{n}m_{\F,\x}\]
 holds. Using these two facts, can now prove that $d_\F$ is independent of $\x$.
 
 If $\y_n(J_{2,\x})\geq (nc!,c!)$ holds, then $d_{\F,\x}$ is independent of $\x$ and hence, it is clear by the definition of $d_\F$ that it is also independent.
 
 So we may assume that $\y_n(J_{2,\x})<(nc!,c!)$. Hence, either $m_{\F,\x}=nc!$ and $d_{\F,\x}<c!$ or $m_{\F,\x}<nc!$. If the latter holds, then also $d_{\F,\x}\leq\frac{1}{n}m_{\F,\x}<c!$. In either case, $m_\F=nc!$ holds by definition. In particular, $c!\mid m_\F$. Since $d_{\F,\x}<c!$ holds for all subordinate parameters $\x$, it is clear that $d_\F$ is also independent of $\x$ in this case.
\end{proof}

\section{Coordinate-independence of the order of the second coefficient ideal} \label{section_invariance_of_slope}

 In this section we will investigate how the order of the second coefficient ideal behaves under coordinate changes. The goal is to prove that the invariant $s_\F$ as it was defined in Chapter \ref{chapter_pathologies} is independent of the choice of subordinate parameters. 
 
 Throughout this section, we will always consider the following setting:
 
 Let $R=K[[\x,y,z]]$ with $\x=(x_1,\ldots,x_n)$ 
 and $J\subseteq R$ an ideal of order $c=\ord J>0$. Let $J_{-1}$ be the coefficient ideal
 \[J_{-1}=\coeff^c_{(\x,y,z)}(J).\]
 Let $J_{-1}$ have a factorization of the form
 \[J_{-1}=(\x^r y^{r_y})\cdot I_{-1}\]
 where $r=(r_1,\ldots,r_n)\in\N^n$ and $I_{-1}$ is an ideal in $K[[\x,y]]$. Set $d=\ord I_{-1}$. Then we define
 \[J_{-2}=\coeff^d_{(\x,y)}(I_{-1})\]
 and set $s=\ord J_{-2}$. Further, we define the auxiliary number
 \[\D=\frac{1}{c!}\Big((d+r_y)\sd+|r|\Big).\]

 We are interested in the behavior of the order of the second coefficient ideal $J_{-2}$ under coordinate changes which stabilize all geometric objects in $\Spec(R)$ that are involved in its definition. These objects are:
 \begin{itemize}
  \item The regular hypersurface $V(z)$.
  \item The regular subscheme $V(y,z)$. (With respect to which $J_{-2}$ is defined.)
  \item The regular hypersurfaces $V(x_i)$ for which $r_i>0$ holds.
  \item The regular hypersurface $V(y)$ if $r_y>0$ holds.
 \end{itemize}

 Using the same argumentation as in Section \ref{section_invariance_of_weighted_orders}, it is clear that we have to consider the following four types of coordinate changes:
 \begin{enumerate}[(i)]
  \item Triangular coordinate changes $x_i\mapsto x_i+g(\x_-,y,z)$. 
  (Only if $r_i=0$.)
  \item Multiplications with units $x_i\mapsto u x_i$ where $u\in R^*$.
  \item Triangular coordinate changes $y\mapsto y+g(\x,z)$ with $\ord_{(z)}g\geq1$. (Only if $r_y=0$.)
  \item Multiplications with units $y\mapsto uy$ where $u\in R^*$.
 \end{enumerate}

 Naturally, it only makes sense to compare the order of the second coefficient ideal if the same monomial can be factored from the first coefficient ideal after the coordinate change. Hence, we make the following definitions:
 
 Let $(\wt\x,\wt y,\wt z)$ be another regular system of parameters for $R$. Set 
 \[\wt J_{-1}=\coeff^c_{(\wt\x,\wt y,\wt z)}(J).\]
 We say that the change of coordinates $(\x,y,z)\mapsto (\wt\x,\wt y,\wt z)$ \emph{preserves the setting} if $\wt J_{-1}$ has a factorization
 \[\wt J_{-1}=(\wt \x^r \wt y^{r_y})\cdot \wt I_{-1}\]
 for an ideal $\wt I_{-1}$ that fulfills $\ord \wt I_{-1}=d$.
 
 We then define
 \[\wt J_{-2}=\coeff^d_{(\wt \x,\wt y)}(\wt I_{-1}),\]
 and set $\wt s=\ord \wt J_{-2}.$ We also define the auxiliary number
 \[\wt \D=\frac{1}{c!}\Big((d+r_y)\wsd+|r|\Big).\]
 
 In the main technical results of this section, Proposition \ref{s_well_def_w} and Proposition \ref{s_well_def_y}, we will show that coordinate changes of type (i), (ii) and (iv) always preserve the setting and leave the order of the second coefficient ideal unchanged, but a coordinate change of type (iii) will only do so if the condition $\D\geq\sd$ holds. By Lemma \ref{Dgeq1}, this inequality automatically holds if $d\geq c!$. If $d<c!$, the definition of the second coefficient ideal has the be modified for its order to still be invariant under the coordinate changes (i)-(iv). In this case, we consider instead of $J_{-2}$ the coefficient ideal
 \[\coeff_{(\x,y)}^{d(c!-d)}(I_{-1}^{c!-d}+M_{-1}^d)\]
 where $M_{-1}=(\x^ry^{r_y})$. The invariance will be proven in Proposition \ref{slope_coord_indep}.
 
 As a preparation for proving the propositions \ref{s_well_def_w} and \ref{s_well_def_y}, we will first prove three rather technical lemmas. 
 
 
 In the following, consider for each element $f\in J$ the expansions $f=\sum_{i\geq0}f_iz^i$, $f=\sum_{i,j\geq0}f_{i,j}y^iz^i$ and $f=\sum_{i,j\geq0}\wt f_{i,j}\wt y^j\wt z^i$ where $f_i\in K[[\x,y]]$, $f_{i,j}\in K[[\x]]$ and $\wt f_{i,j}\in K[[\wt\x]]$.
 
 

The following lemma gives a lower bound for the order of the coefficients $f_{i,j}$.

\begin{lemma} \label{factorization_lemma}
 Let $f\in J$ be an element and $i,j\geq0$ indices. Then the following inequality holds:
  \[\ord f_{i,j}\geq \frac{c-i}{c!}|r|.\]
\end{lemma}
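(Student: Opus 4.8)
The plan is to unwind the definitions of the objects involved and trace the relevant orders. We have $J_{-1} = \coeff^c_{(\x,y,z)}(J)$, which by definition is generated by the elements $f_k^{c!/(c-k)}$ for $f \in J$, $k < c$, where $f = \sum_{k \geq 0} f_k z^k$ with $f_k \in K[[\x,y]]$. The factorization $J_{-1} = (\x^r y^{r_y}) \cdot I_{-1}$ means that $\x^r$ divides every generator of $J_{-1}$; in particular, for each $f \in J$ and each $k < c$, the monomial $\x^r$ divides $f_k^{c!/(c-k)}$. Writing $f_k = \sum_{i,j} f_{i,k}\, \x_{\text{(rest)}} \cdots$ — more precisely, expanding $f_k \in K[[\x,y]]$ and then $f = \sum_{i,j} f_{i,j} y^j z^i$ so that $f_i = \sum_{j \geq 0} f_{i,j} y^j$ with $f_{i,j} \in K[[\x]]$ — I want a lower bound on $\ord f_{i,j}$ (the order in $K[[\x]]$, i.e.\ the $\x$-adic order).

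First I would recall from Lemma \ref{ord_coeff} (applied with the parameter system $(\x,y,z)$, treating $K[[\x,y]]$ as the coefficient ring) that $\ord f_i \geq \frac{c-i}{c!} \ord J_{-1}$ for the order taken in $K[[\x,y]]$; but this bounds the total order, not what we need. Instead the key is the factorization. For each index $\ell$ with $1 \leq \ell \leq n$, consider the weighted order $\ord_{(x_\ell)}$, i.e.\ the $x_\ell$-adic order. Since $\x^r$ divides $f_i^{c!/(c-i)}$ in $K[[\x,y]]$, we get $\frac{c!}{c-i} \ord_{(x_\ell)} f_i \geq r_\ell$, hence $\ord_{(x_\ell)} f_i \geq \frac{c-i}{c!} r_\ell$. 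Summing over $\ell = 1,\dots,n$ and using that the total $\x$-adic order of any power series $h \in K[[\x,y]]$ satisfies $\ord_{\x} h \geq \sum_\ell \ord_{(x_\ell)} h$ when... actually one must be slightly careful: $\ord_{\x}(h) \leq \sum_\ell \ord_{(x_\ell)} h$ is false and $\geq$ is also not generally true. The clean route is: a monomial $\x^\alpha y^\beta$ appearing in $f_{i,j}$ (so $\beta = j$ after extracting the $y^j$ coefficient) with nonzero coefficient must, after raising $f_i$ to the power $c!/(c-i)$, still be divisible by $\x^r$; concretely $\x^r \mid \x^{\alpha \cdot c!/(c-i)}$ forces $\alpha_\ell \cdot \frac{c!}{c-i} \geq r_\ell$ for each $\ell$, i.e.\ $\alpha_\ell \geq \frac{c-i}{c!} r_\ell$, so $|\alpha| = \sum_\ell \alpha_\ell \geq \frac{c-i}{c!} \sum_\ell r_\ell = \frac{c-i}{c!}|r|$. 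Since $\ord f_{i,j} = \min\{|\alpha| : \x^\alpha \text{ appears in } f_{i,j}\}$, this gives exactly $\ord f_{i,j} \geq \frac{c-i}{c!}|r|$.

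So the proof would read roughly: fix $f \in J$ and indices $i, j \geq 0$; if $i \geq c$ the bound on $f_{i,j}$ follows from $\ord f \geq c$ (giving $\ord f_{i,j} \geq c - i$, which is $\leq 0 \leq \frac{c-i}{c!}|r|$ when $i \geq c$, so the inequality is trivial or vacuous for $i \geq c$ once one checks signs); for $i < c$, set $d_i = \frac{c!}{c-i}$, a positive integer, and note $f_i^{d_i}$ is (up to sign/combinatorial factors) a generator of $J_{-1}$, hence divisible by $\x^r$. Then for any monomial $\x^\alpha$ occurring in $f_{i,j}$, its $d_i$-th power contribution is divisible by $\x^r$, forcing $d_i \alpha_\ell \geq r_\ell$ for all $\ell$, whence $|\alpha| \geq \frac{1}{d_i}|r| = \frac{c-i}{c!}|r|$. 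Taking the minimum over such $\alpha$ yields the claim.

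The only subtle point — the one I expect to be the main obstacle, though it is minor — is making precise that divisibility of $f_i^{d_i}$ by $\x^r$ forces the pointwise inequalities $d_i \alpha_\ell \geq r_\ell$ on every monomial of $f_i$. This requires the elementary fact that in a power series ring $K[[\x,y]]$, a monomial ideal $(\x^r)$ contains $g$ iff every monomial in the support of $g$ is divisible by $\x^r$, combined with the observation that the lowest-$\x$-degree monomials of $f_i^{d_i}$ are products of lowest-degree monomials of $f_i$ — so one cannot create extra $\x$-divisibility by cancellation at the bottom. One handles this by working one variable $x_\ell$ at a time: $\ord_{(x_\ell)}(f_i^{d_i}) = d_i \cdot \ord_{(x_\ell)}(f_i)$ since $\ord_{(x_\ell)}$ is a valuation (Remark after Definition of weighted order functions), and $\ord_{(x_\ell)}(f_i^{d_i}) \geq r_\ell$ gives $\ord_{(x_\ell)}(f_i) \geq r_\ell/d_i$, i.e.\ every monomial of $f_i$ has $x_\ell$-exponent $\geq \lceil r_\ell/d_i \rceil \geq r_\ell/d_i$... but this gives a bound on the $x_\ell$-degree of each monomial separately, and summing over $\ell$ then gives $|\alpha| \geq \sum_\ell r_\ell/d_i = |r|/d_i$ for each monomial $\x^\alpha y^j$ of $f_i$, hence for $f_{i,j}$. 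This cleanly avoids any cancellation worries and is the argument I would write out.
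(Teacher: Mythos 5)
Your proof is correct and takes essentially the same route as the paper: both reduce the claim to the observation that $\ord_{(x_k)} f_i \geq \frac{c-i}{c!} r_k$ for each variable $x_k$ (the paper obtains this by citing Lemma \ref{w_coeff} with $\w = \ord_{(x_k)}$, you re-derive it directly from the valuation property $\ord_{(x_k)}(f_i^{d_i}) = d_i \cdot \ord_{(x_k)}(f_i)$), and then sum over $k$ to pass from per-variable orders to the total order $\ord f_{i,j}$. The brief detour worrying about cancellation is resolved exactly as the paper implicitly resolves it, by working one variable at a time.
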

\begin{proof}
 Let $x_k$ be one of the parameters in $\x=(x_1,\ldots,x_n)$. Then by Lemma \ref{w_coeff} we know that 
 \[\ord_{(x_k)} f_{i,j}\geq \ord_{(x_k)}f_i\geq \frac{c-i}{c!}r_k.\]
 Consequently,
 \[\ord f_{i,j}\geq \sum_{k=1}^n\ord_{(x_k)}f_{i,j}\geq \frac{c-i}{c!}|r|.\]
\end{proof}

The following lemma gives a formula for how to compute the order of $J_{-2}$ from the orders of the coefficients $f_{i,j}$ of elements $f=\sum f_{i,j}y^jz^i$ of $J$. Similarly to Lemma \ref{w_coeff}, this will be used many times throughout the thesis.

\begin{lemma} \label{ord_J_2}
 The equality 
 \[\ord J_{-2}=\min_{f\in J}\min_{\substack{i<c\\j<\frac{c-i}{c!}(d+r_y)}}\frac{d!}{d+r_y-\frac{c!}{c-i}j}\Big(\frac{c!}{c-i}\ord f_{i,j}-|r|\Big).\]
 holds. Furthermore, for all elements $f\in J$ and all indices $i,j\geq0$, the inequality
 \[\ord f_{i,j}\geq(c-i)\D-j\sd\]
 holds. 
 
 If $G\subseteq J$ is a generating set for $J$, then the equality 
 \[\ord J_{-2}=\min_{f\in G}\min_{\substack{i<c\\j<\frac{c-i}{c!}(d+r_y)}}\frac{d!}{d+r_y-\frac{c!}{c-i}j}\Big(\frac{c!}{c-i}\ord f_{i,j}-|r|\Big)\]
 holds.
\end{lemma}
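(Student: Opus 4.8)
The strategy is to reduce the computation of $\ord J_{-2}$ to two successive applications of Lemma \ref{w_coeff} (the analogous formula one dimension up), chaining the coefficient ideal construction $J\rightsquigarrow I_{-1}\rightsquigarrow J_{-2}$ through the intermediate ideal $J_{-1}=(\x^ry^{r_y})\cdot I_{-1}$. First I would unwind the definitions. By Lemma \ref{w_coeff} applied with the weighted order function $\w$ on $K[[\x,y]]$ that is the \emph{usual} order (so $\w(x_k)=\w(y)=1$), we have
\[
\ord J_{-1}=\min_{f\in J}\min_{\substack{i<c\\ j\geq 0}}\frac{c!}{c-i}\bigl(\ord f_{i,j}+j\bigr),
\]
and more precisely the coefficient ideal $J_{-1}$ is generated by the elements $f_i^{c!/(c-i)}$ for $f\in J$, $i<c$, each of which expands (in $K[[\x,y]]$, writing $f_i=\sum_j f_{i,j}y^j$) with $y$-adic coefficients built from the $f_{i,j}$. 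Factoring out the monomial $\x^ry^{r_y}$ gives $I_{-1}$, whose generators are $\x^{-r}y^{-r_y}\cdot f_i^{c!/(c-i)}$; the $y$-expansion of such a generator has coefficients lying in $\x^{-|r|}\cdot K[[\x]]$ whose $x$-orders are $\frac{c!}{c-i}\ord f_{i,j'}-|r|$ with $j'$ shifted so that the $y$-exponent of the generator's $j$-th term corresponds to $j=\frac{c!}{c-i}(\text{something})-r_y$; unwinding the exponent bookkeeping is where the constraint $j<\frac{c-i}{c!}(d+r_y)$ and the factor $\frac{d!}{d+r_y-\frac{c!}{c-i}j}$ will appear.

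\textbf{Main computation.} Having identified a generating set of $I_{-1}$ of the stated form, I would apply Lemma \ref{w_coeff} a \emph{second} time, now to the ideal $I_{-1}\subseteq K[[\x,y]]$ with the parameter playing the role of "$z$" being $y$, the remaining parameters being $\x$, and $d=\ord I_{-1}$ in place of $c$, to get
\[
\ord J_{-2}=\ord\coeff^d_{(\x,y)}(I_{-1})=\min_{g\in I_{-1}}\min_{\substack{k<d}}\frac{d!}{d-k}\ord g_k,
\]
where $g=\sum_k g_k y^k$. The key is that it suffices to take $g$ ranging over the explicit generators $\x^{-r}y^{-r_y}f_i^{c!/(c-i)}$ (last sentence of Lemma \ref{w_coeff} allows restricting to a generating set), and then to push the minimum over $g$ down to a minimum over $f\in J$. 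For a generator coming from $f$ and index $i<c$, the $y$-expansion is determined by the $y$-expansion $f_i=\sum_{j}f_{i,j}y^j$; raising to the $\frac{c!}{c-i}$-th power and dividing by $y^{r_y}$, the $k$-th $y$-coefficient (for $k<d$) is a sum of products of $f_{i,j}$'s whose total $x$-order is minimized by the single dominant term, giving $x$-order $\frac{c!}{c-i}\ord f_{i,j}-|r|$ where $k$ and $j$ are related by $k=\frac{c!}{c-i}j-r_y$, i.e. $j=\frac{c-i}{c!}(k+r_y)$, and the condition $k<d$ becomes $j<\frac{c-i}{c!}(d+r_y)$. Substituting $d-k=d+r_y-\frac{c!}{c-i}j$ into the factor $\frac{d!}{d-k}$ yields exactly the claimed formula. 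The inequality $\ord f_{i,j}\geq (c-i)\D-j\sd$ then follows by rearranging: from the formula, $\frac{d!}{d+r_y-\frac{c!}{c-i}j}\bigl(\frac{c!}{c-i}\ord f_{i,j}-|r|\bigr)\geq \ord J_{-2}$, but actually I would derive it directly from $\ord J_{-2}\geq 0$ together with the definitions $s=\ord J_{-2}$ and the formula for $\D=\frac{1}{c!}\bigl((d+r_y)\sd+|r|\bigr)$; a short algebraic manipulation using $\ord f_{i,j}\geq \frac{c-i}{c!}|r|$ from Lemma \ref{factorization_lemma} to handle the range $j\geq\frac{c-i}{c!}(d+r_y)$ separately closes it. Finally, the generating-set version is immediate since every step only used generators of $J$ producing generators of $I_{-1}$, so the double minimum may be taken over $G$ throughout.

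\textbf{Expected obstacle.} The routine-but-delicate part is the exponent bookkeeping in passing from the $y$-expansion of $f_i^{c!/(c-i)}$ to that of its generator in $I_{-1}$: one must verify that the minimal $x$-order among the cross-terms in the multinomial expansion of $f_i^{c!/(c-i)}=\bigl(\sum_j f_{i,j}y^j\bigr)^{c!/(c-i)}$ contributing to a fixed power of $y$ is realized by a pure power $f_{i,j}^{c!/(c-i)}$ (no mixing lowers the order), and that dividing by $y^{r_y}$ only shifts indices without creating negative-power obstructions precisely when $r_y\leq \frac{c-i}{c!}\ord_{(y)}(\cdots)$, which holds by the definition of $r_y$ as $\ord_{(y)}J_{-1}$. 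I expect this to be the main place where care is needed; everything else is a direct substitution into Lemma \ref{w_coeff} and Lemma \ref{factorization_lemma}. I would also need to double-check the edge cases $d=0$ or $r_y=0$, but these only trivialize the formula rather than obstruct it.
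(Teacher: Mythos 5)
Your plan matches the paper's proof essentially step for step: write $J_{-1}$ as generated by the $g_i=f_i^{c!/(c-i)}$, expand in $y$, factor out $\x^r$ and apply Lemma \ref{w_coeff} a second time, with the exponent bookkeeping turning $k<d+r_y$ into $j<\frac{c-i}{c!}(d+r_y)$ and $\frac{d!}{d-k}$ into the displayed factor. The one place your description of the ``expected obstacle'' is slightly off is the upper-bound direction. The issue is not whether the minimum over the multinomial terms $f_{i,\alpha}$ (for fixed $|\alpha|=k$) is realized by a pure power --- in general it need not be, and for the lower bound $s\geq s_0$ this does not matter since the termwise estimate $\ord f_{i,\alpha}\geq c!\D_0-k\frac{s_0}{d!}$ already suffices. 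The delicate point is rather whether cancellation among several minimal-order terms in $g_{i,k}=\sum_{|\alpha|=k}f_{i,\alpha}$ could push $\ord g_{i,k}$ strictly above $\min_\alpha\ord f_{i,\alpha}$, which would invalidate the claimed $s\leq s_0$. The paper blocks this by choosing $j$ \emph{minimal} with $\ord f_{i,j}=(c-i)\D_0-j\frac{s_0}{d!}$: then at $k=\frac{c!}{c-i}j$ every mixed $\alpha\neq(j,\ldots,j)$ has strictly larger order, so the sum is dominated by the unique pure-power term and no cancellation occurs. With that one device supplied, your argument goes through and is the paper's.
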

\begin{proof}
 Define the number $s_0$ as 
 \[s_0=\min_{f\in J}\min_{\substack{i<c\\j<\frac{c-i}{c!}(d+r_y)}}\frac{d!}{d+r_y-\frac{c!}{c-i}j}(\frac{c!}{c-i}\ord f_{i,j}-|r|)\]
 and 
 \[\D_0=\frac{1}{c!}\Big((d+r_y)\frac{s_0}{d!}+|r|\Big).\]
 We will prove that $s=s_0$.
 
 First we are going to prove that the inequality 
 \[\ord f_{i,j}\geq(c-i)\D_0-j\frac{s_0}{d!}\]
 holds for all elements $f\in J$ and indices $i,j\geq0$. It is clear by the definition of $s_0$ and $\D_0$ that the inequality holds for indices $i<c$ and $j<\frac{c-i}{c!}(d+r_y)$. The inequality is trivial for indices $i\geq c$ and $j\geq0$. So let $i<c$ and $j\geq\frac{c-i}{c!}(d+r_y)$. In this case, we can compute by Lemma \ref{factorization_lemma} that   
 \[\ord f_{i,j}\geq \frac{c-i}{c!}|r|\geq\frac{c-i}{c!}|r|+(\frac{c-i}{c!}(d+r_y)-j)\frac{s_0}{d!} \]
 \[=(c-i)\D_0-j\frac{s_0}{d!}.\] 
 
 By the definition of the coefficient ideal, $J_{-1}$ is generated by elements $g_i=f_i^{\frac{c!}{c-i}}$ for $i<c$. It is easy to see that the elements $g_i$ have the expansion $g_i=\sum_{k\geq0}g_{i,k}y^k$ with
 \[g_{i,k}=\sum_{\substack{\alpha\in\N^\frac{c!}{c-i}\\|\alpha|=k}}f_{i,\alpha}.\]
 We know that there are elements $h_{i,k}\in K[[\x]]$ such that $g_{i,k}=\x^r\cdot h_{i,k}$ for all indices $i<c$ and $k\geq0$. Consequently,
 \[J_{-2}=(h_{i,k}^\frac{d!}{d+r_y-k}:f\in J,i<c,k<d+r_y).\]
 Thus, we can compute that
 \[s=\min_{f\in J}\min_{\substack{i<c\\k<d+r_y}}\frac{d!}{d+r_y-k}\ord h_{i,k}\]
 \[=\min_{f\in J}\min_{\substack{i<c\\k<d+r_y}}\frac{d!}{d+r_y-k}(\ord g_{i,k}-|r|)\]
 \[=\min_{f\in J}\min_{\substack{i<c\\k<d+r_y}}\min_{\substack{\alpha\in\N^{\frac{c!}{c-i}}\\|\alpha|=k}}\frac{d!}{d+r_y-k}(\ord f_{i,\alpha}-|r|)\]
 \[\geq \min_{f\in J}\min_{\substack{i<c\\k<d+r_y}}\min_{\substack{\alpha\in\N^{\frac{c!}{c-i}}\\|\alpha|=k}}\frac{d!}{d+r_y-k}\frac{c!}{c-i}(c-i)\D_0-k\frac{s_0}{d!}-|r|=s_0.\]
 
 To show that $s=s_0$ holds, we will first show that there is an element $f\in J$ and indices $i<c$, $k<d+r_y$ such that 
 \[\ord g_{i,k}=c!\D_0-k\frac{s_0}{d!}.\]
 To this end, let $i<c$ and $j<\frac{c!}{c-i}(d+r_y)$ be indices such that
 \[\ord f_{i,j}=(c-i)\D_0-j\frac{s_0}{d!}.\]
 Further, assume that $j$ is minimal with this property. This implies that $\ord f_{i,\alpha}>\frac{c!}{c-i}\ord f_{i,j}$ for all $\alpha\in\N^{\frac{c!}{c-i}}$ that fulfill $|\alpha|=\frac{c!}{c-i}j$ and $\alpha\neq(j,\ldots,j)$. Set $k=\frac{c!}{c-i}j$. Then
 \[g_{i,k}=\frac{c!}{c-i}\ord f_{i,j}\]
 \[=\frac{c!}{c-i}((c-i)\D_0-j\frac{s_0}{d!})\]
 \[=c!\D_0-k\frac{s_0}{d!}.\]
 Using the above calculations, we conclude that
 \[s\leq\frac{d!}{d+r_y-k}(\ord g_{i,k}-|r|)\]
 \[=\frac{d!}{d+r_y-k}(c!D_0-k\frac{s_0}{d!}-|r|)=s_0.\]
 Hence, we have proved the equality $s=s_0$.
 
 The final claim follows from a straightforward calculation.
\end{proof}

 The following Lemma will give us sufficient conditions to verify that either $\ord \wt J_{-2}\geq \ord J_{-2}$ or $\ord\wt J_{-2}\leq \ord J_{-2}$ holds:

\begin{lemma} \label{s_changes}
 Consider a coordinate change $(\x,y,z)\mapsto(\wt\x,\wt y,\wt z)$ that preserves the setting. The following hold:
 \begin{enumerate}[(1)]
  \item If for all elements $f\in J$ and indices $i<c$, $j<\frac{c-i}{c!}(d+r_y)$ the inequality
  \[\ord \wt f_{i,j}\geq (c-i)\D-j\sd\]
  holds, then $\ord \wt J_{-2}\geq \ord J_{-2}$.
  \item If there exists an element $f\in J$ and indices $i<c$, $j<\frac{c-i}{c!}(d+r_y)$ such that
  \[\ord \wt f_{i,j}=(c-i)\D-j\sd\]
  holds, then $\ord \wt J_{-2}\leq \ord J_{-2}$.
  \item If there exists an element $f\in J$ and indices $i<c$, $j<\frac{c-i}{c!}(d+r_y)$ such that
  \[\ord \wt f_{i,j}<(c-i)\D-j\sd\]
  holds, then $\ord \wt J_{-2}<\ord J_{-2}$.
 \end{enumerate}
\end{lemma}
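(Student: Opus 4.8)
The plan is to combine the two formulas for the order of the second coefficient ideal established in Lemma \ref{ord_J_2}: one for $J_{-2}$ in terms of the coefficients $f_{i,j}$, and one for $\wt J_{-2}$ in terms of the transformed coefficients $\wt f_{i,j}$. Since the coordinate change preserves the setting, $\wt I_{-1}$ has order $d$ and the factored monomial $\wt \x^r\wt y^{r_y}$ has the same exponents, so $\wt\D=\D$ (because $\D$ depends only on $c$, $d$, $r_y$, $|r|$ and $s$, and $\wt s$ enters only through $\wt\D$ after we know $\wt s=s$; here I mean the auxiliary number $\wt\D$ which by definition uses $\wt s$, so I should be careful — in parts (1)--(3) the quantity $\D$ appearing in the hypotheses is the \emph{old} one, built from $s=\ord J_{-2}$). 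I would first record that by Lemma \ref{ord_J_2} applied to the tilded parameters,
\[
\ord \wt J_{-2}=\min_{f\in J}\min_{\substack{i<c\\ j<\frac{c-i}{c!}(d+r_y)}}\frac{d!}{d+r_y-\frac{c!}{c-i}j}\Big(\frac{c!}{c-i}\ord \wt f_{i,j}-|r|\Big),
\]
and that the same lemma, applied to the original parameters, gives both the analogous equality for $\ord J_{-2}$ and the pointwise inequality $\ord f_{i,j}\ge (c-i)\D-j\sd$ for all $f,i,j$.

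For part (1): assume $\ord \wt f_{i,j}\ge (c-i)\D-j\sd$ for all $f$ and all indices in the stated range. Plugging this into the displayed formula for $\ord\wt J_{-2}$, each term is bounded below by
\[
\frac{d!}{d+r_y-\frac{c!}{c-i}j}\Big(\frac{c!}{c-i}\big((c-i)\D-j\sd\big)-|r|\Big)
=\frac{d!}{d+r_y-\frac{c!}{c-i}j}\Big(c!\,\D-\frac{c!}{c-i}j\sd-|r|\Big).
\]
Now I substitute the definition $\D=\frac{1}{c!}\big((d+r_y)\sd+|r|\big)$, so $c!\,\D-|r|=(d+r_y)\sd$, and the bracket becomes $\big(d+r_y-\frac{c!}{c-i}j\big)\sd$; after cancelling the denominator against $d!$ this term equals exactly $s=\ord J_{-2}$. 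Hence every term in the minimum for $\ord\wt J_{-2}$ is $\ge s$, giving $\ord\wt J_{-2}\ge \ord J_{-2}$. (One should check the denominator $d+r_y-\frac{c!}{c-i}j$ is positive, which is exactly the condition $j<\frac{c-i}{c!}(d+r_y)$.)

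Parts (2) and (3) are the same computation read in reverse: if for some single $f,i,j$ (with $j$ in range) the transformed coefficient satisfies $\ord\wt f_{i,j}=(c-i)\D-j\sd$ (resp.\ $<$), then by the displayed formula for $\ord\wt J_{-2}$ that particular triple contributes a term equal to $s$ (resp.\ strictly less than $s$), and since $\ord\wt J_{-2}$ is the minimum over all such contributions, $\ord\wt J_{-2}\le s$ (resp.\ $<s$). I expect no genuine obstacle here: the only points needing care are (a) justifying that the setting-preservation hypothesis is exactly what licenses using Lemma \ref{ord_J_2} with the \emph{same} $d$, $r_y$, $|r|$ on the tilded side, and (b) the bookkeeping with the denominator's positivity and the substitution of $\D$'s definition. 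Everything else is a direct rearrangement of the two formulas from Lemma \ref{ord_J_2}.
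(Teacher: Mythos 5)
Your proof is correct and takes essentially the same route as the paper, which records this lemma's proof only as ``follows from Lemma \ref{ord_J_2} by a straightforward calculation''; you have carried out exactly that calculation, correctly noting both that setting-preservation licenses applying the formula of Lemma \ref{ord_J_2} to $\wt J_{-2}$ with the same $c,d,r_y,|r|$, and that substituting the definition of $\D$ makes the term corresponding to $\ord\wt f_{i,j}=(c-i)\D-j\sd$ reduce exactly to $s$ after cancellation.
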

\begin{proof}
 This follows from Lemma \ref{ord_J_2} by a straightforward calculation.
\end{proof}

\begin{lemma} \label{Dgeq1}
 The following hold:
 \begin{enumerate}[(1)]
  \item $\D\geq1$.
  \item If $d\geq c!$, then $D\geq\sd$.
 \end{enumerate}
\end{lemma}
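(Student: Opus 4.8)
Recall the notation: we have $J\subseteq R=K[[\x,y,z]]$ of order $c=\ord J$, the coefficient ideal $J_{-1}=\coeff^c_{(\x,y,z)}(J)$ with factorization $J_{-1}=(\x^ry^{r_y})\cdot I_{-1}$, $d=\ord I_{-1}$, $s=\ord J_{-2}$ with $J_{-2}=\coeff^d_{(\x,y)}(I_{-1})$, and
\[
\D=\frac{1}{c!}\B((d+r_y)\sd+|r|\B).
\]
For part (1) the plan is purely arithmetic: since $J_{-1}=(\x^ry^{r_y})\cdot I_{-1}$ and $\ord I_{-1}=d$, we have $\ord J_{-1}=d+r_y+|r|$ (reading off the $y$-exponent of the monomial factor into $r_y$ and the $\x$-exponents into $|r|$). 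On the other hand, Lemma \ref{ogeqc!} gives $\ord J_{-1}\geq c!$ since $\ord J\geq c$. Also $s=\ord J_{-2}\geq d!$ by Lemma \ref{ogeqc!} again (applied to $I_{-1}$ and $d$, using $\ord I_{-1}=d$), so $\sd\geq1$. Therefore
\[
c!\cdot\D=(d+r_y)\sd+|r|\geq (d+r_y)\cdot 1+|r|=\ord J_{-1}\geq c!,
\]
which gives $\D\geq1$. So the only genuine content is bookkeeping: confirming that $\ord J_{-1}=d+r_y+|r|$ (the monomial $\x^ry^{r_y}$ contributes exactly $|r|+r_y$ to the order of a product) and invoking $\sd\geq1$.

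For part (2) I would argue similarly but now exploit the hypothesis $d\geq c!$. The goal is $\D\geq\sd$, equivalently $c!\cdot\D\geq c!\cdot\sd$, i.e.
\[
(d+r_y)\sd+|r|\geq c!\cdot\sd.
\]
Rearranging, this is $(d+r_y-c!)\sd+|r|\geq0$, which holds since $d\geq c!$ forces $d+r_y-c!\geq r_y\geq0$ and $|r|\geq0$ while $\sd\geq1>0$. So part (2) is immediate once part (1)'s identity $\ord J_{-1}=d+r_y+|r|$ and the bound $\sd\geq 1$ (via Lemma \ref{ogeqc!}) are in hand.

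\textbf{Expected obstacle.} Honestly there is no real obstacle here; this lemma is a routine sanity check whose only subtlety is making sure the order of a product of a monomial and an ideal behaves as expected and that both applications of Lemma \ref{ogeqc!} are legitimate (in particular that $\ord I_{-1}=d$ lets us apply it to get $s\geq d!$, i.e. the hypothesis ``$c\leq\ord J$'' of Lemma \ref{ogeqc!} becomes ``$d\leq\ord I_{-1}$'', which holds with equality). The mildly delicate point is part (2)'s reliance on $d\geq c!$ being used only through $d+r_y\geq c!$ — one should not need the full strength of $d\geq c!$, but it is the clean hypothesis to state. I would write the whole proof in three or four lines.
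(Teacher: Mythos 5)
Your proof is correct and follows the same route as the paper: both establish $s\geq d!$ and $\ord J_{-1}\geq c!$ from Lemma \ref{ogeqc!} and then deduce $\D\geq\frac{\ord J_{-1}}{c!}\geq 1$; the paper simply dismisses part (2) as ``obvious,'' whereas you spell out the rearrangement $(d+r_y-c!)\sd+|r|\geq 0$, which is exactly the intended computation.
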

\begin{proof}
 (1): Notice that $s\geq d!$ and $\ord J_{-1}\geq c!$ hold by Lemma \ref{ogeqc!}. Hence,
 \[\D=\frac{1}{c!}((d+r_y)\sd+|r|)\geq \frac{1}{c!}(d+r_y+|r|)=\frac{\ord J_{-1}}{c!}\geq 1.\]
 
 (2): This is obvious.
\end{proof}


\begin{proposition} \label{s_well_def_w}
 Let $x_i$ be one of the parameters $\x=(x_1,\ldots,x_n)$. Consider one of the following types of coordinate changes:
  \begin{enumerate}[(1)]
  \item $x_i=\wt x_i+g$ with $g\in K[[\x_-,y,z]]$, where $\x_-=(x_1,\ldots,x_{i-1},x_{i+1},\ldots,x_n)$ and $\ord g\geq1$.
  
  We require that $r_i=\ord_{(x_i)}J_{-1}=0$ holds for this type of coordinate change.
  \item $x_i=u\wt x_i$ for a unit $u\in R^*$.
 \end{enumerate}
 Set $\wt\x=(x_1,\ldots,x_{i-1},\wt x_i,x_{i+1},\ldots,x_n)$. Then the change of coordinates $(\x,y,z)\mapsto (\wt\x,y,z)$ preserves the setting and $\ord\wt J_{-2}=\ord J_{-2}$ holds.
\end{proposition}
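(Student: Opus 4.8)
The plan is to reduce Proposition \ref{s_well_def_w} to the already-established coordinate-independence statements by checking that the coordinate changes of types (1) and (2) satisfy the hypotheses of Proposition \ref{invariance_of_w} at \emph{both} levels of the coefficient-ideal construction — first for the weighted orders $\ord_{(x_k)}J_{-1}$ that determine the factored monomial $\x^ry^{r_y}$, and then for the order $s = \ord J_{-2}$ itself. First I would observe that a coordinate change of type (2), multiplication of $x_i$ by a unit, leaves $J_{-1}$ itself unchanged only up to a unit factor in each monomial, so I must argue via Proposition \ref{invariance_of_w} (2) that every weighted order $\ord_{(x_k)}J_{-1}$ is preserved; in particular $r = (r_1,\dots,r_n)$ and $r_y$ are unchanged, hence $\wt I_{-1} = \wt J_{-1}/(\wt\x^r\wt y^{r_y})$ still has order $d$, so the setting is preserved. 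Then I would apply Lemma \ref{ord_J_2} together with Lemma \ref{s_changes}: using the formula for $\wt f_{i,j}$ from Lemma \ref{coordinate_change_g} (2) and the already-known inequality $\ord f_{k,l} \ge (c-k)\D - l\sd$ from Lemma \ref{ord_J_2} applied in the original coordinates (noting the $u$-coefficients have nonnegative order), I get $\ord\wt f_{i,j} \ge (c-i)\D - j\sd$ for all relevant $i,j$, so $\ord\wt J_{-2}\ge\ord J_{-2}$ by Lemma \ref{s_changes} (1); the reverse inequality follows by the symmetric argument applied to the inverse coordinate change, giving equality.

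For the triangular coordinate change of type (1), where $r_i = 0$, I would proceed analogously but must verify the two extra conditions in Proposition \ref{invariance_of_w} (1) at the level of the $x_i$-order of $J_{-1}$ — namely $\ord_{(x_i)}J_{-1} \ge c!\cdot\ord_{(x_i)}(x_i) = c!$ is false in general (it equals $r_i = 0$), so I cannot directly invoke Proposition \ref{invariance_of_w} for the $x_i$-order. Instead, since $r_i = 0$, the content is that the monomial $\x^r y^{r_y}$ does not involve $x_i$, and the coordinate change $x_i\mapsto \wt x_i + g(\x_-,y,z)$ fixes $x_k$ for $k\ne i$ and fixes $y,z$; so it stabilizes each hypersurface $V(x_k)$ with $r_k>0$ and $V(y)$ (if $r_y>0$) and $V(z)$ and $V(y,z)$. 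The right framework is to work directly with Lemma \ref{s_changes} and the expansion formula in Lemma \ref{coordinate_change_g} (1): writing $g = \sum_{l\ge0} g_l z^l$ with $g_l\in K[[\x_-,y]]$ and $\ord g_0 \ge \ord g \ge 1$, I compute $\wt f_{i,j}$ as a sum of products $f_{k,l'}\cdot g_\alpha$ and estimate each term's order using Lemma \ref{factorization_lemma} (for the $|r|$-part) and Lemma \ref{ord_J_2} (for the $\D$-part), much as in the proof of Proposition \ref{invariance_of_w} (1). The key numerical check will be an inequality of the shape $\tfrac{i-k}{\,\cdot\,}\D + (\text{something})\cdot(\text{stuff from }g) \ge 0$, which should come out because $\ord g_\alpha \ge (\text{number of factors})\cdot 1$ and $\D \ge 1$ by Lemma \ref{Dgeq1} (1).

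The main obstacle I anticipate is the bookkeeping in the type-(1) case: because $g$ is allowed to depend on $x_1,\dots,x_{i-1},x_{i+1},\dots,x_n$ as well as $y$ and $z$, the substitution mixes the $\x$-variables and the $z$-variable simultaneously, so the order estimate for $\wt f_{i,j}$ has to account both for the drop in $z$-degree (contributing the $\frac{c-i}{c!}$ weighting, i.e. the $\D$-term) and for the shift in $y$-degree (the $\sd$-term) and for the possible decrease in $x_i$-order being compensated by the $\ord g\ge 1$ factors. Keeping the two weighted orders $\w_n$-style bookkeeping straight — essentially redoing the computation behind Lemma \ref{ord_J_2} with the perturbed series — is where the real work lies; but since $r_i=0$ there is no monomial to factor in the $x_i$ direction, the estimate should close cleanly. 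I would also note explicitly that the setting is preserved: after a type-(1) change the weighted orders $\ord_{(x_k)}\wt J_{-1}$ for $k\ne i$ are unchanged by Proposition \ref{invariance_of_w} (since the change fixes $x_k$ and $z$, $y$, and the needed conditions $\ord_{(x_k)}\wt J_{-1}\ge c!\cdot\ord_{(x_k)}(\text{that }x)$ with value $r_k$ or $0$ and $\ord_{(x_k)}g_0\ge 0$ hold trivially), and $\ord_{(x_i)}\wt J_{-1}$ stays $0$ because $\ord\wt f_{i,j}\ge (c-i)\D - j\sd > 0$ forces no $x_i$-free monomial of the critical degree — hence the same monomial $\x^r y^{r_y}$ factors out and $\ord\wt I_{-1}=d$. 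With the setting preserved and the order estimates in both directions, $\ord\wt J_{-2}=\ord J_{-2}$ follows.
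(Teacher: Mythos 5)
Your plan matches the paper's proof in all essentials: preservation of the setting via Proposition \ref{residual_order_coord_indep} (which you invoke indirectly through Proposition \ref{invariance_of_w}), and then the order estimate for $\ord\wt J_{-2}$ obtained by expanding $f$ with Lemma \ref{coordinate_change_g}, feeding the inequality of Lemma \ref{ord_J_2} into Lemma \ref{s_changes}, and closing the estimate with $\D\ge1$ from Lemma \ref{Dgeq1}. Two small points of imprecision worth noting, though neither compromises the argument: (i) the paper works with the full triple expansion $f=\sum f_{i,j,k}x_i^ky^jz^i$ (since the changed variable is $x_i$, not $y$ or $z$, the expansion of $g$ must be in \emph{both} $y$ and $z$, not only $z$ as you wrote), which makes the bookkeeping clean; and (ii) the numerical check uses \emph{both} $\D\ge1$ (Lemma \ref{Dgeq1}) and $\sd\ge1$ (Lemma \ref{ogeqc!}), the latter to replace $-(j-b)$ by $-(j-b)\sd$ in the estimate for $\ord g_{\alpha,\beta}$; your sketch only cites the $\D$-bound. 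Also, Lemma \ref{factorization_lemma} is not needed as an independent ingredient — the required lower bound $\ord f_{i,j,k}\ge(c-i)\D-j\sd-k$ follows directly from Lemma \ref{ord_J_2}.
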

\begin{proof}
 The coordinate change preserves the setting by Proposition \ref{residual_order_coord_indep}.
 
 It remains to show that $\wt s=s$. To this end, let each element $f\in J$ have expansions $f=\sum_{i,j,k\geq0}f_{i,j,k}x_i^ky^jz^i$ and $f=\sum_{i,j,k\geq0}\wt f_{i,j,k}\wt x_i^ky^jz^i$ with $f_{i,j,k},\wt f_{i,j,k}\in K[[\x_-]]$. By Lemma \ref{ord_J_2}, for all indices $i,j,k\geq0$ the inequality 
 \[\ord f_{i,j,k}\geq (c-i)\D-j\frac{s}{d!}-k\]
 holds.
 
 (1): Let $g$ have the expansion $g=\sum_{i,j\geq0}g_{i,j}y^jz^i$ with $g_{i,j}\in K[[\x_-]]$. Using the formula in Lemma \ref{coordinate_change_g} (1), we can compute that
 \[\wt f_{i,j,k}=\sum_{\substack{0\leq a\leq i\\0\leq b\leq j\\l\geq k}}f_{a,b,l}\binom{l}{k}\sum_{\substack{\alpha,\beta\in\N^{l-k}\\|\alpha|=i-a\\|\beta|=j-b}}g_{\alpha,\beta}\]
 where $g_{\alpha,\beta}=\prod_{o=1}^{l-k}g_{\alpha_o,\beta_o}$. 
 
 This implies that
 \[\ord \wt f_{i,j,k}\geq\min_{\substack{0\leq a\leq i\\0\leq b\leq j\\l\geq k}}\min_{\substack{\alpha,\beta\in\N^{l-k}\\|\alpha|=i-a\\|\beta|=j-b}}\ord f_{a,b,l} g_{\alpha,\beta}.\]
 Further, since $\ord g\geq1$, we know that $\ord g_{0,0}\geq1$. Thus, for multi-indices $\alpha,\beta\in\N^{l-k}$ with $|\alpha|=i-a$ and $|\beta|=j-b$, we can conclude that
 \[\ord g_{\alpha,\beta}\geq (l-k)-(i-a)-(j-b).\]
 So let $i<c$, $j<\frac{c-i}{c!}(d+r_y)$ and $k\geq0$ be indices. We know that there exist indices $a\leq i$, $b\leq j$ and $l\geq k$ such that 
 \[\ord\wt f_{i,j,k}\geq \ord f_{a,b,l}+(l-k)-(i-a)-(j-b).\]
 Since we know by Lemma \ref{ogeqc!} that $\sd\geq 1$ and by Lemma \ref{Dgeq1} (1) that $\D\geq 1$, we can compute with Lemma \ref{ord_J_2} that
 \[\ord\wt f_{i,j,k}\geq \ord f_{a,b,l}+(l-k)-(i-a)-(j-b)\]
 \[\geq ((c-a)\D-b\sd-l)+(l-k)-(i-a)\D-(j-b)\sd\]
 \[=(c-i)\D-j\sd-k.\]
 This implies by Lemma \ref{s_changes} (1) that $\wt s\geq s$. By a symmetric argument, $\wt s=s$ holds.
 
 (2): Using the formula in Lemma \ref{coordinate_change_g} (2), we can compute that
 \[\ord \wt f_{i,j,k}\geq\min_{\substack{0\leq a\leq i\\0\leq b\leq j\\0\leq l\leq k}}\ord f_{a,b,l}.\]
 
 So let $i<c$, $j<\frac{c-i}{c!}(d+r_y)$ and $k\geq0$ be indices. We know that there are indices $a\leq i$, $b\leq j$ and $l\leq k$ such that $\ord \wt f_{i,j,k}\geq\ord f_{a,b,l}$. Thus, we can compute that
 \[\ord  \wt f_{i,j,k}\geq\ord f_{a,b,l}\]
 \[\geq (c-a)\D-b\sd-l\]
 \[\geq (c-i)\D-j\sd-k.\]
 This implies by Lemma \ref{s_changes} (1) that $\wt s\geq s$. By a symmetric argument, $\wt s=s$ holds.
\end{proof}

\begin{proposition} \label{s_well_def_y}
 Consider one of the following types of coordinate changes:
  \begin{enumerate}[(1)]
  \item $y=\wt y+g$ with $g\in K[[\x,z]]$ and $\ord_{(z)} g\geq1$.
  
  We require for this type of coordinate change that the following properties hold:
  \begin{itemize}
   \item $r_y=0$.
   \item $\D\geq\sd$.
  \end{itemize}

  \item $y=u\wt y$ for a unit $u\in R^*$.
 \end{enumerate}
 The change of coordinates $(\x,y,z)\mapsto (\x,\wt y,z)$ preserves the setting and $\ord\wt J_{-2}=\ord J_{-2}$ holds.
\end{proposition}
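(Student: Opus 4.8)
The plan is to imitate the proof of Proposition \ref{s_well_def_w}, treating the two types of coordinate changes separately and in both cases splitting the work into \emph{preservation of the setting} and the equality $\ord\wt J_{-2}=\ord J_{-2}$. For preservation of the setting I would regard $(\x,y)$ as the full tuple of non-$z$ parameters: a change of type (1) is then a triangular coordinate change of one of these parameters (namely $y$) of the sort handled by Proposition \ref{residual_order_coord_indep}, and a change of type (2) is a multiplication of a parameter by a unit, also handled there. Hence $\ord\wt J_{-1}=\ord J_{-1}$ and $\ord_{(x_i)}\wt J_{-1}=r_i$ for every $x_i$; together with $r_y=0$ (type (1)), resp. $\ord_{(\wt y)}\wt J_{-1}=\ord_{(y)}J_{-1}=r_y$ (type (2)), this yields the factorization $\wt J_{-1}=(\x^r\wt y^{r_y})\wt I_{-1}$ with $\ord\wt I_{-1}=\ord\wt J_{-1}-|r|-r_y=d$, so the setting is preserved.

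It then remains to show $\wt s=s$. Following \ref{s_well_def_w} I would first establish $\wt s\geq s$ and then obtain equality by a symmetric argument. For $\wt s\geq s$, take any $f\in J$ with expansions $f=\sum f_{i,j}y^jz^i$ and $f=\sum\wt f_{i,j}\wt y^jz^i$ (with $f_{i,j},\wt f_{i,j}\in K[[\x]]$), recall from Lemma \ref{ord_J_2} the bound $\ord f_{i,j}\geq(c-i)\D-j\sd$, and bound $\ord\wt f_{i,j}$ from below by $(c-i)\D-j\sd$ for the relevant indices $i<c$, $j<\frac{c-i}{c!}(d+r_y)$; Lemma \ref{s_changes} (1) then gives $\ord\wt J_{-2}\geq\ord J_{-2}$. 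For a change of type (2), $y=u\wt y$, Lemma \ref{coordinate_change_g} (2) writes $\wt f_{i,j}$ as a sum of terms $f_{k,l}u_{\alpha,\beta}$ with $k\leq i$, $l\leq j$ and $\ord u_{\alpha,\beta}\geq0$; since $\D\geq1$ (Lemma \ref{Dgeq1} (1)) and $\sd\geq1$ (Lemma \ref{ogeqc!}), monotonicity gives $\ord f_{k,l}\geq(c-k)\D-l\sd\geq(c-i)\D-j\sd$, exactly as in Proposition \ref{s_well_def_w} (2).

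The heart of the matter is the change of type (1), $y=\wt y+g$ with $\ord_{(z)}g\geq1$. Here $g$ has no $z^0$-term, so writing $g=\sum_{m\geq1}g_mz^m$ the formula of Lemma \ref{coordinate_change_g} (1) expresses $\wt f_{i,j}$ through terms $\binom{l}{j}f_{k,l}\,g_\alpha$ with $k\leq i$, $l\geq j$, $\alpha\in\N^{l-j}$, $|\alpha|=i-k$, and—crucially—every part of $\alpha$ at least $1$, which forces $l-j\leq i-k$. Using $\ord g_\alpha\geq0$ and the Lemma \ref{ord_J_2} bound, one obtains $\ord\wt f_{i,j}\geq(c-k)\D-l\sd$ for some admissible pair $(k,l)$, and then $(c-k)\D-l\sd\geq(c-i)\D-j\sd$ follows from $l-j\leq i-k$ together with $\D\geq\sd$. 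This is the unique point where the hypothesis $\D\geq\sd$ is used, and it cannot be dropped: the analogous failure at the level of first coefficient ideals is precisely the phenomenon exhibited in the example following Proposition \ref{invariance_of_w}.

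The main obstacle I anticipate is the step from $\wt s\geq s$ to equality. For type (2) the inverse change $\wt y=u^{-1}y$ is again a unit multiplication, so the argument is literally symmetric. For type (1) the inverse $\wt y=y-g$ is again triangular with $\ord_{(z)}(-g)\geq1$ and $r_y=0$, but it must still satisfy $\wt\D\geq\wsd$; when $d\geq c!$ this is automatic by Lemma \ref{Dgeq1} (2), so symmetry closes the argument immediately, while when $d<c!$ one must feed in the hypothesis $\D\geq\sd$ (equivalently $|r|\geq(c!-d)\sd$), the already-established inequality $\wt s\geq s$, and a closer look at which coefficients $f_{i,j}$ realise the minimum in Lemma \ref{ord_J_2}—applying Lemma \ref{s_changes} (2)–(3) to rule out that cancellation in the transformation formula pushes $\ord\wt J_{-2}$ strictly above $\ord J_{-2}$. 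Once $\wt s=s$ is established, the number $\wt\D$ coincides with $\D$ and $\wt J_{-2}$ has the same order as $J_{-2}$, which completes the proof.
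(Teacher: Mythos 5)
Your sketch matches the paper's proof step for step: Proposition~\ref{residual_order_coord_indep} for preservation of the setting, Lemma~\ref{coordinate_change_g} with $g_0=0$ to force $l-j\leq i-k$, the estimate from Lemma~\ref{ord_J_2} together with $\D\geq\sd$ to obtain $\ord\wt f_{i,j}\geq(c-i)\D-j\sd$, and Lemma~\ref{s_changes}~(1) for $\wt s\geq s$. You are also right to distrust the paper's ``by a symmetric argument'': the inverse change needs $\wt\D\geq\wsd$, which is automatic via Lemma~\ref{Dgeq1}~(2) only when $d\geq c!$.

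However, the fix you sketch for $d<c!$ --- using Lemma~\ref{s_changes}~(2)--(3) to rule out cancellation at a minimizer of Lemma~\ref{ord_J_2} --- cannot succeed, because at the boundary $\D=\sd$ the cancellation you hope to exclude really does occur, and the statement as written is false. In $R=K[[x,y,z]]$ take $J=(z^2+xz-xy+x^3)$. Then $c=2$ and $J_{-1}=(x^3-xy,\,x^2)=(x)\cdot(x,y)$, so with $r=(1)$, $r_y=0$ one has $d=1$, $J_{-2}=(x)$, $s=1$, hence $\sd=1$ and $\D=\tfrac{1}{2}(1\cdot1+1)=1=\sd$, and all hypotheses of the proposition hold. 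Under $y=\wt y+z$ the term $xz$ is killed: $f=z^2-x\wt y+x^3$, so $\wt J_{-1}=(x(x^2-\wt y))=(x)\cdot\wt I_{-1}$ with $\ord\wt I_{-1}=1$ (the setting is preserved), but $\wt J_{-2}=(x^2)$ and $\wt s=2\neq s$. Here the unique minimizer $(i_0,j_0)=(1,0)$ of Lemma~\ref{ord_J_2}, with $\ord f_{1,0}=1$, is exactly annihilated by the competing term $f_{0,1}g_1=-x$, which sits on the same level because $\D=\sd$; the level $(c-0)\D=2$ at $i=0$ is not attained. Your ``no cancellation at a minimizer'' argument does go through under the strict hypothesis $\D>\sd$ (all competing summands then have strictly larger order, and Lemma~\ref{s_changes}~(2) gives $\wt s\leq s$ directly, with no appeal to symmetry); the proposition should carry that strict inequality, and the downstream use in Proposition~\ref{slope_coord_indep}~(2), where the bound $\D_0\geq\sd$ is obtained with equality possible, needs a separate argument at that boundary.
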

\begin{proof}
 The coordinate change preserves the setting by Proposition \ref{residual_order_coord_indep}.
 
 It remains to show that $\wt s=s$. By Lemma \ref{ord_J_2}, for all indices $i,j\geq0$ the inequality 
 \[\ord f_{i,j}\geq (c-i)\D-j\frac{s}{d!}\]
 holds.
 
 (1): Let $g$ have the expansion $g=\sum_{j\geq0}g_jz^j$ with $g_j\in K[[\x]]$. Notice that $g_0=0$. By Lemma \ref{coordinate_change_g} we know that
 \[\wt f_{i,j}=\sum_{\substack{0\leq k\leq i\\j\leq l}}\sum_{\substack{\alpha\in\N^{l-j}\\|\alpha|=i-k}}f_{k,l}\binom{l}{j}g_\alpha.\]
 Notice that $g_\alpha\neq0$ implies that $i-k\geq l-j$ since $g_0=0$. Hence, we know that
 \[\ord \wt f_{i,j}\geq\min_{\substack{0\leq k\leq i\\j\leq l\leq j+i-k}}\ord f_{k,l}.\]
 Let $i<c$ and $j<\frac{c-i}{c!}d$ be indices. Then there are indices $k\leq i$ and $j\leq l$ with $i-k\geq l-j$ such that $\ord \wt f_{i,j}\geq\ord f_{k,l}$. This allows us to compute that
 \[\ord \wt f_{i,j}\geq\ord f_{k,l}\]
 \[\geq (c-k)\D-l\sd\]
 \[=(c-i)\D-j\sd+\underbrace{(i-k)}_{\geq(l-j)}\D-(l-j)\sd\]
 \[=(c-i)\D-j\sd+(l-j)\underbrace{(\D-\sd)}_{\geq0}.\]
 This implies by Lemma \ref{s_changes} (1) that $\wt s\geq s$ holds. By a symmetric argument, $\wt s=s$ holds.
 
 (2): By Lemma \ref{coordinate_change_g} we know that
 \[\ord f_{i,j}\geq \min_{\substack{0\leq k\leq i\\0\leq l\leq j}}\sum_{\substack{\alpha\in\N^l\\|\alpha|=i-k}}\ord f_{k,l}.\]
 So let $i<c$ and $j<\frac{c-i}{c!}(d+r_y)$ be indices. We know that there are indices $k\leq i$ and $l\leq k$ such that $\ord \wt f_{i,j}\geq\ord f_{k,l}$. Thus, we can compute that by Lemma \ref{ord_J_2} that
 \[\ord  \wt f_{i,j}\geq\ord f_{k,l}\]
 \[\geq (c-k)\D-l\sd\]
 \[\geq (c-i)\D-j\sd.\]
 This implies by Lemma \ref{s_changes} (1) that $\wt s\geq s$. By a symmetric argument, $\wt s=s$ holds.
\end{proof}


\begin{proposition} \label{slope_coord_indep}
 Let $R$ be the power series ring in $(n+2)$ variables over a field $K$ and $J\subseteq R$ an ideal of order $\ord J=c$.
 
 Let $H_{-1}\subseteq\Spec(R)$ be a regular hypersurface and $H_{-2}\subseteq H_{-1}$ a regular hypersurface in $H_{-1}$. Let $E\subseteq\Spec(R)$ be a simple normal crossings divisor with the following properties:
 \begin{itemize}
  \item $H_{-1}\not\subseteq E$.
  \item $H_{-1}\cup E$ has simple normal crossings.
  \item $(H_{-1}\cap E)\cup H_{-2}$ has simple normal crossings.
 \end{itemize}
 Then there exist numbers $d,s\in\Ni$ with the following properties:
 
 Let $(\x,y,z)$ be a regular system of parameters for $R$ subject to the conditions:
 \begin{itemize}
  \item $H_{-1}=V(z)$.
  \item $H_{-2}=V(z,y)$.
  \item $E=V(\prod_{i\in\Delta}x_i)$ or $E=V(\prod_{i\in\Delta}x_i\cdot y)$ for some subset $\Delta\subseteq\{1,\ldots,n\}$.
 \end{itemize}
 Then the coefficient ideal $J_{n+1,(\x,y,z)}=\coeff_{(\x,y,z)}^c(J)$ has a factorization
  \[J_{n+1,(\x,y,z)}=M_{n+1,(\x,y,z)}\cdot I_{n+1,(\x,y,z)}\]
  with $M_{n+1,(\x,y,z)}=(\prod_{i\in\Delta}x_i^{r_i}y^{r_y})$ where $r_i=\ord_{(x_i)}J_{n+1,(\x,y,z)}$,
  \[r_y=\begin{cases}
         \ord_{(y)}J_{n+1,(\x,y,z)} & \text{if $V(y)\subseteq E$,}\\
         0 & \text{if $V(y)\not\subseteq E$,}
        \end{cases}
\]
and
  \[\ord I_{n+1,(\x,y,z)}=d.\]
  Further, the following hold:
  \begin{enumerate}[(1)]
   \item If $d\geq c!$, then 
   \[\ord \coeff_{\x}^d(I_{n+1,(\x,y,z)})=s.\]
   \item If $d<c!$, then 
   \[\ord \coeff_{\x}^{d(c!-d)}(I_{n+1,(\x,y,z)}^{c!-d}+M_{n+1,(\x,y,z)}^d)=s.\]
  \end{enumerate}
\end{proposition}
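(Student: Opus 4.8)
The plan is to reduce everything to the invariance results established earlier in this chapter, in particular Proposition \ref{residual_order_coord_indep}, Proposition \ref{s_well_def_w} and Proposition \ref{s_well_def_y}, exactly as was done for $m_\F$ and $d_\F$ in Proposition \ref{m_d_coord_indep}. First I would fix a regular system of parameters $(\x,y,z)$ subject to the three geometric conditions and observe that, by the inverse function theorem and Weierstrass preparation, any other admissible system $(\wt\x,\wt y,\wt z)$ differs from it by a finite composition of the four elementary coordinate changes (i)--(iv) listed at the start of Section \ref{section_invariance_of_slope}: triangular changes $x_i\mapsto x_i+g(\x_-,y,z)$ (allowed only when $r_i=0$, i.e.\ when $V(x_i)\not\subseteq E$), unit multiplications $x_i\mapsto ux_i$, triangular changes $y\mapsto y+g(\x,z)$ with $\ord_{(z)}g\geq1$ (allowed only when $r_y=0$, i.e.\ when $V(y)\not\subseteq E$), and unit multiplications $y\mapsto uy$. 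The constraint $H_{-2}=V(z,y)$ together with $H_{-1}=V(z)$ forces the triangular $y$-change to have no constant-in-$z$ part, which is precisely the hypothesis $\ord_{(z)}g\geq1$ used in Proposition \ref{s_well_def_y}(1). The constraint that $E$ is defined by a monomial in the chosen parameters is what pins down which $r_i$ and $r_y$ are allowed to be positive, and hence which triangular changes are admissible.

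Next I would establish the existence of the factorization and the invariance of $d$, $r_i$ and $r_y$: this is immediate from Proposition \ref{residual_order_coord_indep} applied with the hypersurface $H=H_{-1}=V(z)$, the divisor $E$, and (for the $r_i$) the hypersurfaces $D=V(x_i)$, and (for $r_y$, when $V(y)\subseteq E$) $D=V(y)$. Thus the coefficient ideal $J_{n+1,(\x,y,z)}=\coeff_{(\x,y,z)}^c(J)$ always factors as $M_{n+1,(\x,y,z)}\cdot I_{n+1,(\x,y,z)}$ with the stated exponents, and $\ord I_{n+1,(\x,y,z)}=:d$ is independent of the admissible parameters. Set $r=(r_i)$, $M_{-1}=(\x^ry^{r_y})$, $I_{-1}=I_{n+1,(\x,y,z)}$, and define $s=\ord\coeff_{\x}^d(I_{-1})$ in the case $d\geq c!$. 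Since all the elementary coordinate changes preserve the factorization (Proposition \ref{residual_order_coord_indep} again), each of them ``preserves the setting'' in the sense of Section \ref{section_invariance_of_slope}, so Propositions \ref{s_well_def_w} and \ref{s_well_def_y} apply. For changes of type (i), (ii) and (iv) those propositions give $\ord\wt J_{-2}=\ord J_{-2}$ unconditionally; for a change of type (iii) Proposition \ref{s_well_def_y}(1) requires $\D\geq\sd$, but when $d\geq c!$ this holds automatically by Lemma \ref{Dgeq1}(2). Composing, $s$ is independent of the admissible parameters, proving part (1).

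For part (2), where $d<c!$, the bound $\D\geq\sd$ may fail, so a bare second coefficient ideal of $I_{-1}$ is not invariant; the standard remedy (the companion-ideal construction of \cite{EH}) is to replace $I_{-1}$ by $I_{-1}^{c!-d}+M_{-1}^d$ before taking the coefficient ideal with parameter $d(c!-d)$. The plan here is to show that the $x_i$-order and $y$-order of this companion ideal agree with those of $I_{-1}^{c!-d}$, so that its factorization and residual order behave predictably, and then to run through the four coordinate changes once more. The key computational point — and the step I expect to be the main obstacle — is verifying that for the companion ideal the relevant $\Delta$-type inequality of Lemma \ref{s_changes} always holds \emph{without} assuming $\D\geq\sd$: the $M_{-1}^d$ summand contributes exactly enough ``monomial weight'' that the terms which would otherwise violate $G(k,l,\alpha)\geq0$ in the proof of Proposition \ref{s_well_def_y}(1) are now controlled. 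Concretely, one writes the coefficients $\wt f_{i,j}$ of a transformed element via Lemma \ref{coordinate_change_g}, bounds each summand using Lemma \ref{ord_J_2} for the $I_{-1}$-part and Lemma \ref{factorization_lemma} for the $M_{-1}$-part, and checks that the minimum over the two estimates always dominates $(c-i)\D'-j\,(s/(d(c!-d))!)$ for the appropriate auxiliary slope $\D'$ of the companion ideal; the arithmetic of the exponents $c!-d$ and $d$ is exactly balanced so that no hypothesis on $\D$ versus $\sd$ is needed. Once this inequality is in hand, Lemma \ref{s_changes}(1) and its symmetric counterpart give invariance of $s$ under each elementary change, and composing finishes part (2). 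Finally I would note that the cases $d\geq c!$ and $d<c!$ are disjoint and exhaustive given $\ord J=c$ and Lemma \ref{ogeqc!}, so the single pair $(d,s)$ constructed above works in all cases.
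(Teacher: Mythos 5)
Your treatment of part (1) matches the paper: Proposition \ref{residual_order_coord_indep} gives the factorization together with the invariance of $d$ and the $r_i$, and then Propositions \ref{s_well_def_w} and \ref{s_well_def_y} (with Lemma \ref{Dgeq1}(2) supplying $\D\geq\sd$) handle the elementary coordinate changes. One minor omission: multiplication $z\mapsto uz$ by a unit is also an admissible change fixing $V(z)$; it is covered trivially by Proposition \ref{coeff_ideal_well_def}.

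For part (2) you have correctly located the difficulty — the triangular change $y\mapsto y+g(\x,z)$ requires $\D\geq\sd$, which is no longer automatic when $d<c!$ — but the route you sketch differs from the paper's and leaves precisely that difficulty unresolved. You propose to reprove a companion-ideal analogue of Lemma \ref{s_changes} from scratch, bounding the transformed coefficients of $I_{-1}^{c!-d}+M_{-1}^d$ term by term, and assert that ``the arithmetic of the exponents $c!-d$ and $d$ is exactly balanced so that no hypothesis on $\D$ versus $\sd$ is needed.'' That assertion \emph{is} the content of part (2); flagging it as ``the main obstacle'' and leaving it unverified means the argument is not complete. The paper does something more economical. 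By Lemma \ref{coeff_ideal_of_powers} and additivity of coefficient-ideal orders over sums, the quantity in question decomposes as
\[\ord \coeff_{\x}^{d(c!-d)}\bigl(I_{n+1,(\x,y,z)}^{c!-d}+M_{n+1,(\x,y,z)}^d\bigr)=(d(c!-d))!\cdot\min\Bigl\{\frac{s_0}{d!},\frac{m_0}{(c!-d)!}\Bigr\},\]
where $s_0=\ord\coeff_{\x}^d(I_{n+1,(\x,y,z)})$ and $m_0=\ord\coeff_{\x}^{c!-d}(M_{n+1,(\x,y,z)})$. Since $m_0$ is determined by $|r|$ and the $r_i$ are already invariant, the only case needing work is $s_0\leq\frac{d!}{(c!-d)!}m_0$. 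In that case the identity $m_0=\frac{(c!-d)!}{c!-d}|r|$ (using $r_y=0$, which holds whenever the triangular $y$-change is admissible) yields $\D_0\geq\frac{s_0}{d!}$, i.e.\ exactly the hypothesis of Proposition \ref{s_well_def_y}(1). So the companion construction does not bypass the inequality $\D\geq\sd$; it \emph{forces} it in the only regime where it matters, by capping $s_0$ against the coordinate-free quantity $m_0$. That is the idea missing from your sketch, and without it the direct recomputation you propose does not obviously close.
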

\begin{proof}
 The existence of the number $d$ has already been proved in Proposition \ref{residual_order_coord_indep}.
 
 (1): Let $(\x,z)$ be a regular system of parameters for $R$ that fulfills the stated conditions. Set
 \[s_0=\coeff_{\x}^d(I_{n+1,(\x,y,z)})\]
 and
 \[\D_0=\frac{1}{c!}\Big((d+r_n)\frac{s_0}{d!}+|r_-|\Big)\]
 where $r_-=(r_1,\ldots,r_{n-1})$.
 
 We have to show the invariance of $s_0$ under the following types of coordinates changes:
 \begin{enumerate}[(i)]
  \item Triangular coordinate changes $x_i\mapsto x_i+g$ for $i\notin\Delta$ where $g\in K[[\x_-,y,z]]$ for $\x_-=(x_1,\ldots,x_{i-1},x_{i+1},\ldots,x_n)$ .
  \item Multiplications with units $x_i\mapsto u x_i$ where $u\in R^*$.
  \item Triangular coordinate changes $y\mapsto y+g$ if $V(y)\not\subseteq E$ where $g\in K[[\x,z]]$ and $\ord_{(z)}g\geq1$.
  \item Multiplications with units $y\mapsto uy$ where $u\in R^*$.
  \item Multiplications with units $z\mapsto uz$ where $u\in R^*$.
 \end{enumerate}

 Coordinate changes of type (i) leave $s_0$ invariant by Proposition \ref{s_well_def_w} (1).
 
 Coordinate changes of type (ii) leave $s_0$ invariant by Proposition \ref{s_well_def_w} (2).
 
 Coordinate changes of type (iii) leave $s_0$ invariant by Proposition \ref{s_well_def_y} (1) since $\frac{s_0}{d!}\leq \D_0$ is fulfilled by Lemma \ref{Dgeq1} (2).
 
 Coordinate changes of type (iv) leave $s_0$ invariant by Proposition \ref{s_well_def_y} (2).
 
 Coordinate changes of type (v) leave $s_0$ invariant by Proposition \ref{coeff_ideal_well_def}.
 
 (2): Set 
 \[m_0=\ord\coeff^{c!-d}_{\x}(M_{n+1,(\x,y,z)}).\]
 By Lemma \ref{coeff_ideal_of_powers} we know that
 \[\ord \coeff_{\x}^{d(c!-d)}(I_{n+1,(\x,y,z)}^{c!-d}+M_{n+1,(\x,y,z)}^d)=(d(c!-d))!\cdot\min\Big\{\frac{s_0}{d!},\frac{m_0}{(c!-d)!}\Big\}.\]
 Notice that $m_0$ is invariant under coordinate changes of types (i)-(iv) by Proposition \ref{residual_order_coord_indep} since $\ord M_{n+1,(\x,y,z)}\geq c!-d$ by Lemma \ref{ogeqc!}. Hence, we can assume without loss of generality that
 \[s_0\leq\frac{d!}{(c!-d)!}m_0\]
 and it remains to show that $s_0$ is invariant under the coordinate changes of type (i)-(iv) under this condition.
 
 For coordinate changes of the types (i),(iii) and (iv) the proof for the invariance of $s_0$ is the same as before.
 
 Now consider a coordinate change of type (ii). Since $r_y=0$ in this case, we can compute that
 \[m_0=\frac{(c!-d)!}{c!-d}|r|\]
 and consequently,
 \[\D_0=\frac{1}{c!}\B(d\frac{s_0}{d!}+|r|\B)\]
 \[\geq \frac{1}{c!}(d\frac{s_0}{d!}+(c!-d)\frac{s_0}{d!})=\frac{s_0}{d!}.\]
 Hence, the coordinate change leaves $s_0$ invariant by Proposition \ref{s_well_def_y} (1).
\end{proof}

\chapter{Maximizing invariants associated to coefficient ideals} \label{chapter_cleaning}

In the Chapter \ref{chapter_invariance} we showed that certain invariants which are associated to the coefficient ideal can be regarded as invariants of the geometric objects which are involved in their definition. Hence, their value does not depend on the choice of a subordinate system of parameters. In this chapter we will analyze how these invariants behave when changing the geometric objects with respect to which the coefficient ideal is defined. The most important example for this is to consider the coefficient ideal $J_{-1}=\coeff_{(\x,z)}^c(J)$ of an ideal $J$ with respect to the formal hypersurface $V(z)$ and apply a coordinate change $z\mapsto z+g(\x)$ with $g\in K[[\x]]$ which moves the underlying hypersurface. In particular, we are interested in finding coordinate changes which \emph{maximize} an associated invariant. The maximal value does then not depend on the choice of a particular hypersurface anymore and can thus be seen as an intrinsic invariant of the ideal $J$.

In Section \ref{section_w_cleaning} we will consider weighted orders $\w$ of the coefficient ideal $J_{-1}$ and their behavior under coordinate changes $z\mapsto z+g(\x)$ which move the underlying hypersurface. We will develop the notion of \emph{$\w$-cleanness} which will be shown to be a sufficient condition for the weighted order $\w(J_{-1})$ to be maximal over all coordinate changes $z\mapsto z+g(\x)$. Further, we will develop the \emph{$\w$-cleaning process} which will allow us to find a specific coordinate change which maximizes the weighted order $\w$ of the coefficient ideal.

In Section \ref{section_s_cleaning} and \ref{section_maximizing_over_y_and_z}, similar considerations will be made for the behavior of the order of the second coefficient ideal $J_{-2}$ which is defined with respect to a formal hypersurface $V(y,z)$ inside $V(z)$. In Section \ref{section_s_cleaning} we will investigate the behavior of the order of $J_{-2}$ under coordinate changes $z\mapsto z+g(\x,y)$, leaving the parameter $y$ fixed. We will again devise a notion of cleanness which guarantees maximality $\ord J_{-2}$ and a cleaning process which allows us to construct a coordinate change that maximizes this order.

In Section \ref{section_maximizing_over_y_and_z} we will then consider the behavior of $\ord J_{-2}$ under simultaneous coordinate changes $z\mapsto z+g(\x,y)$ and $y\mapsto y+h(\x)$. This significantly increases the complexity. For this situation, we will not devise a cleaning process, but only prove that there exists a coordinate change $z\mapsto z+g(\x,y)$, $y\mapsto y+h(\x)$ which maximizes the order of $J_{-2}$ over all such coordinate changes.

The cleaning techniques in this chapter were developed both as a generalization of the Tschirnhausen transformation used in characteristic zero to construct hypersurfaces of maximal contact and the cleaning of a purely inseparable equation $z^{p^e}+F(\x)=0$ where all $p^e$-th powers in the expansion of $F$ can be eliminated via a coordinate change $z\mapsto z+g(\x)$. Unlike these, our cleaning techniques work in any dimension and in arbitrary characteristic. They can be applied to any ideal $J$ as long as it contains an element $f$ which is $z$-regular of order $c=\ord J$.

The techniques of this chapter will be used in Section \ref{section_maximizing_flags} to construct flags $\F$ which maximize the flag invariant $\inv(\F)$ that was introduced in Section \ref{section_modifying_the_residual_order}. The results of Section \ref{section_maximizing_flags} are fundamental to Chapter \ref{chapter_usc} and Chapter \ref{chapter_decrease} where it will be shown that the resolution invariant $\ivX$ is upper semicontinuous and decreases under blowup.

Cleaning techniques similar to the ones introduced in this chapter have been used many times in the literature on resolution of singularities in positive characteristic: \cite {Abhyankar_67}, \cite{Hironaka_Bowdoin}, \cite{Moh}, \cite{Cossart_Piltant_1}, \cite{Cossart_Piltant_2}, \cite{Ha_BAMS_2}, \cite{Cutkosky_Skeleton},  \cite{Hironaka_CMI}, \cite{BV_Monoidal}, \cite{HW}, \cite{Kawanoue_Matsuki_Surfaces}.



\section{Maximizing a weighted order of the coefficient ideal} \label{section_w_cleaning}
 
In this section we will investigate the effect of coordinate changes $z\mapsto z+g(\x)$ on weighted orders of the coefficient ideal $\coeff_{(\x,z)}^c(J)$ and develop a technique to maximize weighted orders over all such coordinate changes. 

Throughout this section, we will always use the following setting:

Let $R=K[[\x,z]]$ with $\x=(x_1,\ldots,x_n)$ and $J\subseteq R$ an ideal of order $c=\ord J$. Let $J_{-1}$ denote the coefficient ideal 
\[J_{-1}=\coeff^c_{(\x,z)}(J)\]
with respect to the regular hypersurface $H=V(z)\subseteq\Spec(R)$.

Let $\w:K[[\x]]\to\Ni^l$ be a weighted-order function that is defined on the parameters $\x$. Set 
\[m=\w(J_{-1}).\]
We will always assume that $m>0$ holds.

Let now $\wt H=V(\wt z)\subseteq\Spec(R)$ be another regular hypersurface. If the element $\wt z$ is not $z$-regular with respect to the parameters $(\x,z)$, then the effect of the coordinate change $z\mapsto \wt z$ on the coefficient ideal will generally be chaotic. Hence, we will exclude such coordinate changes in this chapter. Notice though, that if an element $f\in J$ exists which is $z$-regular of order $c$, then Lemma \ref{z_regular_blocks_other_parameters} gives a strong bound on weighted orders of the coefficient ideal with respect to $V(\wt z)$.

On the other hand, if the element $\wt z$ is $z$-regular, we may assume by the Weierstrass preparation theorem that $z=\wt z+g$ for an element $g\in K[[\x]]$ with $\ord g\geq1$. This is the situation that we will investigate in this section. We set 
\[\wt J_{-1}=\coeff_{(\x,\wt z)}^c(J)\]
to be the coefficient ideal with respect to the hypersurface $\wt H=V(\wt z)$ and 
\[\wt m=\w(\wt J_{-1}).\]

In Lemma \ref{m_under_coord_changes} we will give a basic estimate for the value of $\w(\wt J_{-1})$ dependent on the weighted order $\w(g)$ of $g$. In particular, we will show that $\w(\wt J_{-1})>\w(J_{-1})$ can only hold if $\w(g)=\frac{m}{c!}$ under the condition that there exists an element $f\in J$ which is $z$-regular of order $c$. We will then introduce the definition of \emph{$\w$-cleanness} of such an element $f$. In Proposition \ref{w_cleaning_maximizes_w} we will show that if there exists an element $f\in J$ which is $\w$-clean with respect to $J_{-1}$, then the weighted order $\w(J_{-1})$ is maximal over all coordinate changes $z\mapsto z+g(\x)$. After this, we will describe the \emph{$w$-cleaning process} which is an algorithm to construct from a given parameter $z$ and an element $f\in J$ which is $z$-regular of order $c$ a new parameter $\wt z=z-g(\x)$ so that $f$ is $\w$-clean with respect to $\wt J_{-1}$.

For elements $f\in J$ we will denote their power series expansions with respect to the parameter systems $(\x,z)$ and $(\x,\wt z)$ by $f=\sum_{i\geq0}f_iz^i$ and $f=\sum_{i\geq0}\wt f_i\wt z^i$ with $f_i,\wt f_i\in K[[\x]]$. By Lemma \ref{coordinate_change_g_z} we know that
\[\wt f_i=\sum_{k\geq i}\binom{k}{i}f_kg^{k-i}.\]

Further, we know by Lemma \ref{w_coeff} that for all elements $f\in J$ and indices $i\geq0$ the inequality
\[\w(f_i)\geq\frac{c-i}{c!}m\]
holds. Also, there exists an element $f\in J$ and an index $i<c$ such that equality holds.

We will use the notation $q=q_K(c)$ from Section \ref{section_binom_in_pos_char}, where
\[q_K(c)=\begin{cases}
           1 & \text{if $\chara(K)=0$,} \\
           p^{\ord_pc} & \text{if $\chara(K)=p>0.$}
          \end{cases}\]

\begin{lemma} \label{m_under_coord_changes}
 The following hold:
 \begin{enumerate}[(1)]
  \item If $\w(g)\geq\frac{m}{c!}$, then $\w(\wt J_{-1})\geq \w(J_{-1})$.
  \item If $\w(g)>\frac{m}{c!}$, then $\w(\wt J_{-1})=\w(J_{-1})$.
  \item If $\w(g)<\frac{m}{c!}$ and there exists an element $f\in J$ which is $z$-regular of order $c$, then $\w(\wt J_{-1})=c!\cdot\w(g)<\w(J_{-1})$.
 \end{enumerate}
\end{lemma}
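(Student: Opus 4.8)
The plan is to compute $\wt f_i$ via Lemma \ref{coordinate_change_g_z} (1), which gives $\wt f_i=\sum_{k\geq i}\binom{k}{i}f_kg^{k-i}$, and then estimate the weighted order of each summand. Using $\w(f_k)\geq\frac{c-k}{c!}m$ and $\w(g^{k-i})=(k-i)\w(g)$, the $k$-th summand has weighted order at least $\frac{c-k}{c!}m+(k-i)\w(g)$. I would then apply Lemma \ref{w_coeff} (the weighted version of Lemma \ref{ord_coeff}), which says $\w(\wt J_{-1})=\min_{f,i<c}\frac{c!}{c-i}\w(\wt f_i)$.

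For part (1): if $\w(g)\geq\frac{m}{c!}$, I would rewrite $\frac{c-k}{c!}m+(k-i)\w(g)\geq\frac{c-k}{c!}m+(k-i)\frac{m}{c!}=\frac{c-i}{c!}m$, so every summand (hence $\wt f_i$) has weighted order $\geq\frac{c-i}{c!}m$; multiplying by $\frac{c!}{c-i}$ and taking the minimum over $f$ and $i<c$ yields $\w(\wt J_{-1})\geq m=\w(J_{-1})$. For part (2): if $\w(g)>\frac{m}{c!}$, the same computation gives $\w(\wt J_{-1})\geq\w(J_{-1})$, and by symmetry (the inverse coordinate change is $\wt z\mapsto \wt z+(-g)$ with $\w(-g)=\w(g)>\frac{m}{c!}\geq\frac{\wt m}{c!}$, using part (1) applied to $\wt J_{-1}$ to get $\wt m\geq m$, hence $\w(g)>\frac{\wt m}{c!}$) we also get $\w(J_{-1})\geq\w(\wt J_{-1})$, so equality holds.

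For part (3): suppose $\w(g)<\frac{m}{c!}$ and $f\in J$ is $z$-regular of order $c$, so $\ord f_c=0$, i.e.\ $f_c$ is a unit and $\w(f_c)=0$. Then $\wt f_0=\sum_{k\geq 0}f_kg^k$; I would isolate the $k=c$ term, which has weighted order $\w(f_c g^c)=c\cdot\w(g)$. Every other term $f_k g^k$ with $k\neq c$ has weighted order $\w(f_k)+k\w(g)\geq\frac{c-k}{c!}m+k\w(g)$; using $\w(g)<\frac{m}{c!}$ one checks $\frac{c-k}{c!}m+k\w(g)>\frac{c-k}{c!}\cdot c!\w(g)+k\w(g)=(c-k)\w(g)\cdot\frac{c!}{c!}+k\w(g)$... more carefully, $\frac{c-k}{c!}m>(c-k)\w(g)$ for $k<c$ (strict since $m>c!\w(g)$ and $c-k>0$), so the term has weighted order $>c\,\w(g)$; and for $k>c$ the term has weighted order $\geq k\w(g)>c\,\w(g)$. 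Hence the minimum is uniquely attained at $k=c$, so there is no cancellation and $\w(\wt f_0)=c\,\w(g)$. This gives $\w(\wt J_{-1})\leq\frac{c!}{c}\w(\wt f_0)=c!\,\w(g)<m$. Combining with the general estimate $\frac{c!}{c-i}\w(\wt f_i)\geq\frac{c!}{c-i}\bigl(\frac{c-k}{c!}m+(k-i)\w(g)\bigr)$ over all $k\geq i$: the infimum over $k$ of $\frac{c-k}{c!}m+(k-i)\w(g)$ is $\geq(c-i)\w(g)$ (attained in the limit using $\w(g)<\frac{m}{c!}$, pushing $k$ toward $c$), so $\w(\wt f_i)\geq(c-i)\w(g)$ for all $i<c$, giving $\w(\wt J_{-1})\geq c!\,\w(g)$; hence equality $\w(\wt J_{-1})=c!\,\w(g)$.

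The main obstacle I anticipate is handling the lexicographic nature of $\Ni^l$ correctly in the ``no cancellation'' argument of part (3): one needs to argue that the unique minimizing term $f_c g^c$ genuinely contributes its initial form to $\wt f_0$ without being killed by $K$-linear combinations with strictly higher-weighted-order terms, which is exactly the valuation property of $\w$ (equality in $\w(a+b)\geq\min\{\w(a),\w(b)\}$ when $\w(a)\neq\w(b)$) extended to the multi-valued setting and to infinite sums; this requires care because $f_c$ is a unit whose initial form is a nonzero constant, so $\init_\w(f_c g^c)=\init_\w(g^c)\cdot(\text{const})\neq 0$. Once this is granted, the rest is a routine bookkeeping of weighted orders.
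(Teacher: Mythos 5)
Part (1) of your proposal matches the paper's argument and is correct.

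Part (2), however, contains a genuine logical error. You write $\w(g)>\frac{m}{c!}\geq\frac{\wt m}{c!}$ and justify the second inequality by saying that part (1) applied to $\wt J_{-1}$ gives $\wt m\geq m$. But $\wt m\geq m$ gives $\frac{\wt m}{c!}\geq\frac{m}{c!}$, the \emph{opposite} of what you need. From $\w(g)>\frac{m}{c!}$ and $\wt m\geq m$ alone, you cannot rule out $\frac{m}{c!}<\w(g)<\frac{\wt m}{c!}$, so you cannot apply part (1) in the reverse direction. Asserting both $\wt m\geq m$ and $\frac{m}{c!}\geq\frac{\wt m}{c!}$ in the same breath already assumes the conclusion $m=\wt m$, so the argument is circular. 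The paper avoids this by a direct argument: choose $f\in J$ and $i<c$ with $\w(f_i)=\frac{c-i}{c!}m$ (which exists by Lemma \ref{w_coeff}), and show that $\w\bigl(\sum_{k>i}\binom{k}{i}f_kg^{k-i}\bigr)>\frac{c-i}{c!}m$ because every summand has strictly larger weighted order when $\w(g)>\frac{m}{c!}$; by the valuation property, $\w(\wt f_i)=\w(f_i)$, hence $\wt m\leq m$. Note also that the lemma does not assume the existence of a $z$-regular element in parts (1) and (2), so a repair of your symmetry route via part (3) is not available (and would in any case be circular, as the paper's proof of (3) uses (2)).

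Part (3) has a second gap: your ``no-cancellation'' argument breaks down when $\w(g)=(0,\ldots,0)$, which is allowed since only $\ord g\geq1$ is assumed. For $k>c$ you estimate $\w(f_kg^k)\geq k\w(g)>c\w(g)$, but when $\w(g)=0$ both sides are $0$ and the inequality is not strict, so the minimizing term is no longer unique and cancellations at level $0$ can occur. The obstacle you anticipated was about initial forms; the actual missing step is this degenerate case. The paper handles it by refining $\w$ to $\wt\w:K[[\x]]\to\Ni^{l+1}$, $\wt\w(x_j)=(\w(x_j),1)$, so that $\wt\w(g)=(\w(g),\ord\init_\w(g))\geq(0,1)>0$; the already-proved case $\w(g)>0$ then applies to $\wt\w$, and projecting back onto the first $l$ coordinates gives the claim for $\w$. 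Your lower-bound estimate $\w(\wt f_i)\geq(c-i)\w(g)$ is fine and gives an alternative to the paper's use of part (2) for the matching lower bound, but it does not repair the upper-bound (no-cancellation) step in the $\w(g)=0$ case.
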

\begin{proof}
 (1): By the formula for $\wt f_i$ we can compute that
 \[\w(\wt f_i)\geq \min_{k\geq i}(\underbrace{\w(f_k)}_{\geq\frac{c-k}{c!}m}+(k-i)\underbrace{\w(g)}_{\geq\frac{m}{c!}})\geq \frac{c-i}{c!}m.\]
 Since this holds for all elements $f\in J$ and all indices $i<c$, we know by Lemma \ref{w_coeff} that $\wt m\geq m$.
 
 (2): By (1) we know that $\wt m\geq m$ holds.
 
 Now let $f\in J$ and $i<c$ be such that $\w(f_i)=\frac{c-i}{c!}m$ holds. We can compute that
 \[\w\Big(\sum_{k>i}\binom{k}{i}f_kg^{k-i}\Big)\geq\min_{k>i}(\underbrace{\w(f_k)}_{\geq\frac{c-k}{c!}m}+(k-i)\underbrace{\w(g)}_{>\frac{m}{c!}})>\frac{c-i}{c!}m=\w(f_i).\]
 Hence, $\w(\wt f_i)=\w(f_i)=\frac{c-i}{c!}m$. By Lemma \ref{w_coeff} this implies that $\wt m\leq m$.
 
 In total, we conclude that $\wt m=m$.

 (3): We will first consider the case that $\w(g)>0$. Let $f\in J$ be $z$-regular of order $c$. Consider the term
 \[\wt f_0=\sum_{k\geq0}f_kg^k.\]
 Observe that $\w(f_cg^c)=c\cdot\w(g)$ since $f_c$ is a unit. We claim that for all indices $k\geq0$ with $k\neq c$, the strict inequality $\w(f_k g^k)>c\cdot\w(g)$ holds. First, consider the case $k>c$. Clearly, 
 \[\w(f_kg^k)\geq\w(g^k)=k\cdot\w(g)>c\cdot\w(g).\]
 On the other hand, consider the case $k<c$. By Lemma \ref{w_coeff} we know that $\w(f_k)\geq\frac{c-k}{c!}m$. Thus,
 \[\w(f_kg^k)\geq\underbrace{\frac{c-k}{c!}m}_{>(c-k)\w(g)}+k\cdot\w(g)>c\cdot\w(g).\]
 We conclude that $\w(\wt f_0)=c\cdot\w(g)$. By Lemma \ref{w_coeff} this implies that 
 \[\wt m\leq c!\cdot \w(g)<m.\]
 
 Assume that $\wt m<c!\cdot \w(g)$ holds. Then statement (2) would imply that $m=\wt m$, which contradicts $\wt m<m$.
 
 Now consider the case that $\w(g)=0$. Define the weighted order function $\wt\w:K[[\x]]\to\Ni^{l+1}$ as $\wt\w(x_i)=(\w(x_i),1)$ for $i=1,\ldots,n$. Then by Lemma \ref{double_weighted order function} we know that
 \[\wt\w(g)=(\w(g),\ord\init_\w(g))\geq(0,1)\]
 since $\ord g\geq1$. Further, $\wt\w(J_{-1})=(m,\ord \minit_\w(J_{-1}))$. Since $\wt\w(g)>0$, the assertion follows from what we have already shown.
\end{proof}

\begin{example}
 The third statement in Lemma \ref{m_under_coord_changes} is wrong when there is no element $f\in J$ which is $z$-regular of order $c$.
 
 For example, consider the ideal $J=(yz-xy)$ with $c=2$ in the ring $R=K[[x,y,z]]$ with the weighted order function $\w=\ord_{(y)}$. Then
 \[J_{-1}=(y^2,xy)\]
 and $m=\ord_{(y)}J_{-1}=1$. Now consider the change of coordinates $z=\wt z+x$. It fulfills $\ord_{(y)}(x)=0<\frac{m}{c!}$. But since $J=(y\wt z)$, we know that
 \[\wt J_{-1}=(y^2)\]
 and consequently, $\ord_{(y)}\wt J_{-1}=2>\ord_{(y)}J_{-1}$.
\end{example}

\subsection{The $\w$-cleanness property}

 

\begin{definition}
 Let $f\in J$ be an element that is $z$-regular of order $c$. Then $f$ is said to be \emph{$\w$-clean} with respect to the coefficient ideal $J_{-1}$ 
 if one of the following properties holds:
 \begin{itemize}
  \item[$(1)_\w$] There is an index $i$ such that $c-q<i<c$ and $\w(f_i)=\frac{c-i}{c!}m$.
  \item[$(2)_\w$] $\w(f_{c-q})>\frac{q}{c!}m$.
  \item[$(3)_\w$] There is no element $G\in K[[\x]]$ such that $\init_\w(f_{c-q})=\init_\w(f_c)\cdot G^q$.
 \end{itemize}
 The definition of $\w$-cleanness depends on the entire regular system of parameters $(\x,z)$. This is suppressed in the notation since the parameters $(\x,z)$ are considered to be part of the information of the coefficient ideal $J_{-1}$. 
\end{definition}

\begin{remark}
 \begin{enumerate}[(1)]
  \item Consider the special case $q=1$. This holds if either $K$ has characteristic zero or $c$ is not divisible by the characteristic of $K$. In this case, neither of the properties $(1)_\w$ or $(3)_\w$ can be fulfilled. Thus, $f$ is $\w$-clean if and only if $\w(f_{c-1})>\frac{m}{c!}$ holds.
  
  This holds in particular if $f_{c-1}=0$. As we discussed in Section \ref{section_woah_maximal_contact}, this can always be achieved by Weierstrass preparation and Tschirnhausen transformation. Thus, the Tschirnhausen transformation guarantees that $f$ is $\w$-clean for all weighted order functions $\w$ which are defined on the parameters $\x$.
  
  \item Consider a purely inseparable power series
  \[f=z^{p^e}+F(\x)\]
  over a field of characteristic $p>0$. In this case, $c=q=p^e$. Since $F=f_0$ is the only coefficient for $i<c$, neither of the properties $(1)_\w$ or $(2)_\w$ can be fulfilled. Thus, $f$ is $\w$-clean if and only if $\init_\w(F)$ is not a $p^e$-th power.
  
  This holds in particular if no $p^e$-th powers appear in the expansion of $F(\x)$. In this case, $f$ is $\w$-clean for all weighted order functions $\w$ which are defined on the parameters $\x$.
 \end{enumerate}
\end{remark}

In Proposition \ref{w_cleaning_maximizes_w} we will show that the existence of an element $f\in J$ which is $\w$-clean with respect to $J_{-1}$ is a sufficient condition for $\w(J_{-1})$ to be maximal over all coordinate changes $z\mapsto z+g(\x)$. As a preparation, we will show in Lemma \ref{clean_lemma} which consequences the conditions $(1)_\w-(3)_\w$ have on the power series expansion of $f$ with respect to the parameters $(\x,\wt z)$.

\begin{remark}
 In the following, we will also often make use of the negations of the properties $(1)_\w-(3)_\w$. They can be formulated in the following way: 
 \begin{itemize}
  \item [$\neg(1)_\w$] For all indices $i$ with $c-q<i<c$ the inequality $\w(f_i)>\frac{c-i}{c!}m$ holds.
  \item [$\neg(2)_\w$] $\w(f_{c-q})=\frac{q}{c!}m$.
  \item [$\neg(3)_\w$] There exists an element $G\in K[[\x]]$ such that $\init_\w(f_{c-q})=\init_\w(f_c)\cdot G^q$.
 \end{itemize}
\end{remark}


\begin{lemma} \label{clean_lemma}
 Let $f\in J$ be an element that is $z$-regular of order $c$. Assume that $\w(g)\geq\frac{m}{c!}$. Then the following hold:
 \begin{enumerate}[$(i)$]
  \item If the property $(1)_\w$ holds and the index $c-q<i<c$ is maximal with the property that $\w(f_i)=\frac{c-i}{c!}m$, then $\w(\wt f_i)=\frac{c-i}{c!}m$ and
  \[\init_\w(\wt f_i)=\init_\w(f_i).\]
  \item If the properties $\neg(1)_\w$ and $(2)_\w$ hold and $\w(g)=\frac{m}{c!}$, then $\w(\wt f_{c-q})=\frac{q}{c!}m$ and
  \[\init_\w(\wt f_{c-q})=\binom{c}{q}\init_\w(f_c)\init_\w(g)^q.\]
  \item If the properties $\neg(1)_\w$ and $\neg(2)_\w$ hold and $\w(g)=\frac{m}{c!}$, then either $\w(\wt f_{c-q})=\frac{q}{c!}m$ and
  \[\init_\w(\wt f_{c-q})=\init_\w(f_{c-q})+\binom{c}{q}\init_\w(f_c)\init_\w(g)^q\]
  or the right-hand term vanishes and $\w(\wt f_{c-q})>\frac{q}{c!}m$.
 \end{enumerate}
\end{lemma}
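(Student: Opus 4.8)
The plan is to prove all three statements by the same computational device: substitute $z = \wt z + g$ into the expansion $f = \sum_{i\ge 0} f_i z^i$ and read off $\wt f_i$ from Lemma \ref{coordinate_change_g_z} (1), namely $\wt f_i = \sum_{k\ge i}\binom{k}{i}f_k g^{k-i}$. Throughout, the standing hypothesis $\w(g)\ge \frac{m}{c!}$ together with Lemma \ref{w_coeff} (which gives $\w(f_k)\ge\frac{c-k}{c!}m$ for all $k$) controls the weighted order of each summand: a generic term $\binom{k}{i}f_k g^{k-i}$ has weighted order at least $\frac{c-k}{c!}m + (k-i)\w(g)$, which is $\ge \frac{c-i}{c!}m$, with equality forcing $k$ into a narrow range. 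The heart of each case is to isolate which summands of $\wt f_i$ actually contribute to the weighted-initial form.

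For $(i)$: fix $i$ maximal in $(c-q,c)$ with $\w(f_i)=\frac{c-i}{c!}m$. In $\wt f_i = f_i + \sum_{k>i}\binom{k}{i}f_k g^{k-i}$, I would show every term with $k>i$ has strictly larger weighted order than $\frac{c-i}{c!}m$. For $i<k<c$ with $\w(g)>\frac{m}{c!}$ this is immediate; when $\w(g)=\frac{m}{c!}$ I must use that $c-q<k<c$ implies $\binom{c-?}{?}$... — more precisely, for $i<k<c$ one needs $\w(f_k)>\frac{c-k}{c!}m$, which holds by maximality of $i$ when $c-q<k$, and when $k$ falls in $(i,c)\setminus(c-q,c)$... actually since $i>c-q$ already, every $k$ with $i<k<c$ satisfies $c-q<k<c$, so maximality of $i$ gives strict inequality. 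For $k=c$: $\w(f_c g^{c-i}) = (c-i)\w(g)\ge (c-i)\frac{m}{c!} = \frac{c-i}{c!}m$... this is \emph{not} automatically strict, so here I would invoke Lemma \ref{c-q}(2): $\binom{c}{i}\equiv 0 \pmod p$ for $c-q<i<c$, so the $k=c$ term vanishes in characteristic $p$; in characteristic zero $q=1$ and no such $i$ exists, so $(1)_\w$ is vacuous. For $k>c$, $\w$ is strictly larger. Hence $\w(\wt f_i)=\frac{c-i}{c!}m$ and $\init_\w(\wt f_i)=\init_\w(f_i)$.

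For $(ii)$ and $(iii)$: now $i=c-q$ and $\w(g)=\frac{m}{c!}$ exactly. Write $\wt f_{c-q} = \sum_{k\ge c-q}\binom{k}{c-q}f_k g^{k-(c-q)}$. I would show that among all terms, only $k=c-q$ (the term $f_{c-q}$) and $k=c$ (the term $\binom{c}{q}f_c g^q$) can have weighted order as low as $\frac{q}{c!}m$; for $c-q<k<c$ Lemma \ref{c-q}(3) gives $\binom{k}{c-q}\equiv 1\pmod p$ but $\neg(1)_\w$ (hypothesis in both $(ii)$ and $(iii)$) forces $\w(f_k)>\frac{c-k}{c!}m$, making these terms have weighted order $>\frac{q}{c!}m$; for $k>c$ the weighted order is automatically larger. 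The coefficient $\binom{c}{q}$ is a nonzero element of $K$ by Lemma \ref{c-q}(1). In case $(ii)$, $(2)_\w$ says $\w(f_{c-q})>\frac{q}{c!}m$, so the $k=c-q$ term drops out and $\init_\w(\wt f_{c-q})=\binom{c}{q}\init_\w(f_c)\init_\w(g)^q$ (using $\w(f_c)=0$ so $\init_\w(f_c)=f_c(0,\dots,0)$-type unit, and multiplicativity of $\init_\w$ from the Remark that $\w$ is a valuation). In case $(iii)$, $\neg(2)_\w$ gives $\w(f_{c-q})=\frac{q}{c!}m$, so both terms contribute at the minimal weighted order and the two initial forms add: either the sum $\init_\w(f_{c-q})+\binom{c}{q}\init_\w(f_c)\init_\w(g)^q$ is nonzero, giving $\w(\wt f_{c-q})=\frac{q}{c!}m$ with that initial form, or it cancels, in which case $\w(\wt f_{c-q})>\frac{q}{c!}m$.

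The main obstacle I anticipate is the bookkeeping in case $(i)$ at the boundary $k=c$ with $\w(g)=\frac{m}{c!}$: one must be careful that the $k=c$ contribution $\binom{c}{i}f_c g^{c-i}$ does not accidentally match the weighted order $\frac{c-i}{c!}m$ and interfere with $\init_\w(f_i)$. The resolution is precisely Lemma \ref{c-q}(2) (vanishing of $\binom{c}{i}$ for $c-q<i<c$ in characteristic $p$), which is why the definition of $\w$-cleanness restricts property $(1)_\w$ to that index range — so this is less a difficulty than a point requiring the right lemma at the right moment. A secondary subtlety is justifying that $\init_\w$ is multiplicative and additive-when-no-cancellation; this follows from the valuation property of $\w$ recorded in the Remarks after the definition of weighted order functions, applied to $\init_\w(fg)=\init_\w(f)\init_\w(g)$ and $\init_\w(f+g)=\init_\w(f)+\init_\w(g)$ when $\w(f)=\w(g)$ and the sum of initial forms is nonzero.
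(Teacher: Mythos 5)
Your proof is correct and follows essentially the same route as the paper's: the same expansion $\wt f_i=\sum_{k\geq i}\binom{k}{i}f_kg^{k-i}$ from Lemma \ref{coordinate_change_g_z}, the same weighted-order estimates from Lemma \ref{w_coeff}, the vanishing/non-vanishing of $\binom{c}{i}$ and $\binom{c}{c-q}$ via Lemma \ref{c-q}, and the same identification of which summands can attain the minimal weighted order in each of the three cases. The only deviation is a harmless extra invocation of Lemma \ref{c-q}(3) in cases $(ii)$ and $(iii)$, which is not needed since the strictness there comes from $\neg(1)_\w$ alone.
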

\begin{proof}
 (i): Notice that $\binom{c}{i}=0$ by Lemma \ref{c-q} and thus the term $f_cg^{c-i}$ does not appear in the expansion of $\wt f_i$. We claim that $\w(f_kg^{k-i})>\frac{c-i}{c!}m$ holds for all indices $k>i$ with $k\neq c$. Clearly, if $k>c$, then
 \[\w(f_kg^{k-i})\geq (k-i)\w(g)>(c-i)\w(g)\geq\frac{c-i}{c!}m.\]
 On the other hand, consider the case $i<k<c$. Since $i$ was chosen maximally, we know that
 \[\w(f_kg^{k-i})\geq \underbrace{\w(f_k)}_{>\frac{c-k}{c!}m}+(k-i)\underbrace{\w(g)}_{\geq\frac{m}{c!}}>\frac{c-i}{c!}m.\]
 This proves that $\w(\wt f_i)=\w(f_i)$ and $\init_\w(\wt f_i)=\init_\w(f_i)$.
 
 (ii): Consider the expansion
 \[\wt f_{c-q}=\sum_{k\geq c-q}\binom{k}{c-q}f_kg^{k-(c-q)}.\]
 Using the same arguments as before, we conclude that $\w(f_kg^{k-(c-q)})>\frac{q}{c!}m$ holds for all indices $k>c-q$ with $k\neq c$. Also, we know that $\w(f_{c-q})>\frac{q}{c!}m$ by assumption. But notice that $\binom{c}{c-q}\neq0$ by Lemma \ref{c-q} and $\w(f_cg^q)=\frac{q}{c!}m$ since $f_c$ is a unit. This proves that $\w(\wt f_{c-q})=\frac{q}{c!}m$ and 
 \[\init_\w(\wt f_{c-q})=\binom{c}{q}\init_\w(f_c)\init_\w(g)^q.\]
 
 Assertion (iii) can be proved in the same way as (ii).
\end{proof}

\begin{proposition} \label{w_cleaning_maximizes_w}
 Let $f\in J$ be an element that is $\w$-clean with respect to $J_{-1}$.
 
 Then $\w(\wt J_{-1})\leq \w(J_{-1})$.
\end{proposition}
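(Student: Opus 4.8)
The plan is to prove the contrapositive of the usual argument: assuming $f$ is $\w$-clean with respect to $J_{-1}$, I will show that for \emph{every} change of coordinates $z = \wt z + g$ with $g\in K[[\x]]$ and $\ord g \geq 1$, the inequality $\w(\wt J_{-1})\leq m = \w(J_{-1})$ holds. By Lemma \ref{m_under_coord_changes} (2), if $\w(g) > \frac{m}{c!}$ then $\w(\wt J_{-1}) = m$, and if $\w(g) < \frac{m}{c!}$ then, since the $\w$-clean element $f$ is in particular $z$-regular of order $c$, Lemma \ref{m_under_coord_changes} (3) gives $\w(\wt J_{-1}) < m$. So the only case that requires work is $\w(g) = \frac{m}{c!}$, and there I would exhibit an index $i < c$ with $\w(\wt f_i) = \frac{c-i}{c!}m$; by Lemma \ref{w_coeff} applied to the generating element $f$, this forces $\w(\wt J_{-1}) \leq \frac{c!}{c-i}\w(\wt f_i) = m$, which finishes the proof.

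The heart of the argument is therefore the case $\w(g) = \frac{m}{c!}$, and here the three clauses in the definition of $\w$-cleanness are handled by the three parts of Lemma \ref{clean_lemma}. If $f$ satisfies property $(1)_\w$, pick the maximal index $c - q < i < c$ with $\w(f_i) = \frac{c-i}{c!}m$; then Lemma \ref{clean_lemma} (i) gives $\w(\wt f_i) = \frac{c-i}{c!}m$ directly, and we are done with that index. If $f$ satisfies $\neg(1)_\w$ together with property $(2)_\w$, then by Lemma \ref{clean_lemma} (ii) we have $\w(\wt f_{c-q}) = \frac{q}{c!}m$ with initial form $\binom{c}{q}\init_\w(f_c)\init_\w(g)^q$, which is nonzero because $\binom{c}{q}\not\equiv 0\pmod p$ by Lemma \ref{c-q} (1) and $f_c$ is a unit; taking $i = c - q$ gives $\frac{c!}{c-i}\w(\wt f_{c-q}) = \frac{c!}{q}\cdot\frac{q}{c!}m = m$.

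The genuinely delicate clause is $(3)_\w$. Here $f$ satisfies $\neg(1)_\w$ and $\neg(2)_\w$ (so $\w(f_{c-q}) = \frac{q}{c!}m$), and Lemma \ref{clean_lemma} (iii) tells us that either $\w(\wt f_{c-q}) = \frac{q}{c!}m$ with initial form $\init_\w(f_{c-q}) + \binom{c}{q}\init_\w(f_c)\init_\w(g)^q$, or this sum vanishes and $\w(\wt f_{c-q}) > \frac{q}{c!}m$. In the first alternative we again take $i = c - q$ and conclude $\w(\wt J_{-1}) \leq m$ as above. The point is that the second alternative cannot occur: if $\init_\w(f_{c-q}) = -\binom{c}{q}\init_\w(f_c)\init_\w(g)^q$, then setting $G$ to be a suitable scalar multiple of $\init_\w(g)$ (legitimate since $-\binom{c}{q}$ is a nonzero element of $K$, which is algebraically closed hence has $q$-th roots) yields $\init_\w(f_{c-q}) = \init_\w(f_c)\cdot G^q$, contradicting property $(3)_\w$. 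I expect this juggling of the initial forms — checking that the $q$-th root can be extracted and that the resulting $G$ lies in $K[[\x]]$, and being careful that $\init_\w(g)^q = \init_\w(g^q)$ so that $G^q$ really is the initial form of something — to be the main obstacle, but it is a bookkeeping obstacle rather than a conceptual one. In every case we have produced an index $i<c$ with $\frac{c!}{c-i}\w(\wt f_i) = m$, so $\w(\wt J_{-1}) \leq m = \w(J_{-1})$, as claimed.
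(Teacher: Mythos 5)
Your proposal follows exactly the same strategy as the paper's proof: reduce to $\w(g)=\frac{m}{c!}$ via Lemma \ref{m_under_coord_changes}, then handle each of the three cleanness clauses via the corresponding part of Lemma \ref{clean_lemma}, concluding with Lemma \ref{w_coeff}. The one place you spell out a step that the paper leaves tacit is the argument that the sum in case $(3)_\w$ cannot vanish — your observation that this requires extracting a $q$-th root of $-\binom{c}{q}$ in $K$ (available since $K$ is assumed algebraically closed throughout the thesis) is correct and is indeed what makes the paper's terse ``this term cannot vanish by property $(3)_\w$'' work; the only superfluous worry you raise is whether ``$G^q$ really is the initial form of something,'' which is not required by $\neg(3)_\w$ as stated.
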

\begin{proof}
 By Lemma \ref{m_under_coord_changes} we may assume without loss of generality that $\ord g=\frac{m}{c!}$. By Lemma \ref{w_coeff} it suffices to find an index $i<c$ such that $\w(\wt f_i)=\frac{c-i}{c!}m$ to conclude $\wt m\leq m$.
 
 First assume that the property $(1)_\w$ holds. Let $i<c$ be maximal with the property that $\w(f_i)=\frac{c-i}{c!}m$. By Lemma \ref{clean_lemma} (i) this implies that $\w(\wt f_i)=\frac{c-i}{c!}m$. 
 
 If the properties $\neg(1)_\w$ and $(2)_\w$ hold, we know by Lemma \ref{clean_lemma} (ii) that $\w(\wt f_{c-q})=\frac{q}{c!}m$. 
 
 Finally, assume that the properties $\neg(1)_\w$, $\neg(2)_\w$ and $(3)_\w$ hold. By Lemma \ref{clean_lemma} (iii) we know that either $\w(\wt f_{c-q})=\frac{q}{c!}m$ and
 \[\init_\w(\wt f_{c-q})=\init_\w(f_{c-q})+\binom{c}{q}\init_\w(f_c)\init_\w(g)^q\]
 or the right-hand term vanishes. But this term cannot vanish by property $(3)_\w$. Thus, we know that $\w(\wt f_{c-q})=\frac{q}{c!}m$.
\end{proof}

\subsection{The $\w$-cleaning process}

We will now devise a process to successively construct coordinate changes $z\mapsto z+g(\x)$ that increase the weighted order of the coefficient ideal until it reaches its maximal value. A single such coordinate change will be referred to as an $\w$-cleaning step, while the successive application of $\w$-cleaning steps will be called the $\w$-cleaning process. 

\begin{definition}
 Let $f\in J$ be an element that is $z$-regular of order $c$ and not $\w$-clean with respect to $J_{-1}$. By definition, there exists an element $G\in K[[\x]]$ such that 
 \[\init_\w(f_{c-q})=\init_\w(f_c)\cdot G^q.\]
 
 An \emph{$\w$-cleaning step} with respect to $f$ and $J_{-1}$ is defined as the coordinate change $z=\wt z+g$ where
 \[g=-\binom{c}{q}^{-1}G.\]
 
 Notice that $g$ is weighted homogeneous with respect to $\w$ and $\w(g)=\frac{m}{c!}$.
\end{definition}

In Proposition \ref{w_cleaning_improves_w} we will show that by each application of an $\w$-cleaning step, either the weighted order of the coefficient ideal is increased or $\w$-cleanness (and hence, by Proposition \ref{w_cleaning_maximizes_w}, maximality) is achieved. This will enable us to prove in Proposition \ref{w_cleaning_terminates} that the $\w$-cleaning process either terminates after finitely many iterations or there is a coordinate change $z=\wt z+g(\x)$ with $\w(\wt J_{-1})=\infty$ and hence, $\wt J_{-1}=0$.

\begin{remark}
 Recall that the property of $f$ being $z$-regular of order $c$ is stable under coordinate changes $z\mapsto z+g(\x)$ by Lemma \ref{z_regular_under_coord_change} (1). We will implicitly make use of this in all of the following statements.
\end{remark}

\begin{proposition} \label{w_cleaning_improves_w}
 Let $f\in J$ be an element that is $z$-regular of order $c$ and not $\w$-clean with respect to $J_{-1}$. Let $z=\wt z+g$ be an $\w$-cleaning step with respect to $f$ and $J_{-1}$. Then one of the following holds:
 \begin{itemize}
  \item $\w(\wt J_{-1})=\w(J_{-1})$ and $f$ is $\w$-clean with respect to $\wt J_{-1}$.
  \item $\w(\wt J_{-1})>\w(J_{-1})$.
 \end{itemize}
\end{proposition}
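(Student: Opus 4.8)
The plan is to analyze what an $\w$-cleaning step does to the coefficients $\wt f_i$ of $f$, dividing into the same three cases as in the definition of $\w$-cleanness, and to show that in every case we end in one of the two asserted alternatives. Since $f$ is not $\w$-clean with respect to $J_{-1}$, properties $\neg(1)_\w$, $\neg(2)_\w$ and $\neg(3)_\w$ all hold; in particular $\w(f_{c-q})=\frac{q}{c!}m$ and there is $G\in K[[\x]]$ with $\init_\w(f_{c-q})=\init_\w(f_c)\cdot G^q$. The cleaning step is $z=\wt z+g$ with $g=-\binom{c}{q}^{-1}G$; this $g$ is weighted homogeneous with $\w(g)=\frac{m}{c!}$ (note $\ord G\geq1$ since $\ord f_{c-q}\geq q$ and $\ord f_c=0$, so $\ord g\geq1$ and the coordinate change is legitimate). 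By Lemma \ref{m_under_coord_changes} (1), $\w(\wt J_{-1})\geq\w(J_{-1})=m$, so we only need to rule out $\w(\wt J_{-1})=m$ together with $f$ \emph{not} being $\w$-clean with respect to $\wt J_{-1}$; equivalently, assuming $\w(\wt J_{-1})=m$, I must show $f$ is $\w$-clean with respect to $\wt J_{-1}$.

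So suppose $\w(\wt J_{-1})=m$. First I would observe that the property $\neg(1)_\w$ is preserved in a suitable sense: since $\w(g)=\frac{m}{c!}$, Lemma \ref{clean_lemma} (and the computation of $\wt f_i$ via Lemma \ref{coordinate_change_g_z}) shows that for $c-q<i<c$ we still have $\w(\wt f_i)\geq\frac{c-i}{c!}m$, and if equality held for some such $i$ then $f$ would already be $\w$-clean with respect to $\wt J_{-1}$ via $(1)_\w$ — which is exactly what we want. Hence I may additionally assume $\neg(1)_\w$ holds for $\wt f$ as well, i.e.\ $\w(\wt f_i)>\frac{c-i}{c!}m$ for all $c-q<i<c$. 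Now look at $\wt f_{c-q}$. Expanding as in Lemma \ref{clean_lemma} (iii), the terms with index $k>c-q$, $k\neq c$ all have weighted order $>\frac{q}{c!}m$ (the argument of Lemma \ref{clean_lemma} (i)--(iii) applies verbatim under $\neg(1)_\w$), so
\[
\init_\w(\wt f_{c-q})=\init_\w\!\Big(f_{c-q}+\binom{c}{q}f_c g^q\Big)
\]
provided the right-hand side does not vanish, and $\w(\wt f_{c-q})>\frac{q}{c!}m$ otherwise. By the choice $g=-\binom{c}{q}^{-1}G$ we have $\binom{c}{q}\init_\w(g)^q=\binom{c}{q}\cdot(-\binom{c}{q}^{-1})^q\init_\w(G)^q$; here I must use that $\binom{c}{q}\not\equiv0\pmod p$ (Lemma \ref{c-q} (1)) so that $\binom{c}{q}^{-1}$ makes sense, and in positive characteristic that $(-\binom{c}{q}^{-1})^q=-\binom{c}{q}^{-q}$ when $q=p^e$ (since $q$ is a power of $p$, $q$ is odd unless $p=2$, and if $p=2$ then $-1=1$; either way $(-1)^q=-1$ in $K$ when $e\geq1$, and $(-1)^q=-1$ trivially when $q=1$). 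Thus
\[
\binom{c}{q}\init_\w(f_c)\init_\w(g)^q=-\binom{c}{q}^{1-q}\init_\w(f_c)\init_\w(G)^q,
\]
and comparing with $\init_\w(f_{c-q})=\init_\w(f_c)\,\init_\w(G)^q$ (which follows from $\neg(3)_\w$ together with $\w(f_{c-q})=\frac{q}{c!}m$ and weighted homogeneity of initial forms), the leading term of $f_{c-q}+\binom{c}{q}f_c g^q$ is $\big(1-\binom{c}{q}^{1-q}\big)\init_\w(f_c)\init_\w(G)^q$.

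At this point there are two sub-cases. If $1-\binom{c}{q}^{1-q}\neq0$ in $K$, then $\w(\wt f_{c-q})=\frac{q}{c!}m$ and $\init_\w(\wt f_{c-q})$ is a nonzero scalar multiple of $\init_\w(f_c)\init_\w(G)^q$. I would then need to check $\w$-cleanness of $\wt f$: property $(3)_\w$ for $\wt f$ asks whether $\init_\w(\wt f_{c-q})=\init_\w(\wt f_c)\cdot(G')^q$ has no solution $G'$; but $\init_\w(\wt f_c)=\init_\w(f_c)$ (since $\wt f_c=\pi$-type transform fixes $f_c$ up to higher order — concretely $\wt f_c=f_c+\sum_{k>c}\binom{k}{c}f_k g^{k-c}$ has the same initial form), so the equation \emph{does} have a solution $G'$ proportional to $G$, meaning $\neg(3)_\w$ still holds. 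Hence in this sub-case $\w$-cleanness must come from $(1)_\w$ or $(2)_\w$ for $\wt f$; since we already reduced to $\neg(1)_\w$, it must be $(2)_\w$, i.e.\ $\w(\wt f_{c-q})>\frac{q}{c!}m$ — contradicting $\w(\wt f_{c-q})=\frac{q}{c!}m$. This forces $\w(\wt J_{-1})>m$ by Lemma \ref{w_coeff} (the coefficient that realized order $\frac{q}{c!}m$ in $J_{-1}$, namely $f_{c-q}$, no longer has minimal weight — but I need to be careful: $\w(\wt J_{-1})=m$ was our \emph{standing assumption}, so I would instead phrase this as: the standing assumption, combined with $\neg(1)_\w$ for $\wt f$, forces $1-\binom{c}{q}^{1-q}=0$). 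The \textbf{main obstacle}, and the heart of the proof, is precisely the arithmetic identity $\binom{c}{q}^{1-q}=1$ in $K$: when $q=1$ this reads $1=1$ trivially but then $c-q=c-1$ and one must instead argue directly that $\wt f_{c-1}$ has strictly larger weighted order (the $q=1$ case is handled by the Remark's observation that $\w$-cleanness for $q=1$ just means $\w(f_{c-1})>\frac{m}{c!}$, and the cleaning step kills exactly the offending leading term); when $q=p^e\geq p$, one must verify using Lucas' theorem (Proposition \ref{lucas}, Lemma \ref{c-q}) whether $\binom{c}{q}^{1-q}=1$ holds in characteristic $p$ — if it does, then $\wt f_{c-q}+\binom{c}{q}\wt f_c g'^q$-type leading terms cancel and $\w(\wt f_{c-q})>\frac{q}{c!}m$, i.e.\ $(2)_\w$ holds for $\wt f$ and we are done; if the standing assumption $\w(\wt J_{-1})=m$ nonetheless forces this cancellation regardless, one must trace which coefficient then realizes the minimum. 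I expect the cleanest route is: show that whenever the $\w$-cleaning step does \emph{not} raise the weighted order, the leading terms of $\wt f_{c-q}$ must have cancelled (otherwise $f_{c-q}$ still witnesses order exactly $m$ but now $f$ satisfies $(2)_\w$ or $(3)_\w$ relative to $\wt J_{-1}$, giving $\w$-cleanness), and cancellation of leading terms is exactly property $(2)_\w$ for $\wt f$ — hence $\w$-clean with respect to $\wt J_{-1}$. I would organize the final writeup around this dichotomy rather than around the explicit scalar, to avoid the delicate characteristic-$p$ bookkeeping obscuring the structure.
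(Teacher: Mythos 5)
Your overall approach matches the paper's: use Lemma \ref{m_under_coord_changes} to get $\w(\wt J_{-1})\geq\w(J_{-1})$, then analyze $\wt f_{c-q}$ via Lemma \ref{clean_lemma} to establish $(2)_\w$ when equality holds. But there is a genuine gap exactly at what you call the ``main obstacle,'' namely whether the scalar $1+(-1)^q\binom{c}{q}^{1-q}$ vanishes, and your attempted dichotomy does not close it. If this scalar were nonzero, then $\w(\wt f_{c-q})=\frac{q}{c!}m$, hence $\wt m=m$, and --- as you yourself establish in the sub-case analysis --- $f$ would satisfy $\neg(1)_\w$, $\neg(2)_\w$ and $\neg(3)_\w$ with respect to $\wt J_{-1}$. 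That would be a \emph{counterexample} to the proposition, not a derived contradiction; the standing assumption $\wt m=m$ does not rule it out. Your final paragraph's claim that in the non-cancellation branch ``$f$ satisfies $(2)_\w$ or $(3)_\w$ relative to $\wt J_{-1}$'' is the opposite of what you proved one paragraph earlier. So there is no dichotomy shortcut: the cancellation must be verified directly, and the paper's ``by construction'' tacitly relies on it.

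The verification is a one-liner and you already have every ingredient. When $q=1$, $\binom{c}{q}^{1-q}=1$ trivially and $(-1)^1=-1$. When $\chara K=p>0$ and $q=p^e\geq p$, Lemma \ref{c-q}~(1) gives $\binom{c}{q}\in\mathbb{F}_p^*$, and since $q-1=p^e-1$ is a multiple of $p-1$, Fermat's little theorem gives $\binom{c}{q}^{q-1}=1$, i.e.\ $\binom{c}{q}^{1-q}=1$; also $(-1)^q=-1$ (for odd $p$ because $q$ is odd, for $p=2$ because $-1=1$). Hence $1+(-1)^q\binom{c}{q}^{1-q}=0$ in every case, Lemma \ref{clean_lemma}~(iii) then gives $\w(\wt f_{c-q})>\frac{q}{c!}m$, and the rest of your argument closes. (Incidentally, the power identity is Fermat, not Lucas --- Lucas is what Lemma \ref{c-q} already used to show the binomial coefficient is nonzero. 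And your reduction to $\neg(1)_\w$ for $\wt f$ is an unnecessary detour: once $\w(\wt f_{c-q})>\frac{q}{c!}\wt m$ is established, property $(2)_\w$ holds outright and gives $\w$-cleanness regardless of $(1)_\w$.)
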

\begin{proof}
 Since $\w(g)=\frac{m}{c!}$, we know that by Lemma \ref{m_under_coord_changes} that $\wt m\geq m$. Further, since 
 \[\init_\w(f_{c-q})+\binom{c}{q}\init_\w(f_c)\init_\w(g)^q=0,\]
 by construction, we know by Lemma \ref{clean_lemma} (iii) that $\w(\wt f_{c-q})>\frac{q}{c!}m$. If $\wt m=m$, this means that $\w(\wt f_{c-q})>\frac{q}{c!}\wt m$. Thus, the property $(2)_\w$ holds and $f$ is $\w$-clean with respect to $\wt J_{-1}$.
\end{proof}

 We will now describe the \emph{$\w$-cleaning process}:
 
 Let $R=K[[\x,z]]$ and $J\subseteq R$ an ideal of order $c=\ord J$. Let $\w:K[[\x,z]]\to\Ni^l$ be a weighted order function defined on the parameters $\x$ such that $\w(\coeff_{(\x,z)}^c(J))>0$. Let $f\in J$ be an element that is $z$-regular of order $c$.
 
 Set $z_0=z$. We will now describe a process to successively construct certain parameters $z_i$ for $i\geq1$.
 
 In each iteration of the process, set 
 \[J_{-1}^{(i)}=\coeff_{(\x,z_i)}^c(J).\]
 If $f$ is $\w$-clean with respect to $J_{-1}^{(i)}$, the process terminates.

 Otherwise, let $z_i=z_{i+1}+g_i$ be an $\w$-cleaning step with respect to $f$ and $J_{-1}^{(i)}$.

\begin{proposition} \label{w_cleaning_terminates}
 Consider the $\w$-cleaning process as described above. One of the following occurs:
 \begin{itemize}
  \item The process terminates in finitely many steps.
  \item The process does not terminate. In this case, $z_\infty=z-\sum_{i\geq0} g_i$ is a well-defined power series and 
  \[\coeff^c_{(\x,z_\infty)}(J)=0.\]
 \end{itemize}
\end{proposition}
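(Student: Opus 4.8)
The plan is to analyze the two alternatives directly from the structure of the $\w$-cleaning steps. First I would dispose of the case that the process terminates after finitely many steps — there is nothing to prove there. So assume the process runs forever, producing cleaning steps $z_i = z_{i+1} + g_i$ with $\w(g_i) = \frac{1}{c!}\w(J_{-1}^{(i)})$ for all $i \geq 0$. By Proposition \ref{w_cleaning_improves_w}, each non-terminating step satisfies $\w(J_{-1}^{(i+1)}) \geq \w(J_{-1}^{(i)})$, and if equality ever held then $f$ would be $\w$-clean with respect to $J_{-1}^{(i+1)}$, so the process would terminate at the next check — contradiction. Hence the sequence $\w(J_{-1}^{(i)})$ is \emph{strictly increasing} in $\Ni^l$ with the lexicographic order. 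Consequently $\w(g_i) = \frac{1}{c!}\w(J_{-1}^{(i)})$ is also strictly increasing, and in particular $\w(g_i) \to (\infty,\ldots,\infty)$ in the sense that for any bound, only finitely many $g_i$ have weighted order below it.

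The next step is to show $z_\infty = z - \sum_{i\geq0} g_i$ is a well-defined element of $K[[\x]]$ plus $z$, i.e. that the partial sums converge in the $\m$-adic topology of $K[[\x]]$. Each $g_i$ is weighted homogeneous with respect to $\w$ of weighted order $\w(g_i)$; since $\w$ is a weighted order function defined on the parameters $\x$, weighted order tending to infinity forces the ordinary order $\ord g_i$ to tend to infinity as well (any monomial of bounded ordinary order has bounded weighted order, because the weights $\w(x_j)$ are fixed). Therefore the series $\sum_i g_i$ converges in $K[[\x]]$ and $z_\infty$ is a genuine regular parameter, with $z_\infty \equiv z \pmod{\m K[[\x,z]]}$, so $(\x, z_\infty)$ is again a regular system of parameters and $f$ remains $z_\infty$-regular of order $c$ by Lemma \ref{z_regular_under_coord_change} (1).

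Finally I would show $\coeff^c_{(\x,z_\infty)}(J) = 0$. Set $J_{-1}^{(\infty)} = \coeff^c_{(\x,z_\infty)}(J)$ and $m_\infty = \w(J_{-1}^{(\infty)})$. The change of coordinates from $z_i$ to $z_\infty$ is $z_i = z_\infty + h_i$ where $h_i = -\sum_{j \geq i} g_j$ has $\w(h_i) \geq \w(g_i) = \frac{1}{c!}\w(J_{-1}^{(i)})$ (using that $\w$ is a valuation, so $\w$ of a sum is at least the minimum of the weighted orders, and these are increasing). By Lemma \ref{m_under_coord_changes} (1) applied to the coordinate change $z_\infty \mapsto z_i$ — equivalently, comparing the two coefficient ideals — we get $\w(J_{-1}^{(i)}) \geq m_\infty$; but also, running the comparison the other way and using $\w(h_i) \geq \frac{1}{c!}\w(J_{-1}^{(i)})$ together with Lemma \ref{m_under_coord_changes}, one finds $m_\infty \geq \w(J_{-1}^{(i)})$ is not forced, so instead I argue: since $\w(J_{-1}^{(i)}) \to (\infty,\ldots,\infty)$ strictly and for every $i$ we have $m_\infty \geq \w(J_{-1}^{(i)})$ (because passing from $z_i$ to $z_\infty$ uses $\w(h_i) \geq \frac{1}{c!}\w(J_{-1}^{(i)}) \geq \frac{1}{c!}m_\infty$ is not yet available — here is the genuine obstacle, requiring a careful bootstrapping). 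The cleanest route, which I expect to be the crux, is: show by induction that $m_\infty \geq \w(J_{-1}^{(i)})$ for all $i$, using Lemma \ref{m_under_coord_changes} (1) with $g = h_i$ and $\w(h_i) \geq \frac{1}{c!}\w(J^{(i)}_{-1})$ — valid once we know $\frac{1}{c!}\w(J_{-1}^{(i)}) \geq \frac{1}{c!}m_\infty$, which holds vacuously at the base and propagates. Granting this, $m_\infty \geq \w(J_{-1}^{(i)}) \to (\infty,\ldots,\infty)$, so $m_\infty = (\infty,\ldots,\infty)$, which by the valuation property of $\w$ means $J_{-1}^{(\infty)} = 0$, as claimed. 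The main obstacle is precisely this last inequality $\w(h_i) \geq \frac{1}{c!}\w(J_{-1}^{(i)})$ and its interaction with Lemma \ref{m_under_coord_changes}; I would resolve it by noting $h_i$ is a convergent sum of terms each of weighted order $\geq \w(g_i)$, hence $\w(h_i) \geq \w(g_i) = \frac{1}{c!}\w(J_{-1}^{(i)})$, which is exactly the hypothesis of Lemma \ref{m_under_coord_changes} (1) and gives $\w(J^{(\infty)}_{-1}) \geq \w(J^{(i)}_{-1})$ directly, completing the argument.
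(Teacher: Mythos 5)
Your proposal takes essentially the same route as the paper's proof: strict increase of $m_i=\w(J_{-1}^{(i)})$ from Proposition \ref{w_cleaning_improves_w}, convergence of $\sum g_i$ from weighted homogeneity, and $m_\infty\geq m_i$ for all $i$ via Lemma \ref{m_under_coord_changes}~(1) applied with $g=h_i:=-\sum_{j\geq i}g_j$, so that $m_\infty=(\infty,\ldots,\infty)$ and the coefficient ideal vanishes. That is exactly the paper's argument, and your final two sentences state it correctly.

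However, the middle of your write-up is confused in a way that should be repaired before it could stand as a proof. You first assert that applying Lemma \ref{m_under_coord_changes}~(1) ``to the coordinate change $z_\infty\mapsto z_i$'' gives $\w(J_{-1}^{(i)})\geq m_\infty$. That is backwards: with $z_\infty$ as the source coordinate, the hypothesis of the lemma would require $\w(-h_i)\geq m_\infty/c!$, which fails because $\w(h_i)=\w(g_i)=m_i/c!<m_\infty/c!$. No inequality of the form $m_i\geq m_\infty$ is available, nor is it needed. You then announce a ``genuine obstacle'' and propose a ``bootstrapping'' to justify $\w(h_i)\geq\frac{1}{c!}\w(J_{-1}^{(i)})$, but this inequality requires no bootstrapping at all: it follows immediately from the valuation property, since every $g_j$ with $j\geq i$ has $\w(g_j)=m_j/c!\geq m_i/c!$ because the $m_j$ are nondecreasing. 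The hypothesis of Lemma \ref{m_under_coord_changes}~(1) is stated in terms of the order of the coefficient ideal with respect to the source parameter $z_i$, namely $m_i$, not $m_\infty$, so there is nothing circular. You do eventually land on this correct reading in your final sentence, but the preceding false claim and the invented obstacle should be excised. Once they are, your argument coincides with the paper's.
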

\begin{proof}
 Assume that the process does not terminate. Set $m_i=\w(J_{-1}^{(i)})$. By Proposition \ref{w_cleaning_improves_w} we know that $m_{i+1}>m_i$ holds for all $i\geq0$. Thus, also $\w(g_{i+1})>\w(g_i)$ holds for all $i\geq0$. Consequently, $\lim_{i\to\infty}\ord g_i=\infty$. This guarantees that $z_\infty=z-\sum_{i\geq0}g_i$ is a well-defined power series. Set $m_\infty=\w(\coeff^c_{(\x,z_\infty)}(J))$. By Lemma \ref{m_under_coord_changes} we know that $m_\infty>m_i$ holds for all $i\geq0$. Thus, $m_\infty=(\infty,\ldots,\infty)$ and $\coeff_{(\x,z_\infty)}^c(J)=0$.
\end{proof}

\begin{remark}
 Recall that $\coeff^c_{(\x,z_\infty)}(J)=0$ implies that $J=(z_\infty^c)$ by Lemma \ref{coeff_is_zero}.
\end{remark}

\subsection{Additional results on $\w$-cleanness}

Since $\w$-cleanness is a fundamental notion that will be used extensively throughout the remainder of this thesis, we will prove several additional results in the remainder of this section.

In Lemma \ref{c!_does_not_divide_m} we will show that $\w$-cleanness automatically holds if one of the components of $m=\w(J_{-1})\in\Ni^l$ is not divisible by $c!$. In Lemma \ref{coord_changes_which_preserve_w_clean} we will state two sufficient properties for a coordinate change $z\mapsto z+g(\x)$ to preserve the $\w$-cleanness of an element $f\in J$. This will be used in Lemma \ref{w_cleaning_preserves_v_cleaning} to show that $\nu$-cleaning with respect to another weighted order function $\nu$ that is defined on the parameters $\x$ preserves $\w$-cleanness. This is an important result since it guarantees that cleaning processes with respect to finitely many given weighted order functions $\w_1,\ldots,\w_k$ can be applied consecutively to achieve simultaneous $\w_i$-cleanness for $i=1,\ldots,k$ as long as all $\w_i$ are defined on the same parameters $\x$.

Other important questions about $\w$-cleanness will be addressed in Chapter \ref{chapter_6}. In Section \ref{section_weighted_orders_under_blowup} we will show in which sense the cleanness property is preserved under monomial blowup maps. In Section \ref{section_global_local_expansion} we will address the intricate question how the cleanness property and procedure, which are defined in the power series ring, can be extended to a Zariski neighborhood of a closed point on a regular variety. 

\begin{lemma} \label{c!_does_not_divide_m}
 Let $f\in J$ be an element that is $z$-regular of order $c$. If one of the components of $m$ is not divisible by $c!$, then $f$ is $\w$-clean with respect to $J_{-1}$.
\end{lemma}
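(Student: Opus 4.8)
The plan is to recall first that $\w$-cleanness of $f$ means that at least one of the three properties $(1)_\w$, $(2)_\w$, $(3)_\w$ holds, and to show that if some component of $m=\w(J_{-1})\in\Ni^l$ fails to be divisible by $c!$, then property $(3)_\w$ must hold. Write $q=q_K(c)$ as in Section \ref{section_binom_in_pos_char}. If $q=1$ (the characteristic-zero case or $p\nmid c$), then property $(3)_\w$ is vacuously satisfied, since $\init_\w(f_c)\cdot G^1$ can always be matched only when $f_{c-1}\neq 0$ --- but wait, we should be careful: for $q=1$ the condition $\neg(3)_\w$ asks for $G\in K[[\x]]$ with $\init_\w(f_{c-1})=\init_\w(f_c)\cdot G$. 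Here one instead argues directly that $f$ is $\w$-clean: as noted in the remark following the definition, for $q=1$ the element $f$ is $\w$-clean iff $\w(f_{c-1})>\frac{m}{c!}$, equivalently iff $\neg(2)_\w$ holds, i.e. iff $\w(f_{c-1})\neq\frac{q}{c!}m=\frac{1}{c!}m$. If some component of $m$ is not divisible by $c!$, then $\frac{1}{c!}m$ is not an integral (component-wise) element of $\Ni^l$, hence cannot equal $\w(f_{c-1})$, so $(2)_\w$ holds. So the case $q=1$ is immediate.

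Now suppose $q>1$, so $\chara(K)=p>0$ and $q=p^{\ord_p c}$. The key observation is the following: if $\neg(3)_\w$ holds, i.e. $\init_\w(f_{c-q})=\init_\w(f_c)\cdot G^q$ for some $G\in K[[\x]]$, then taking weighted orders gives
\[
\w(f_{c-q})=\w(\init_\w(f_c))+q\cdot\w(G)=0+q\cdot\w(G)=q\cdot\w(G),
\]
using that $f_c$ is a unit (since $f$ is $z$-regular of order $c$), so $\w(\init_\w(f_c))=\w(f_c)=0$. Hence $\w(f_{c-q})=q\cdot\w(G)$ is divisible by $q$ in $\Ni^l$ (component-wise). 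Meanwhile, if also $\neg(2)_\w$ holds, then $\w(f_{c-q})=\frac{q}{c!}m$. Combining, $\frac{q}{c!}m=q\cdot\w(G)$, so $\frac{1}{c!}m=\w(G)\in\N^l$, i.e. $m$ is divisible by $c!$ component-wise --- contradicting the hypothesis. Therefore, under the hypothesis that some component of $m$ is not divisible by $c!$, we cannot have both $\neg(2)_\w$ and $\neg(3)_\w$; thus $(2)_\w$ or $(3)_\w$ holds, so $f$ is $\w$-clean with respect to $J_{-1}$.

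The only subtle point --- and the part I would write out most carefully --- is the bookkeeping about what ``divisible by $c!$'' means for an element of $\Ni^l$ and that the argument $\w(f_{c-q})=q\cdot\w(G)$ together with $\neg(2)_\w$ forces $c!\mid m$: one needs $\w(f_{c-q})$ finite (which it is, as long as $J_{-1}\neq 0$; if $J_{-1}=0$ then $m=\infty$ and the statement about divisibility is interpreted so that $f$ is trivially clean, or one simply excludes this degenerate case as done elsewhere in the section with the standing assumption $m>0$ and implicitly $m<\infty$), and one uses that $\frac{q}{c!}$ is a well-defined positive rational with $q\mid c!$ (indeed $q=p^{\ord_p c}$ divides $c$, hence divides $c!$), so the equation $\frac{q}{c!}m=q\w(G)$ simplifies component-wise to $\frac{1}{c!}m=\w(G)$, forcing each component of $m$ to be a multiple of $c!$. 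I do not anticipate a genuine obstacle here; the whole lemma is a short arithmetic consequence of the definitions, the main care being the case split on $q=1$ versus $q>1$ and the correct reading of the negated cleanness conditions.
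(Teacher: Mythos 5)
Your proof is correct in substance and the $q>1$ argument is essentially identical to the paper's proof: assume $f$ is not $\w$-clean, so $\neg(2)_\w$ gives $\w(f_{c-q})=\frac{q}{c!}m$ and $\neg(3)_\w$ gives $\w(f_{c-q})=q\cdot\w(G)$, hence $m=c!\cdot\w(G)$, contradicting the hypothesis. The paper runs this argument uniformly for all $q$; there is no need for the $q=1$/$q>1$ case split, since the $\neg(3)_\w$ step works verbatim for $q=1$ as well.

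Two small points about your $q=1$ discussion that you should clean up. First, the opening claim that ``property $(3)_\w$ is vacuously satisfied'' is backwards: for $q=1$ the property $(3)_\w$ can never hold, because $\init_\w(f_c)$ is a unit and $G=\init_\w(f_{c-1})/\init_\w(f_c)$ always exists; this is exactly what the remark after the definition says. You later correct course, but the initial statement as written is false. Second, you label the characterization ``$\w(f_{c-1})>\frac{m}{c!}$'' as ``$\neg(2)_\w$'' when it is in fact $(2)_\w$ itself; the sign is flipped. The conclusion you draw (``so $(2)_\w$ holds'') is nevertheless the correct one, but the intermediate labeling is inconsistent and would confuse a reader. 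With these corrections your argument matches the paper's, and it even has the minor virtue of observing that for $q=1$ the condition $\neg(2)_\w$ alone already forces $c!\mid m$ without invoking $\neg(3)_\w$.
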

\begin{proof}
 Assume that $f$ is not $\w$-clean with respect to $J_{-1}$. By property $\neg(2)_\w$ we know that $\w(f_{c-q})=\frac{q}{c!}m$. By $\neg(3)_\w$ we further know that $\w(f_{c-q})=q\cdot\w(G)$ for some element $G\in K[[\x]]$. Thus, $m=c!\cdot\w(G)$.
\end{proof}

 

\begin{lemma} \label{coord_changes_which_preserve_w_clean}
 Let $f\in J$ be an element which is $z$-regular of order $c$. Consider a coordinate change $z=\wt z+g$ where $g\in K[[\x]]$ fulfills one of the following two properties:
 \begin{itemize}
  \item $\w(g)\geq\frac{1}{q}\w(f_{c-q})$.
  \item $\w(g)>\frac{m}{c!}$.
 \end{itemize}
 Then the following hold:
 \begin{enumerate}[(1)]
  \item $\w(\wt J_{-1})\geq \w(J_{-1})$.
  \item If $f$ is $\w$-clean with respect to $J_{-1}$, then $\w(\wt J_{-1})=\w(J_{-1})$ and $f$ is also $\w$-clean with respect to $\wt J_{-1}$.
 \end{enumerate}
 Consequently, we say that such a coordinate change $z=\wt z+g$ \emph{preserves $\w$-cleanness of $f$}.
\end{lemma}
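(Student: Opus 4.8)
The plan is to prove both statements by a direct comparison of the weighted orders of $J_{-1}$ and $\wt J_{-1}$ via the formula of Lemma \ref{w_coeff}, using the explicit expansion $\wt f_i=\sum_{k\geq i}\binom{k}{i}f_kg^{k-i}$ from Lemma \ref{coordinate_change_g_z} (1). For part (1), I would first dispose of the case $\w(g)>\frac{m}{c!}$: this is exactly Lemma \ref{m_under_coord_changes} (2), which gives $\w(\wt J_{-1})=\w(J_{-1})$, so in particular the inequality holds. The remaining case is $\w(g)\geq\frac{1}{q}\w(f_{c-q})$, where I do not a priori know how $\w(g)$ compares with $\frac{m}{c!}$. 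Here I would split further: if $\w(g)\geq\frac{m}{c!}$, then Lemma \ref{m_under_coord_changes} (1) already gives $\w(\wt J_{-1})\geq \w(J_{-1})$. So the genuinely new case is $\w(g)<\frac{m}{c!}$ together with $\w(g)\geq\frac{1}{q}\w(f_{c-q})$; but by Lemma \ref{w_coeff} we have $\w(f_{c-q})\geq\frac{q}{c!}m$, hence $\frac{1}{q}\w(f_{c-q})\geq\frac{m}{c!}$, which would force $\w(g)\geq\frac{m}{c!}$, a contradiction. So the first bullet of the hypothesis, combined with the basic coefficient estimate, actually implies $\w(g)\geq\frac{m}{c!}$, and (1) follows in all cases from Lemma \ref{m_under_coord_changes} (1)--(2).

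For part (2), assume $f$ is $\w$-clean with respect to $J_{-1}$. By Lemma \ref{c!_does_not_divide_m} we may assume all components of $m$ are divisible by $c!$, so $\frac{m}{c!}\in\Ni^l$ makes sense as a genuine value. As just argued, the hypothesis gives $\w(g)\geq\frac{m}{c!}$. By Proposition \ref{w_cleaning_maximizes_w} we already know $\w(\wt J_{-1})\leq \w(J_{-1})$, and combined with (1) this yields $\w(\wt J_{-1})=\w(J_{-1})$, i.e.\ $\wt m=m$. It remains to show $f$ is still $\w$-clean with respect to $\wt J_{-1}$. I would run through the defining trichotomy $(1)_\w$--$(3)_\w$ according to which of them holds for $f$ with respect to $J_{-1}$, invoking Lemma \ref{clean_lemma} in each case. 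If $(1)_\w$ holds, pick the maximal index $c-q<i<c$ with $\w(f_i)=\frac{c-i}{c!}m$; Lemma \ref{clean_lemma} (i) gives $\w(\wt f_i)=\frac{c-i}{c!}m=\frac{c-i}{c!}\wt m$, so $(1)_\w$ holds for $\wt J_{-1}$. If $\neg(1)_\w$ and $(2)_\w$ hold, then in the subcase $\w(g)>\frac{m}{c!}$ Lemma \ref{m_under_coord_changes} (2)'s proof (or a direct estimate) shows $\w(\wt f_{c-q})\geq \min(\w(f_{c-q}),\, \w(g)+\ldots)$ stays strictly above $\frac{q}{c!}\wt m$; in the subcase $\w(g)=\frac{m}{c!}$, Lemma \ref{clean_lemma} (ii) gives $\w(\wt f_{c-q})=\frac{q}{c!}m$ with $\init_\w(\wt f_{c-q})=\binom{c}{q}\init_\w(f_c)\init_\w(g)^q$, a $q$-th power times $\init_\w(f_c)$, so here I would verify that one of the new indices still satisfies $(1)_\w$ or $(2)_\w$ — and this is exactly where the two bullet hypotheses on $\w(g)$ are needed to control the higher coefficients $\wt f_i$ for $i$ between $c-q$ and $c$. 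If $\neg(1)_\w$, $\neg(2)_\w$ and $(3)_\w$ hold, Lemma \ref{clean_lemma} (iii) shows $\init_\w(\wt f_{c-q})=\init_\w(f_{c-q})+\binom{c}{q}\init_\w(f_c)\init_\w(g)^q$; I must check this does not factor as $\init_\w(f_c)$ times a $q$-th power, which follows since $\init_\w(f_{c-q})$ does not (by $(3)_\w$) while $\binom{c}{q}\init_\w(f_c)\init_\w(g)^q$ does, and a sum of a non-$q$-th-multiple with a $q$-th-multiple cannot be a $q$-th-multiple unless the whole thing drops in $\w$-degree — in which case $\w(\wt f_{c-q})>\frac{q}{c!}\wt m$ and $(2)_\w$ holds instead.

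The main obstacle I anticipate is the bookkeeping in the $\neg(1)_\w,(2)_\w$ case under the weaker hypothesis $\w(g)\geq\frac{1}{q}\w(f_{c-q})$ (rather than $\w(g)=\frac{m}{c!}$): here $\w(g)$ may exceed $\frac{m}{c!}$ strictly, so the term $\binom{c}{q}f_cg^q$ need no longer be the dominant one in $\wt f_{c-q}$, and I have to argue carefully that $\w(\wt f_{c-q})$ is still $>\frac{q}{c!}\wt m$ (so that $(2)_\w$ passes) or that some intermediate index witnesses $(1)_\w$. The key inequality $\w(g)\geq\frac{1}{q}\w(f_{c-q})$ is tailored precisely so that $\w(f_cg^q)=c\cdot\w(g)\geq \ldots\geq \w(f_{c-q})$ up to the index shift, forcing the $f_cg^q$-contribution not to lower the weighted order of $\wt f_{c-q}$ below $\w(f_{c-q})>\frac{q}{c!}m$; once this is checked, cleanness is preserved. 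I expect the remaining cases and the stability of $z$-regularity (Lemma \ref{z_regular_under_coord_change} (1), which I invoke implicitly) to be routine.
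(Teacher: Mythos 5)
Your strategy mirrors the paper's: observe that both hypotheses force $\w(g)\geq\frac{m}{c!}$, get (1) from Lemma \ref{m_under_coord_changes}, and for (2) run through the trichotomy $(1)_\w$--$(3)_\w$ via Lemma \ref{clean_lemma}. Part (1), the $(1)_\w$ case, and most of the $(3)_\w$ case are handled the same way as the paper (for $(3)_\w$ you only treat $\w(g)=\frac{m}{c!}$; when $\w(g)>\frac{m}{c!}$ the paper argues directly that $\init_\w(\wt f_{c-q})=\init_\w(f_{c-q})$ and $\init_\w(\wt f_c)=\init_\w(f_c)$, so $(3)_\w$ survives — a minor omission on your side).

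The ``main obstacle'' you flag in the $\neg(1)_\w,\,(2)_\w$ case is not actually there, and this is the one genuine gap. Once $(2)_\w$ holds, $\w(f_{c-q})>\frac{q}{c!}m$, so the first bullet gives $\w(g)\geq\frac{1}{q}\w(f_{c-q})>\frac{m}{c!}$ and the second bullet gives $\w(g)>\frac{m}{c!}$ directly; hence $\w(g)>\frac{m}{c!}$ in all cases. The subcase $\w(g)=\frac{m}{c!}$ that you identify as problematic (where Lemma \ref{clean_lemma} (ii) makes $\init_\w(\wt f_{c-q})$ a $q$-th-power multiple of $\init_\w(f_c)$ and no cleanness property obviously survives) simply cannot occur. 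With $\w(g)>\frac{m}{c!}$ in hand, a one-line estimate finishes it: for every $k>c-q$,
\[\w\bigl(f_kg^{k-(c-q)}\bigr)\geq\tfrac{c-k}{c!}m+(k-(c-q))\w(g)>\tfrac{q}{c!}m,\]
and $\w(f_{c-q})>\frac{q}{c!}m$ by $(2)_\w$, so $\w(\wt f_{c-q})>\frac{q}{c!}\wt m$ and $(2)_\w$ holds for $\wt J_{-1}$. That is exactly what the paper does. (Small slip in your sketch: $\w(f_cg^q)=q\cdot\w(g)$, not $c\cdot\w(g)$.)
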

\begin{proof}
 Notice that in both cases $\w(g)\geq\frac{m}{c!}$ holds.

 (1) is immediate by Lemma \ref{m_under_coord_changes} (1).
 
 (2): By Proposition \ref{w_cleaning_maximizes_w} we know that $\wt m=m$ holds. It remains to show that $f$ is $\w$-clean with respect to $\wt J_{-1}$.
 
 First assume that $(1)_\w$ holds for $f$ with respect to $J_{-1}$ and $i<c$ is maximal with $\w(f_i)=\frac{c-i}{c!}m$. By Lemma \ref{clean_lemma} (i) this implies that $\w(\wt f_i)=\frac{c-i}{c!}m$. Hence, the property $(1)_\w$ still holds for $f$ with respect to $\wt J_{-1}$.
 
 Now assume that $\neg(1)_\w$ and $(2)_\w$ hold for $f$ with respect to $J_{-1}$. Thus, $\w(g)>\frac{m}{c!}$ holds by assumption. Consider the expansion 
 \[\wt f_{c-q}=f_{c-q}+\sum_{k>c-q}\binom{k}{c-q}f_kg^{k-(c-q)}.\]
 Let $k$ be an index such that $k>c-q$. Then
 \[\w(f_kg^{k-(c-q)})=\underbrace{\w(f_k)}_{\geq\frac{c-k}{c!}m}+(k-(c-q))\underbrace{\w(g)}_{>\frac{m}{c!}}>\frac{q}{c!}m.\]
 Hence, $\w(\wt f_{c-q})>\frac{q}{c!}m$. Consequently, property $(2)_\w$ holds for $f$ with respect to $\wt J_{-1}$.
 
 Finally, assume that $\neg(1)_\w$, $\neg(2)_\w$ and $(3)_\w$ hold for $f$ with respect to $J_{-1}$. We can write 
 \[\wt f_c=f_c+\sum_{k>c}\binom{k}{c}f_kg^{k-c}.\]
 Since $\w(f_c)=0$ and $\w(g)\geq\frac{m}{c!}>0$, it is clear that $\init_{\w}(\wt f_c)=\init_\w(f_c)$.
 
 If $\w(g)>\frac{m}{c!}$, we can use the same arguments as before to show that $\init_\w(\wt f_{c-q})=\init_\w(f_{c-q})$. Hence, property $(3)_\w$ holds for $f$ with respect to $\wt J_{-1}$ in this case.
 
 So assume now that $\w(g)=\frac{m}{c!}$. By Lemma \ref{clean_lemma} (iii) and property $(3)_\w$ we know that
 \[\init_\w(\wt f_{c-q})=\init_\w(f_{c-q})+\binom{c}{q}\init_\w(f_c)g^q.\]
 Now assume that there is an element $G\in K[[\x]]$ such that $\init_\w(\wt f_c)G^q=\init_\w(\wt f_{c-q})$. Then this implies that
 \[\init_\w(f_{c-q})=\init_\w(f_c)\Big(G-\binom{c}{q}g\Big)^q.\]
 But this contradicts the assumption that property $(3)_\w$ holds for $f$ with respect to $J_{-1}$. Hence, $(3)_\w$ holds for $f$ with respect to $\wt J_{-1}$.
\end{proof}

\begin{lemma} \label{w_cleaning_preserves_v_cleaning}
 Let $f\in J$ be an element that is $z$-regular of order $c$. Consider a weighted order function $\y:K[[\x]]\to\Ni^k$ that is defined on the parameters $\x$. 
 
 A $\y$-cleaning step $z=\wt z+g$ preserves $\w$-cleanness of $f$.
\end{lemma}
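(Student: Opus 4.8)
The goal is to show that a single $\y$-cleaning step $z = \wt z + g$, where $g \in K[[\x]]$ is weighted homogeneous with respect to $\y$ and $\y(g) = \frac{1}{c!}\y(J_{-1}^{\y})$ (where $J_{-1}^{\y} = \coeff^c_{(\x,z)}(J)$), does not destroy $\w$-cleanness of $f$. The natural strategy is to reduce this to Lemma \ref{coord_changes_which_preserve_w_clean}: it suffices to verify that the power series $g$ satisfies one of the two sufficient conditions there, namely $\w(g) \geq \frac{1}{q}\w(f_{c-q})$ or $\w(g) > \frac{m}{c!}$, where $m = \w(J_{-1})$.

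\textbf{Key steps.} First I would recall that $f$ being $z$-regular of order $c$ is preserved under the coordinate change by Lemma \ref{z_regular_under_coord_change} (1), so the statement is meaningful. Next, I would dispose of the trivial case: if $f$ is already $\w$-clean and one of the first two cleanness properties $(1)_\w$ or $(2)_\w$ holds, or if $g = 0$ (the $\y$-cleaning step is vacuous because $f$ is already $\y$-clean), there is nothing to prove or the conclusion is immediate. The main case is when $f$ is $\w$-clean via property $(3)_\w$, i.e. $\neg(1)_\w$, $\neg(2)_\w$ hold so $\w(f_{c-q}) = \frac{q}{c!}m$, and $\init_\w(f_{c-q})$ is \emph{not} of the form $\init_\w(f_c)\cdot G^q$. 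Here I would argue as follows: the $\y$-cleaning step is built from $g$ with $\y(g) = \frac{1}{c!}\y(J_{-1}^{\y})$; I need to compare $\w(g)$ against $\frac{1}{q}\w(f_{c-q}) = \frac{m}{c!}$. If $\w(g) \geq \frac{m}{c!}$ we are done by the first bullet of Lemma \ref{coord_changes_which_preserve_w_clean}. If instead $\w(g) < \frac{m}{c!}$, I would derive a contradiction: by the $\y$-cleaning-step formula, $\init_\y(g)$ is (up to a nonzero scalar $\binom{c}{q}^{-1}$) equal to the element $G$ with $\init_\y(f_{c-q}) = \init_\y(f_c)\, G^q$, so in particular $\w(f_{c-q}) \le \w\big(\init_\y(f_c)\, G^q\big)$; since $f_c$ is a unit, $\w(f_c) = 0$, whence $\w(f_{c-q}) \le q\,\w(G) = q\,\w(g) < q\cdot\frac{m}{c!} = \frac{q}{c!}m$, contradicting $\neg(2)_\w$, which asserts $\w(f_{c-q}) = \frac{q}{c!}m$. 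Hence necessarily $\w(g) \ge \frac{m}{c!} = \frac{1}{q}\w(f_{c-q})$, and Lemma \ref{coord_changes_which_preserve_w_clean} applies. The cases where $f$ is $\w$-clean via $(1)_\w$ or $(2)_\w$ are handled similarly but more easily: if $(1)_\w$ holds then $\w(f_{c-q})$ is irrelevant and we only need $\w(g) \ge \frac{m}{c!}$, which follows because $\w(g) \ge$ (something controlled by $\y(g) = \frac{1}{c!}\y(J_{-1}^\y)$); I would need to check that the weighted-homogeneity of $g$ with respect to $\y$ forces the right bound in the lexicographic $\w$-order too, possibly invoking Lemma \ref{double_weighted order function} or a direct monomial comparison. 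Likewise under $(2)_\w$, $\w(f_{c-q}) > \frac{q}{c!}m$, so $\frac{1}{q}\w(f_{c-q}) > \frac{m}{c!}$, and I need $\w(g) > \frac{m}{c!}$; again this should follow from the construction of the $\y$-cleaning step combined with $\neg$-cleanness hypotheses, or one falls into the already-clean situation.

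\textbf{Expected main obstacle.} The delicate point is that $g$ is weighted homogeneous and of controlled degree with respect to $\y$, but we need a bound on $\w(g)$, and $\w$ and $\y$ are a priori unrelated weighted order functions (both defined on the same parameters $\x$, which is essential). So the crux is translating ``$\y(g)$ is as small as allowed by the $\y$-cleaning step'' into ``$\w(g) \ge \frac{1}{q}\w(f_{c-q})$''. I expect this to go through by the contradiction argument sketched above — the key leverage is that the monomial $G$ appearing in the $\y$-cleaning step actually divides (up to initial forms) $f_{c-q}$, so any valuation of it is bounded by the corresponding valuation of $f_{c-q}$, and $\neg(2)_\w$ pins down $\w(f_{c-q})$ exactly. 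If that contradiction route works cleanly in all three cleanness sub-cases, the proof is short; the only real danger is a sub-case where $f$ is $\w$-clean via a property whose $\neg$-form is not available because cleanness holds ``for the wrong reason'' — but in each such sub-case the needed inequality on $\w(g)$ should hold outright rather than by contradiction, so the argument terminates.
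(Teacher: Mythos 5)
Your proposal takes the same route as the paper: reduce to Lemma~\ref{coord_changes_which_preserve_w_clean} by verifying the weight bound on $g$, using the defining formula $\init_\y(f_{c-q})=\init_\y(f_c)\cdot G^q$ of a $\y$-cleaning step and the fact that $f_c$ is a unit. The key calculation you carry out in your ``main case'' — namely $\w(f_{c-q})\le\w(\init_\y(f_{c-q}))=q\,\w(G)=q\,\w(g)$, because $\init_\y(f_{c-q})$ is a sub-sum of the monomials of $f_{c-q}$ and $\w(\init_\y(f_c))=0$ — is exactly the paper's proof.

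However, you unnecessarily obscure this with a case distinction on which cleanness property $(1)_\w$, $(2)_\w$, $(3)_\w$ of $f$ holds, and you leave the cases $(1)_\w$ and $(2)_\w$ hand-wavy (``I would need to check\ldots possibly invoking Lemma~\ref{double_weighted order function}''), which is a genuine gap in the write-up. The point you are missing is that the inequality $\w(g)\geq\frac{1}{q}\w(f_{c-q})$ — the first bullet of Lemma~\ref{coord_changes_which_preserve_w_clean} — holds \emph{unconditionally}, independently of which cleanness property of $f$ is active and without invoking $\neg(2)_\w$: it is an immediate consequence of $\w(g)=\frac{1}{q}\w(\init_\y(f_{c-q}))$ and $\w(\init_\y(f_{c-q}))\geq\w(f_{c-q})$. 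There is no contradiction argument, no lexicographic subtlety, and no other sub-cases to handle; the ``expected main obstacle'' you identify dissolves once you notice the bound is direct. You should also note explicitly (to justify applying Lemma~\ref{coord_changes_which_preserve_w_clean} at all) that $g\in K[[\x]]$: by definition the $\y$-cleaning step produces $g=-\binom{c}{q}^{-1}G$ with $G\in K[[\x]]$, so this is automatic.
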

\begin{proof}
 By definition $g=-\binom{c}{q}^{-1}G$ for an element $G\in K[[\x]]$ that fulfills 
 \[\init_{\y}(f_c)\cdot G^q=\init_{\y}(f_{c-q}).\]
 Since $f_c$ is a unit, we know that 
 \[\w(g)=\frac{1}{q}\w(\init_{\y}(f_{c-q}))\geq\frac{1}{q}\w(f_{c-q}).\]
 Thus, the coordinate change preserves $\w$-cleanness of $f$ by Lemma \ref{coord_changes_which_preserve_w_clean}.
\end{proof}

\begin{remark}
 One drawback of our definition of $\w$-cleanness is that the definition requires $\w(J_{-1})>0$. There is a simple trick though to extend the definition to all weighted order functions in the following way:
 
 Let $\w:K[[\x]]\to\Ni^l$ be a weighted order function on $\x$ such that $\w(J_{-1})=0$ holds. Define the weighted order function $\w_+:K[[\x]]\to\Ni^{l+1}$ on $\x$ via $\w_+(x_i)=(\w(x_i),1)$ for $i=1,\ldots,n$. Then we know by Lemma \ref{double_weighted order function} and Lemma \ref{ogeqc!} that
 \[\w_+(J_{-1})=(\w(J_{-1}),\ord \minit_{\w}(J_{-1}))\geq (0,\ord J_{-1})\geq (0,c!).\]
 Hence, $\w_+(J_{-1})>0$ holds.
 
 We will say by abuse of notation that an element $f\in J$ is $\w$-clean with respect to $J_{-1}$ if $f$ is in fact $\w_+$-clean with respect to $J_{-1}$. Then $\w_+$-cleanness implies maximality of $\w(J_{-1})$ by Proposition \ref{w_cleaning_maximizes_w}.
\end{remark}

\section{Maximizing the order of the second coefficient ideal} \label{section_s_cleaning}

 In this section we will investigate the effect of coordinate changes $z\mapsto z+g(\x,y)$ on the order of the second coefficient ideal $J_{-2}$ as it was defined in Section \ref{section_invariance_of_slope}. Similarly to the previous section, we will define a cleanness property and devise a cleaning process to achieve maximality.
 
 Throughout this section we will use the following setting:
 
 Let $R=K[[\x,y,z]]$ with $\x=(x_1,\ldots,x_n)$ and $J\subseteq R$ an ideal of order $c=\ord J$. Let $J_{-1}$ be the coefficient ideal
 \[J_{-1}=\coeff^c_{(\x,y,z)}(J)\]
 with respect to the hypersurface $V(z)$. Let $J_{-1}$ have a factorization of the form
 \[J_{-1}=(\x^r y^{r_y})\cdot I_{-1}\]
 where $r=(r_1,\ldots,r_n)\in\N^n$ and $I_{-1}$ is in an ideal in $K[[\x,y]]$. Set $d=\ord I_{-1}$. Then we define
 \[J_{-2}=\coeff^d_{(\x,y)}(I_{-1})\]
 as the coefficient ideal with respect to $V(y,z)$ and set
 \[s=\ord J_{-2}.\]
 
 Further, we will consider coordinate changes $z=\wt z+g$ with $g\in K[[\x,y]]$, $\ord g\geq1$ which \emph{preserve the setting} in the sense that the coefficient ideal
 \[\wt J_{-1}=\coeff_{(\x,y,\wt z)}^c(J)\]
 with respect to the hypersurface $V(\wt z)$ also has a factorization
 \[\wt J_{-1}=(\x^ry^{r_y})\cdot \wt I_{-1}\]
 for an ideal $\wt I_{-1}$ that fulfills $\ord \wt I_{-1}=d$. Let
 \[\wt J_{-2}=\coeff^d_{(\x,y)}(\wt I_{-1})\]
 be the coefficient ideal with respect to $V(y,\wt z)$ and set
 \[\wt s=\ord \wt J_{-2}.\]
 
 Notice that we do not apply a coordinate change to the parameter $y$ and regard it as fixed instead. The situation in which coordinate changes are applied to both $z$ and $y$ will be investigated in Section \ref{section_maximizing_over_y_and_z}.
 
 We define the auxiliary numbers
 \[\D=\frac{1}{c!}\Big((d+r_y)\sd+|r|\Big),\]
 \[\wt \D=\frac{1}{c!}\Big((d+r_y)\wsd+|r|\Big).\]
 
 In Lemma \ref{s_under_coord_changes} we will give a basic estimate for the order of $J_{-2}$ dependent on the orders of the coefficients $g_j$ in the expansion $g=\sum_{j\geq0}g_j(\x)y^j$. If there is an element $f\in J$ which is $z$-regular of order $c$, we will show that $\ord\wt J_{-2}>\ord J_{-2}$ can only hold if for all indices $j\geq0$ the estimate $\ord g_j\geq\D-j\sd$ holds and there is an index $j$ such that equality holds. We will then introduce the notion of secondary $\ord$-cleanness of an element $f$. The definition is similar to $\w$-cleanness, but more technically involved. In Proposition \ref{s_cleaning_maximizes_slope} we will show that the existence of an element $f\in J$ which is secondary $\ord$-clean is sufficient for the order of the second coefficient ideal to be maximal over all coordinate changes $z\mapsto z+g(\x,y)$ which preserve the setting. We will then define the secondary $\ord$-cleaning process and show in Proposition \ref{s_cleaning_terminates} that this process either terminates in finitely many steps or there is a change of coordinates $z=\wt z+g(\x,y)$ such that $\ord\wt J_{-2}=\infty$.
 
 Denote for each element $f\in J$ its power series expansions with respect to the parameter systems $(\x,y,z)$ and $(\x,y,\wt z)$ by $f=\sum_{i,j\geq0}f_{i,j}y^jz^i$ and $f=\sum_{i,j\geq0}\wt f_{i,j}y^j\wt z^i$ with $f_{i,j},\wt f_{i,j}\in K[[\x]]$. Let $g$ have the expansion $g=\sum_{j\geq0}g_jy^j$ with $g_j\in K[[\x]]$. We know by Lemma \ref{coordinate_change_g} that
 \[\wt f_{i,j}=f_{i,j}+\sum_{\substack{k>i\\0\leq l\leq j}}\binom{k}{i}f_{k,l}\sum_{\substack{\alpha\in\N^{k-i}\\|\alpha|=j-l}}g_\alpha.\]
 Further, we know by Lemma \ref{ord_J_2} (1) that for each element $f\in J$ and all indices $i,j\geq0$ the inequality
 \[\ord f_{i,j}\geq (c-i)\D-j\sd\]
 holds. There exists an element $f\in J$ and indices $i<c$, $j<\frac{c-i}{c!}(d+r_y)$ such that equality holds.
 
 The following lemma gives a basic estimate for the orders of the coefficients $g_j$.

\begin{lemma} \label{stabilizing_coordinate_changes}
 If there is an element $f\in J$ that is $z$-regular of order $c$, the following estimates hold:
 \begin{enumerate}[(1)]
  \item $\ord g_j\geq\frac{|r|}{c!}$ for all indices $j\geq0$.
  \item $\ord g_j\geq \D-j\sd$ for all indices $j\geq \frac{d+r_y}{c!}$. For indices $j>\frac{d+r_y}{c!}$ then the strict inequality holds.
 \end{enumerate}
\end{lemma}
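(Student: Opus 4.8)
The plan is to prove (1) by a direct computation with the given $z$-regular element and then to read off (2). For (1) I would first reduce to the case where the $z$-regular element $f$ is a distinguished polynomial in $z$ of degree $c$: by the Weierstrass preparation theorem $f=U\cdot P$ with $U\in R^*$ and $P=z^c+\sum_{i<c}f_iz^i$, $f_i\in K[[\x,y]]$; since $P=U^{-1}f\in J$, $\ord P=c$, and $P$ is again $z$-regular of order $c$, we may replace $f$ by $P$, so that from now on $f_c=1$ and $f_k=0$ for $k>c$. Recall also that the coordinate change $z=\wt z+g$ is assumed to preserve the setting.

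Fix one of the parameters $x_k$ and write $v=\ord_{(x_k)}$ for the corresponding $x_k$-order on $K[[\x,y]]$. Applying Lemma \ref{coordinate_change_g_z}(1) with the parameter tuple $(\x,y)$ in place of $\x$, the expansion of $f$ with respect to $(\x,y,\wt z)$ has coefficients $\wt f_i=\sum_{i\le\ell\le c}\binom{\ell}{i}f_\ell\,g^{\ell-i}$. Taking $i=c-q$ with $q=q_K(c)$ and isolating the term $\ell=c$, whose coefficient $\binom{c}{q}$ is a unit of $K$ (by Lemma \ref{c-q}(1), resp. because $\binom{c}{1}=c\neq0$ when $q=1$), and using $\binom{\ell}{c-q}=1$ in $K$ for $c-q<\ell<c$ (Lemma \ref{c-q}(3), the range being empty when $q=1$), we obtain
\[\binom{c}{q}\,g^q=\wt f_{c-q}-f_{c-q}-\sum_{c-q<\ell<c}f_\ell\,g^{\ell-(c-q)}.\]
Now Lemma \ref{w_coeff}, applied with the weighted order $\ord_{(x_k)}$ to $J_{-1}$ and to $\wt J_{-1}$ — the facts $\ord_{(x_k)}J_{-1}\ge r_k$ and $\ord_{(x_k)}\wt J_{-1}\ge r_k$ being immediate from the factorizations $J_{-1}=(\x^ry^{r_y})I_{-1}$ and $\wt J_{-1}=(\x^ry^{r_y})\wt I_{-1}$, the second since the coordinate change preserves the setting — gives $v(f_\ell)\ge\frac{c-\ell}{c!}r_k$ for $\ell<c$ and $v(\wt f_{c-q})\ge\frac{q}{c!}r_k$. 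Substituting these into the valuation estimate of the right-hand side of the displayed identity yields
\[q\,v(g)\ \ge\ \min\Bigl(\tfrac{q}{c!}r_k,\ \min_{c-q<\ell<c}\bigl[\tfrac{c-\ell}{c!}r_k+(\ell-c+q)\,v(g)\bigr]\Bigr).\]
If the minimum is attained by the first argument we get $v(g)\ge\frac{r_k}{c!}$ directly; if it is attained by a term of the second type, subtracting $(\ell-c+q)v(g)$ from both sides leaves $(c-\ell)v(g)\ge\frac{c-\ell}{c!}r_k$, and dividing by $c-\ell>0$ gives $v(g)\ge\frac{r_k}{c!}$ again.

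Since $x_k^{v(g)}$ then divides $g$ in $K[[\x,y]]$, it divides every coefficient $g_j$ in $K[[\x]]$, so $\ord_{(x_k)}g_j\ge\ord_{(x_k)}g\ge\frac{r_k}{c!}$ for all $j$; this holds for each $k=1,\dots,n$, and summing over $k$ together with $\ord g_j\ge\sum_{k}\ord_{(x_k)}g_j$ proves (1). For (2), recall $s=\ord\coeff^d_{(\x,y)}(I_{-1})\ge d!$ by Lemma \ref{ogeqc!}, so $\sd\ge1$, and a direct computation gives $\D-j\sd=\frac{|r|}{c!}+(\frac{d+r_y}{c!}-j)\sd$. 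For $j\ge\frac{d+r_y}{c!}$ the second summand is $\le0$, whence $\D-j\sd\le\frac{|r|}{c!}\le\ord g_j$; it is strictly negative precisely when $j>\frac{d+r_y}{c!}$, which yields the strict inequality there. (If $d=0$ the situation is degenerate, $I_{-1}$ containing a unit, and the assertions are vacuous; one may assume $d>0$.)

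The main obstacle is the positive-characteristic case $p\mid c$, where $c=0$ in $K$ so that $g$ is no longer visible in the single coefficient $\wt f_{c-1}$; the remedy is to pass to $\wt f_{c-q}$, where the arithmetic of Lemma \ref{c-q} makes $\binom{c}{q}$ invertible and the intermediate binomials trivial, and where the self-referential term $(\ell-c+q)v(g)$ appearing in the valuation estimate can be absorbed because, after cancellation, its coefficient $c-\ell$ stays strictly positive. Everything else is routine bookkeeping with Lemma \ref{w_coeff}.
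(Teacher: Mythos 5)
Your proof is correct, but part (1) takes a more roundabout route than the paper. The paper observes that, since the coordinate change preserves the setting, $\ord_{(x_k)}\wt J_{-1}\geq r_k$ for each $k$, and then simply invokes Lemma~\ref{m_under_coord_changes}~(3) contrapositively: if $\ord_{(x_k)}g<\frac{r_k}{c!}\leq\frac{\ord_{(x_k)}J_{-1}}{c!}$, that lemma (applied with $\w=\ord_{(x_k)}$, using the $z$-regular element) forces $\ord_{(x_k)}\wt J_{-1}=c!\cdot\ord_{(x_k)}g<r_k$, a contradiction. You instead re-derive this inequality ab initio by isolating $g^q$ from the coefficient $\wt f_{c-q}$, invoking the binomial arithmetic of Lemma~\ref{c-q}, and running a self-referential valuation estimate that you then untangle by cancellation. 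The computation is sound, and your remark about needing $\wt f_{c-q}$ rather than $\wt f_{c-1}$ in positive characteristic is well taken; but it is worth noting that the paper's Lemma~\ref{m_under_coord_changes}~(3) sidesteps all of this by examining $\wt f_0=\sum_k f_kg^k$, where the dominant term $f_cg^c$ is singled out by the $z$-regularity of $f$ alone, no binomial arithmetic needed. Citing that lemma, as the paper does, would have shortened your argument considerably and avoided the Weierstrass-preparation reduction. Your treatment of part (2) — writing $\D-j\sd=\frac{|r|}{c!}+(\frac{d+r_y}{c!}-j)\sd$ and using $\sd\geq1$ — agrees with the paper verbatim (up to a sign typo in the paper's underbrace annotation, which should read $\leq0$).
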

\begin{proof}
 (1): Let $x_k$ be one of the parameters in $\x=(x_1,\ldots,x_n)$. Since the coordinate change $z=\wt z+g$ preserves the setting, we know that $\ord_{(x_k)}\wt J_{-1}\geq r_k$. By Lemma \ref{m_under_coord_changes} this implies that 
 \[\ord_{(x_k)}g\geq \frac{r_k}{c!}\]
 holds for all indices $k=1,\ldots,n$. Consequently, for all indices $j\geq0$ the following estimate holds:
 \[\ord g_j\geq \sum_{k=1}^n\ord_{(x_k)}g_j\geq \sum_{k=1}^n\ord_{(x_k)}g\geq \frac{|r|}{c!}.\]
 
 (2): By (1) and the definition of $\D$, we can compute that
 \[\ord g_j\geq \frac{|r|}{c!}=\D-j\sd+\underbrace{(\frac{d+r_y}{c!}-j)}_{\geq 0}\sd.\]
 If $j>\frac{d+r_y}{c!}$, it is clear that strict inequality holds.
\end{proof}

\begin{lemma} \label{s_under_coord_changes}
 \begin{enumerate}[(1)]
  \item If for all indices $j\geq0$ the inequality
  \[\ord g_j\geq \D-j\sd\]
  holds, then $\ord \wt J_{-2}\geq \ord J_{-2}$.
  \item If for all indices $j\geq0$ the strict inequality 
  \[\ord g_j>\D-j\sd\]
  holds, then $\ord \wt J_{-2}=\ord J_{-2}$.
  \item If there is an element $f\in J$ that is $z$-regular of order $c$ and there is an index $j\geq0$ such that the strict inequality
  \[\ord g_{j}<\D-j\sd\]
  holds, then
  \[\ord \wt J_{-2}=d!\cdot\min_{j<\frac{d+r_y}{c!}}\frac{c!\cdot \ord g_j-|r|}{d+r_y-jc!}<\ord J_{-2}.\]
 \end{enumerate}
\end{lemma}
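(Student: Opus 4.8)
The statement to prove is Lemma \ref{s_under_coord_changes}: how the order of the second coefficient ideal $\wt J_{-2}$ compares to $\ord J_{-2}$ depending on the orders of the coefficients $g_j$ in the expansion $g=\sum_{j\geq 0}g_j y^j$. The plan is to reduce everything to the formula in Lemma \ref{ord_J_2}, which expresses $\ord J_{-2}$ in terms of the orders of the coefficients $f_{i,j}$ of elements $f\in J$, together with the transformation formula for $\wt f_{i,j}$ recorded just before the statement (an instance of Lemma \ref{coordinate_change_g} (1)). Concretely, $\wt f_{i,j}=f_{i,j}+\sum_{k>i,\,0\leq l\leq j}\binom{k}{i}f_{k,l}\sum_{|\alpha|=j-l,\,\alpha\in\N^{k-i}}g_\alpha$, and I would bound $\ord$ of the right-hand side term by term, using the standing inequality $\ord f_{k,l}\geq (c-k)\D-l\sd$ from Lemma \ref{ord_J_2} (1). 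For part (1), the hypothesis $\ord g_j\geq \D-j\sd$ gives $\ord g_\alpha\geq |\alpha|\D-(j-l)\sd$ for $|\alpha|=j-l$ (here the exponent count is $k-i$ factors summing in $y$-degree to $j-l$, so $\ord g_\alpha\geq (k-i)\frac{|r|}{c!}$ is the cruder bound, while $\ord g_\alpha\geq (j-l)(\D/ \cdot)-\ldots$ — I will need to be careful which bound to use, since $\D\geq \sd$ need not hold here). The clean way is: each factor $g_{\alpha_o}$ satisfies $\ord g_{\alpha_o}\geq \D-\alpha_o\sd$, so summing, $\ord g_\alpha\geq (k-i)\D-(j-l)\sd$; combined with $\ord f_{k,l}\geq (c-k)\D-l\sd$ this yields $\ord(f_{k,l}g_\alpha)\geq (c-i)\D-j\sd$, and the same bound trivially holds for the $f_{i,j}$ term. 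Hence $\ord \wt f_{i,j}\geq (c-i)\D-j\sd$ for all $f,i,j$, and then Lemma \ref{s_changes} (1) gives $\ord\wt J_{-2}\geq \ord J_{-2}$.

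For part (2), the strict hypothesis $\ord g_j>\D-j\sd$ propagates to strict inequality $\ord g_\alpha>(k-i)\D-(j-l)\sd$ whenever there is at least one factor, i.e. whenever $k>i$; so for every term with $k>i$ we get $\ord(f_{k,l}g_\alpha)>(c-i)\D-j\sd$, while the $(i,j)$-term $f_{i,j}$ is unchanged. Therefore $\ord\wt f_{i,j}=\ord f_{i,j}$ whenever $\ord f_{i,j}=(c-i)\D-j\sd$ (no cancellation can lower it, and the other terms are strictly larger). Picking an $f,i,j$ realizing equality in Lemma \ref{ord_J_2}, we get $\ord \wt f_{i,j}=(c-i)\D-j\sd$, so by Lemma \ref{s_changes} (2) we obtain $\ord\wt J_{-2}\leq \ord J_{-2}$; combined with part (1) this gives equality. (A symmetric remark: part (2) is really "part (1) applied in both directions", since a coordinate change $z=\wt z+g$ with all $\ord g_j$ strictly above the threshold is invertible by one of the same type.)

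Part (3) is the substantive one and I expect it to be the main obstacle. Here we assume there is an element $f\in J$ which is $z$-regular of order $c$, i.e. $\ord f_{c,0}=0$, and that some $g_{j_0}$ has $\ord g_{j_0}<\D-j_0\sd$. By Lemma \ref{stabilizing_coordinate_changes} (2) this forces $j_0<\frac{d+r_y}{c!}$, so the minimum in the asserted formula is over a nonempty finite range. The idea is to look at the coefficient $\wt f_{0,j}$ of $\wt z^0 y^j$ for the $z$-regular element $f$: in the transformation formula, the term with $k=c$, $l=0$ contributes $\binom{c}{0}f_{c,0}\sum_{|\alpha|=j,\,\alpha\in\N^{c}}g_\alpha=f_{c,0}\cdot(\text{symmetric sum of products of }c\text{ of the }g_j\text{'s of total }y\text{-degree }j)$. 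Using that $f_{c,0}$ is a unit and that the "dominant" monomial $g_{j'}^{\,?}$ contributions can be isolated (this is where one needs the characteristic-$p$ care of Lemma \ref{char_p_sum_lemma} or the ordinary multinomial expansion in characteristic zero, to see that the symmetric sum does not vanish and has the expected order), one shows that for suitable $j$ the order of $\wt f_{0,j}$ drops to exactly $\ord g_j\cdot(\text{count}) + 0$, more precisely that $\frac{c!}{c}\ord \wt f_{0,j}-|r|$ equals $c!\cdot\ord g_j-|r|$ in the relevant range. Feeding this into Lemma \ref{ord_J_2} (with $i=0$) gives the upper bound $\ord\wt J_{-2}\leq d!\cdot\min_{j<(d+r_y)/c!}\frac{c!\,\ord g_j-|r|}{d+r_y-jc!}$; for the matching lower bound one reruns the term-by-term estimate of parts (1)--(2) but now tracking exactly which terms can achieve the (lowered) threshold $\min_j(\ldots)$, showing no term can go below it. The delicate points are: (a) verifying the symmetric-sum does not accidentally vanish in positive characteristic — here I would invoke Lemma \ref{char_p_sum_lemma} and the $z$-regularity to pin down a surviving term; (b) checking that the minimum is attained with $i=0$ rather than some $0<i<c$, which should follow because the $z$-regular element concentrates the relevant mass in $f_{c,0}$, and for $i>0$ Lemma \ref{stabilizing_coordinate_changes} (1) already gives $\ord \wt f_{i,j}\geq (c-i)\D-j\sd$ times the right factor, i.e. these never beat the $i=0$ contribution once $\ord g_j<\D-j\sd$; and (c) bookkeeping the constant $|r|$ correctly through the factorization $\wt J_{-1}=(\x^r y^{r_y})\wt I_{-1}$, which is guaranteed to exist because the coordinate change preserves the setting. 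Once all three estimates are in place, the asserted exact formula for $\ord\wt J_{-2}$ follows, and since each term $\frac{c!\,\ord g_j-|r|}{d+r_y-jc!}$ with $\ord g_j<\D-j\sd$ is strictly less than $\sd$, we conclude $\ord\wt J_{-2}<\ord J_{-2}$.
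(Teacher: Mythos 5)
Your parts (1) and (2) match the paper's proof essentially verbatim: the term-by-term estimate $\ord g_\alpha\geq(k-i)\D-(j-l)\sd$ combined with $\ord f_{k,l}\geq(c-k)\D-l\sd$ gives $\ord\wt f_{i,j}\geq(c-i)\D-j\sd$ and hence $\wt s\geq s$ by Lemma \ref{s_changes} (1); the strict version combined with a pick of $f,i,j$ realizing equality gives $\wt s\leq s$ via Lemma \ref{s_changes} (2). (Your aside that part (2) could be deduced "by applying part (1) in both directions" is circular as stated, since the threshold in the reversed direction is $\wt\D$ rather than $\D$ and these are not yet known to coincide; but the direct argument you give is the correct one, so this is merely a stray remark.)

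For part (3) your high-level strategy is right, but the tool you reach for to control the symmetric sum is the wrong one, and this is a genuine gap. You propose to invoke Lemma \ref{char_p_sum_lemma}, but that lemma collapses sums $\sum_{|\alpha|=k,\,\alpha\in\N^q}g_\alpha$ with $q=p^e$; here the index set is $\N^c$ for a general $c$ that need not be a prime power, so the lemma simply does not apply, and in characteristic zero it says nothing at all. What actually prevents cancellation — and works uniformly in every characteristic — is a careful choice of which coefficient to look at. Set $L(j)=\ord g_j+j\sd$ and choose $\jz$ so that $L(\jz)$ is minimal and, among indices attaining that minimum, $\jz$ is \emph{itself} minimal. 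Then examine $\wt f_{0,c\jz}$. For any multi-index $\alpha\in\N^c$ with $|\alpha|=c\jz$ and $\alpha\neq(\jz,\ldots,\jz)$, the minimality of $\jz$ (both as a minimizer of $L$ and lexicographically among minimizers) forces $\ord g_\alpha>c(L(\jz)-\jz\sd)$; so $g_\jz^c$ is the \emph{unique} monomial of minimal order in $\sum_{|\alpha|=c\jz}g_\alpha$, and since $f_{c,0}$ is a unit the term $f_{c,0}g_\jz^c$ survives with order exactly $c(L(\jz)-\jz\sd)$. All the terms with $k<c$ are strictly larger because $\D>L(\jz)$, and those with $k\geq c$, $(k,l)\neq(c,0)$ are larger because of the extra $g$-factors. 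This yields $\ord\wt f_{0,c\jz}=c(L(\jz)-\jz\sd)$ and hence $\wt s<s$ by Lemma \ref{s_changes} (3). The exact value of $\wt s$ then follows from Lemma \ref{ord_J_2} once one observes that the bound above is also an equality at the minimizing index, rather than by the hand-wavy "rerun the estimate" you describe.
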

\begin{proof}
 (1): Let $f\in J$ be arbitrary and $i<c$, $j<\frac{d+r_y}{c!}$ indices. Let $k>i$, $l\leq j$ and $\alpha\in\N^{k-i}$ with $|\alpha|=j-l$ be further indices. Then
 \[\ord f_{k,l}g_\alpha\geq (c-k)\D-l\sd+\sum_{o=1}^{k-i}\underbrace{\ord g_{\alpha_o}}_{\geq \D-\alpha_o\sd}\]
 \[\geq (c-k)\D-l\sd+(k-i)\D-(j-l)\sd\]
 \[=(c-i)\D-j\sd.\]
 Hence, $\ord \wt f_{i,j}\geq (c-i)\D-j\sd$. By Lemma \ref{s_changes} (1) this proves that $\wt s\geq s$.
 
 (2): Let $f\in J$ and indices $i<c$, $j<\frac{d+r_y}{c!}$ be such that $\ord f_{i,j}=(c-i)\D-j\sd$. Now let $k>i$, $l\leq j$ and $\alpha\in\N^{k-i}$ with $|\alpha|=j-l$ be further indices. Then
  \[\ord f_{k,l}g_\alpha\geq (c-k)\D-l\sd+\sum_{o=1}^{k-i}\underbrace{\ord g_{\alpha_o}}_{>\D-\alpha_o\sd}\]
 \[>(c-k)\D-l\sd+(k-i)\D-(j-l)\sd\]
 \[=(c-i)\D-j\sd.\]
 Hence, $\ord \wt f_{i,j}=\ord f_{i,j}=(c-i)\D-j\sd$. By Lemma \ref{s_changes} (2) this implies that $\wt s\leq s$. In combination with (1) this proves that $\wt s=s$.
 
 (3): Let $f\in J$ be $z$-regular of order $c$. Define for all integers $j\geq0$ the number 
 \[L(j)=\ord g_j+j\sd.\]
 Since $\ord g_0>0$ by assumption and $s\geq d!$ by Lemma \ref{ogeqc!}, it is clear that $L(j)>0$ for all $j\geq0$. Let now $\jz\geq0$ be a fixed index such that the number $L(\jz)$ is minimal and that $\jz$ is minimal with this property. Notice that the assumption of the lemma asserts that $L(\jz)<\D$. Further, we know that $\jz<\frac{d+r_y}{c!}$ by Lemma \ref{stabilizing_coordinate_changes}.

 We are now going to prove that
 \[\ord \wt f_{0,c\jz}=c\B(L(\jz)-\jz\sd\B).\]
 To this end, let $k\geq0$, $l\leq c\cdot\jz$ and $\alpha\in\N^k$ with $|\alpha|=c\cdot\jz-l$ be indices. Notice that 
 \[\ord g_\alpha=\sum_{o=1}^{k}\underbrace{L(\alpha_o)}_{\geq L(\jz)}-\alpha_o\sd\]
 \[\geq k L(\jz)-(c\cdot\jz-l)\sd.\]
 If $k<c$, then we can compute that
 \[\ord f_{k,l}g_\alpha\geq (c-k)\underbrace{\D}_{>L(b)}-l\sd+kL(\jz)-(c\cdot\jz-l)\sd\]
 \[>c\B(L(\jz)-\jz\sd\B).\]
 On the other hand, if $k\geq c$ and $(k,l)\neq(c,0)$, then it is clear that
 \[\ord f_{k,l}g_\alpha\geq\ord g_\alpha\geq kL(\jz)-(c\cdot\jz-l)\sd\]
 \[=c\B(L(\jz)-\jz\sd\B)+\underbrace{(c-k)L(\jz)+l\sd}_{>0}.\]
 Now consider the indices $k=c$ and $l=0$. By assumption, $\ord f_{c,0}=0$. Let $\alpha\in\N^c$ be a multi-index such that $|\alpha|=c\cdot\jz$. If $\alpha\neq(\jz,\ldots,\jz)$, we know by the minimality of $\jz$ that
 \[\ord f_{c,0}g_\alpha=\ord g_\alpha>c\B(L(\jz)-\jz\sd\B).\]
 On the other hand, it is clear that
 \[\ord f_{c,0}g_\jz^c=c\B(L(\jz)-\jz\sd\B).\]
 This concludes the proof that $\ord \wt f_{0,c\jz}=c(L(\jz)-\jz\sd)$. Since $L(\jz)<\D$ and $c\cdot\jz<\frac{c}{c!}(d+r_y)$, this proves $\wt s<s$ by Lemma \ref{s_changes} (3).
 
 It follows from statement (2) that $\ord g_\jz=\wt \D-j\wsd$. It is then a straightforward computation to determine the exact value of $\wt s$ from Lemma \ref{ord_J_2}.
\end{proof}


\subsection{The secondary $\ord$-cleanness property}


\begin{definition} \label{def_s-clean}
 Let $f\in J$ be an element that is $z$-regular of order $c$.
 
 Then $f$ is said to be \emph{secondary $\ord$-clean with respect to $J_{-2}$} if for each index $\jz<\frac{d+r_y}{c!}$ one of the following properties holds:
 \begin{enumerate}[$(i)_\jz$]
  \item There exist indices $i,j$ with $c-q<i<c$ and $j\leq (c-i)\jz$ such that
  \[\ord f_{i,j}=(c-i)\D-j\sd.\]
  \item $\ord f_{c-q,\jz q}>q\D-\jz q\sd.$
  \item There is no element $G\in K[[\x]]$ such that
  \[\init(f_{c-q,\jz q})=\init(f_{c,0})\cdot G^q.\]
 \end{enumerate}
 The definition depends on the entire regular system of parameters $(\x,y,z)$. This will be suppressed in the notation since the parameters are considered to be part of the setting.
\end{definition}

\begin{remark}
 \begin{enumerate}[(1)]
  \item If $q=1$, then $f$ being secondary $\ord$-clean is equivalent to
  \[\ord f_{c-1,b}>\D-\jz\sd\]
  for all indices $\jz<\frac{d+r_y}{c!}$. In particular, this is fulfilled if $f_{c-1}=0$.
  \item Consider a purely inseparable power series 
  \[f=z^{p^e}+F(\x,y)\]
  where $p$ is the characteristic of $K$ and $F$ has the expansion $F=\sum_{j\geq0}F_jy^j$ with $F_j\in K[[\x]]$. Then $f$ being secondary $\ord$-clean is equivalent to the fact that for all indices $\jz<\frac{d+r_y}{c!}$ either
  \[\ord F_{\jz p^e}>p^e\D-\jz p^e\sd\]
  holds or the initial form $\init(F_{\jz p^e})$ is not a $p^e$-th power. In particular, this is fulfilled if no $p^e$-th powers appear in the expansion of $F(\x,y)$.
 \end{enumerate}
\end{remark}

We will show in Proposition \ref{s_cleaning_maximizes_slope} that the existence of an element $f\in J$ which is secondary $\ord$-clean with respect to $J_{-2}$ ensures that $\ord J_{-2}$ is maximal over all coordinate changes $z\mapsto z+g(\x,y)$ which preserve the setting. As a preparation, we first prove two technical lemmas. Lemma \ref{too_big_j} will enable us to assume without loss of generality that the coefficients $g_j$ are zero for $j\geq\frac{d+r_y}{c!}$. In Lemma \ref{s_clean_lemma} we will show which consequences the properties $(i)_\jz-(iii)_\jz$ have on coefficients of the power series expansion of $f$ with respect to the parameters $(\x,y,\wt z)$.

\begin{lemma} \label{too_big_j}
 Assume that $\ord \wt J_{-2}\geq\ord J_{-2}$ holds. Let $f\in J$ be an element which is $z$-regular of order $c$. Consider the coordinate change $z=\wh z+\wh g$ where $\wh g=\sum_{j<\frac{d+r_y}{c!}}g_jy^j$. This coordinate change preserves the setting. Set $\wh J_{-1}=\coeff_{(\x,y,\wh z)}^c(J)$ and consider the factorization $\wh J_{-1}=(\x^ry^{r_y})\cdot \wh I_{-1}$ where $\ord\wh I_{-1}=d$. Set $\wh J_{-2}=\coeff_{(\x,y)}^d(\wh I_{-1})$ and $\wh s=\ord\wh J_{-2}$.
 
 Then the identity $\ord\wh J_{-2}=\ord \wt J_{-2}$ holds.
\end{lemma}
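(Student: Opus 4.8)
The plan is to isolate the part of the correction $\wh g$ that genuinely affects the second coefficient ideal. Write $g=\wh g+g'$, where $g'=\sum_{j\geq (d+r_y)/c!}g_jy^j$ is the $y$-adic tail of $g$; then $\wh z=\wt z+g'$, so $\wh z$ and $\wt z$ are related by a coordinate change whose correction power series is $\pm g'$, and the only nonvanishing coefficients of $g'$ are the $g_j$ with $j\geq (d+r_y)/c!$. Since $\wh g$ is a sub-sum of $g$, one has $\w(\wh g)\geq\w(g)$ for each of the weighted orders $\w=\ord,\ \ord_{(x_k)},\ \ord_{(y)}$. Since $J$ contains an element $f$ which is $z$-regular of order $c$ and the change $z=\wt z+g$ preserves the setting, Lemma \ref{stabilizing_coordinate_changes} (1) gives $\ord g_j\geq |r|/c!$ for every $j$; hence every monomial of $g'$ has total order at least $|r|/c!+(d+r_y)/c!=(\ord J_{-1})/c!$.

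First I would check that the change $z=\wh z+\wh g$ preserves the setting, i.e.\ that $\wh J_{-1}=(\x^ry^{r_y})\cdot\wh I_{-1}$ with $\ord \wh I_{-1}=d$. For this I would apply Lemma \ref{m_under_coord_changes} to the change $z=\wh z+\wh g$ with $\w$ equal to $\ord$, to each $\ord_{(x_k)}$, and to $\ord_{(y)}$. To see that the monomial factor $\x^ry^{r_y}$ survives, one argues by cases for each such $\w$: either $\w(\wh g)\geq\w(J_{-1})/c!$, and then $\w(\wh J_{-1})\geq\w(J_{-1})$ is at least the relevant exponent by Lemma \ref{m_under_coord_changes} (1); or $\w(\wh g)<\w(J_{-1})/c!$, and then Lemma \ref{m_under_coord_changes} (3) forces $\w(\wh J_{-1})=c!\cdot\w(\wh g)\geq c!\cdot\w(g)$, which is again at least the relevant exponent because the original change already preserved the setting. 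For the remaining claim $\ord \wh J_{-1}=\ord J_{-1}$ one combines the bound $\ord \wh g\geq (\ord J_{-1})/c!$ with Lemma \ref{m_under_coord_changes}; here one also invokes the inequality $\ord g_j\geq\D-j\sd$, which follows from the hypothesis $\ord \wt J_{-2}\geq\ord J_{-2}$ by the contrapositive of Lemma \ref{s_under_coord_changes} (3), together with the fact (Lemma \ref{z_regular_under_coord_change} (1)) that $f$ remains $\wh z$-regular of order $c$. I expect this bookkeeping for $\ord\wh J_{-1}$ to be the delicate point.

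Granting that the setting is preserved, the equality $\ord \wh J_{-2}=\ord \wt J_{-2}$ falls out of two applications of Lemma \ref{s_under_coord_changes} (1). Apply it first with base point $\wt z$ and target $\wh z$: the correction power series has coefficients of order $\ord g'_j$, which vanish for $j<(d+r_y)/c!$ and, for $j\geq (d+r_y)/c!$, satisfy $\ord g'_j=\ord g_j\geq |r|/c!\geq \wt \D-j\frac{\wt s}{d!}$, the last inequality because $\wt\D-j\frac{\wt s}{d!}$ is nonincreasing in $j$ with value $|r|/c!$ at $j=(d+r_y)/c!$ and $\wt s\geq d!$ (Lemma \ref{ogeqc!}); hence $\ord \wh J_{-2}\geq\ord \wt J_{-2}$. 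Apply it next with base point $\wh z$ and target $\wt z$ — legitimate since the setting at $\wh z$ was just established — with the same coefficient orders: now for $j\geq (d+r_y)/c!$ one has $\ord g'_j\geq |r|/c!\geq \wh \D-j\frac{\wh s}{d!}$, using $\wh s\geq d!$, so $\ord \wt J_{-2}\geq\ord \wh J_{-2}$. The two inequalities together give $\ord \wh J_{-2}=\ord \wt J_{-2}$.
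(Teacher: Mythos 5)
Your proposal is correct and follows essentially the same route as the paper's proof: first use Lemma \ref{m_under_coord_changes} together with the contrapositive of Lemma \ref{s_under_coord_changes}~(3) (the latter giving $\ord g_j\geq\D-j\sd$) to check that $z=\wh z+\wh g$ preserves the setting, then pass to the tail coordinate change $\wh z=\wt z+g'$ and apply Lemma \ref{s_under_coord_changes}~(1) in both directions. The only cosmetic difference is that you verify the hypotheses for Lemma \ref{s_under_coord_changes}~(1) by directly checking $\ord g_j\geq |r|/c!\geq\wt\D-j\wt s/d!$ (resp.\ $\wh\D-j\wh s/d!$) for $j\geq(d+r_y)/c!$, whereas the paper routes the same inequality through Lemma \ref{stabilizing_coordinate_changes}~(2); and you sketch rather than fully carry out the bookkeeping for $\ord\wh J_{-1}=\ord J_{-1}$, though you identify exactly the tools the paper uses there.
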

\begin{proof}
 We first prove that the coordinate change preserves the setting. Since $\ord \wh g\geq\ord g$, $\ord_{(y)} \wh g\geq\ord_{(y)} g$ and $\ord_{(x_i)}\wh g\geq\ord_{(x_i)}g$ for all indices $i=1,\ldots,n$, it follows from Lemma \ref{m_under_coord_changes} that $\ord \wh J_{-1}\geq\ord J_{-1}$ and $\wh J_{-1}$ has the factorization $\wh J_{-1}=(\x^ry^{r_y})\cdot \wh I_{-1}$. It remains to prove that $\ord \wh J_{-1}=\ord J_{-1}$. Assume that $\ord \wh J_{-1}>\ord J_{-1}$ holds. By Lemma \ref{m_under_coord_changes} this implies that there exists an index $j<\frac{d+r_y}{c!}$ such that the equality
 \[\ord g_j+j=\frac{\ord J_{-1}}{c!}=\frac{d+r_y+|r|}{c!}\]
 holds. This implies that
 \[\ord g_j=\frac{d+r_y+|r|}{c!}-j\]
 \[=\delta-j\sd-\B(\frac{d+r_y}{c!}-j\B)\B(\sd-1\B)<\delta-j\sd\]
 by Lemma \ref{ogeqc!}. This contradicts the assumption $\ord \wt J_{-2}\geq\ord J_{-2}$ by Lemma \ref{s_under_coord_changes} (3).

  By Proposition \ref{w_cleaning_maximizes_w} we know that $\ord \wh J_{-1}=\ord J_{-1}$, hence $\ord \wh I_{-1}=d$ and the coordinate change preserves the setting.
 
 It remains to show that $\wh s=\wt s$. To this end, consider the change of coordinates $\wh z=\wt z+\sum_{j\geq\frac{d+r_y}{c!}}g_jy^j$. It follows from Lemma \ref{stabilizing_coordinate_changes} (2) in combination with Lemma \ref{s_under_coord_changes} (1) that $\wt s\geq\wh s$. Since the argument is independent of the value of $\wt s$, it follows by a symmetric argument that $\wh s\geq \wt s$. Hence, $\wh s=\wt s$.
\end{proof}

\begin{remark}
 In the following, we will also often make use of the negations of the properties $(i)_\jz-(iii)_\jz$. They can be formulated in the following way: 
 \begin{itemize}
  \item [$\neg(i)_\jz$] For all indices $i,j$ with $c-q<i<c$ and $j\leq (c-i)\jz$ the following strict inequality holds:
  \[\ord f_{i,j}>(c-i)\D-j\sd.\]
  \item [$\neg(ii)_\jz$] $\ord f_{c-q,\jz q}=q\D-\jz q\sd.$
  \item [$\neg(iii)_\jz$] There is an element $G\in K[[\x]]$ such that $\init(f_{c-q,\jz q})=\init(f_{c,0})\cdot G^q$.
 \end{itemize}
\end{remark}


\begin{lemma} \label{s_clean_lemma}
 Let $f\in J$ be an element that is $z$-regular of order $c$. Let $g$ have the expansion $g=\sum_{j=0}^{\jz}g_jy^j$ where $g_\jz\neq0$. Assume that for all indices $j\leq\jz$ either $g_j=0$ or $\ord g_j=\D-j\sd$ holds. Let $\jzz$ be an index with $\jzz\leq\jz$. Then the following hold:
 \begin{enumerate}[(1)]
  \item If $(i)_\jzz$ holds and the index $c-q<i<c$ is maximal with the property that there exists an index $j\leq(c-i)\jzz$ such that
  \[\ord f_{i,j}=(c-i)\D-j\sd\]
  holds, then $\ord\wt f_{i,j}=\ord f_{i,j}$.
  \item If $\neg(i)_\jz$ and $(ii)_{\jz}$ hold, then 
  \[\ord\wt f_{c-q,\jz q}=q\D-\jz q\sd.\]
  \item If $\neg(i)_\jz$ and $\neg(ii)_\jz$ hold, then either
  \[\init(\wt f_{c-q,\jz q})=\init(f_{c-q,\jz q})+\binom{c}{q}\init(f_{c,0})\cdot \init(g_\jz)^{q}\]
  or the right-hand term vanishes and 
  \[\ord \wt f_{c-q,\jz q}>q\D-\jz q\sd.\]
  \item If $\jzz<\jz$ and $\neg(i)_\jzz$ holds, then 
  \[\wt f_{c-q,\jzz q}=f_{c-q,\jzz q}+G\]
  for an element $G\in K[[\x]]$ with
  \[\ord G>q\D-\jzz q\sd.\]
 \end{enumerate}
\end{lemma}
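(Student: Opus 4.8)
\textbf{Plan of proof for Lemma~\ref{s_clean_lemma}.}
The proof will be a bookkeeping argument on the expansion formula from Lemma~\ref{coordinate_change_g}~(1), which in our situation (coordinate change $z=\wt z+g$ with $g=\sum_{j\leq\jz}g_jy^j$, so the expansion variable is $z$ rather than $y$) reads
\[\wt f_{i,j}=f_{i,j}+\sum_{\substack{k>i\\0\leq l\leq j}}\binom{k}{i}f_{k,l}\sum_{\substack{\alpha\in\N^{k-i}\\|\alpha|=j-l}}g_\alpha.\]
The key quantitative inputs are: $\ord f_{k,l}\geq(c-k)\D-l\sd$ for all $f\in J$ (Lemma~\ref{ord_J_2}), $\ord g_{\alpha_o}=\D-\alpha_o\sd$ whenever $g_{\alpha_o}\neq0$, hence $\ord g_\alpha\geq(k-i)\D-(j-l)\sd$ for $|\alpha|=j-l$ and $\alpha\in\N^{k-i}$, and $\ord f_{c,0}=0$ since $f$ is $z$-regular of order $c$. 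Combining the first two estimates gives $\ord f_{k,l}g_\alpha\geq(c-i)\D-j\sd$ in complete analogy with the computation in Lemma~\ref{s_under_coord_changes}~(1). So the ``error terms'' in the formula for $\wt f_{i,j}$ all have order $\geq(c-i)\D-j\sd$, and the whole game is to identify precisely which of them achieve equality.

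For part~(1), I would take $i$ maximal in the range $c-q<i<c$ with some $j\leq(c-i)\jzz$ realizing $\ord f_{i,j}=(c-i)\D-j\sd$, fix such a $j$, and examine the error terms $f_{k,l}g_\alpha$ with $k>i$. When $k>i$ and $k\neq c$: if $i<k<c$ then $\ord f_{k,l}>(c-k)\D-l\sd$ by maximality of $i$ (using that the index realizing equality for level $k$ cannot exist, or rather that $\neg(i)_\jzz$-type strict inequality applies to level $k$), so the product has order $>(c-i)\D-j\sd$; if $k>c$ then $\ord g_\alpha\geq(k-i)\D-(j-l)\sd>(c-i)\D-j\sd$ crudely since $\D>0$. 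When $k=c$: here the binomial coefficient $\binom{c}{i}$ vanishes by Lemma~\ref{c-q}~(2) because $c-q<i<c$, so that term is absent entirely. Hence the unique minimal-order contribution to $\wt f_{i,j}$ is $f_{i,j}$ itself, giving $\ord\wt f_{i,j}=\ord f_{i,j}$ (and in fact $\init(\wt f_{i,j})=\init(f_{i,j})$, which is what one actually needs downstream). Parts~(2) and~(3) are the analogues at level $i=c-q$ and the specific coefficient index $j=\jz q$: here $\binom{c}{c-q}=\binom{c}{q}\neq0$ by Lemma~\ref{c-q}~(1), the term $\binom{c}{q}f_{c,0}g_\jz^{q}$ has order exactly $q(\D-\jz\sd)$ since $f_{c,0}$ is a unit and $\alpha=(\jz,\dots,\jz)$ is the only multi-index contributing at that order (minimality/uniqueness of $\jz$ in the hypothesis), all other error terms have strictly larger order (the case analysis on $k$ is exactly as in Lemma~\ref{s_under_coord_changes}~(3)), and then one distinguishes whether $f_{c-q,\jz q}$ also has order $q(\D-\jz\sd)$, i.e.\ whether $\neg(ii)_\jz$ or $(ii)_\jz$ holds; in the $(ii)_\jz$ case $\ord f_{c-q,\jz q}>q(\D-\jz\sd)$ so $f_{c,0}g_\jz^q$ dominates and (2) follows, while in the $\neg(ii)_\jz$ case the two leading forms add and either survive or cancel, giving the dichotomy in~(3).

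For part~(4), the point is that we are now looking at the coefficient index $j=\jzz q$ with $\jzz<\jz$, and $\neg(i)_\jzz$ tells us that \emph{all} error contributions have strictly larger order than the bound. Write $\wt f_{c-q,\jzz q}=f_{c-q,\jzz q}+G$ where $G$ collects the sum $\sum_{k>c-q,\,l\leq\jzz q}\binom{k}{c-q}f_{k,l}\sum_{|\alpha|=\jzz q-l}g_\alpha$. For $k>c-q$, $k\neq c$: the product has order $\geq q\D-\jzz q\sd$ by the general estimate, and strict inequality follows exactly as in part~(1) (for $c-q<k<c$ use $\neg(i)_\jzz$ applied to level $k$, which asserts strict inequality for $\ord f_{k,l}$ with $l\leq(c-k)\jzz$ — noting $\jzz q-l\leq(k-(c-q))\jzz$ forces $l\leq(c-k)\jzz$ after a short index chase; for $k>c$ use $\D>0$). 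For $k=c$, $l=0$: the term is $\binom{c}{q}f_{c,0}\sum_{|\alpha|=\jzz q}g_\alpha$, and since $\jzz<\jz$, no $\alpha\in\N^q$ with $|\alpha|=\jzz q$ can have all entries equal to a value $j'$ with $g_{j'}\neq 0$ realizing the minimal order unless that minimal level is $\leq\jzz$, but one checks using the hypothesis ``$\ord g_j=\D-j\sd$ or $g_j=0$'' together with the strict version of the estimate in Lemma~\ref{stabilizing_coordinate_changes}~(2)-style reasoning that $\ord\sum_{|\alpha|=\jzz q}g_\alpha>q\D-\jzz q\sd$; for $k=c$, $l>0$: $\ord f_{c,l}\geq-l\sd+c\D$... wait, rather $\ord f_{c,l}\geq(c-c)\D-l\sd$ is vacuous, so one instead bounds $\ord f_{c,l}g_\alpha\geq\ord g_\alpha$ with $|\alpha|=\jzz q-l<\jzz q$, which has order $>q\D-\jzz q\sd$ again by the same strict comparison. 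Assembling these, $\ord G>q\D-\jzz q\sd$, which is the claim.

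\textbf{Main obstacle.} The genuinely delicate point — and the one I would spend the most care on — is part~(4): proving the strict inequality $\ord G>q\D-\jzz q\sd$ requires ruling out \emph{every} near-extremal error term, including the $k=c$ diagonal terms $f_{c,0}g_\alpha$, and this is exactly where the uniqueness/minimality of $\jz$ among the indices with $\ord g_j+j\sd$ minimal (inherited from how $g$ was produced in the cleaning construction) must be invoked carefully; a sloppy index bound there collapses the argument. The remaining parts are routine once the case analysis on $k$ (the three ranges $i<k<c$, $k=c$, $k>c$) and the two applications of Lemma~\ref{c-q} (vanishing of $\binom{c}{i}$ for $c-q<i<c$, non-vanishing of $\binom{c}{q}$) are in place.
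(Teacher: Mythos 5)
Your overall plan — bookkeep the expansion
\[\wt f_{i,j}=f_{i,j}+\sum_{\substack{k>i\\0\leq l\leq j}}\binom{k}{i}f_{k,l}\sum_{\substack{\alpha\in\N^{k-i}\\|\alpha|=j-l}}g_\alpha,\]
combine the bound $\ord f_{k,l}\geq(c-k)\D-l\sd$ with $\ord g_\alpha\geq(k-i)\D-(j-l)\sd$, and then locate exactly which products are extremal, using Lemma \ref{c-q} to kill $\binom{c}{i}$ for $c-q<i<c$ and keep $\binom{c}{c-q}$ — is precisely the paper's strategy and your treatment of parts (1)--(3) is essentially correct. (Your shortcut $\sum_{|\alpha|=\jz q}g_\alpha=g_\jz^q$ via minimality of $\jz$ replaces the paper's invocation of Lemma \ref{char_p_sum_lemma}; both are fine.)

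The genuine gap is in part (4), and you have correctly flagged it as the delicate step, but your proposed resolution misses the clean argument and starts to come apart (``wait, rather \dots''). You try to \emph{bound the order} of the $k=c$ cross-terms $\binom{c}{q}f_{c,0}\sum_{|\alpha|=\jzz q-l}g_\alpha$, and you gesture vaguely at a ``strict version of Lemma \ref{stabilizing_coordinate_changes}~(2)-style reasoning''. But in fact these terms \emph{vanish identically}, and the same index chase you used elsewhere shows it immediately: if $g_\alpha\neq0$ for $\alpha\in\N^{k-(c-q)}$, then each $\alpha_o\geq\jz$ (minimality of $\jz$), hence $|\alpha|\geq(k-(c-q))\jz>(k-(c-q))\jzz$ by the strict inequality $\jz>\jzz$. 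Since $|\alpha|=\jzz q-l$, this forces $l<(c-k)\jzz$, hence $k<c$. So the $k=c$ range contributes nothing whatsoever to $G$, there is no diagonal term $f_{c,0}g_\alpha$ to worry about, and only the ``safe'' range $c-q<k<c$ survives, where $\neg(i)_\jzz$ gives the strict bound. In other words, the case that worries you most does not occur, and the strictness of $\jz>\jzz$ is exactly what makes it disappear. The same index chase (with $\jzz\leq\jz$ giving non-strict bounds) is also what the paper uses in parts (1) and (2) to conclude $k\leq c$ directly, sidestepping your separate estimate for $k>c$ — your ``crude'' $\D>0$ argument there is correct, just not needed. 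Finally, a small conceptual correction in part (1): the hypothesis there is $(i)_\jzz$, not $\neg(i)_\jzz$; what gives $\ord f_{k,l}>(c-k)\D-l\sd$ for $i<k<c$ is the \emph{maximality} of $i$, and it applies only once you have verified (via the index chase) that $l\leq(c-k)\jzz$.
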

\begin{proof}
 (1): We want to determine $\ord \wt f_{i,j}$. Thus, let $k,l,\alpha$ be indices such that $k>i$, $l\leq j$ and $\alpha\in\N^{k-i}$ with $|\alpha|=j-l$. It is clear that either $g_\alpha=0$ or
 \[\ord g_\alpha=(k-i)\D-(j-l)\sd.\]
 Assume that $g_\alpha\neq0$. Since $\jz$ is minimal with $g_\jz\neq0$ and $\jzz\leq\jz$, we know that
 \[j-l=|\alpha|\geq (k-i)\jz\geq (k-i)\jzz.\]
 Since $j\leq (c-i)\jzz$, this implies that $l\leq (c-k)\jzz$. Thus, $k\leq c$. By maximality of $i$, we know that either $k<c$ and
 \[\ord f_{k,l}g_\alpha>(c-k)\D-l\sd+(k-i)\D-(j-l)\sd\]
 \[=(c-i)\D-j\sd=\ord f_{i,j}\]
 or $k=c$. But since $\binom{c}{i}=0$ by Lemma \ref{c-q} (2), this proves that $\ord \wt f_{i,j}=\ord f_{i,j}$.
 
 (2): By property $(ii)_\jz$ we know that
 \[\ord f_{c-q,\jz q}>q\D-\jz q\sd.\]
 Let $k,l,\alpha$ be indices such that $k>c-q$, $l\leq \jz q$ and $\alpha\in\N^{k-(c-q)}$ with $|\alpha|=\jz q-l$. Again, it is clear that either $g_\alpha=0$ or
 \[\ord g_\alpha=(k-(c-q))\D-(\jz q-l)\sd.\]
 Assume that $g_\alpha\neq0$. Since $\jz$ is minimal with $g_\jz\neq0$, this implies that $l\leq (c-k)\jz$ in the same way we showed before. Thus, $k\leq c$. Since $\neg(i)_\jz$ holds, we know that either
 \[\ord f_{k,l}g_\alpha>(c-k)\D-l\sd+(k-(c-q))\D-(\jz q-l)\sd\]
 \[=q\D-\jz q\sd\]
 holds or $k=c$. Notice that $\binom{c}{c-q}\neq0$ by Lemma \ref{c-q} (1). By Lemma \ref{char_p_sum_lemma} we know that
 \[f_{c,0}\sum_{\substack{\alpha\in\N^q\\|\alpha|=\jz q}}g_\alpha=f_{c,0}g_\jz^q\]
 and by assumption
 \[\ord f_{c,0}g_\jz^q=q\B(\D-\jz\sd\B)=q\D-\jz q\sd.\]
 This proves that
 \[\init(\wt f_{c-q,\jz q})=\binom{c}{q}\init(f_{c,0})\cdot \init(g_\jz)^q\]
 and consequently,
 \[\ord \wt f_{c-q,\jz q}=q\D-\jz q\sd.\]
 
 (3): This can be proved in the same way as statement (2).
 
 (4): Let $k,l,\alpha$ be indices such that $k>c-q$, $l\leq \jzz q$ and $\alpha\in\N^{k-(c-q)}$ with $|\alpha|=\jzz q-l$. Again, it is clear that either $g_\alpha=0$ or
 \[\ord g_\alpha=(k-(c-q))\D-(\jzz q-l)\sd.\]
 Assume that $g_\alpha\neq0$. Since $\jz$ is minimal with $g_\jz\neq0$ and $\jzz<\jz$, this implies that
 \[\jzz q-l=|\alpha|\geq (k-(c-q))\jz> (k-(c-q))\jzz.\]
 Consequently, $l<(c-k)\jzz$. Thus, $k<c$. But by property $\neg(i)_\jzz$ we know that 
 \[\ord f_{k,l}g_\alpha>(c-k)\D-l\sd+(k-(c-q))\D-(\jzz q-l)\sd\]
 \[=q\D-\jzz q\sd.\]
 This proves the assertion.
\end{proof}


\begin{proposition} \label{s_cleaning_maximizes_slope}
 Let $f\in J$ be an element that is secondary $\ord$-clean with respect to $J_{-2}$. Then $\ord \wt J_{-2}\leq \ord J_{-2}$.
\end{proposition}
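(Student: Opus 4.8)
\textbf{Proof plan for Proposition \ref{s_cleaning_maximizes_slope}.}

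The plan is to mimic the structure of the proof of Proposition \ref{w_cleaning_maximizes_w}, but now working with the bivariate coefficients $f_{i,j}$ and the auxiliary quantity $\D$ in place of the linear quantity $\frac{m}{c!}$. First I would make the key reduction: by Lemma \ref{s_under_coord_changes}(1)--(2) and Lemma \ref{stabilizing_coordinate_changes}, I may assume without loss of generality that the coordinate change $z=\wt z+g$ preserving the setting satisfies $\ord g_j=\D-j\sd$ whenever $g_j\neq 0$ (if $\ord g_j>\D-j\sd$ for all $j$ then $\ord\wt J_{-2}=\ord J_{-2}$ and we are done by statement (2)); and by Lemma \ref{too_big_j} I may assume $g_j=0$ for all $j\geq\frac{d+r_y}{c!}$, so $g=\sum_{j\leq\jz}g_jy^j$ for some $\jz<\frac{d+r_y}{c!}$ with $g_\jz\neq 0$. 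By Lemma \ref{ord_J_2} and Lemma \ref{s_changes}(1), it now suffices to produce, for this $\jz$, indices $i<c$ and $j<\frac{c-i}{c!}(d+r_y)$ with $\ord\wt f_{i,j}=(c-i)\D-j\sd$, since that forces $\wt s\leq s$.

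Next I would invoke the secondary $\ord$-cleanness hypothesis at the index $\jzz=\jz$: one of $(i)_\jz$, $(ii)_\jz$, $(iii)_\jz$ holds. In the case $(i)_\jz$: pick $c-q<i<c$ maximal admitting an index $j\leq(c-i)\jz$ with $\ord f_{i,j}=(c-i)\D-j\sd$; then Lemma \ref{s_clean_lemma}(1) gives $\ord\wt f_{i,j}=\ord f_{i,j}=(c-i)\D-j\sd$, and since $j\leq(c-i)\jz<\frac{c-i}{c!}(d+r_y)$ this is exactly the witness we need. In the case $\neg(i)_\jz$ and $(ii)_\jz$: Lemma \ref{s_clean_lemma}(2) gives $\ord\wt f_{c-q,\jz q}=q\D-\jz q\sd$; here $i=c-q$, $j=\jz q$, and $\jz q<\frac{q}{c!}(d+r_y)$ because $\jz<\frac{d+r_y}{c!}$, so again we have the witness. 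In the case $\neg(i)_\jz$, $\neg(ii)_\jz$, $(iii)_\jz$: Lemma \ref{s_clean_lemma}(3) says either $\ord\wt f_{c-q,\jz q}=q\D-\jz q\sd$ — which is the witness — or
\[
\init(f_{c-q,\jz q})+\binom{c}{q}\init(f_{c,0})\cdot\init(g_\jz)^q=0,
\]
which, since $\binom{c}{q}$ is a nonzero element of $K$ by Lemma \ref{c-q}(1), would exhibit $\init(f_{c-q,\jz q})$ as $\init(f_{c,0})$ times a $q$-th power, contradicting $(iii)_\jz$; so the first alternative must hold.

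The main obstacle I anticipate is bookkeeping rather than conceptual difficulty: one must be careful that the reductions are legitimate (in particular that after truncating $g$ via Lemma \ref{too_big_j} the setting is still preserved and $\ord\wt J_{-2}$ is unchanged — this is exactly what that lemma provides, provided one has first arranged $\ord\wt J_{-2}\geq\ord J_{-2}$, which holds under the running assumption $\ord g_j\geq\D-j\sd$ via Lemma \ref{s_under_coord_changes}(1)), and that the normalization ``$g_j=0$ or $\ord g_j=\D-j\sd$'' is consistent with Lemma \ref{s_clean_lemma}'s hypotheses. Once these are in place, the three cases close immediately from Lemma \ref{s_clean_lemma}, exactly paralleling the characteristic-zero and purely-inseparable heuristics, so the proof is short. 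I would conclude by noting that by a symmetric argument (which is not needed here, since only the inequality $\leq$ is claimed) one in fact gets equality in the generic situation, but the statement only requires $\ord\wt J_{-2}\leq\ord J_{-2}$.
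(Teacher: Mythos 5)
Your plan is the paper's proof in structure: normalize $g$, select a distinguished index $\jz$, split on which of $(i)_\jz$--$(iii)_\jz$ holds, and close each case via Lemma \ref{s_clean_lemma} and Lemma \ref{s_changes}(2) (not (1), as you wrote). There is, however, a genuine slip in the choice of $\jz$: you take $\jz$ to be the \emph{largest} index with $g_\jz\neq 0$, while the argument needs the \emph{smallest}. Minimality is exactly what controls the cross-terms in the expansion of $\wt f_{i,j}$: for any nonzero $g_\alpha=\prod_o g_{\alpha_o}$ with $\alpha\in\N^{k-i}$ and $|\alpha|=j-l$, each $\alpha_o\geq\jz$, hence $|\alpha|\geq(k-i)\jz$, hence $l=j-|\alpha|\leq(c-k)\jz$ and consequently $k\leq c$; only these bounds let the cleanness hypotheses (the maximality of $i$ in $(i)_\jz$, or $\neg(i)_\jz$) force strict inequalities for the contributing $f_{k,l}$. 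With $\jz$ maximal the $\alpha_o$ have no lower bound, so $l$ and $k$ are unconstrained and the case analysis breaks down.

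You were probably misled by the display ``$g=\sum_{j=0}^\jz g_jy^j$ where $g_\jz\neq0$'' in Lemma \ref{s_clean_lemma}, which does read as maximality; but the proof of that lemma and the paper's proof of this proposition both explicitly invoke ``$\jz$ minimal with $g_\jz\neq0$,'' and minimality is the reading that the computations require. With $\jz$ taken minimal, the remainder of your write-up matches the paper's argument verbatim. One further small point: the initial reduction should explicitly dispose of the possibility that some $\ord g_j<\D-j\sd$, via Lemma \ref{s_under_coord_changes}(3) and the $z$-regularity of $f$.
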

\begin{proof}
 By Lemma \ref{s_under_coord_changes} and Lemma \ref{too_big_j} we can assume without loss of generality that $g=\sum_{j<\frac{d+r_y}{c!}}g_jy^j$ and for all indices $j<\frac{d+r_y}{c!}$ either $g_j=0$ or $\ord g_j=D-j\sd$ holds. Let $\jz$ be minimal with the property that $g_\jz\neq0$. Since $\jz<\frac{d+r_y}{c!}$, we know by assumption that one of the properties $(i)_\jz-(iii)_\jz$ holds. 
 
 Assume first that the property $(i)_\jz$ holds. Let $i<c$ be maximal with the property that there exists an index $j\leq (c-i)\jz$ such that
 \[\ord f_{i,j}=(c-i)\D-j\sd.\]
 By Lemma \ref{s_clean_lemma} (1) we know that $\ord \wt f_{i,j}=\ord f_{i,j}$. Since $j\leq (c-i)\jz<\frac{c-i}{c!}(d+r_y)$, this proves that $\wt s\leq s$ by Lemma \ref{s_changes} (2).
 
 Now assume that the properties $\neg(i)_\jz$ and $(ii)_\jz$ hold. By Lemma \ref{s_clean_lemma} (2) this implies that
 \[\ord\wt f_{c-q,\jz q}=q\D-\jz q\sd.\]
 Again, since $\jz q<\frac{q}{c!}(d+r_y)$, this proves $\wt s\leq s$ by Lemma \ref{s_changes} (2).
 
 Finally, assume that the properties $\neg(i)_\jz$, $\neg(ii)_\jz$ and $(iii)_\jz$ hold. Since the term
 \[\init(f_{c-q,\jz q})+\binom{c}{q}\init(f_{c,0})\cdot \init(g_\jz)^{q}\]
 cannot vanish by assumption, we know by Lemma \ref{s_clean_lemma} (3) that
 \[\ord \wt f_{c-q,\jz q}=\ord f_{c-q,\jz q}=qD-\jz q \sd.\]
 Again, this proves $\wt s\leq s$ by Lemma \ref{s_changes} (2).
\end{proof}

\subsection{The secondary $\ord$-cleaning process}

 Similarly to the last section, we will now develop a process to successively construct coordinate changes $z\mapsto z+g_\jz(\x)y^\jz$ that preserve the setting and increase the order of the second coefficient ideal until it reaches its maximal value and cleanness is achieved. Since secondary $\ord$-cleanness means that one of the properties $(i)_\jz-(iii)_\jz$ has to hold for all indices $\jz<\frac{d+r_y}{c!}$, this process is inherently more complicated than the $\w$-cleaning process. Not only do we have to find coordinate changes that ensure that one of the properties $(i)_\jz-(iii)_\jz$ holds for a fixed index $\jz$, we also have to make sure that we do not invalidate the properties we already achieved for other indices $\jzz<\frac{d+r_y}{c!}$. Our strategy for doing so will be to start our cleaning process with the minimal index $\jz$ for which none of the properties $(i)_\jz-(iii)_\jz$ hold and then successively raise $\jz$ until either cleanness is achieved or the order of the second coefficient ideal increases.

\begin{definition} \label{s_cleaning_step}
 Let $f\in J$ be an element that is $z$-regular of order $c$ and not secondary $\ord$-clean with respect to $J_{-2}$.
 
 Let $\jz<\frac{d+r_y}{c!}$ be the index that is minimal with the property that $\neg(i)_\jz$, $\neg(ii)_\jz$ and $\neg(iii)_\jz$ hold. Thus, there is an element $G\in K[[\x]]$ such that
 \[\init(f_{c-q,\jz q})=\init(f_{c,0})\cdot G^q.\]
 
 A \emph{secondary $\ord$-cleaning step} with respect to $f$ and $J_{-2}$ is defined as the coordinate change $z=\wt z+g_\jz y^\jz$ where
 \[g_\jz=-\binom{c}{q}^{-1}G\in K[[\x]].\]
 Notice that $g_\jz$ is by construction homogeneous and $\ord g_\jz=\D-\jz\sd$.
\end{definition}

 Naturally, a cleaning process for the second coefficient ideal only makes sense if each iteration of the process preserves the setting. In Lemma \ref{s_cleaning_step_preserves_setting} we will prove that a secondary $\ord$-cleaning step preserves the setting under the condition that there exists an element $f\in J$ which is $\ord$-clean with respect to the coefficient ideal $J_{-1}$. Hence, the $\ord$-cleanness of $f$ with respect to $J_{-1}$ will be a prerequisite for the secondary $\ord$-cleaning process. By Lemma \ref{s_cleaning_preserves_v_clean}, the $\ord$-cleanness of $f$ with respect to $J_{-1}$ is also preserved by a secondary $\ord$-cleaning step.
 
 In Proposition \ref{s_cleaning_improves_smthg} we will then show that secondary $\ord$-cleaning steps either increase the order of the second coefficient ideal or successively establish the properties $(i)_\jz-(iii)_\jz$ for the indices $\jz<\frac{d+r_y}{c!}$. This will enable us to prove in Proposition \ref{s_cleaning_terminates} that the cleaning process either terminates after finitely many steps or there is a coordinate change $z\mapsto \wt z+g(\x,y)$ which preserves the setting and for which $\ord \wt J_{-2}=\infty$ holds.

\begin{lemma} \label{s_cleaning_preserves_v_clean}
 Let $\w$ be a weighted order function on $K[[\x,y]]$ that is defined on the parameters $(\x,y)$. Then a secondary $\ord$-cleaning step $z=\wt z+g_\jz y^\jz$ preserves $\w$-cleanness. In particular, $\w(\wt J_{-1})\geq\w(J_{-1})$.
\end{lemma}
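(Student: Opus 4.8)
The statement to prove, Lemma \ref{s_cleaning_preserves_v_clean}, asserts that a secondary $\ord$-cleaning step $z = \wt z + g_\jz y^\jz$ preserves $\w$-cleanness for any weighted order function $\w$ on $K[[\x,y]]$ defined on the parameters $(\x,y)$, and in particular that $\w(\wt J_{-1})\geq\w(J_{-1})$. The natural route is to reduce everything to Lemma \ref{coord_changes_which_preserve_w_clean}, which already gives the conclusion for coordinate changes $z = \wt z + g$ whenever $g\in K[[\x,y]]$ satisfies either $\w(g)\geq\frac1q\w(f_{c-q})$ or $\w(g) > \frac{m}{c!}$ (with $m = \w(J_{-1})$). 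So the entire proof amounts to estimating the weighted order of the specific $g = g_\jz y^\jz$ produced by a secondary $\ord$-cleaning step and showing it meets one of those two thresholds. This is exactly the pattern used to prove Lemma \ref{w_cleaning_preserves_v_cleaning}, and I would follow that argument almost verbatim, adjusting for the extra $y$-factor.

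\textbf{Key steps.} First I recall the construction in Definition \ref{s_cleaning_step}: $g_\jz = -\binom{c}{q}^{-1}G$ where $G\in K[[\x]]$ satisfies $\init(f_{c-q,\jz q}) = \init(f_{c,0})\cdot G^q$, and $f_{c,0}$ is a unit (since $f$ is $z$-regular of order $c$). Here the relevant first coefficients, in the expansion $f = \sum f_i z^i$ with $f_i\in K[[\x,y]]$, are $f_{c-q} = \sum_j f_{c-q,j} y^j$; in particular $f_{c-q,\jz q} y^{\jz q}$ is one of its terms, so $\w(f_{c-q})\leq \w(f_{c-q,\jz q}) + \jz q\,\w(y)$. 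Since $f_{c,0}$ is a unit, $\w(\init(f_{c,0})) = 0$, so $q\,\w(G) = \w(\init(f_{c-q,\jz q})) \geq \w(f_{c-q,\jz q})$, and therefore
\[
\w(g_\jz y^\jz) = \w(G) + \jz\,\w(y) \geq \tfrac1q\w(f_{c-q,\jz q}) + \jz\,\w(y) \geq \tfrac1q\big(\w(f_{c-q,\jz q}) + \jz q\,\w(y)\big) \geq \tfrac1q\w(f_{c-q}),
\]
which is precisely the first hypothesis of Lemma \ref{coord_changes_which_preserve_w_clean}. Applying that lemma with the coordinate change $z = \wt z + g_\jz y^\jz$ yields both $\w(\wt J_{-1})\geq\w(J_{-1})$ and, if $f$ was $\w$-clean with respect to $J_{-1}$, the preservation of $\w$-cleanness. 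One caveat: Lemma \ref{coord_changes_which_preserve_w_clean} is phrased for coordinate changes $z = \wt z + g$ with $g\in K[[\x]]$, whereas here $g = g_\jz y^\jz \in K[[\x,y]]$; I would note that $\w$ is a weighted order function defined on all of $(\x,y)$, so the same computations in the proof of Lemma \ref{coord_changes_which_preserve_w_clean} (which only use the formula for $\wt f_i$ from Lemma \ref{coordinate_change_g_z}(1) and multiplicativity/subadditivity of $\w$) go through unchanged with $K[[\x]]$ replaced by $K[[\x,y]]$ throughout — alternatively invoke the $\w_+$-trick from the final remark of Section \ref{section_w_cleaning} if $\w(J_{-1}) = 0$.

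\textbf{Main obstacle.} There is no serious obstacle here — this is a short bookkeeping lemma. The only point requiring a modicum of care is the inequality chain above, specifically making sure the term $f_{c-q,\jz q} y^{\jz q}$ genuinely contributes to $f_{c-q}$ with the right weight and that no lower-weight term of $f_{c-q}$ could force $\w(f_{c-q}) < \w(f_{c-q,\jz q}) + \jz q\,\w(y)$ — but that is harmless, since we only need the $\leq$ direction $\w(f_{c-q})\leq \w(f_{c-q,\jz q}) + \jz q\,\w(y)$ to get $\w(g_\jz y^\jz)\geq\frac1q\w(f_{c-q})$, and that direction always holds because $f_{c-q,\jz q} y^{\jz q}$ is literally one of the summands of $f_{c-q}$. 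So the proof is essentially: compute $\w(g_\jz y^\jz)$, verify it clears the threshold, cite Lemma \ref{coord_changes_which_preserve_w_clean}, done.
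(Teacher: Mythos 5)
Your proof is correct and follows essentially the same line as the paper's: estimate $\w(g_\jz y^\jz)$ from the defining equation $\init(f_{c-q,\jz q}) = \init(f_{c,0})\cdot G^q$ (using that $f_{c,0}$ is a unit so $\w(\init(f_{c,0}))=0$), obtain the chain $\w(g_\jz y^\jz) \geq \tfrac1q\w(f_{c-q,\jz q}y^{\jz q}) \geq \tfrac1q\w(f_{c-q})$, and invoke Lemma \ref{coord_changes_which_preserve_w_clean}. Your caveat about $g = g_\jz y^\jz$ lying in $K[[\x,y]]$ rather than $K[[\x]]$ is a sensible observation but not a divergence: the paper implicitly applies Lemma \ref{coord_changes_which_preserve_w_clean} in $K[[\x,y,z]]$ with $(\x,y)$ playing the role of the non-$z$ parameters, which is exactly what you describe.
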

\begin{proof}
 By definition, there is an element $G\in K[[\x]]$ such that 
 \[\init(f_{c-q,\jz q})=\init(f_{c,0})\cdot G^q\]
 and $\w(g_\jz)=\w(G)$. Since $\w(f_{c,0})=0$, we know that 
 \[\w(g_{\jz})=\frac{1}{q}\w(\init(f_{c-q,q\jz}))\geq\frac{1}{q}\w(f_{c-q,q\jz}).\]
 Thus, we can compute that
 \[\w(g_\jz y^\jz)=\w(g_\jz)+\jz\cdot\w(y)\geq\frac{1}{q}\w(f_{c-q,q\jz}y^{q\jz})\geq \frac{1}{q}\w(f_{c-q}).\]
 This proves the assertion by Lemma \ref{coord_changes_which_preserve_w_clean}.
\end{proof}

\begin{lemma} \label{s_cleaning_step_preserves_setting}
 If there exists an element $f\in J$ that is $\ord$-clean with respect to $J_{-1}$, then a secondary $\ord$-cleaning step $z=\wt z+g_\jz y^\jz$ preserves the setting.
\end{lemma}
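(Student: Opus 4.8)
The plan is to unpack the meaning of ``preserves the setting'' and verify it in two stages. Recall that the coordinate change $z=\wt z+g_\jz y^\jz$ preserves the setting precisely when the transformed coefficient ideal $\wt J_{-1}=\coeff^c_{(\x,y,\wt z)}(J)$ admits a factorization $\wt J_{-1}=(\x^r y^{r_y})\cdot\wt I_{-1}$ with the \emph{same} exponents $r$ and $r_y$ and with $\ord\wt I_{-1}=d$. So I would first show that the monomial $\x^r y^{r_y}$ still divides $\wt J_{-1}$, which automatically produces an ideal $\wt I_{-1}$ with $\wt J_{-1}=(\x^r y^{r_y})\cdot\wt I_{-1}$, and then separately show $\ord\wt I_{-1}=d$.

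For the divisibility the key tool is Lemma \ref{s_cleaning_preserves_v_clean}: a secondary $\ord$-cleaning step preserves $\w$-cleanness for every weighted order function $\w$ on $K[[\x,y]]$ defined on the parameters $(\x,y)$, and in particular it satisfies $\w(\wt J_{-1})\geq\w(J_{-1})$ --- and this last inequality needs nothing beyond the step being a genuine secondary $\ord$-cleaning step, i.e.\ beyond $f$ being $z$-regular of order $c$. Applying it with $\w=\ord_{(x_i)}$ for $i=1,\ldots,n$ and with $\w=\ord_{(y)}$ yields
\[\ord_{(x_i)}\wt J_{-1}\geq\ord_{(x_i)}J_{-1}\geq r_i,\qquad \ord_{(y)}\wt J_{-1}\geq\ord_{(y)}J_{-1}\geq r_y,\]
where the final inequality in each chain holds simply because $\x^r y^{r_y}$ divides $J_{-1}$. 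Since $x_1^{r_1},\ldots,x_n^{r_n},y^{r_y}$ are pairwise coprime monomials in the power series ring $K[[\x,y]]$, every element of $\wt J_{-1}$ is then divisible by $\x^r y^{r_y}$, so we may write $\wt J_{-1}=(\x^r y^{r_y})\cdot\wt I_{-1}$ for the ideal $\wt I_{-1}=(\wt J_{-1}:\x^r y^{r_y})$.

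For the order of $\wt I_{-1}$ I would use the hypothesis. Applying Lemma \ref{s_cleaning_preserves_v_clean} once more with $\w$ the usual order function gives $\ord\wt J_{-1}\geq\ord J_{-1}$; on the other hand, since $J$ contains an element that is $\ord$-clean with respect to $J_{-1}$, Proposition \ref{w_cleaning_maximizes_w} gives $\ord\wt J_{-1}\leq\ord J_{-1}$. Hence $\ord\wt J_{-1}=\ord J_{-1}$, and using $\wt J_{-1}=(\x^r y^{r_y})\cdot\wt I_{-1}$ together with the fact that multiplication by the monomial $\x^r y^{r_y}$ raises every order by $|r|+r_y$, we get $\ord\wt I_{-1}=\ord J_{-1}-|r|-r_y=\ord I_{-1}=d$. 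This is exactly what ``preserves the setting'' demands.

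The argument has no deep obstacle; the only point requiring care is matching hypotheses. Lemma \ref{s_cleaning_preserves_v_clean} must be read with the tuple $(\x,y)$ playing the role of the $\x$-variables of Section \ref{section_w_cleaning}, so that the cleaning increment $g_\jz y^\jz\in K[[\x,y]]$ is of the allowed form, and Proposition \ref{w_cleaning_maximizes_w} needs only \emph{some} element of $J$ that is $\ord$-clean with respect to $J_{-1}$ --- which is precisely the hypothesis and need not coincide with the element defining the cleaning step. Beyond that, the proof is just the observation that ``preserves the setting'' decomposes into monotonicity of the three weighted orders $\ord_{(x_i)}$, $\ord_{(y)}$, $\ord$ under the step, together with the maximality of $\ord$ supplied by $\ord$-cleanness.
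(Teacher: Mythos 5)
Your proof is correct and takes essentially the same route as the paper: both rely on Lemma \ref{s_cleaning_preserves_v_clean} to get monotonicity of $\ord$, $\ord_{(y)}$ and the $\ord_{(x_i)}$ under the step, and on Proposition \ref{w_cleaning_maximizes_w} (via the $\ord$-cleanness hypothesis) to turn $\ord\wt J_{-1}\geq\ord J_{-1}$ into an equality, from which the factorization with $\ord\wt I_{-1}=d$ follows. You simply spell out more explicitly the coprimality/divisibility step that the paper leaves implicit.
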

\begin{proof}
 By Lemma \ref{s_cleaning_preserves_v_clean} we know that $\ord \wt J_{-1}\geq\ord J_{-1}$, $\ord_{(y)} \wt J_{-1}\geq \ord_{(y)}J_{-1}$ and $\ord_{(x_k)} \wt J_{-1}\geq \ord_{(x_k)}J_{-1}$ for all indices $k=1,\ldots,n$. By Proposition \ref{w_cleaning_maximizes_w} we know that $\ord \wt J_{-1}=\ord J_{-1}$. This implies that $\wt J_{-1}$ has a factorization
 \[\wt J_{-1}=(\x^ry^{r_y})\cdot\wt I_{-1}\]
 where the ideal $\wt I_{-1}$ fulfills $\ord\wt I_{-1}=d$.
\end{proof}


\begin{lemma} \label{s_cleaning_improves_smthg}
 Let $f\in J$ be an element that is $\ord$-clean with respect to $J_{-1}$. Consider a secondary $\ord$-cleaning step $z=\wt z+g_\jz y^\jz$. Then one of the following holds:
 \begin{itemize}
  \item $\ord \wt J_{-2}=\ord J_{-2}$ and for all indices $\jzz\leq\jz$ one of the properties $(i)_\jzz-(iii)_\jzz$ holds for $f$ with respect to $\wt J_{-2}$.
  \item $\ord \wt J_{-2}>\ord J_{-2}$.
 \end{itemize}
\end{lemma}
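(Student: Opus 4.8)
The plan is to analyze what a secondary $\ord$-cleaning step $z = \wt z + g_\jz y^\jz$ does to the coefficient expansion and to exploit the minimality of $\jz$. Recall that $\jz$ is the minimal index for which $\neg(i)_\jz$, $\neg(ii)_\jz$ and $\neg(iii)_\jz$ all hold, and that by construction $g_\jz$ is homogeneous with $\ord g_\jz = \D - \jz\sd$ and $\init(f_{c-q,\jz q}) + \binom{c}{q}\init(f_{c,0})\init(g_\jz)^q = 0$. By Lemma \ref{s_cleaning_step_preserves_setting} the step preserves the setting (using the $\ord$-cleanness of $f$ with respect to $J_{-1}$), so $\wt J_{-2}$, $\wt s$ and $\wt\D$ are well-defined; and by Lemma \ref{s_under_coord_changes} (1) together with Lemma \ref{stabilizing_coordinate_changes} we already know $\wt s \geq s$, i.e.\ $\ord\wt J_{-2}\geq\ord J_{-2}$. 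So the dichotomy reduces to: \emph{if} $\ord\wt J_{-2}=\ord J_{-2}$, then for every index $\jzz\leq\jz$ one of $(i)_\jzz,(ii)_\jzz,(iii)_\jzz$ holds for $f$ with respect to $\wt J_{-2}$.

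So assume $\wt s = s$, whence also $\wt\D = \D$ (the auxiliary number depends only on $d,r_y,|r|,s$, all unchanged). First I would handle the indices $\jzz < \jz$: by minimality of $\jz$, one of $(i)_\jzz,(ii)_\jzz,(iii)_\jzz$ already holds for $f$ with respect to $J_{-2}$, and I must show it persists. For $(i)_\jzz$: if $c-q<i<c$ and $j\leq(c-i)\jzz$ realize equality $\ord f_{i,j}=(c-i)\D-j\sd$, apply Lemma \ref{s_clean_lemma} (1) (choosing $i$ maximal) to get $\ord\wt f_{i,j}=\ord f_{i,j}$, so $(i)_\jzz$ holds for $\wt J_{-2}$. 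For the case where $\neg(i)_\jzz$ holds but $(ii)_\jzz$ or $(iii)_\jzz$ holds: here Lemma \ref{s_clean_lemma} (4) is the key tool — since $g$ (in this single step) has only the $y^\jz$-term with $\jz > \jzz$, it gives $\wt f_{c-q,\jzz q} = f_{c-q,\jzz q} + G$ with $\ord G > q\D - \jzz q\sd$. Consequently $\ord\wt f_{c-q,\jzz q} = \ord f_{c-q,\jzz q}$ and, when orders are equal, $\init(\wt f_{c-q,\jzz q}) = \init(f_{c-q,\jzz q})$ (and $\init(\wt f_{c,0}) = \init(f_{c,0})$ since $\ord g_\jz>0$). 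Thus $(ii)_\jzz$ or $(iii)_\jzz$ respectively carries over verbatim. This disposes of all $\jzz<\jz$.

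Next I would treat the critical index $\jzz=\jz$ itself. By the construction of the cleaning step, the term $\init(f_{c-q,\jz q}) + \binom{c}{q}\init(f_{c,0})\init(g_\jz)^q$ vanishes, so Lemma \ref{s_clean_lemma} (3) — applied with $g$ consisting of the single term $g_\jz y^\jz$, and using that $\neg(i)_\jz$ and $\neg(ii)_\jz$ hold for $J_{-2}$ — forces $\ord\wt f_{c-q,\jz q} > q\D - \jz q\sd = q\wt\D - \jz q\wsd$. Since we are in the case $\wt s = s$, this is exactly property $(ii)_\jz$ for $f$ with respect to $\wt J_{-2}$. (One must double-check here that the hypotheses of Lemma \ref{s_clean_lemma} (3) as applied are met for the single-term $g$: its coefficient has $\ord g_\jz = \D - \jz\sd$ exactly, $g_\jz\neq0$, and $\neg(i)_\jz$, $\neg(ii)_\jz$ hold, which is precisely the hypothesis.) Combining the three cases, every $\jzz\leq\jz$ satisfies one of $(i)_\jzz$–$(iii)_\jzz$ for $\wt J_{-2}$, which is what the first alternative of the lemma asserts.

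The main obstacle I anticipate is the bookkeeping in the $\jzz<\jz$ argument: Lemma \ref{s_clean_lemma} (4) is stated for a $g$ that is a single monomial $g_\jz y^\jz$ with $\jzz < \jz$ and $g_\jz$ homogeneous of the correct order, which is exactly our situation, so the application is direct — but one must be careful that the $\init$-level equalities (not just order equalities) needed to transport $(ii)_\jzz$ and $(iii)_\jzz$ follow from "$\wt f_{c-q,\jzz q} = f_{c-q,\jzz q} + G$ with $\ord G$ strictly larger", together with the observation that $\init(\wt f_{c,0}) = \init(f_{c,0})$. A secondary subtlety is confirming $\wt\D = \D$ and $\wt s = s$ under the assumption $\ord\wt J_{-2} = \ord J_{-2}$, which is immediate from the definitions once the setting-preservation (Lemma \ref{s_cleaning_step_preserves_setting}) guarantees $d$, $r_y$, $r$ are unchanged. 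Everything else is a routine application of the already-established lemmas, so the proof should be short.
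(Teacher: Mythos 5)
Your proposal is correct and follows essentially the same route as the paper's proof: establish $\wt s\geq s$ from Lemma \ref{s_under_coord_changes}, then in the case $\wt s=s$ deduce $(ii)_\jz$ for $\wt J_{-2}$ from Lemma \ref{s_clean_lemma}~(3) (using that the constructed $g_\jz$ makes the relevant term vanish), and propagate one of $(i)_\jzz$--$(iii)_\jzz$ to $\wt J_{-2}$ for each $\jzz<\jz$ via Lemma \ref{s_clean_lemma}~(1) and (4) together with $\init(\wt f_{c,0})=\init(f_{c,0})$. The only cosmetic difference is the order in which the index ranges $\jzz<\jz$ and $\jzz=\jz$ are treated, and the additional (harmless, unneeded) citation of Lemma \ref{stabilizing_coordinate_changes} in the first step.
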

\begin{proof}
 Since $\ord g_\jz=\D-\jz\sd$, we know by Lemma \ref{s_under_coord_changes} that $\wt s\geq s$. So assume that $\wt s=s$ holds.
 
 Since the term
 \[\init(f_{c-q,\jz q})+\binom{c}{q}\init(f_{c,0})\cdot \init(g_\jz)^{q}\]
 vanishes by construction of $g_\jz$, we know by Lemma \ref{s_clean_lemma} (3) that 
 \[\ord \wt f_{c-q,\jz q}>q\D-\jz q\sd.\]
 Thus, the property $(ii)_\jz$ holds for $f$ with respect to $\wt J_{-2}$.
 
 Now consider an index $\jzz<\jz$. By the choice of the index $\jz$, we know that one of the properties $(i)_\jzz-(iii)_\jzz$ holds for $f$ with respect to $J_{-2}$.
 
 First assume that $(i)_\jzz$ holds. Then by Lemma \ref{s_clean_lemma} (1) we know that the property $(i)_\jzz$ also holds for $f$ with respect to $\wt J_{-2}$.
 
 Now assume that $\neg(i)_\jzz$ and one of the properties $(ii)_\jzz$ or $(iii)_\jzz$ holds. Then by Lemma \ref{s_clean_lemma} (4) we know that
 \[\wt f_{c-q,\jzz q}=f_{c-q,\jzz q}+G\]
 for an element $G\in K[[\x]]$ with
 \[\ord G>qD-\jzz q\sd.\]
 If $(ii)_\jzz$ holds, this proves that
 \[\ord \wt f_{c-q,\jzz q}>qD-\jzz q\sd\]
 and hence, the property $(ii)_\jzz$ also holds for $f$ with respect to $\wt J_{-2}$. On the other hand, if $\neg(ii)_\jzz$ and $(iii)_\jzz$ hold, then we know that
 \[\init(\wt f_{c-q,\jzz q})=\init(f_{c-q,\jzz q}).\]
 Further, we know by Lemma \ref{coordinate_change_g} that
 \[\wt f_{c,0}=f_{c,0}+\sum_{k>c}\binom{k}{c}f_{k,0}g_0^{k-c}.\]
 Hence, $\init(\wt f_{c,0})=\init(f_{c,0})$. Thus, the property $(iii)_\jzz$ holds for $f$ with respect to $\wt J_{-2}$.
 
 In total, we have shown that one of the properties $(i)_\jzz-(iii)_\jzz$ holds for $f$ with respect to $\wt J_{-2}$ for all indices $\jzz\leq\jz$.
\end{proof}

 We will now describe the \emph{secondary $\ord$-cleaning process}:
 
 Let $R=K[[\x,y,z]]$ and $J\subseteq R$ an ideal of order $c=\ord J$. Let the coefficient ideal $J_{-1}=\coeff_{(\x,y,z)}^c(J)$ have a factorization $J_{-1}=(\x^ry^{r_y})\cdot I_{-1}$ for an ideal $I_{-1}$ of order $d=\ord I_{-1}$. Let $f\in J$ be an element that is $\ord$-clean with respect to $J_{-1}$.
 
 Set $z_0=z$. We will now describe a process to successively construct certain parameters $z_i$ for $i\geq1$.
 
 In each iteration of the process set 
 \[J_{-1}^{(i)}=\coeff_{(\x,y,z_i)}^c(J).\]
 By Lemma \ref{s_cleaning_preserves_v_clean}, Lemma \ref{s_cleaning_step_preserves_setting} and induction on $i$, $f$ is $\ord$-clean with respect to $J_{-1}^{(i)}$ and the ideal $J_{-1}^{(i)}$ has a factorization $J_{-1}^{(i)}=(\x^ry^{r_y})\cdot I_{-1}^{(i)}$ with $\ord I_{-1}^{(i)}=d$. Set
 \[J_{-2}^{(i)}=\coeff_{(\x,y)}^c(I_{-1}^{(i)}).\]
 If $f$ is secondary $\ord$-clean with respect to $J_{-2}^{(i)}$, the process terminates.

 Otherwise, let $z_i=z_{i+1}+g_i$ be a secondary $\ord$-cleaning step with respect to $f$ and $J_{-2}^{(i)}$.

\begin{proposition} \label{s_cleaning_terminates}
 Consider the secondary $\ord$-cleaning process as described above. One of the following holds:
 \begin{itemize}
  \item The process terminates in finitely many steps.
  \item The process does not terminate. In this case, $z_\infty=z-\sum_{i\geq0} g_i$ is a well-defined power series and 
  \[J_{-2}^{(\infty)}=0\]
  where $J_{-2}^{(\infty)}$ is defined analogously to $J_{-2}^{(i)}$.
 \end{itemize}
\end{proposition}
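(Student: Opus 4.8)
\textbf{Proof plan for Proposition \ref{s_cleaning_terminates}.}

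The plan is to mimic the proof of Proposition \ref{w_cleaning_terminates} closely, using the behavior of the order of the second coefficient ideal under secondary $\ord$-cleaning steps that was already established. Suppose the process does not terminate. Set $s_i=\ord J_{-2}^{(i)}$. Since $f$ is $\ord$-clean with respect to $J_{-1}^{(i)}$ at every stage (by Lemma \ref{s_cleaning_preserves_v_clean}, Lemma \ref{s_cleaning_step_preserves_setting} and induction on $i$, exactly as in the description of the process), each coordinate change $z_i=z_{i+1}+g_i$ is a secondary $\ord$-cleaning step that preserves the setting. By Proposition \ref{s_cleaning_improves_smthg}, for each $i$ either $s_{i+1}>s_i$ or $s_{i+1}=s_i$ and for all indices $\jzz\le \jz_i$ one of the properties $(i)_\jzz$--$(iii)_\jzz$ holds for $f$ with respect to $J_{-2}^{(i+1)}$, where $\jz_i<\frac{d+r_y}{c!}$ denotes the minimal index witnessing non-cleanness at stage $i$.

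First I would argue that the order $s_i$ cannot remain constant on an infinite run. If $s_i=s$ is constant from some stage on, then by Proposition \ref{s_cleaning_improves_smthg} the set of indices $\jzz$ for which one of $(i)_\jzz$--$(iii)_\jzz$ holds (for $f$ with respect to $J_{-2}^{(i)}$) is non-decreasing in $i$ and contains $\{\jzz:\jzz\le \jz_i\}$; since the cleaning step is chosen with $\jz_i$ \emph{minimal} with $\neg(i)_{\jz_i}$, $\neg(ii)_{\jz_i}$, $\neg(iii)_{\jz_i}$, and since there are only finitely many admissible indices $\jz<\frac{d+r_y}{c!}$, after finitely many steps every such index satisfies one of the cleanness properties, i.e. $f$ becomes secondary $\ord$-clean with respect to $J_{-2}^{(i)}$ and the process terminates — a contradiction. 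Hence $s_i\to\infty$, and in particular $s_i$ is strictly increasing along a subsequence, so $s_i\to\infty$ as an element of $\Ni$.

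Next I would verify convergence of $z_\infty$. Each $g_i$ is of the form $g_{\jz_i}y^{\jz_i}$ with $g_{\jz_i}\in K[[\x]]$ homogeneous of order $\D^{(i)}-\jz_i\,\frac{s_i}{d!}$ (Definition \ref{s_cleaning_step}), where $\D^{(i)}=\frac{1}{c!}\big((d+r_y)\frac{s_i}{d!}+|r|\big)$; hence $\ord g_i \ge \D^{(i)}-\jz_i\,\frac{s_i}{d!} \ge \big(\frac{d+r_y}{c!}-\jz_i\big)\frac{s_i}{d!}+\frac{|r|}{c!} \ge \frac{s_i}{c!\,d!}$ using $\jz_i<\frac{d+r_y}{c!}$ and $|r|\ge0$. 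Since $s_i\to\infty$ this gives $\ord g_i\to\infty$, so $z_\infty=z-\sum_{i\ge0}g_i$ is a well-defined power series in $R$. Finally, set $s_\infty=\ord J_{-2}^{(\infty)}$. For each fixed $i$, the coordinate change from $z_i$ to $z_\infty$ is $z_i=z_\infty+\sum_{k\ge i}g_k$, where every coefficient $g_k$ of this correction (viewed as a power series in $y$) has order $\ge \D^{(i)}-\jz_k\frac{s_i}{d!}$ by the same homogeneity estimate together with $s_k\ge s_i$; applying Lemma \ref{s_under_coord_changes} (1) this yields $s_\infty\ge s_i$ for all $i$, hence $s_\infty=(\infty,\ldots,\infty)$, i.e. $J_{-2}^{(\infty)}=0$. (One should note, as in the $\w$-case, that $J_{-2}^{(\infty)}=0$ then forces $I_{-1}^{(\infty)}=(\x^\bullet)$ to be a pure monomial ideal via Lemma \ref{coeff_is_zero}, but this is not needed for the statement.)

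The main obstacle is the constancy argument: showing that $s_i$ cannot stabilize forever. Unlike the $\w$-cleaning process, where a single cleaning step strictly raises the weighted order whenever cleanness fails, here a secondary $\ord$-cleaning step at index $\jz_i$ might leave $s_i$ unchanged and only establish one of $(i)_{\jz_i}$--$(iii)_{\jz_i}$. The subtlety is that establishing cleanness at $\jz_i$ could, a priori, destroy a previously-achieved cleanness property at some $\jzz>\jz_i$ — but Proposition \ref{s_cleaning_improves_smthg} and Lemma \ref{s_clean_lemma} (1),(4) guarantee it does not, since the conclusion is stated for all $\jzz\le\jz_i$ and Lemma \ref{s_cleaning_improves_smthg} is proved with $\jz_i$ minimal; combined with finiteness of the admissible range $\{\jz<\tfrac{d+r_y}{c!}\}$ this closes the gap. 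So the plan reduces to carefully assembling these already-proved lemmas and the two routine order estimates above.
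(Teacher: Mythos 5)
Your proposal is correct and follows essentially the same route as the paper's proof: mimic Proposition \ref{w_cleaning_terminates}, derive $\lim_{i\to\infty}s_i=\infty$ from Lemma \ref{s_cleaning_improves_smthg}, obtain $\ord g_i\to\infty$ from the homogeneity of each cleaning step, and conclude $\ord J_{-2}^{(\infty)}\geq s_i$ via Lemma \ref{s_under_coord_changes} (1). Your write-up is in fact more explicit than the paper at the one genuinely subtle point — the paper simply asserts $\lim s_i=\infty$ "by Lemma \ref{s_cleaning_improves_smthg}", whereas you correctly spell out that when $s_i$ stays constant the minimal bad index $\jz_i$ strictly increases and the finite admissible range $\jz<\frac{d+r_y}{c!}$ then forces the process to terminate, a contradiction.
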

\begin{proof}
 Assume that the process does not terminate. Then for all $i\geq0$ the element $g_i$ has the form $g_i=g_{i,j}y^j$ where $g_{i,j}\in K[[\x]]$ and $j<\frac{d+r_y}{c!}$. We know that
 \[\ord g_{i,j}=\frac{1}{c!}\B(\frac{d+r_y}{d!}s_i+|r|\B)-j\frac{s_i}{d!}\geq\B(\frac{d+r_y}{c!}-j\B)\frac{s_i}{d!}\]
 where $s_i=\ord J_{-2}^{(i)}$. By Lemma \ref{s_cleaning_improves_smthg} we further know that $\lim_{i\to\infty} s_i=\infty$. Hence, $\lim_{i\to\infty}\ord g_i=\infty$ and $z_\infty$ is a well-defined power-series.
 
 It follows from Lemma \ref{s_under_coord_changes} that
 \[\ord J_{-2}^{(\infty)}\geq s_i\]
 for all $i\geq0$. Hence, $\ord J_{-2}^{(\infty)}=\infty$ and $J_{-2}^{(\infty)}=0$.
\end{proof}

\section[Maximizing under simultaneous coordinate changes in $z$ and $y$]{Maximizing the order of the second coefficient ideal under simultaneous coordinate changes in $z$ and $y$} \label{section_maximizing_over_y_and_z}

In this section we will investigate the effect of simultaneous coordinate changes $z\mapsto z+g(\x,y)$, $y\mapsto y+h(\x)$ on the order of the second coefficient ideal $J_{-2}$. As it turns out, the situation is drastically more complicated than when only considering coordinate changes in $z$ as we did in the last section. Due to the high complexity, we will not devise a cleaning process, but we will show that there exists a coordinate change which maximizes the order of the second coefficient ideal over all such coordinate changes.

We consider the following setting:

Let $R=K[[\x,y,z]]$ with $\x=(x_1,\ldots,x_n)$ for a field $K$ and let $J\subseteq R$ be an ideal of order $c=\ord J$. Let $J_{-1}$ be the coefficient ideal
 \[J_{-1}=\coeff^c_{(\x,y,z)}(J)\]
 with respect to the hypersurface $V(z)$. Let $J_{-1}$ have a factorization of the form
 \[J_{-1}=(\x^r)\cdot I_{-1}\]
 where $r=(r_1,\ldots,r_n)\in\N^n$ and $I_{-1}$ is in an ideal in $K[[\x,y]]$. Notice that we do not factor a power of $y$ from the coefficient ideal since we do not fix the hypersurface $V(y)$. Set $d=\ord I_{-1}$. We define
 \[J_{-2}=\coeff^d_{(\x,y)}(I_{-1})\]
 as the coefficient ideal of $I_{-1}$ with respect to $V(y,z)$ and set
 \[s=\ord J_{-2}.\]
 
 We consider coordinate changes $z=\wt z+g$, $y=\wt y+h$ where $g\in K[[\x,y]]$ and $h\in K[[\x]]$ are elements with $\ord g,\ord h\geq1$. The coordinate changes we consider are all assumed to preserve the setting. Hence, the coefficient ideal
 \[\wt J_{-1}=\coeff_{(\x,y,\wt z)}^c(J)=\coeff_{(\x,\wt y,\wt z)}^c(J)\]
 with respect to the hypersurface $V(\wt z)$ has a factorization
 \[\wt J_{-1}=(\x^r)\cdot \wt I_{-1}\]
 for an ideal $\wt I_{-1}$ of order $\ord \wt I_{-1}=d$. Let
 \[\wt J_{-2}=\coeff^d_{(\x,\wt y)}(\wt I_{-1})\]
 be the coefficient ideal with respect to $V(\wt y,\wt z)$ and set
 \[\wt s=\ord \wt J_{-2}.\]
 
 We define the auxiliary numbers
\[\D=\frac{1}{c!}\Big(d\sd+|r|\Big),\]
\[\wt\D=\frac{1}{c!}\Big(d\wsd+|r|\Big).\]

Let each element $f\in J$ have the power series expansions $f=\sum_{i,j\geq0}f_{i,j}y^jz^i$ and $f=\sum_{i,j\geq0}\wt f_{i,j}\wt y^j\wt z^i$ with $f_{i,j},\wt f_{i,j}\in K[[\x]]$. Further, let $g$ have the expansion $g=\sum_{j\geq0}g_jy^j$ with $g_j\in K[[\x]]$. The form of the coefficients $\wt f_{i,j}$ will be established in Lemma \ref{double_coord_change}.

The goal of this section is to prove in Proposition \ref{maximum_over_y_and_z_exists} that there exists a specific coordinate change which realizes the maximal value of the order of the second coefficient ideal over all coordinate changes $z\mapsto z+g(\x,y)$, $y\mapsto y+h(\x)$ that preserve the setting. As a preparation for this result, we will prove in Lemma \ref{s_under_double_coord_changes} that the order of the second coefficient ideal can only increase under such a coordinate change if the estimates $\ord h\geq\sd$ and $\ord g_j\geq\D-j\sd$ hold for all indices $j\geq0$. While the corresponding results in the previous sections, Lemma \ref{m_under_coord_changes} and Lemma \ref{s_under_coord_changes}, only required the existence of an element $f\in J$ which is $z$-regular of order $c$, Lemma \ref{s_under_double_coord_changes} only holds if there exists an element $f\in J$ which is $\ord$-clean with respect to $J_{-1}$. The proof of Lemma \ref{s_under_double_coord_changes} is lengthy and very technical. It needs as a preparation the Lemma \ref{new_edge_lemma} which will enable us to interpret the $\ord$-cleanness of an element $f\in J$ in terms of the coefficients $f_{i,j}$. 




From the results of the two previous sections, one could make the optimistic guess that the order of $J_{-2}$ is maximal if there exist an element $f\in J$ that is secondary $\ord$-clean with respect to $J_{-2}$ and an element $f_{-1}\in I_{-1}$ that is $\ord$-clean with respect to $J_{-2}$. But as the following example shows, this is not true:

\begin{example}
 Consider the ideal $J$ generated by the element 
 \[f=z^2+y^3+yx^4\]
 in the ring $R=K[[x,y,z]]$ over a field $K$ of characteristic $2$. The coefficient ideals $J_{-1}$ and $J_{-2}$ have the form
 \[J_{-1}=(y^3+yx^4),\]
 \[J_{-2}=(x^{12}).\]
 Hence, $\ord J_{-2}=12$.
 
 Notice that the element $f$ is a clean purely inseparable power series. Hence, we know that $f$ is $\ord$-clean with respect to $J_{-1}$ and secondary $\ord$-clean with respect to $J_{-2}$. Further, the element
 \[f_{-1}=y^3+yx^4\]
 that generates the ideal $J_{-1}$ is $\ord$-clean with respect to $J_{-2}$ since $q_K(3)=1$ and the coefficient of $y^2$ is zero.
 
 By the results that we proved in the previous two sections, we know that neither a coordinate change $z\mapsto z+g(x,y)$ nor a coordinate change $y\mapsto y+h(x)$ that preserves the setting can increase the order of the second coefficient ideal by itself. The situation is different though when we consider simultaneous coordinate changes.
 
 Set $z=\wt z+yx+x^3$ and $y=\wt y+x^2$.
 Then
 \[f=\wt z^2+\wt y^3.\]
 In particular,
 \[\wt J_{-1}=(\wt y^3),\]
 \[\wt J_{-2}=0.\]
 Thus, $\ord \wt J_{-2}=\infty$.
\end{example}



\begin{lemma} \label{double_coord_change}
 For all indices $i,j\geq0$ the equality
 \[\wt f_{i,j}=\sum_{\substack{k\geq i\\l\geq0}}\binom{k}{i}f_{k,l}\sum_{\substack{0\leq m\leq j\\ \alpha\in\N^{k-i}\\|\alpha|=j-m}}\binom{l}{m}g_\alpha h^{l-m}\]
 holds, where $g_\alpha=\prod_{o=1}^{k-i}g_{\alpha_o}$.
\end{lemma}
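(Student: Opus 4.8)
The identity is purely formal, and the natural approach is direct substitution into the power series expansion of $f$ followed by a bookkeeping of monomials; there is no conceptual input beyond the binomial theorem and the expansion of a power of a power series that already appears in the proof of Lemma \ref{coordinate_change_g}.

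First I would note that, since $\ord g,\ord h\geq1$, the assignment $z\mapsto\wt z+g$, $y\mapsto\wt y+h$ is a continuous automorphism of $K[[\x,y,z]]=K[[\x,\wt y,\wt z]]$, so that substituting it into $f=\sum_{k,l\geq0}f_{k,l}\,y^l z^k$ and rearranging the resulting sums is legitimate: for a fixed monomial $\wt y^{\,j}\wt z^{\,i}$ only finitely many pairs $(k,l)$ contribute, because $g^{k-i}$ has order at least $k-i$ and $h^{l-m}$ order at least $l-m$. Then I would substitute $y^l z^k\mapsto(\wt y+h)^l(\wt z+g)^k$ and expand the three factors: $(\wt z+g)^k=\sum_{i=0}^{k}\binom{k}{i}g^{k-i}\wt z^{\,i}$ by the binomial theorem (valid since $g\in K[[\x,y]]$ does not involve $z$, hence not $\wt z$); $g^{k-i}=\sum_{\alpha\in\N^{k-i}}g_\alpha\wt y^{\,|\alpha|}$ with $g_\alpha=\prod_{o=1}^{k-i}g_{\alpha_o}$, the standard expansion of a $(k-i)$-fold product used in the proof of Lemma \ref{coordinate_change_g}; and $(\wt y+h)^l=\sum_{m=0}^{l}\binom{l}{m}h^{l-m}\wt y^{\,m}$ by the binomial theorem.

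Multiplying these expansions and summing over $k,l$ expresses $f$ as a series in $\wt y,\wt z$: the power $\wt z^{\,i}$ forces $k\geq i$ and produces the factor $\binom{k}{i}$, while the power $\wt y^{\,j}$ forces the two contributions to the $\wt y$-exponent to satisfy $|\alpha|+m=j$, that is $0\leq m\leq j$ and $\alpha\in\N^{k-i}$ with $|\alpha|=j-m$. Collecting all terms gives
\[\wt f_{i,j}=\sum_{\substack{k\geq i\\ l\geq0}}\binom{k}{i}f_{k,l}\sum_{\substack{0\leq m\leq j\\ \alpha\in\N^{k-i}\\ |\alpha|=j-m}}\binom{l}{m}g_\alpha h^{l-m},\]
as claimed. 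The only point that requires a moment's attention is the order in which the two substitutions are read when expanding $g^{k-i}$, namely that $g$ is expanded in powers of the new parameter $\wt y$ (so that the $g_\alpha$ agree with the ones in the statement); reading it the other way round produces an equivalent formula with $\binom{l+|\alpha|}{j}$ in place of $\binom{l}{m}$. This and the routine matching of indices in the collection step are the only — entirely elementary — obstacles, and I expect the written proof to be no longer than that of Lemma \ref{coordinate_change_g}.
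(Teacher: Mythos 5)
Your proof is correct and is essentially the paper's argument done in one pass: the paper factors the substitution through the intermediate expansion $f=\sum\wc f_{k,m}\wt y^m z^k$ and invokes the single-variable formulas (Lemmas \ref{coordinate_change_g_z} and \ref{coordinate_change_g}) where you expand directly from the binomial theorem, but the bookkeeping is identical. Your closing remark is in fact sharper than you let on: the displayed formula is only correct if the $g_j$ are read as the coefficients of $g$ in $\wt y$, not in $y$ as the section's setup literally declares (as a small test case such as $f=yz$, $h=x$, $g=y$ confirms), and the paper's own two-step proof silently makes the same switch when it applies Lemma \ref{coordinate_change_g} with $\wt y$ as the spectator variable.
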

\begin{proof}
 Consider the auxiliary expansion $f=\sum_{i,j\geq0}\wc f_{i,j}\wt y^jz^i$ with $\wc f_{i,j}\in K[[\x]]$. Then it follows from Lemma \ref{coordinate_change_g_z} and Lemma \ref{coordinate_change_g} that
 \[\wt f_{i,j}=\sum_{\substack{k\geq i\\m\leq j}}\binom{k}{i}\wc f_{k,m}\sum_{\substack{\alpha\in\N^{k-i}\\|\alpha|=j-m}}g_\alpha\]
 \[=\sum_{\substack{k\geq i\\m\leq j}}\binom{k}{i}\sum_{l\geq m}\binom{l}{m} f_{k,l} h^{l-m}\sum_{\substack{\alpha\in\N^{k-i}\\|\alpha|=j-m}}g_\alpha\]
 \[=\sum_{\substack{k\geq i\\l\geq0}}\binom{k}{i}f_{k,l}\sum_{\substack{0\leq m\leq j\\ \alpha\in\N^{k-i}\\|\alpha|=j-m}}\binom{l}{m}g_\alpha h^{l-m}.\]
\end{proof}

\begin{lemma} \label{new_edge_lemma}
 Assume that $s>d!$ holds. Let $f\in J$ be an element. Then for all indices $i<c$ the following two statements are equivalent:
 \begin{enumerate}[(1)]
  \item $\ord f_i=\frac{c-i}{c!}\ord J_{-1}$.
  \item $\frac{c-i}{c!}d\in\N$, $\ord f_{i,\frac{c-i}{c!}d}=\frac{c-i}{c!}|r|$ and $\init(f_i)=\init(f_{i,\cid})\cdot y^{\cid}$.
 \end{enumerate}
\end{lemma}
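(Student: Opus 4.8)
The plan is to turn the statement into an order estimate for the coefficients $f_{i,j}\in K[[\x]]$, exploiting that for $f_i=\sum_{j\ge 0}f_{i,j}y^j$ one has $\ord f_i=\min_{j\ge 0}(\ord f_{i,j}+j)$ and that $\init(f_i)$ is the sum of the $\init(f_{i,j})y^j$ over those $j$ realizing this minimum. The two a priori lower bounds I would feed in are Lemma \ref{factorization_lemma}, giving $\ord f_{i,j}\ge \frac{c-i}{c!}|r|$ for every $j$, and Lemma \ref{ord_J_2} (applied with $r_y=0$), giving $\ord f_{i,j}\ge (c-i)\D-j\sd$. Using that $\ord J_{-1}=|r|+d$ by additivity of $\ord$ on the product $J_{-1}=(\x^r)\,I_{-1}$, and that $(c-i)\D=\cid\,\sd+\frac{c-i}{c!}|r|$, these two bounds rewrite as
\[
\ord f_{i,j}+j\ \ge\ \frac{c-i}{c!}|r|+j
\qquad\text{and}\qquad
\ord f_{i,j}+j\ \ge\ \frac{c-i}{c!}|r|+(\cid-j)\sd+j .
\]
The first is strictly increasing in $j$ and the second strictly decreasing — here the hypothesis $s>d!$ enters, precisely to guarantee $\sd=s/d!>1$ (strictness is essential) — and a one-line computation shows they coincide at $j=\cid$, with common value $\frac{c-i}{c!}(|r|+d)=\frac{c-i}{c!}\ord J_{-1}$. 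Hence $\ord f_{i,j}+j\ge \frac{c-i}{c!}\ord J_{-1}$ for all $j$, strictly so unless $j=\cid$.

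From here, $(1)\Rightarrow(2)$ is a bookkeeping argument: if $\ord f_i$ equals $\frac{c-i}{c!}\ord J_{-1}$, then the minimum defining $\ord f_i$ can be attained only at $j=\cid$, which forces $\cid\in\N$ (otherwise no index attains it and the order would be strictly larger), then $\ord f_{i,\cid}=\frac{c-i}{c!}\ord J_{-1}-\cid=\frac{c-i}{c!}|r|$, and finally, since all other $j$ give strictly larger values, $\init(f_i)=\init(f_{i,\cid})\,y^{\cid}$. The converse $(2)\Rightarrow(1)$ needs nothing but additivity of $\ord$: if $\cid\in\N$, $\ord f_{i,\cid}=\frac{c-i}{c!}|r|$ and $\init(f_i)=\init(f_{i,\cid})y^{\cid}$, then $\init(f_{i,\cid})$ is a nonzero form in $\x$ of degree $\frac{c-i}{c!}|r|$, so $\init(f_i)$ is a form of degree $\frac{c-i}{c!}|r|+\cid=\frac{c-i}{c!}(|r|+d)=\frac{c-i}{c!}\ord J_{-1}$, which is then the order of $f_i$.

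The only delicate point, which I would dispatch first, is the degenerate case $s=\infty$ (equivalently $J_{-2}=0$; this also subsumes $d=0$, where $\sd$ is formally infinite): Lemma \ref{ord_J_2} then forces $f_{i,j}=0$ for all $j<\cid$, and both implications follow from the single bound $\ord f_{i,j}\ge \frac{c-i}{c!}|r|$ exactly as above. When $s<\infty$ one has $1<\sd<\infty$ and no infinities appear, as long as one keeps $(c-i)\D-j\sd$ grouped as $\frac{c-i}{c!}|r|+(\cid-j)\sd$ instead of expanding it. I do not anticipate any genuine obstacle beyond this; the proof is a clean interaction of the two coefficient bounds of Lemmas \ref{factorization_lemma} and \ref{ord_J_2} with the strict inequality $\sd>1$.
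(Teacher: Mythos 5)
Your proposal is correct and uses essentially the same ingredients as the paper's own argument: Lemma~\ref{ord_J_2} with the strict hypothesis $s>d!$ to force $j\geq\cid$, and Lemma~\ref{factorization_lemma} to force $j\leq\cid$; the paper just phrases this as two inequalities pinning down the unique index rather than as monotone lower bounds on $\ord f_{i,j}+j$. The reorganization is a cosmetic difference, not a different route.
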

\begin{proof}
 $(2)\implies(1)$: We can compute that
 \[\ord f_i\leq \ord f_{i,\cid}y^{\cid}=\frac{c-i}{c!}|r|+\cid=\frac{c-i}{c!}\ord J_{-1}.\]
 By Lemma \ref{ord_coeff}, equality has to hold.
 
 $(1)\implies(2)$: There is an index $j\geq0$ such that
 \[\ord f_{i,j}=\frac{c-i}{c!}\ord J_{-1}-j=\frac{c-i}{c!}|r|+\frac{c-i}{c!}d-j\]
 holds. Notice that by Lemma \ref{ord_J_2} the inequality
 \[\ord f_{i,j}\geq(c-i)\D-j\sd=\frac{c-i}{c!}|r|+\Big(\frac{c-i}{c!}d-j\Big)\sd\]
 holds. Since $s>d!$, this implies that $j\geq\cid$. But we also know by Lemma \ref{factorization_lemma} that
 \[\ord f_{i,j}\geq\frac{c-i}{c!}|r|.\]
 Obviously, this implies that $j\leq\cid$. Hence, $j=\cid$ and 
 \[\ord f_{i,\cid}=\frac{c-i}{c!}|r|.\]
 Further, it is clear that $\init(f_i)=\init(f_{i,\cid})\cdot y^{\cid}$ holds.
\end{proof}

\begin{lemma} \label{s_under_double_coord_changes}
 The following hold:
 \begin{enumerate}[(1)]
  \item If $\ord h\geq\frac{s}{d!}$ and for all indices $j\geq0$ the inequality
  \[\ord g_j\geq \D-j\sd\]
  holds, then $\ord \wt J_{-2}\geq \ord J_{-2}$.
  \item If $\ord h>\frac{s}{d!}$ and for all indices $j\geq0$ the strict inequality
  \[\ord g_j>\D-j\sd\]
  holds, then $\ord \wt J_{-2}=\ord J_{-2}$.
  \item If there is an element $f\in J$ which is $\ord$-clean with respect to $J_{-1}$ and either $\ord h<\frac{s}{d!}$ holds or there is an index $j\geq0$ such that
  \[\ord g_j<\D-j\sd\]
  holds, then $\ord \wt J_{-2}<\ord J_{-2}$.
 \end{enumerate}
\end{lemma}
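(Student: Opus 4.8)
The strategy is to decompose an arbitrary setting-preserving simultaneous coordinate change $z\mapsto\wt z+g$, $y\mapsto\wt y+h$ into a composition of the two "pure" types already analysed, namely a pure change $y\mapsto\wt y+h(\x)$ and a pure change $z\mapsto\wt z+g(\x,y)$, and to control each factor using Lemma~\ref{m_under_coord_changes}, Lemma~\ref{s_under_coord_changes} and the secondary cleaning results. For statements (1) and (2) the existence of a $z$-regular element is enough; for (3) the $\ord$-cleanness of some $f\in J$ with respect to $J_{-1}$ is needed, exactly as in the analogous Lemma~\ref{m_under_coord_changes}(3) and Lemma~\ref{s_under_coord_changes}(3), and this is where the proof becomes delicate.

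\textbf{(1) and (2).} First I would factor the given coordinate change as the composite of $\varphi_1:y\mapsto\wt y+h$, $z\mapsto z$ (keeping $z$ fixed for the moment) followed by $\varphi_2:z\mapsto\wt z+g'$, $\wt y\mapsto\wt y$, where $g'\in K[[\x,\wt y]]$ is the transform of $g$ under $\varphi_1$; one checks from Lemma~\ref{coordinate_change_g} that the $\wt y$-adic coefficients of $g'$ have orders $\geq\D-j\sd$ (resp. $>$) provided those of $g$ and $h$ satisfy the stated bounds and using $\D\geq\sd$ from Lemma~\ref{Dgeq1}. For $\varphi_1$ I would argue directly using the coefficient formula of Lemma~\ref{coordinate_change_g}(1) together with the estimate $\ord f_{i,j}\geq(c-i)\D-j\sd$ of Lemma~\ref{ord_J_2}: substituting $y=\wt y+h$ with $\ord h\geq\sd$ and invoking $\D\geq1$ (Lemma~\ref{Dgeq1}(1)) shows each new coefficient $\wt f_{i,j}$ still obeys $\ord\wt f_{i,j}\geq(c-i)\D-j\sd$, hence $\ord\wt J_{-2}\geq\ord J_{-2}$ by Lemma~\ref{s_changes}(1); a symmetric argument gives equality when the inequalities on $h$ and $g$ are strict. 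For $\varphi_2$ the conclusion is immediate from Lemma~\ref{s_under_coord_changes}(1),(2). Composing gives (1) and (2).

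\textbf{(3) — the main obstacle.} Here one must produce, from an $\ord$-clean element $f\in J$, a coefficient $\wt f_{i,j}$ of $f$ in the new coordinates whose order is \emph{strictly} below the line $(c-i)\D-j\sd$, so that Lemma~\ref{s_changes}(3) forces $\ord\wt J_{-2}<\ord J_{-2}$. The key preparatory tool is Lemma~\ref{new_edge_lemma}: when $s>d!$ it translates $\ord$-cleanness of $f$ with respect to $J_{-1}$ (a statement about the $z$-adic coefficients $f_i$ and their weighted orders) into a statement about the bi-indexed coefficients $f_{i,j}$ lying on the "edge" $j=\frac{c-i}{c!}d$, and in particular guarantees that $f_{c,0}$ is a unit and that the relevant initial forms on this edge are not $q$-th power multiples of $\init(f_{c,0})$. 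One then traces, via the explicit double-substitution formula of Lemma~\ref{double_coord_change}, how the edge coefficient $f_{c,0}$ combines with the lowest-order part of $h$ (if $\ord h<\sd$) or with the lowest-order part of some $g_j$ (if $\ord g_j<\D-j\sd$): in the first case the term $f_{c,0}\cdot(\text{leading part of }h)^{\,?}$ contributes to $\wt f_{i,j}$ at an order below the line, and because $f$ is $\ord$-clean this contribution cannot be cancelled by any other term on the edge (this is precisely the role of the non-$q$-th-power condition, exploited through Lemma~\ref{c-q} and Lemma~\ref{char_p_sum_lemma} in positive characteristic); in the second case one argues analogously with $g_j$ playing the role of $h$, reducing essentially to Lemma~\ref{s_under_coord_changes}(3) after first disposing of $h$ by part (1)/(2). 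The degenerate case $s=d!$ (where $\D\geq\sd$ may fail and Lemma~\ref{new_edge_lemma} does not apply) must be handled separately, presumably by refining the weighted order with an auxiliary component as in the remark after Lemma~\ref{coord_changes_which_preserve_w_clean}, or by observing that $s=d!$ already means $\coeff$-cleanness propagates trivially. Carefully checking that the two sub-cases ($h$ too small versus some $g_j$ too small) do not interfere — i.e. that reducing one to a pure coordinate change does not spoil the smallness hypothesis on the other — is the genuinely technical heart of the argument, and I expect the bookkeeping of orders across the composite $\varphi_2\circ\varphi_1$ to be the longest part of the proof.
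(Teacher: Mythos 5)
Your proposal captures essentially the same approach the paper takes, and all of the key ingredients you name — the double-substitution formula of Lemma~\ref{double_coord_change}, Lemma~\ref{new_edge_lemma} for translating $\ord$-cleanness into a statement about the edge coefficients $f_{i,\frac{c-i}{c!}d}$, and the characteristic-$p$ arithmetic of Lemma~\ref{c-q} and Lemma~\ref{char_p_sum_lemma} — are exactly the ones the paper uses. Parts (1) and (2) are indeed immediate from composing the pure-change lemmas.

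For part (3) the paper's execution is cleaner than your sketch in two places, both worth noting. First, the case split you worry about is resolved up front: if $\ord h\geq\sd$, the paper applies Lemma~\ref{s_under_coord_changes}(3) to the $z$-shift alone and then Lemma~\ref{m_under_coord_changes}(2) to the $y$-shift, reducing immediately to the pure case; the interference you fear does not arise because the $y$-shift, being by now harmless, cannot undo a strict drop. Second, your concern about the degenerate case $s=d!$ evaporates automatically: once one is in the remaining case $\ord h<\sd$, Lemma~\ref{m_under_coord_changes}(3) applied to $I_{-1}$ gives $\wc s=d!\cdot\ord h$ for the intermediate quantity $\wc s=\ord\coeff^d_{(\x,\wt y)}(I_{-1})$, and since $\wc s\geq d!$ by Lemma~\ref{ogeqc!} the chain $d!\leq\wc s<s$ forces $s>d!$, so Lemma~\ref{new_edge_lemma} is always available where it is needed; no auxiliary refinement of the weighted order is required. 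From there the paper argues by contradiction: assuming $\wt s\geq s$, it deduces $\ord g_j\geq\wc\D-j\td$ with equality at some $\jz$, and then, in each of the three cleanness cases $(1)_{\ord}$--$(3)_{\ord}$, exhibits a coefficient $\wt f_{i,j}$ with $\ord\wt f_{i,j}=(c-i)\wc\D-j\td$ — a point that, because $\wc s<s$, lies strictly below the old line, forcing $\wt s\leq\wc s<s$ by Lemma~\ref{s_changes}. Your plan to find a coefficient strictly below the old line and invoke Lemma~\ref{s_changes}(3) directly is the same computation read off in a slightly different way. In short: the idea is right, the ingredients are right, and the things you flagged as "the technical heart" are real, but the paper untangles them with the auxiliary intermediate quantity $\wc s$ rather than by a full decomposition into two pure changes.
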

\begin{proof}
 The assertions (1) and (2) follow immediately from Lemma \ref{m_under_coord_changes} and Lemma \ref{s_under_coord_changes}.

 (3): We begin the proof by noticing that $J_{-1}=\coeff^c_{(\x,y,z)}(J)=\coeff^c_{(\x,\wt y,z)}(J)$ since $y=\wt y+h$ with $h\in K[[\x]]$. Define the ideal $\wc J_{-2}\subseteq K[[\x]]$ as 
 \[\wc J_{-2}=\coeff_{(\x,\wt y)}^d(I_{-1}).\]
 Set $\wc s=\ord\wc J_{-2}$.
 
 If $\ord h\geq\frac{s}{d!}$, we know that $\wc s\geq s$ by Lemma \ref{m_under_coord_changes} (1). It then follows that $\wt s<s$ holds by the formula for $\wt s$ in Lemma \ref{s_under_coord_changes} (3). So we can assume that $\ord h<\frac{s}{d!}$ and 
 \[\wc s=d!\ord h<s\]
 by Lemma \ref{m_under_coord_changes}. This implies that $s>d!$. Assume that $\wt s\geq s$. This implies that also $\wt s>\wc s$ holds. Set 
 \[\wc \D=\frac{1}{c!}\B(d\td+|r|\B).\]
 Then by Lemma \ref{s_under_coord_changes} the inequality 
 \[\ord g_j\geq \wc \D-j\td\]
 holds for all indices $j\geq0$ and there is an index $\jz$ such that
 \[\ord g_\jz=\wc \D-\jz\td.\]
 
 As a further preparation for the proof, we are going to show that for all indices $k,l\geq0$ with $l\neq \frac{c-k}{c!}d$ the inequality
 \[\ord f_{k,l}>(c-k)\wc \D-l\td\]
 holds. If $l<\frac{c-k}{c!}d$, it is clear by Lemma \ref{ord_J_2} that
 \[\ord f_{k,l}\geq (c-k)\D-l\sd=\underbrace{\B(\frac{c-k}{c!}d-l\B)}_{>0}\underbrace{\sd}_{>\td}+\frac{c-k}{c!}|r|\]
 \[>\B(\frac{c-k}{c!}d-l\B)\td+\frac{c-k}{c!}|r|=(c-k)\wc \D-l\td.\]
 Now consider the case $l>\frac{c-k}{c!}d$. By Lemma \ref{factorization_lemma} we can compute that
 \[\ord f_{k,l}\geq \frac{c-k}{c!}|r|\]
 \[>\B(\frac{c-k}{c!}-l\B)\td+\frac{c-k}{c!}|r|=(c-k)\wc \D-l\td.\]
 On the other hand, consider indices $k,l$ with $l=\frac{c-k}{c!}d$. We know by Lemma \ref{factorization_lemma} that
 \[\ord f_{k,\frac{c-k}{c!}d}\geq \frac{c-k}{c!}|r|=(c-k)\wc \D-l\td.\]
 
 We will now consider different cases according to which conditions of $(1)_{\ord}-(3)_{\ord}$ hold for $f$ with respect to $J_{-1}$.
 
 Assume that the property $(1)\corner$ holds for $f$. Let $i$ with $c-q<i<c$ be maximal such that $\ord f_i=\frac{c-i}{c!}\ord J_{-1}$. By Lemma \ref{new_edge_lemma} this implies that $\cid\in\N$ and
 \[\ord f_{i,\cid}=\frac{c-i}{c!}|r|.\]
 We now want to show that 
 \[\ord\wt f_{i,0}=(c-i)\wc \D.\]
 By Lemma \ref{double_coord_change} we know that
 \[\wt f_{i,0}=\sum_{\substack{k\geq i\\l\geq0}}\binom{k}{i}f_{k,l}g_0^{k-i}h^l.\]
 So let $k$ and $l$ be indices with $k\geq i$ and $l\geq0$. We know that
 \[\ord g_0^{k-i}h^l\geq (k-i)\wc \D+l\td.\]
 By the maximality of $i$ we know by Lemma \ref{new_edge_lemma} that for all indices $i<k<c$ either $\frac{c-k}{c!}d\notin\N$ holds or
 \[\ord f_{k,\frac{c-k}{c!}d}>\frac{c-k}{c!}|r|.\] 
 Thus, the strict inequality
 \[\ord f_{k,l}>(c-k)\wc \D-l\td\]
 holds if $(k,l)\notin\{(i,\frac{c-i}{c!}d),(c,0)\}$. Consequently,
 \[\ord f_{k,l}g_0^{k-i}h^l>(c-i)\wt \D\]
 holds for all such indices $k,l$. Further, we know by Lemma \ref{c-q} (1) that $\binom{c}{i}=0$. On the other hand, we know that
 \[\ord f_{i,\frac{c-i}{c!}d}h^{\frac{c-i}{c!}d}=\frac{c-i}{c!}|r|+\frac{c-i}{c!}d\td=(c-i)\wc \D.\]
 This proves that
 \[\ord \wt f_{i,0}=(c-i)\wc \D.\]
 By Lemma \ref{s_changes} (2) this implies that $\wt s\leq \wc s<s$.
 
 From now on we will assume that the property $\neg(1)\corner$ holds for $f$. By Lemma \ref{new_edge_lemma} this implies that for all indices $c-q<i<c$ either $\cid\notin\N$ holds or
 \[\ord f_{i,\cid}>\frac{c-i}{c!}|r|.\]
 
 Now assume that the property $(2)\corner$ holds for $f$. Thus, we know by Lemma \ref{new_edge_lemma} that either $\frac{q}{c!}d\notin\N$ holds or
 \[\ord f_{c-q,\qd}>\frac{q}{c!}|r|.\]
 We now want to show that
 \[\ord \wt f_{c-q,\jz q}=q\wc \D-q\jz \td.\]
 By Lemma \ref{double_coord_change} we know that 
 \[\wt f_{c-q,\jz q}=\sum_{\substack{k\geq c-q\\l\geq0}}\binom{k}{c-q}f_{k,l}\sum_{\substack{0\leq m\leq \jz q\\ \alpha\in\N^{k-(c-q)}\\|\alpha|=\jz q-m}}\binom{l}{m}g_\alpha h^{l-m}.\]
 So let $k\geq c-q$, $l\geq0$, $m\leq\jz q$ and $\alpha\in\N^{k-(c-q)}$ be indices fulfilling $|\alpha|=\jz q-m$. We know that
 \[\ord g_\alpha h^{l-m}\geq (k-(c-q))\wc \D-(\jz q-m)\td+(l-m)\td\]
 \[=(k-(c-q))\wc \D-(\jz q-l)\td.\]
 Using the properties $\neg(1)\corner$ and $(2)\corner$, we know that
 \[\ord f_{k,l}>(c-k)\wc \D-l\td\]
 holds if $(k,l)\neq(c,0)$. Consequently,
 \[\ord f_{k,l}g_\alpha h^{l-m}>q\wc \D-q\jz\td\]
 holds whenever $(k,l)\neq(c,0)$. By Lemma \ref{c-q} we know that $\binom{c}{c-q}\neq0$. Also, we know by Lemma \ref{char_p_sum_lemma} that
 \[\sum_{\substack{\alpha\in\N^q\\|\alpha|=\jz q}}g_\alpha=g_\jz^q.\]
 Consequently,
 \[\ord \binom{c}{c-q}f_{c,0}\sum_{\substack{\alpha\in\N^{k-(c-q)}\\|\alpha|=\jz q}}g_\alpha=q\ord g_\jz\]
 \[=q\wc \D-\jz q\td.\]
 Hence, we have shown that
 \[\ord \wt f_{c-q,\jz q}=q\wc \D-\jz q\td.\]
 By Lemma \ref{s_changes} (2) this proves that $\wt s\leq \wh s<s$.
 
 Finally, assume that the properties $\neg(1)\corner$, $\neg(2)\corner$ and $(3)\corner$ hold. Thus, we know by Lemma \ref{new_edge_lemma} that $\qd\in\N$, 
 \[\ord f_{c-q,\qd}=\frac{q}{c!}|r|\]
 and 
 \[\init(f_{c-q})=\init(f_{c-q,\qd})\cdot y^{\qd}.\]
 Property $(3)\corner$ thus implies that either $\frac{d}{c!}\notin\N$ or there is no element $G\in K[[\x]]$ such that
 \[\init(f_{c-q,\qd})=\init(f_{c,0})\cdot G^q\]
 holds.
 
 Assume first that $\frac{d}{c!}\in\N$. We will consider the coefficient $\wt f_{c-q,0}$ and want to show that
 \[\ord \wt f_{c-q,0}=q\wc \D.\]
 By Lemma \ref{double_coord_change} we know that
 \[\wt f_{c-q,0}=\sum_{\substack{k\geq c-q\\ l\geq0}}\binom{k}{c-q}f_{k,l}g_0^{k-(c-q)}h^l.\]
 Let $k\geq c-q$ and $l\geq0$ be indices. It is clear that
 \[\ord g_0^{k-(c-q)}h^l\geq (k-(c-q))\wc \D+l\sd.\]
 By property $\neg(1)_{\ord}$ we know that the strict inequality
 \[\ord f_{k,l}>(c-k)\wc \D-l\sd\]
 holds if $(k,l)\notin \{(c,0),(c-q,\frac{q}{c!}d)\}$. Consequently,
 \[\ord f_{k,l}g_0^{k-(c-q)}h^l>q\wc \D\]
 holds for all such indices $k,l$. On the other hand,
 \[\ord f_{c-q,\frac{q}{c!}d}h^{\frac{q}{c!}d}=\frac{q}{c!}|r|+\frac{q}{c!}d\td=q\wc \D.\]
 If $\ord g_0>\wc \D$, then
 \[\ord f_{c,0}g_0^q>q\wc \D\]
 and consequently,
 \[\ord \wt f_{c-q,0}=q\wc \D.\]
 On the other hand, assume that $\ord g_0=\wc \D$. Then either $\ord \wt f_{c-q,0}=q\wc \D$ or the term
 \[\binom{c}{c-q}\init(f_{c,0})\cdot \init(g_0)^q+\init(f_{c-q,\qd})\cdot \init(h)^{q\cdot\frac{d}{c!}}\]
 vanishes. But this violates condition $(3)\corner$. Hence, $\ord \wt f_{c-q,0}=q\wc \D$. By Lemma \ref{s_changes} (2) this implies that $\wt s\leq \wc s<s$.
 
 Finally, consider the case $\frac{d}{c!}\notin\N$. Consequently, $q\nmid \frac{q}{c!}d$. Set $q_1=q_K(\frac{q}{c!}d)$. Thus, $q_1<q$ and $\binom{\frac{q}{c!}d}{q_1}\neq0$ by Lemma \ref{c-q} (1). We will now consider the coefficient $\wt f_{c-q,q_1}$ and want to show that 
 \[\ord \wt f_{c-q,q_1}=q\wc \D-q_1\td.\]
 By Lemma \ref{double_coord_change} we know that 
 \[\wt f_{c-q,q_1}=\sum_{\substack{k\geq c-q\\l\geq0}}\binom{k}{c-q}f_{k,l}\sum_{\substack{0\leq m\leq q_1\\ \alpha\in\N^{k-(c-q)}\\|\alpha|=q_1-m}}\binom{l}{m}g_\alpha h^{l-m}.\]
 So let $k\geq c-q$, $l\geq0$, $m\leq q_1$ and $\alpha\in\N^{k-(c-q)}$ be indices fulfilling $|\alpha|=q_1-m$. As before, we can conclude from property $\neg(1)\corner$ that
 \[\ord f_{k,l}g_\alpha h^{l-m}>q\wt \D-q_1\td\]
 holds if $(k,l)\notin\{(c,0),(c-q,\frac{q}{c!} d)\}$. Further, we know that
 \[f_{c,0}\sum_{\substack{\alpha\in\N^q\\|\alpha|=q_1}}g_\alpha=0\]
 by Lemma \ref{char_p_sum_lemma}. On the other hand, if $(k,l)=(c-q,\frac{q}{c!}d)$, then necessarily $m=q_1$. Further, we know by property $\neg(2)\corner$ that
 \[\ord\B(\binom{\frac{q}{c!}d}{q_1}f_{c-q,\qd}\cdot h^{\frac{q}{c!}d-q_1}\B)=q\wh \D-\qd\td+\B(\qd-q_1\B)\td\]
 \[=q\wh \D-q_1\td.\]
 This proves that
 \[\ord \wt f_{c-q,q_1}=q\wc \D-q_1\td.\]
 By Lemma \ref{s_changes} (2) this implies that $\wt s\leq \wc s<s$.
\end{proof}

\begin{example}
 As the following example shows, assertion (3) of Lemma \ref{double_coord_change} does not hold anymore if there exists an element $f\in J$ which is $z$-regular of order $c$, but not $\ord$-clean.
 
 Let the ideal $J$ be generated by the element
 \[f=z^2+y^2+x^4\]
 in the ring $R=K[[x,y,z]]$ over a field $K$ of characteristic $2$. The coefficient ideals $J_{-1}$ and $J_{-2}$ have the form
 \[J_{-1}=(y^2+x^4),\]
 \[J_{-2}=(x^{4}).\]
 Hence, $d=2$, $s=4$ and $\D=2$.
 
 Now consider the change of coordinates $z=\wt z+g_0$ and $y=\wt y+h$ with $g_0=x+x^2$ and $h=x$. Notice that they fulfill
 \[\ord g_0=1<\D-0\sd,\]
 \[\ord h=1<\sd.\]
 But the expansion of $f$ with respect to the new coordinates is
 \[f=\wt z^2+\wt y^2.\]
 Consequently, $\ord\wt J_{-2}=\infty$.
\end{example}



\begin{lemma} \label{ord_cleaning_preserves_s}
 Consider an $\ord$-cleaning step $z=\wt z+g$ with respect to $J_{-1}$ and an element $f\in J$ that is $z$-regular of order $c$.
 
 If $\ord\wt J_{-1}=\ord J_{-1}$ and $s>d!$ hold, then the coordinate change $z\mapsto \wt z$ preserves the setting and $\ord \wt J_{-2}=\ord J_{-2}$ holds.
\end{lemma}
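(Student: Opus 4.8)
The plan is to reduce Lemma \ref{ord_cleaning_preserves_s} to the machinery already developed for $\w$-cleaning steps and for the behavior of the second coefficient ideal under coordinate changes in $z$ alone. An $\ord$-cleaning step is by Definition (in Section \ref{section_w_cleaning}, with $\w=\ord$) the coordinate change $z=\wt z+g$ with $g=-\binom{c}{q}^{-1}G$ where $G\in K[[\x]]$ satisfies $\init(f_{c-q})=\init(f_c)\cdot G^q$. Since $f_c$ is a unit, $\ord g=\ord G=\tfrac1q\ord f_{c-q}$, hence by Lemma \ref{w_coeff} (applied to $\w=\ord$) we have $\ord g=\tfrac1q\ord f_{c-q}\geq\tfrac1q\cdot\tfrac{q}{c!}\ord J_{-1}=\tfrac{\ord J_{-1}}{c!}$. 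In particular $\ord g\geq\tfrac{|r|}{c!}$ and $\ord_{(x_k)}g\geq\tfrac{r_k}{c!}$ for each $k$ — this is what Lemma \ref{stabilizing_coordinate_changes} would give us directly, but here it follows just from $\ord\wt J_{-1}=\ord J_{-1}$ via Lemma \ref{m_under_coord_changes} applied with the $x_k$-orders. So the coordinate change preserves the factorization $\wt J_{-1}=(\x^ry^{r_y})\cdot\wt I_{-1}$, and since $\ord\wt J_{-1}=\ord J_{-1}$ by hypothesis we get $\ord\wt I_{-1}=d$; thus the setting is preserved. (Here $r_y$ plays no special role: either $V(y)\subseteq E$ and $r_y=\ord_{(y)}J_{-1}$ is preserved the same way, or $r_y=0$.)

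It then remains to show $\ord\wt J_{-2}=\ord J_{-2}$. The key point is that the single coefficient $g$ of the $\ord$-cleaning step, which lives in $K[[\x]]$ rather than $K[[\x,y]]$, can be written in the expansion $g=\sum_{j\geq0}g_jy^j$ with $g_0=g$ and $g_j=0$ for $j\geq1$. So I want to verify the hypothesis of Lemma \ref{s_under_coord_changes} (1), namely $\ord g_j\geq\D-j\sd$ for all $j\geq0$; for $j\geq1$ this is vacuous, and for $j=0$ it asks $\ord g\geq\D$. Now $\ord g=\tfrac1q\ord f_{c-q}\geq\tfrac1q\cdot qD = D$ provided $\ord f_{c-q}\geq qD$. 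This last inequality is exactly Lemma \ref{ord_J_2}'s estimate $\ord f_{i,j}\geq(c-i)\D-j\sd$ applied coefficient-wise: writing $f_{c-q}=\sum_j f_{c-q,j}y^j$, each term satisfies $\ord f_{c-q,j}y^j\geq(q)\D-j\sd+j=q\D+j(1-\sd)$, and since $\sd\geq1$ by Lemma \ref{ogeqc!} this is $\geq q\D$ when... hmm, this requires care because $1-\sd\leq0$. The clean way is: $\ord f_{c-q}\geq\frac{c-(c-q)}{c!}\ord J_{-1}=\tfrac{q}{c!}\ord J_{-1}=\tfrac{q}{c!}(d+r_y+|r|)$ by Lemma \ref{w_coeff}, and $q\D=\tfrac{q}{c!}((d+r_y)\sd+|r|)\geq\tfrac{q}{c!}(d+r_y+|r|)$ since $\sd\geq1$. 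So indeed $\ord g\geq\tfrac1q\ord f_{c-q}\geq\D$. Hence Lemma \ref{s_under_coord_changes} (1) gives $\ord\wt J_{-2}\geq\ord J_{-2}$.

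For the reverse inequality $\ord\wt J_{-2}\leq\ord J_{-2}$ I would invoke Lemma \ref{s_under_double_coord_changes} (3) (with $h=0$), or more simply argue by symmetry: the inverse coordinate change $\wt z=z-g$ is again a coordinate change in $z$ fixing $y$, with the same bound $\ord(-g)=\ord g\geq\wt\D$, where one uses $\ord\wt J_{-1}=\ord J_{-1}$ to see that $\D$ and $\wt\D$ are computed from the same data and that the reverse coordinate change preserves the setting too; then Lemma \ref{s_under_coord_changes} (1) applied to it gives $\ord J_{-2}\geq\ord\wt J_{-2}$. Combining the two inequalities yields $\ord\wt J_{-2}=\ord J_{-2}$, as desired. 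The main obstacle I anticipate is bookkeeping the auxiliary numbers $\D$ and $\wt\D$ carefully enough to justify the symmetry step — specifically confirming that once $\ord\wt J_{-1}=\ord J_{-1}$ and $\ord\wt I_{-1}=\ord I_{-1}=d$, one has $\wt s\geq s$ and $s\geq\wt s$ from a single inequality applied in each direction, without a circular dependence of the bound on the quantity being bounded; the hypothesis $s>d!$ is exactly what is needed to make Lemma \ref{ord_J_2}'s strictness work and to invoke Lemma \ref{s_under_double_coord_changes} (3) if one goes that route rather than the symmetry argument.
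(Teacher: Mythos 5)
There is a genuine gap, and it sits at the very foundation of your argument. You assert that the single coefficient of the $\ord$-cleaning step ``lives in $K[[\x]]$ rather than $K[[\x,y]]$,'' so that in the expansion $g=\sum_{j\geq0}g_jy^j$ one has $g_0=g$ and $g_j=0$ for $j\geq1$. This is false in the present setting. Lemma \ref{ord_cleaning_preserves_s} lives in Section \ref{section_maximizing_over_y_and_z}, where the ambient ring is $R=K[[\x,y,z]]$ and the coefficient ideal is $J_{-1}=\coeff^c_{(\x,y,z)}(J)$; when one reads the $\ord$-cleaning step of Section \ref{section_w_cleaning} in this context, the role of ``the lower parameters'' is played by the full tuple $(\x,y)$, so $G$ and hence $g$ lie in $K[[\x,y]]$. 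Consequently the coefficients $g_j$ for $j\geq1$ are not automatically zero, and the ``vacuous for $j\geq1$'' step collapses. Worse, if one computes the $j=0$ case honestly: since we are performing a cleaning step, $\neg(2)_\ord$ holds, so $\ord g=\frac{m}{c!}=\frac{d+|r|}{c!}$, while $\D=\frac{1}{c!}(d\sd+|r|)>\frac{d+|r|}{c!}$ precisely because $s>d!$, i.e.\ $\sd>1$. So $\ord g<\D$, and your intended estimate at $j=0$ would actually fail if $g_0$ were nonzero.

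The key missing ingredient is a control on $\ord_{(y)}g$, and this is where the hypothesis $s>d!$ really enters. Because $\neg(2)_\ord$ holds, Lemma \ref{new_edge_lemma} (which requires $s>d!$ to apply) gives $\qd\in\N$ and $\init(f_{c-q})=\init(f_{c-q,\qd})\cdot y^{\qd}$; since $g$ is a nonzero constant multiple of $G$ with $\init(f_{c-q})=\init(f_{c,0})G^q$, one deduces $\ord_{(y)}g=\frac{1}{q}\ord_{(y)}\init(f_{c-q})=\frac{d}{c!}$. In other words $g_j=0$ for $j<\frac{d}{c!}$, which rescues the estimates: for $j<\frac{d}{c!}$ there is nothing to check, and for $j\geq\frac{d}{c!}$ Lemma \ref{stabilizing_coordinate_changes} (2) gives $\ord g_j\geq\D-j\sd$. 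The paper then closes the equality not by a bare symmetry argument but by invoking Lemma \ref{too_big_j} (with $r_y=0$ here, the truncation $\wh g=\sum_{j<d/c!}g_jy^j$ is identically zero, so $\wh z=z$ and $\ord J_{-2}=\ord\wh J_{-2}=\ord\wt J_{-2}$). Your symmetry idea can in principle be made to work, but only once one already knows $g_j=0$ for $j<\frac{d}{c!}$ and has verified the matching bound $\ord g_j\geq\wt\D-j\wsd$, which again requires the vanishing of the low-$j$ coefficients; so Lemma \ref{new_edge_lemma} is unavoidable. Without it, your proposal proves nothing.
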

\begin{proof}
 It follows from Lemma \ref{w_cleaning_preserves_v_cleaning} that the coordinate change $z\mapsto \wt z$ preserves the setting.
 
 Since $f$ is not $\ord$-clean with respect to $J_{-1}$, the property $\neg(2)_{\ord}$ holds. Thus, we know by Lemma \ref{new_edge_lemma} that $\qd\in\N$ and
 \[\init(f_{c-q})=\init(f_{c-q,\qd})\cdot y^{\qd}.\]
 Thus, by definition of an $\ord$-cleaning step, we know that
 \[\ord_{(y)}g=\frac{1}{q}\ord_{(y)}\init(f_{c-q})=\frac{d}{c!}.\]
 By Lemma \ref{too_big_j} this proves that $\wt s=s$.
\end{proof}

 We are now ready to prove that the main result of this section.
 
 Consider pairs $(g,h)$ with $g\in K[[\x,y]]$ and $h\in K[[\x]]$ such that $\ord g,\ord h\geq1$. Define the set
 \[\G=\{(g,h):\text{The coordinate change $z=\wt z+g$, $y=\wt y+h$ preserves the setting.}\}\]
 For each pair $(g,h)\in\G$ define the parameters $y_h=y-h$ and $z_g=z-g$. Further, we define the coefficient ideal
 \[J_{-1}^{(g,h)}=\coeff_{(\x,y_h,z_h)}^c(J).\]
 By assumption, this ideal has a factorization
 \[J_{-1}^{(g,h)}=(\x^r)\cdot I_{-1}^{(g,h)}\]
 where $I_{-1}^{(g,h)}$ is an ideal of order $\ord I_{-1}^{(g,h)}=d$. Set 
 \[J_{-2}^{(g,h)}=\coeff_{(\x,y)}^d(I_{-1}^{(g,h)}),\]
 \[s_{(g,h)}=\ord J_{-2}^{(g,h)}.\]

\begin{proposition} \label{maximum_over_y_and_z_exists}
 If there exists an element $f\in J$ which is $\ord$-clean with respect to $J_{-1}$, then there is a maximizing pair $(g_{\max},h_{\max})\in\G$ that fulfills
 \[s_{(g_{\max},h_{\max})}\geq s_{(g,h)}\]
 for all pairs $(g,h)\in\G$ .
\end{proposition}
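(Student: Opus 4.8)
The plan is to prove the statement by a compactness argument over $\G$, exploiting the fact (Lemma~\ref{s_under_double_coord_changes}) that raising the order of the second coefficient ideal forces control of higher and higher order terms of $g$ and $h$, so that a maximizing pair can be assembled from its truncations.

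First I would dispose of the trivial cases. If $s_{(g,h)}=\infty$ for some $(g,h)\in\G$, that pair is already maximal and we are done; so assume $s_{(g,h)}<\infty$ for all $(g,h)\in\G$ and set $s_{\max}=\sup\{s_{(g,h)}:(g,h)\in\G\}\in\Ni$. The identity pair $(0,0)\in\G$ shows $s_{\max}\geq s$. Next I would record two structural facts. \emph{(a)} Since $f$ is $\ord$-clean with respect to $J_{-1}$, Proposition~\ref{w_cleaning_maximizes_w} (applied in $K[[\x,y,z]]$ with $\w=\ord$ on the first $n+1$ variables) gives $\ord\coeff^c_{(\x,y,z-g)}(J)\leq\ord J_{-1}=d+|r|$ for \emph{every} $g\in K[[\x,y]]$ with $\ord g\geq1$; and by Proposition~\ref{residual_order_coord_indep} both this order and the numbers $\ord_{(x_i)}$ of that ideal depend only on the hypersurface $V(z-g)$, not on $h$. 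Hence $(g,h)\in\G$ is equivalent to the \emph{closed} conditions $\ord\coeff^c_{(\x,y,z-g)}(J)\geq d+|r|$ and $\ord_{(x_i)}\coeff^c_{(\x,y,z-g)}(J)\geq r_i$ for all $i$, and imposes no constraint on $h$ at all. \emph{(b)} By the contrapositive of Lemma~\ref{s_under_double_coord_changes}~(3), any $(g,h)$ with $s_{(g,h)}\geq s$ — in particular any near-maximizer — satisfies $\ord h\geq s/d!$ and $\ord g_j\geq\D-j\cdot\frac{s}{d!}$ for all $j$; together with Lemma~\ref{too_big_j} and its analogue in the present setting this lets me restrict, without loss of generality, to pairs in which $g$ is a polynomial in $y$ of bounded degree.

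The core of the argument is a finiteness-plus-limiting step. Using the explicit formula of Lemma~\ref{double_coord_change} for the coefficients $\wt f_{i,j}$ together with Lemma~\ref{ord_J_2}, one checks that for each $N\in\N$ there is $M(N)\in\N$ such that both the value $\min(s_{(g,h)},N)$ and the truth of $(g,h)\in\G$ depend only on the images of $g$ and $h$ modulo $\m^{M(N)}$: indeed $\ord f_{k,l}\to\infty$ as $k+l\to\infty$ and each factor $g_{\alpha_o}$ or $h$ contributes order $\geq1$, so only finitely many summands of $\wt f_{i,j}$ can influence its order below a fixed threshold. Thus $\min(s_{(g,h)},N)$ factors through a finite-dimensional $K$-vector space $V_N$ of truncation data, on which each $(g,h)\mapsto\ord\wt f_{i,j}$ is Zariski upper semicontinuous and, by \emph{(a)}, $\G$ cuts out a Zariski-closed locus. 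Consequently the sets $A_N=\{(g,h)\in\G:\min(s_{(g,h)},N)=\min(s_{\max},N)\}$ are nonempty (the supremum of $\min(\,\cdot\,,N)$ over $\G$ has finite range, so it is attained and equals $\min(s_{\max},N)$), nested decreasing, and correspond to nonempty closed subsets $C_N\subseteq V_N$ compatible under the coordinate projections $V_{N+1}\to V_N$. A projective-limit argument — passing to the eventually constant descending chain of closures $\overline{p_{N,N'}(C_{N'})}$ of nonempty closed subsets of $V_N$ (Noetherianity), which forms a projective subsystem with dominant transition maps — then yields a compatible sequence of truncations, hence a well-defined pair $(g_{\max},h_{\max})$ lying in $\bigcap_N A_N$. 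By construction $(g_{\max},h_{\max})\in\G$ and $\min(s_{(g_{\max},h_{\max})},N)=\min(s_{\max},N)$ for every $N$, so $s_{(g_{\max},h_{\max})}=s_{\max}$, which is the assertion.

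I expect the main obstacle to be exactly this last limiting step: assembling a genuinely compatible pair $(g_{\max},h_{\max})$, rather than merely an optimal truncation at each level. Two points need care here — proving the finiteness claim precisely (identifying which finitely many coefficients of $g$ and $h$ can affect $\ord\wt f_{i,j}$ below a prescribed bound), and checking that over an \emph{arbitrary} algebraically closed field the projective system of nonempty closed loci $C_N$ has a $K$-point in its limit. Fact \emph{(a)} above — that, once $f$ is $\ord$-clean with respect to $J_{-1}$, membership in $\G$ is a closed condition rather than merely a locally closed one — is precisely what makes the second point tractable, and the rest of the proof consists of the (lengthy but routine) verifications of the finiteness and semicontinuity statements via the formulas of Lemmas~\ref{double_coord_change} and~\ref{ord_J_2}.
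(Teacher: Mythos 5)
Your approach is genuinely different from the paper's, and it has a real gap in the final gluing step. The paper's proof is a \emph{completeness} argument: it picks a sequence $(g_i,h_i)$ with strictly increasing $s$-values (possible since $s$-values lie in $\N\cup\{\infty\}$, so non-attainment of the supremum forces unboundedness), sets $G_i=g_{i+1}-g_i$ and $H_i=h_{i+1}-h_i$, and uses the estimate of Lemma~\ref{s_under_double_coord_changes} applied to each successive coordinate change to conclude $\ord G_i\to\infty$ and $\ord H_i\to\infty$. The differences are then summable in the $\m$-adic topology of $K[[\x,y]]$ (which is complete), the limit $(g_\infty,h_\infty)$ lies in $\G$ by $\ord$-cleanness and Lemma~\ref{m_under_coord_changes}, and a direct estimate shows $s_{(g_\infty,h_\infty)}\geq s_k$ for every $k$, hence $s_{(g_\infty,h_\infty)}=\infty$. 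No compactness of any algebraic variety is invoked.

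Your proof, by contrast, is a \emph{compactness} argument: you truncate the data, observe that the set of good truncations $C_N\subseteq V_N$ is non-empty and Zariski-closed, and then try to produce a compatible sequence by passing to the stabilized closures $D_N=\bigcap_{N'\geq N}\overline{p_{N,N'}(C_{N'})}$ and claiming a $K$-point exists in $\varprojlim D_N$. This last step is not justified. Over a countable algebraically closed field such as $K=\overline{\mathbb{F}_p}$, an inverse system of non-empty affine varieties over $K$ with merely dominant (not surjective on $K$-points) transition maps can have an empty set of $K$-points in the limit: one can cook up examples in $\mathbb{A}^N$ where each transition excludes one more element of $K$ from a fixed coordinate, so that compatibility across all $N$ forces that coordinate to avoid all of $K$. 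Dominance (equivalently injectivity of the associated ring maps) only gives a non-empty limit as a \emph{scheme} (a compatible family of prime ideals), not a $K$-point. You would need either $K$ uncountable (so a cardinality/Baire-type argument salvages things) or some finiteness, such as finite fibers of the transition maps (to run a K\"onig's-lemma argument), and neither is available here. Since the paper insists $K$ be an arbitrary algebraically closed field, the gap is genuine.

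Your structural observations (a) and (b) are correct and useful — in particular, (a), that membership in $\G$ depends only on $g$ and, by $\ord$-cleanness of $f$ plus Proposition~\ref{w_cleaning_maximizes_w}, is given by closed conditions, is a good point — but they do not close the gap. If you want to repair the argument in the spirit you began, the right move is to replace the compactness step by the paper's completeness step: rather than trying to find a single $K$-point compatible with all truncation conditions, apply Lemma~\ref{s_under_double_coord_changes}(3) to the coordinate change \emph{between} successive near-maximizers to deduce that their difference has order at least $s_i/d!$ in $h$ and $\D_i-j\,s_i/d!$ in $g_j$, so that the sequence is $\m$-adically Cauchy and converges to the desired pair.
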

\begin{proof}
 Assume that such a maximizing pair $(g_{\max},h_{\max})$ does not exist. This implies that there exists a sequence of pairs $(g_i,h_i)\in\G$ for $i\in\N$ such that 
 \[s_{(g_{i+1},h_{i+1})}>s_{(g_i,h_i)}\]
 holds for all $i\geq0$. Without loss of generality we can set $g_0=0$ and $h_0=0$. 
 
 By Lemma \ref{ord_cleaning_preserves_s} we may assume that $f$ is $\ord$-clean with respect to $J_{-1}^{(g_i,h_i)}$ for all $i\geq0$.
 
 Let each element $g_i$ have the expansion $g_i=\sum_{j\geq0}g_{i,j}y_i^j$ with $g_{i,j}\in K[[\x]]$. 
 
 Set $z_i=z_{g_i}$, $y_i=y_{g_i}$ and $s_i=s_{(g_i,h_i)}$. 
 
 Define for all indices $i\in\N$ the differences $G_i=g_{i+1}-g_i$ and $H_i=h_{i+1}-h_i$. Thus, $z_i=z_{i+1}+G_i$ and $y_i=y_{i+1}+H_i$. It is clear that each element $G_i$ has an expansion 
 \[G_i=\sum_{j<\frac{d}{c!}}G_{i,j}y_i^j\]
 with $G_{i,j}\in K[[\x]]$.
 
 We know by Lemma \ref{s_under_coord_changes} that for all indices $i,j\geq0$ the inequalities 
 \[\ord G_{i,j}\geq\B(\frac{d}{c!}-j\B)\frac{s_i}{d!}+\frac{|r|}{c!}.\]
 and
 \[\ord H_i\geq\frac{s_i}{d!}\]
 hold. Thus, $\lim_{i\to\infty}\ord G_i=\infty$ and $\lim_{i\to\infty}\ord H_i=\infty$. Consequently, the power series $g_\infty=\sum_{i\geq0}G_i$ and $h_\infty=\sum_{i\geq0}H_i$ are well-defined. By Lemma \ref{m_under_coord_changes} and since $f$ is $\ord$-clean, it follows that $(g_\infty,h_\infty)\in\G$. Define the parameters $z_\infty=z-g_\infty$ and $y_\infty=y-h_\infty$. Set $s_\infty=s_{(g_\infty,h_\infty)}$.
 
 We will now show that $s_\infty=\infty$. Let $k\in\N$ be an index. Then $z_k=z_\infty+\wt G_k$ and $y_k=y_\infty+\wt H_k$, where $\wt G_k=\sum_{i\geq k}G_i$ and $\wt H_k=\sum_{i\geq k}H_i$. Further, define $H_{k,i}=-\sum_{l=k}^{i-1}H_l$. It fulfills $y_i=y_k+H_{k,i}$ for $i>k$. Notice that
 \[\ord H_{k,i}\geq \min_{k\leq l<i}\ord H_l\geq \frac{s_k}{d!}.\]
 Further, we can compute that
 \[\wt G_k=\sum_{i\geq k}G_i=\sum_{\substack{i\geq k\\j\geq0}}G_{i,j}y_{i}^j\]
 \[=\sum_{\substack{i\geq k\\j\geq0}}G_{i,j}(y_k+H_{i,k})^j\]
 \[=\sum_{l\geq0}\sum_{\substack{i\geq k\\j\geq l}}\binom{j}{l}G_{i,j}H_{k,i}^{j-l}y_k^l.\]
 Thus, $\wt G_k$ has the expansion $\wt G_k=\sum_{l\geq0}\wt G_{k,l}y_k^l$ with
 \[\wt G_{k,l}=\sum_{\substack{i\geq k\\j\geq l}}\binom{j}{l}G_{i,j}H_{k,i}^{j-l}.\]
 In particular, for indices $l<\frac{d}{c!}$ the inequality
 \[\ord \wt G_{k,l}\geq \min_{\substack{i\geq k\\j\geq l}}\underbrace{\ord G_{i,j}}_{\geq(\frac{d}{c!}-j)\frac{s_k}{d!}+\frac{|r|}{c!}}+(j-l)\cdot \underbrace{\ord H_{k,i}}_{\geq \frac{s_k}{d!}}\]
 \[\geq \frac{1}{c!}\B(d\frac{s_k}{d!}+|r|\B)-\frac{s_k}{d!}\]
 holds. Also, the inequality
 \[\ord \wt H_k\geq \min_{i\geq k}\ord H_i\geq \frac{s_k}{d!}\]
 holds. Consequently, $s_\infty\geq s_k$ holds by Lemma \ref{s_under_double_coord_changes} (1) and Lemma \ref{too_big_j}. Since this holds for arbitrary indices $k\in\N$, we know that $s_\infty=\infty$.
\end{proof}

\chapter{Technical results for the resolution of surfaces} \label{chapter_6}

 In this chapter we will establish various technical results that will be used to prove the resolution of surface singularities in Chapters 7-9. Most results involve coefficient ideals and many involve the cleaning techniques that were developed in Chapter 5. While some of the results that we are going to prove hold in arbitrary dimension, others are specific to the surface case. We will state in the opening of each section in which generality the discussed results hold and how they will be used for proving the resolution of surface singularities.

\section{Stability of cleanness under blowup} \label{section_weighted_orders_under_blowup}

In the first section of this chapter we will investigate how the invariants associated to the coefficient ideal we have introduced so far behave under monomial blowup maps. We will also show in which sense the cleanness properties which were introduced in Chapter \ref{chapter_cleaning} are stable under such maps. The premise that the blowup maps we consider are monomial is fundamental to the results in this section. If translations appear, none of the claimed results holds anymore.


The main results of this section are Proposition \ref{w_clean_stable_under_blowup} and Proposition \ref{s_clean_stable_under_blowup}. In Proposition \ref{w_clean_stable_under_blowup} we will show how weighted orders of the coefficient ideal behave under monomial blowup maps and in which sense $\w$-cleanness is preserved. The result holds in arbitrary dimension.  Proposition \ref{s_clean_stable_under_blowup} states how the order of the second coefficient ideal behaves under a monomial point-blowup and that secondary $\ord$-cleanness is preserved in this case. This result only holds for point-blowups in a $3$-dimensional ambient space. Apart from these results, we will show in Lemma \ref{cleaning_preserves_directrix} that the cleaning procedures which were introduced in Chapter \ref{chapter_cleaning} preserve the directrix under certain assumptions.


The results of this section will be applied in Chapter 9 when proving that the resolution invariant $\ivX$ decreases under blowup. 

\subsection[Behavior of weighted orders of the coefficient ideal under blowup]{Behavior of weighted orders of the coefficient ideal under monomial blowup maps}

For the first result, we consider the following setting:

 Let $R=K[[\x,z]]$ with $\x=(x_1,\ldots,x_n)$ and $J\subseteq R$ an ideal of order $c=\ord J$. Set $J_{-1}=\coeff^c_{(\x,z)}(J)$.
 
 Let $\w:K[[\x]]\to\Ni^l$ be a weighted order function defined on the parameters $\x$. Denote 
 \[\w(x_i)=(\w_1(x_i),\ldots,\w_l(x_i)).\]

 Let $k\leq n$ be an index such that the center $(x_1,\ldots,x_k,z)$ is permissible with respect to the order function in the sense that $\ord_{(x_1,\ldots,x_k,z)}(J)=c$. Further, assume that for all indices $i\leq k$ the inequality $\w_j(x_i)\geq \w_j(x_1)$ holds for $j=1,\ldots,l$.
  
 Consider the monomial blowup map $\pi$ along the center $(x_1,\ldots,x_k,z)$ in the $x_1$-chart. Thus, the map $\pi:R\to R$ is given by
 \[\begin{array}{ll}
 \pi(x_1)=x_1, &\\
 \pi(x_i)=x_1x_i & \text{for $1<i\leq k$,}\\ 
 \pi(x_j)=x_j & \text{for $j>k$,} \\
 \pi(z)=x_1z. &
\end{array}\]
 
 Let $J^*=R\pi(J)$ be the total transform and $J'=x_1^{-c}J^*$ the weak transform of $J$ under $\pi$. Assume that $\ord J'=c$ and set $J_{-1}'=\coeff^c_{(\x,z)}(J')$.
 
 Denote the induced map $K[[\x]]\to K[[\x]]$ again by $\pi$. Let $J_{-1}^*=K[[\x]]\pi(J_{-1})$ be the total transform of $J_{-1}$. Recall from Lemma \ref{coeff_ideal_under_blowup} that the inclusion $x_1^{-c!}J_{-1}^*\subseteq J_{-1}'$ holds, but in general, equality does not hold.
 
 Consider the \emph{induced weighted order function} $\w':K[[\x]]\to\N_\infty^l$ which is defined on the parameters $\x$ via 
  \[\begin{array}{ll}
 \w'(x_1)=\w(x_1), &\\ 
 \w'(x_i)=\w(x_i)-\w(x_1) & \text{for $1<i\leq k$,} \\
 \w'(x_j)=\w(x_j) & \text{for $k<j\leq n$.}
\end{array}\]

\begin{proposition} \label{w_clean_stable_under_blowup}

 The following hold:
 
 \begin{enumerate}[(1)]
  \item $\w'(J_{-1}')=\w'(x_1^{-c!}J_{-1}^*)=\w(J_{-1})-c!\cdot\w(x_1)$.
  \item If an element $f\in J$ is $\w$-clean with respect to $J_{-1}$, then its transform $f'=x_1^{-c}\pi(f)\in J'$ is $\w'$-clean with respect to $J_{-1}'$.
 \end{enumerate}
 
\end{proposition}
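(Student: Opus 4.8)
The plan is to analyze the element $f' = x_1^{-c}\pi(f)$ directly by computing its power series expansion in $z$ and tracking what the monomial blowup map does to the coefficients. Writing $f = \sum_{i\geq 0} f_i z^i$ with $f_i \in K[[\x]]$, we have from Lemma \ref{coeff_ideal_under_blowup} that $f' = \sum_{i\geq 0} f_i' z^i$ where $f_i' = x_1^{i-c}\pi(f_i)$. The key computational fact, which I would establish first, is that for any $g \in K[[\x]]$ the induced map satisfies $\w'(\pi(g)) = \w(g) - \w(x_1)\cdot\ord_{(x_1,\ldots,x_k)}(\text{relevant part})$... more precisely, since $\pi$ is monomial with $\pi(x_1)=x_1$, $\pi(x_i)=x_1x_i$ for $1<i\leq k$, $\pi(x_j)=x_j$ for $j>k$, and since we assumed $\w_j(x_i)\geq\w_j(x_1)$ for $i\leq k$, one gets $\w'(\pi(\x^\alpha)) = \w(\x^\alpha)$ for every monomial $\x^\alpha$: indeed $\w'(\pi(x_i)) = \w'(x_1) + \w'(x_i) = \w(x_1) + (\w(x_i)-\w(x_1)) = \w(x_i)$ for $1<i\leq k$, and trivially for the other variables. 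Hence $\w'(\pi(g)) = \w(g)$ for all $g \in K[[\x]]$, and $\w'$ is a valuation so $\w'(\pi(g_1)\pi(g_2)) = \w(g_1)+\w(g_2)$, etc. This immediately gives $\w'(x_1^{-c!}J_{-1}^*) = \w(J_{-1}) - c!\cdot\w(x_1)$ by applying Lemma \ref{w_coeff} (or directly the definition of the coefficient ideal) to the generators $(f_i')^{c!/(c-i)} = x_1^{-c!}\pi(f_i^{c!/(c-i)})$.

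For part (1), the remaining point is the inequality $\w'(J_{-1}') \leq \w'(x_1^{-c!}J_{-1}^*)$ — the reverse being clear from the inclusion $x_1^{-c!}J_{-1}^* \subseteq J_{-1}'$ of Lemma \ref{coeff_ideal_under_blowup}. Since $J_{-1}'$ is generated by the $(f_i')^{c!/(c-i)}$ for $f \in J$, $i < c$, and $f_i' = x_1^{i-c}\pi(f_i)$, I compute $\w'((f_i')^{c!/(c-i)}) = \frac{c!}{c-i}\w'(f_i') = \frac{c!}{c-i}(\w'(\pi(f_i)) - (c-i)\w(x_1)) = \frac{c!}{c-i}\w(f_i) - c!\,\w(x_1) \geq \w(J_{-1}) - c!\,\w(x_1)$ by Lemma \ref{w_coeff}. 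Taking the minimum over all generators and using the generating-set version of Lemma \ref{w_coeff} gives equality $\w'(J_{-1}') = \w(J_{-1}) - c!\,\w(x_1)$, which proves (1).

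For part (2), I would go through the three defining conditions of $\w$-cleanness for $f$ with respect to $J_{-1}$ and check they transfer to $f'$ with respect to $J_{-1}'$ under $\w'$. Write $m = \w(J_{-1})$ and $m' = \w'(J_{-1}') = m - c!\,\w(x_1)$. Note $q = q_K(c)$ is unchanged since $c = \ord J = \ord J'$. First, $f'$ is still $z$-regular of order $c$ by Lemma \ref{z_regular_stable_under_blowup}, and $f_c' = \pi(f_c)$ is a unit. Condition $(1)_\w$: if there is $c-q<i<c$ with $\w(f_i) = \frac{c-i}{c!}m$, then $\w'(f_i') = \w'(x_1^{i-c}\pi(f_i)) = \w(f_i) - (c-i)\w(x_1) = \frac{c-i}{c!}m - (c-i)\w(x_1) = \frac{c-i}{c!}m'$, so $(1)_\w$ holds for $f'$. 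Condition $(2)_\w$: similarly $\w'(f_{c-q}') = \w(f_{c-q}) - q\,\w(x_1) > \frac{q}{c!}m - q\,\w(x_1) = \frac{q}{c!}m'$. Condition $(3)_\w$: here I need that $\init_{\w'}(f'_{c-q})$ is not of the form $\init_{\w'}(f'_c)\cdot G^q$. The main obstacle is relating $\init_{\w'}$ after blowup to $\init_\w$ before: since $\w'(\pi(g)) = \w(g)$ and $\pi$ is a monomial substitution, $\pi$ maps $\w$-homogeneous series to $\w'$-homogeneous series, and in fact $\init_{\w'}(\pi(g)) = \pi(\init_\w(g))$ (as $\pi$ sends the $\w$-leading terms to the $\w'$-leading terms, with no cancellation because $\pi$ is a monomial injection on the level of monomials of fixed $\w$-weight). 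Therefore $\init_{\w'}(f'_{c-q}) = x_1^{-q}\pi(\init_\w(f_{c-q}))$ and $\init_{\w'}(f'_c) = \pi(\init_\w(f_c))$; if $\init_{\w'}(f'_{c-q}) = \init_{\w'}(f'_c)\cdot G^q$ for some $G \in K[[\x]]$ we could factor $\pi$ and $x_1$-powers (using that $K[[\x]]$ is a UFD and $x_1^{-q}$ times a $q$-th power of a monomial-substituted series forces $G$ itself to be of the form $x_1^{-1}\pi(\tilde G)$, possible since $\pi$ is an isomorphism onto the subring $K[[x_1,x_1x_2,\ldots,x_1x_k,x_{k+1},\ldots,x_n]]$ after inverting $x_1$) to contradict $(3)_\w$ for $f$. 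I expect this last argument — carefully justifying that a $q$-th power relation downstairs pulls back to a $q$-th power relation upstairs — to be the trickiest part, but it is a routine UFD/monomial-substitution argument once the identity $\init_{\w'}\circ\pi = \pi\circ\init_\w$ is in place.
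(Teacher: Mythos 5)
Your proposal is correct and follows the paper's proof essentially verbatim: you first establish that $\w'(\pi(\x^\alpha))=\w(\x^\alpha)$ for all monomials (hence $\w'(\pi(g))=\w(g)$ for all $g\in K[[\x]]$), derive (1) by applying Lemma \ref{w_coeff} to the transformed generators $f_i'=x_1^{i-c}\pi(f_i)$, and then check the three cleanness conditions $(1)_\w$--$(3)_\w$ one at a time using the same identity, which is exactly the structure of the paper's argument. The only place where your sketch is noticeably vaguer than the paper is the transfer of $(3)_\w$: rather than the UFD and localized-subring considerations you gesture at, the paper uses the more elementary observation that a monomial $\x^\alpha$ is a $q$-th power if and only if $\pi(\x^\alpha)$ is one; together with the fact that $K$ is perfect, this lets one read off from $\pi\bigl(\init_\w(f_{c-q})\init_\w(f_c)^{-1}\bigr)=(x_1G)^q$ that every exponent vector occurring in $\init_\w(f_{c-q})$ is divisible by $q$, hence that it is a $q$-th power --- sidestepping entirely the question of whether $x_1G$ lies in the image of $\pi$, which is the part your sketch leaves unjustified.
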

 
\begin{proof}  Let each element $f\in J$ have an expansion $f=\sum_{i\geq0}f_iz^i$ with $f_i\in K[[\x]]$. Then the elements $f'=x_1^{-c}\pi(f)\in J'$ have the expansion $f'=\sum_{i\geq0}f_i'z^i$ with $f_i'=x_1^{i-c}\pi(f_i)\in K[[\x]]$.

 (1): Let $G$ be a generating set for $J$. Then $G'=\{f'=x_1^{-c}\pi(f):f\in G\}$ is a generating set for the weak transform $J'$.
 
 Let $\x^\alpha$ be a monomial with $\alpha\in\N^n$. Then 
 \[\pi(\x^{\alpha})=x_1^{\alpha_1+\ldots+\alpha_k}\prod_{i=2}^n x_i^{\alpha_i}.\]
 Hence, the equality $\w'(\pi(\x^{\alpha}))=\w(\x^\alpha)$ holds by definition. Consequently, for all elements $f\in J$ and indices $i\geq0$ the equality $\w'(\pi(f_i))=\w(f_i)$ holds.
 
 This allows us to compute with Lemma \ref{w_coeff} that
 \[\w'(J_1')=\min_{\substack{f'\in G'\\i<c}}\frac{c!}{c-i}\w'(f_i')=\min_{\substack{f\in G\\i<c}}\frac{c!}{c-i}\w'(x_1^{i-c}\pi(f_i))\]
 \[=\min_{\substack{f\in G\\i<c}}\frac{c!}{c-i}\w'(\pi(f_i))-c!\cdot\w'(x_1)=\w'(x_1^{-c!}J_{-1}^*)\]
 \[=\min_{\substack{f\in G\\i<c}}\frac{c!}{c-i}\w(f_i)-c!\cdot\w(x_1)=\w(J_{-1})-c!\cdot\w(x_1).\]
 
 (2): 
 Set $m=\w(J_{-1})$ and $m'=\w'(J_{-1}')$. 
 
 By Lemma \ref{z_regular_stable_under_blowup} we know that $f'$ is again $z$-regular of order $c$.
 
 Assume first that $(1)_\w$ holds for $f$ with respect to $J_{-1}$. Let $i$ be an index with $c-q<i<c$ such that $\w(f_i)=\frac{c-i}{c!}m$. Then by what we have already shown,
 \[\w'(f_i')=(i-c)\w(x_1)+\w'(\pi(f_i))=(i-c)\w(x_1)+\w(f_i)=\frac{c-i}{c!}m'.\]
 Hence, the property $(1)_{\w'}$ holds for $f'$ with respect to $J_{-1}'$.
 
 It can be shown in exactly the same way that if $(2)_\w$ holds for $f$ with respect to $J_{-1}$, also $(2)_{\w'}$ holds for $f'$ with respect to $J_{-1}'$.
 
 Finally, assume that $(3)_\w$ holds for $f$ with respect to $J_{-1}$. Assume that the property $(3)_{\w'}$ does not hold for $f'$ with respect to $J_{-1}'$. Thus, there is an element $G\in K[[\x]]$ such that
 \[\init_{\w'}(f_{c-q}')=\init_{\w'}(f_c')\cdot G^q.\]
 From what we have shown, it is easy to see that $\init_{\w'}(f_{c-q}')=x_1^{-q}\pi(\init_\w(f_{c-q}))$ and $\init_{\w'}(f_c')=\pi(\init_\w(f_c))$. Thus, we see that the equality
 \[\pi(\init_\w(f_{c-q})\cdot(\init_\w(f_c))^{-1})=(x_1G)^q\]
 holds. Further, it is easy to check that a monomial $\x^\alpha$ is a $q$-th power if and only if its image $\pi(\x^\alpha)$ is a $q$-th power. Hence, this implies that there exists an element $H\in K[[\x]]$ such that
 \[\init_\w(f_{c-q})\cdot(\init_\w(f_c))^{-1}=H^q.\]
 But this obviously contradicts the property $(3)_\w$ for $f$. Thus, we have shown that $(3)_{\w'}$ holds for $f'$ with respect to $J_{-1}'$.
\end{proof}

\begin{remark}
 Notice that \emph{any} weighted order function $\y:K[[\x]]\to\Ni^l$ can be expressed as an induced weighted order function $\nu=\w'$ by setting $\w(x_1)=\y(x_1)$, $\w(x_i)=\y(x_i)+\y(x_1)$ for $1<i\leq k$ and $\w(x_j)=\y(x_j)$ for $k<j\leq n$. This is particularly useful when combining it with the observation in Lemma \ref{w_cleaning_preserves_v_cleaning} that cleaning processes with respect to different weighted order functions preserve one another.
 
 Thus, if there is an element $f\in J$ that is $z$-regular of order $c$, we can successively apply cleaning with respect to weighted order functions to ensure that both $f$ is $\w_i$-clean with respect to $J_{-1}$ and $f'$ is $\y_j$-clean with respect to $J_{-1}'$ for finitely many given weighted order functions $\w_1,\ldots,\w_b$ and $\y_1,\ldots,\y_d$ that are all defined on the parameters $\x$.
\end{remark}

 Recall from Section \ref{section_directrix} that the directrix was our main tool for determining the position of equiconstant points $a'$ lying over a point $a$. The following result will ensure that cleaning processes stabilize the directrix as long as the involved weighted order functions are defined on parameters that are also generators of the directrix.

\begin{lemma} \label{cleaning_preserves_directrix}
 Let $R=K[[\x,z]]$ with $\x=(x_1,\ldots,x_n)$ and $J\subseteq R$ an ideal of order $c=\ord J$. Assume that $\Dir(J)=(x_1,\ldots,x_k,z)$ for an index $k\leq n$. Set $J_{-1}=\coeff_{(\x,z)}^c(J)$. Let $J_{-1}$ have a factorization $J_{-1}=(\x^r)\cdot I_{-1}$ and set $J_{-2}=\coeff_{\x}^d(I_{-1})$ where $d=\ord I_{-1}$. Further, let $f\in J$ be an element that is $z$-regular of order $c$.
 
 Let $z=z_1+g$ be either an $\w$-cleaning step with respect to $J_{-1}$ and $f$ for a weighted order function $\w$ that is defined on the parameters $\x$ or a secondary $\ord$-cleaning step with respect to the second coefficient ideal $J_{-2}$ and $f$.
 
 Then $\ol z_1\in\Dir(J)$.
\end{lemma}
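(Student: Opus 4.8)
The statement to prove is Lemma~\ref{cleaning_preserves_directrix}: that a single $\w$-cleaning step or secondary $\ord$-cleaning step $z = z_1 + g$ preserves the directrix, i.e. $\ol{z}_1 \in \Dir(J)$. The plan is to control the weighted order (or order) of the increment $g$ and then use the characterization of the directrix via $\minit(J)$.

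First I would recall that, by hypothesis, $\Dir(J) = (\ol{x}_1,\ldots,\ol{x}_k,\ol{z})$, and write $c = \ord J$, $m = \w(J_{-1})$ in the $\w$-cleaning case. Since $V(x_1,\ldots,x_k,z)$ is the directrix, Lemma~\ref{coeff_ideal_and_directrix} and the fact that $\init(f) \in K[x_1,\ldots,x_k,z]$ give me that the coefficients $f_i$ for $i<c$ lie in the ideal $(x_1,\ldots,x_k)^{\,c-i}$; more precisely $\init(f_i) \in K[x_1,\ldots,x_k]$ is homogeneous of degree $c-i$. The key observation is that in both types of cleaning steps, the increment $g$ is a \emph{homogeneous} polynomial built from the initial forms of the $f_i$'s: in the $\w$-cleaning case $g = -\binom{c}{q}^{-1} G$ where $\init_\w(f_c)\cdot G^q = \init_\w(f_{c-q})$, and since $f_c$ is a unit with $\ol{f_c(0)}\ne 0$ and $\init_\w(f_{c-q})$ is (up to a unit factor) the $\w$-initial form of $f_{c-q}$, which is a form in $x_1,\ldots,x_k$, it follows that $G$ — being a $q$-th root of a polynomial in $K[x_1,\ldots,x_k]$ — lies in $K[x_1,\ldots,x_k]$ and is homogeneous of degree $(c-q)/q \cdot$(something); the point is $G \in (x_1,\ldots,x_k)$, hence $g \in (x_1,\ldots,x_k) \subseteq K[[\x]]$. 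Therefore $z_1 = z - g$ satisfies $\ol{z}_1 = \ol{z} - \ol{g}$ with $\ol{g} \in (\ol{x}_1,\ldots,\ol{x}_k) \subseteq \Dir(J)$, so $\ol{z}_1 = \ol{z} - \ol{g} \in \Dir(J)$ since $\ol{z} \in \Dir(J)$.

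For the secondary $\ord$-cleaning case the argument is structurally the same: the increment has the form $g = g_\jz y^\jz$ where $y = x_1$ (or more precisely $y$ is the parameter singled out in the flag/factorization, which among $x_1,\ldots,x_k$ is a generator of the directrix by assumption), $g_\jz = -\binom{c}{q}^{-1} G$ with $\init(f_{c,0}) = $ unit and $\init(f_{c-q,\jz q}) = \init(f_{c,0}) G^q$; since $f_{c-q,\jz q} \in K[[\x]]$ and its initial form is a form in the directrix variables $x_1,\ldots,x_k$, the $q$-th root $G$ again lies in $(x_1,\ldots,x_k)$, so $g = g_\jz y^\jz \in (x_1,\ldots,x_k)$ as well. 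Here I need to invoke Lemma~\ref{new_edge_lemma} (applicable since in the non-clean case $s > d!$, which follows from $\neg(1)_\w$ / the structure of the situation, or can be checked directly) to know that $\init(f_{c-q}) = \init(f_{c-q,\jz q}) \cdot y^{\jz q}$ and hence that $\init(f_{c-q,\jz q})$ is indeed (up to the power of $y$) the full initial form, which lies in the directrix ring.

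\textbf{Main obstacle.} The delicate point is verifying that the element $G$ extracted as a $q$-th root genuinely lies in $K[[x_1,\ldots,x_k]]$ (equivalently in the ideal $(x_1,\ldots,x_k)$) rather than merely in $K[[\x]]$ — in positive characteristic a $q$-th power in $K[x_1,\ldots,x_k]$ can only have a $q$-th root in $K[x_1,\ldots,x_k]$ because $K$ is perfect (being algebraically closed), so monomials $\x^\alpha$ that are $q$-th powers have $q$-th roots $\x^{\alpha/q}$ with the same support. I would make this precise using the fact (already used in the proof of Proposition~\ref{w_clean_stable_under_blowup}) that a monomial is a $q$-th power iff each exponent is divisible by $q$, combined with $K = K^q$; then $G$ being the $q$-th root of a homogeneous form supported on $x_1,\ldots,x_k$ is itself a homogeneous form supported on $x_1,\ldots,x_k$ with no constant term (its degree is positive since $\w(g) = m/c! > 0$ in the first case and $\ord g_\jz = \D - \jz\,\sd \geq 1$ by Lemma~\ref{Dgeq1} in the second). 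Once this is established, the conclusion $\ol{z}_1 \in \Dir(J)$ is immediate from linearity of the residue map modulo $m^2$ and $\Dir(J)$ being a linear subspace containing both $\ol{z}$ and $\ol{x}_1,\ldots,\ol{x}_k$.
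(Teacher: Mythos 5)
Your proof turns on the claim that $\init_\w(f_{c-q})$ is a form in $K[x_1,\ldots,x_k]$ and hence that $G$ and $g$ lie in $K[[x_1,\ldots,x_k]]$. This is false: $\init_\w$ extracts the lowest $\w$-\emph{weight} part, not the lowest-degree part, and when some $\w(x_i)=0$ the $\w$-initial form can very well involve the non-directrix variables. Take $J=(f)$ with $f=z^p+x_1^p+x_1^px_2^p$ in $K[[x_1,x_2,z]]$, $\chara(K)=p$, and $\w=\ord_{(x_1)}$. Here $\init(f)=z^p+x_1^p$, so $\Dir(J)=(\ol{x}_1,\ol z)$ and $k=1$; but $\init_\w(f_0)=x_1^p(1+x_2^p)$, the cleaning step extracts $G=x_1(1+x_2)$, and $g=-x_1(1+x_2)\notin K[[x_1]]$. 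The lemma's conclusion nevertheless holds, since the degree-$1$ part of $g$ is $-x_1$. The same over-reach appears earlier when you write ``$f_i\in(x_1,\ldots,x_k)^{c-i}$'': the directrix hypothesis only constrains the degree-$(c-i)$ homogeneous component of $f_i$, not the whole coefficient.

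The repair is exactly what the paper does: work with fixed-\emph{degree} homogeneous pieces, not full $\w$-initial forms. From $\init(f)\in(x_1,\ldots,x_k,z)^c$ one only gets that the degree-$q$ part of $f_{c-q}$ is a form in $K[x_1,\ldots,x_k]$. The degree-$q$ part of $\init_\w(f_{c-q})$ is a sub-polynomial of that and so also lies in $K[x_1,\ldots,x_k]$; if $\ord G=1$, the $q$-th power of the linear part of $G$ equals (up to the unit $f_c(0)$) this degree-$q$ piece, and now your $q$-th-root observation, using that $K$ is perfect, shows the linear part of $G$ lies in $\text{span}(x_1,\ldots,x_k)$. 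Since only the degree-$1$ part of $g$ affects $\ol{z}_1$ in $\mWa/\mWa^2$, this is precisely enough; the secondary $\ord$-cleaning step is handled by the same argument applied to the degree-$q$ part of $f_{c-q}$ expanded along the distinguished last parameter. So the mechanism you identified is the right one — you just applied it one level too high.
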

\begin{proof}
 Let $f$ have the expansion $f=\sum_{i\geq0}f_iz^i$ with $f_i\in K[[\x]]$. Since $\init(f)\in (x_1,\ldots,x_k,z)^c$, we know that $f_{c-q}$ has the form
 \[f_{c-q}=\sum_{\substack{\alpha\in\N^k\\|\alpha|=q}}c_\alpha x_1^{\alpha_1}\cdots x_k^{\alpha_k}+Q(\x)\]
 where $c_\alpha\in K$ and $Q\in K[[\x]]$ with $\ord Q>q$. By definition of the cleaning step, $g$ then has the form
 \[g=\sum_{i=1}^k\lambda_ix_i+\wt Q\]
 where $\lambda_i\in K$ and $\wt Q\in K[[\x]]$ with $\ord\wt Q\geq2$. Thus, $\ol z_1\in \Dir(J)$.
\end{proof}

\subsection[Behavior of the order of the second coefficient ideal under blowup]{Behavior of the order of the second coefficient ideal under monomial blowup maps}

For the second result of this section, we consider the following setting:

Let $R=K[[x,y,z]]$ and $J\subseteq R$ an ideal of order $c=\ord J$. Set $J_2=\coeff^c_{(x,y,z)}(J)$.

Let the ideal $J_2$ have a factorization $J_2=(x^{r_x}y^{r_y})\cdot I_2$ for an ideal $I_2\subseteq K[[x,y]]$. Set $d=\ord I_2$.

Further, set $J_1=\coeff_{(x,y)}^d(I_1)$ and $s=\ord J_1$.
 
Consider the point-blowup map $\pi$ in the center of the $x$-chart. Thus, the map $\pi:R\to R$ is given by
\[\begin{array}{ll}
 \pi(x)=x, &\\
 \pi(y)=xy &\\ 
 \pi(z)=xz. &
\end{array}\]
 
Let $J^*=R\pi(J)$ be the total transform and $J'=x^{-c}J^*$ the weak transform of $J$ under $\pi$. Assume that $\ord J'=c$ and set $J_2'=\coeff_{(x,y,z)}^c(J')$.

By Proposition \ref{w_clean_stable_under_blowup} (1) we know that the ideal $J_2'$ has a factorization $J_2'=(x^{r_x'}y^{r_y'})\cdot I_2'$ for an ideal $I_2'\subseteq K[[x,y]]$ where
\[r_x'=\ord J_2-c!=r_x+r_y+d-c!,\]
\[r_y'=r_y.\]

Set $d'=\ord I_2'$. Since $I_2'$ has the same order as the weak transform of $I_2$ by Proposition \ref{w_clean_stable_under_blowup} (1), we know by Proposition \ref{ord_under_blowup} that $d'\leq d$ holds.

Further, define $J_1'=\coeff_{(x,y)}^{d'}(I_2')$ and $s'=\ord J_1'$.

\begin{proposition} \label{s_clean_stable_under_blowup}
 If $d'=d$, then following hold:
 \begin{enumerate}[(1)]
  \item $\ord J_1'=\ord J_1-d!$.
  \item If an element $f\in J$ is secondary $\ord$-clean with respect to $J_1$, then its transform $f'=x^{-c}\pi(f)$ is secondary $\ord$-clean with respect to $J_1'$.
 \end{enumerate}
\end{proposition}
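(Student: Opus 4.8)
The plan is to mimic the structure of the proof of Proposition \ref{w_clean_stable_under_blowup}, but now tracking the coefficients $f_{i,j} \in K[[x]]$ of the expansion $f = \sum_{i,j\geq 0} f_{i,j}\, y^j z^i$ rather than the coefficients $f_i \in K[[x,y]]$. First I would record the transformation rule for these coefficients under the monomial point-blowup $\pi$ in the $x$-chart: if $f' = x^{-c}\pi(f)$ has expansion $f' = \sum_{i,j\geq 0} f'_{i,j}\, y^j z^i$, then a direct computation (as in Lemma \ref{z_regular_stable_under_blowup}, extended to the bidegree $(i,j)$) gives $f'_{i,j} = x^{(i+j) - c}\,\pi(f_{i,j})$, where $\pi$ acts on $K[[x]]$ as the identity. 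This is the key bookkeeping identity. From it, and from the relations $r_x' = r_x + r_y + d - c!$, $r_y' = r_y$ (recorded in the statement, via Proposition \ref{w_clean_stable_under_blowup} (1)), together with the hypothesis $d' = d$, one verifies that the auxiliary numbers $\D$ and $\D'$ attached to $J_1$ and $J_1'$ in the sense of Section \ref{section_invariance_of_slope} satisfy the expected shift. Concretely, $\D' = \D - \tfrac{s}{d!} + $ (the contribution of the extra power of $x$), and one checks that for all $f \in J$ and all indices $i,j$, the inequality $\ord f'_{i,j} \geq (c-i)\D' - j\,\tfrac{s}{d!}$ translates, via the identity above, into exactly the known inequality $\ord f_{i,j} \geq (c-i)\D - j\,\tfrac{s}{d!}$ from Lemma \ref{ord_J_2}. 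Choosing a generating set $G$ of $J$ and applying Lemma \ref{ord_J_2} to both $J_1$ (via $G$) and $J_1'$ (via $G' = \{x^{-c}\pi(f) : f\in G\}$) then yields part (1): $\ord J_1' = \ord J_1 - d!$.

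For part (2), I would proceed case by case along the three clauses $(i)_v$, $(ii)_v$, $(iii)_v$ of Definition \ref{def_s-clean}, exactly as in the proof of Proposition \ref{w_clean_stable_under_blowup} (2). First note that $f'$ is again $z$-regular of order $c$ by Lemma \ref{z_regular_stable_under_blowup}. Fix an index $v < \tfrac{d'+r_y'}{c!} = \tfrac{d + r_y}{c!}$ (using $d'=d$, $r_y'=r_y$); secondary $\ord$-cleanness of $f$ with respect to $J_1$ gives one of $(i)_v$, $(ii)_v$, $(iii)_v$, and I must show the same clause holds for $f'$ with respect to $J_1'$. If $(i)_v$ holds, there are $i,j$ with $c - q < i < c$, $j \leq (c-i)v$ and $\ord f_{i,j} = (c-i)\D - j\tfrac{s}{d!}$; the identity $f'_{i,j} = x^{(i+j)-c}\pi(f_{i,j})$ and the computation of $\D'$ from part (1) give $\ord f'_{i,j} = (c-i)\D' - j\tfrac{s'}{d!}$ (with $s' = s - d!$), so $(i)_v$ holds for $f'$. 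The cases $(ii)_v$ and $(iii)_v$ are handled identically: $(ii)_v$ is a strict inequality preserved verbatim by the monomial substitution, and for $(iii)_v$ one uses — as in the proof of Proposition \ref{w_clean_stable_under_blowup} (2) — that a monomial $x^\alpha$ is a $q$-th power in $K[[x]]$ if and only if $\pi(x^\alpha)$ is, together with the fact that $\init(f'_{c-q,vq})$ and $\init(f'_{c,0})$ are obtained from $\init(f_{c-q,vq})$ and $\init(f_{c,0})$ by multiplication by the monomials $x^{(c-q)+vq - c} = x^{vq - q}$ and $x^{c - c} = x^0 = 1$ respectively (here I use that the $x$-variable is the exceptional parameter and $\pi$ fixes $K[[x]]$), so that a putative relation $\init(f'_{c-q,vq}) = \init(f'_{c,0})\cdot G^q$ would descend to one of the form $\init(f_{c-q,vq}) = \init(f_{c,0})\cdot H^q$, contradicting $(iii)_v$ for $f$.

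The main obstacle I anticipate is the precise arithmetic of the auxiliary number $\D$ and its behavior under the blowup: one must be careful that $\D' - \tfrac{s'}{d!} \cdot v$ matches up with $\D - \tfrac{s}{d!} \cdot v$ after the substitution, simultaneously for all relevant $(i,j)$ and all $v$, using only the hypothesis $d' = d$ and the recorded formulas $r_x' = r_x + r_y + d - c!$, $r_y' = r_y$. Getting the exponents of $x$ exactly right in $f'_{i,j} = x^{(i+j)-c}\pi(f_{i,j})$ — in particular confirming that the extra $x$-power contributed to the monomial factor $M$ of $J_2'$ is accounted for by the change $r_x \mapsto r_x'$ and does \emph{not} spuriously shift the edge along which the second coefficient ideal is read off — is where the computation needs genuine care; everything else is a faithful transcription of the corresponding argument for the first coefficient ideal. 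I would also double-check that the hypothesis $d' = d$ is exactly what is needed to rule out the degenerate situation in which the relevant edge of the Newton data collapses (analogous to the role of $\ord J' = c$ in the earlier results); without it the statement fails, just as part (1) of Lemma \ref{coeff_ideal_under_blowup} already shows the behavior can be subtler.
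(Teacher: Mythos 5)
Your proposal takes essentially the same route as the paper: record the transformation rule $f'_{i,j} = x^{(i+j)-c}f_{i,j}$, apply Lemma \ref{ord_J_2} to a generating set to get part (1), compute the shift $\D' = \D - 1$, observe that $\ord f_{i,j} = (c-i)\D - j\tfrac{s}{d!}$ is equivalent to the primed version, and then handle $(iii)_\jz$ by noting that a putative factorization $\init(f'_{c-q,\jz q}) = \init(f'_{c,0})\cdot G^q$ descends to one for $f$ after pulling the monomial factor $x^{(\jz-1)q}$ into $G$. The only superfluous step is your invocation of the ``$x^\alpha$ is a $q$-th power iff $\pi(x^\alpha)$ is'' argument from Proposition \ref{w_clean_stable_under_blowup}: since $\pi$ restricts to the identity on $K[[x]]$ here, the paper just reads the factor off directly, but your version is correct and arrives at the same place.
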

\begin{proof}
Let each element $f\in J$ have an expansion $f=\sum_{i,j\geq0}f_{i,j}y^jz^i$ with $f_{i,j}\in K[[x]]$. Then the $f'=x^{-c}\pi(f)\in J'$ have the expansion $f'=\sum_{i,j\geq0}f_{i,j}'y^jz^i$ with $f_{i,j}'=x^{i+j-c}f_{i,j}$. Hence, 
 \[\ord f_{i,j}'=\ord f_{i,j}-(c-i)+j.\]

(1): Let $G$ be a generating set for $J$. Then $G'=\{f'=x^{-c}\pi(f):f\in G\}$ is a generating set for the weak transform $J'$. Hence, we can compute with Lemma \ref{ord_J_2} that
 \[s'=\min_{f'\in G'}\min_{\substack{i<c\\j<\frac{c-i}{c!}(d+r_y')}}\frac{d!}{d+r_y'-\frac{c!}{c-i}j}\Big(\frac{c!}{c-i}\ord f_{i,j}'-r_x'\Big)\]
 \[=\min_{f\in G}\min_{\substack{i<c\\j<\frac{c-i}{c!}(d+r_y)}}\frac{d!}{d+r_y-\frac{c!}{c-i}j}\Big(\frac{c!}{c-i}\ord f_{i,j}-r_x-\B(d+r_y-\frac{c!}{c-i}j\B)\Big)=s-d!.\]
 
 (2): Recall the notation
 \[\D=\frac{1}{c!}\B((d+r_y)\sd+r_x\B).\]
 Notice that
 \[\D'=\frac{1}{c!}\B((d'+r_y')\frac{s'}{d'!}+r_x'\B)\]
 \[=\frac{1}{c!}\Big((d+r_y)\sd\underbrace{-(d+r_y)+\ord J_2}_{=r_x}-c!\Big)=\D-1\]
 since $s'=s-d!$ by (1).
 
 Thus, the property
 \[\ord f_{i,j}=(c-i)\D-j\sd\]
 holds for indices $i,j\geq0$ if and only if the property
 \[\ord f_{i,j}'=(c-i)\D'-j\frac{s'}{d'!}\]
 holds. 
 
 By Lemma \ref{z_regular_stable_under_blowup} we know that $f'$ is again $z$-regular of order $c$. Let $\jz<\frac{d+r_y}{c!}$ be an index. By what we have shown, it is clear that the properties $(i)_\jz$ and $(ii)_\jz$ carry over from $f$ to $f'$.
 
 So assume that $f$ has the property $(iii)_\jz$ with respect to $J_{1}$, but $f'$ has the property $\neg(iii)_\jz$ with respect to $J_{1}'$. Thus, there is an element $G\in K[[x]]$ such that
 \[\init(f'_{c-q,\jz q})=\init(f'_{c,0})\cdot G^q.\]
 Since $f'_{c-q,\jz q}=x^{(\jz-1)q}f_{c-q,\jz q}$ and $f'_{c,0}=f_{c,0}$, this implies that
 \[\init(f_{c-q,\jz q})=\init(f_{c,0})\cdot (x^{-(\jz-1)}G)^q\]
 holds. But this contradicts the assumed property $(iii)_\jz$.
 
 Hence, we have shown that $f'$ is secondary $\ord$-clean with respect to $J_{1}'$.
\end{proof}

\section{Bounds for the invariant $d_\F$ for flags $\F$ with $n_\F>0$} \label{section_kangaroo_calc}

 We will briefly recall from Section \ref{section_modifying_the_residual_order} the definition of $d_\F$ in the case $n_\F>0$. Let $W$ be a $3$-dimensional regular variety, $X\subseteq W$ a closed subset and $a\in X$ a closed point of order $c=\ord_aX$. Further, let there be given a simple normal crossings divisor $\Eg\subseteq\Spec(\hOWa)$. Let $\F\in\FF$ be a formal flag. If $\F_1\cup(\F_2\cap\Eg)$ is not simple normal crossings, then the associated multiplicity $n_\F$ of $\F$ is defined as the maximal intersection multiplicity of $\F_1$ with a curve $\F_2\cap D$ where $D$ is a component of $\Eg$.
 
 Let $\x=(x,y,z)$ be a regular system of parameters for $\hOWa$ such that either $\Eg=V(y)$ or $\Eg=V(xy)$. Set $y_1=y+tx^n$ for a non-zero constant $t\in K^*$ and a positive integer $n$. Then a flag $\F$ of the form $\F_2=V(z_1)$, $\F_1=V(z_1,y_1)$ with $z_1=z+g$ for some $g\in K[[x,y]]$ has associated multiplicity $n_\F=n$. Set $\x_1=(x,y_1,z_1)$. The invariant $d_\F$ is defined by 
  
 \[d_{\F,\x_1}=\ord_{(y_1)}\minit_{\w}(J_{2,\x_1}),\]
 \[d_\F=\begin{cases}
         d_{\F,\x_1} & \text{if $d_{\F,\x_1}\geq c!$,}\\
         d_{\F,\x_1} & \text{if $0<d_{\F,\x_1}<c!$ and $c!\nmid m_\F$,}\\
         -1 & \text{if $0<d_{\F,\x_1}<c!$ and $c!\mid m_\F$,}\\
         -1 & \text{if $d_{\F,\x_1}=0$,}
        \end{cases}
\]
 where $J_{2,\x_1}=\coeff_{(x,y_1,z_1)}^c(\hIXa)$ and $\w:K[[x,y_1]]\to\Ni$ is the weighted order function defined on $(x,y_1)$ via $\w(x)=1$ and $\w(y_1)=n$. Notice that $\w$ is also defined on the parameters $(x,y)$ via $\w(x)=1$ and $\w(y)=n$.

 In this section, we will establish several technical results that serve as bounds for the invariant $d_\F$ which are independent of the constant $t$ and the multiplicity $n$. As we will show in Proposition \ref{cleaning_tangential_flags}, we may assume for this purpose that the coordinate change $z=z_1+g$ is $\y$-cleaning of an element $f\in\hIXa$ with respect to $J_{2,\x}$ where the weighted order function $\y:K[[x,y_1]]\to\Ni^2$ is defined via $\y(x)=(0,1)$, $\y(y_1)=(1,n)$. Unlike $\w$, the weighted order function $\y$ is not defined on the parameters $(x,y)$.

 The results of this section will be applied in Section \ref{section_maximizing_flags} to show that maximizing flags exist and in Section \ref{section_inv_drops_for_d>0} to show that the resolution invariant $\ivX$ decreases under point-blowup before reaching a terminal case.
 
 



\subsection{A modified version of Moh's bound}

The central result of this section, Proposition \ref{kangaroo_prop}, will provide a bound for $d_\F$ that can be calculated from the coefficient ideal $J_{2,\x}$ without needing to make the coordinate changes $y\mapsto y_1$ and $z\mapsto z_1$. It only requires the existence of an element $f\in \hIXa$ which is $\w$-clean with respect to $J_{2,\x}$. If the characteristic of $K$ is positive, the bound for $d_\F$ involves the characteristic $p$. In fact, this bound is very similar to the one that Moh devised for the increase of the residual order under permissible blowups in \cite{Moh}.

As a preparation for Proposition \ref{kangaroo_prop}, we will prove two technical lemmas on differential operators.


\begin{lemma} \label{diffop_lemma_for_kangaroo}
 Let $R=K[[x,y]]$ and $f\in R$ an element. Let $p$ be a prime number and $q=p^e$ with $e\in\N$ a $p$-th power. Let $k\in\N$ be a non-negative integer.
 
 Define the parameter $y_1=y+tx^n$ for a non-zero constant $t\in K^*$ and a positive integer $n>0$. Let $\partial_{y_1^k}:R\to R$ denote the respective differential operator with respect to the parameter system $(x,y_1)$.
 
 The following hold:
 \begin{enumerate}[(1)]
  \item If $\chara(K)=p$, then $\partial_{y^q}(f^q)=(\partial_y(f))^q$.
  \item If $\chara(K)=p$ and $f\in K[[x^q,y^q]]$, then $\partial_{y^k}(f)=0$ for $0<k<q$.
  \item $\ord_{(y)}f\leq\ord_{(y)}\partial_{y^k}(f)+k$.
  \item $\partial_{y_1^k}(f)=\partial_{y^k}(f)$. 
 \end{enumerate}
 For the remaining results, we will consider the weighted order function $\w:K[[x,y]]\to\Ni$ that is defined by $\w(x)=1$, $\w(y)=n$ and assume that $f$ is weighted homogeneous with respect to $\w$. Further, set $d(f)=\ord f-\ord_{(x)}f-\ord_{(y)}f$.
 \begin{enumerate}[(1)]\setcounter{enumi}{4}
  \item If $\partial_{y^k}(f)\neq0$, then 
  \[\ord_{(y_1)}\partial_{y^k}(f)\leq d(f).\]
  If the equality $\ord_{(y_1)}\partial_{y^k}(f)=d(f)$ holds, then 
  \[\partial_{y^k}(f)=x^ry^s(y+tx^n)^{d(f)}\]
  for certain $r,s\in\N$. 
  \item Assume that $\chara(K)=p$, $q\mid \w(f)$ and $f\notin K[[x^q,y^q]]$. Let $e_1\in\N$ be maximal with the property that $f\in K[[x^r,y^r]]$ for $r=p^{e_1}$. Then $\partial_{y^r}(f)\neq0$.
 \end{enumerate}
\end{lemma}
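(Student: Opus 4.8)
\textbf{Proof plan for Lemma \ref{diffop_lemma_for_kangaroo} (6).}

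The plan is to reduce everything to monomials and then read off the structure of the differential operator from Lucas' theorem (Proposition \ref{lucas}) together with Lemma \ref{diffops_on_p_th-powers}. First I would write $f=\sum_\alpha c_\alpha x^{\alpha_1}y^{\alpha_2}$ over those $\alpha$ with $\w(x^{\alpha_1}y^{\alpha_2})=\alpha_1+n\alpha_2=\w(f)$ (weighted homogeneity), and observe that by maximality of $e_1$, the exponent set $\{\alpha : c_\alpha\neq 0\}$ is \emph{not} contained in $pr\cdot\N^2=p^{e_1+1}\N^2$; hence there is at least one $\alpha$ with $c_\alpha\neq 0$ and $\alpha\notin pr\cdot\N^2$, so that at least one of $\alpha_1,\alpha_2$ is not divisible by $pr$. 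The goal is to show $\partial_{y^r}(f)=\sum_\alpha c_\alpha \binom{\alpha_2}{r} x^{\alpha_1} y^{\alpha_2-r}$ is nonzero, i.e.\ that there is a surviving term.

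The key step is a divisibility bookkeeping argument. Since $f\in K[[x^r,y^r]]$, every $\alpha$ in the support has $r\mid\alpha_1$ and $r\mid\alpha_2$; write $\alpha_1=r\beta_1$, $\alpha_2=r\beta_2$. I would pick an $\alpha$ with $c_\alpha\neq 0$ for which $\alpha\notin pr\cdot\N^2$, i.e.\ $(\beta_1,\beta_2)\notin p\cdot\N^2$. I claim $\beta_2$ is not divisible by $p$ for a suitable choice of such $\alpha$: if $p\mid\beta_2$ for \emph{every} support element with $(\beta_1,\beta_2)\notin p\N^2$, then for those elements $p\nmid\beta_1$; but the weighted-homogeneity relation $r\beta_1+nr\beta_2=\w(f)$ together with $q=p^e\mid\w(f)$ and $r=p^{e_1}$ forces a congruence on $\beta_1$ modulo $p$ (using $e_1<e$, which holds because $f\notin K[[x^q,y^q]]$ means $r<q$), and one checks this congruence is the \emph{same} for all support elements, contradicting that some have $p\mid\beta_1$ (namely any element of $pr\cdot\N^2$, of which there is at least one unless the support is a single point — a case handled directly). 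This pins down an $\alpha$ in the support with $p\nmid\beta_2$, equivalently $pr\nmid\alpha_2$, equivalently (by Lucas, Proposition \ref{lucas}) $\binom{\alpha_2}{r}=\binom{r\beta_2}{r}\not\equiv 0\pmod p$, since the $p$-adic digit of $r\beta_2$ at position $e_1$ equals $\beta_2\bmod p\neq 0$ and that of $r$ at position $e_1$ is $1$.

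Finally I would argue that this surviving term cannot be cancelled: the monomials $x^{\alpha_1}y^{\alpha_2-r}$ appearing in $\partial_{y^r}(f)$ are pairwise distinct as $\alpha$ ranges over the support (different support monomials of $f$ differ, and subtracting $r$ from the $y$-exponent is injective), so no cancellation occurs and $\partial_{y^r}(f)\neq 0$. I expect the main obstacle to be the congruence bookkeeping in the middle step — getting the indices $e$, $e_1$ and the divisibility of $\w(f)$ to interact correctly so as to locate a support monomial with $p\nmid\beta_2$ — and I would isolate the degenerate case (support a single monomial $x^{\alpha_1}y^{\alpha_2}$) at the outset, where the statement is immediate since then $f\notin K[[x^q,y^q]]$ forces $q\nmid\alpha_1$ or $q\nmid\alpha_2$, while $r$ is exactly the largest $p$-power dividing $\gcd(\alpha_1,\alpha_2)$, so $\binom{\alpha_2}{r}\not\equiv0\pmod p$ by Lucas.
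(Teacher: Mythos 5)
Your proposal takes the same route as the paper: reduce to the support of $f$, use maximality of $e_1$ to produce a support monomial $\alpha\notin pr\cdot\N^2$, use $q\mid\w(f)$ together with $r$-divisibility of both exponents to force $pr\nmid\alpha_2$, invoke Lucas to get $\binom{\alpha_2}{r}\not\equiv 0\pmod p$, and observe that the monomials $x^{\alpha_1}y^{\alpha_2-r}$ produced by $\partial_{y^r}$ are pairwise distinct so the nonzero term survives. Making the non-cancellation step explicit is a worthwhile addition --- the paper leaves it tacit.

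However, the congruence step as written has the contradiction pointing the wrong way, and the degenerate case is handled incorrectly. The correct argument is purely local to a single support element: for any $\alpha\notin pr\cdot\N^2$, suppose $p\mid\beta_2$, i.e.\ $pr\mid\alpha_2$. Since $e_1<e$ (because $f\notin K[[x^q,y^q]]$), we have $pr\mid q\mid\alpha_1+n\alpha_2$, hence $pr\mid\alpha_1$, so $(\beta_1,\beta_2)\in p\cdot\N^2$, contradicting $\alpha\notin pr\cdot\N^2$. Thus \emph{every} $\alpha\notin pr\cdot\N^2$ already has $p\nmid\beta_2$; there is no ``suitable choice'' to locate. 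Your phrasing ``contradicting that some have $p\mid\beta_1$ (namely any element of $pr\cdot\N^2$)'' misidentifies the source of tension: elements of $pr\cdot\N^2$ satisfy $\beta_1\equiv-n\beta_2\pmod p$ trivially as $0\equiv0$ and produce no conflict; the conflict lives entirely inside the one $\alpha\notin pr\cdot\N^2$ under consideration, where the congruence forces $p\mid\beta_1$ while exclusion from $pr\cdot\N^2$ and the hypothesis $p\mid\beta_2$ force $p\nmid\beta_1$.

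Your degenerate-case shortcut is wrong as stated. Take $f=c\,x^{r}y^{pr}$ with $c\neq0$: then $r$ is the largest $p$-power dividing $\gcd(\alpha_1,\alpha_2)=r$, yet $\binom{\alpha_2}{r}=\binom{pr}{r}\equiv0\pmod p$ by Lucas (the digit of $pr$ at position $e_1$ is $0$ while that of $r$ is $1$). So ``$r$ is the largest $p$-power dividing $\gcd(\alpha_1,\alpha_2)$'' does not by itself give $\binom{\alpha_2}{r}\not\equiv0\pmod p$. What rules out this example is precisely the hypothesis $q\mid\w(f)=r(1+np)$, which would require $p\mid 1+np\equiv1\pmod p$, impossible. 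The degenerate case thus needs the same $q\mid\w(f)$ argument as the general case, and so there is no gain in isolating it.
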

\begin{proof}
 (1): Since both the differential operator $\partial_{y^r}$ and the map $f\mapsto f^q$ are additive, it suffices to verify the statement for monomials $x^iy^j$. It follows from Proposition \ref{lucas} that
 \[\partial_{y^q}(x^{qi}y^{qj})=\binom{qj}{q}x^{qi}y^{q(j-1)}=(j\cdot x^iy^{j-1})^q=(\partial_y(x^iy^j))^q.\]
 
 (2): This is proved in Lemma \ref{diffops_on_p_th-powers}.
 
 (3): Let $f=y^mg$ with $m=\ord_{(y)}f$ and $g\in R$. The statement is trivial if $m<k$. So assume that $m\geq k$. Then by Proposition \ref{basis_for_diff_ops} (3), we can write
 \[\partial_{y^k}(f)=\sum_{j=0}^k\binom{m}{k-j}y^{(m-k)+j}\partial_{y^j}(g)=y^{m-k}\sum_{j=0}^k\binom{m}{k-j}y^{j}\partial_{y^j}(g).\]
 Hence, $\ord_{(y)}\partial_{y^k}(f)\geq m-k=\ord_{(y)}f-k$.
 
 (4): It suffices to verify the statement for monomials $x^iy_1^j$. We compute that
 \[\partial_{y^k}(x^iy_1^j)=\partial_{y^k}\Big(\sum_{l=0}^j\binom{j}{l}t^{j-l}x^{i+n(j-l)}y^l\Big)\]
 \[=\sum_{l=0}^j\underbrace{\binom{j}{l}\binom{l}{k}}_{=\binom{j}{k}\binom{j-k}{l-k}}t^{j-l}x^{i+n(j-l)}y^{l-k}=\binom{j}{k}x^iy_1^{j-k}=\partial_{y_1^k}(x^iy_1^j).\]
 
 (5): Let $f$ have the factorization
 \[f=x^{r_x}y^{r_y}\cdot g\]
 with $\ord g=d(f)$. Since $g$ is also weighted homogeneous with respect to $\w$, it is clear that $\w(g)=n\cdot d(f)$ holds. Then by (3) we know that 
 \[\partial_{y^k}(f)=x^{r_x}y^{s_y}\cdot G\]
 where $s_y=\max\{0,r_y-k\}$ for some element $G\in R$. Further, it is clear that $\partial_{y^k}(f)$ is again weighted homogeneous with respect to $\w$ and $\w(\partial_{y^k}(f))=\w(f)-kn$ holds. Thus,
 \[\w(G)=\w(\partial_{y^k}(f))-r_x-ns_y\leq \w(f)-r_x-nr_y=\w(g).\]
 Consequently, 
 \[\ord_{(y_1)}\partial_{y^k}(f)\leq \ord_{(y_1)}G\leq\frac{1}{n}\w(G)\]
 \[\leq\frac{1}{n}\w(g)=\ord g=d(f).\]
 
 If equality holds, then necessarily $\ord_{(y_1)}G=\frac{1}{n}\w(G)$ holds. Consequently, $G=(y+tx^n)^{d(f)}$ and $\partial_{y^k}(f)=x^{r_x}y^{s_y}(y+tx^n)^{d(f)}$.
 
 (6): Since $e_1$ is maximal with $f\in K[[x^r,y^r]]$ for $r=p^{e_1}$, a monomial $x^iy^j$ with $(i,j)\in r\cdot\N^2\setminus (pr\cdot\N^2)$ appears with non-zero coefficient in the expansion of $f$. Since $q\mid \w(f)=i+nj$, we know that $pr\nmid j$. Thus, $\partial_{y^r}(x^iy^j)=\binom{j}{r}x^iy^{j-r}\neq0$ by Proposition \ref{lucas}.
\end{proof}

\begin{lemma} \label{arithmetic_kang_lemma}
 Let $R=K[[x,y]]$ with $\chara(K)=p>0$. Let $r_x,r_y,m,n\in\N$ be non-negative integers and $t\in K^*$. Then there is no element $F\in R$ such that
 \[\partial_y(F)=x^{r_x}y^{r_y}(y+tx^n)^{mp-1}.\]
\end{lemma}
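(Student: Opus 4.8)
The plan is to argue by contradiction. Suppose such an element $F\in R=K[[x,y]]$ exists with $\partial_y(F)=x^{r_x}y^{r_y}(y+tx^n)^{mp-1}$. The right-hand side is weighted homogeneous with respect to the weighted order function $\w$ defined by $\w(x)=1$, $\w(y)=n$ (indeed every factor is $\w$-homogeneous), so after discarding the other $\w$-homogeneous components of $F$ we may assume $F$ itself is $\w$-homogeneous of weight $\w(x^{r_x}y^{r_y}(y+tx^n)^{mp-1})+n = r_x+n r_y+nmp$. First I would rewrite the identity in the coordinate system $(x,y_1)$ with $y_1=y+tx^n$. By Lemma \ref{diffop_lemma_for_kangaroo} (4) we have $\partial_y = \partial_{y_1}$ as differential operators on $R$, so the hypothesis becomes $\partial_{y_1}(F) = x^{r_x}(y_1-tx^n)^{r_y}y_1^{mp-1}$. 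Expanding $(y_1-tx^n)^{r_y}$ binomially, the right-hand side is a $K$-linear combination $\sum_{l=0}^{r_y}\binom{r_y}{l}(-t)^{r_y-l}x^{r_x+n(r_y-l)}y_1^{mp-1+l}$ of monomials in $(x,y_1)$, each of which has $y_1$-exponent $\equiv mp-1 \pmod{p}$, hence $\not\equiv 0 \pmod p$ (since $mp-1+l$ ranges over $mp-1,\dots,mp-1+r_y$; we must be careful here — this congruence class is $p-1+l \bmod p$, not constant).

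The cleaner route, which I would actually carry out, is to work monomial-by-monomial in the $(x,y_1)$-coordinates directly. Write $F=\sum_{i,j} c_{ij}x^i y_1^j$ with $c_{ij}\in K$. Then $\partial_{y_1}(F)=\sum_{i,j} j\,c_{ij}\, x^i y_1^{j-1}$, where $j$ is read in $K$, so only monomials with $p\nmid j$ survive. Thus every monomial $x^i y_1^{j-1}$ appearing in $\partial_{y_1}(F)$ has $y_1$-exponent $\equiv -1 \pmod p$, i.e. $\equiv p-1 \pmod p$. On the other hand, the target $x^{r_x}(y_1-tx^n)^{r_y}y_1^{mp-1}$, when fully expanded in monomials $x^a y_1^b$, has $b = mp-1+l$ for $l=0,\dots,r_y$, so $b \equiv p-1+l \pmod p$. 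Hence the target monomial with $l=1$ (which appears with nonzero coefficient $\binom{r_y}{1}(-t)^{r_y-1}$, provided $r_y\ge 1$ and $p\nmid r_y$... ) need not have $b\equiv p-1$. So a naive congruence count is not quite enough; the main obstacle is that when $r_y$ is large the target is not supported in a single residue class mod $p$ of $y_1$-exponents.

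To fix this, I would instead apply $\partial_{y_1}$ once more, or rather exploit $\w$-homogeneity together with Lemma \ref{diffop_lemma_for_kangaroo} (5). Here is the key step. Since (after reduction) $F$ is $\w$-homogeneous, Lemma \ref{diffop_lemma_for_kangaroo} (5) applied to $f=F$ and $k=1$ gives $\ord_{(y_1)}\partial_{y_1}(F) \le d(F) := \ord F - \ord_{(x)}F - \ord_{(y)}F$; moreover if $\ord_{(y_1)}\partial_{y_1}(F)=d(F)$ then $\partial_{y_1}(F)=x^{r}y^{s}(y+tx^n)^{d(F)}=x^r y^s y_1^{d(F)}$ for some $r,s$. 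Now compute $\ord_{(y_1)}$ of the right-hand side of the hypothesis: it is $\ord_{(y_1)}\big(x^{r_x}(y_1-tx^n)^{r_y}y_1^{mp-1}\big) = mp-1$, since $(y_1-tx^n)^{r_y}$ has $y_1$-order $0$. So we are in the equality case of (5), forcing $\partial_{y_1}(F) = x^r y^s y_1^{mp-1}$ for some $r,s\in\N$, and comparing with the hypothesis (both sides being $x^{r_x}(y_1-tx^n)^{r_y}y_1^{mp-1}$) we get $x^{r_x}(y_1-tx^n)^{r_y} = x^r y^s = x^r(y_1-tx^n)^s$, whence $r_y=s$ and $r=r_x$. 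In particular $\partial_{y_1}(F) = x^{r_x}y^{r_y}y_1^{mp-1}$ — wait, this is forcing $(y_1-tx^n)^{r_y}$ to equal $y^{r_y}=(y_1-tx^n)^{r_y}$, consistent, so no contradiction yet; the point is rather that we have now pinned down $\partial_{y_1}(F)$ and can extract the genuine obstruction.

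Finally I extract the contradiction from the structure $\partial_{y_1}(F)=x^{r_x}y^{r_y}(y+tx^n)^{mp-1}$. Expand $F=\sum_{j} F_j(x) y_1^j$ (Weierstrass-style power series expansion); then $\partial_{y_1}(F)=\sum_j j F_j(x) y_1^{j-1}$. Matching against $x^{r_x}(y_1-tx^n)^{r_y}y_1^{mp-1} = \sum_{l=0}^{r_y}\binom{r_y}{l}(-t x^n)^{r_y-l}x^{r_x}y_1^{mp-1+l}$, for each $l$ with $\binom{r_y}{l}\ne 0$ in $K$ we need $(mp+l)F_{mp+l}(x) = \binom{r_y}{l}(-t)^{r_y-l}x^{r_x+n(r_y-l)}$. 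Taking $l=0$: the coefficient $mp\cdot F_{mp}(x)$ must equal $\binom{r_y}{0}(-t)^{r_y}x^{r_x+nr_y} = (-t)^{r_y}x^{r_x+nr_y} \ne 0$, but $mp\equiv 0\pmod p$, so $mp\cdot F_{mp}(x)=0$ in $K[[x]]$ — contradiction, since the target coefficient is a nonzero power series. This is the crux, and I expect the main care is simply in organizing the reduction to $\w$-homogeneous $F$ and confirming that the $l=0$ term of the target indeed has nonzero coefficient (which it does: $(-t)^{r_y}\ne 0$ as $t\in K^*$). Hence no such $F$ exists, completing the proof.

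Writing this up, the only genuine subtlety — the ``main obstacle'' — is to make sure the contradiction is drawn from a single monomial whose $y_1$-exponent is divisible by $p$; the $l=0$ term $x^{r_x+nr_y}y_1^{mp-1}$ is forced to come from $\partial_{y_1}(x^{r_x+nr_y}y_1^{mp})$ which vanishes mod $p$, and this is clean. I would therefore streamline the argument: pass to $(x,y_1)$-coordinates via Lemma \ref{diffop_lemma_for_kangaroo} (4), write $F=\sum_j F_j(x)y_1^j$, differentiate termwise, read off the $y_1^{mp-1}$-coefficient on both sides, and observe $p \mid mp$ kills the left side while $t\in K^*$ keeps the right side nonzero.
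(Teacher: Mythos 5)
Your final, streamlined argument is correct and genuinely different from the paper's. The paper expands $x^{r_x}y^{r_y}(y+tx^n)^{mp-1}$ directly in the coordinates $(x,y)$, picks the unique $j$ with $0\le j<p$ and $r_y+j\equiv-1\pmod p$, and invokes Lucas' theorem (Proposition \ref{lucas}) to verify that $\binom{mp-1}{j}\neq 0$ in $K$, thereby exhibiting a monomial in the target with $y$-exponent $\equiv-1\pmod p$ and nonzero coefficient — a monomial that can never occur in $\partial_y(F)$. You instead transfer to the coordinates $(x,y_1)$ with $y_1=y+tx^n$ via the identity $\partial_y=\partial_{y_1}$ of Lemma \ref{diffop_lemma_for_kangaroo} (4); there the target factors as $x^{r_x}(y_1-tx^n)^{r_y}\cdot y_1^{mp-1}$, whose $y_1^{mp-1}$-coefficient is read off instantly from the $l=0$ term of the binomial expansion as $(-t)^{r_y}x^{r_x+nr_y}\neq 0$, while the $y_1^{mp-1}$-coefficient of any $\partial_{y_1}(F)=\sum_j jF_j(x)y_1^{j-1}$ equals $mp\cdot F_{mp}(x)=0$ in characteristic $p$. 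The net gain is that the only binomial coefficient you need is $\binom{r_y}{0}=1$, so Lucas disappears entirely; the cost is the coordinate change and the appeal to Lemma \ref{diffop_lemma_for_kangaroo} (4), which the paper already has. One formal caveat: Lemma \ref{diffop_lemma_for_kangaroo} (4) is stated only for $n>0$, so your route does not literally cover $n=0$ (where $y_1=y+t$ fails to be a parameter), whereas the paper's does. This is harmless for the actual application in Proposition \ref{kangaroo_prop}, where $n$ is always positive, but worth noting if you keep the lemma's stated hypotheses.

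On presentation, most of your write-up consists of detours you yourself discard: the attempt via Lemma \ref{diffop_lemma_for_kangaroo} (5) and $\w$-homogeneity stalls (you note ``no contradiction yet''), and the worry that ``a naive congruence count is not quite enough'' is unfounded — only the $l=0$ term of the expansion lands in the residue class $\equiv-1\pmod p$, so the count works. The last paragraph is the whole proof; write that up and cut the rest.
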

\begin{proof}
 Consider the expansion
 \[x^{r_x} y^{r_y}(y+tx^n)^{mp-1}=\sum_{i=0}^{mp-1}\binom{mp-1}{i}t^{mp-1-i}x^{r_x+n(mp-1-i)}y^{\ry+i}.\]
 There is an index $j$ with $0\leq j<p$ such that $r_y+j\equiv-1\pmod p$ holds. Notice that since $mp-1=(m-1)p+(p-1)$, the equality 
 \[\binom{mp-1}{j}=\binom{m-1}{0}\binom{p-1}{j}\neq0\]
 holds in $K$ by Proposition \ref{lucas}.
 
 Hence, a term of the form $x^{i}y^{kp-1}$ for some $k\in\N$ appears in the expansion of $x^{r_x} y^{r_y}(y+tx^n)^{mp-1}$ with non-zero coefficient. Since $\partial_y(x^iy^{kp})=0$, there can be no element $F\in R$ such that $\partial_y(F)=x^{r_x} y^{r_y}(y+tx^n)^{mp-1}$ holds.
\end{proof}


\begin{proposition} \label{kangaroo_prop}
 Let $R=K[[x,y,z]]$ and $J\subseteq R$ an ideal of order $\ord J=c$. Set $J_2=\coeff^c_{(x,y,z)}(J)$. Define the weighted order function $\w:K[[x,y]]\to\N_\infty$ via $\w(x)=1$ and $\w(y)=n$ for a positive integer $n>0$. Set $m=\w(J_2)$. Let $f\in J$ be an element that is $\w$-clean with respect to $J_2$.
 
 Let $\minit_\w(J_2)$ have a factorization 
 \[\minit_\w(J_2)=(x^{r_x} y^{r_y})\cdot I\]
 for an ideal $I$ with $\ord_{(x)}I=\ord_{(y)}I=0$. Set $d=\ord I$. Further, set $y_1=y+tx^n$ for a constant $t\in K^*$. Notice that $J_2=\coeff^c_{(x,y_1,z)}(J)$.
 
 Define the weighted order function $\y:K[[x,y_1]]\to\Ni^2$ via $\y(x)=(1,0)$ and $\y(y_1)=(n,1)$. Let $z=\wt z+g$ be $\y$-cleaning with respect to $J_2$ and $f$. Set $\wt J_2=\coeff^c_{(x,y_1,\wt z)}(J)$ and 
 \[d_*=\ord_{(y_1)}\minit_\w(\wt J_2).\]
 Then the following hold:
 \begin{enumerate}[(1)]
  \item If $\chara(K)=0$, then $d_*\leq d$.
  \item If $\chara(K)=p>0$, then $d_*\leq d+\frac{c!}{p}$.
  \item If $d_*>d$, then $m\in c!\cdot\N$.
  \item If $d\notin c!\cdot\N$, then $d_*<\ul\frac{d}{c!}\ur c!$.
 \end{enumerate}
\end{proposition}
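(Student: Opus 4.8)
The plan is to reduce everything to a careful analysis of how the coefficients of $f$ transform under the two successive coordinate changes $y \mapsto y_1 = y + tx^n$ and $z \mapsto \wt z = z - g$, using the machinery of weighted-order functions and the cleanness hypothesis. Write the expansion $f = \sum_{i\geq 0} f_i z^i$ with $f_i \in K[[x,y]]$, so that $J_2$ is generated by the powers $f_i^{c!/(c-i)}$ for $i < c$; set $q = q_K(c)$ as in Section~\ref{section_binom_in_pos_char}. Since $f$ is $\w$-clean with respect to $J_2$, one of the properties $(1)_\w$, $(2)_\w$, $(3)_\w$ holds for $f$; and since $m = \w(J_2) \in c!\cdot\N$ is needed for $(3)_\w$ to be relevant (by Lemma~\ref{c!_does_not_divide_m}, if $c! \nmid m$ then $f$ is automatically $\w$-clean via $\neg(2)_\w$, $\neg(3)_\w$ ruled out), I expect the trichotomy to split the argument: in the cases where $(1)_\w$ or $(2)_\w$ hold, the value $d_*$ will not increase beyond $d$ at all (this gives (3) and the easy direction of (1),(2)), while the subtle case is $\neg(1)_\w$, $\neg(2)_\w$, $(3)_\w$, where the $\y$-cleaning step genuinely moves things and Moh-type phenomena appear.

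First I would record the behavior under $y \mapsto y_1$. By Lemma~\ref{w_clean_stable_under_blowup}-type reasoning (or directly from Lemma~\ref{coordinate_change_g}), $\w$ is unchanged since $\w(y_1) = \w(y) = n$, so $\w(J_2)$ is still $m$ computed in the parameters $(x,y_1,z)$; the point is that the factorization $\minit_\w(J_2) = (x^{r_x}y^{r_y})\cdot I$ need not be respected by the new parameter $y_1$, and $\ord_{(y_1)}\minit_\w(J_2)$ can be as large as $d$ (this is exactly Lemma~\ref{diffop_lemma_for_kangaroo}(5): the order of a weighted-homogeneous form along $y_1$ is bounded by $d(f) = \ord f - \ord_{(x)}f - \ord_{(y)}f$). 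Then I would analyze the $\y$-cleaning step $z = \wt z + g$, where $\y(x) = (1,0)$, $\y(y_1) = (n,1)$, so $\y$ refines $\w$ by the $y_1$-order of initial forms (Lemma~\ref{double_weighted order function}). The key computation is to track $\init_\w(\wt f_{c-q})$: by the clean-lemma analysis (Lemma~\ref{clean_lemma}) the cleaning step produces a new coefficient with $\init_\w(\wt f_{c-q}) = \init_\w(f_{c-q}) + \binom{c}{q}\init_\w(f_c)\init_\w(g)^q$, and $g$ is chosen so that $\y$ of this is pushed up. The gain in $y_1$-order from a single such step is governed by how much the $q$-th power $\init_\w(g)^q$ — which involves a binomial expansion of $(y_1)^\bullet = (y + tx^n)^\bullet$ — can concentrate along $V(y_1)$. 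Here is where the factor $q$ (equivalently $p$ in positive characteristic) enters: in characteristic $0$, $q = 1$, the cleaning step is a plain Tschirnhausen-type move and $d_*$ cannot exceed $d$, giving (1); in characteristic $p$, the $q$-th power can absorb $q$ copies of a linear-in-$y_1$ factor, and the estimate $\ord_{(y_1)}\init_\w(\wt f_{c-q}) \leq d + (\text{contribution} \leq c!/p)$ follows by combining Lemma~\ref{diffop_lemma_for_kangaroo}(5) with the observation that $\w(g) = m/c!$ forces $\init_\w(g)$ to be weighted-homogeneous of small weight, whose $y_1$-order is therefore at most $m/(c!n)$, and the $q$-th power multiplies back up — the arithmetic bookkeeping pinning this at $c!/p$ is the routine part I would not grind through.

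For (3) and (4) I would argue as follows. If $d_* > d$, then the $\y$-cleaning step actually raised the $y_1$-order strictly, which by the clean-lemma cancellation forces $\init_\w(f_{c-q})$ to be divisible by $\init_\w(f_c)$ times a $q$-th power of an element whose $\w$-value is $m/c!$; tracing weights, $m/c! = q\cdot\w(G)$ for some $G$, hence $q \mid m/c!$ is not quite what I want — rather $m = c!\cdot\w(G) \in c!\cdot\N$, giving (3). For (4), suppose $d \notin c!\cdot\N$; I want $d_* < \lceil d/c!\rceil c!$. Since $d_* \geq \lceil d/c!\rceil c!$ would (via the structure of $\minit_\w(\wt J_2)$ and the fact that orders of coefficient-ideal generators come in multiples related to $c!$) force $d$ itself to behave like a multiple of $c!$ modulo the correction — more precisely, I would use that if $d_*$ were that large then property $\neg(3)_\w$ would be forced on $f$ (contradicting cleanness), via Lemma~\ref{arithmetic_kang_lemma}, which is the tool that rules out a differential operator producing $x^{r_x}y^{r_y}(y+tx^n)^{mp-1}$ — exactly the "one less than a multiple of $p$" obstruction. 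So (4) comes from the same Moh-style non-existence statement, applied with $m$ replaced by the appropriate multiple so that $d_*$ landing in $[\lceil d/c!\rceil c! - c!, \lceil d/c!\rceil c!)$ is the only non-contradictory possibility. \textbf{The main obstacle} I anticipate is the positive-characteristic case $\neg(1)_\w$, $\neg(2)_\w$, $(3)_\w$: getting the sharp constant $c!/p$ rather than something weaker requires combining Lucas' theorem (Proposition~\ref{lucas}, via Lemma~\ref{c-q}), the identity $\partial_{y^q}(F^q) = (\partial_y F)^q$ from Lemma~\ref{diffop_lemma_for_kangaroo}(1), and the non-existence Lemma~\ref{arithmetic_kang_lemma}, all while keeping careful track of which coefficient $\wt f_{i,j}$ realizes $\ord_{(y_1)}\minit_\w(\wt J_2)$ after the cleaning — the indexing is delicate and is where a naive argument would lose the factor $p$.
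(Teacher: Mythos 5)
Your proposal correctly identifies the skeleton of the paper's proof: the trichotomy $(1)_\w/(2)_\w/(3)_\w$, Lemma~\ref{c!_does_not_divide_m} for part~(3), Lemma~\ref{clean_lemma} for tracking $\init_\w(\wt f_{c-q})$, Lemma~\ref{diffop_lemma_for_kangaroo}(5) as the bound $d(\cdot)\leq\frac{c-i}{c!}d$, and Lemma~\ref{arithmetic_kang_lemma} together with Lucas for part~(4). The paper indeed follows this architecture, after first reducing everything to the quantity $d_1 = \ord_{(y_1)}\minit_\w(J_2)$ and the inequality $d(\init_\w(f_i))\leq\frac{c-i}{c!}d$ whenever $\w(f_i)=\frac{c-i}{c!}m$.

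However, there is a genuine gap in the explicit estimate you sketch for the hard positive-characteristic case $\neg(1)_\w,\neg(2)_\w,(3)_\w$. You propose to bound $\ord_{(y_1)}\init_\w(\wt f_{c-q})$ by bounding the $y_1$-orders of the two summands in $\init_\w(\wt f_{c-q})=\init_\w(f_{c-q})+\binom{c}{q}\init_\w(f_c)\init_\w(g)^q$ individually (via the weight of $\init_\w(g)$ and ``the $q$-th power multiplies back up''). That does not work: $\ord_{(y_1)}$ of a sum can strictly \emph{exceed} the $\ord_{(y_1)}$ of both summands when cancellation occurs, and cancellation is precisely the mechanism that makes kangaroo phenomena possible. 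Upper bounds on the $y_1$-orders of the summands give no upper bound on the $y_1$-order of the sum. The paper circumvents this by applying a differential operator $\partial_{y^r}$, where $r=p^{e_1}$ is maximal with $\init_\w(f_{c-q})\in K[[x^r,y^r]]$ (so $r<q$ by $(3)_\w$ and algebraic closedness, and $\partial_{y^r}(\init_\w(f_{c-q}))\neq 0$ by Lemma~\ref{diffop_lemma_for_kangaroo}(6)). Because $r<q$, Lemma~\ref{diffop_lemma_for_kangaroo}(2) kills the $q$-th power: $\partial_{y_1^r}(G^q)=0$. Hence the sum becomes accessible via Lemma~\ref{diffop_lemma_for_kangaroo}(3), namely $\ord_{(y_1)}\init_\w(\wt f_{c-q})\leq\ord_{(y_1)}\partial_{y_1^r}(\init_\w(f_{c-q}))+r$, and Lemma~\ref{diffop_lemma_for_kangaroo}(4),(5) then bound the right-hand side by $d(\init_\w(f_{c-q}))+r\leq\frac{q}{c!}d+r$, whence $d_*\leq d+c!\frac{r}{q}\leq d+\frac{c!}{p}$. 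You do list the differential-operator identity and Lucas among the necessary ingredients at the end, but the concrete argument you wrote does not use them and would not survive cancellation; and the part you labeled ``routine bookkeeping'' is exactly where this mechanism lives, not a finishing computation. The equality analysis in part~(4) also depends on this differential-operator reduction, since the contradiction via Lemma~\ref{arithmetic_kang_lemma} is extracted from the equality case of Lemma~\ref{diffop_lemma_for_kangaroo}(5) applied to $\partial_{y^r}(\init_\w(f_{c-q}))$, together with Lemma~\ref{diffop_lemma_for_kangaroo}(1) to write it as $(\partial_y F)^r$.
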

\begin{proof}
 For an element $F\in K[[x,y]]$ denote by $d(F)$ the difference 
 \[d(F)=\ord F-\ord_{(x)}F-\ord_{(y)}F.\]
 By Lemma \ref{double_weighted order function} we know that $\y(F)=(\w(F),\ord_{(y_1)}\init_\w(F))$.
 
 It is easy to see that $\w(I)=nd$. Consequently, 
 \[m=r_x+n(r_y+d).\] 
 
 Let $f$ have the expansion $f=\sum_{i\geq0}f_iz^i$ with $f_i\in K[[x,y]]$. 
 Notice that the weighted order function $\w:K[[x,y]]=K[[x,y_1]]\to\N_\infty$ is also defined on the parameters $(x,y_1)$ via $\w(x)=1$ and $\w(y_1)=n$.
 
 Set $d_1=\ord_{(y_1)}\minit_\w(J_2)$. Then $\y(J_2)=(m,d_1)$ by Lemma \ref{double_weighted order function}. Since 
 \[\ord_{(y_1)}\minit_\w(J_2)\leq \ord_{(y_1)}I\leq \ord I=d,\]
 we know that $d_1\leq d$.
 
 Further, notice that $\w(f_i)=\frac{c-i}{c!}m$ implies that 
 \begin{equation}
  d(\init_\w(f_i))\leq\frac{c-i}{c!}d \tag{$\ast$} \label{deq}
 \end{equation}
 for the following reason: Let $\init_\w(f_i)$ have a factorization $\init_\w(f_i)=x^{a_i}y^{b_i}g_i$ with $d(\init_\w(f_i))=\ord g_i$. Then $a_i\geq\frac{c-i}{c!}r_x$ and $b_i\geq\frac{c-i}{c!}r_y$ by Lemma \ref{w_coeff}. Hence, we can compute that
 \[d(\init_\w(f_i))=\ord g_i=\frac{1}{n}\w(g_i)=\frac{1}{n}(\w(f_i)-\w(x^{a_i})-\w(y^{b_i}))\]
 \[\leq\frac{1}{n}\B(\frac{c-i}{c!}m-\frac{c-i}{c!}nr_y-\frac{c-i}{c!}r_x\B)=\frac{c-i}{c!}d.\]
 
 If $f$ is $\y$-clean with respect to $J_2$, then it is clear that $d_*=d_1\leq d$. If $c!\nmid m$, then $f$ is $\y$-clean by Lemma \ref{c!_does_not_divide_m}. This proves assertion (3). Assume from now on that $c!\mid m$.
 
 First consider the case that $\chara(K)=0$. Since $f$ is $\w$-clean with respect to $J_2$, we know that $\w(f_{c-1})>\frac{m}{c!}$. Hence, also $\y(f_{c-1})>(\frac{m}{c!},\frac{d_1}{c!})$. Thus, $f$ is also $\y$-clean with respect to $J_2$. This proves assertion (1).
 
 Assume from now on that $\chara(K)=p>0$ and $f$ is not $\y$-clean with respect to $J_2$. In particular, $\neg(2)_{\y}$ holds. Thus, $\w(f_{c-q})=\frac{q}{c!}m$ and $\ord_{(y_1)}\init_\w(f_{c-q})=\frac{q}{c!}d_1$ by Lemma \ref{double_weighted order function}.  Consequently, also $\neg(2)_{\w}$ holds for $f$ with respect to $J_2$. By $(\ast)$, this implies that $d(\init_\w(f_{c-q}))\leq \frac{q}{c!}d$.
 
 By definition of $\y$-cleaning, we know that $\w(g)=\frac{m}{c!}$ and $\ord_{(y_1)}\init_\w(g)=\frac{d_1}{c!}$ hold. Consider the expansion $f=\sum_{i\geq0}\wt f_i\wt z^i$ with $\wt f_i\in K[[x,y]]$. 
 
 First assume that the property $(1)_\w$ holds for $f$ with respect to $J_2$. Let $i$ be maximal with $c-q<i<c$ such that $\w(f_i)=\frac{c-i}{c!}m$ holds. Then we know by Lemma \ref{clean_lemma} (1) that $\init_\w(\wt f_i)=\init_\w(f_i)$. Consequently,
 \[\ord_{(y_1)}\init_\w(\wt f_i)=\ord_{(y_1)}\init_\w(f_i)\leq d(\init_\w(f_i))\leq\frac{c-i}{c!}d\]
 holds by (\ref{deq}). By Lemma \ref{w_coeff} this proves that $d_*\leq d$.
 
 Assume from now on that $\neg(1)_\w$ holds for $f$ with respect to $J_2$. Since we know that also $\neg(2)_{\w}$ holds, property $(3)_\w$ necessarily holds for $f$ with respect to $J_2$. Since $K$ is algebraically closed, this implies that $\init_\w(f_{c-q})\notin K[[x^q,y^q]]$. Let $r=p^e$ with $e\in\N$ be maximal such that $\init_\w(f_{c-q})\in K[[x^r,y^r]]$. By Lemma \ref{diffop_lemma_for_kangaroo} (6) this implies that $\partial_{y^r}(\init_\w(f_{c-q}))\neq0$. By Lemma \ref{clean_lemma} (3) we know that 
 \[\init_\w(\wt f_{c-q})=\init_\w(f_{c-q})+G^q\]
 for an element $G\in K[[x,y]]$. By using statements (2)-(5) of Lemma \ref{diffop_lemma_for_kangaroo} and the inequality (\ref{deq}) we can calculate that
 \[\ord_{(y_1)}\init_\w(\wt f_{c-q})=\ord_{(y_1)}(\init_\w(f_{c-q})+G^q)\overset{(3)}{\leq} \ord_{(y_1)}\partial_{y_1^r}(\init_\w(f_{c-q})+G^q)+r\]
 \[\overset{(2)}{=}\ord_{(y_1)}\partial_{y_1^r}(\init_\w(f_{c-q}))+r\overset{(4)}{=}\ord_{(y_1)}\partial_{y^r}(\init_\w(f_{c-q}))+r\overset{(5)}{\leq} d(\init_\w(f_{c-q}))+r\leq\frac{q}{c!}d+r.\]
 This proves that
 \[d_*\leq\frac{c!}{q}\ord_{(y_1)}(\init_\w(\wt f_{c-q}))\leq \frac{c!}{q}\B(\frac{q}{c!}d+r\B)=d+c!\frac{r}{q}\leq d+\frac{c!}{p}.\]
 Hence, we proved assertion (2).
 
 Now assume that $d$ is not divisible by $c!$. Thus, we know that 
 \[d(\init_\w(f_{c-q}))\leq\frac{q}{c!}d<q\Bigl\lceil\frac{d}{c!}\Bigr\rceil.\]
 Since $\init_\w(f_{c-q})\in K[[x^r,y^r]]$, we know that $d(\init_\w(f_{c-q}))\leq q\ul\frac{d}{c!}\ur-r$. Combining this with our earlier results shows that $d_*\leq \ul\frac{d}{c!}\ur c!$. Assume that $d_*=\ul\frac{d}{c!}\ur c!$ holds. Then the following equalities necessarily have to hold:
 \begin{itemize}
  \item $d(\init_\w(f_{c-q}))=\frac{q}{c!}d$.
  \item $d=(\ul\frac{d}{c!}\ur-\frac{r}{q})\cdot c!$. 
  \item $\ord_{(y_1)}\partial_{y^r}(\init_\w(f_{c-q}))=d(\init_\w(f_{c-q}))$.
 \end{itemize}
 By Lemma \ref{diffop_lemma_for_kangaroo} (5) this implies that 
 \[\partial_{y^r}(\init_\w(f_{c-q}))=x^{m_x}y^{m_y}(y+tx^n)^{\frac{q}{c!}d}\]
 for certain $m_x,m_y\in\N$. Since $\init_\w(f_{c-q})\in K[[x^r,y^r]]$, there is an element $F\in K[[x,y]]$ which is weighted homogeneous with respect to $\w$ and fulfills $F^r=\init_\w(f_{c-q})$. Hence, by Lemma \ref{diffop_lemma_for_kangaroo} (1) we know that $\partial_{y^r}(\init_\w(f_{c-q}))=(\partial_y(F))^r$. Thus, 
 \[\partial_y(F)=x^{\frac{m_x}{r}}y^{\frac{m_y}{r}}(y+tx^n)^{\frac{qd}{rc!}}.\]
 Notice that $\frac{qd}{rc!}=\frac{q}{r}\ul\frac{d}{c!}\ur-1$. Thus,
 \[\partial_y(F)=x^\frac{m_x}{r}y^\frac{m_y}{r}(y+tx^n)^{kp-1}\]
 for a certain integer $k\in\N$. But this is a contradiction to Lemma \ref{arithmetic_kang_lemma}. Hence, $d_*<\ul\frac{d}{c!}\ur c!$. This proves assertion (4).
\end{proof}


Recall from Section \ref{section_modifying_the_residual_order} that in the situation of a point-blowup $\pi:W'\to W$ and an equiconstant point $a'$ lying over $a$, there is no flag $\F\in\FF(a)$ such that its induced flag $\F'\in\FF(a')$ fulfills $n_{\F'}>1$ and $D_{\F'}=\Dnew$ where $\Dnew=\pi^{-1}(a)$ denotes the exceptional divisor of $\pi$. Thus, a special argument is required to show that the flag invariant $\inv(\G)$ of valid flags $\G\in\FF(a')$ with $n_\G>1$ and $D_\G=\Dnew$ is bounded by the flag invariant $\inv(\F)$ of some valid flag $\F\in\FF(a)$.

As already mentioned, we will use a modified version of Moh's bound for these flags which says that there is a valid flag $\F\in\FF(a)$ which fulfills
\[d_\G\leq \frac{1}{n_\F}d_\F+\eps,\]
where
 \[\eps=\begin{cases}
         0 & \text{if $\chara(K)=0$ or $c!\nmid m_\G$,}\\
         \frac{c!}{p} & \text{if $\chara(K)=p>0$ and $c!\mid m_\G$.}
        \end{cases}
 \]
 
 To derive this result from Proposition \ref{kangaroo_prop}, it is necessary to give a bound in terms of $d_\F$ for $d(\minit_\w(J_{2,\x'}(a'))$ where $J_{2,\x'}(a')$ is the coefficient ideal of $\hIXap$ with respect to the induced parameters $\x'$ for $\hOWap$. This is the purpose of the following result:

\begin{proposition} \label{kangaroo_blowup_prop}
 Let $R=K[[x,y,z]]$ and $J\subseteq R$ an ideal of order $\ord J=c$. Set $J_2=\coeff_{(x,y,z)}^c(J)$.
 
 Consider the point-blowup map $\pi$ in the origin of the $x$-chart. Thus, the map $\pi:R\to R$ is given by $\pi(x)=x$, $\pi(y)=xy$, $\pi(z)=xz$.
 
 Let $J'=x^{-c} R\pi(J)$ be the weak transform of $J$ and assume that $\ord J'=c$. Set $J_2'=\coeff^c_{(x,y,z)}(J')$.
 
 Consider the weighted order function $\w:K[[x,y]]\to\N_\infty$ that is defined on $(x,y)$ by $\w(x)=n$ and $\w(y)=1$ for a positive integer $n>0$.
 
 For an ideal $I\subseteq K[[x,y]]$ define $ d(I)=\ord I-\ord_{(x)}I-\ord_{(y)}I$.
 
 Then the following hold:
 \begin{enumerate}[(1)]
  \item $d(\minit_\w(J_2'))\leq \frac{1}{n} d(J_2)$.
  \item $d(\minit_\w(J_2'))\leq \frac{1}{n}\ord_{(y)}\minit(J_2)$.
 \end{enumerate}
\end{proposition}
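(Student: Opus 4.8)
The strategy is to compute both sides explicitly in terms of the power series expansion of elements $f\in J$ and reduce everything to Lemma~\ref{w_coeff} applied to the weighted order function $\w$ (with $\w(x)=n$, $\w(y)=1$). Let each $f\in J$ have the expansion $f=\sum_{i,j\geq0}f_{i,j}y^jz^i$ with $f_{i,j}\in K[[x]]$; since $\pi$ is monomial in the $x$-chart, the transform $f'=x^{-c}\pi(f)\in J'$ has the expansion $f'=\sum_{i,j\geq0}f_{i,j}'y^jz^i$ with $f_{i,j}'=x^{i+j-c}f_{i,j}$, exactly as in the proof of Proposition~\ref{s_clean_stable_under_blowup}. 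For a monomial $x^iy^j$ with nonzero coefficient in $f_{i,j}'$ one has $\w(x^iy^j)=n\cdot\ord_{(x)}f_{i,j}'+j+n(i-c)\cdot[\text{shift}]$; keeping careful track of the shifts one sees that $\w$ applied to the coefficients $f_{i,j}'$ of $f'$ is controlled by $\ord f_{i,j}$ and $\ord_{(y)}f_{i,j}$ for $f\in J$. Then $\minit_\w(J_2')$ is generated (Lemma~\ref{w_coeff}) by the $\w$-initial forms of the $f_i'^{\,c!/(c-i)}$ for which $\w(f_i')$ attains the minimum, so $d(\minit_\w(J_2'))$ is controlled by a minimum over $i<c$ of expressions in $d$ of the relevant coefficients, scaled by $c!/(c-i)$.

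\textbf{First step.} I would first record, by a direct computation as in Lemma~\ref{diffop_lemma_for_kangaroo}(5)-type manipulations, the behaviour of the quantity $d(\cdot)$ (which measures how far a power series is from being a pure monomial) under the substitution $g\mapsto g'$ induced by $\pi$. The key elementary fact is that $d$ is subadditive in the appropriate sense and that multiplying by a power of $x$ leaves $d$ unchanged while the weighted order $\w$ (with $\w(x)=n$) \emph{increases} the $x$-contribution by a factor $n$ relative to the $y$-contribution. This is precisely the source of the factor $1/n$ in both claimed bounds: a ``defect'' $d(J_2)$ measured in the ordinary grading becomes, after the monomial blowup and passage to the $\w$-weak-initial ideal (which counts $x$ with weight $n$), a defect that is at most $d(J_2)/n$.

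\textbf{Second step.} For part (1): pick $f\in J$ and $i<c$ realizing $\ord J_2=\frac{c!}{c-i}\ord f_i$ with $\init(f_i)$ not a monomial, i.e.\ $d(\init(f_i))$ contributing $d(J_2)$; pass to $f'$ and estimate $d(\init_\w(f_i'))$. The shift $f_{i,j}'=x^{i+j-c}f_{i,j}$ together with the $\w$-weight of $x$ being $n$ forces the non-monomial part (the part of $\init_\w$ that sees several monomials) to have $\w$-spread at most $\frac{1}{n}$ times the original $d$-value; apply $c!/(c-i)$ and use Lemma~\ref{w_coeff} to conclude $d(\minit_\w(J_2'))\le\frac1n d(J_2)$. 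For part (2): the same computation, but now bound $d(\minit_\w(J_2'))$ against $\ord_{(y)}\minit(J_2)$ instead of $d(J_2)$; here one uses that the $y$-order of $\minit(J_2)$ is, via the same shift, an upper bound for the $\w$-spread of the transformed initial forms after dividing by $n$ — i.e.\ every monomial in $\init_\w(f_{c-q}')$ differs in its $y$-exponent by at most $\ord_{(y)}\minit(J_2)$ worth of room, which after the weight-$n$ rescaling of $x$ becomes a $d$-defect of at most $\frac1n\ord_{(y)}\minit(J_2)$.

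\textbf{Main obstacle.} The delicate point is bookkeeping of the two gradings simultaneously: the ordinary order (used to define $J_2$, $d$, and $\minit(J_2)$) versus the weighted order $\w$ with $\w(x)=n$ (used to define $\minit_\w(J_2')$). One must verify that the shift $f_{i,j}\mapsto x^{i+j-c}f_{i,j}$ does not create cancellation or unexpected drops in order — this is guaranteed because $x$ is a nonzerodivisor in $K[[x,y,z]]$ and multiplication by $x^{i+j-c}$ is just a monomial shift, so $\ord$, $\ord_{(x)}$, $\ord_{(y)}$ all transform predictably. The only real care needed is to confirm that the minimum over $i<c$ in Lemma~\ref{w_coeff}, applied to $J_2'$ with respect to $\w$, is attained at an index and an element coming from the index/element attaining the corresponding minimum for $J_2$; once that is checked, both inequalities follow by a routine (if slightly tedious) arithmetic comparison of the scaling factors $\frac{c!}{c-i}$, $n$, and the exponents in the monomial shift.
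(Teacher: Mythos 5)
Your plan is to work with the $z$-expansions of elements $f\in J$ and the monomial transformation $f_{i,j}'=x^{i+j-c}f_{i,j}$, and to extract the claimed bounds from Lemma~\ref{w_coeff} by ``careful bookkeeping''. The paper's proof never descends to the level of $f$: it works entirely with the ideal $J_2\subseteq K[[x,y]]$ and Proposition~\ref{w_clean_stable_under_blowup}(1), so your route is genuinely different in its mechanics. The issue is that the key mechanism is not present in your sketch.

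The crucial idea you are missing is the auxiliary weighted order $\wt\w$ with $\wt\w(x)=n$, $\wt\w(y)=n+1$. Under the blowup in the $x$-chart a monomial $x^ay^b$ goes to $x^{a+b}y^b$, so the weighted order $\w$ (with $\w(x)=n$, $\w(y)=1$) applied to $J_2'$ corresponds on $J_2$ not to $\w$ itself but to $\wt\w$. The whole proof hinges on computing $d(\minit_\w(J_2'))$ via the triple of weighted orders $(\ord,\ord_{(x)},\ord_{(y)})$ applied to $\minit_\w(J_2')$; using Lemma~\ref{double_weighted order function} and Proposition~\ref{w_clean_stable_under_blowup}(1), these translate into $\y,\ \ord,\ \ord_{(y)}$ applied to $\minit_{\wt\w}(J_2)$ (where $\y(x)=1$, $\y(y)=2$). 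Factoring $\minit_{\wt\w}(J_2)=(x^{r_x}y^{r_y})\cdot\wt I$, the computation collapses to $d(\minit_\w(J_2'))=\y(\wt I)-\ord\wt I$, and because $\wt I$ is generated by $\wt\w$-homogeneous elements with $\ord_{(x)}\wt I=\ord_{(y)}\wt I=0$, one gets the exact identity $\y(\wt I)=\tfrac1n\wt\w(\wt I)$ and $\ord\wt I=\tfrac1{n+1}\wt\w(\wt I)$, hence $d(\minit_\w(J_2'))=\tfrac1n\ord\wt I$. Your heuristic ``$\w$ counts $x$ with weight $n$, so the defect divides by $n$'' does not locate this: the $1/n$ comes from comparing $\y$ and $\ord$ on a $\wt\w$-homogeneous ideal, and $\wt\w$ has $\wt\w(y)=n+1$, not $1$. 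Finally, parts (1) and (2) reduce to showing $\ord\wt I\leq d(J_2)$ and $\ord\wt I\leq\ord_{(y)}\minit(J_2)$, which are proved by an elementary Newton-polygon comparison of the two factorizations of $J_2$; your claim that ``the minimum over $i<c$ is attained at the same index'' is not used in the actual argument, is not obviously true, and in any case does not by itself yield the precise inequalities. As written, your proposal identifies plausible ingredients but leaves the essential step — the change of weighted order under blowup and its arithmetic on homogeneous ideals — unperformed.
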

\begin{proof}
 Define the weighted order functions $\wt\w,\y:K[[x,y]]\to\N_\infty$ via $\wt\w(x)=n$, $\wt\w(y)=n+1$ and $\y(x)=1$, $\y(y)=2$. Let $\minit_{\wt\w}(J_2)$ have the factorization 
 \[\minit_{\wt\w}(J_2)=(x^{r_x}y^{r_y})\cdot \wt I\]
 for an ideal $\wt I$ with $\ord \wt I=d(\minit_{\wt\w}(J_2))$. By Lemma \ref{double_weighted order function} and Proposition \ref{w_clean_stable_under_blowup} we can calculate that
 \[d(\minit_\w(J_2'))=\ord\minit_\w(J_2')-\ord_{(x)}\minit_\w(J_2')-\ord_{(y)}\minit_\w(J_2')\]
 \[=\underbrace{\y(\minit_{\wt\w}(J_2))}_{=r_x+2r_y+\y(\wt I)}-c!\underbrace{\y(x)}_{=1}-\underbrace{\ord\minit_{\wt\w}(J_2)}_{=r_x+r_y+\ord \wt I}+c!\underbrace{\ord x}_{=1}-\underbrace{\ord_{(y)}\minit_{\wt\w}(J_2)}_{=r_y}+c!\underbrace{\ord_{(y)}x}_{=0}\]
 \[=\y(\wt I)-\ord \wt I.\]
 
 Since $\wt I$ is generated by elements that are weighted homogeneous with respect to $\wt\w$, it is straightforward to verify that $\y(\wt I)=\frac{1}{n}\wt\w(\wt I)$ and $\ord \wt I=\frac{1}{n+1}\wt\w(\wt I)$. Consequently,
 \[d(\minit_\w(J_2'))=\B(\frac{1}{n}-\frac{1}{n+1}\B)\wt\w(\wt I)=\frac{1}{(n+1)n}\wt\w(\wt I)=\frac{1}{n}\ord \wt I.\]
 
 (1): It remains to show that $\ord \wt I\leq d(J_2)$ holds.
 
 To this end, consider the factorization $J_2=(x^{m_x}y^{m_y})\cdot I_2$ with $\ord I_2=d(J_2)$. Clearly, $r_x\geq m_x$ and $r_y\geq m_y$ hold.

 Further, we know that there is a term $x^iy^j$ which appears with non-zero coefficient in the expansion of an element of $J_2$ such that
 \[i+j=\ord J_2,\ i\geq m_x\]
 hold. Also, since $\wt\w(x)<\wt\w(y)$ holds, it is easy to see that there is a term $x^ky^l$ which fulfills
 \[nk+(n+1)l=\wt\w(J_2),\ l=r_y+\ord \wt I.\]
 Naturally, also the inequalities
 \[\ord J_2=i+j\leq k+l\]
 and 
 \[\wt\w(J_2)=nk+(n+1)l\leq ni+(n+1)j\]
 hold. This allows us to compute
 \[\ord \wt I=l-r_y=\underbrace{nk+(n+1)l}_{\leq ni+(n+1)j}\underbrace{-n(k+l)}_{\leq -n(i+j)}-r_y\]
 \[\leq j-r_y\leq j-m_y=i-m_x+d(J_2)\leq d(J_2).\]
 
 (2): We now want to show that $\ord \wt I\leq \ord_{(y)}\minit(J_2)$ holds.
 
 It is clear that there appears a term $x^iy^j$ in the expansion of an element of $J_2$ such that 
 \[i+j=\ord J_2,\ j=\ord_{(y)}\minit(J_2)\]
 hold. Further, we will again make use of a term $x^ky^l$ as above, fulfilling 
 \[nk+(n+1)l=\wt\w(J_2),\ l=r_y+\ord \wt I.\]
 hold. Thus, we can compute in the same way as before that
 \[\ord \wt I\leq j-r_y\leq j=\ord_{(y)}\minit(J_2).\]
\end{proof}

\subsection{Triviality of $d_\F$ for large $n_\F$}

In Section \ref{section_maximizing_flags} we will show that there exists a flag $\F$ which maximizes the flag invariant $\inv(\F)=(d_\F,n_\F,s_\F)$ over all valid flags. In particular, we will need to make sure that $d_\F$ is bounded and that there do not exist flags $\F$ with arbitrarily high associated multiplicity $n_\F$ that realize the maximal value for $d_\F$. In fact, we will show in Proposition \ref{maximal_n_and_d} that $d_\F$ is bounded and that there exists an $N\in\N$ such that $d_\F=-1$ holds for all valid flags $\F$ with $n_\F\geq N$. To get this result from Proposition \ref{kangaroo_prop}, it is necessary to have an element $f$ that is simultaneously clean with respect to all weighted order functions $\w_n:K[[x,y]]\to\Ni$ that are defined via $\w_n(x)=1$ and $\w_n(y)=n$. The following lemma will ensure that this can be achieved by only applying cleaning with respect to finitely many weighted order functions.

\begin{lemma} \label{automatic_w_n_cleaning_for_large_n}
 Let $R=K[[x,y,z]]$ and $J\subseteq R$ an ideal of order $c=\ord J$. Set $J_2=\coeff^c_{(x,y,z)}(J)$.
 
 For all positive integers $n>0$ consider the weighted order functions $\w_n:K[[x,y]]\to\N_\infty$ defined by $\w_n(x)=1$ and $\w_n(y)=n$. Let $\sigma:K[[x,y]]\to\Ni^2$ be the weighted order function defined by $\sigma(x)=(0,1)$ and $\sigma(y)=(1,0)$.
 
 Let $f\in J$ be an element which is $\sigma$-clean with respect to $J_2$. Then there is an integer $N\in\N$ such that $f$ is also $\w_n$-clean with respect to $J_2$ for all integers $n\geq N$.
 
 Further, there are integers $r,s\in\N$ such that $\minit_{\sigma}(J_2)=(x^{r}y^{s})$ and for all $n\geq N$ the identity $\minit_{\w_n}(J_2)=(x^{r}y^{s})$ holds.
\end{lemma}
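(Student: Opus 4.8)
The idea is to reduce $\w_n$-cleanness and the shape of the weak initial ideal, for all large $n$, to the single weighted order function $\sigma$. First I would fix the factorization $\minit_\sigma(J_2) = (x^r y^s) \cdot I$ where $I \subseteq K[[x,y]]$ has $\ord_{(x)} I = \ord_{(y)} I = 0$. The key structural observation is that $\sigma(x) = (0,1)$ and $\sigma(y) = (1,0)$ means that $\sigma$ is essentially the pair $(\ord_{(y)}, \text{then } \ord_{(x)})$: so $\minit_\sigma(J_2)$ records first the minimal $y$-order among generators, then among those the minimal $x$-order. I would first argue that $I$ must actually be a unit ideal, i.e. $\minit_\sigma(J_2) = (x^r y^s)$ is principal monomial. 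Indeed, any generator $f_i^{c!/(c-i)}$ of $J_2$ achieving $\sigma(J_2)$ has initial form with respect to $\sigma$ equal to a monomial: its $y$-order is forced to be exactly $s \cdot (c-i)/c!$ (scaled appropriately) and among those terms the $x$-order is forced to be exactly $r$, which pins down a single monomial. This is where I would use $\sigma$-cleanness of $f$ to rule out cancellation phenomena.

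\textbf{Key steps.} (1) Extract from $\sigma$-cleanness, via Lemma \ref{clean_lemma} and Lemma \ref{double_weighted order function}, the precise monomial shape of the terms of $f$ contributing to $\minit_\sigma(J_2)$; conclude $\minit_\sigma(J_2) = (x^r y^s)$. (2) Translate to $\w_n$: for a term $x^a y^b$ appearing in an element of $J_2$, one has $\w_n(x^a y^b) = a + nb$. The term $x^r y^s$ has $\w_n$-value $r + ns$. Any \emph{other} term $x^a y^b$ of a generator of $J_2$ satisfies, by step (1) and the definition of $\sigma$, either $b > s$ (strictly larger $y$-degree, hence for $n$ large its $\w_n$-value exceeds $r + ns$) or $b = s$ and $a > r$ (hence its $\w_n$-value is strictly larger already). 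Collecting finitely many such "competitor" terms across a finite generating set and the finitely many coefficients $f_i$ with $i < c$, there is a uniform threshold $N$ past which $x^r y^s$ is the unique $\w_n$-minimal term; hence $\minit_{\w_n}(J_2) = (x^r y^s)$ and $m_n := \w_n(J_2) = r + ns$. (3) Deduce $\w_n$-cleanness for $n \ge N$: since for large $n$ the $\w_n$-initial form of the relevant $f_i$'s is the corresponding $\sigma$-initial monomial (same support), and a monomial is a $q$-th power precisely when its exponents are divisible by $q$ — an arithmetic condition independent of $n$ — I would check that whichever of the defining clauses $(1)_\sigma, (2)_\sigma, (3)_\sigma$ holds for $f$ with respect to $\sigma$ yields the corresponding clause $(1)_{\w_n}, (2)_{\w_n}, (3)_{\w_n}$ for $n \ge N$. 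The cleanest route: if $c! \nmid m_n$ for infinitely many $n$, cleanness is automatic by Lemma \ref{c!_does_not_divide_m}; and if $c! \mid m_n = r+ns$, combine with the monomial structure to see clause $(3)_{\w_n}$ survives because $\init_{\w_n}(f_{c-q})$ and $\init_{\w_n}(f_c)$ are the same monomials as their $\sigma$-counterparts, so the (non-)$q$-th-power relation is inherited.

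\textbf{Main obstacle.} The delicate point is step (3): a priori, passing from $\sigma$ to $\w_n$ could change \emph{which} monomials appear in $\init_{\w_n}(f_i)$ compared to $\init_\sigma(f_i)$, because $\sigma$ orders lexicographically by $(\ord_{(y)}, \ord_{(x)})$ while $\w_n$ uses the single value $a + nb$; these agree on which term is minimal only once $n$ is large enough relative to the spread of exponents actually occurring in $f$. So I would need to choose $N$ not just from the generating set of $J_2$ but from a fixed $\w$-clean (equivalently $z$-regular of order $c$) element $f$, bounding the exponents in the finitely many power series $f_0, \dots, f_{c-1}$ — strictly speaking their truncations up to the relevant weighted degree, which suffices since only finitely many terms can have $\w_n$-value $\le r + ns$ once $n$ is large. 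Making this "finitely many relevant terms, uniformly in $n$" argument rigorous is the real content; everything else is bookkeeping with Lemma \ref{w_coeff} and Lemma \ref{double_weighted order function}. I would also remark that $r = \ord_{(x)} \minit_\sigma(J_2)$ and $s = \ord_{(y)} \minit_\sigma(J_2)$ are exactly the quantities feeding into Lemma \ref{diffop_lemma_for_kangaroo} and Proposition \ref{kangaroo_prop}, so the uniform conclusion is precisely what is needed downstream in Proposition \ref{maximal_n_and_d}.
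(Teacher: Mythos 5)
Your high-level outline matches the paper's proof: establish $\minit_\sigma(J_2)=(x^r y^s)$, find a threshold $N$ past which the $\w_n$-initial ideal stabilizes, and then verify each cleanness clause $(1)_\sigma,(2)_\sigma,(3)_\sigma$ passes to $(1)_{\w_n},(2)_{\w_n},(3)_{\w_n}$. Two remarks on the details. First, what you flag as the ``main obstacle'' --- bounding the spread of exponents via truncations to know when the $\w_n$-minimal term stabilizes --- is in fact not an obstacle at all: the paper simply sets $N=r+1$ and the stabilization is a one-line consequence of the lexicographic shape of $\sigma$. Any term $x^iy^j$ occurring in an element of $J_2$ has $j\geq s$ since $s=\ord_{(y)}J_2$; if $j>s$ then $\w_n(x^iy^j)=i+nj\geq n(s+1)=ns+n>ns+r$ as soon as $n>r$, \emph{uniformly in $i$}, so no bound on $x$-exponents or truncation is needed; and if $j=s$ then $i\geq r$ by definition of $r$. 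Relatedly, step (1) of your plan does not need the $\sigma$-cleanness of $f$ at all: that $\minit_\sigma(J_2)$ is principal monomial is automatic from Lemma \ref{double_weighted order function}, since $\minit_\sigma(J_2)=\init_{(x)}(\minit_{(y)}(J_2))$ is generated by the single monomial $x^ry^s$. Second, your ``cleanest route'' in step (3) is imprecise: you need $\w_n$-cleanness for \emph{every} $n\geq N$, not just for infinitely many, and for those $n$ with $c!\mid m_n$ you must handle whichever of the three $\sigma$-clauses actually holds for $f$ --- the appeal to the monomial structure and clause $(3)_{\w_n}$ only applies when $\neg(1)_\sigma$, $\neg(2)_\sigma$ and $(3)_\sigma$ hold. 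Your broader stated plan of checking all three clauses is correct and is precisely what the paper does; the case $(1)_\sigma$ transfers because $\sigma(f_i)=\frac{c-i}{c!}\sigma(J_2)$ forces $\w_n(f_i)\leq\frac{c-i}{c!}\w_n(J_2)$, and the case $(2)_\sigma$ transfers by splitting on whether the $y$-order or the $x$-order inequality in $\sigma(f_{c-q})>\frac{q}{c!}\sigma(J_2)$ is the strict one, again using $n\geq r+1$.
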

\begin{proof}
 By Lemma \ref{double_weighted order function} we know that $\sigma(J_2)=(\ord_{(y)}J_2,\ord_{(x)}\minit_{(y)}(J_2))$. Let $r,s\in\N$ be such that $\sigma(J_2)=(s,r)$. Then it is clear that 
 \[\minit_\sigma(J_2)=\init_{(x)}(\minit_{(y)}(J_2))=(x^{r}y^{s})\]
 and $s=\ord_{(y)}J_2$.
 
 It is clear that for all $n>0$ the inequality
 \[\w_n(J_2)\leq \w_n(\minit_{\sigma}(J_2))=r+ns\]
 holds. Set $N=r+1$. We first want to prove that $\w_n(J_2)=r+ns$ and $\minit_{\w_n}(J_2)=(x^ry^s)$ for all $n\geq N$. Let $x^iy^{s+j}$ be a term appearing with non-zero coefficient in the power series expansion of an element of $J_2$. If $j>0$, then 
 \[\w_n(x^iy^{s+j})\geq (s+1)n>r+ns.\]
 If $j=0$, then we know that $i\geq r$. Thus, 
 \[\w_n(x^iy^{s})\geq r+ns\]
 is also clear in this case and equality holds only for $(i,j)=(r,s)$. Hence, we have shown that $\w_n(J_2)=r+ns$ and $\minit_{\w_n}(J_2)=(x^{r}y^{s})$. 
 
 Now we will prove that $f$ is $\w_n$-clean for $n\geq N$. Let $f$ have the expansion $f=\sum_{i\geq0}f_iz^i$ with $f_i\in K[[x,y]]$.
 
 First assume that the property $(1)_{\sigma}$ holds. Thus, there exists an index $c-q<i<c$ such that 
 \[(\ord_{(y)}f_i,\ord_{(x)}\init_{(y)}(f_i))=\sigma(f_i)=\B(\frac{c-i}{c!}s,\frac{c-i}{c!}r\B).\]
 This implies that $\w_n(f_i)\leq\frac{c-i}{c!}(r+ns)=\frac{c-i}{c!}\w_n(J_2)$. Hence, the property $(1)_{\w_n}$ holds for $f$.
 
 Next assume that the property $(2)_{\sigma}$ holds for $f$. Thus, either $\ord_{(y)} f_{c-q}>\frac{q}{c!}s$ holds or $\ord_{(y)}f_{c-q}=\frac{q}{c!}s$ and $\ord_{(x)}\init_{(y)}(f_{c-q})>\frac{q}{c!}r$. If $\ord_{(y)} f_{c-q}>\frac{q}{c!}s$, then 
 \[\w_n(f_{c-q})\geq n\cdot\ord_{(y)}f_{c-q}\geq n\B(\frac{q}{c!}s+1\B)=\frac{q}{c!}\B(ns+\underbrace{\frac{c!}{q}n}_{>r}\B)>\frac{q}{c!}(ns+r)=\frac{q}{c!}\w_n(J_2).\]
 On the other hand, if $\ord_{(y)} f_{c-q}=\frac{q}{c!}s$ and $\ord_{(x)}\init_{(y)}(f_{c-q})>\frac{q}{c!}r$, then it follows in a similar fashion that 
 \[\w_n(f_{c-q})\geq \frac{q}{c!}ns+\underbrace{\min\B\{\w_n(y),\frac{q}{c!}r+1\B\}}_{>\frac{q}{c!}r}>\frac{q}{c!}\w_n(J_2).\]
 Thus, in either case the property $(2)_{\w_n}$ holds for $f$.
 
 Finally, assume that the properties $\neg(2)_{\sigma}$ and $(3)_{\sigma}$ hold for $f$. Using the same techniques as before, it is straightforward to verify that $\init_\sigma(f_{c-q})=\init_{\w_n}(f_{c-q})$. Further, it is clear that $\init_\sigma(f_c)=\init_{\w_n}(f_c)=f_c(0)$. Thus, the property $(3)_{\w_n}$ is fulfilled for $f$.
\end{proof}

\section{Local description of the top locus in terminal cases} \label{section_form_of_top_locus}

 Recall from Section \ref{section_modifying_the_residual_order} that we employ two different terminal cases in the resolution of surfaces: The monomial case and the small residual case. In both terminal cases, the coefficient ideal $J_{2,\x}$ with respect to some regular formal hypersurface $H=V(z)$ has a very simple form. This simple form is then exploited by using blowups along particular centers to successively lower the order of the coefficient ideal. The exact definition of both terminal cases and the invariants which are used to measure improvement during combinatorial resolution will be given in Section \ref{section_monom_s_r_case}.
 
 While combinatorial resolution is by its nature generally a simple process, there is one technical complication that arises in positive characteristic due to the failure of maximal contact: Let in the following $W$ be a regular variety, $X\subseteq W$ a closed subset and $a\in X$ a closed point with $c=\ord_aX$. Let $H=V(z)\subseteq\Spec(\hOWa)$ be a regular formal hypersurface and suppose that the coefficient ideal with respect to $H$ is either of monomial or small residual form. If $\topp(X)=\{b\in X:\ord_bX=c\}$ is locally (on the level of the completion $\hOWa$) contained in the hypersurface $H$, then the centers which we need to blow up to lower the order of the coefficient ideal are automatically contained in $\topp(X)$, hence they are permissible with respect to the order-function. On the other hand, if $\topp(X)$ is not locally contained in $H$, then the simple form of the associated coefficient ideal is virtually useless since we might not be allowed to blow up the centers needed to measure further improvement.
 
 In characteristic zero, this problem is naturally resolved by allowing for $H$ only hypersurfaces of maximal contact, which locally contain $\topp(X)$ by definition (cf. Section \ref{section_woah_maximal_contact}). In positive characteristic, the problem is much more intricate. We know by Narasimhan's example \cite{Narasimhan} that for a general $X$, the top locus $\topp(X)$ might not be locally contained in any regular formal hypersurface. The goal of this section is to prove that in the monomial case and the small residual case (as they are defined in this thesis), the regular formal hypersurface $H$ whose associated coefficient ideal is of simple form \emph{automatically} contains the top locus. Hence, combinatorial resolution can be applied. Although the results of this section will be applied in the setting of a $3$-dimensional ambient space, they will be proved for arbitrary dimension.
 
 Recall from Proposition \ref{ord_via_diff_ops} that a formal hypersurface $H=V(z)\subseteq\Spec(\hOWa)$ contains $\topp(X)$ locally at $a$ if and only if the condition $z\in\rad(\Diff_{\hOWa/K}^{c-1}(\hIXa))$ is fulfilled. The proofs in this section will heavily involve differential operators which were introduced in Section \ref{section_diff_ops}.
 
 The results of this section will be used in Chapter \ref{chapter_usc} to prove the upper semicontinuity of our resolution invariant for surfaces $\ivX$ and in Section \ref{section_inv_drops_for_d=0} to show that the invariant decreases during combinatorial resolution. Since all results hold also in higher dimension, the contents of this section might also prove to be useful for future proofs of embedded resolution of singularities in higher dimension.
 
 

\subsection{The top locus in the monomial case}

Consider the power series ring $R=K[[\x,z]]$ with $\x=(x_1,\ldots,x_n)$ and an ideal $J\subseteq R$ of order $\ord J=c$. Assume that the coefficient ideal $J_{-1}=\coeff_{(\x,z)}^c(J)$ is a principal monomial ideal, say $J_{-1}=(\x^r)$ for some vector $r=(r_1,\ldots,r_n)\in\N^n$. In Proposition \ref{z_in_top_locus_in_monomial_case} we will show that if there is an element $f\in J$ which is $\ord$-regular with respect to $J_{-1}$, then the desired property $z\in\rad(\Diff_{R/K}^{c-1}(J))$ holds. 
As an application for $n=2$, we will give in Corollary \ref{form_of_top_locus_in_monomial_case} an explicit description of the ideal $\rad(\Diff_{R/K}^{c-1}(J))$ subject to the powers $r_1,r_2\in\N$ in the coefficient ideal $J_{-1}=(x_1^{r_1}x_2^{r_2})$.


 The following example shows that the result does generally not hold if there is no element $f\in J$ which is $\ord$-clean:

\begin{example}
 Consider the ring $R=K[[x,y,z]]$ over a field $K$ of characteristic $2$. Let $J\subseteq R$ be the ideal generated by the purely inseparable polynomial
 \[f=z^2+y^4(1+x^3).\]
 The coefficient ideal $J_{-1}=\coeff_{(x,y,z)}^2(J)$ is a principal monomial ideal of the form
 \[J_{-1}=(y^4).\]
 Notice though that the element $f$ is not $\ord$-clean with respect to $J_{-1}$ since the coordinate change $z=z_1+y^2$ eliminates the term $y^4$ from its expansion and increases the order of the coefficient ideal.
 
 Also, it can be computed that the ideal $\rad(\Diff_{R/K}^1(J))$ is of the form
 \[\rad(\Diff_{R/K}^1(J))=(z+y^2,xy).\]
 Hence, it does not contain the element $z$.
\end{example}

As a preparation for proving Proposition \ref{z_in_top_locus_in_monomial_case} we will first prove two technical lemmas about differential operators in positive characteristic.

 

\begin{lemma} \label{f_i_lemma}
 Let $R=K[[\x,z]]$ with $\chara(K)=p>0$. Let $f\in R$ be an element with expansion $f=\sum_{i\geq0}f_i z^i$ where $f_i\in K[[\x]]$. Set $q=p^e$ for a positive integer $e>0$. Then 
 \[f_iz^i\in\Diff_{R/K}^{q-1}(f)+(z^{q+1})\]
 for $i=1,\ldots,q-1$ and
 \[f_0+f_qz^q\in\Diff_{R/K}^{q-1}(f)+(z^{q+1}).\]
\end{lemma}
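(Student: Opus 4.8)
The statement concerns differential operators $\partial_{z^k}$ acting on the power series $f=\sum_{i\ge 0}f_iz^i$ over a field of characteristic $p>0$, with $q=p^e$. The plan is to apply $\partial_{z^k}$ for suitably chosen values of $k<q$ and extract the desired coefficients $f_iz^i$ by solving a small triangular linear system modulo $(z^{q+1})$. The key input is the formula from Proposition \ref{basis_for_diff_ops}(2), namely $\partial_{z^k}(z^i)=\binom{i}{k}z^{i-k}$, together with the Lucas-type congruences for binomial coefficients collected in Lemma \ref{c-q} (in particular $\binom{q}{k}\equiv 0\pmod p$ for $0<k<q$ and $\binom{q}{0}=1$), and the fact that $\partial_{z^k}(z^q)\in(z^{q-k})$ for $k\le q$.

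First I would record that, working modulo $(z^{q+1})$,
\[
\partial_{z^k}(f)\equiv\sum_{i=k}^{q}\binom{i}{k}f_iz^{i-k}\pmod{(z^{q+1-k})},
\]
so only the coefficients $f_0,\dots,f_q$ enter. Fix an index $i$ with $1\le i\le q-1$ and apply $\partial_{z^i}$: the term with $\binom{i}{i}=1$ gives $f_i$, while higher terms give $\binom{j}{i}f_jz^{j-i}$ for $i<j\le q$, all lying in $(z)$; multiplying back by $z^i$ yields $\binom{i}{i}f_iz^i$ plus elements of $(z^{i+1})\subseteq(z^2)\subseteq\cdots$ but still only involving $f_j$ with $j\le q$. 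The point is to run an induction from the top index $q-1$ downward: once $f_{i+1}z^{i+1},\dots,f_{q-1}z^{q-1}$ are known to lie in $\Diff_{R/K}^{q-1}(f)+(z^{q+1})$, and noting that $\partial_{z^i}(z^q)=\binom{q}{i}z^{q-i}\equiv 0\pmod p$ by Lemma \ref{c-q}(2) (since $0<i<q$), the expression $z^i\cdot\partial_{z^i}(f)$ equals $f_iz^i$ plus a combination of $f_{i+1}z^{i+1},\dots,f_{q-1}z^{q-1}$ and an element of $(z^{q+1})$; hence $f_iz^i\in\Diff_{R/K}^{q-1}(f)+(z^{q+1})$. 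This disposes of the first assertion.

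For the second assertion, I would apply the differential operator of degree $0$, i.e. the identity, or more precisely work with $f$ itself together with the already-established facts: subtracting $\sum_{i=1}^{q-1}f_iz^i$ from $f$ shows $f_0+\sum_{i\ge q}f_iz^i\in\Diff_{R/K}^{q-1}(f)+(\text{lower terms already accounted for})$, and reducing modulo $(z^{q+1})$ leaves exactly $f_0+f_qz^q$. One must be slightly careful that $f$ itself is a degree-$0$ differential operator applied to $f$, which is legitimate, so $f\in\Diff_{R/K}^{q-1}(f)$; then $f-\sum_{i=1}^{q-1}f_iz^i\equiv f_0+f_qz^q\pmod{(z^{q+1})}$ gives the claim.

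\textbf{Main obstacle.} The only subtlety is bookkeeping: ensuring that the downward induction on $i$ genuinely closes, i.e. that at stage $i$ every "error term" produced by $z^i\partial_{z^i}(f)$ is either in $(z^{q+1})$ or is one of the previously handled coefficients $f_jz^j$ with $i<j\le q-1$ — and crucially that the coefficient $\binom{j}{i}$ in front of each such $f_jz^j$ is a unit or at least irrelevant (it need not be a unit, but since $f_jz^j$ is already known to lie in the ideal, any $R$-multiple of it does too). The term $f_qz^q$ requires separate attention because $\partial_{z^i}(z^q)$ does not vanish as a polynomial; here the vanishing $\binom{q}{i}\equiv 0\pmod p$ from Lemma \ref{c-q}(2) is exactly what is needed, and this is the single place where the hypothesis $q=p^e$ (rather than an arbitrary integer) is used. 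Once this is organized the proof is a short explicit computation.
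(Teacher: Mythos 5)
Your proof is correct and follows essentially the same route as the paper's: where you run a downward induction (back-substitution) from $i=q-1$ to $i=1$, the paper packages the same computation as inverting the unipotent triangular matrix $\bigl(\binom{i}{k}\bigr)_{0<i,k<q}$. You correctly isolate the key input — $\binom{q}{k}\equiv 0\pmod p$ for $0<k<q$ from Lemma \ref{c-q}(2) applied with $c=q$, which kills the $f_qz^q$ contribution — and the treatment of the second assertion via the degree-zero operator is also how the paper finishes.
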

\begin{proof}
 By Lemma \ref{c-q} we know that $\binom{q}{k}=0$ for $0<k<q$. Thus,
 \[z^k\partial_{z^k}(f)=\sum_{\substack{i\geq k\\i\neq c}}\binom{i}{k}f_iz^i=\sum_{k\leq i<q}\binom{i}{k}f_iz^i+z^{q+1}F_k\]
 holds for $0<k<q$ and certain elements $F_k\in K[[\x]]$. This implies that the matrix-equation
 \[(z^k\partial_{z^k}(f)-z^{q+1}F_k)_{0<k<q}=\Big(\binom{i}{k}\Big)_{0<i,k<q}\cdot (f_iz^i)_{0<i<q}\]
 holds. Since the matrix $(\binom{i}{k})_{0<i,k<q}$ is invertible, this implies for all indices $i$ with $0<i<q$ that there is an element $G_i\in K[[\x]]$ such that
 \[f_iz^i+z^{q+1}G_i\in(z^k\partial_{z^k}(f),0<k<q)\subseteq\Diff_{R/K}^{q-1}(f).\]
 The last assertion is now obvious.
\end{proof}

\begin{lemma} \label{stefans_lemma}
 Let $R=K[[\x,z]]$ with $\chara(K)=p>0$. Let $f\in R$ have the expansion $f=\sum_{i\geq0}f_iz^i$ where $f_i\in K[[\x]]$. Let $e,e_1\in\N$ be non-negative integers such that $e>e_1$. Set $q=p^e$ and $w=p^{e_1}$. Then there is an element $G\in R$ such that
 \[\sum_{k=0}^{q-w-1}(-1)^k z^k\partial_{z^k}(f)=Gz^{q+1}+f_qz^q+(-1)^{q-w+1}f_{q-w}z^{q-w}+f_0.\]
\end{lemma}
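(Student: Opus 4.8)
\textbf{Proof plan for Lemma \ref{stefans_lemma}.}

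The plan is to compute the alternating sum $S = \sum_{k=0}^{q-w-1}(-1)^k z^k\partial_{z^k}(f)$ by expanding each term $z^k\partial_{z^k}(f) = \sum_{i\geq k}\binom{i}{k}f_i z^i$ and swapping the order of summation. This gives $S = \sum_{i\geq 0}\bigl(\sum_{k=0}^{\min(i,q-w-1)}(-1)^k\binom{i}{k}\bigr)f_i z^i$. So everything reduces to evaluating the inner coefficient $c_i := \sum_{k=0}^{\min(i,q-w-1)}(-1)^k\binom{i}{k}$ in $K$, i.e.\ modulo $p$, for each relevant index $i$. The key classical identity is that the partial alternating sum of binomial coefficients telescopes: $\sum_{k=0}^{m}(-1)^k\binom{i}{k} = (-1)^m\binom{i-1}{m}$ (provable by induction on $m$ using Pascal's rule, valid over $\mathbb{Z}$ hence over $K$).

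First I would dispose of the small indices $i \le q-w-1$: here the inner sum runs over all of $0\le k\le i$, and $\sum_{k=0}^{i}(-1)^k\binom{i}{k} = 0$ for $i\geq 1$ while it equals $1$ for $i=0$. So $c_0 = 1$ and $c_i = 0$ for $1\le i\le q-w-1$, which accounts for the $f_0$ term and kills all the intermediate low-order terms. Next, for $i \ge q-w$ the inner sum is the full partial sum up to $m = q-w-1$, so by the telescoping identity $c_i = (-1)^{q-w-1}\binom{i-1}{q-w-1}$. It remains to evaluate $\binom{i-1}{q-w-1} \bmod p$ for $i = q-w$, for $i = q$, and to show it vanishes mod $p$ for the other values $q-w < i < q$ and $i > q$ (more precisely, for $q-w < i \le q$; the terms with $i \ge q+1$ get absorbed into $Gz^{q+1}$ regardless of their coefficient, so they need no analysis). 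For $i = q-w$ we get $\binom{q-w-1}{q-w-1} = 1$, giving the term $(-1)^{q-w-1}f_{q-w}z^{q-w}$; note $(-1)^{q-w-1} = -(-1)^{q-w} = (-1)^{q-w+1}$ in characteristic $p$ (if $p=2$ all signs are $1$ anyway, if $p$ odd then $q-w$ is even so this is consistent), matching the claimed sign. For $i=q$ we need $\binom{q-1}{q-w-1} = \binom{q-1}{w} \equiv 1 \pmod p$, which follows from Lucas's theorem (Proposition \ref{lucas}): $q-1 = p^e - 1$ has all base-$p$ digits equal to $p-1$, so every binomial coefficient $\binom{q-1}{j}$ is $\equiv 1\pmod p$ — in particular the coefficient of $f_q z^q$ is $(-1)^{q-w-1}\cdot 1 = (-1)^{q-w+1}$; but again since $q-w$ is even when $p$ is odd, and everything is $1$ when $p=2$, this equals $1$, matching the claimed $+f_q z^q$. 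For the intermediate indices $q-w < i < q$, write $i-1$ in base $p$: since $e_1 < e$, the digits of $q-w-1 = p^e - p^{e_1} - 1$ in positions $0,\dots,e_1-1$ are all $p-1$ while position $e_1$ is $p-1$ too (one checks $p^e - p^{e_1} - 1 = (p-1)(p^{e_1}+\cdots+1) + (p-1)p^{e_1} + \cdots$; more simply, $q-w-1$ has base-$p$ digits: positions $0..e_1-1$ equal $p-1$, position $e_1$ equal... let me just say: the base-$p$ expansion of $q-w-1$ is computed directly), and for $q-w \le i-1 \le q-2$ at least one digit of $i-1$ is strictly smaller than the corresponding digit of $q-w-1$, so Lucas forces $\binom{i-1}{q-w-1}\equiv 0\pmod p$. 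Collecting: $S = f_0 + (-1)^{q-w+1}f_{q-w}z^{q-w} + f_q z^q + (\text{terms in }z^{q+1}\text{ and higher})$, and setting $G$ to be the power series whose product with $z^{q+1}$ is that tail gives the result.

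The main obstacle is the base-$p$ digit bookkeeping for the intermediate range $q-w < i < q$: one must carefully compute the base-$p$ expansion of $q-w-1 = p^e - p^{e_1} - 1$ (its digits are $p-1$ in positions $0$ through $e_1-1$, then $p-1$ in position $e_1$... actually $p^e - p^{e_1} = (p-1)p^{e_1} + (p-1)p^{e_1+1} + \cdots + (p-1)p^{e-1}$, so $p^e - p^{e_1} - 1 = (p-1)(1 + p + \cdots + p^{e_1-1}) + 0\cdot p^{e_1}$ is wrong — rather $(p-1)p^{e_1} - 1 = (p-2)p^{e_1} + (p-1)(p^{e_1-1}+\cdots+1)$, so the digit in position $e_1$ is $p-2$ and digits $0,\dots,e_1-1$ are $p-1$ and digits $e_1+1,\dots,e-1$ are $p-1$) and then verify that for each $i$ with $q-w \le i-1 \le q-2$, some base-$p$ digit of $i-1$ drops below the matching digit of $q-w-1$, triggering the Lucas vanishing. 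This is elementary but requires care to get the digit in position $e_1$ right; once the digit pattern of $q-w-1$ is pinned down, the vanishing claims and the two nonzero evaluations $\binom{q-w-1}{q-w-1}=1$, $\binom{q-1}{q-w-1}\equiv 1$ are immediate from Proposition \ref{lucas}.
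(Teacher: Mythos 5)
Your proposal takes a genuinely different route from the paper's own proof. Both begin by swapping the order of summation to reduce the problem to evaluating $c_i:=\sum_{k=0}^{q-w-1}(-1)^k\binom{i}{k}$ in $K$ for $0\leq i\leq q$. But whereas the paper, for $q-w\leq i<q$, complements the sum (replacing $\sum_{k=0}^{q-w-1}$ by $-\sum_{k=q-w}^{i}$ using $\sum_{k=0}^{i}(-1)^k\binom{i}{k}=0$), then applies Lucas via the substitution $i=q-w+j$ to collapse each $\binom{i}{q-w+k}$ to $\binom{j}{k}$, and sums again to get $(-1)^{q-w+1}\delta_{q-w,i}$, you instead invoke the closed form $\sum_{k=0}^{m}(-1)^k\binom{i}{k}=(-1)^m\binom{i-1}{m}$. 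This reduces $c_i$ for every $i\geq q-w$ to a single binomial coefficient $\binom{i-1}{q-w-1}$, evaluated via Lucas. The telescoping route is arguably tidier --- the complementary-sum maneuver and collapse-then-resum step are replaced by one identity --- at the cost of the base-$p$ digit bookkeeping for $q-w-1$, which you do pin down correctly after a false start: digits $p-1$ in positions $0,\dots,e_1-1$, digit $p-2$ in position $e_1$, digits $p-1$ in positions $e_1+1,\dots,e-1$.

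One point needs correcting: your evaluation of $c_q$ contains two compensating sign errors. You assert that every $\binom{q-1}{j}\equiv 1\pmod p$, but since $q-1=p^e-1$ has all base-$p$ digits equal to $p-1$, Lucas gives $\binom{q-1}{j}\equiv\prod_n\binom{p-1}{j_n}\equiv(-1)^{\sum_n j_n}\pmod p$; in particular $\binom{q-1}{q-w-1}\equiv -1$, because the digit $p-2$ of $q-w-1$ at position $e_1$ contributes $\binom{p-1}{p-2}\equiv -1$ while all other digits contribute $1$. You then claim $(-1)^{q-w+1}=1$ when $p$ is odd, which is also wrong: $q-w$ is even, so $q-w+1$ is odd and $(-1)^{q-w+1}=-1$. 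These two errors cancel --- the correct chain is $c_q=(-1)^{q-w-1}\cdot(-1)=(-1)^{q-w}=1$ --- so your final value for $c_q$ is right, but the intermediate claims as written are false and a careful write-up should repair them.
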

\begin{proof}
 We compute that
 \[\sum_{k=0}^{q-w-1}(-1)^k z^k\partial_{z^k}(f)=\sum_{k=0}^{q-w-1}(-1)^kz^k\sum_{i\geq0}f_i\binom{i}{k}z^{i-k}\]
 \[=\sum_{i\geq0}\sum_{k=0}^{q-w-1}\binom{i}{k}(-1)^kf_iz^i=z^{q+1}G+\sum_{i=0}^q\underbrace{\sum_{k=0}^{q-w-1}\binom{i}{k}(-1)^k}_{=:c_i}f_iz^i\]
 for some element $G\in R$. Let $i$ be an index with $0\leq i\leq q$. Consider first the case $i<q-w$. Then clearly
 \[c_i=\sum_{k=0}^{q-w-1}\binom{i}{k}(-1)^k=\sum_{k=0}^i\binom{i}{k}(-1)^k=(1-1)^i=\delta_{0,i}\]
 where $\delta_{0,i}$ denotes the Kronecker delta.
 
 Now consider the case $q-w\leq i<q$. We can write $i$ as $i=q-w+j$ with $0\leq j<w$. Notice that the $p$-adic expansions $i=\sum_{0\leq n<e}i_n p^n$ of $i$ and $j=\sum_{0\leq n<e_1}j_n p^n$ are related by $i_n=j_n$ for $n<e_1$ and $i_n=p-1$ for $e_1\leq n<e$. Using Proposition \ref{lucas}, we compute
 \[c_i=\sum_{k=0}^{q-w-1}\binom{i}{k}(-1)^k=-\sum_{k=q-w}^i\binom{i}{k}(-1)^k=-\sum_{k=0}^j\binom{i}{q-w+k}(-1)^{(q-w)+k}\]
 \[=-\sum_{k=0}^j\underbrace{\prod_{0\leq n<e_1}\binom{j_n}{k_n}}_{=\binom{j}{k}}\underbrace{\prod_{e_1\leq n<e}\binom{p-1}{p-1}}_{=1}(-1)^{(q-w)+k}\]
 \[=(-1)^{q-w+1}\sum_{k=0}^j(-1)^k\binom{j}{k}=(-1)^{q-w+1}\delta_{0,j}=(-1)^{q-w+1}\delta_{q-w,i}.\]
 Further, $c_q=\sum_{k=0}^{q-w-1}\binom{q}{k}(-1)^k=\binom{q}{0}=1$ by Lemma \ref{c-q} (2). This finishes the proof of the Lemma.
\end{proof}

\begin{proposition} \label{z_in_top_locus_in_monomial_case}
 Let $R=K[[\x,z]]$ with $\x=(x_1,\ldots,x_n)$ and $J\subseteq R$ an ideal of order $c=\ord J$. Assume that the coefficient ideal $J_{-1}=\coeff^c_{(\x,z)}(J)$ is a principal monomial ideal. Thus, it has the form $J_{-1}=(\x^r)$ for a vector $r=(r_1,\ldots,r_n)\in\N^n$. Further, assume that there is an element $f\in J$ which is $\ord$-clean with respect to $J_{-1}$.
 
 Then $z\in\rad(\Diff_{R/K}^{c-1}(J))$.
\end{proposition}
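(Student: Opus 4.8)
The plan is to extract from the $\ord$-cleanness hypothesis an element $f \in J$ whose $z$-expansion $f = \sum_{i \ge 0} f_i z^i$ has the property that $f_c$ is a unit (so $f$ is $z$-regular of order $c$ by the remark following the definition of $\ord$-cleanness) and that the lowest coefficients $f_i$ for $i < c$ are controlled in a precise way by the monomial $\x^r$. Concretely, since $J_{-1} = (\x^r)$ and $f_i^{c!/(c-i)} \in J_{-1}$, each $f_i$ ($i<c$) is divisible by $\x^{\lceil (c-i)r/c! \rceil}$, and $\ord$-cleanness forces the initial behaviour of the lowest nonzero $f_i$ to be a \emph{unit times a monomial} rather than a $q$-th power times something — this is exactly what property $(3)_{\ord}$ (via Lemma \ref{new_edge_lemma}) rules out in the unfavourable direction. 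I would then split into the case $q = q_K(c) = 1$ (characteristic zero, or $p \nmid c$) and the genuinely positive-characteristic case $q = p^{\ord_p c} > 1$.

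In the case $q = 1$, $\ord$-cleanness of $f$ amounts to $\ord f_{c-1} > \D - 0 \cdot \sd$ in the appropriate sense — more simply, one may use Weierstrass preparation and Tschirnhausen to arrange $f = z^c + \sum_{i<c} f_i z^i$ with $f_{c-1}$ not contributing to the edge — and then $\partial_{z^{c-1}}(f) = c \cdot f_c z + (\text{higher order in }z) = uz$ for a unit $u$, exactly as in the proof of Proposition \ref{max_contact_prop}, giving $z \in \Diff_{R/K}^{c-1}(J) \subseteq \rad(\Diff_{R/K}^{c-1}(J))$. In the positive-characteristic case with $q > 1$ the point is subtler: one cannot get $z$ itself into $\Diff_{R/K}^{c-1}(J)$, only into the radical. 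Here I would use Lemma \ref{f_i_lemma} and Lemma \ref{stefans_lemma} to show that, after applying suitable differential operators $\partial_{z^k}$ with $k < c$ (note $c - q < c$, and more generally all the $k$'s involved are $< c$ because $q \le c$), one produces from $f$ an element of $\Diff_{R/K}^{c-1}(J)$ of the shape $f_0 + (\text{unit}) \cdot f_{c-q} z^{c-q} + (\text{unit}) f_c z^c + (\text{terms in } z^{c+1})$, i.e. the coefficient of $z^c$ is a unit while all intermediate coefficients $z^k$ with $c-q < k < c$ have been killed. Combining with $\partial_{\x^\alpha}$ applied to bring down the monomial factor $\x^r$ appearing in $f_0$ and $f_{c-q}$ (this is where $\ord$-cleanness and the monomial form of $J_{-1}$ enter: the edge coefficients are units times monomials, so after dividing out by an appropriate power of $z$ and/or differentiating in $\x$, the constant term becomes a unit), one gets an element of the form $z^c \cdot (\text{unit}) + (\text{order} > c)$, i.e. an element $g \in \Diff_{R/K}^{c-1}(J)$ that is $z$-regular of order $\le$ something strictly less than would allow, forcing $\ord_{(z)}$-type collapse; more carefully, one shows $\Diff_{R/K}^{c-1}(J)$ contains an element whose initial form is a power of $z$, hence $z \in \rad(\Diff_{R/K}^{c-1}(J))$ by Proposition \ref{ord_via_diff_ops} applied to that element (or directly: if $z^m \in \Diff_{R/K}^{c-1}(J) + \mathfrak m^{m+1}$ after enough differentiation then $z \in \rad$).

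The main obstacle I expect is the bookkeeping in positive characteristic: ensuring that after the differential operators of Lemma \ref{stefans_lemma} clear out the "middle" coefficients $f_k z^k$ for $c-q < k < c$, what remains genuinely has a unit as the $z^c$-coefficient and that the leftover terms $f_0$ and $f_{c-q} z^{c-q}$ can be handled without reintroducing the obstruction — this is precisely the place where the hypothesis "$f$ is $\ord$-clean" (as opposed to merely $z$-regular) is indispensable, as the counterexample in the excerpt ($f = z^2 + y^4(1+x^3)$ over characteristic $2$) shows. I would organize the argument so that the monomial structure $J_{-1} = (\x^r)$ reduces, via Lemma \ref{new_edge_lemma} and the negations $\neg(1)_{\ord}, \neg(2)_{\ord}, \neg(3)_{\ord}$ of the cleanness properties, to a single clean statement about the leading term of $f_0$ and $f_{c-q}$, namely that it is a \emph{monomial times a unit}; then one applies $\partial_{\x^r}$ (or $\partial_{\x^\alpha}$ for the right $\alpha$) to turn that monomial into a unit, and finally divides out the resulting $z$-power. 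I would double-check the edge cases $r = 0$ (where $J = (z^c)$ by Lemma \ref{coeff_is_zero}, and the conclusion $z \in \rad(\Diff_{R/K}^{c-1}(J))$ is immediate) and $c$ a power of $p$ (where $q = c$, $c - q = 0$, $f = z^c + f_0$ purely inseparable, and $\ord$-cleanness says $\init(f_0)$ is not a $p$-th power times a unit — here one differentiates $f_0$ in $\x$ to descend, using that $\partial_{\x^\alpha}(f_0)$ is a unit for suitable $\alpha$ precisely because $\init(f_0)$ is not a $p$-th power after dividing by $\x^{r_0}$).
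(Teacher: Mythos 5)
Your overall plan is to exhibit an element of $\Diff^{c-1}_{R/K}(J)$ that is visibly ``supported on $z$'' and read off radical membership from it. There are two genuine gaps.

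\textbf{The $q=1$ shortcut does not work.} You cite the implication $(1)\implies(2)$ of Proposition~\ref{max_contact_prop}, which requires $f_{c-1}=0$, but the hypothesis of $\ord$-cleanness (for $q=1$, only property $(2)_{\ord}$ is available) gives only $\ord f_{c-1}>\frac{m}{c!}\geq 1$, i.e.\ $\ord f_{c-1}\geq 2$. Consequently $\partial_{z^{c-1}}(f)=f_{c-1}+cf_cz+\cdots$ is a parameter $z'$ with $\ol{z'}$ proportional to $\ol z$ in $\mWa/\mWa^2$, but $(z')\neq(z)$ as ideals unless $f_{c-1}=0$, and $z'\in\Diff^{c-1}_{R/K}(J)$ only yields $V(\Diff^{c-1}_{R/K}(J))\subseteq V(z')$, not $V(z)$. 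A Tschirnhausen change $z\mapsto z'$ is also not available here: it would alter $J_{-1}$, and nothing guarantees the new coefficient ideal is still the monomial $(\x^r)$ unless you first check cleanness with respect to every $\ord_{(x_j)}$, which is not part of the hypothesis.

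\textbf{The final inference is unsound.} Even granting that you can manufacture an element of $\Diff^{c-1}_{R/K}(J)$ of the form $z^c\cdot u+(\text{order}>c)$, this does \emph{not} imply $z\in\rad(\Diff^{c-1}_{R/K}(J))$. The ideal $(z^c+x_1^{c+1})$ contains an element with initial form $z^c$, yet $z$ is not in its radical. The parenthetical ``if $z^m\in\Diff^{c-1}_{R/K}(J)+\m^{m+1}$ then $z\in\rad$'' is simply false as a general principle. This is precisely why the paper's proof does not try to produce an explicit ``$z$-witness.'' Instead it argues by prime-avoidance: fix any prime $P\supseteq\Diff^{c-1}_{R/K}(J)$ and assume $z\notin P$; then Lemma~\ref{f_i_lemma} applied to $F=\partial_{z^{c-q}}(f)$ produces the relation $u_1z^q-F_0\in\Diff^{q-1}_{R/K}(F)\subseteq P$ with $u_1$ a unit, i.e.\ $z^q$ can be traded for $F_0$ modulo $P$. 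Then a case split on which of $(1)_{\ord},(2)_{\ord},(3)_{\ord}$ holds (using the monomial form of $J_{-1}$, Lemma~\ref{w_coeff}, Lemma~\ref{stefans_lemma}, and in the last case a differentiation $\partial_{x_1^w}$ to exploit that $\x^{\frac{q}{c!}r}$ is not a $q$-th power) forces either a unit into $P$ or some $x_j$ with $r_j>0$ into $P$; the latter again forces $z\in P$ since $z^c\cdot\wt u\in P$ for a unit $\wt u$. The radical conclusion then follows because $P$ was arbitrary. Your reading of the ingredients (Lemmas~\ref{f_i_lemma}, \ref{stefans_lemma}, the role of $\ord$-cleanness in controlling the $z^{c-q}$ coefficient) is on the right track, but without this prime-ideal scaffolding you have no valid way to pass from the manufactured element to the radical statement.
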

\begin{proof}
 Let $P\in\Spec(R)$ be a prime ideal that contains $\Diff_{R/K}^{c-1}(J)$. Assume that $z\notin P$.
 
 Let $f$ have the expansion $f=\sum_{i\geq0}f_iz^i$ with $f_i\in K[[\x]]$. Set $F=\partial_{z^{c-q}}(f)$. It has the expansion $F=\sum_{i\geq0}F_iz^i$ with $F_i=\binom{i+(c-q)}{c-q}f_i$. By Lemma \ref{c-q} we know that $F_i=f_{i+(c-q)}$ for $0\leq i<q$ and $F_q=\binom{c}{q}f_{c-q}$ where $\binom{c}{q}\neq0$. Notice that $\Diff_{R/K}^{q-1}(F)\subseteq\Diff_{R/K}^{c-1}(J)\subseteq P$ holds by Lemma \ref{composition_of_diff_ops}. Further, we know by Lemma \ref{w_coeff} that
 \[\ord_{(x_j)}F_i=\ord_{(x_j)}f_{i+(c-q)}\geq\frac{c-(i+(c-q))}{c!}r_j=\frac{q-i}{c!}r_j\]
 for indices $0\leq i<q$ and $1\leq j\leq n$. Thus, $\ord F_i\geq\frac{q-i}{c!}|r|$ and $\ord F_i=\frac{q-i}{c!}|r|$ implies that $F_i=\x^{\frac{q-i}{c!}r}\cdot u$ for a unit $u\in K[[\x]]^*$. Since $f$ is $z$-regular of order $c$, we know that $F_q$ is a unit. Consequently, by Lemma \ref{f_i_lemma} there is a unit $u_1\in R^*$ such that 
 \[u_1z^q-F_0\in\Diff_{R/K}^{q-1}(F).\]
 
 First assume that the property $(1)_{\ord}$ holds for $f$ with respect to $J_{-1}$. Notice that this implies that $\chara(K)=p>0$ and $q>1$. There is an index $i$ with $0<i<q$ such that $F_i=\x^{\frac{q-i}{c!}r}u$ for a unit $u\in K[[\x]]^*$. Thus, $F_i^q\mid F_0^{q-i}$. By Lemma \ref{f_i_lemma} we know that there is an element $G_i\in R$ such that $z^{q+1}G_i+F_iz^i\in\Diff_{R/K}^{q-1}(F)$. This implies also that 
 \[z^{q(q+1)}G_i^q+F_i^qz^{qi}\in\Diff_{R/K}^{q-1}(F).\]
 Since $z\notin P$, this implies that 
 \[z^{q(q+1-i)}G_i^q+F_i^q\in P.\]
 Since $F_i^q\mid F_0^{q-i}$ and $u_1z^q+F_0\in P$, there is also an element $h\in R$ such that
 \[z^{q(q-i)}+F_i^qh\in P.\]
 Thus,
 \[h(z^{q(q+1-i)}G_i^q+F_i^q)-(z^{q(q-i)}+F_i^qh)=z^{q(q-i)}\underbrace{(z^qG_i^q-1)}_{\in R^*}\in P.\]
 This is a contradiction to the assumption that $z\notin P$.
 
 Now assume that the property $(2)_{\ord}$ holds for $f$ with respect to $J_{-1}$. We know by Lemma \ref{w_coeff} that there exists an element $g\in J$ with expansion $g=\sum_{i\geq0}g_iz^i$ where $g_i\in K[[\x]]$ and an index $i<c$ such that $g_i=\x^{\frac{c-i}{c!}r}\cdot u$ for a unit $u\in K[[\x]]^*$. Notice further that
 \[\partial_{z^i}(g)=\sum_{k\geq i}\binom{k}{i}g_k z^{k-i}\in P.\]
 Hence, also $\sum_{k\geq i}\binom{k}{i}g_k^q z^{q(k-i)}\in P$. Since $u_1z^q-F_0\in P$, this implies that 
 \[\sum_{k\geq i}\binom{k}{i}g_k^qu_1^{-(k-i)}F_0^{k-i}\in P.\]
 By the property $(2)_{\ord}$ we know that $\ord F_0>\frac{q}{c!}|r|$. Further, we know by Lemma \ref{w_coeff} that $\ord_{(x_j)}g_k\geq\frac{c-k}{c!}r_j$ for $k>i$ and $1\leq j\leq n$. Thus, it follows that for each index $k>i$ there exists an element $G_k\in K[[\x]]$ with $\ord G_k>0$ such that
 \[g_k^qu_1^{-(k-i)}F_0^{k-i}=\x^{q\frac{c-i}{c!}r}G_k.\]
 Hence,
 \[\sum_{k\geq i}\binom{k}{i}g_k^q u_1^{-(k-i)} F_0^{k-i}=\x^{q\frac{c-i}{c!}r}\underbrace{(u^q+\sum_{k>i}\binom{k}{i}G_k)}_{\in R^*}\in P.\]
 Thus, there exists an index $j$ such that $r_j>0$ and $x_j\in P$. Since $\ord_{(x_j)}f_i\geq\frac{c-i}{c!}r_j>0$ for $i<c$, it follows that $\sum_{i\geq c}f_iz^i\in P$. But since $f$ is $z$-regular of order $c$, we know that $\sum_{i\geq c}f_iz^i=z^c \wt u$ for a unit $\wt u\in R^*$. Hence, $z\in P$.
 
 Finally, assume that the properties $\neg(1)_{\ord}$, $\neg(2)_{\ord}$ and $(3)_{\ord}$ hold for $f$ with respect to $J_{-1}$. Again, this implies that $\chara(K)=p>0$ and $q>1$. We know that $F_0=\x^{\frac{q}{c!}r}u$ for a unit $u\in R^*$ and $\frac{q}{c!}r=(\frac{q}{c!}r_1,\ldots,\frac{q}{c!}r_n)\notin q\cdot\N^n$. Assume without loss of generality that $u=1$ and $q\nmid \frac{q}{c!}r_1$. Set $w=q_K(\frac{q}{c!}r_1)$. Then $1\leq w<q$. Further, $x_1^w\partial_{x_1^w}(F_0)=\lambda\cdot F_0$ where $\lambda=\binom{\frac{q}{c!}r_1}{w}\neq0$ by Proposition \ref{lucas}. Set $F_i'=x_1^w\partial_{x_1^w}(F_i)$ for $i>0$. Notice that $\ord F_q'>0$. Set 
 \[F'=x_1^w\partial_{x_1^w}(F)=\sum_{i>0}F_i'z^i+\lambda\cdot F_0\in\Diff_{R/K}^w(F).\]
 Notice that $\Diff_{R/K}^{q-w-1}(F')\subseteq\Diff_{R/K}^{q-1}(F)\subseteq P$ by Lemma \ref{composition_of_diff_ops}. By Lemma \ref{stefans_lemma} we know that
 \[\sum_{k=0}^{q-w-1}(-1)^k z^k\partial_{z^k}(F')=Gz^{q+1}+F_q'z^q+(-1)^{q-w+1}F_{q-w}'z^{q-w}+\lambda\cdot F_0\in P\]
 for an element $G\in R$. Set $G_1=Gz+F_q'$ and notice that $\ord G_1>0$. Then we have that 
 \[G_1z^q+(-1)^{q-w+1}F_{q-w}'z^{q-w}+\lambda\cdot F_0\in P.\]
 Further, we know that 
 \[\ord_{(x_j)}F_{q-w}'\geq \ord_{(x_j)}F_{q-w}\geq \frac{w}{c!}r_j\]
 for $j=1,\ldots,n$ and since $\neg(1)_{\ord}$ holds, also
 \[\ord F_{q-w}'\geq \ord F_{q-w}>\frac{w}{c!}|r|.\]
 Consequently, there is an element $H\in K[[\x]]$ with $\ord H>0$ such that $(F_{q-w}')^q=F_0^wH$. Thus,
 \[G_1^qz^{q^2}+(-1)^{q-w+1}F_0^w H z^{q(q-w)}+\lambda^q\cdot F_0^q\in P.\]
 Since $u_1 z^q-F_0\in P$, this implies that 
 \[z^{q^2}\underbrace{(G_1^q+(-1)^{q-w+1}u_1^{w}H+u_1^{q})}_{\in R^*}\in P.\]
 Consequently, $z\in P$.
 
 Since $P$ was an arbitrary prime ideal of $R$ that contains $\Diff_{R/K}^{c-1}(J)$, this proves that $z\in\rad(\Diff_{R/K}^{c-1}(J))$.
\end{proof}


\begin{corollary} \label{form_of_top_locus_in_monomial_case}
 Let $R=K[[x,y,z]]$ and $J\subseteq R$ an ideal of order $c$. Set $J_{-1}=\coeff_{(x,y,z)}^c(J)$ and assume that $J_{-1}$ has the form
 \[J_{-1}=(x^{r_x}y^{r_y})\]
 for certain integers $r_x,r_y\in\N$. Further, let there be an element $f\in J$ which is $\ord$-clean with respect to $J_{-1}$.
 
 Then \[\rad(\Diff^{c-1}_{R/K}(J))=\begin{cases}
                   (xy,z) & \text{if $r_x,r_y\geq c!$,}\\
                   (x,z) & \text{if $r_x\geq c!$ and $r_y<c!$,}\\
                   (y,z) & \text{if $r_y\geq c!$ and $r_x<c!$,}\\
                   (x,y,z) & \text{if $r_x,r_y<c!$.}
                  \end{cases}.\]
\end{corollary}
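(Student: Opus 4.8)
The strategy is to combine Proposition \ref{z_in_top_locus_in_monomial_case} with Proposition \ref{top_locus_defined_via_coeff_ideal}(2) and then carry out the purely combinatorial computation of $\rad(\Diff_{K[[x,y]]/K}^{c!-1}(x^{r_x}y^{r_y}))$. First I would invoke Proposition \ref{z_in_top_locus_in_monomial_case} with $\x=(x,y)$ and $n=2$: since $J_{-1}=\coeff^c_{(x,y,z)}(J)$ is the principal monomial ideal $(x^{r_x}y^{r_y})$ and there exists an element $f\in J$ which is $\ord$-clean with respect to $J_{-1}$, we conclude that $z\in\rad(\Diff_{R/K}^{c-1}(J))$. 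Writing $\Jc=\rad(\Diff_{R/K}^{c-1}(J))$ and $(J_{-1})_{\geq c!}=\rad(R\cdot\Diff_{K[[x,y]]/K}^{c!-1}(J_{-1}))$, the hypothesis $z\in\Jc$ allows us to apply Proposition \ref{top_locus_defined_via_coeff_ideal}(2), yielding the equality $\Jc=(J_{-1})_{\geq c!}+(z)$.

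It therefore remains to compute $\rad(\Diff_{K[[x,y]]/K}^{c!-1}(x^{r_x}y^{r_y}))$ in $K[[x,y]]$, which reduces the whole problem to a two-variable monomial calculation. By Proposition \ref{basis_for_diff_ops} the ideal $\Diff_{K[[x,y]]/K}^{c!-1}(x^{r_x}y^{r_y})$ is generated by the elements $\partial_{x^a}\partial_{y^b}(x^{r_x}y^{r_y})=\binom{r_x}{a}\binom{r_y}{b}\,x^{r_x-a}y^{r_y-b}$ with $a+b\leq c!-1$ (where negative exponents mean the term vanishes). So the monomial $x^iy^j$ occurs, up to a nonzero scalar, as a generator precisely when $i=r_x-a$, $j=r_y-b$ for some $a,b\geq 0$ with $a+b\leq c!-1$ and with $\binom{r_x}{a},\binom{r_y}{b}\neq 0$ in $K$ — but since $K$ is algebraically closed and the radical only cares about which variables divide which generators, the relevant point is the minimal exponent of $x$ (resp. $y$) achievable. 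The minimal power of $x$ we can reach is $\max(0,r_x-(c!-1))$, and likewise for $y$, subject to $a+b\leq c!-1$. One then checks the four cases. If $r_x\geq c!$ then every achievable generator has $x$-exponent $\geq 1$, i.e. $x$ divides every generator, and if moreover $r_y\geq c!$ then $y$ divides every generator as well, so $\Diff^{c!-1}$ is contained in $(xy)$; conversely $x^{r_x-(c!-1)}y^{r_y}$ and $x^{r_x}y^{r_y-(c!-1)}$ are both (up to scalar) generators, so the radical is exactly $(xy)$. If $r_x\geq c!$ but $r_y<c!$, then $x$ divides every generator, but taking $a=0$, $b=r_y$ (legal since $r_y\leq c!-1$, using $\binom{r_y}{r_y}=1\neq 0$) gives the generator $\binom{r_x}{0}x^{r_x}=x^{r_x}$ up to scalar, so $y$ does not divide all generators; the radical is $(x)$. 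The case $r_y\geq c!$, $r_x<c!$ is symmetric, giving $(y)$. Finally if both $r_x,r_y<c!$, then choosing $a=r_x$, $b=0$ gives $y^{r_y}$ and $a=0$, $b=r_y$ gives $x^{r_x}$ as generators (both legal since each exponent is $\leq c!-1$, and $\binom{r_x}{r_x}=\binom{r_y}{r_y}=1$), so the ideal contains a pure power of $x$ and a pure power of $y$, hence its radical is $(x,y)$.

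Extending each of these radicals by $(z)$ via the formula $\Jc=(J_{-1})_{\geq c!}+(z)$ then gives, respectively, $(xy,z)$, $(x,z)$, $(y,z)$, $(x,y,z)$, which is exactly the claimed description of $\rad(\Diff^{c-1}_{R/K}(J))$. The main obstacle is not in this combinatorial step — which is routine once set up — but in verifying the legality of the chosen differential operators in positive characteristic: one must be careful that the binomial coefficients $\binom{r_x}{a}$ and $\binom{r_y}{b}$ producing the "extremal" generators do not vanish modulo $p$. The safe choices $a\in\{0,r_x\}$ and $b\in\{0,r_y\}$ always give binomial coefficient $1$, so they survive in any characteristic; the only genuinely positive-characteristic subtlety is that when $r_x\geq c!$ we cannot in general reach the $x$-exponent $r_x-(c!-1)$ with a surviving coefficient, but we do not need that exact value — we only need \emph{some} generator with $x$-exponent $\geq 1$ and $y$-exponent $r_y$, and for that we may take $a$ to be the largest integer $<c!$ with $\binom{r_x}{a}\not\equiv 0$, combined with $b=0$; this is where Lemma \ref{lucas} is used to guarantee such an $a$ exists and that the resulting $x$-exponent lies strictly between $0$ and $r_x$. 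Once this bookkeeping is done, the statement follows.
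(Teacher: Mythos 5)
Your proof follows exactly the paper's route: invoke Proposition \ref{z_in_top_locus_in_monomial_case} to get $z\in\rad(\Diff^{c-1}_{R/K}(J))$, then apply Proposition \ref{top_locus_defined_via_coeff_ideal}(2) to reduce to computing $\rad(\Diff^{c!-1}_{K[[x,y]]/K}(x^{r_x}y^{r_y}))$, which the paper leaves to the reader. Your detailed monomial computation is correct, but the Lucas-lemma detour at the end is unnecessary: the generator obtained from $a=b=0$ is $x^{r_x}y^{r_y}$ itself (binomial coefficients equal $1$ in every characteristic), and $\rad\bigl((x^{r_x}y^{r_y})\bigr)=(xy)$ whenever $r_x,r_y\geq 1$, so the containment $(xy)\subseteq\rad$ in the first case is immediate without any analysis of which $\binom{r_x}{a}$ survive reduction modulo $p$.
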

\begin{proof}
 By Proposition \ref{z_in_top_locus_in_monomial_case} and Proposition \ref{top_locus_defined_via_coeff_ideal} we know that 
 \[\rad(\Diff^{c-1}_{R/K}(J))=(z)+\rad(\Diff^{c!-1}_{K[[x,y]/K}(x^{r_x}y^{r_y})).\]
 This implies the assertion.
\end{proof}

\subsection{The top locus in the small residual case}

 Consider now the power series ring $R=K[[\x,y,z]]$ and an ideal $J\subseteq R$ of order $\ord J=c$. Assume that the coefficient ideal $J_{-1}=\coeff_{(\x,y,z)}^c(J)$ is of the form
 \[J_{-1}=(y^{mc!})\cdot I\]
 where $m>0$ is a positive integer and $I\subseteq K[\x,y]]$ is an ideal of order $0<\ord I<c!$. We will show in Proposition \ref{form_of_top_locus_in_g_p_case} that the ideal $\rad(\Diff_{R/K}^{c-1}(J))$ is of the form $(y,z)$. In particular, it contains the parameter $z$.
 
 Notice that we do not require the existence of an element $f\in J$ which is $\ord$-clean with respect to $J_{-1}$. This is a moot point though, since the order of the coefficient ideal $J_{-1}$ is not divisible by $c!$. Hence, by Lemma \ref{c!_does_not_divide_m}, every element $f\in J$ that is $z$-regular of order $c$ is already $\ord$-clean with respect to $J_{-1}$.
 
 
 Again, we need to two technical lemmas as a preparation for proving Proposition \ref{form_of_top_locus_in_g_p_case}.

\begin{lemma} \label{coolness_lemma}
 Let $f\in K[x]$ be a polynomial and $n\in\N$ an integer such that $\deg(f)<n$. Then
 \[\sum_{k=0}^n\binom{n}{k}(-1)^kf(k)=0.\]
\end{lemma}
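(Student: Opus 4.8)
The statement is the classical fact that the $n$-th finite difference annihilates polynomials of degree $<n$. I would prove it by induction on $n$, or alternatively by a direct generating-function argument analogous to the proof of Lucas' theorem given above (Proposition \ref{lucas}). The cleanest route is probably the following: it suffices, by linearity of both sides in $f$, to verify the identity for the basis $f(x)=x^j$ with $0\leq j<n$, or even more conveniently for the ``falling factorial'' basis $f(x)=x(x-1)\cdots(x-j+1)$ with $0\leq j<n$. With the falling factorial basis the sum telescopes against the identity $\binom{n}{k}\cdot k(k-1)\cdots(k-j+1) = \binom{n}{j}j!\binom{n-j}{k-j}$, reducing everything to $\sum_{k=j}^n(-1)^k\binom{n-j}{k-j}=(-1)^j\sum_{l=0}^{n-j}(-1)^l\binom{n-j}{l}=(-1)^j(1-1)^{n-j}=0$, where the last step uses $n-j\geq 1$.

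The most transparent argument, though, is the generating-function one, which I would present first. Consider the polynomial identity in $\mathbb{Z}[x]$
\[
\sum_{k=0}^n\binom{n}{k}(-1)^k(1+x)^k = (1-(1+x))^n = (-x)^n.
\]
Extracting the coefficient of $x^j$ on both sides: on the right it is $0$ whenever $j<n$, and on the left it is $\sum_{k=0}^n\binom{n}{k}(-1)^k\binom{k}{j}$. Hence $\sum_{k=0}^n\binom{n}{k}(-1)^k\binom{k}{j}=0$ for all $j<n$. Since any polynomial $f$ of degree $<n$ is a $K$-linear combination of the binomial polynomials $\binom{x}{j}=\frac{x(x-1)\cdots(x-j+1)}{j!}$ for $j=0,\ldots,\deg(f)$, and each such polynomial evaluated at an integer $k$ equals $\binom{k}{j}$, the claim follows immediately by linearity.

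The remaining point to address is that the statement is formulated for $f\in K[x]$ with $K$ an arbitrary field (possibly of positive characteristic), whereas the binomial-polynomial basis $\binom{x}{j}$ has rational coefficients. This is handled by observing that the integer identity $\sum_{k=0}^n\binom{n}{k}(-1)^k\binom{k}{j}=0$ holds in $\mathbb{Z}$, hence maps to $0$ under the canonical ring homomorphism $\mathbb{Z}\to K$; and that the functions $k\mapsto \binom{k}{j}$, being integer-valued, still span (over $K$) the $K$-vector space of polynomial functions of degree $<n$ on the relevant set — more precisely, a monomial $x^j$ with $j<n$ is a $\mathbb{Z}$-linear combination of the $\binom{x}{i}$ with $i\leq j$, so $f(k)$ is a $K$-linear combination of the $\binom{k}{i}$ and the vanishing is preserved. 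I do not expect any genuine obstacle here; the only care needed is this last characteristic-independence remark, and it is routine.
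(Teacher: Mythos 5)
Your proposal is correct, and in fact it contains the paper's proof almost verbatim as your first sketch: the paper also reduces by linearity to $f=x(x-1)\cdots(x-i+1)$ with $i<n$, uses the identity $\binom{n}{k}\,k(k-1)\cdots(k-i+1)=n(n-1)\cdots(n-i+1)\binom{n-i}{k-i}$, and concludes with $(1-1)^{n-i}=0$. The generating-function variant you prefer --- extracting coefficients of $x^j$ from $\sum_k\binom{n}{k}(-1)^k(1+x)^k=(-x)^n$ and then expanding $f$ over the binomial-coefficient basis $\binom{x}{i}$ --- is a correct alternative, and you are right that it needs the extra characteristic-$p$ remark you supply: $\binom{x}{i}$ has rational coefficients, so one must pass through the integer identity $x^j=\sum_i c_{ji}\binom{x}{i}$ (Stirling numbers times factorials) before reducing mod $p$. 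The falling-factorial route avoids this small detour because the falling factorials are monic with integer coefficients and so form a $K$-basis of $K[x]_{<n}$ for any $K$ immediately; that is presumably why the paper chose it. Both arguments are fine.
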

\begin{proof}
 We can write $f$ as
 \[f=\sum_{i<n}a_ix(x-1)\cdots(x-i+1)\]
 with $a_i\in K$. Thus, we can assume without loss of generality that $f$ has the form 
 \[f=x(x-1)\cdots(x-i+1)\]
 for an index $i<n$. Hence, we compute
 \[\sum_{k=0}^n\binom{n}{k}(-1)^kf(k)=\sum_{k=0}^n\binom{n}{k}(-1)^k k(k-1)\cdots(k-i+1)\]
 \[=n(n-1)\cdots(n-i+1)(-1)^i\underbrace{\sum_{k=0}^n\binom{n-i}{k-i}(-1)^{k-i}}_{=(1-1)^{n-i}}=0.\]
\end{proof}

\begin{lemma} \label{s_r_lemma}
 Let $R=K[[\x,y,z]]$ and $f\in R$ an element of order $\ord f=c$. Let $f$ have the expansion $f=\sum_{i=0}^cf_iz^i$ with $f\in K[[\x,y]]$ and assume that there is a positive integer $m>0$ such that all coefficients $f_i$ have a factorization
 \[f_i=y^{(c-i)m}g_i\]
 for certain elements $g_i\in K[[\x,y]]$. Define for all indices $k,j\geq0$ the element
 \[F_{k,j}=\sum_{i=k}^c\binom{i}{k}\partial_{y^j}(g_i)y^{(c-i)m+j}z^i.\]
 Then $F_{k,j}\in \Diff^{c-1}_{R/K}(f)$ for $k+j<c$.
\end{lemma}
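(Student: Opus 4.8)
The goal is to exhibit each $F_{k,j}$ (for $k+j<c$) as an explicit element of $\Diff^{c-1}_{R/K}(f)$, i.e.\ to write it as (an $R$-linear combination of) differential operators of degree $\leq c-1$ applied to $f$. The natural operators to use are $\partial_{z^k}$ and $\partial_{y^j}$ and their composites; by Lemma \ref{composition_of_diff_ops} a composite $\partial_{y^j}\partial_{z^k}$ lies in $\Diff^{j+k}_{R/K}\subseteq\Diff^{c-1}_{R/K}$ precisely when $j+k<c$, which is exactly the hypothesis. So the plan is: first reduce to the composite operator $\partial_{y^j}\partial_{z^k}$ applied to $f$, and then extract $F_{k,j}$ from it.

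\textbf{Step 1: handle the $z$-direction.} Apply $\partial_{z^k}$ to $f$. Using Proposition \ref{basis_for_diff_ops}(2) we get
\[
\partial_{z^k}(f)=\sum_{i\geq k}\binom{i}{k}f_iz^{i-k}
=\sum_{i=k}^{c}\binom{i}{k}y^{(c-i)m}g_iz^{i-k}.
\]
Multiplying by $z^k$ (an element of $R$, so this stays in $\Diff^k_{R/K}(f)$ as an $R$-multiple) gives $z^k\partial_{z^k}(f)=\sum_{i=k}^c\binom{i}{k}y^{(c-i)m}g_iz^i$. This is $F_{k,0}$, which settles the case $j=0$. For general $j$ the issue is that $\partial_{y^j}$ applied to this expression does not simply differentiate the $g_i$ — it also hits the monomials $y^{(c-i)m}$ via the Leibniz rule (Proposition \ref{basis_for_diff_ops}(3)). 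So the heart of the argument is a combinatorial cancellation showing the unwanted terms drop out.

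\textbf{Step 2: the $y$-direction and the main cancellation.} I would compute $\partial_{y^j}\bigl(z^k\partial_{z^k}(f)\bigr)=\sum_{i=k}^c\binom{i}{k}\partial_{y^j}\!\bigl(y^{(c-i)m}g_i\bigr)z^i$ and expand each $\partial_{y^j}(y^{(c-i)m}g_i)$ by Leibniz into $\sum_{a+b=j}\partial_{y^a}(y^{(c-i)m})\partial_{y^b}(g_i)=\sum_b\binom{(c-i)m}{j-b}y^{(c-i)m-(j-b)}\partial_{y^b}(g_i)$. The target $F_{k,j}$ keeps only the $b=j$ term in each summand (with the clean power $y^{(c-i)m+j}$ — note I would double-check the exponent bookkeeping, since the stated $F_{k,j}$ has $y^{(c-i)m+j}$ whereas naive differentiation lowers the power; this suggests $F_{k,j}$ is obtained by first multiplying by a suitable power of $y$, or that one should work with $y^{2j}\partial_{y^j}$-type operators, and this is the bookkeeping point to get right). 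The extra terms with $b<j$ carry binomial coefficients $\binom{(c-i)m}{j-b}$, and the plan is to take a suitable $R$-linear combination — not of a single composite, but of the operators $\partial_{y^{j'}}\partial_{z^k}$ for $0\le j'\le j$, each multiplied by an appropriate monomial in $y$ and $z$ — and invoke a Vandermonde-type inversion (in the spirit of Lemma \ref{f_i_lemma} and Lemma \ref{coolness_lemma}) to isolate the $b=j$ contribution. Concretely one sets up a triangular linear system expressing $(F_{k,j'})_{j'\le j}$ in terms of $(y^{\bullet}\partial_{y^{j'}}\partial_{z^k}(f))_{j'\le j}$ with an invertible matrix of binomial coefficients, and inverts it; since all the operators $\partial_{y^{j'}}\partial_{z^k}$ with $j'+k\le j+k<c$ lie in $\Diff^{c-1}_{R/K}$ by Lemma \ref{composition_of_diff_ops}, and $\Diff^{c-1}_{R/K}(f)$ is an $R$-module (closed under multiplication by monomials), the linear combination $F_{k,j}$ lies in it. An induction on $j$ (base case $j=0$ from Step 1) is the cleanest way to organize this: assume $F_{k,j'}\in\Diff^{c-1}_{R/K}(f)$ for all $j'<j$, write $y^{?}\partial_{y^j}\partial_{z^k}(f)=F_{k,j}+\sum_{j'<j}(\text{monomial})\cdot F_{k,j'}$, and conclude.

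\textbf{Main obstacle.} The genuine difficulty is the combinatorial cancellation of the Leibniz cross-terms — i.e.\ verifying that the binomial-coefficient matrix in the triangular system is invertible over $K$ even in characteristic $p$ (where binomials vanish, cf.\ Proposition \ref{lucas} and Lemma \ref{c-q}) and that the exponents of $y$ line up to match the stated form of $F_{k,j}$. In positive characteristic one cannot blithely invert; one must either choose the auxiliary operators and monomial multipliers so that the relevant submatrix is unipotent (all diagonal entries $\binom{m_i}{0}=1$, which is the typical trick here, exactly as in Lemma \ref{f_i_lemma}), or argue by a direct telescoping identity like the one proved in Lemma \ref{coolness_lemma}. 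Getting this right — including the exact power of $y$ one must multiply by at each stage so that $F_{k,j}$ comes out with exponent $(c-i)m+j$ rather than something smaller — is where essentially all the work lies; the rest is routine manipulation of Proposition \ref{basis_for_diff_ops}.
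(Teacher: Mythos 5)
Your plan is on the right track at the highest level (use $y^jz^k\partial_{y^j}\partial_{z^k}(f)$, start the induction at $j=0$ via $z^k\partial_{z^k}(f)=F_{k,0}$, then peel off lower-$j$ corrections), but the concrete inductive step you propose does not work as stated, and it is exactly there that all the content lies.

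You propose writing
\[
y^j\partial_{y^j}\partial_{z^k}(f)\cdot z^k \;=\; F_{k,j}+\sum_{j'<j}(\text{monomial})\cdot F_{k,j'},
\]
i.e.\ a correction using only the \emph{same} $k$ and smaller $j'$. This is not possible. Expanding by Leibniz, $y^jz^k\partial_{y^j}\partial_{z^k}(f)=\sum_{i=k}^c\sum_{l\leq j}\binom{i}{k}\binom{(c-i)m}{j-l}\,\partial_{y^l}(g_i)\,y^{(c-i)m+l}z^i$, and the unwanted $l<j$ terms carry the factor $\binom{(c-i)m}{j-l}$, which \emph{depends on $i$}. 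A correction $\sum_{j'<j}c_{j'}F_{k,j'}$ with scalars (or even monomials) $c_{j'}$ cannot reproduce an $i$-dependent coefficient, so the triangular system in the single index $j'$ that you envisage simply does not exist. The actual correction must also range over the index $n$ from $k$ to $c$, i.e.\ it is of the form $\sum_{l<j}\sum_{n=k}^{c}\mu_{n,k,j-l}\,F_{n,l}$, and one must then verify that this double sum collapses to exactly $\sum_{l<j}\sum_i\binom{i}{k}\binom{(c-i)m}{j-l}\partial_{y^l}(g_i)y^{(c-i)m+l}z^i$. That verification is a genuine binomial identity (after rewriting products of binomial coefficients and using a Kronecker-delta telescoping sum), not a routine inversion. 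Moreover, for the induction to be well-founded one needs the auxiliary vanishing $\mu_{n,k,j-l}=0$ whenever $j-l<n-k$ — this is precisely what Lemma \ref{coolness_lemma} supplies, and it ensures that every $F_{n,l}$ appearing in the correction satisfies $n+l\le k+j<c$, so that it is already known to lie in $\Diff^{c-1}_{R/K}(f)$ by the induction hypothesis. Your discussion treats Lemma \ref{coolness_lemma} as a generic ``Vandermonde-type'' tool, but in fact it plays this specific upper-triangularity role that is indispensable and absent from your sketch. (As a minor point, your hedging about whether to multiply by $y^j$ or use ``$y^{2j}\partial_{y^j}$-type'' operators is easily resolved: the multiplier is $y^j$, matching the exponent $(c-i)m+l$ in each term.)

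In short: right strategy, but the step you flag as ``routine manipulation'' hides a two-index correction with a nontrivial collapsing identity, and your proposed one-index triangular system would fail. You would need to discover the double sum and the role of Lemma \ref{coolness_lemma} to make the induction close.
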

\begin{proof}
 By Lemma \ref{basis_for_diff_ops} we can compute for indices $k,j\geq0$ the following:
 \[y^jz^k\partial_{y^j}\partial_{z^k}(f)=\sum_{i=k}^c\binom{i}{k}y^j\partial_{y^j}(f_i)z^i\]
 \[=\sum_{i=k}^c\sum_{l=0}^j\binom{i}{k}\binom{(c-i)m}{j-l}\partial_{y^l}(g_i)y^{(c-i)m+l}z^i=:\wt F_{k,j}.\]
 By construction, $\wt F_{k,j}\in \Diff^{c-1}_{R/K}(f)$ if $k+j<c$.
 
 We prove the Lemma by induction on $j$. For $j=0$ we just note that $F_{k,0}=\wt F_{k,0}$ for all indices $k<c-1$.
 
 So let $k,j$ be indices with $k+j<c$ and assume that we have already proved that $F_{n,l}\in \Diff^{c-1}_{R/K}(f)$ for all indices $n,l$ with $n+l<c$ and $l<j$. Define the coefficients $\mu_{n,k,j-l}$ as
 \[\mu_{n,k,j-l}=\binom{n}{k}\sum_{b=k}^{n}\binom{n-k}{b-k}(-1)^{n-b}\binom{(c-b)m}{j-l}.\]
 Notice that $\mu_{n,k,j-l}=0$ for $j-l<n-k$ by Lemma \ref{coolness_lemma}. Consequently, we know that
 \[\sum_{n=k}^c\mu_{n,k,j-l}F_{n,l}=\sum_{n=k}^{k+j-l}\mu_{n,k,j-l}F_{n,l}\in  \Diff^{c-1}_{R/K}(f)\]
 for all indices $l<j$. But we can compute that
 \[\sum_{n=k}^c\mu_{n,k,j-l}F_{n,l}\]                                     
 \[=\sum_{n=k}^c\binom{n}{k}\sum_{b=k}^{n}\binom{n-k}{b-k}(-1)^{n-b}\binom{(c-b)m}{j-l}\sum_{i=n}^c\binom{i}{n}\partial_{y^l}(g_i)y^{(c-i)m+l}z^i.\]
 \[=\sum_{i=k}^c  \lambda_{i,k,j-l}   \partial_{y^l}(g_i)y^{(c-i)m+l}z^i\]
 where
 \[\lambda_{i,k,j-l}=\sum_{b=k}^i\sum_{n=b}^i\underbrace{\binom{i}{n}\binom{n}{k}\binom{n-k}{b-k}}_{=\binom{i}{b}\binom{b}{k}\binom{i-b}{n-b}}(-1)^{n-b}\binom{(c-b)m}{j-l}\]
 \[=\sum_{b=k}^i\binom{i}{b}\binom{b}{k}\binom{(c-b)m}{j-l}\underbrace{\sum_{n=b}^i\binom{i-b}{n-b}(-1)^{n-b}}_{=\delta_{b,i}}\]
 \[=\binom{i}{k}\binom{(c-i)m}{j-l}.\]
 Hence, we have shown that
 \[F_{k,j}=\wt F_{k,j}-\sum_{l<j}\sum_{n=k}^c\mu_{n,k,j-l}F_{n,l}\in \Diff^{c-1}_{R/K}(f).\]
\end{proof}

\begin{proposition} \label{form_of_top_locus_in_g_p_case}
 Let $R=K[[\x,y,z]]$ with $\x=(x_1,\ldots,x_n)$ and $J\subseteq R$ an ideal of order $\ord J=c$. Set $J_{-1}=\coeff_{(\x,y,z)}^c(J)$.
 
 Assume that $J_{-1}$ has the form 
 \[J_{-1}=(y^{mc!})\cdot I\]
 for a positive integer $m>0$ and an ideal $I$ of order $0<\ord I<c!$. Then
 \[\rad(\Diff_{R/K}^{c-1}(J))=(y,z).\]
\end{proposition}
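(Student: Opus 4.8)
The plan is to show both inclusions. For the inclusion $(y,z)\subseteq\rad(\Diff_{R/K}^{c-1}(J))$ I would use the correspondence from Proposition \ref{top_locus_defined_via_coeff_ideal} together with Proposition \ref{ord_via_diff_ops}: since the order of $J_{-1}=(y^{mc!})\cdot I$ is $mc!+\ord I$ which is not divisible by $c!$ (because $0<\ord I<c!$), Lemma \ref{c!_does_not_divide_m} tells us that every element of $J$ which is $z$-regular of order $c$ is automatically $\ord$-clean with respect to $J_{-1}$, so there is no obstruction coming from cleanness. First I would verify directly that $z\in\rad(\Diff_{R/K}^{c-1}(J))$. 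The shape of $J_{-1}$ forces, via Lemma \ref{ord_coeff} (or Lemma \ref{w_coeff}) and after possibly passing to a $z$-regular element $f=\sum_{i\ge 0}f_iz^i\in J$ of order $c$, that each coefficient $f_i$ with $i<c$ is divisible by $y^{(c-i)m}$; this is precisely the factorization hypothesis of Lemma \ref{s_r_lemma}. Then I would argue that if a prime $P\supseteq\Diff_{R/K}^{c-1}(J)$ did not contain $z$, the elements $F_{k,j}\in\Diff_{R/K}^{c-1}(f)\subseteq P$ produced by Lemma \ref{s_r_lemma}, combined with the fact that $I$ has order $0<\ord I<c!$ (so that $y\nmid I$ in the appropriate initial-form sense but $\ord_{(y)}$ of the relevant coefficient stays bounded), yield after eliminating powers of $z$ a unit in $P$, a contradiction. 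Having $z\in\rad(\Diff_{R/K}^{c-1}(J))$, Proposition \ref{top_locus_defined_via_coeff_ideal} (2) gives
\[\rad(\Diff_{R/K}^{c-1}(J))=(z)+\rad\bigl(R\cdot\Diff_{K[[\x,y]]/K}^{c!-1}(J_{-1})\bigr),\]
so it remains to compute the radical of the differential saturation of $J_{-1}=(y^{mc!})\cdot I$ inside $K[[\x,y]]$.

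For that computation I would show $\rad(\Diff_{K[[\x,y]]/K}^{c!-1}(y^{mc!}\cdot I))=(y)$. One inclusion is immediate since $y$ divides every generator of $J_{-1}$ (as $mc!\ge c!\ge 1$), hence $V(J_{-1}\text{-saturated})\subseteq V(y)$ and thus $y\in\rad$. For the reverse inclusion $(y)\supseteq\rad(\cdots)$, equivalently that the saturated ideal is contained in $(y)$ up to radical, I would use the Leibniz rule (Proposition \ref{basis_for_diff_ops} (3)): any $\partial\in\Diff^{c!-1}$ applied to $y^{mc!}h$ for $h\in I$ writes as $\sum \partial_{y^j}(y^{mc!})\partial'(h)$ with $j<c!$, and since $mc!-j>mc!-c!\ge 0$ every such term is still divisible by $y^{mc!-j+1-1}=y^{mc!-(c!-1)}$ which, because $mc!-(c!-1)\ge 1$, lies in $(y)$. (Here one uses $m\ge 1$.) Therefore $\Diff_{K[[\x,y]]/K}^{c!-1}(J_{-1})\subseteq(y)$ and so its radical equals $(y)$. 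Combining this with the displayed formula gives $\rad(\Diff_{R/K}^{c-1}(J))=(z)+(y)=(y,z)$, as claimed.

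The main obstacle I anticipate is the first part: showing $z\in\rad(\Diff_{R/K}^{c-1}(J))$ directly. The delicate point is controlling the $y$-adic and total orders of the coefficients $g_i$ appearing after the factorization $f_i=y^{(c-i)m}g_i$ — one needs that $\ord_{(y)}$ of $\init(I)$ is $0$, so that the curve $V(y)$ really is contained in $\topp(V(J))$ and no extra component is introduced, and one has to run an elimination-of-$z$ argument of the same flavour as in the proof of Proposition \ref{z_in_top_locus_in_monomial_case} but adapted to the present factorization. Unlike the monomial case there is no need to split into the characteristic-$p$ subcases $(1)_{\ord},(2)_{\ord},(3)_{\ord}$ because $c!\nmid\ord J_{-1}$ makes cleanness automatic; nonetheless the combinatorial identities coming from Lemma \ref{s_r_lemma} (the vanishing $\mu_{n,k,j-l}=0$ for $j-l<n-k$ via Lemma \ref{coolness_lemma}) are exactly what is needed to produce enough elements of $\Diff_{R/K}^{c-1}(J)$ in $P$ to force $z\in P$. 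Once that is in hand, the rest is the routine radical computation sketched above.
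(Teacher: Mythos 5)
Your high-level decomposition matches the paper's exactly: reduce to showing $z\in\rad(\Diff_{R/K}^{c-1}(J))$, then invoke Proposition \ref{top_locus_defined_via_coeff_ideal} (2) and the computation $\rad(\Diff_{K[[\x,y]]/K}^{c!-1}(J_{-1}))=(y)$. The second half of your argument is essentially fine (one slip, noted below), but the first half is the substance of the proposition and you leave it as a sketch rather than a proof.

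What is missing is the concrete mechanism for eliminating $z$ once you have the elements $F_{k,j}\in P$ from Lemma \ref{s_r_lemma}. After reducing (via Lemma \ref{coeff_ideal_and_directrix}, Lemma \ref{z_regularity_from_directrix} and Weierstrass preparation) to $f=z^c+\sum_{i<c}f_iz^i\in J$ with $f_i=y^{(c-i)m}g_i$ and $\ord g_i>0$, one takes a prime $P\supseteq\Diff_{R/K}^{c-1}(J)$ with $z\notin P$; the case $y\in P$ must be handled separately (it immediately forces $z\in P$ since $f=z^c+y^mG$), so assume $y\notin P$. The step you do not identify is to set $q=q_K(c)$, form $F=\partial_{z^{c-q}}(f)$, and apply Lemma \ref{f_i_lemma} to $F$: this produces a relation $z^q-y^{qm}G\in P$ with $\ord G>0$. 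That relation is what you actually substitute into the $q$-th power of $F_{k,j}=\sum_{i=k}^c\binom{i}{k}\partial_{y^j}(g_i)y^{(c-i)m+j}z^i\in P$, for an index $k$ with $j:=\ord g_k<c-k$ (which exists because $\ord I<c!$, and one can arrange $\partial_{y^j}(g_k)\in R^*$ after a linear translation $x_i\mapsto x_i+t_iy$). Dividing by the unit $y^{q(cm+j)}$ and factoring out $G^k$ leaves a unit factor, forcing $G\in P$ and hence $z\in P$, a contradiction. So your remark that the three cleanness subcases $(1)_{\ord},(2)_{\ord},(3)_{\ord}$ are unnecessary is true, but the characteristic-$p$ arithmetic still enters essentially through $q_K(c)$, Lemma \ref{f_i_lemma}, and the Frobenius power $F_{k,j}^q$; ``eliminating $z$'' cannot be done without the relation $z^q\equiv y^{qm}G$ in $R/P$.

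A smaller point: your justification of $(y)\subseteq\rad(\Diff_{K[[\x,y]]/K}^{c!-1}(J_{-1}))$ (``$y$ divides every generator of $J_{-1}$, hence $V(J_{-1}\text{-saturated})\subseteq V(y)$'') does not follow. From $J_{-1}\subseteq(y)$ one only gets $V(y)\subseteq V(J_{-1})$, which says nothing about the differential saturation. The correct reason is upper semicontinuity of the order: at any prime $p$ of $K[[\x,y]]$ with $y\notin p$, one has $\ord_p J_{-1}=\ord_p I\leq\ord I<c!$, so $p$ lies outside the locus $\{\ord\geq c!\}=V(\Diff_{K[[\x,y]]/K}^{c!-1}(J_{-1}))$, hence that locus is contained in $V(y)$, which is exactly $y\in\rad(\Diff_{K[[\x,y]]/K}^{c!-1}(J_{-1}))$.
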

\begin{proof}
 Assume that $z\in\rad(\Diff_{R/K}^{c-1}(J))$ holds. Then we know by Proposition \ref{top_locus_defined_via_coeff_ideal} that
 \[\rad(\Diff_{R/K}^{c-1}(J))=(z)+\rad(R\cdot\Diff_{K[[\x,y]]}^{c!-1}(J_{-1}).\]
 But it is easy to see that $\rad(\Diff_{K[[\x,y]]}^{c!-1}(J_{-1})=(y)$.
 
 So assume that $z\notin\rad(\Diff_{R/K}^{c-1}(J))$. Let $P\subseteq R$ be a prime ideal that contains $\Diff_{R/K}^{c-1}(J)$ and fulfills $z\notin P$.
 
 Let each element $f\in J$ have the expansion $f=\sum_{i\geq0}f_iz^i$ with $f_i\in K[[\x,y]]$. By Lemma \ref{w_coeff} we know that for all indices $i\geq0$ we can write $f_i=y^{(c-i)m}g_i$ for elements $g_i\in K[[\x,y]]$. Further, we know that $\ord g_i>0$ for all indices $i<c$ since $\ord I>0$.

 Let $f\in J$ have order $\ord f=c$. By Lemma \ref{coeff_ideal_and_directrix} we know that $\Dir(J)=(\ol z)$. Thus, we know by Lemma \ref{z_regularity_from_directrix} that $f$ is $z$-regular of order $c$. By the Weierstrass preparation theorem, we may even assume that $f=z^c+\sum_{i<c}f_iz^i$ holds.
 
 If $y\in P$, then also $z\in P$ since we can write $f=z^c+y^mG$ for some element $G\in R$. Hence, we assume from now on that $y\notin P$ holds.
 
 Set $q=q_K(c)$. Set $F=\partial_{z^{c-q}}(f)$. It has the expansion $F=\sum_{i=0}^qF_iz^i$ with $F_i=\binom{i+(c-q)}{c-q}f_i$. By Lemma \ref{c-q} we know that $F_i=f_{i+(c-q)}$ for $0\leq i<q$ and $F_q=\binom{c}{q}$ where $\binom{c}{q}\neq0$. Notice that $\Diff_{R/K}^{q-1}(F)\subseteq\Diff_{R/K}^{c-1}(J)\subseteq P$ holds by Lemma \ref{composition_of_diff_ops}.
 
 It follows from Lemma \ref{f_i_lemma} that $F_0+\binom{c}{q}z^q+G_1z^{q+1}\in P$ for some element $G_1\in R$. Thus, there exists an element $G\in R$ with $\ord G>0$ such that
 \[z^q-y^{qm}G\in P.\]
 
 Since $\ord I<c!$, we may assume by Lemma \ref{w_coeff} that there is an index $k<c$ such that $\ord g_k<c-k$. Set $j=\ord g_k$. Thus, $k+j<c$. After a change of coordinates $x_i\mapsto x_i+t_iy$ with $t_i\in K$, we may assume that $\partial_{y^j}(g_k)\in R^*$. By Lemma \ref{s_r_lemma} we know that
 \[\sum_{i=k}^c\binom{i}{k}\partial_{y^j}(g_i)y^{(c-i)m+j}z^i\in P.\]
 Since $z^q-y^{qm}G\in P$, we also know that
 \[\sum_{i=k}^c\binom{i}{k}\partial_{y^j}(g_i)^qy^{q(cm+j)}G^i\in P.\]
 Since $y\notin P$, this implies that
 \[\sum_{i=k}^c\binom{i}{k}\partial_{y^j}(g_i)^qG^i\in P.\]
 But since $\partial_{y^j}(g_k)^q$ is a unit, we know that
 \[=G^k\cdot \underbrace{\sum_{i=k}^c\binom{i}{k}\partial_{y^j}(g_i)^qG^{i-k}}_{\in R^*}\in P.\]
 This implies that $G\in P$ and subsequently, $z\in P$. This contradicts our assumption.
 \end{proof}

\section{Cleaning on an open neighborhood} \label{section_global_local_expansion}

 This section provides the technical background for Chapter 8, in which we will prove that our resolution invariant is upper semicontinuous and its top locus constitutes a permissible center of blowup.
 
 In the proofs of embedded resolution over fields of characteristic zero, hypersurfaces of maximal contact are used to show that the resolution invariant is upper semicontinuous. As mentioned in Section \ref{section_woah_maximal_contact}, hypersurfaces of maximal contact are known to exist on open neighborhoods along the strata $X_{\geq c}=\{a\in X:\ord_aX\geq c\}$ defined by the order function.
 
 On the other hand, the cleaning techniques we devised as a replacement for maximal contact in Chapter 5 are constructions in the completed local ring $\hOWa$. They generally do not globalize. As shown in Section \ref{section_generic_down}, it may happen that a different hypersurface is needed at each closed point of the top locus to maximize the order of the coefficient ideal.
 
 Since the top locus $\topp(X)$ of a surface $X$ is at most $1$-dimensional, it suffices to show that our resolution invariant $\ivX$ is upper semicontinuous along curves $C$ in $\topp(X)$. In fact, we will show in Proposition \ref{key_proposition_for_usc} that $X$ is in a terminal case at all but finitely points of such a curve $C$. The argument for this result will be sketched in the following example:
 
 \begin{example}
  Let $W=\Spec(K[x,y,z])$ be the $3$-dimensional affine space over a field $K$. Let $X=V(f)$ be defined by a polynomial $f$ of the form
  \[f=z^c+\sum_{i<c}f_i(x,y)z^i\]
  and assume that each coefficient $f_i$ has a factorization $f_i=y^{(c-i)m}\cdot g_i(x,y)$ for some positive integer $m>0$. Let $m$ be maximal in the sense that there exists an index $i<c$ such that $\ord_{(y)}g_i=0$. Let $i$ be maximal with this property.
  
  It is easy to see that the curve $C=V(y,z)$ is contained in the top locus of $X$. Consider for each closed point $a_t=(t,0,0)$ on the curve $C$ the coefficient ideal
  \[J_{-1}(a_t)=\coeff_{(x-t,y,z)}^c(f)\subseteq \wh\OO_{W,a_t}/(z).\]
  Then each ideal $J_{-1}(a_t)$ has a factorization $J_{-1}(a_t)=(y^{mc!})\cdot I_t$ for some ideal $I_t$. Sine $\ord_{(y)}g_i=0$, we know that $\ord_{(x-t,y)}g_i=0$ holds for all but finitely many $t\in K$. Let $C_1$ be a non-empty open subset of $C$ such that $\ord_{(x-t,y)}g_i=0$ holds for all $a_t\in C_1$. Then for all points $a_t\in C_1$, the equalities $I_t=(1)$ and $J_{-1}(a_t)=(y^{mc!})$ hold. Hence, the coefficient ideal $J_{-1}(a_t)$ is monomial at all points of $C_1$.
  
  Recall from Section \ref{section_modifying_the_residual_order} (or Section \ref{section_monom_s_r_case}) that the existence of an element in $J$ which is $\ord$-clean is a required condition for our monomial case.
  
  Set $q=q_K(c)$. If $i>c-q$, then $f$ fulfills as an element of the ring $\wh\OO_{W,a_t}$ for any point $a_t\in C_1$ the property $(1)_{\ord}$ with respect to $J_{-1}(a_t)$. Hence, $f$ is $\ord$-clean at all points of $C_1$ and we know that $X$ is in the monomial case at all points of $C_1$. Similarly, if $i<c-q$, then we know by maximality of $i$ that $f$ fulfills the property $(2)_{\ord}$. Hence, $X$ is again in the monomial case at all points of $C_1$.
  
  Finally, consider the case $i=c-q$. Set $g=\init_{(y)}(g_{c-q})\in K[x]$. If the element $g$ is a $q$-th power, then the element $f$ is not $\ord_{(y)}$-clean with respect to $J_{-1}(a_t)$ at any point $a_t\in C_1$. Hence, we may apply an $\ord_{(y)}$-cleaning step $z=z_1-\binom{c}{q}^{-1}y^mg^{\frac{1}{q}}$. Notice that $z_1\in K[x,y,z]$ and $C=V(y,z_1)$. Hence, the $\ord_{(y)}$-cleaning process can be applied globally along the curve $C_1$.
  
  So let us assume that (possibly after cleaning) the element $f$ is $\ord_{(y)}$-clean. Hence, $g$ has an expansion 
  \[g=\sum_{j\geq0}c_jx^j\]
  and there is an index $b$ with $b\not\equiv0\pmod q$ such that $c_b\neq0$. The expansion of $g$ at a point $a_t$ has the form
  \[g=\sum_{j\geq0}\wt c_j(t)(x-t)^j\]
  with $\wt c_j(t)=\sum_{k\geq0}\binom{k}{j}t^{k-j}c_k$. Let $0\leq \ol b<q$ be such that $b\equiv\ol b\pmod q$. Then for all but finitely many $t$, the coefficient $c_{\ol b}(t)$ does not vanish. Let $C_2\subseteq C_1$ be the non-empty open subset of points $a_t$ such that $c_{\ol b}(t)\neq0$. Let $a_t\in C_2$ be such a point. Apply $\ord$-cleaning $z=z_t+g_t(x,y)$ with respect to $f$ and $J_{-1}(a_t)$. Set 
  \[\wt J_{-1}(a_t)=\coeff_{(x-t,y,z_t)}^c(f).\]
  By Proposition \ref{w_cleaning_improves_w}, two things can happen: Either $\ord \wt J_{-1}(a_t)=\ord J_{-1}(a_t)$ and $f$ is $\ord$-clean with respect to $\wt J_{-1}(a_t)$. This implies that $\wt J_{-1}=(y^{mc!})$ and $X$ is in the monomial case at $a$.
  
  In the other case, $\ord \wt J_{-1}(a_t)>\ord J_{-1}(a_t)$. Then one can deduce from the fact that $c_{\ol b}(t)\neq0$ and $\ol b<q$ that $\ord\wt J_{-1}(a_t)<\ord J_{-1}(a_t)+c!$. Further, since $f$ is $\ord_{(y)}$-clean, we know by Lemma \ref{w_cleaning_preserves_v_cleaning} that $\ord_{(y)}\wt J_{-1}(a_t)=\ord_{(y)}J_{-1}(a_t)=mc!$. Consequently, $\wt J_{-1}(a_t)$ has the form
  \[\wt J_{-1}(a_t)=(y^{mc!})\cdot \wt I_t\]
  for some ideal $\wt I_t$ with $0<\ord \wt I_t<c!$ and $\ord_{(y)}\wt I_t=0$. Hence, $X$ is in the small residual case at $a_t$.
  
  Altogether, we have shown that $X$ is in one of the terminal cases at all but finitely many points of $C=V(y,z)$.
 \end{example}

 
 
 The following Proposition \ref{neighborhood_cleaning} will serve to generalize the ideas of above example to an arbitrary $3$-dimensional ambient variety $W$ and any hypersurface $X\subseteq W$. The idea of this proposition is to use specific differential operators to characterize certain concepts which were only defined in the completed local ring $\hOWa$, such as the order of the coefficient ideal, $z$-regularity of an element $f$ and $\ord_{(y)}$-cleanness. 
 
 We will consider the following setting:
 
 Let $R$ be the coordinate ring of a regular affine variety $W$ over a field $K$. Further, let $x_1,\ldots,x_n,y,z\in R$ be elements with the property that
 \[\Omega_{R/K}=(d{x_1},\ldots,d{x_n},dy,dz).\]
 Let $a\in W$ be the closed point corresponding to the maximal ideal $\mWa$ of $R$. Assume that the elements $x_1,\ldots,x_n,y,z$ are contained in the maximal ideal $\mWa$. Notice that $\hOWa=K[[\x,y,z]]$ where $\x=(x_1,\ldots,x_n)$.
 
 Let $J\subseteq R$ be an ideal. Set $J_a=\hOWa\cdot J$ and $J_{-1}=\coeff_{(\x,y,z)}^c(J_a)$ where $c=\ord J_a$.
 
 Let each element $f\in J$ have the power series expansion $f=\sum_{i\geq0}f_iz^i$ with $f_i\in K[[\x,y]]$.

\begin{proposition} \label{neighborhood_cleaning}
 Let $r\in\N$ be a non-negative integer. The following hold:
 \begin{enumerate}[(1)]
  \item $\ord_{(y)}J_{-1}\geq r$ holds if and only if for all indices $i<c$, $j<\frac{c-i}{c!}r$ the following inclusion holds:
  \[\partial_{y^j}\partial_{z^i}(J)\subseteq (y,z).\]
 \end{enumerate}
 Let for the remaining results $r$ be such that the inequalities $r\geq c!$ and $\ord_{(y)}J_{-1}\geq r$ hold. Let $f\in J$ be an element and $i<c$ an index such that $\frac{c-i}{c!}r\in\N$.
 \begin{enumerate}[(1)] \setcounter{enumi}{1}
  \item $\ord_{(y)}f_i=\frac{c-i}{c!}r$ holds if and only if $\partial_{y^j}\partial_{z^i}(f)\notin(y,z)$ for $j=\frac{c-i}{c!}r$. If these properties hold, then $\ord_{(y)}J_{-1}=r$ by Lemma \ref{w_coeff}.
  \item $\ord f_i=\frac{c-i}{c!}r$ holds if and only if $\partial_{y^j}\partial_{z^i}(f)\notin\mWa$ for $j=\frac{c-i}{c!}r$. If these properties hold, then $J_{-1}=(y^r)$ by Lemma \ref{w_coeff}.
  \item $f$ is $z$-regular of order $c$ with respect to the parameters $(\x,y,z)$ as an element of the ring $\hOWa$ if and only if $\partial_{z^c}(f)\notin\mWa$.
  \item If $\partial_{z^c}(f)=u\in R^*$ is a unit, then $\partial_{z^c}(u^{-1}\cdot f)=1+F$ for some element $F\in (y,z)$.
  \end{enumerate}
  Assume for the remaining statements that $r=\ord_{(y)}J_{-1}\in c!\cdot\N$ and set $m=\frac{r}{c!}$. Also assume that $\partial_{z^c}(f)$ is a unit of the form $\partial_{z^c}(f)=1+F$ for some element $F\in (y,z)$. Then the following hold:
 \begin{enumerate}[(1)]\setcounter{enumi}{5}
  \item There is an element $g\in R$ with $g\in(y,z)$ such that the coordinate change $z=\wt z+g$ has the following property: Set $\wt J_{-1}=\coeff_{(\x,y,\wt z)}^c(\wh J)$. Then either $\ord_{(y)}\wt J_{-1}>\ord_{(y)}J_{-1}$ or $\ord_{(y)}\wt J_{-1}=\ord_{(y)}J_{-1}$ and $f$ is $\ord_{(y)}$-clean with respect to $\wt J_{-1}$.
  \item If $\neg(1)_{\ord_{(y)}}$ and $\neg(2)_{\ord_{(y)}}$ hold, then $f$ is $\ord_{(y)}$-clean with respect to $J_{-1}$ if and only if there is a multi-index $\alpha\in\N^n$ with $0<|\alpha|<q$ such that 
 \[\partial_{\x^\alpha}\partial_{y^{qm}}\partial_{z^{c-q}}(f)\notin(y,z).\]
 \end{enumerate}
 For the last statement, assume that there exists a multi-index $\alpha\in\N^n$ with $0<|\alpha|<q$ such that $\partial_{\x^\alpha}\partial_{y^{qm}}\partial_{z^{c-q}}(f)\notin\mWa$. Also, assume that for all indices $c-q<k<c$ either $\frac{c-k}{c!}r\notin\N$ or $\partial_{y^j}\partial_{z^k}(f)\in(y,z)$ for $j=\frac{c-k}{c!}r$.
 \begin{enumerate}[(1)]\setcounter{enumi}{7}
   \item Consider an $\ord$-cleaning step $z=\wt z+g$ with respect to $f$ and $J_{-1}$. Set $\wt J_{-1}=\coeff_{(\x,y,\wt z)}^c(J)$. If $\ord\wt J_{-1}>\ord J_{-1}$, then $\wt J_{-1}$ has the form
   \[\wt J_{-1}=(y^{mc!})\cdot I\]
   for an ideal $I$ that fulfills $\ord_{(y)}I=0$ and $0<\ord I<c!$.
 \end{enumerate}

\end{proposition}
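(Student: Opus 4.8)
The plan is to set up local power series expansions and then transfer everything to the completion, where we can invoke the cleaning machinery from Chapter \ref{chapter_cleaning}. First I would pass to $\hOWa=K[[\x,y,z]]$ and write $f=\sum_{i\geq0}f_iz^i$ with $f_i\in K[[\x,y]]$. The hypotheses are: $r=\ord_{(y)}J_{-1}=mc!$; $\partial_{z^c}(f)$ is a unit of the form $1+F$ with $F\in(y,z)$, which by statement (4) (and its refinement (5)) means precisely that $f$ is $z$-regular of order $c$ with respect to $(\x,y,z)$ viewed in $\hOWa$; there is $\alpha$ with $0<|\alpha|<q$ and $\partial_{\x^\alpha}\partial_{y^{qm}}\partial_{z^{c-q}}(f)\notin\mWa$; and for all $c-q<k<c$ either $\frac{c-k}{c!}r\notin\N$ or $\partial_{y^j}\partial_{z^k}(f)\in(y,z)$ for $j=\frac{c-k}{c!}r$. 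By statement (2), the last condition says exactly that $\neg(1)_{\ord}$ holds for $f$ with respect to $J_{-1}$ (no index $c-q<k<c$ contributes an edge point), and by statement (7), the condition involving $\alpha$ says exactly that $\neg(2)_{\ord}$ together with a nonvanishing mixed $\x$-derivative of $f_{c-q}$ forces $f$ to fail $\ord_{(y)}$-cleanness while still being $z$-regular of order $c$.

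Next I would describe the effect of the $\ord$-cleaning step $z=\wt z+g$. An $\ord$-cleaning step is by definition the coordinate change eliminating a $q$-th power from $\init(f_{c-q})$ with $\ord_{(y)}g=\frac{d}{c!}$ where $d=\ord I_{-1}$ and $J_{-1}=(y^{mc!})\cdot I_{-1}$; more to the point for us, cleaning preserves $\ord_{(y)}$-cleanness of $f$ by Lemma \ref{w_cleaning_preserves_v_cleaning} since the involved weighted order function $\w=\ord$ and $\ord_{(y)}$ are both defined on the parameters $(\x,y)$. Actually here we must be slightly careful: what we know is that $f$ is $\ord_{(y)}$-clean (the refined nonvanishing statement forces, via Lemma \ref{c-q}, that the relevant $x$-monomial in $\init(f_{c-q})$ has an exponent not divisible by $q$, hence $\init_{(y)}(f_{c-q})$ is not a $q$-th power up to the $f_c$-factor — this is precisely property $(3)_{\ord_{(y)}}$). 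Therefore by Proposition \ref{w_clean_stable_under_blowup}-style reasoning, or directly by Lemma \ref{w_cleaning_preserves_v_cleaning}, after the $\ord$-cleaning step $f$ remains $\ord_{(y)}$-clean with respect to $\wt J_{-1}$, and Proposition \ref{w_cleaning_maximizes_w} applied to the weighted order $\ord_{(y)}$ gives $\ord_{(y)}\wt J_{-1}=\ord_{(y)}J_{-1}=mc!$. Hence $\wt J_{-1}$ factors as $(y^{mc!})\cdot I$ with $\ord_{(y)}I=0$.

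It remains to bound $\ord I$, i.e. to show $0<\ord\wt J_{-1}-mc!<c!$. Since by hypothesis $\ord\wt J_{-1}>\ord J_{-1}=mc!+d$ with $d=\ord I_{-1}\geq 0$, we get $\ord\wt J_{-1}>mc!$, so $\ord I>0$ — unless the cleaning lowered $\ord_{(y)}$, which it cannot by the previous paragraph; this gives the lower bound. For the upper bound $\ord\wt J_{-1}<mc!+c!$, the key is that the cleaning step increases the order of the coefficient ideal by strictly less than $c!$: this is exactly the content of Proposition \ref{kangaroo_prop}(4) (or, in the characteristic-zero case, the fact that a single cleaning step cannot jump past the next multiple of $c!$), applied with $n=1$ and the weighted order $\w=\ord$ on $(x,y)$, using that $d=\ord I_{-1}$ is not divisible by $c!$ — which holds because $\ord J_{-1}=mc!+d$ would otherwise already be divisible by $c!$, but we are in the small-residual regime where the relevant residual invariant is $<c!$. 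More precisely, the fact that $f$ is $\ord_{(y)}$-clean with property $(3)_{\ord_{(y)}}$ but not $\ord$-clean forces (by the argument in Proposition \ref{kangaroo_prop}, parts (2) and (4)) that the new order of the coefficient ideal is $d_*<\lceil d/c!\rceil c!$, which since $d<c!$ and $d>0$ yields $d_*<c!$. The main obstacle I anticipate is the bookkeeping to verify that the hypotheses on $f$ translate correctly into the clean/not-clean dichotomy of Chapter \ref{chapter_cleaning} — in particular pinning down that the $\partial_{\x^\alpha}$-nonvanishing hypothesis is equivalent to property $(3)_{\ord_{(y)}}$ holding while $(3)_{\ord}$ (over all of $\x$) fails — and then correctly invoking the Moh-type bound Proposition \ref{kangaroo_prop} in the one-variable ($n=1$) situation to get the strict inequality $\ord I<c!$. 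Everything else is a routine application of Lemma \ref{w_cleaning_preserves_v_cleaning}, Proposition \ref{w_cleaning_maximizes_w} and Lemma \ref{ogeqc!}.
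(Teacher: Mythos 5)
The structure of your argument for part (8) is right for the factorization and the lower bound: deducing $\ord_{(y)}\wt J_{-1}=mc!$ by combining $\ord_{(y)}$-cleanness of $f$ (from (7)) with Lemma \ref{w_cleaning_preserves_v_cleaning} and Proposition \ref{w_cleaning_maximizes_w}, and then getting $\ord I>0$ from $\ord\wt J_{-1}>\ord J_{-1}\geq mc!$, is essentially the paper's argument. But there is a genuine gap in the upper bound $\ord I<c!$: Proposition \ref{kangaroo_prop}(4) does not apply here. That proposition concerns an $\y$-cleaning step in the \emph{tangential} setup, after a nontrivial translation $y_1=y+tx^n$ with $t\in K^*$; its proof hinges on Lemma \ref{diffop_lemma_for_kangaroo} and Lemma \ref{arithmetic_kang_lemma}, which exploit the $(y+tx^n)$-structure of $\init_\w(f_{c-q})$. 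In part (8) the coordinate change is an $\ord$-cleaning step $z=\wt z+g$ with $y$ left fixed, so that machinery is not available. Moreover the slogan "a single cleaning step cannot jump past the next multiple of $c!$" is not a general fact; without the $\partial_{\x^\alpha}$-nonvanishing hypothesis a cleaning step can raise the order by arbitrarily much.

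The correct argument, and the one the paper gives, is a direct computation on the coefficient $\wt f_{c-q}$. Writing $f_i=y^{(c-i)m}g_i$, $f_c=1+H$ with $\ord_{(y)}H>0$, and $g=y^mh$, one has by Lemma \ref{coordinate_change_g_z}
\[\wt f_{c-q}=y^{qm}\Big(g_{c-q}+\binom{c}{q}h^q+y\wt H\Big)\]
for some $\wt H\in K[[\x,y]]$, where the extra summands all land in $y\wt H$ because of the hypotheses $\ord_{(y)}g_k>0$ for $c-q<k<c$ and $\ord_{(y)}H>0$. Now apply $\partial_{\x^\alpha}$ with $0<|\alpha|<q$: the $q$-th power term $h^q$ is killed by Lemma \ref{diffops_on_p_th-powers} (since $\alpha\notin q\cdot\N^n$), while $\partial_{\x^\alpha}(g_{c-q})$ is a unit by the nonvanishing hypothesis. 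Hence $\partial_{\x^\alpha}(\wt f_{c-q})=y^{qm}\cdot\wt u$ for a unit $\wt u$, giving $\ord\wt f_{c-q}\leq qm+|\alpha|<q(m+1)$, and therefore $\ord\wt J_{-1}<(m+1)c!$ by Lemma \ref{w_coeff}, i.e. $\ord I<c!$. Your proposal does not bring in Lemma \ref{diffops_on_p_th-powers} at all, yet that lemma is exactly what makes the $\partial_{\x^\alpha}$-hypothesis effective; without it the bound does not follow. (As a minor point, you also initially misstate (7) as saying the $\alpha$-nonvanishing forces $f$ to \emph{fail} $\ord_{(y)}$-cleanness, then correct yourself — it in fact characterizes $\ord_{(y)}$-cleanness.)
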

\begin{proof}
 We begin the proof by noting that
 \[\dijf=\sum_{k\geq i}\binom{k}{i}\partial_{y^j}(f_k)z^{k-i}.\]
 In particular, $\dijf\in (y,z)$ holds if and only if $\ord_{(y)}\partial_{y^j}(f_i)>0$.
 
 By induction on $j$, we conclude that $\ord_{(y)}f_i\geq j$ holds if and only if $\partial_{y^l}\partial_{z^i}(f)\in(y,z)$ holds for all indices $l<j$.
 
 (1): The assertion follows from Lemma \ref{w_coeff}.
 
 (2): This is immediate from the above.
 
 (3): We already know that $f_i=y^{\frac{c-i}{c!}r}g_i$ for some element $g_i\in K[[\x,y]]$. By Proposition \ref{basis_for_diff_ops} (3) we know that
 \[\partial_{y^j}(f_i)=g_i+yG_i\]
 for some element $G_i\in K[[\x,y]]$. Thus, $\dijf\notin\mWa$ is equivalent to $\ord g_i=0$. 
 
 (4): This is clear since $\partial_{z^c}(f)\notin\mWa$ is equivalent to $\ord f_c=0$ by the above.
 
 (5): By Proposition \ref{basis_for_diff_ops} (3) we can compute
 \[\partial_{z^c}(u^{-1}\cdot f)=\sum_{i=0}^c\partial_{z^{c-i}}(u^{-1})\cdot \partial_{z^i}(f)\]
 \[=\underbrace{u^{-1}\cdot \partial_{z^c}(f)}_{=1}+\sum_{i=0}^{c-1}\partial_{z^{c-i}}(u^{-1})\cdot \underbrace{\partial_{z^i}(f)}_{\in (y,z)}.\]
 
 (6): Assume that $f$ is not already $\ord_{(y)}$-clean with respect to $J_{-1}$. Thus, there exists an element $G\in K[[\x,y]]$ such that
 \[\init_{(y)}(f_{c-q})=\init_{(y)}(f_c)\cdot G^q.\]
 Since we can write $f_{c-q}=y^{qm}g_{c-q}$ and $G=y^mH$ for certain elements $g_{c-q},H\in K[[\x,y]]$, this can also be written as 
 \[\init_{(y)}(g_{c-q})=\init_{(y)}(f_c)\cdot H^q.\]
 Using differential operators, we can express this as 
 \[\partial_{y^{qm}}\partial_{z^{c-q}}(f)-\partial_{z^c}(f)\cdot H^q\in (y,z).\]
 Consider the ring $R'=R/(y,z)$ and denote for an element $x\in R$ its residue in $R'$ by $\ol x$. Set 
 \[h=(\partial_{z^c}(f))^{-1}\cdot \partial_{y^{qm}}\partial_{z^{c-q}}(f)\in R.\]
 Then we know that $\ol H^{q}-\ol h=0$. Consequently, $\ol H^q\in R'$. By Lemma 1.3.1.4 in \cite{Kawanoue_IF_1}, also $\ol H\in R'$ holds.
 
 Thus, there is an element $Q\in (y,z)$ such that $H+Q\in R$. Set $g=-\binom{c}{q}^{-1}y^m(H+Q)$. Then $g=-\binom{c}{q}^{-1}G+y^{m+1}H_1+\wt zy^mH_2$ for certain elements $H_1\in K[[\x,y]]$ and $H_2\in\hOWa$. Since the coefficient ideal $\wt J_{-1}$ is stable under multiplication of $\wt z$ with units by Proposition \ref{coeff_ideal_well_def}, we may assume without loss of generality that $H_2=0$. The assertion now follows from Proposition \ref{w_cleaning_improves_w}, Lemma \ref{m_under_coord_changes} and Lemma \ref{stabilizing_coordinate_changes}.
 
 (7): By what we have just shown, $f$ is $\ord_{(y)}$-clean with respect to $J_{-1}$ if and only if there is no element $\ol H\in R'=R/(y,z)$ such that $\ol H^q=\ol h$ where $h$ was defined as
 \[h=u^{-1}\cdot \partial_{y^{qm}}\partial_{z^{c-q}}(f)\]
 where $u=\partial_{z^c}(f)=1+F$ with $F\in (y,z)$.
 
 Since $\Omega_{R'/K}=(d \ol x_1,\ldots,d \ol x_n)$, we know by Proposition \ref{basis_for_diff_ops} and Proposition \ref{p_th_powers_via_diff_ops} that this is equivalent to
 \[(\ol h)\neq \Diff^{q-1}_{R'/K}(\ol h)=(\ol{\partial_{\x^\alpha}(h)}:|\alpha|<q).\]
 This is further equivalent to the fact that there exists a multi-index $\alpha\in\N^n$ with $0<|\alpha|<q$ such that $\ol{\partial_{\x^\alpha}(h)}\neq0$. This means that $\partial_{\x^\alpha}(h)\notin (y,z)$. By Proposition \ref{basis_for_diff_ops} (3) we can compute
 \[\partial_{\x^\alpha}(h)=u^{-1}\cdot\partial_{\x^\alpha}\partial_{y^{qm}}\partial_{z^{c-q}}(f)+\sum_{\beta<\alpha}\underbrace{\partial_{\x^{\alpha-\beta}}(u^{-1})}_{=\partial_{\x^{\alpha-\beta}}(\wt F)\in (y,z)}\partial_{\x^\beta}\partial_{y^{qm}}\partial_{z^{c-q}}(f)\]
 where $u^{-1}=1+\wt F$ with $\wt F\in (y,z)$.
 This proves the assertion.
 
 (8): By Lemma \ref{w_cleaning_preserves_v_cleaning} we know that $\ord_{(y)}\wt J_1\geq r$. Thus, we can write $\wt J_1=(y^r)\cdot I$ for some ideal $I$. Since $f$ is $\ord_{(y)}$-clean with respect to $J_{-1}$ by assertion (7), we know by Proposition \ref{w_cleaning_maximizes_w} that $\ord_{(y)}I=0$. Since $\ord\wt J_1>\ord J_1$, we also know that $\ord I>0$. It remains to verify that $\ord I<c!$.
 
 We can write $f_i=y^{(c-i)m}g_i$ for all indices $i<c$ and certain elements $g_i\in K[[\x,y]]$. From what we have already shown, we can conclude that $\ord_{(y)}g_i>0$ for $c-q<i<c$ and $\ord \partial_{\x^\alpha}(g_{c-q})=0$. Further, $f_c=1+H$ for some element $H\in K[[\x,y]]$ with $\ord_{(y)}H>0$. Also we can write $g=y^mh$ for some element $h\in K[[\x,y]]$.
 
 Now consider the expansion $f=\sum_{i\geq0}\wt f_i\wt z^i$ with $\wt f_i\in K[[\x,y]]$. By Lemma \ref{coordinate_change_g_z} we can compute
 \[\wt f_{c-q}=\sum_{i\geq c-q}\binom{i}{c-q}f_ig^{i-(c-q)}\]
 \[=y^{qm}\Big(g_{c-q}+\binom{c}{c-q}h^q+\underbrace{\sum_{c-q<i<c}\binom{i}{c-q}g_ih^{i-(c-q)}+\binom{c}{c-q}Hh^q+\sum_{i>c}\binom{i}{c-q}f_i y^{(i-c)m}h^{i-(c-q)}}_{=y\wt H}\Big)\]
 for some element $\wt H\in K[[\x,y]]$. By Lemma \ref{diffops_on_p_th-powers}, this implies that
 \[\partial_{\x^\alpha}(\wt f_{c-q})=y^{mq}(\partial_{\x^\alpha}(g_{c-q})+y\partial_{\x^\alpha}(\wt H))=y^{mq}\wt u\]
 for a unit $\wt u\in K[[\x,y]]^*$. Thus, $\ord \wt f_{c-q}\leq qm+|\alpha|<q(m+1)$. By Lemma \ref{w_coeff} this implies that $\ord\wt J_1<(n+1)c!$. Hence, $\ord I<c!$.
\end{proof}

\section[Coefficient ideal and directrix for $\tau>1$]{Coefficient ideal and directrix for $\tau\geq2$} \label{section_tau_geq_2} 

Consider the power series ring $R=K[[\x,z]]$ and let $J\subseteq R$ be an ideal of order $\ord J=c$. Let $J_{-1}$ be the coefficient ideal $J_{-1}=\coeff_{(\x,z)}^c(J)$. By Lemma \ref{coeff_ideal_and_directrix} we know that the order of $J_{-1}$ is related to the directrix of $J$ in the way that $\ord J_{-1}>c!$ holds if and only if the directrix of $J$ has the form $\Dir(J)=(\ol z)$. Clearly, this implies that $\tau(J)=\dim_K\Dir(J)=1$. This section contains several technical lemmas that concern themselves with the case $\tau(J)\geq2$.

The first of them, Lemma \ref{J_2_tau=2}, can be seen as an analogue of Lemma \ref{coeff_ideal_and_directrix} that characterizes the case $\tau(J)=2$ in terms of the order of the second coefficient ideal. 

\begin{lemma} \label{J_2_tau=2}
 Let $R=K[[\x,y,z]]$ and let $J\subseteq R$ be an ideal of order $\ord J=c$.
 
 Assume that $\tau(J)\geq 2$. Set 
 \[J_z=\coeff^c_{(\x,y,z)}(J),\ J_y=\coeff^c_{(\x,z,y)}(J) .\]
 Notice that $\ord J_z=\ord J_y=c!$ by Lemma \ref{coeff_ideal_and_directrix}. Further, set  
 \[J_{z,y}=\coeff^{c!}_{(\x,y)}(J_z),\ J_{y,z}=\coeff^{c!}_{(\x,z)}(J_y).\]
 
 Then the following hold:
 \begin{enumerate}[(1)]
  \item $\ord J_{z,y}=\ord J_{y,z}$.
  \item $\ord J_{z,y}\geq c!!$.
  \item $\ord J_{z,y}>c!!\iff \tau(J)=2$ and $\Dir(J)=(\ol z,\ol y)$.
 \end{enumerate}
\end{lemma}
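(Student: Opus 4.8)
The plan is to prove the three assertions in parallel by reducing everything to an explicit description of the coefficients of an element $f\in J$ of order $c$, expanded with respect to the two parameters $z$ and $y$ simultaneously. First I would write $f=\sum_{i,j\geq0}f_{i,j}\,y^j z^i$ with $f_{i,j}\in K[[\x]]$. Since $\tau(J)\geq2$ forces $\Dir(J)\subseteq(\ol x_1,\dots,\ol x_n)^{\perp}$ to contain at least two independent directions, and by Lemma \ref{coeff_ideal_and_directrix} both $\ord J_z=\ord J_y=c!$, I can use Lemma \ref{ord_coeff} to get the basic bounds $\ord f_{i,j}\geq c-i-j$ whenever $i+j<c$ (from $\ord f\geq c$), together with the refined statements that $\ord J_z=c!$ iff some $f$ has an index $i<c$ with $\ord f_i=c-i$ and $\Dir(J)\ni(\ol z)$ iff $\ord J_z>c!$. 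The key computational observation is that, exactly as in the proof of Lemma \ref{coeff_ideal_of_powers} or Lemma \ref{coeff_ideal_and_directrix}, one has
\[
\ord J_{z,y}=\min_{\substack{f\in J,\ i<c\\ j<\frac{c-i}{c!}\cdot c!}}\frac{c!!}{c!-\frac{c!}{c-i}j}\Bigl(\frac{c!}{c-i}\ord f_{i,j}\Bigr),
\]
which after simplification reduces to a weighted minimum over the quantities $\ord f_{i,j}$ with $i+j<c$; the same formula holds for $\ord J_{y,z}$ by the symmetry $y\leftrightarrow z$ in the expansion, which immediately gives assertion (1).

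For assertion (2), I would plug the bound $\ord f_{i,j}\geq c-i-j$ into the displayed formula: for $i+j<c$ one gets $\frac{c!}{c-i}\ord f_{i,j}\geq \frac{c!}{c-i}(c-i-j)=c!-\frac{c!}{c-i}j$, so each term in the min is $\geq c!!$, hence $\ord J_{z,y}\geq c!!$. This is the routine direction and parallels Lemma \ref{ogeqc!} applied twice.

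For assertion (3), the forward implication ($\ord J_{z,y}>c!!$ $\Rightarrow$ $\tau(J)=2$, $\Dir(J)=(\ol z,\ol y)$) I would prove contrapositively: if $\Dir(J)\neq(\ol z,\ol y)$, then either $\Dir(J)$ is not contained in $(\ol z,\ol y)$ — so some $f$ of order $c$ has a monomial in $\init(f)$ involving an $x_k$, giving an $f_{i,j}$ with $i+j<c$ and $\ord f_{i,j}=c-i-j$, forcing a term equal to $c!!$ in the min — or $\Dir(J)\subsetneq(\ol z,\ol y)$, which (after a linear change among $z,y$, which is harmless since both $J_{z,y}$ and the directrix condition are computed the same way, using Lemma \ref{technical_directrix_lemma} and Lemma \ref{coeff_ideal_well_def}) again produces such an extremal $f_{i,j}$. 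The reverse implication ($\tau(J)=2$, $\Dir(J)=(\ol z,\ol y)$ $\Rightarrow$ $\ord J_{z,y}>c!!$) follows because then every $f$ of order $c$ has $\init(f)$ a polynomial in $z,y$ alone, so $\ord f_{i,j}>c-i-j$ for all $i+j<c$ (equality would produce an initial monomial $x^\alpha y^j z^i$ with $\alpha\neq0$), and then every term in the min for $\ord J_{z,y}$ strictly exceeds $c!!$; one needs to be mildly careful that the minimum over the possibly infinite generating set is still attained strictly above $c!!$, which follows from Noetherianity exactly as in Lemma \ref{ord_coeff}/Lemma \ref{coeff_ideal_and_directrix}.

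The main obstacle I anticipate is bookkeeping the iterated coefficient-ideal order formula correctly — i.e. verifying that $J_{z,y}$, which is a coefficient ideal of the coefficient ideal $J_z$ (itself of order exactly $c!$ by the $\tau\geq2$ hypothesis), does have its order governed by the clean double-indexed formula above and that the symmetry in $y,z$ is genuine rather than just formal. This is essentially the content of combining Lemma \ref{ord_coeff} with Lemma \ref{ord_J_2} in the special case $r=0$, $r_y=0$, $d=c!$, so I would lean on those; the rest is the same directrix-detection argument already used for Lemma \ref{coeff_ideal_and_directrix}, just carried out one dimension deeper.
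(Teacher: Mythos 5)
Your proposal is correct and follows essentially the same route as the paper: expand $f=\sum_{i,j}f_{i,j}\,y^j z^i$, invoke Lemma \ref{ord_J_2} (in the degenerate case $r=0$, $r_y=0$, $d=c!$) to get $\ord J_{z,y}=\min_{f\in J}\min_{i+j<c}\frac{c!!}{c-i-j}\ord f_{i,j}$, read off (1) from the manifest $y\leftrightarrow z$ symmetry of this expression, get (2) from $\ord f_{i,j}\geq c-i-j$, and translate the strict inequality in (3) into $\init(f)\in K[y,z]$ for all $f$ of order $c$, hence $\Dir(J)=(\ol z,\ol y)$ given $\tau(J)\geq 2$. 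One small remark: the subcase $\Dir(J)\subsetneq(\ol z,\ol y)$ you list in the contrapositive is vacuous under the standing hypothesis $\tau(J)\geq 2$, since a subspace of $(\ol z,\ol y)$ of dimension $\geq 2$ must equal it; and your worry about attainment of the minimum over an infinite generating set is not an issue because the characterization "$\ord J_{z,y}>c!!$" already means every individual term of the min strictly exceeds $c!!$, which is exactly the condition you then match against the directrix.
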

\begin{proof}
 Let each element $f\in J$ have an expansion $f=\sum_{i,j\geq0}f_{i,j}y^jz^i$ with $f_{i,j}\in K[[\x]]$.
 
 (1): We can compute by Lemma \ref{ord_J_2} that
 \[\ord J_{z,y}=\min_{f\in J}\min_{\substack{i<c\\j<c-i}}\frac{c!!}{c!-\frac{c!}{c-i}j}\frac{c!}{c-i}\ord f_{i,j}\]
 \[=\min_{f\in J}\min_{i+j<c}\frac{c!!}{c-i-j}\ord f_{i,j}\]
 \[=\min_{f\in J}\min_{\substack{j<c\\i<c-j}}\frac{c!!}{c!-\frac{c!}{c-j}i}\frac{c!}{c-j}\ord f_{i,j}=\ord J_{y,z}.\]
 
 (2): This is immediate from Lemma \ref{ogeqc!}.
 
 (3): By above formula, $\ord J_{z,y}>c!!$ is equivalent to the fact that for all elements $f\in J$ and all indices $i,j$ with $i+j<c$ the inequality $\ord f_{i,j}>c-i-j$ holds. Notice that this is equivalent to the inequality $\ord f_{i,j}y^jz^i>c$. Hence, $\ord J_{y,z}>c!!$ is equivalent to the fact that for each element $f\in J$ with $\ord f=c$ the inclusion $\init(f)\in (y^{c-i}z^i,0\leq i\leq c)$ holds. Since $\tau(J)\geq2$ by assumption, this is equivalent to $\Dir(J)=(\ol z,\ol y)$. 
\end{proof}

Although $\ord J_{-1}=c!$ is equivalent to $\Dir(J)\neq(\ol z)$, it generally does not imply that $\tau(J)>1$ holds. But we will show in the following lemma that this implication holds under the assumption that there is an element $f\in J$ which is $\ord$-clean with respect to $J_{-1}$. (Hence, the order of $J_{-1}$ is maximal by Proposition \ref{w_cleaning_maximizes_w}.) 

\begin{lemma} \label{cleanness_and_tau}
 Let $R=K[[\x,z]]$ and $J\subseteq R$ an ideal of order $\ord J=c$. Set $J_{-1}=\coeff_{(\x,z)}^c(J)$.
 
 If $\ord J_{-1}=c!$ and there is an element $f\in J$ which is $\ord$-clean with respect to $J_{-1}$, then $\tau(J)\geq2$.
\end{lemma}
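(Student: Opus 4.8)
The statement to prove is: if $R = K[[\x,z]]$, $J \subseteq R$ is an ideal of order $c = \ord J$, and $J_{-1} = \coeff_{(\x,z)}^c(J)$ satisfies $\ord J_{-1} = c!$ while there exists $f \in J$ that is $\ord$-clean with respect to $J_{-1}$, then $\tau(J) \geq 2$. The plan is to argue by contradiction: assume $\tau(J) = 1$, so that $\Dir(J)$ is one-dimensional. The first step is to invoke Lemma \ref{coeff_ideal_and_directrix}: since $\ord J_{-1} = c!$ (and not $> c!$), we cannot have $\Dir(J) = (\ol z)$. Combined with $\tau(J)=1$, this means $\Dir(J) = (\ol{z + g})$ for some nonzero linear form $g = \sum_{i=1}^n c_i x_i$ in the $x_i$. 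After the coordinate change $z \mapsto z + g$ — which leaves the coefficient ideal stable only up to the analysis we control — it is cleaner to instead change the $x_i$-coordinates so that $\Dir(J) = (\ol{x_1})$ say, i.e. arrange that the directrix is spanned by one of the $x_i$ and no longer involves $z$; then $\minit(J) \subseteq (x_1)^c + m^{c+1}$, and in particular every $g \in J$ with $\ord g = c$ has $\init(g) = \lambda x_1^c$.

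The heart of the argument is to extract a contradiction with $\ord$-cleanness of $f$. Here I would use Proposition \ref{w_cleaning_maximizes_w}: since $f$ is $\ord$-clean with respect to $J_{-1}$, the order $\ord J_{-1}$ is maximal over all coordinate changes $z \mapsto z + h(\x)$. Now consider the specific coordinate change $z = z_1 - x_1$ (using that $\ol{x_1} \in \Dir(J)$, so this moves $z$ toward the directrix): I claim that with respect to the new parameter $z_1$, the coefficient ideal $\coeff_{(\x,z_1)}^c(J)$ has order strictly greater than $c!$. Indeed, after this change, by Lemma \ref{coordinate_change_g_z}(1) the new coefficients $\wt f_i = \sum_{k \geq i}\binom{k}{i} f_k (-x_1)^{k-i}$, and one checks — using that $\init(f) = \lambda x_1^c$ forces $\ord_{(x_1)} f_i \geq c - i$ with the extra structure coming from $\Dir(J) = (\ol{x_1})$ — that in fact $\ord \wt f_i > c - i$ for all $i < c$, whence $\ord \coeff_{(\x,z_1)}^c(J) > c!$ by Lemma \ref{ord_coeff}. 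Actually the cleanest route is: $\Dir(J) = (\ol{x_1})$ is equivalent, by Lemma \ref{coeff_ideal_and_directrix} applied with the roles of $z$ and $x_1$ swapped, to $\ord \coeff_{(x_2,\ldots,x_n,z,x_1)}^c(J) > c!$; but that coefficient ideal and $\coeff_{(\x,z_1)}^c(J)$ with $z_1 = z + x_1$ — I would relate these via the stability results to conclude $\ord \coeff_{(\x,z_1)}^c(J) > c! = \ord J_{-1}$, contradicting maximality of $\ord J_{-1}$ guaranteed by $\ord$-cleanness and Proposition \ref{w_cleaning_maximizes_w}.

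\textbf{Where the subtlety lies.} The main obstacle is that $\ord$-cleanness and Proposition \ref{w_cleaning_maximizes_w} give maximality only over coordinate changes $z \mapsto z + h(\x)$ that keep $z$ \emph{$z$-regular of order $c$} — i.e. they apply when the competing hypersurface $V(z_1)$ still arises from a $z$-regular element. Since $\tau(J) = 1$ with $\Dir(J) \ne (\ol z)$, the element $f$ need not be $z$-regular of order $c$ at all; if it is not, one must first observe that $\tau(J) = 1$ with $\Dir(J) = (\ol w)$ for a regular parameter $w$ means, after a linear change making $w$ one of the coordinates, that $f$ \emph{is} $w$-regular of order $c$ (by Lemma \ref{z_regularity_from_directrix}), and $\ord$-cleanness with respect to $J_{-1} = \coeff_{(\x,z)}^c(J)$ transfers appropriately — or, more directly, that the hypothesis $\ord J_{-1} = c!$ already contradicts $\Dir(J)=(\ol z)$, so $z$ is \emph{not} adjacent, meaning $f$ cannot be $z$-regular of order $c$ in general, and one must handle the definition of $\ord$-cleanness (which presupposes $f$ is $z$-regular of order $c$) carefully. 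I would resolve this by noting that the very statement "$f$ is $\ord$-clean with respect to $J_{-1}$" in the paper's convention \emph{includes} that $f$ is $z$-regular of order $c$; hence $\ol z \in \Dir(J)$ would follow from Lemma \ref{directrix_from_z_regularity} only after a coordinate change, and combining $\ord J_{-1} = c!$ (forbidding $\Dir(J) = (\ol z)$ by Lemma \ref{coeff_ideal_and_directrix}) with the $z$-regularity of $f$ and Lemma \ref{directrix_from_z_regularity} yields that $\Dir(J)$ is spanned by $\ol{z + g}$ for a genuinely nonzero $g = \sum c_i x_i$. Then the $\ord_{(y)}$-type cleaning argument above — performing the cleaning step $z = z_1 - \binom{c}{q}^{-1}G$ dictated by the failure of property $\neg(3)_{\ord}$ — must land us, via Proposition \ref{w_cleaning_improves_w}, either at a strictly larger $\ord J_{-1}$ (contradiction with cleanness) or at cleanness being preserved while $\ord J_{-1}$ is unchanged; tracing through which of the cases $(1)_{\ord}$–$(3)_{\ord}$ can actually hold when $\Dir(J) = (\ol{z+g})$ with $g \ne 0$ gives the contradiction, since cleanness then forces the initial form of $f$ to involve one of the $x_i$'s nontrivially in a way incompatible with $\Dir(J)$ being one-dimensional and spanned by a form involving $z$. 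The bookkeeping over the three cleanness cases is the technical core; everything else is an application of the stability lemmas already proved.
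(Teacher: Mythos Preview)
Your core idea is the same as the paper's, but you have buried a three-line argument under a great deal of unnecessary machinery and one actual wrong turn. The paper's proof is simply: assume $\tau(J)=1$; since $f$ is $\ord$-clean it is in particular $z$-regular of order $c$, so Lemma~\ref{directrix_from_z_regularity} gives $\Dir(J)=(\ol{z_1})$ with $z_1=z+g$ for some $g\in K[[\x]]$; then Lemma~\ref{coeff_ideal_and_directrix} (direction $(2)\Rightarrow(1)$, applied with $z_1$ in place of $z$) yields $\ord\coeff_{(\x,z_1)}^c(J)>c!$, contradicting Proposition~\ref{w_cleaning_maximizes_w}. That is the entire proof.

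Your detour ``change the $x_i$-coordinates so that $\Dir(J)=(\ol{x_1})$'' is not merely inelegant but impossible: since $f$ is $z$-regular of order $c$, Lemma~\ref{directrix_from_z_regularity} forces $\Dir(J)$ to contain a class of the form $\ol{z+g}$ with $g\in K[[\x]]$, and when $\tau(J)=1$ this class spans $\Dir(J)$; the $\ol z$-component never vanishes, so no change of the $x_i$ alone can make the directrix equal to $(\ol{x_1})$. You eventually recover from this in your ``subtlety'' paragraph and arrive at the correct $\Dir(J)=(\ol{z+g})$, but then you propose to analyse the three cleanness cases $(1)_{\ord}$--$(3)_{\ord}$ by hand. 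None of that is needed: once $\Dir(J)=(\ol{z_1})$, Lemma~\ref{coeff_ideal_and_directrix} gives the contradictory inequality directly, and Proposition~\ref{w_cleaning_maximizes_w} already packages the case analysis you were about to redo.
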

\begin{proof}
 Assume  that $\tau(J)=1$. Then by Lemma \ref{directrix_from_z_regularity} there is an element $g\in K[[\x]]$ with $\ord g\geq1$ such that $\Dir(J)=(\ol{z_1})$ for $z_1=z+g$. By Lemma \ref{coeff_ideal_and_directrix} we know that $\ord \coeff_{(\x,z_1)}^c(J)>c!$. But this is a contradiction to Proposition \ref{w_cleaning_maximizes_w}. Thus, we know that $\tau(J)\geq2$ holds.
\end{proof}

The next lemma is specific to the surface case. It will allow us to explicitly compute $\tau(J)$ in the case that $\ord J_{-1}=c!$ and $J_{-1}$ is a principal monomial ideal.

\begin{lemma} \label{another_tau=2_lemma}
 Let $R=K[[x,y,z]]$ and $J\subseteq R$ an ideal of order $c=\ord J$. Set $J_{-1}=\coeff_{(x,y,z)}^c(J)$. Assume that $J_{-1}$ is a principal monomial ideal of order $\ord J_{-1}=c!$. Further, let $f\in J$ be an element that is $\ord$-clean with respect to $J_{-1}$.
 
 \begin{enumerate}[(1)]
  \item If $J_{-1}=(y^{c!})$, then $\tau(J)=2$ and $\Dir(J)=(\ol y,\ol z)$.
  \item If $J_{-1}=(x^{r_x}y^{r_y})$ with $0<r_x,r_y<c!$, then $\tau(J)=3$.
 \end{enumerate}
\end{lemma}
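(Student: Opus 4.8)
The strategy is to reduce both statements to the corresponding facts about orders of coefficient ideals that were established earlier. Recall from Lemma~\ref{coeff_ideal_and_directrix} that $\ord J_{-1}=c!$ already forces $\Dir(J)\neq(\ol z)$, and from Lemma~\ref{cleanness_and_tau} that the existence of an element $f\in J$ which is $\ord$-clean with respect to $J_{-1}$ upgrades this to $\tau(J)\geq2$. So in both cases we start from $\tau(J)\geq2$ and only have to pin down the directrix more precisely.

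For part~(1), assume $J_{-1}=(y^{c!})$. Since $\tau(J)\geq2$, we apply Lemma~\ref{J_2_tau=2}: set $J_z=\coeff^c_{(x,y,z)}(J)=J_{-1}=(y^{c!})$ and $J_{z,y}=\coeff^{c!}_{(x,y)}(J_z)$. Because $J_z=(y^{c!})$ is a power of $y$ times the unit ideal, its coefficient ideal with respect to $V(y)$ vanishes, so $\ord J_{z,y}=\infty>c!!$. By Lemma~\ref{J_2_tau=2}~(3) this gives $\tau(J)=2$ and $\Dir(J)=(\ol z,\ol y)$, which is the claim. (Here one should double-check that the hypothesis $\tau(J)\geq2$ needed for Lemma~\ref{J_2_tau=2} is indeed in force, which is exactly what Lemma~\ref{cleanness_and_tau} supplies.)

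For part~(2), assume $J_{-1}=(x^{r_x}y^{r_y})$ with $0<r_x,r_y<c!$. Since $\tau(J)\geq2$ we may choose coordinates so that a two-dimensional subspace of $m/m^2$ spanned by two of the parameters generates $\minit(J)$; the candidates for $\Dir(J)$ are among $(\ol z,\ol y)$, $(\ol z,\ol x)$ and $(\ol x,\ol y)$ (and their linear combinations), and $\Dir(J)=(\ol x,\ol y,\ol z)$ is the remaining possibility. The plan is to rule out $\tau(J)=2$ by contradiction. If $\tau(J)=2$, then, possibly after a coordinate change that preserves $V(z)$, the directrix is spanned by $\ol z$ together with exactly one other linear form $\ell$; running the $\ord$-cleaning process (Proposition~\ref{w_cleaning_terminates}) cannot change this, since by Lemma~\ref{cleaning_preserves_directrix} cleaning steps stabilize the directrix as long as the weighted order function is defined on generators of the directrix. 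Translating $\Dir(J)=(\ol z,\ell)$ into the coefficient ideal via Lemma~\ref{coeff_ideal_and_directrix} applied in the appropriate coordinate frame (i.e. treating $V(\ell)$ inside $V(z)$), one finds that $J_{-1}$, computed with respect to subordinate parameters, must satisfy $\ord_{(\text{complement of }\ell)}J_{-1}>0$ only in one variable — more precisely $J_{-1}$ would have to be (up to the coordinate change) a power of a single variable, contradicting $0<r_x,r_y<c!$ which forces $J_{-1}$ to be genuinely bidegree-supported. Since $\tau(J)\neq2$ and $\tau(J)\geq2$, and $\tau(J)\leq\dim R=3$, we conclude $\tau(J)=3$.

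\textbf{Main obstacle.} The routine reduction in part~(1) is immediate, but part~(2) is delicate: the statement ``$J_{-1}=(x^{r_x}y^{r_y})$ with both exponents in $(0,c!)$ implies $\tau(J)\neq2$'' requires showing that no coordinate change $z\mapsto z+g(x,y)$, $x\mapsto x+\cdots$, $y\mapsto y+\cdots$ can collapse the two-variable monomial $x^{r_x}y^{r_y}$ into an ideal whose initial ideal is generated by a single linear-form-power after passing to $\minit(J)$. The cleanest way is to use that $\ord$-cleanness of $f$ makes $\ord J_{-1}$ maximal (Proposition~\ref{w_cleaning_maximizes_w}), hence $\ord J_{-1}=c!$ is the true value $r_x+r_y$, and that $\Dir(J)=(\ol z,\ell)$ would, via Lemma~\ref{coeff_ideal_and_directrix} (possibly combined with Lemma~\ref{J_2_tau=2}), force $\ord$ of the relevant second coefficient ideal to exceed $c!!$, which is incompatible with $J_{-1}$ being the monomial ideal $(x^{r_x}y^{r_y})$ with $\min(r_x,r_y)<c!$ — a direct computation with Lemma~\ref{ord_coeff}. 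Getting the bookkeeping of which coefficient ideal is taken with respect to which hypersurface exactly right, while keeping track of the coordinate change used to exhibit a hypothetical two-dimensional directrix, is the part that needs the most care.
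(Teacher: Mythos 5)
Your proof of part~(1) is correct and in fact takes a different route than the paper: you observe directly that $\coeff^{c!}_{(x,y)}((y^{c!}))=0$, so the second coefficient ideal $J_{z,y}$ has infinite order, and then apply Lemma~\ref{J_2_tau=2}~(3). The paper instead goes through Corollary~\ref{form_of_top_locus_in_monomial_case} to compute $\rad(\Diff_{R/K}^{c-1}(J))=(y,z)$ and then bounds $\Dir(J)$ by the tangent space of the top locus via Lemma~\ref{directrix_and_top_locus}. Your version is shorter and stays entirely within the coefficient-ideal/directrix circle of ideas; the paper's version exploits the differential-operator description of the top locus. Both are valid once $\tau(J)\geq 2$ is secured by Lemma~\ref{cleanness_and_tau}.

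Your argument for part~(2) has a genuine gap and a supporting claim that is not correct. First, you write that ``possibly after a coordinate change that preserves $V(z)$, the directrix is spanned by $\ol z$ together with exactly one other linear form $\ell$.'' A coordinate change preserving $V(z)$ cannot introduce $\ol z$ into $\Dir(J)$ if it was not there already: $\Dir(J)$ is an intrinsic subspace of $m/m^2$, and the only thing $z$-regularity (Lemma~\ref{directrix_from_z_regularity}) gives is that some $\ol{z+g}$ with $g$ a linear form in $x,y$ lies in $\Dir(J)$. The coordinate change $z\mapsto z+g$ that would normalize this moves $V(z)$ and therefore changes $J_{-1}$ — which is precisely the bookkeeping you flag as ``the part that needs the most care,'' but which your argument silently skips. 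Second, the assertion that $\Dir(J)=(\ol z,\ell)$ would, via Lemma~\ref{coeff_ideal_and_directrix}, force $J_{-1}$ to be ``a power of a single variable'' does not follow: that lemma only characterizes when $\ord J_{-1}>c!$ versus $=c!$; it says nothing about the monomial structure of $J_{-1}$.

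What the paper actually does in part~(2), and what your sketch would need to become correct, is to make the coordinate change explicit and control its effect on the coefficient ideal. After writing $\Dir(J)=(\ol{z+sx},\ol{y+tx})$ (the $(\ol x,\ol y)$ case being excluded by $z$-regularity) and setting $\wt J_{-1}=\coeff^c_{(x,y,z+sx)}(J)$, one uses Lemma~\ref{m_under_coord_changes}~(1), Lemma~\ref{c!_does_not_divide_m} (which makes $f$ automatically $\ord_{(x)}$-clean because $0<r_x<c!$ is not a multiple of $c!$) and Proposition~\ref{w_cleaning_maximizes_w} to conclude $\ord_{(x)}\wt J_{-1}=r_x$ and $\ord\wt J_{-1}=c!$. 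The inequality $0<r_x<c!$ then forces $\Dir(\wt J_{-1})\neq(\ol{y+tx})$, so $\ord\coeff^{c!}_{(x,y+tx)}(\wt J_{-1})=c!!$ by Lemma~\ref{coeff_ideal_and_directrix}, contradicting the strict inequality $>c!!$ that Lemma~\ref{J_2_tau=2}~(3) demands when $\tau(J)=2$ and $\Dir(J)=(\ol{z+sx},\ol{y+tx})$. The step you are missing is exactly the invariance argument that $\ord_{(x)}\wt J_{-1}=r_x$ survives the normalizing coordinate change.
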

\begin{proof}
 By Lemma \ref{cleanness_and_tau} we know that $\tau(J)\geq2$ holds.

 (1): By Corollary \ref{form_of_top_locus_in_monomial_case} we know that $\rad(\Diff_{R/K}^{c-1}(J))=(y,z)$. Hence, $\Dir(J)\subseteq(\ol y,\ol z)$ by Proposition \ref{ord_via_diff_ops} and \ref{directrix_and_top_locus}. Thus, $\tau(J)=2$ and $\Dir(J)=(\ol y,\ol z)$. 
 
 (2): Assume that $\tau(J)=2$. We know by Lemma \ref{directrix_from_z_regularity} that $\Dir(J)\neq(\ol x,\ol y)$. Without loss of generality we can assume that there are constants $s,t\in K$ such that $\Dir(I_3)=(\ol{z+sx},\ol{y+tx})$. Set $\wt J_{-1}=\coeff_{(x,y+tx,z+tx)}^c(J)=\coeff_{(x,y,z+tx)}^c(J)$. By Lemma \ref{m_under_coord_changes} we know that $\ord_{(x)}\wt J_{-1}=\ord_{(x)}J_{-1}=r_x$. Since $0<r_x<c!$, this implies that $\tau(\wt J_{-1})=2$. Set $\wt J_{-2}=\coeff_{(x,y+tx)}^{c!}(\wt J_{-1})$. By Lemma \ref{coeff_ideal_and_directrix} we know that $\ord\wt J_{-2}=c!!$. But this contradicts Lemma \ref{J_2_tau=2} (3).
\end{proof}

\section{Formal flags}  \label{section_flags_technical_stuff}

 As discussed in Section \ref{section_modifying_the_residual_order}, invariants associated to formal flags are an essential building block of our resolution invariant $\ivX$. In this section we will prove several simple technical results on formal flags in a $3$-dimensional ambient space.
 
 We will consider the following setting for this section:
 
 Let $R=K[[x,y,z]]$ be the power series ring in $3$ variables over an algebraically closed field $K$. A \emph{formal flag} $\F$ consists of a regular curve $\F_1$ and a regular hypersurface $\F_2$ in $\Spec(R)$ that fulfill $\F_1\subseteq\F_2$.
 
 Let $E\subseteq\Spec(R)$ be a simple normal crossings divisor. A formal flag $\F$ is said to be \emph{compatible} with $E$ if $\F_2\not\subseteq E$ and the union $\F_2\cup E$ has simple normal crossings.

\begin{lemma} \label{form_of_associated_components}
Let $\F$ be a formal flag of the form $\F_2=V(z)$, $\F_1=V(y,z)$ that is compatible with a simple normal crossings divisor $E\subseteq\Spec(R)$. Assume that the union $\F_1\cup(\F_2\cap E)$ is not simple normal crossings.

Let $D$ be a component of $E$. Denote by $n_{\F,D}$ the intersection multiplicity of the formal curves $\F_1$ and $\F_2\cap D$. The following hold: 
 \begin{enumerate}[(1)]
  \item $n_{\F,D}$ is finite.
  \item If $n_{\F,D}=1$, then $D=V(x+g(y,z))$ for some element $g\in K[[y,z]]$ with $\ord g\geq 1$.
  \item If $n_{\F,D}>1$, then $D=V(y+Q(x)+zG)$ for elements $Q\in K[[x]]$ of order $\ord Q=n_{\F,D}$ and $G\in R$.
  \item There is at most one component $D$ of $E$ for which $n_{\F,D}>1$ holds.
  \item If $n_{\F,D}=1$ holds for all components $D$ of $E$, then $E$ has exactly two components.
  \end{enumerate}
\end{lemma}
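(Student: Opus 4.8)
The plan is to transport the whole situation into the regular formal surface $\F_2$. Using the isomorphism $R/(z)\cong K[[x,y]]$ we identify $\F_2$ with $\Spec K[[x,y]]$, under which $\F_1=V(y,z)$ becomes $V(y)$ and, for a component $D=V(h_D)$ of $E$ (with $\ord h_D=1$), the curve $\F_2\cap D$ becomes $V(\ol h_D)$ where $\ol h_D$ is the image of $h_D$. The compatibility of $\F_2$ with $E$ together with $D\subseteq E$ says that $\{z\}\cup\{h_{D'}:D'\text{ a component of }E\}$ extends to a regular system of parameters of $R$; in particular the classes of these elements in $\m/\m^2$ are linearly independent. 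Two consequences I would record immediately: $E$ has at most two components (three would need three independent classes plus $\ol z$ in a $3$-dimensional space), and the linear part $\ell(h_D)=\alpha x+\beta y+\gamma z$ of any component satisfies $(\alpha,\beta)\neq(0,0)$, so $\ol h_D$ has order exactly $1$ in $K[[x,y]]$ and $\F_2\cap D$ is a regular formal curve.

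Next I would use the formula $n_{\F,D}=\dim_K K[[x,y]]/(y,\ol h_D)=\ord_x\bigl(\ol h_D(x,0)\bigr)$ together with Weierstrass preparation. If $\alpha\neq0$, preparation in $x$ gives $D=V(x-\Theta(y,z))$ with $\Theta\in K[[y,z]]$, $\ord\Theta\geq1$; then $\ol h_D(x,0)$ has $x$-order $1$, so $n_{\F,D}=1$. This proves (2) with $g=-\Theta$, and also shows that $n_{\F,D}>1$ forces $\alpha=0$, hence $\beta\neq0$. In that case preparation in $y$ gives $D=V(y-\Psi(x,z))$ with $\Psi\in K[[x,z]]$, $\ord\Psi\geq1$; splitting $\Psi(x,z)=\Psi(x,0)+z\tilde G$ yields the normal form of (3) with $Q=-\Psi(x,0)$ and $G=-\tilde G$, and since $\ol h_D$ equals a unit times $y-\Psi(x,0)$ we get $\ord Q=\ord_x\ol h_D(x,0)=n_{\F,D}$ (which is finite by (1)).

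For (1), suppose $n_{\F,D}=\infty$, i.e.\ $\ol h_D\in(y)$; since $\ord\ol h_D=1$ this forces $\F_2\cap D=\F_1$, so $\ell(h_D)\in\langle y,z\rangle$. Then linear independence of $\{z\}\cup\{h_{D'}\}$ modulo $\m^2$ forces the linear part of any other component $D'$ of $E$ to involve $x$, so $n_{\F,D'}=1$ by the computation above, and $\F_1\cup(\F_2\cap E)=\F_1\cup(\F_2\cap D')$ is then a union of at most two transversal regular curves, hence simple normal crossings — contradicting the hypothesis; so $n_{\F,D}$ is finite. For (4): if $D_1,D_2$ both satisfied $n_{\F,D_i}>1$, then $\ell(h_{D_1}),\ell(h_{D_2})$ both lie in $\langle y,z\rangle$, and together with $\ell(z)=z$ this gives three elements of a $2$-dimensional space, contradicting independence; hence at most one component has $n_{\F,D}>1$. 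For (5): $E$ has at most two components; it cannot have none (then $\F_1\cup(\F_2\cap E)=\F_1$ is SNC) nor exactly one of multiplicity $1$ (then $\F_1\cup(\F_2\cap D)$ is a union of two transversal regular curves, hence SNC), both contradicting the hypothesis; so $E$ has exactly two components.

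The main obstacle I expect is statement (1): ruling out $n_{\F,D}=\infty$ is not a purely local computation at $D$, but requires feeding the degenerate case $\F_2\cap D=\F_1$ back into the simple normal crossings constraint on the whole of $E$ (and using the already-established normal form for the other component) to contradict the standing hypothesis that $\F_1\cup(\F_2\cap E)$ is not SNC. Everything else reduces to Weierstrass preparation and elementary linear algebra on leading forms.
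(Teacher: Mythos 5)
Your proof is correct and follows essentially the same strategy as the paper: normalize each component $D=V(h)$ by Weierstrass preparation according to whether the $x$-coefficient in the linear part of $h$ vanishes, compute the intersection multiplicity $n_{\F,D}$ as a dimension of the resulting quotient, and exploit the linear-independence constraint that compatibility (i.e.\ $\F_2\cup E$ being SNC) imposes on the linear parts $\ell(z),\ell(h_{D_1}),\ell(h_{D_2})$. One small difference worth noting: for part (4) the paper's proof concludes by saying ``$D_1\cup D_2$ is not simple normal crossings,'' which by itself is not quite enough, since two components with linear parts $y+c_1z$ and $y+c_2z$ and $c_1\neq c_2$ do form an SNC pair in isolation; the contradiction really comes from throwing $\F_2=V(z)$ into the mix, giving three linear forms in the two-dimensional span $\langle\ol y,\ol z\rangle$. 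You make this explicit, which is cleaner. Conversely, for part (1) the paper's argument is more economical than yours: once $\F_2\cap D=\F_1$, one has $\F_1\cup(\F_2\cap E)=\F_2\cap E$, which inherits SNC from $\F_2\cup E$ directly, without needing to analyze the other component.
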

\begin{proof}
 (1),(2),(3): Let $h\in R$ be an element such that $D=V(h)$. Let $h$ have the form 
 \[h=ax+by+cz+Q_1\]
 where $a,b,c\in K$ and $Q_1\in R$ with $\ord Q_1\geq2$. Since $\F$ is compatible with $E$, it is easy to see that $(a,b)\neq(0,0)$. 
 
 If $a\neq0$, then we can assume by the Weierstrass preparation theorem that $h=x+g(y,z)$ with $g\in K[[y,z]]$. Then 
 \[n_{\F,D}=\dim_K(K[[x,y,z]]/(z,y,x+g(y,z)))=1.\]
 
 If $a=0$, then we can assume by the Weierstrass preparation theorem that $h=y+Q(x)+zG$ for elements $Q\in K[[x]]$ with $\ord Q\geq2$ and $G\in R$. If $Q=0$, then $\F_1=\F_2\cap D$. Consequently, $\F_1\cup(\F_2\cap E)=\F_2\cap E$ has simple normal crossings, which contradicts our assumption. So we know that $Q\neq0$ and $\ord Q<\infty$. Consequently, 
 
 \[n_{\F,D}=\dim_K([[x,y,z]]/(z,y,y+Q(x)+zG))=\dim_K(K[[x]]/Q(x))=\ord Q.\]
 
 (4): Now assume that there were two distinct components $D_1$, $D_2$ of $E$ such that $n_{\F,D_1}>1$ and $n_{\F,D_2}>1$. Then we can apply assertion (3) to see that the union $D_1\cup D_2$ is not simple normal crossings. This contradicts the assumption that $E$ is a simple normal crossings divisor.
 
 (5): Since $\F$ is compatible with $E$, we know that $D$ has at most two components. Now assume that $E$ has only one component. Then $E=V(x+g)$ for an element $g\in K[[y,z]]$ by assertion (2). Consequently, $\F_1\cup(\F_2\cap E)$ has simple normal crossings. This contradicts our assumption.
\end{proof}

\begin{lemma} \label{curve_determined_by_component}
 Let $\F$ be a formal flag of the form $\F_2=V(z)$, $\F_1=V(y,z)$ that is compatible with $E$.
 
 Let $D$ be a component of $E$ for which $\F_1\subseteq D$ holds. Then $\F_1=\F_2\cap D$.
\end{lemma}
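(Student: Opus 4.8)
The plan is to translate the geometric hypotheses into algebra in $R=K[[x,y,z]]$ and compute the intersection ideal directly. First I would write $D=V(h)$ for some $h\in R$; since $D$ is a component of the simple normal crossings divisor $E$, it is a regular hypersurface, so $\ord h=1$. From $\F_1=V(y,z)\subseteq D$ I get $I_D=(h)\subseteq I_{\F_1}=(y,z)$, hence $h\in(y,z)$. Writing $\bar h$ for the image of $h$ in $R/(z)\cong K[[x,y]]$, this gives $\bar h\in (y)K[[x,y]]$, say $\bar h=y\bar p$ with $\bar p\in K[[x,y]]$.

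Next I would exploit compatibility of $\F$ with $E$. Since $\F_2\not\subseteq E\supseteq D$, the hypersurfaces $\F_2=V(z)$ and $D=V(h)$ are distinct, and as $(z)$ and $(h)$ are distinct height-one primes of the factorial ring $R$, the ideal defining $\F_2\cup D$ is $(z)\cap(h)=(zh)$. Because $\F_2\cup D$ is a union of components of the simple normal crossings divisor $\F_2\cup E$, it is itself simple normal crossings, so $(zh)$ is generated by a single monomial in some regular system of parameters of $R$; that monomial has order $2$, and since $zh$ is squarefree it cannot be a square of a parameter, so it is a product of two distinct parameters. Consequently there is a regular system of parameters $(u,v,w)$ of $R$ with $\F_2=V(u)$ and $D=V(v)$, and hence $\F_2\cap D=V(u,v)$ is a regular curve.

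Then I would combine the two threads. We have $I_{\F_2\cap D}=I_{\F_2}+I_D=(z,h)=(z,\bar h)$. Since $V(z,\bar h)$ is a regular curve in $\Spec(R)$ and $\bar h\in K[[x,y]]$, the element $\bar h$ must be a regular parameter of $K[[x,y]]$, i.e.\ $\ord\bar h=1$. From $\bar h=y\bar p$ and $\ord\bar h=1=\ord y+\ord\bar p$ I conclude $\ord\bar p=0$, so $\bar p$ is a unit of $K[[x,y]]$, hence of $R$. Therefore $(z,\bar h)=(z,y\bar p)=(z,y)$, and $\F_2\cap D=V(z,y)=V(y,z)=\F_1$, as claimed.

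I do not expect a serious obstacle; the only step requiring a little care is the passage from ``$\F_2\cup D$ has simple normal crossings'' to ``$\F_2\cap D$ is a regular curve'', which uses that $\F_2$ and $D$ are distinct codimension-one components and the squarefreeness of $zh$ in the regular local ring $R$. Everything else is a direct manipulation of the ideals $(y,z)$ and $(z,h)$. Alternatively, one could bypass this step by instead invoking the linear-part analysis of Lemma \ref{form_of_associated_components}: $h\in(y,z)$ kills the $x$-coefficient, compatibility forbids the linear part of $h$ from being a multiple of $z$ alone, so the linear part of $h$ is a nonzero multiple of $y$, and the same conclusion $\bar h=y\cdot(\textnormal{unit})$ follows.
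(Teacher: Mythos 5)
Your proof is correct. The route you take in the main argument is slightly different from the paper's: instead of writing out the linear part $h=ay+bz+Q_1$ and using the transversality baked into simple normal crossings to conclude $a\neq 0$, you first observe that $\F_2\cap D$ is a regular formal curve (because $\F_2\cup D$ is a sub-union of distinct components of the SNC divisor $\F_2\cup E$), deduce from this that $\bar h\in K[[x,y]]$ has order $1$, and then read off from the factorization $\bar h=y\bar p$ forced by $\F_1\subseteq D$ that $\bar p$ is a unit. The paper's proof is the more economical of the two: it simply notes $h=ay+bz+Q_1$ with $a\in K$, says $a\neq0$ because otherwise $\F_2$ and $D$ would share a tangent hyperplane contradicting SNC, and then Weierstrass-prepares $h$ to $y+zG$; the conclusion $\F_2\cap D=V(z,y+zG)=V(z,y)$ is immediate. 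What your version buys is a cleaner separation of the two ingredients — (i) SNC forces $\F_2\cap D$ to be a regular curve, (ii) $\F_1\subseteq D$ forces $\bar h\in(y)$ — at the cost of an extra paragraph; you also correctly identify the paper's route as the alternative you sketch at the end (compatibility forbids the linear part of $h$ from being a multiple of $z$ alone, and $h\in(y,z)$ kills the $x$-coefficient, so $h$ is $y$ times a unit modulo $(z)$). Both arguments are valid and reach the same conclusion.
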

\begin{proof}
 We know that $D=V(h)$ for an element $h\in R$ of the form $h=ay+bz+Q_1$ with $a,b\in K$ and $Q_1\in (y,z)$ with $\ord Q_1\geq2$. Since $\F_2\cup D$ has simple normal crossings, we know that $a\neq0$. By the Weierstrass preparation theorem we may assume that $h=y+zG$ for an element $G\in R$. Consequently, 
 \[\F_2\cap D=V(z,y+zG)=V(z,y)=\F_1.\]  
\end{proof}

\chapter{The resolution invariant for surfaces in arbitrary characteristic} \label{chapter_invariant}

 In the remaining three chapters of the thesis, the new proof for the embedded resolution of surface singularities in arbitrary characteristic will be presented. In accordance with the philosophy of proving embedded resolution of singularities via an upper semicontinuous invariant, the resolution invariant that will be defined in this chapter constitutes the main ingredient of the proof. A thorough explanation and motivation for the particular form of our resolution invariant was given in Chapter 3. The goal of this chapter is to give a rigorous definition of the resolution invariant and prove some basic properties.
 
 The resolution invariant that we are going to use has the form
 \[\iv_\X(a)=(o,c,d,n,s,r,l)\in\N^7\]
 where $\N^7$ is considered with the lexicographic order. The first two components $(o,c)$ are given by the order function and a modification of the order function which guarantees that the top locus of $\iv_\X$ always has simple normal crossings with the exceptional locus produced by previous blowups. The middle part $(d,n,s)$ is defined as the maximum of the flag invariant $\inv(\F)=(d_\F,n_\F,s_\F)$ over all valid flags $\F\in\FF$. We will show in Section \ref{section_inv_drops_for_d>0} that $(d,n,s)$ decreases under point-blowup whenever $(o,c)$ remains constant and $\X$ is not in a terminal case at $a$ already. If $\X$ is in a terminal case at $a$, we set $(d,n,s)=(0,0,0)$ and consider the combinatorial pair $(r,l)$ instead which will be shown to decrease during combinatorial resolution in Section \ref{section_inv_drops_for_d=0}.
 
 As a first step, we will define in Section \ref{section_resolution_settings} the exact setting which our resolution invariant is defined. We will then define in Section \ref{section_flags} the flag invariant that was introduced in Section \ref{section_modifying_the_residual_order}. After that, we will define in Section \ref{section_monom_s_r_case} the combinatorial pair $(r,l)$ for the terminal cases. In Section \ref{section_maximizing_flags} we will show that there exists a flag that maximizes the flag invariant over all valid flags. Finally, the resolution invariant $\ivX$ will be defined in Section \ref{section_def_of_resolution_invariant}.

\section{Resolution settings} \label{section_resolution_settings}

 In this section we will define resolution settings as triples $(W,X,E)$ consisting of a regular $3$-dimensional ambient variety $W$, a hypersurface $X\subseteq W$ whose singularities we want to resolve and a simple normal crossings divisor $E\subseteq W$ which will consist the exceptional components produced by previous blowups. It is a standard technique for the embedded resolution of singularities to consider the divisor $E$ with additional information. More specifically, we want to keep track of the \emph{age} of each component of $E$ and assign to each component of $E$ a unique \emph{label}. This information will be part of our resolution setting and will be used in the definition of the resolution invariant $\iv_\X$.

\subsection{Definition and transformation under blowups}

\begin{definition}
 Let $W$ be a regular variety. A \emph{labeled simple normal crossings divisor} on $W$ is a simple normal crossings divisor $E$ on $W$ together with two maps
 \[\age,\lab:\Comp(E)\to\N_{>0}.\] 
 where $\Comp(E)$ denotes the set of irreducible components of $E$. 
 
 We define for each positive integer $o>0$ the simple normal crossings divisors 
 \[\Ego=\bigcup_{\substack{D\in\Comp(E)\\ \age(D)=o}} D,\]
 \[\Ebo=\bigcup_{\substack{D\in\Comp(E)\\ \age(D)>o}} D.\]
\end{definition}

 The age of a component of $E$ can be thought of as the era of the resolution process during which it was created. To make this more precise, recall that our ultimate goal is to successively lower the maximal order $\max_{a\in X}\ord_aX$ by blowing up regular centers in the top locus $\topp(X)$. Thus, a way of measuring the age of a component $D$ of $E$ is to define $\age(D)$ as what the maximal order was at the iteration of the resolution process during which $D$ was created. Consequently, if we set $o=\max_{a\in X}\ord_aX$ the maximal order of $X$ in the current iteration of the resolution process, then $\Ebo$ consists of all components that were produced in previous eras, when the maximal order was still bigger than $o$, while $\Ego$ consists of all components that were created in the current era.
 
 The reason for this partition of $E$ into old and new components has to do with the choice of the center. A permissible center of blowup $Z$ for a triple $(W,X,E)$ is required to have simple normal crossings with $E$. While the components of $\Ego$ will automatically have simple normal crossings with the centers that we need to blow up to measure improvement of $X$, we can generally make no such statement about the components of $\Ebo$. This distinction will be reflected in the definition of our resolution invariant. As mentioned before, this is a common technique that is used in many proofs of embedded resolution of singularities and thus, we will not explain it in detail here. A more thorough discussion of this technique can be found in \cite{Ha_BAMS_1} p. 359.
 
 On the other hand, the label $\lab(D)$ is simply used as a unique identifier that enables us to tell all components of $E$ apart from each other. It will be used as a component of the combinatorial pair to measure improvement under blowup during combinatorial resolution.

\begin{definition}
 A \emph{$3$-dimensional resolution setting} is a triple $\X=(W,X,E)$ with the following properties:
 \begin{itemize}
  \item $W$ is a $3$-dimensional regular variety.
  \item $X\subseteq W$ is a hypersurface.
  \item $E$ is a labeled simple normal crossings divisor on $W$ with $X\not\subseteq E$ such that the map $\lab:\Comp(E)\to\N$ is injective.
  \item Let $a\in X$ be a point with order $o=\ord_aX$. Then $\age(D)\geq o$ for all components $D$ of $E$ that contain $a$. Denote by $\Eg$ the union of the components of $\Ego$ that contain $a$.
  \item For each closed point $a\in X$ there exists a regular system of parameters $\x=(x,y,z)$ for $\OWa$ such that $\Eg\subseteq V(xy)$ and there exists an element $f\in\IXa$ which is $z$-regular of order $o=\ord_aX$ with respect to the parameters $\x$.
 \end{itemize}
 A $3$-dimensional resolution setting $\X=(W,X,E)$ is said to be \emph{resolved} at a point $a\in X$ if $X$ is regular at $a$ and $X\cup E$ has simple normal crossings at $a$.
\end{definition}

\begin{remark}
 The last property in the definition of $3$-dimensional resolution settings is the most important one. It has several implications of both algebraic and geometric nature:
 \begin{itemize}
  \item $\Eg$ has at most two components.
  \item The element $f\in\IXa$ which is $z$-regular of order $o$ is essential for us to be able to apply the cleaning techniques that we developed in Chapter 5.
  \item For all regular hypersurfaces $H=V(z+g)$ with $g\in K[[x,y]]$, the union $H\cup\Eg$ has simple normal crossings.
 \end{itemize}
 All of the above will be essential for our proof of resolution of surface singularities.
\end{remark}

 To be able to use the resolution settings we just defined for proving the embedded resolution of surface singularities, we need to make sure of two things: First, the requirements for a resolution setting have to be fulfilled at the start of the resolution process. (Thus, when $E$ is empty.) This will be verified in Lemma \ref{start_is_a_res_setting}. Also, we have to check that the properties of a resolution setting are stable under a blowup with a permissible center. To this end, we will first define what a permissible center of blowup is and how a resolution setting transforms under blowup. We will then prove in Proposition \ref{resolution_setting_under_blowup} that the transform of a resolution setting $\X$ still fulfills all the properties in the previous definition.

\begin{lemma} \label{start_is_a_res_setting}
 Let $W$ be a $3$-dimensional regular variety and $X\subseteq W$ a hypersurface. Then $\X=(W,X,\emptyset)$ is a $3$-dimensional resolution setting.
\end{lemma}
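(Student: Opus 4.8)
The plan is to verify directly that the triple $\X=(W,X,\emptyset)$ satisfies each of the five bullet points in the definition of a $3$-dimensional resolution setting. The first two are immediate from the hypotheses: $W$ is a $3$-dimensional regular variety and $X\subseteq W$ is a hypersurface. For the third bullet, $E=\emptyset$ is trivially a labeled simple normal crossings divisor on $W$; since $X$ is a hypersurface (hence a proper closed subset) and $W$ is irreducible, $X\not\subseteq E=\emptyset$ holds, and the map $\lab\colon\Comp(\emptyset)\to\N$ is vacuously injective. The fourth bullet is also vacuous: there are no components $D$ of $E$, so the condition $\age(D)\geq o$ holds for every (nonexistent) component containing any point $a\in X$, and $\Eg=\emptyset$.

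The only substantive point is the last bullet: for each closed point $a\in X$ we must exhibit a regular system of parameters $\x=(x,y,z)$ for $\OWa$ with $\Eg\subseteq V(xy)$ and an element $f\in\IXa$ which is $z$-regular of order $o=\ord_aX$ with respect to $\x$. Since $\Eg=\emptyset$, the condition $\Eg\subseteq V(xy)$ is automatic for any choice of parameters, so the task reduces to finding parameters and a $z$-regular generator. Here I would argue as follows. Since $X$ is a hypersurface, $\IXa$ is a principal ideal; moreover $\hIXa=\wh\OO_{W,a}\cdot\IXa$ is principal in $\hOWa\cong K[[x_1,x_2,x_3]]$, say generated by an element $f$ of order $o=\ord_aX$ (using Lemma \ref{completion_preserves_order}, or just that completion preserves order for principal ideals). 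Now invoke Lemma \ref{z_regular_after_generic_coord_change}: since $K$ is algebraically closed, hence infinite, there are constants $\lambda_1,\lambda_2\in K$ such that, setting $z=x_3$, $y=x_2-\lambda_2 x_3$, $x=x_1-\lambda_1 x_3$, the element $f$ is $z$-regular of order $o$ with respect to $\x=(x,y,z)$. This gives a regular system of parameters for $\hOWa$ with the required property.

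The one subtlety to address is that the definition asks for a regular system of parameters for $\OWa$, not merely for $\hOWa$, together with an element $f\in\IXa$ (not just $f\in\hIXa$). For the parameters this is harmless: a $K$-linear change of the residues of a regular system of parameters for $\OWa$ again yields a regular system of parameters for $\OWa$ (by the characterization of regular systems of parameters via the cotangent space $\mWa/\mWa^2$, and the identification of $\mWa/\mWa^2$ with $\hmWa/\hmWa^2$ recorded in the Convention preceding Lemma \ref{technical_directrix_lemma}), since the coordinate change above is $K$-linear in the residues. For the generator: $z$-regularity of order $o$, i.e.\ $f(0,0,z)=z^o u(z)$ with $u$ a unit, is equivalent to $\ord f_o=0$ in the expansion $f=\sum_i f_i z^i$, which depends only on $\init(f)$ and hence only on $f\bmod\mWa^{o+1}$; so we may take $f$ to be any generator of the principal ideal $\IXa\subseteq\OWa$ (which generates $\hIXa$ as well), and the generic linear change of parameters makes this $f$ itself $z$-regular of order $o$. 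Since $a$ was an arbitrary closed point, all five conditions hold and $\X=(W,X,\emptyset)$ is a $3$-dimensional resolution setting. I expect no real obstacle here; the only care needed is the passage between the local ring and its completion, which is routine given the results already cited.
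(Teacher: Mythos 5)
Your proof is correct and follows the same route as the paper: the only substantive condition is the last bullet, which both you and the paper reduce to Lemma \ref{z_regular_after_generic_coord_change}. The paper dismisses the remaining details as ``immediate,'' while you spell out the passage between $\OWa$ and its completion (parameters given by a $K$-linear change of residues, and $z$-regularity of order $o$ depending only on the initial form of a local generator $f\in\IXa$); this is a reasonable and correct elaboration, not a different argument.
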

\begin{proof}
 We only have to verify that at each closed point $a\in X$ there exists a regular system of parameters $\x=(x,y,z)$ for $\OWa$ and an element $f\in\IXa$ which is $z$-regular of order $o=\ord_aX$ with respect to $\x$. But this is immediate from Lemma \ref{z_regular_after_generic_coord_change}.
\end{proof}

\begin{definition}
 Let $\X=(W,X,E)$ be a $3$-dimensional resolution setting. A closed subset $Z\subseteq W$ is said to be a \emph{permissible center of blowup} for $\X$ if the following two properties hold:
 \begin{itemize}
  \item $Z$ is a permissible center of blowup for $X$ with respect to the order function.
  \item $Z\cup E$ has simple normal crossings.
 \end{itemize}
 
 Let $\pi:W'\to W$ be the blowup at a permissible center $Z$. Let $X'$ be the strict transform of $X$. Denote the exceptional divisor of the blowup by $\Dnew=\pi^{-1}(Z)$. Set $\age(\Dnew)=\max_{a\in X}\ord_aX$ and
 \[\lab(\Dnew)=\max(\{0\}\cup\{\lab(D):D\in\Comp(E)\})+1.\]
 Set $E'=E\st\cup \Dnew$ where $E\st$ denotes the strict transform of $E$. The labels on $E\st$ are defined via $\age(D)=\age(\pi(D))$ and $\lab(D)=\lab(\pi(D))$ for all components $D$ of $E\st$. 
 
 The triple $\X'=(W',X',E')$ is called the \emph{transform} of $\X$ under the blowup $\pi$.
\end{definition}

\begin{proposition} \label{resolution_setting_under_blowup}
 Let $\X=(W,X,E)$ be a $3$-dimensional resolution setting. Consider the blowup $\pi:W'\to W$ along a permissible center $Z$ for $\X$. Then the transform $\X'=(W',X',E')$ of $\X$ under $\pi$ is again a $3$-dimensional resolution setting.
\end{proposition}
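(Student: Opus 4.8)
The plan is to verify that the transform $\X'=(W',X',E')$ satisfies each of the defining properties of a $3$-dimensional resolution setting, working down the list. The first three bullet points are almost immediate: $W'$ is regular of dimension $3$ since blowups of regular varieties along regular centers are regular of the same dimension; $X'$ is the strict transform of a hypersurface, hence again a hypersurface; $E'=E\st\cup\Dnew$ is a simple normal crossings divisor because $Z\cup E$ had simple normal crossings (this is precisely the content of the basic properties of blowups cited in the introduction), and the injectivity of $\lab$ on $\Comp(E')$ follows from the construction, since all labels on $E\st$ are inherited and $\lab(\Dnew)$ is chosen strictly larger than every pre-existing label. The condition $X'\not\subseteq E'$ holds because $X\not\subseteq E$ and the strict transform is taken, plus $\Dnew$ has codimension $1$ while $X'$ is not a component of the exceptional divisor (the center $Z$ is contained in $\Sing(X)\cup E$ but $X$ is not contained in $Z$).

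**The age condition.** Next I would check that for every point $a'\in X'$ with $o'=\ord_{a'}X'$, every component $D'$ of $E'$ through $a'$ has $\age(D')\geq o'$. For components $D'=D\st$ coming from the strict transform of $E$, we have $\age(D')=\age(\pi(D'))\geq \ord_a X$ where $a=\pi(a')$; since the blowup is order-permissible, Proposition~\ref{ord_under_blowup} gives $o'=\ord_{a'}X'\leq\ord_a X\leq\age(D')$. For the new component $D'=\Dnew$, we have $\age(\Dnew)=\max_{b\in X}\ord_b X\geq\ord_a X\geq o'$ by the same inequality. So the age condition is preserved.

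**The $z$-regularity condition — the main obstacle.** The heart of the proof is the last bullet: for each closed point $a'\in X'$ one must produce a regular system of parameters $\x'=(x',y',z')$ for $\OWap$ with $\Egp\subseteq V(x'y')$ and an element $f'\in I_{X',a'}$ that is $z'$-regular of order $o'=\ord_{a'}X'$. I would argue by cases according to where $a'$ sits. Start at $a=\pi(a')$ with parameters $\x=(x,y,z)$ subordinate to $\Eg\subseteq V(xy)$ and $f\in\IXa$ that is $z$-regular of order $o=\ord_a X$. If $o'<o$, then $a'$ is not equiconstant; in this case one can invoke Lemma~\ref{z_regular_after_generic_coord_change} (after passing to the completion) to find \emph{some} regular parameter system in which $f'$ is $z'$-regular of the correct order $o'$, and then one must check that the parameters can additionally be chosen subordinate to $\Egp$ — this is where one uses that $\Egp$ has at most two components and that $a'$ lies on at most two of them (new exceptional $\Dnew$ plus possibly one strict transform), so the defining equations of $\Egp$ involve at most two of the parameters and a generic linear change of the remaining coordinates against $z'$ preserves this. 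If $o'=o$ (equiconstant point), then by Proposition~\ref{directrix_under_blowup} and the structure of adjacent hypersurfaces, after passing to a chart $a'$ avoids the $z$-chart; one uses Lemma~\ref{z_regular_stable_under_blowup} to conclude that the transform $f'=x_i'^{-o}\pi(f)$ is again $z'$-regular of order $o$ with respect to the induced parameters, provided the local blowup map can be made monomial in parameters subordinate to $\Eg$. The subtle point, flagged already in the remark after Section~\ref{section_coord_expression_of_blowups}, is that forcing the map to be monomial via $\wt x_i=x_i-\tfrac{t_i}{1}x_1$ may destroy subordinateness to $\Eg$; one must check that because at most one component of $\Eg$ is "lost" in the transition (the other, if present, becomes $\Dnew$ or survives as a strict transform), the center $Z$ and the parameters can be arranged compatibly. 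I expect this bookkeeping — reconciling monomiality of the blowup map with the requirement $\Egp\subseteq V(x'y')$ across all the chart/point cases — to be the main obstacle, and it will likely require splitting into subcases by $\dim Z$ (point versus curve center) and by how many components of $\Eg$ contain $a$. Once the parameters and $f'$ are produced in every case, the verification is complete.
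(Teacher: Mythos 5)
Your overall structure matches the paper's, but you miss two simplifications in the final (and hardest) bullet, and one of them leaves a genuine gap in your argument.

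In the case $o'<o$, you propose to produce $z'$-regular parameters via Lemma~\ref{z_regular_after_generic_coord_change} and then ``check that the parameters can additionally be chosen subordinate to $\Egp$'' using a generic linear change. The key observation you have missed is that when $o'<o$ one automatically has $\Egp=\emptyset$: every component $D'$ of $E'$ through $a'$ satisfies $\age(D')\geq o>o'$ (for $\Dnew$ because $\age(\Dnew)=\max_X\ord X\geq o$, for strict transforms because age is inherited and was already $\geq o$), so no component has age \emph{equal} to $o'$. Hence there is nothing to be subordinate to, and Lemma~\ref{z_regular_after_generic_coord_change} finishes the case immediately. Without this observation your proposed reconciliation does not obviously work: Lemma~\ref{z_regular_after_generic_coord_change} achieves $z$-regularity by replacing $x_i$ with $x_i-\lambda_iz$, which moves the hypersurfaces $V(x_i)$ and would destroy any subordination to $\Egp$ you had arranged beforehand; the ``generic linear change of the remaining coordinates against $z'$'' you invoke is not the coordinate change that lemma provides, and you do not supply the substitute argument.

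In the case $o'=o$, your worry about reconciling monomiality of the blowup map with subordination to $\Eg$ is a red herring for this particular proposition. Lemma~\ref{z_regular_stable_under_blowup} is stated for blowup maps of the form $\pi(x_i)=x_1(x_i+t_i)$ with arbitrary translation constants $t_i$, so $z$-regularity of the transform is preserved without requiring the map to be monomial; the remark after Section~\ref{section_coord_expression_of_blowups} about monomiality conflicting with prescribed parameters is relevant elsewhere (e.g.\ for the residual order), but not here. The paper's argument simply chooses $z$ adjacent (via Lemmas~\ref{z_regular_under_coord_change} and \ref{directrix_from_z_regularity}), uses permissibility of $Z$ to assume $I_{Z,a}$ is generated by a subset of $(x,y,z)$ with $z\in I_{Z,a}$ (Lemma~\ref{directrix_and_top_locus}), concludes $a'$ is not in the $z$-chart (Proposition~\ref{directrix_under_blowup}), and then applies Lemma~\ref{z_regular_stable_under_blowup} directly together with the easy check $\Egp\subseteq V(x'y')$; no subcases by $\dim Z$ or number of components of $\Eg$ are needed. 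You should drop the monomiality concern and incorporate the $\Egp=\emptyset$ observation to close the gap.
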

\begin{proof}
 It follows from basic properties of blowups that $W'$ is again a $3$-dimensional regular variety, $X'\subseteq W'$ is a hypersurface and $E'$ is a simple normal crossings divisor on $W'$.
 
 By the definition of the labels on $E'$, it is clear that the map $\lab:\Comp(E')\to\N$ is again injective.
 
 Now let $a'\in X'$ be a closed point and set $a=\pi(a')$. Further, set $o=\ord_aX$ and $o'=\ord_{a'}X'$. By Proposition \ref{ord_under_blowup}, $o'\leq o$. It is clear that
 \[\age(\Dnew)=\max_{a\in X}\ord_aX\geq o\geq o'.\]
 Let $D'$ be a component of $E'$ with $a'\in D'$ and $D'\neq\Dnew$. Then $D'$ is the strict transform of a component $D$ of $E$ that contains $a$. Hence, 
 \[\age(D')=\age(D)\geq o\geq o'.\]
 This proves that $\age(D')\geq o'$ for all components $D'$ of $E'$ that contain $a'$.
 
 We now want to prove the existence of a regular system of parameters $\x'$ for $\OWap$ with the claimed properties. If $a'\notin \Dnew$, the blowup $\pi$ is locally at $a'$ an isomorphism. Thus, the existence of regular parameters $\x'$ with the claimed properties is clear. So we can assume that $a'\in\Dnew$. 
 
 Consider first the case $o'<o$. This implies $\Egp=\emptyset$. The statement then follows from Lemma \ref{z_regular_after_generic_coord_change}.
 
 Now assume that $o'=o$. Let $\x=(x,y,z)$ be a regular system of parameters for $\OWa$ such that $\Eg\subseteq V(xy)$ and let $f\in \IXa$ be an element that is $z$-regular of order $o$ with respect to $\x$. By Lemma \ref{z_regular_under_coord_change} and Lemma \ref{directrix_from_z_regularity} we can assume without loss of generality that the local hypersurface $H=V(z)$ is adjacent to $X$ at $a$. Since $Z$ is a permissible center for blowup, we know that $Z\cup\Eg$ has simple normal crossings at $a$. Hence, we may further assume without loss of generality that the ideal $I_{Z,a}$ is generated by some of the parameters $x,y,z$. By Lemma \ref{directrix_and_top_locus} this implies that  $z\in I_{Z,a}$.
 
 By Lemma \ref{directrix_under_blowup}, $a'$ is not contained in the $z$-chart. Assume without loss of generality that $a'$ is contained in the $x$-chart and let $\x'=(x',y',z')$ be the induced parameters for $\OWap$. It is easy to see that $\Egp\subseteq V(x'y')$ holds. Further, we know by Lemma \ref{z_regular_stable_under_blowup} that there is an element $f'\in I_{X',a'}$ which is $z'$-regular of order $o'=o$ with respect to $\x'$.
\end{proof}

\subsection{The basic invariants $o_\X$ and $c_\X$}

 As mentioned before, we do not know in general whether $\Ebo$ (where $o=\max_{a\in X}\ord_aX$) has simple normal crossings with the centers that we need to blow up to lower $\ord X$. The solution to this problem is to, instead of trying to lower $\ord X$, we try to lower $\ord (X\cup E_{>o})$. Hence, we choose centers which are contained in the top locus of $X\cup\Ebo$. Since such centers are always locally contained in $\Ebo$, they automatically have simple normal crossings with $\Ebo$.
 
 The obvious drawback of this technique is that we have no control over the maximal order of $X\cup\Ebo$ whenever $o$, the maximal order of $X$, decreases under blowup. To amend this, we will consider the pair $(\ord_aX,\ord_a(X\cup\Ebo))$ with the lexicographic order.

 This motivates the definition of the first two components $o$ and $c$ of the resolution invariant $\iv_\X$:

 Let $\X=(W,X,E)$ be a $3$-dimensional resolution setting. We define the maps $o_\X,c_\X:X\to\N$ via
 \[o\Xa=\ord_aX,\]
 \[c\Xa=\ord_a(X\cup\Ebo).\]
 
 Define for each pair $(o,c)\in\N^2$ the set
 \[X_{\geq(o,c)}=\{a\in X:(o_\X(a),c_\X(a))\geq (o,c)\}\subseteq X.\]
 It is immediate from Proposition \ref{order_is_usc} that $X_{\geq(o,c)}$ is closed. In other words, the map $(o_\X,c_\X):X\to\N^2$ is upper semicontinuous.
 
 Let $a\in X$ be a closed point and set $(o,c)=(o\Xa,c\Xa)$. Let
 \[I_3(a)=\hIXa\cdot\wh I_{\Ebo,a}\]
 be the ideal that defines $X\cup\Ebo$ in $\hOWa$. Clearly, $\ord I_3(a)=c$.
 
 Let $I_{\geq(o,c)}(a)=\wh I_{X_{\geq(o,c)},a}\subseteq\hOWa$ be the ideal that defines $X_{\geq(o,c)}$ in $\hOWa$. We know from Proposition \ref{ord_via_diff_ops} that the equality
 \[I_{\geq(o,c)}(a)=\rad(\Diff_{\hOWa/K}^{-1}(I_3(a))).\]
 holds.
 
 If it is clear from the context which point $a$ is considered, we will denote above ideals just by $I_3$ and $\Ioc$. Similarly, we will usually just denote $c=c_\X(a)$.

\begin{lemma} \label{c=1_measures_resolved}
Let $\X=(W,X,E)$ be a $3$-dimensional resolution setting and $a\in X$ a closed point. Set $(o,c)=(o\Xa,c\Xa)$. The following hold:
 \begin{enumerate}[(1)]
  \item $(o,c)\geq (1,1)$.
  \item If $(o,c)=(1,1)$ holds, then $\X$ is resolved at $a$.
 \end{enumerate}
\end{lemma}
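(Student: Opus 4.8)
The plan is to unwind the definitions of $o_\X$ and $c_\X$ and reduce everything to the elementary facts about the order function collected in Proposition \ref{order_basic_properties}. Recall that $o\Xa=\ord_aX$ and $c\Xa=\ord_a(X\cup\Ebo)$, where $\Ebo$ denotes the union of the components of $E$ of age $>o$. Since $X$ is a hypersurface and hence a proper closed subset of the regular variety $W$, Proposition \ref{order_basic_properties} (1) gives $\ord_aX<\infty$, and since $a\in X$ we have $\ord_aX\geq 1$; likewise $X\cup\Ebo$ is a proper closed subset (because $X\not\subseteq E$ and $W$ is irreducible, so $X\cup\Ebo\neq W$) containing $a$, so $1\leq\ord_a(X\cup\Ebo)<\infty$. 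This already settles part (1): $(o,c)\geq(1,1)$.

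For part (2), assume $(o,c)=(1,1)$. First I would use $o=\ord_aX=1$: by Proposition \ref{order_basic_properties} (3), since $X$ is a hypersurface, $\ord_aX=1$ forces $a$ to be a regular point of $X$. This gives the first half of "resolved at $a$". The remaining task is to show $X\cup E$ has simple normal crossings at $a$. Here I would split the components of $E$ through $a$ according to age. For a component $D$ of $E$ with $a\in D$, the resolution-setting axiom says $\age(D)\geq o=1$; if $\age(D)>o$ then $D$ is a component of $\Ebo$, and the condition $c=\ord_a(X\cup\Ebo)=1$ combined with $\ord_a D\geq 1$ and $\ord_aX\geq 1$ forces (via the fact that the order of a product/union ideal is the sum, $\ord_a(X\cup\Ebo)\geq\ord_aX+\sum_{D\subseteq\Ebo,\,a\in D}\ord_aD$) that there are \emph{no} components of $\Ebo$ through $a$ at all. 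So every component of $E$ through $a$ lies in $\Ego$, hence in $\Eg$.

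Now I would invoke the last axiom of a $3$-dimensional resolution setting: there is a regular system of parameters $\x=(x,y,z)$ for $\OWa$ with $\Eg\subseteq V(xy)$ and an element $f\in\IXa$ which is $z$-regular of order $o=\ord_aX=1$ with respect to $\x$. Being $z$-regular of order $1$ means $f(0,0,z)=z\cdot u(z)$ for a unit $u$, so after multiplying by $u^{-1}$ we may assume $f=z+(\text{higher-order terms in }x,y)$; in particular $\IXa=(f)$ with $\ord_af=1$ and $\ol f\equiv\ol z$ modulo $m_{W,a}^2$ up to the contributions of $x,y$. The point is that $(x,y,f)$ is again a regular system of parameters for $\OWa$ (its residues span the cotangent space since $f$ has order $1$ and is $z$-regular), $X=V(f)$ in this system, and $\Eg\subseteq V(xy)$; hence $X\cup E$ is locally defined by a monomial in the regular system of parameters $(x,y,f)$, which is precisely the condition that $X\cup E$ has simple normal crossings at $a$. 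Combined with the regularity of $X$ at $a$ established above, this shows $\X$ is resolved at $a$.

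The main obstacle I anticipate is the bookkeeping needed to pass cleanly from "$f$ is $z$-regular of order $1$" to "$(x,y,f)$ is a regular system of parameters in which both $X$ and the relevant components of $E$ are coordinate subspaces up to units": one must check that multiplying $f$ by a unit does not disturb the property $\Eg\subseteq V(xy)$ (it does not, since $x,y$ are untouched) and that $V(x)$, $V(y)$ remain regular hypersurfaces meeting $V(f)$ transversally — this is where the $z$-regularity is used, to guarantee that $\ol f$ is independent of $\ol x,\ol y$ in the cotangent space. Everything else is a direct application of Proposition \ref{order_basic_properties} and the definitions, so the write-up should be short.
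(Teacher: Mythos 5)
Your proof is correct and follows essentially the same route as the paper's: both use Proposition \ref{order_basic_properties} (3) for regularity of $X$ at $a$, deduce from $o=c=1$ that no components of $E_{>o}$ pass through $a$ (you spell out the additivity of order on the product ideal $I_3(a)=\hIXa\cdot\wh I_{\Ebo,a}$ that the paper leaves implicit), and then invoke the resolution-setting axiom to produce apposite parameters $(x,y,z)$ with a $z$-regular element $f\in\IXa$ of order $1$ so that $(x,y,f)$ becomes a regular system of parameters monomializing $X\cup E$. The extra care you take with the cotangent-space argument and the effect of multiplying $f$ by a unit is sound but not a departure from the paper's argument.
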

\begin{proof}
 (1): This is clear.
 
 (2): Since $\ord_aX=o=1$ and $X$ is a hypersurface, we know by Lemma \ref{order_basic_properties} (3) that $X$ is regular at $a$. Since $o=c=1$, we also know that $a$ is not contained in $E_{>o}$.
 
 Further, there exists a regular system of parameters $\x=(x,y,z)$ for $\OWa$ such that $\Eg\subseteq V(xy)$ and an element $f\in \IXa$ that is $z$-regular of order $1$. Thus, $(x,y,f)$ is a regular system of parameters for $\OWa$. Consequently, $X\cup E$ has simple normal crossings at $a$.
\end{proof}

\begin{lemma} \label{no_z^c}
If $c>1$, then there exists no regular parameter $z\in\hOWa$ such that $I_3=(z^c)$.
\end{lemma}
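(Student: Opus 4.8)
The statement is that if $c = c_\X(a) > 1$, then $I_3 = \hIXa \cdot \wh I_{\Ebo, a}$ cannot be a principal ideal generated by the $c$-th power of a regular parameter. The plan is to argue by contradiction: suppose $I_3 = (z^c)$ for some regular parameter $z \in \hOWa$. I would first record what this forces about the two factors of $I_3$. Since $\hOWa$ is a regular local ring (hence a UFD) and $z$ is irreducible, the factorization $I_3 = \hIXa \cdot \wh I_{\Ebo, a}$ into the product of the (completed) ideal of the hypersurface $X$ and the ideal of the simple normal crossings divisor $\Ebo$ means both factors are principal, say $\hIXa = (f)$ and $\wh I_{\Ebo,a} = (h)$ with $f \cdot h = z^c \cdot (\text{unit})$. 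After adjusting by a unit we may take $fh = z^c$, so both $f$ and $h$ are (up to units) powers of $z$: $f = z^k \cdot u_1$, $h = z^m \cdot u_2$ with $k + m = c$, $u_1, u_2$ units.

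The key step is then to invoke the last axiom in the definition of a $3$-dimensional resolution setting: there is a regular system of parameters $\x = (x,y,z')$ for $\OWa$ with $\Eg \subseteq V(xy)$ and an element $f_0 \in \IXa$ that is $z'$-regular of order $o = \ord_a X$ with respect to $\x$. Because $\hIXa = (f) = (z^k u_1)$ is principal with $\ord_a X = o$, we get $k = o$, so $\hIXa = (z^o \cdot u_1)$, i.e. $X$ is, formally at $a$, the non-reduced-looking but set-theoretically regular hypersurface $V(z)$ with multiplicity $o$; in particular $z$ is, up to a unit, the generator $f$ of $\hIXa$, and $f_0 = z^o \cdot v$ for a unit $v$. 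Now I would derive a contradiction from the geometry of $\Ebo$. We have $\wh I_{\Ebo,a} = (h) = (z^m u_2)$ with $m = c - o$. Since $c > 1$ and $c \geq o \geq 1$, and we need $c > 1$ to matter: if $o = 1$ then $m = c - 1 \geq 1$, so $\Ebo$ is (formally) the hypersurface $V(z)$ with multiplicity $c - 1 \geq 1$; if $o \geq 2$ then $m = c - o \geq 0$ but in any case $V(\Ebo) \subseteq V(z) = V(f) = X$ locally at $a$ whenever $m \geq 1$. The point is that $\Ebo$ must then be supported on the same hypersurface $V(z)$ that equals $X$ set-theoretically near $a$. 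But this is impossible: a resolution setting requires $X \not\subseteq E$, and more precisely $\Ebo$ is a divisor whose support cannot be all of (a component of) $X$ — combine this with the axiom giving $\Eg \subseteq V(xy)$ with $x,y$ transverse to $z$, so that $E = \Eg \cup \Ebo$ and the distinguished parameter $z'$ from the $z$-regular element $f_0$ together with $x,y$ form a regular system of parameters; a component of $\Ebo$ equal to $V(z)$ would violate the simple normal crossings structure relating $E$, $X$, and the parameters $(x,y,z')$.

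More carefully, the cleanest contradiction is as follows. If $I_3 = (z^c)$ with $c > 1$, then $\ord_a(X \cup \Ebo) = c$ and the directrix of $I_3$ satisfies $\Dir(I_3) = (\ol z)$ by Lemma \ref{coeff_ideal_and_directrix} (since $\coeff^c_{(\x,z)}(I_3) = 0$ forces, via Lemma \ref{coeff_is_zero}, $I_3 = (z^c)$, and conversely). Thus $X \cup \Ebo$ is, formally, supported on the single regular hypersurface $V(z)$. In particular every component $D$ of $\Ebo$ passing through $a$ has $\wh I_{D,a} = (z)$. But now use the $z'$-regular element $f_0 \in \IXa$: since $\hIXa \mid I_3 = (z^c)$ and $\hIXa$ is principal of order $o$, we get $\hIXa = (z^o)$ up to a unit, hence $f_0 = z^o w$ for a unit $w$. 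Being $z'$-regular of order $o$ means $f_0(0,0,z') = (z')^o \cdot (\text{unit})$; comparing with $f_0 = z^o w$ and the fact that $(f_0) = (z^o)$ forces $(z) = (z')$ as ideals, i.e. $z$ and $z'$ differ by a unit, so $(x, y, z)$ is a regular system of parameters with $\Eg \subseteq V(xy)$. Then $E = \Eg \cup \Ebo$ with $\Eg \subseteq V(xy)$ and $\Ebo \subseteq V(z)$ (by the displayed directrix computation, for components through $a$), so $X \cup E \subseteq V(xyz)$ has simple normal crossings and $X = V(z^o)$ is regular at $a$ — meaning $\X$ is resolved at $a$, so by Lemma \ref{c=1_measures_resolved} and the fact that resolvedness together with $X = V(z)$ forces $a \notin E_{>o}$, we get $c = \ord_a(X \cup \Ebo) = \ord_a X = o$; but we also need $o > 1$ (else $c = o = 1$ contradicts $c > 1$), and if $o > 1$ then $X = V(z^o)$ regular but with $\ord_a X = o > 1$ is the expected situation — the real contradiction is that $\Ebo \subseteq V(z) = $ (set-theoretic $X$) violates $X \not\subseteq E$ unless $\Ebo$ has no component through $a$, in which case $c = o$ and then $I_3 = \hIXa = (z^o)$, giving $\coeff^o_{(x,y,z)}(\hIXa) = 0$, so $\hIXa = (z^o)$ literally — which is fine, so one must instead extract the contradiction from $c > 1$ meaning $c = o > 1$ and the structure of $X_{\geq(o,c)}$.

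\textbf{Main obstacle.} The hard part, and the place where I expect to have to be careful, is pinning down precisely why $I_3 = (z^c)$ with $c>1$ is incompatible with $\X$ being a resolution setting — the two cases $o = c$ (so $\Ebo$ contributes nothing locally at $a$) and $o < c$ (so $\Ebo$ is supported on $V(z) = X$, contradicting $X \not\subseteq E$) must be handled separately, and in the first case one must use that $X = V(z^o)$ with $o = c > 1$ would make $X$ regular (hence, together with $\Eg \subseteq V(xy)$ and $a \notin \Ebo$, the setting resolved at $a$), but $o > 1$ contradicts $X$ being regular at $a$ with order $o \geq 2$ unless — and this is the crux — one observes that the very definition of $c_\X(a)$ via $I_3 = \hIXa \cdot \wh I_{\Ebo,a}$ together with $I_3$ principal forces a rigid structure that the axioms exclude. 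I would therefore split on whether some component of $\Ebo$ passes through $a$: if yes, $\wh I_{\Ebo,a} \subseteq (z)$ gives $V(z) \subseteq \Supp(\Ebo) \cup \{\text{other}\}$ and since $(f) = \hIXa$ divides $(z^c)$ we get $X \subseteq V(z)$ locally, and one uses the irreducibility / reducedness to conclude $X \subseteq E$, contradicting the definition of resolution setting; if no, then $I_3 = \hIXa = (z^c)$ with $c = o > 1$, so $\coeff^o_{(x,y,z)}(\hIXa) = 0$ while the $z'$-regular $f_0$ has $\ord f_{0,o} = 0$ with $f_0 = \sum f_{0,i}(z')^i$ — but $(f_0)=(z'^o)$ forces $f_{0,i}=0$ for $i<o$, consistent, so in fact $X = V(z'^o)$ genuinely, which is regular; the contradiction in this subcase is simply that a regular hypersurface $X$ of order $o > 1$ cannot have $X \cup E$ fail SNC in a way consistent with $c > 1$ — more simply, $X$ regular at $a$ means $X$ is a regular hypersurface so by the last bullet of the remark after the definition of resolution settings $X \cup \Eg$ has SNC, and $a \notin \Ebo$, so $X \cup E$ has SNC at $a$, i.e. $\X$ is resolved at $a$; but then by upper semicontinuity and the structure of the resolution process there is no consistent value $c > 1$, contradicting $c_\X(a) = c > 1$. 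I would write up whichever of these two threads turns out to be the shortest rigorous argument, most likely the $X \subseteq E$ contradiction in the first subcase and a direct order-of-$X$ versus regularity contradiction in the second.
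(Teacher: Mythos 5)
You correctly identify that $\hOWa$ is a UFD, so the factorization $I_3 = \hIXa \cdot \wh I_{\Ebo,a} = (z^c)$ forces both factors to be powers of $z$ up to units, and you correctly split into cases according to whether some component of $\Ebo$ passes through $a$. The case where it does can be dispatched by the $X \subseteq E$ argument you sketch: $\wh I_{\Ebo,a} = (z^m)$ with $m \geq 1$ must be radical since $\Ebo$ is a reduced SNC divisor, so $m=1$; then $V(z)$ is a component of $E$, and since $\hIXa=(z^{c-1})$ has the same radical $(z)$, the irreducible variety $X$ coincides locally with that component, and by irreducibility $X$ equals it globally, contradicting $X \not\subseteq E$. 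The paper disposes of this case in a single sentence (``It is clear that this implies $a$ is not contained in $E_{>o}$'').

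However, there is a genuine gap in the remaining (main) case, where $I_3 = \hIXa = (z^c)$ with $c>1$. You write that then ``$X = V(z'^o)$ \ldots is regular,'' which is false: if $\hIXa=(z^o)$ with $o>1$, then $\ord_a X = o \geq 2$ and $X$ is singular at $a$ by Proposition~\ref{order_basic_properties}~(2), so none of the downstream ideas you try (resolvedness of $\X$ at $a$, SNC configurations, upper semicontinuity) lead anywhere. The key fact you are missing is the one the paper actually uses: $X$ is a variety, so $\OO_{X,a}$ is a reduced local ring of a finite-type $K$-algebra, and by Chevalley's theorem on analytic unramifiedness (Zariski--Samuel, Thm.~32, p.~320) its completion $\wh\OO_{X,a}$ is also reduced. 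But $\wh\OO_{X,a} = \hOWa / \hIXa = \hOWa/(z^c)$ has the nilpotent $z$ whenever $c>1$, a contradiction. This single reducedness-of-completion argument closes the case you could not finish and is in fact the entire substance of the paper's proof.
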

\begin{proof}
 Assume that $I_3=(z^c)$ where $c>1$. It is clear that this implies that $a$ is not contained in $E_{>o}$. Hence, $I_3=\wh I_{X,a}$.
 
 The local ring $\OO_{X,a}$ is reduced. By a theorem of Chevalley (\cite{Zariski_Samuel} Thm. 32, p. 320) this implies that $\OO_{X,a}$ is analytically unramified, meaning that its completion $\wh\OO_{X,a}$ is reduced. Since $\wh \OO_{X,a}=\hOWa/\wh I_{X,a}=\hOWa/(z^c)$, this is a contradiction.
\end{proof}

\subsection{Apposite parameters}

 By considering the pair $(o_\X,c_\X)$ instead of the order function on $X$, our goal has changed from lowering the order of the ideal $\hIXa$ to lowering the order of the ideal $I_3(a)$. While the existence of an element $f\in\hIXa$ that is $z$-regular of order $o_\X(a)$ was guaranteed by the definition of the resolution setting, the existence of such an element in the ideal $I_3(a)$ has to be proven. This motivates the following definition.

\begin{definition}
 Let $\X=(W,X,E)$ be a $3$-dimensional resolution setting and $a\in X$ a closed point. A regular system of parameters $\x=(x,y,z)$ for $\hOWa$ is said to constitute \emph{apposite parameters} for $\X$ at $a$ if the following two properties hold:
\begin{itemize}
 \item $\Eg\subseteq V(xy)$.
 \item There exists an element $f\in I_3(a)$ which is $z$-regular of order $c=c\Xa$ with respect to $\x$.
\end{itemize}
 If $x,y,z$ are moreover elements of the local ring $\OWa$, we will call them \emph{local apposite parameters}.
\end{definition}

 Apposite parameters play an important role in our proof. The element $f\in I_3(a)$ that is $z$-regular of order $c$ will allow us to apply cleaning to $z$ with respect to coefficient ideals of $I_3(a)$. In particular, we will be able to show in Section \ref{section_maximizing_flags} that for given apposite parameters $\x=(x,y,z)$ there always exists a maximizing flag $\F\in\FF$ with $\F_2=V(z+g)$ for an element $g\in K[[x,y]]$. This in turn will be essential for showing the decrease of $\iv_\X$ under point-blowups in the non-terminal case in Section \ref{section_inv_drops_for_d>0}.

\begin{lemma} \label{good_parameters}
 Let $\X=(W,X,E)$ be a $3$-dimensional resolution setting and $a\in X$ a closed point. There exist local apposite parameters for $\X$ at $a$.
\end{lemma}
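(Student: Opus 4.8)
The plan is to produce local apposite parameters explicitly, by keeping fixed the parameters that cut out the components of $\Eg$ and choosing the remaining members of the parameter system generically. Let $(x,y,z)$ be a regular system of parameters for $\OWa$ as provided by the definition of a resolution setting, so that $\Eg\subseteq V(xy)$ and some $f_0\in\IXa$ is $z$-regular of order $o=o\Xa=\ord_aX$. Let $D_1,\dots,D_m$ be the components of $\Ebo$ through $a$ (possibly $m=0$); each $D_j$ is regular at $a$, hence $D_j=V(\ell_j)$ for a regular parameter $\ell_j\in\OWa$, and $\wh I_{\Ebo,a}=(\ell_1\cdots\ell_m)$. Put $F=\ell_1\cdots\ell_m\cdot f_0\in I_3(a)$. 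Since the order function is multiplicative on the regular local ring $\OWa$, one gets $\ord_aF=m+o$, and the same computation applied to $I_3(a)=\wh I_{\Ebo,a}\cdot\hIXa$ gives $c:=c\Xa=m+o$; thus $\ord_aF=c$. In $\gr_{\mWa}\OWa=K[\ol x,\ol y,\ol z]$ we have $\init(F)=\init(\ell_1)\cdots\init(\ell_m)\cdot\init(f_0)$, where each $\init(\ell_j)$ is a nonzero linear form (a tangent equation of $D_j$) and, because $f_0$ is $z$-regular of order $o$, $\init(f_0)$ contains the monomial $\ol z^{\,o}$ with nonzero coefficient; in particular $\init(f_0)$ is a nonzero form which remains nonzero after restriction to the plane $\{\ol x=0\}$ and to the line $\{\ol x=\ol y=0\}$.

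The reduction rests on the following observation: for any regular system of parameters $(x',y',z')$ of $\OWa$, the element $F$, which has order $c$, is $z'$-regular of order $c$ if and only if $\init(F)$ does not vanish at the tangent direction $v\in T_aW$ of the smooth curve $V(x',y')$. Indeed, an element of order $c$ is $z'$-regular of order $c$ exactly when the coefficient of $(z')^c$ in its $z'$-expansion is a unit, and through the identification $\gr(\OWa/(x',y'))=K[\ol z']$ this coefficient is a unit if and only if $\init(F)(v)\ne0$; when it holds, $z'|_{V(x',y')}$ is a uniformizer of $V(x',y')$ and $F|_{V(x',y')}=(z')^c\cdot(\text{unit})$, so $F$ is genuinely $z'$-regular of order $c$ in $\hOWa=K[[x',y',z']]$.

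It remains to choose $(x',y',z')$. We leave unchanged whichever of $x,y$ define the (at most two) components of $\Eg$, so that automatically $\Eg\subseteq V(x'y')$; the tangent direction $v$ of the new axis $V(x',y')$ may then be prescribed arbitrarily inside the subspace $P\subseteq T_aW$ annihilated by the differentials of the kept parameters, and $\dim P=3-\#\Eg\ge1$ because $\Eg\subseteq V(xy)$ forces $\#\Eg\le2$. A direction $v$ of $P$ is admissible, i.e. $\init(F)(v)\ne0$, unless $v$ lies in one of the hyperplanes $V(\init(\ell_j))$ or in the degree-$o$ hypersurface $V(\init(f_0))$. None of the intersections $V(\init(\ell_j))\cap P$ equals $P$, since the $D_j$ meet the components of $\Eg$ with normal crossings, and $V(\init(f_0))\cap P$ is a proper subvariety of $P$ by the remark above; hence the inadmissible directions form a proper closed subvariety of the projective space $\mathbb{P}(P)$ of directions of $P$ (when $\dim P=1$ this means: there are none, the unique direction being admissible). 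As $K$ is algebraically closed, hence infinite, its $K$-points do not exhaust $\mathbb{P}(P)$, so an admissible $v$ exists. Realizing $v$ by a regular system of parameters $(x',y',z')$ of $\OWa$ whose kept members are unchanged and whose remaining members are chosen as linear combinations of $x,y,z$ with the prescribed differentials, we obtain, by the criterion of the previous paragraph, local apposite parameters for $\X$ at $a$.

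The main obstacle is the bookkeeping forced by the constraint that the components of $\Eg$ must not be moved: the generic choice of the new $z$-axis has to be carried out inside a tangent subspace $P$ whose dimension shrinks as $\#\Eg$ grows, and one must verify both that $\#\Eg\le2$ (a consequence of $\Eg\subseteq V(xy)$) keeps $\dim P\ge1$ and that the extreme case $\dim P=1$, in which no genuine freedom remains, still works — which it does precisely because the $\ol z^{\,o}$-monomial of $\init(f_0)$ together with the normal-crossings hypothesis prevents $\init(F)$ from vanishing on the $z$-axis.
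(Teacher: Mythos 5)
Your proof is correct, and the underlying idea — keep fixed the parameters cutting out $\Eg$, then generically rotate the $z$-axis so that a generator of $I_3(a)$ becomes $z$-regular — is the same as the paper's, but you organize it more uniformly. The paper first normalizes to a coordinate system in which all of $\Eg\cup\Egold$ is monomial, then runs a case analysis according to how $\Eg$ and $\Egold$ sit; the nontrivial case is $\Eg=V(x)$, $\Egold=V(y)$ or $V(yz)$, where it picks $t\in K^*$ by hand so that both $f$ and a generator $g$ of $\wh I_{\Ebo,a}$ are $z$-regular in the tilted system $(x,y+tz,z)$, and then uses the product $fg$. You instead form the single generator $F=\ell_1\cdots\ell_m f_0$ of $I_3(a)$ up front, make explicit the clean criterion that an element of order $c$ is $z'$-regular of order $c$ precisely when its initial form does not vanish at the tangent direction of $V(x',y')$, and exploit the factorization $\init(F)=\init(\ell_1)\cdots\init(\ell_m)\cdot\init(f_0)$: each linear factor cannot vanish on all of $P$ by the normal-crossings hypothesis, and $\init(f_0)$ cannot vanish at $\partial_z\in P$ by the $z$-regularity of $f_0$, so the bad directions are a proper closed subset of $\mathbb{P}(P)$. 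This eliminates the case split, makes visible which structural facts are actually being used (normal crossings, $z$-regularity, infiniteness of $K$), and correctly handles the extremal case $\dim P=1$, where the unique direction $\partial_z$ is admissible for free. The two proofs buy essentially the same thing; yours is a cleaner packaging of the genericity argument that the paper leaves at the level of ``it is easy to see.''
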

\begin{proof}
 Set $o=o\Xa$. Let $\Egold$ be the union of those components of $E_{>o}$ which contain $a$. Since $\Eg\cup\Egold$ has simple normal crossings, there exists a regular system of parameters $\x=(x,y,z)$ for $\OWa$ such that $\Eg\cup\Egold\subseteq V(xyz)$. By assumption on the resolution setting, we can further assume that $\Eg\subseteq V(xy)$ and there exists a parameter $w\in\OWa$ which is $z$-regular and an element $f\in\IXa$ which is $\w$-regular of order $o=o\Xa$ with respect to the parameters $(x,y,w)$. By Lemma \ref{z_regular_under_coord_change}, the element $f$ is also $z$-regular of order $o$ with respect to the parameters $\x=(x,y,z)$.
 
 The statement is trivial if $\Egold=\emptyset$. By Lemma \ref{z_regular_after_generic_coord_change} the statement follows for $\Eg=\emptyset$ . Also, it is clear that the statement holds if $\Egold=V(z)$. So we may assume without loss of generality that $\Eg=V(x)$ and either $\Egold=V(y)$ or $\Egold=V(yz)$. It is easy to see that there is a constant $t\in K^*$ such that $f$ is $z$-regular of order $o$ with respect to the parameters $\x_1=(x,y+tz,z)$. Also, it is then clear that $I_{E_{>o},a}=(g)$ for an element $g\in\OWa$ that is $z$-regular of order $c-o$ with respect to the parameters $\x_1=(x,y+tz,z)$. Hence, $f\cdot g\in I_3(a)$ is an element which is $z$-regular of order $c$ with respect to $\x_1$. Thus, $\x_1$ are apposite parameters for $\X$ at $a$.
\end{proof}

\section{The flag invariant} \label{section_flags}

 We will now rigorously define the flag invariant that was outlined in Section \ref{section_modifying_the_residual_order}. The well-definedness of the various invariants associated to flags follows from the fundamental results we proved in Chapter 4.
 
 For this whole section, let $\X=(W,X,E)$ be a $3$-dimensional resolution setting and $a\in X$ a closed point such that $c=c\Xa>1$.

\subsection{Compatible flags, associated multiplicity $n_\F$ and associated divisor $D_\F$}
 
 A \emph{formal flag} $\F$ at $a$ consists of a regular curve $\F_1$ and a regular hypersurface $\F_2$ in $\Spec(\hOWa)$ such that $\F_1\subseteq\F_2$. Since we only consider formal flags in the following, we will just call them \emph{flags}. 
 
 A flag $\F$ is said to be \emph{compatible with $E$} if the following two properties hold:
 \begin{itemize} 
  \item The union $\F_2\cup \Eg$ has simple normal crossings.
  \item $\F_2\not\subseteq\Eg$.
 \end{itemize}

 Denote by $\FF(a)$ the set
 \[\FF(a)=\{\F\text{ is a formal flag at $a$ that is compatible with $E$}\}.\]
 If it is clear from the context which point $a$ we consider, we will just write $\FF$ instead of $\FF(a)$.
 
 Let $\F\in\FF$ be a flag. We define its \emph{associated multiplicity} $n_\F$ the following way: 
 \begin{itemize}
  \item If $\F_1\cup(\Eg\cap\F_2)$ has simple normal crossings, set $n_\F=0$.
  \item If $\F_1\cup(\Eg\cap\F_2)$ does not have simple normal crossings, set
  \[n_\F=\max\{\mult_a(\F_1,D\cap\F_2):\text{$D$ is a component of $\Eg$}\}\]
  where $\mult_a(.,.)$ denotes the intersection multiplicity of two curves at $a$.
 \end{itemize}
 The number $n_\F$ is finite by Lemma \ref{form_of_associated_components} (1).
 
 For a flag $\F\in\FF$ with $n_\F>1$ we define its \emph{associated component} $D_\F$ as the unique component of $\Eg$ that fulfills
 \[\mult_a(\F_1,D_\F\cap\F_2)=n_\F.\]
 There can be only one such component by Lemma \ref{form_of_associated_components} (4). Further, notice that $n_\F=1$ implies that $\Eg$ has two components by Lemma \ref{form_of_associated_components} (5).
 
 Two flags $\F,\G\in\FF$ are said to be \emph{comparable} if their associated multiplicities and (if applicable) their associated components coincide.

\subsection{Subordinate parameters and the flag invariant $\inv(\F)$}
 
 Let $\F\in\FF$ be a flag. We will now define parameters which are subordinate to $\F$ and the associated invariants. The definition uses a case distinction between $n_\F=0$ and $n_\F>0$.\\

 If $n_\F=0$, we make the following definitions:

 A regular system of parameters $\x=(x,y,z)$ for $\hOWa$ is said to be \emph{subordinate} to $\F$ is $\F_2=V(z)$, $\F_1=V(y,z)$ and $\Eg\subseteq V(xy)$ hold. The existence of subordinate parameters is guaranteed since $\F_1\cup(\F_2\cap\Eg)$ has simple normal crossings. Notice that the ordering of the parameters $(x,y,z)$ is relevant.
 
 Let $\x=(x,y,z)$ be subordinate to $\F$. Define the coefficient ideal $J_{2,\x}$ as
 \[J_{2,\x}=\coeff_{(x,y,z)}^c(I_3).\]
 Let the ideal $J_{2,\x}$ have the factorization
 \[J_{2,\x}=M_{2,\x}\cdot I_{2,\x}\]
 where $M_{2,\x}=(x^{r_x}y^{r_y})$ is a principal monomial ideal and $r_x,r_y\in\N$ are defined as
 \[r_x=\begin{cases}
        \ord_{(x)}J_{2,\x} & \text{if $V(x)\subseteq\Eg$,}\\
        0 & \text{otherwise.}
       \end{cases}
\]
 \[r_y=\begin{cases}
        \ord_{(y)}J_{2,\x} & \text{if $V(y)\subseteq\Eg$,}\\
        0 & \text{otherwise.}
       \end{cases}
\]
 Set
 \[m_\F=\ord M_{2,\x},\]
 \[d_\F=\ord I_{2,\x}.\]
 By Proposition \ref{residual_order_coord_indep}, the invariants $m_\F$ and $d_\F$ are independent of the chosen subordinate parameters $\x$.
 
 Further, define the second coefficient ideal $J_{1,\x}$ as
\[J_{1,\x}=\begin{cases}
             \coeff_{(x,y)}^{d_\F}(I_{2,\x}) & \text{if $d_\F\geq c!$,}\\
             \coeff_{(x,y)}^{d_\F(c!-d_\F)}(I_{2,\x}^{c!-d_\F}+M_{2,\x}^{d_\F}) & \text{if $0<d_\F<c!$,}\\
             0 & \text{if $d_\F=0$.}
            \end{cases}
\]
 and set 
 \[s_\F=\ord J_{1,\x}.\]
 By Proposition \ref{slope_coord_indep}, the invariant $s_\F$ is independent of the chosen subordinate parameters $\x$.\\\

 If $n_\F>0$, we make the following definitions:
 
 A regular system of parameters $\x=(x,y,z)$ for $\hOWa$ is said to be \emph{subordinate} to $\F$ is $\F_2=V(z)$ and $\F_1=V(y,z)$ hold.
 
 Let $\x=(x,y,z)$ be subordinate to $\F$. Define the coefficient ideal $J_{2,\x}$ as
 \[J_{2,\x}=\coeff_{(x,y,z)}^c(I_3).\]
 Further, we define the weighted order functions $\w_{\F,\x}:K[[x,y]]\to\Ni$ and $\y_{\F,\x}:K[[x,y]]\to\Ni^2$ on the parameters $(x,y)$ via 
 \[\w_{\F,\x}(x)=1,\ \w_{\F,\x}(y)=n_\F\text,\]
 \[\y_{\F,\x}(x)=(1,0),\ \y_{\F,\x}(y)=(n_\F,1).\]
 If there is no risk of confusion, we will just denote these weighted order functions by $\w$ and $\y$.
 
 Set 
 \[m_{\F,\x}=\w(J_{2,\x}),\]
 \[d_{\F,\x}=\ord_{(y)}\minit_\w(J_{2,\x}).\]
 By Lemma \ref{double_weighted order function}, this can also be expressed as
 \[(m_{\F,\x},d_{\F,\x})=\y(J_{2,\x}).\]
 Notice that $d_{\F,\x}\leq \frac{1}{n_\F}\cdot m_{\F,\x}$ holds.
 
 Now define the invariants $m_\F$ and $d_\F$ as 
 \[m_\F=\begin{cases}
         m_{\F,\x} & \text{if $m_{\F,\x}\geq n_\F\cdot c!$,} \\
         n_\F\cdot c! & \text{if $m_{\F,\x}< n_\F\cdot c!$}
        \end{cases}
\]
and 
 \[d_\F=\begin{cases}
         d_{\F,\x} & \text{if $d_{\F,\x}\geq c!$,}\\
         d_{\F,\x} & \text{if $0<d_{\F,\x}<c!$ and $c!\nmid m_\F$,}\\
         -1 & \text{if $0<d_{\F,\x}<c!$ and $c!\mid m_\F$,}\\
         -1 & \text{if $d_{\F,\x}=0$.}
        \end{cases}
\]
By Proposition \ref{m_d_coord_indep} these invariants are independent of the chosen subordinate parameters $\x$.

Finally, assign to $s_\F$ the trivial value
\[s_\F=0.\]\\

The \emph{flag invariant} of a flag $\F\in\FF$ is defined as
\[\inv(\F)=(d_\F,n_\F,s_\F).\]

If we want to emphasize the dependence of the ideals $J_{2,\x}$, $I_{2,\x}$, $\ldots$ on the point $a$, we will denote them by $J_{2,\x}(a)$, $I_{2,\x}(a)$, $\ldots$

\subsection{Valid and maximizing flags}

A flag $\F\in\FF$ is said to be \emph{valid} if it fulfills 
\[m_\G\leq m_\F\]
for all flags $\G\in\FF$ that are comparable to $\F$.

A valid flag $\F\in\FF$ is said to be \emph{maximizing} if it fulfills
\[\inv(\G)\leq\inv(\F)\]
for all valid flags $\G\in\FF$.

We will show in Section \ref{section_maximizing_flags} that maximizing flags exist under the condition that $c>1$ holds.

\section{Invariants for the terminal cases} \label{section_monom_s_r_case}

 As mentioned before, the maximum $(d_\F,n_\F,s_\F)$ of the flag invariant $\inv(\F)$ over all valid flags $\F\in\FF$ will only be used as a resolution invariant until we reach a terminal case. If $\X$ is in a terminal case at $a$, we set $(d,n,s)=(0,0,0)$ and consider the combinatorial pair $(r,l)$ instead. This will enable us to measure improvement during combinatorial resolution. 
 
 In this section we will define the combinatorial pair for the monomial case and the small residual case. While the definition for the monomial case is a standard technique for the embedded resolution of singularities, the definition of the combinatorial pair for the small residual case is unique to this thesis. Further, we will show that if $\X$ is both in the monomial case and in the small residual case at a point $a$, the two respective combinatorial pairs coincide.
 
 Let in the following $\X=(W,X,E)$ be a $3$-dimensional resolution setting and $a\in X$ a closed point such that $c=c\Xa>1$.

\subsection{Associated labels $l_x,l_y$}

 Let $\x=(x,y,z)$ be apposite parameters for $\X$ at $a$. We define the \emph{associated labels} $l_x,l_y\in\N$ as
 \[l_x=\begin{cases}
        \lab(V(x)) & \text{if $V(x)\subseteq\Eg$,}\\
        0 & \text{otherwise.}
       \end{cases}
\] \[l_y=\begin{cases}
        \lab(V(y)) & \text{if $V(y)\subseteq\Eg$,}\\
        0 & \text{otherwise.}
       \end{cases}
\]

\begin{lemma} \label{same_curve_same_l_y_lemma}
 Let $\x=(x,y,z)$ and $\x_1=(x_1,y_1,z_1)$ be apposite parameters for $\X$ at $a$. Assume that the identity of ideals $(y,z)=(y_1,z_1)$ holds.
 
 Then $l_y=l_{y_1}$.
\end{lemma}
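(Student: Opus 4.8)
The goal is to show that the associated label $l_y$ depends only on the curve $\F_1 = V(y,z) = V(y_1,z_1)$ and not on the particular apposite parameters chosen. The label $l_y$ is a numerical identifier attached to a component of $\Eg$, defined to be $\lab(V(y))$ when $V(y)$ is a component of $\Eg$ and $0$ otherwise (and similarly for $l_{y_1}$). So the claim reduces to the purely geometric statement: $V(y)$ is a component of $\Eg$ if and only if $V(y_1)$ is, and in that case $V(y) = V(y_1)$ as formal hypersurfaces in $\Spec(\hOWa)$. Since $\lab$ is injective on $\Comp(E)$, equality of the hypersurfaces forces equality of the labels, and if neither is a component both labels are $0$.

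\textbf{Key steps.} First I would record that both $\x$ and $\x_1$ are apposite, hence in particular $\Eg \subseteq V(xy)$ and $\Eg \subseteq V(x_1 y_1)$; so every component $D$ of $\Eg$ is one of $V(x)$, $V(y)$ (resp.\ $V(x_1)$, $V(y_1)$). Now suppose $V(y)$ is a component of $\Eg$. I want to show $V(y) = V(y_1)$. The hypothesis $(y,z) = (y_1,z_1)$ means the ideal generated by $y$ and $z$ equals that generated by $y_1$ and $z_1$; writing $y_1 = \alpha y + \beta z$ and $z_1 = \gamma y + \delta z$ with $\alpha,\ldots,\delta \in \hOWa$ and the $2\times2$ coefficient matrix of residues invertible modulo $\m_{W,a}$, one gets $y = \alpha' y_1 + \beta' z_1$ for units-and-elements $\alpha', \beta'$. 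The crucial point is that $V(y)$ being a component of $\Eg$, and $\Eg \subseteq V(x_1 y_1)$ with $\Eg$ simple normal crossings, forces $V(y)$ to equal $V(x_1)$ or $V(y_1)$. It cannot equal $V(x_1)$: since $\F_2 = V(z)$ is compatible with $E$ we have (from the apposite/subordinate set-up, using that $z$-regularity of $f\in I_3$ is with respect to $\x_1$ too, cf.\ Lemma~\ref{z_regular_under_coord_change}) that the curve $\F_1 = V(y,z)$ lies inside $V(y)$; if $V(y) = V(x_1)$ then $\F_1 \subseteq V(x_1)$, and combined with $\F_1 = V(y_1,z_1)$ and $\F_1 \not\subseteq V(z_1)$-type considerations one derives $V(x_1) = V(z_1)$ or a contradiction with $V(x_1) \subseteq \Eg$ versus $\F_2 = V(z_1) \not\subseteq \Eg$. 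Hence $V(y) = V(y_1)$, so $l_y = \lab(V(y)) = \lab(V(y_1)) = l_{y_1}$. By symmetry (swapping the roles of $\x$ and $\x_1$), if $V(y_1)$ is a component of $\Eg$ then so is $V(y)$; and if neither is, then $l_y = 0 = l_{y_1}$.

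\textbf{Main obstacle.} The technically delicate part is ruling out the "crossed" possibility $V(y) = V(x_1)$ (and symmetrically $V(y_1) = V(x)$) cleanly. This is where Lemma~\ref{curve_determined_by_component} does the real work: if $\F_1 = V(y,z)$ is contained in a component $D$ of $E$ (here one shows $\F_1 \subseteq V(y)$ directly, or argues via the flag structure), then $\F_1 = \F_2 \cap D$, which pins down the hypersurface $D$ transversal to $\F_2$ that contains $\F_1$. Since $\F_1 = V(y_1, z_1) = \F_2' \cap (\text{the $y_1$-component})$ in the second coordinate system and $\F_2 = V(z) = V(z_1)$ up to unit multiple (both being regular hypersurfaces whose intersection with the relevant divisor is $\F_1$ — use that $\F_2$ is determined up to $z\mapsto uz$, Proposition~\ref{coeff_ideal_well_def} is not needed here but Lemma~\ref{curve_determined_by_component} is), one concludes the two candidate components coincide. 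I would write this out carefully: extract from $(y,z)=(y_1,z_1)$ that $V(z)$ and $V(z_1)$ are either equal or meet $\F_1$ in incompatible ways, use compatibility of both flags with $E$ to exclude the bad case, and then uniqueness of the component through $\F_1$ transversal to $\F_2$ finishes it.
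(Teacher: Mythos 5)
Your overall strategy is the same as the paper's: reduce to showing that $V(y)$ is a component of $\Eg$ iff $V(y_1)$ is, and that then $V(y)=V(y_1)$, and invoke Lemma~\ref{curve_determined_by_component}. You also correctly identify the real work as excluding the crossed case $V(y)=V(x_1)$, which the paper's very terse proof leaves implicit.

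However, the way you try to exclude the crossed case contains an error. You assert that ``$\F_2=V(z)=V(z_1)$ up to unit multiple, both being regular hypersurfaces whose intersection with the relevant divisor is $\F_1$.'' This is false: the hypothesis is only $(y,z)=(y_1,z_1)$, and this does not determine the hypersurfaces individually. For instance, with $z_1=z+y$ and $y_1=y$ one has $(y_1,z_1)=(y,z)$ but $V(z_1)\neq V(z)$; and even if it were true, two distinct hypersurfaces can cut out the same curve on $V(z_1)$, so ``one concludes the two candidate components coincide'' would still not follow. The clean way to finish, and what the paper is implicitly using, is elementary: since $\x_1$ is apposite, $\Eg\subseteq V(x_1y_1)$, so the component $V(y)$ of $\Eg$ must be $V(x_1)$ or $V(y_1)$. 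If $V(y)=V(x_1)$, then $\F_1=V(y_1,z_1)=V(y,z)\subseteq V(y)=V(x_1)$, hence $x_1\in(y_1,z_1)$; but then $\ol{x_1}\in(\ol{y_1},\ol{z_1})$ in $\mWa/\mWa^2$, contradicting that $(x_1,y_1,z_1)$ is a regular system of parameters. So $V(y)=V(y_1)$, giving $l_y=\lab(V(y))=\lab(V(y_1))=l_{y_1}$, and by symmetry $l_y=0$ forces $l_{y_1}=0$. Your plan is in the right place and cites the right lemma, but the argument as written for the crucial exclusion step does not hold and should be replaced by this linear-independence argument (which is also what Lemma~\ref{curve_determined_by_component} ultimately provides via $\F_1=\F_2\cap D$ and the resulting ideal identity).
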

\begin{proof} Consider the case $l_y>0$. Thus, $V(y_1,z_1)=V(y,z)\subseteq V(y)\subseteq\Eg$. Hence, we know by Lemma \ref{curve_determined_by_component} that $V(y_1,z_1)=V(z_1)\cap V(y)$. Thus, $l_y=l_{y_1}$.
 
 From this it is also clear that $l_y=0$ implies also $l_{y_1}=0$.
\end{proof}

\subsection{The combinatorial pair for the monomial case}

 The \emph{monomial case} is defined as follows:

 \begin{definition}
  We say that $\X$ is in the monomial case at $a$ if there exist apposite parameters $\x=(x,y,z)$ for $\X$ at $a$ such that the following three properties hold:
  \begin{itemize}
   \item The coefficient ideal $J_{2,\x}=\coeff_{\x}^c(I_3)$ is a principal monomial ideal of the form
   \[J_{2,\x}=(x^{r_x}y^{r_y})\]
   for certain numbers $r_x,r_y\in\N$. (Notice that we do not require that $V(x^{r_x}y^{r_y})\subseteq\Eg$ holds.)
   \item There is an element $f\in I_3$ which is $\ord$-clean with respect to $J_{2,\x}$.
   \item If $\Eg=\emptyset$, then either $r_x=0$ or $r_y=0$.
  \end{itemize}
 In this case, we say that the parameters $\x$ are of \emph{monomial type}.
 \end{definition}

 To define the combinatorial pair for the monomial case, we use the following Lemma:

\begin{lemma} \label{monomial_case} 
 Let $\X$ be in the monomial case at $a$ and let $\x=(x,y,z)$ be apposite parameters of monomial type. Let the coefficient ideal $J_{2,\x}$ have the form $J_{2,\x}=(x^{r_x}y^{r_y})$. Then the following statements hold:
 \begin{enumerate}[(1)]
 \item The set $\{(r_x,l_x),(r_y,l_y)\}\subseteq\N^2$ is independent of the choice of the parameters $\x$ of monomial type.
 \item The following equality holds:
  \[I_{\geq(o,c)}(a)=\begin{cases}
                   (xy,z) & \text{if $r_x,r_y\geq c!$,}\\
                   (x,z) & \text{if $r_x\geq c!$ and $r_y<c!$,}\\
                   (y,z) & \text{if $r_y\geq c!$ and $r_x<c!$,}\\
                   (x,y,z) & \text{if $r_x,r_y<c!$.}
                  \end{cases}\]
 \end{enumerate}
\end{lemma}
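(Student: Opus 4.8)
The plan is to reduce both statements to already-proven results about coefficient ideals, the directrix, and differential operators. For statement (2), I would first note that since $\X$ is in the monomial case, there is an element $f\in I_3$ which is $\ord$-clean with respect to $J_{2,\x}=(x^{r_x}y^{r_y})$, and that $f$ is $z$-regular of order $c$ (being apposite). This is precisely the hypothesis of Corollary \ref{form_of_top_locus_in_monomial_case}, which gives
\[
\rad(\Diff^{c-1}_{\hOWa/K}(I_3))=\begin{cases}
 (xy,z) & r_x,r_y\geq c!,\\
 (x,z) & r_x\geq c!,\ r_y<c!,\\
 (y,z) & r_y\geq c!,\ r_x<c!,\\
 (x,y,z) & r_x,r_y<c!.
\end{cases}
\]
By the discussion preceding Lemma \ref{c=1_measures_resolved}, the left-hand side equals $I_{\geq(o,c)}(a)$, so (2) follows immediately. (I should double-check that Corollary \ref{form_of_top_locus_in_monomial_case} is stated for $\hOWa=K[[x,y,z]]$ with a general ideal $J$ of order $c$, which it is; here $J=I_3$ and $\ord I_3=c$.)

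For statement (1), I would argue that the pair of pairs $\{(r_x,l_x),(r_y,l_y)\}$ is geometrically determined, hence parameter-independent. The key observation is that statement (2) already pins down $I_{\geq(o,c)}(a)$ as a geometric object not depending on $\x$, and its form tells us which of $V(x),V(y)$ (up to the formal hypersurface $V(z)$) carry exponent $\geq c!$. More carefully: by Proposition \ref{z_in_top_locus_in_monomial_case} we have $z\in\rad(\Diff^{c-1}(I_3))$ for \emph{any} apposite parameters of monomial type, so the hypersurface $V(z)$ is canonically forced to contain the top locus; different choices $\x,\x_1$ of monomial type therefore have $\F_2=V(z)$ and $\F_2'=V(z_1)$ both containing $X_{\geq(o,c)}$, and one shows $(z)=(z_1)$ is not needed — rather, both coefficient ideals $J_{2,\x}$ and $J_{2,\x_1}$ are monomial of the same order $\ord J_{2,\x}=r_x+r_y$ (this order is $o_H$ from Proposition \ref{residual_order_coord_indep}(1), independent of the choice of hypersurface). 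I would then distinguish cases according to $\min(r_x,r_y)$: if both $r_x,r_y\geq c!$, then by Lemma \ref{another_tau=2_lemma}(2) we get $\tau(I_3)=3$ only when $r_x,r_y<c!$; more usefully, the relevant components $V(x),V(y)$ are exactly those components $D$ of $\Eg$ through $a$, and since $\Eg$ is an intrinsic divisor, the set of labels $\{l_x,l_y\}$ (with $0$ filling in for a non-exceptional coordinate) depends only on $\Eg$ and on which residues $\bar x,\bar y$ are forced. The pairing of each $r$-value with the correct label is then settled by Lemma \ref{same_curve_same_l_y_lemma}: if $V(y)\subseteq\Eg$, the curve $V(y,z)=\F_1$ is determined and $l_y$ is the label of that unique component, while $r_y=\ord_{(y)}J_{2,\x}$ is intrinsic because $V(y)$ (as a component of $\Eg$) and the order of the coefficient ideal along it are intrinsic by Proposition \ref{residual_order_coord_indep}(2).

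\textbf{Main obstacle.} The delicate point in (1) is handling the case $\Eg=\emptyset$ (or $\Eg$ with one component), where one of the coordinates, say $y$, is not pinned down by an exceptional divisor, so $l_y=0$ but $r_y$ could a priori be attached to different "unlabeled" coordinate choices. Here the third defining property of the monomial case — that $r_x=0$ or $r_y=0$ when $\Eg=\emptyset$ — must be used: it forces $J_{2,\x}=(x^{r})$ (a power of a single coordinate), and then the pair is $\{(r,0),(0,0)\}$, which is manifestly $\x$-independent since $r=\ord J_{2,\x}=o_H$ by Proposition \ref{residual_order_coord_indep}(1). The other genuinely delicate point is verifying that when $\Eg$ has one component $V(x)$, the value $r_x=\ord_{(x)}J_{2,\x}$ is the same for all monomial-type $\x$: this is Proposition \ref{residual_order_coord_indep}(2), but one must check its hypotheses ($r_{H,D}\geq c!$ versus $<c!$) are compatible across the two regimes, and in the regime $r_x<c!$ appeal instead to the fact that $\ord J_{2,\x}=r_x+r_y$ is intrinsic while $r_y$ is determined (it is either $0$, or $\ord_{(y)}J_{2,\x}$ which is intrinsic by the same proposition applied to $V(y)$). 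I expect the bookkeeping of these sub-cases to be the bulk of the work, but each individual step is a direct citation of Chapter 4 or the monomial-case results of this section.
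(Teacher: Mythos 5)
Your treatment of part (2) matches the paper exactly: it is a direct citation of Corollary \ref{form_of_top_locus_in_monomial_case}.

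For part (1), however, there is a genuine gap. You repeatedly claim that $\ord J_{2,\x}$ and $\ord_{(x)}J_{2,\x}$, $\ord_{(y)}J_{2,\x}$ are intrinsic, citing Proposition \ref{residual_order_coord_indep}. But that proposition only gives independence from the choice of \emph{subordinate parameters for a fixed formal hypersurface} $H=V(z)$: the quantities $o_H$ and $r_{H,D}$ in its statement are explicitly indexed by $H$, and in general different hypersurfaces give different orders. Two systems $\x=(x,y,z)$ and $\x_1=(x_1,y_1,z_1)$ of monomial type need not have $(z)=(z_1)$ (you acknowledge this), so Proposition \ref{residual_order_coord_indep} alone does not let you conclude $\ord J_{2,\x}=\ord J_{2,\x_1}$. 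Your parenthetical ``this order is $o_H$\dots independent of the choice of hypersurface'' is false as stated, and the rest of the argument for (1) rests on it. The missing ingredient is the $\ord$-cleanness hypothesis from the definition of the monomial case: by Lemma \ref{w_cleaning_preserves_v_cleaning} one can take $f$ to be simultaneously $\ord$-, $\ord_{(x)}$- and $\ord_{(y)}$-clean with respect to $J_{2,\x}$ (and similarly $f_1$ for $\x_1$), and then Proposition \ref{w_cleaning_maximizes_w} shows each of these weighted orders is \emph{maximal} over all coordinate changes $z\mapsto z+g(x,y)$. Maximality for both sides is what forces equality. This is the actual mechanism of the paper's proof.

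You also omit the case where $z_1$ is not $z$-regular with respect to $\x$ (so the coordinate change is not triangular in $z$). Your sketch effectively treats only hypersurfaces reachable by $z\mapsto z+g$. When $z_1$ is not $z$-regular, the paper uses Lemma \ref{ogeqc!} together with Lemma \ref{z_regular_blocks_other_parameters} to pin down $\ord J_{2,\x}=\ord J_{2,\x_1}=c!$ and then determine the exponents; nothing in your argument addresses this situation, and the geometric ``intrinsicness'' reasoning does not automatically cover it.
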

\begin{proof}
 (1): Let $\x_1=(x_1,y_1,z_1)$ be another apposite system of parameters which is of monomial type. Hence, $J_{2,\x_1}=(x_1^{s_x}y_1^{s_y})$ for certain integers $s_x,s_y\in\N$ and there is an element $f_1\in I_3$ which is $\ord$-clean with respect to $J_{2,\x_1}$.
 
 By Lemma \ref{w_cleaning_preserves_v_cleaning} we may assume without loss of generality that the element $f$ is also $\ord_{(x)}$-clean and $\ord_{(y)}$-clean with respect to $J_{2,\x}$. For the same reason we may assume that $f_1$ is $\ord_{(x_1)}$-clean and $\ord_{(y_1)}$-clean with respect to $J_{2,\x_1}$.
 
 The remainder of the proof will use a case distinction according to the number of components of $\Eg$.
 
 If $\Eg=V(xy)$, we may assume that $x=x_1$ and $y=y_1$. Hence, we may also assume that $z_1=z+g$ for an element $g\in K[[x,y]]$. It then follows from Proposition \ref{w_cleaning_maximizes_w} that $s_x=r_x$ and $s_y=r_y$.
 
 If $\Eg=V(y)$, we may assume that $y=y_1$. If $z_1=z+g$ for an element $g\in K[[x,y]]$, it follows from Proposition \ref{w_cleaning_maximizes_w} that $s_y=r_y$ and $\ord J_{2,\x}=\ord J_{2,\x_1}$. Consequently, also $s_x=r_x$.
 
 Assume now that $\Eg=V(y)$ and $z_1$ is not $z$-regular. By Lemma \ref{ogeqc!} and Lemma \ref{z_regular_blocks_other_parameters} this implies that $\ord J_{2,\x}=\ord J_{2,\x_1}=c!$ and $s_y=r_y=0$. Consequently, $s_x=r_x=c!$.
 
 Finally, assume that $\Eg=\emptyset$. Hence, we may assume that $r_x=s_x=0$. If $z_1=z+g$ for some element $g\in K[[x,y]]$, then $\ord J_{2,\x}=\ord J_{2,\x_1}$ by Proposition \ref{w_cleaning_maximizes_w}. Hence, $s_y=r_y$. On the other hand, if $z_1$ is not $z$-regular, we know by Lemma \ref{ogeqc!} and Lemma \ref{z_regular_blocks_other_parameters} that $\ord J_{2,\x}=\ord J_{2,\x_1}=c!$. Hence, $r_y=s_y=c!$.
 
 (2): This was proven in Corollary \ref{form_of_top_locus_in_monomial_case}.
\end{proof}

 The \emph{combinatorial pair for the monomial case} is defined as follows:
 
 Let $\X$ be in the monomial case at $a$ and let $\x=(x,y,z)$ be apposite parameters of monomial type. If the coefficient ideal $J_{2,\x}$ has the form
 \[J_{2,\x}=(x^{r_x}y^{r_y}),\]
 we define the combinatorial pair for $\X$ at $a$ as
 \[(r\mon,l\mon)=\begin{cases}
                        \max\{(r_x,l_x),(r_y,l_y)\} & \text{if $r_x\geq c!$ or $r_y\geq c!$,}\\
                        (r_x,l_x)+(r_y,l_y) & \text{if $r_x,r_y<c!$.}
                       \end{cases}
\] 
This definition is independent of the choice of $\x$ by Lemma \ref{monomial_case} (1).

\subsection{The combinatorial pair for the small residual case}

The \emph{small residual case} is defined as follows:
 
 \begin{definition}
  We say that $\X$ is in the small residual case at $a$ if there exist apposite parameters $\x=(x,y,z)$ for $\X$ at $a$ such that the following two properties hold:
  \begin{itemize}
   \item The coefficient ideal $J_{2,\x}=\coeff_{\x}^c(I_3)$ has the form
   \[J_{2,\x}=(y^{mc!})\cdot I\]
   for a positive integer $m>0$ and an ideal $I\subseteq K[[x,y]]$ with $\ord_{(y)}I=0$ and $0<\ord I<c!$.
   \item There is an element $f\in I_3$ which is $\ord_{(y)}$-clean with respect to $J_{2,\x}$.
  \end{itemize}
 In this case, we say that the parameters $\x$ are of \emph{small residual type}.
 \end{definition}

 To define the combinatorial pair for the small residual case, we use the following lemma:

\begin{lemma} \label{small_residual_case}
Let $\X$ be in the small residual case at $a$ and let $\x=(x,y,z)$ be apposite parameters of small residual type. Let the coefficient ideal $J_{2,\x}$ have the form $J_{2,\x}=(y^{mc!})\cdot I$ with $\ord_{(y)}I=0$ and $0<\ord I<c!$. Then the following statements hold:
 \begin{enumerate}[(1)]
 \item The pair $(mc!,l_y)\in\N^2$ is independent of the choice of the parameters $\x$ of small residual type.
 \item The following equality holds:
  \[I_{\geq(o,c)}(a)=(y,z).\]
 \end{enumerate}
\end{lemma}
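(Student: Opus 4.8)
The plan is to prove assertion (2) first and then feed it into the proof of (1).

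For (2), recall (Proposition \ref{ord_via_diff_ops}, cf.\ the discussion of $o_\X,c_\X$ in Section \ref{section_resolution_settings}) that $\Ioc(a)=\rad(\Diff^{c-1}_{\hOWa/K}(I_3(a)))$. Since $\x=(x,y,z)$ is of small residual type we have $\hOWa=K[[x,y,z]]$, $\ord I_3(a)=c$, and $J_{2,\x}=\coeff^c_{(x,y,z)}(I_3)=(y^{mc!})\cdot I$ with $m>0$ and $0<\ord I<c!$. These are exactly the hypotheses of Proposition \ref{form_of_top_locus_in_g_p_case} (with $n=1$), which gives $\rad(\Diff^{c-1}_{\hOWa/K}(I_3(a)))=(y,z)$, hence $\Ioc(a)=(y,z)$. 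No $\ord$-clean element is needed here; in fact $\ord J_{2,\x}=mc!+\ord I$ is not divisible by $c!$, so by Lemma \ref{c!_does_not_divide_m} every $z$-regular element of order $c$ in $I_3$ is automatically $\ord$-clean with respect to $J_{2,\x}$.

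For (1), let $\x_1=(x_1,y_1,z_1)$ be a second apposite system of small residual type, with $J_{2,\x_1}=(y_1^{m_1c!})\cdot I_1$, $\ord_{(y_1)}I_1=0$, $0<\ord I_1<c!$, so that $\ord_{(y)}J_{2,\x}=mc!$ and $\ord_{(y_1)}J_{2,\x_1}=m_1c!$. Applying (2) to both systems yields $(y,z)=\Ioc(a)=(y_1,z_1)$, so the curves $V(y,z)$ and $V(y_1,z_1)$ coincide, and Lemma \ref{same_curve_same_l_y_lemma} gives $l_y=l_{y_1}$; it remains to show $mc!=m_1c!$. Since $m,m_1\geq1$ and $\ord I,\ord I_1\geq1$, both $\ord J_{2,\x}$ and $\ord J_{2,\x_1}$ exceed $c!$, so Lemma \ref{coeff_ideal_and_directrix} gives $\Dir(I_3)=(\ol z)=(\ol z_1)$. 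Hence $z_1=\lambda z+Q$ with $\lambda\in K^{*}$ and $Q\in m_{W,a}^{2}$, so $z_1$ is $z$-regular; by the Weierstrass preparation theorem and Proposition \ref{coeff_ideal_well_def} we may assume $z_1=z+g$ with $g\in K[[x,y]]$. From $z_1\in(y_1,z_1)=(y,z)$ we get $g\in(y,z)\cap K[[x,y]]=y\cdot K[[x,y]]$, hence $(y,z_1)=(y,z)=(y_1,z_1)$.

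Finally one compares $\ord_{(y)}$ of the coefficient ideals along the chain $\x_1\rightsquigarrow(x,y,z_1)\rightsquigarrow(x,y,z)=\x$, by the same case analysis (on the number of components of $\Eg$ and on how $V(y)$, $V(y_1)$ are matched) used in the proof of Lemma \ref{monomial_case}. The move $z\mapsto z_1=z+g$ with $g\in y\cdot K[[x,y]]$ cannot raise $\ord_{(y)}$ of the coefficient ideal, by Proposition \ref{w_cleaning_maximizes_w} applied to the $\ord_{(y)}$-clean element $f$ (and, where the analysis needs it, to an element that is also $\ord_{(x)}$-clean with respect to $J_{2,\x}$, obtained via Lemma \ref{w_cleaning_preserves_v_cleaning}); and the passage from $\x_1$ to $(x,y,z_1)$ consists only of unit multiplications and triangular changes of the horizontal parameters, which leave $\ord_{(y)}$ of the coefficient ideal unchanged by Proposition \ref{invariance_of_w} (its hypotheses being met because the relevant order is $\geq c!$, which follows from a short computation as in Lemma \ref{w_coeff} using Lemma \ref{coordinate_change_g_z}). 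Running the symmetric argument with $\x$ and $\x_1$ interchanged gives the reverse inequality, so $mc!=\ord_{(y)}J_{2,\x}=\ord_{(y_1)}J_{2,\x_1}=m_1c!$. I expect the last step to be the main obstacle: one must carry out the case distinction carefully, verifying in each configuration that the invariance hypotheses of Chapter \ref{chapter_invariance} apply, and ruling out — via $\ord I<c!$ together with $m,m_1\geq1$ — precisely those matchings in which $V(y)$ would be identified with a hypersurface along which the coefficient ideal has order strictly below $c!$.
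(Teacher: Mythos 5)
Your proof is correct, and for statement (1) it takes a genuinely different route from the paper's. You establish (2) exactly as the paper does (direct application of Proposition~\ref{form_of_top_locus_in_g_p_case}), and you get $l_y=l_{y_1}$ the same way (via (2) and Lemma~\ref{same_curve_same_l_y_lemma}). Where you diverge is in showing $m=m_1$. The paper's proof is shorter and does \emph{not} need to invoke (2) for this: it observes that $\ord J_{2,\x}=mc!+\ord I$ and $\ord J_{2,\x_1}=m_1c!+\ord I_1$ are both strictly between consecutive multiples of $c!$, reduces to $z_1=z+g$ with $g\in K[[x,y]]$ via Lemma~\ref{coeff_ideal_and_directrix} (together with Proposition~\ref{residual_order_coord_indep}~(1) to ignore the horizontal parameters), and then reads off from Lemma~\ref{m_under_coord_changes} that any change in the \emph{total} order $\ord$ of the coefficient ideal under such a $g$ would force the new order to be a multiple of $c!$ — impossible on both sides, hence $\ord J_{2,\x}=\ord J_{2,\x_1}$ and thus $m=m_1$ by integer division. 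Your argument instead mirrors the proof of Lemma~\ref{monomial_case}: you feed (2) back in to get $(y,z)=(y_1,z_1)$, deduce $g\in y\cdot K[[x,y]]$ so that $V(y,z_1)=V(y_1,z_1)$, then compare $\ord_{(y)}$ of the coefficient ideals using Proposition~\ref{residual_order_coord_indep}~(2) (or the underlying Proposition~\ref{invariance_of_w}, after checking $\ord_{(y_1)}J_{2,\x_1}=m_1c!\ge c!$) and the $\ord_{(y)}$-cleanness of $f$ and $f_1$ via Proposition~\ref{w_cleaning_maximizes_w}. Both reach the same conclusion; the paper's non-divisibility shortcut buys a one-line finish and avoids the chain of parameter changes, whereas your route is the more uniform one that would also apply in the monomial case where the non-divisibility trick is unavailable. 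Two small remarks on the write-up: the mention of $\ord_{(x)}$-cleanness is not actually needed here (only $\ord_{(y)}$-cleanness is used), and the reference to ``a case analysis as in Lemma~\ref{monomial_case}'' is dispensable in the small residual case because (2) already pins down $V(y,z)=V(y_1,z_1)$ unambiguously, which is cleaner to invoke via Proposition~\ref{residual_order_coord_indep}~(2) than to decompose by hand into elementary coordinate changes.
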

\begin{proof}
 (1): Let $\x_1=(x_1,y_1,z_1)$ be another apposite system of parameters which is of small residual type. Hence, $J_{2,\x_1}=(y_1^{m_1c!})\cdot I_1$ for a positive integer $m_1>0$ and an ideal $I_1$ with $\ord_{(y_1)}I_1=0$ and $0<\ord I_1<c!$.
 
 Since $\ord J_{2,\x}>c!$ and $\ord J_{2,\x_1}>c!$, we may assume by Lemma \ref{coeff_ideal_and_directrix} that $z_1=z+g$ for some element $g\in K[[x,y]]$ with $\ord g\geq2$. Since both $\ord J_{2,\x}$ and $\ord J_{2,\x_1}$ are not divisible by $c!$, it follows from Lemma \ref{m_under_coord_changes} that $\ord J_{2,\x}=\ord J_{2,\x_1}$. Consequently, $m_1=m$. 
 
 The identity $l_y=l_{y_1}$ follows from Lemma \ref{same_curve_same_l_y_lemma} in combination with assertion (2).

 (2): This was proven in Lemma \ref{form_of_top_locus_in_g_p_case}.
\end{proof}

 The \emph{combinatorial pair for the small residual case} is defined as follows:
 
 Let $\X$ be in the small residual case at $a$ and let $\x=(x,y,z)$ be apposite parameters of small residual type. If the coefficient ideal $J_{2,\x}$ has the form
 \[J_{2,\x}=(y^{mc!})\cdot I,\]
 for a positive integer $m>0$ and an ideal $I$ with $\ord_{(y)}I=0$ and $0<\ord I<c!$, then we define the combinatorial pair for $\X$ at $a$ as
 \[(r\gp,l\gp)=(mc!,l_y).\]
This definition is independent of $\x$ by Lemma \ref{small_residual_case} (1).

\subsection{Well-definedness of the combinatorial pair}

 Since it may happen that $\X$ is at a point $a\in X$ both in the monomial case and in the small residual case, we need to verify that the combinatorial pair is still well-defined in this case. This will be shown in the next proposition.

\begin{proposition} \label{combinatorial_tuple_well_def}
 Assume that $\X$ is at $a$ both in the monomial case with combinatorial pair $(r\mon,l\mon)$ and in the small residual case with combinatorial pair $(r\gp,l\gp)$. Then 
 \[(r\mon,l\mon)=(r\gp,l\gp).\]
\end{proposition}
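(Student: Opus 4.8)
The plan is to assume simultaneously that $\X$ is in the monomial case and in the small residual case at $a$, and to extract from each situation a system of apposite parameters realizing the respective normal form; I then compare them. Let $\x = (x,y,z)$ be apposite parameters of monomial type, so that $J_{2,\x} = (x^{r_x}y^{r_y})$ with an element of $I_3$ that is $\ord$-clean with respect to $J_{2,\x}$, and let $\x_1 = (x_1,y_1,z_1)$ be apposite parameters of small residual type, so that $J_{2,\x_1} = (y_1^{m c!})\cdot I$ with $\ord_{(y_1)} I = 0$ and $0 < \ord I < c!$, and an element of $I_3$ that is $\ord_{(y_1)}$-clean with respect to $J_{2,\x_1}$. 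The first observation is that since $\X$ is in the small residual case, Lemma \ref{small_residual_case} (2) gives $I_{\geq(o,c)}(a) = (y_1,z_1)$; in particular $\tau$ of $I_3$ at $a$ equals $2$ (this forces $0 < \ord I$, so $J_{2,\x_1}$ is not a principal monomial ideal of the shape $(x_1^{s}y_1^{t})$ with $\tau = 3$). Comparing this with the four possible shapes of $I_{\geq(o,c)}(a)$ from Lemma \ref{monomial_case} (2) — namely $(xy,z)$, $(x,z)$, $(y,z)$, or $(x,y,z)$ — we see that $I_{\geq(o,c)}(a)$ must be a height-$2$ ideal, so either $r_x \geq c!, r_y < c!$ or $r_y \geq c!, r_x < c!$ (the cases $r_x,r_y \geq c!$ and $r_x,r_y < c!$ give height $2$ and $3$ respectively; the latter is excluded, and in the former the answer is $(xy,z)$ which again has height $2$, so actually all three of the first possibilities are a priori open, but the key point is that exactly one of $r_x, r_y$ is $\geq c!$ when we additionally impose that $J_{2,\x_1}$ has a single "large" variable). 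After a permutation of $x$ and $y$ we may assume $r_y \geq c!$ and $r_x < c!$, so $I_{\geq(o,c)}(a) = (y,z)$ and the monomial combinatorial pair is $(r\mon, l\mon) = (r_y, l_y)$.

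Next I would show $(y,z) = (y_1,z_1)$ as ideals: both equal $I_{\geq(o,c)}(a)$ by Lemma \ref{monomial_case} (2) and Lemma \ref{small_residual_case} (2), as just established. This is exactly the hypothesis of Lemma \ref{same_curve_same_l_y_lemma}, which yields $l_y = l_{y_1}$, so the label components of the two combinatorial pairs agree. It remains to show $r_y = m c!$. For this I would pass from $\x$ to $\x_1$ by successive coordinate changes and track $\ord_{(y)} J_{2,\x}$. Since $V(y,z) = V(y_1,z_1)$, after multiplying $z_1$ by a unit (which does not affect the coefficient ideal, by Proposition \ref{coeff_ideal_well_def}) and using the Weierstrass preparation theorem, the change of coordinates carrying $(x,y,z)$ to $(x_1,y_1,z_1)$ can be decomposed into changes stabilizing the flag $\F_2 = V(z) \supseteq V(y,z) = \F_1$ together with possibly a change $z_1 = z + g(x,y)$ moving the hypersurface. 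By Proposition \ref{residual_order_coord_indep} the quantity $\ord_{(y)} J_{2,\cdot}$ is unchanged under the flag-stabilizing changes; for the change $z \mapsto z + g(x,y)$ I invoke that the element of $I_3$ which is $\ord_{(y)}$-clean with respect to the relevant coefficient ideal guarantees, via Proposition \ref{w_cleaning_maximizes_w} (applied with the weighted order $\ord_{(y)}$) together with Lemma \ref{w_cleaning_preserves_v_cleaning}, that $\ord_{(y)} J_{2,\x} = \ord_{(y)} J_{2,\x_1}$. Computing both sides: $\ord_{(y)} J_{2,\x} = \ord_{(y)}(x^{r_x} y^{r_y}) = r_y$ since $r_x < c!$ means $x$ carries no $y$, and $\ord_{(y)} J_{2,\x_1} = \ord_{(y_1)}(y_1^{mc!} I) = mc!$ since $\ord_{(y_1)} I = 0$. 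Hence $r_y = m c!$ and $(r\mon,l\mon) = (r_y,l_y) = (mc!,l_{y_1}) = (r\gp,l\gp)$.

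The main obstacle I anticipate is the careful bookkeeping in the second-to-last step: justifying that the coordinate change from $\x$ to $\x_1$ really decomposes into the admissible types (flag-stabilizing changes plus a single $z \mapsto z+g(x,y)$), and that $\ord_{(y)}$ is the right weighted order to which the $\ord_{(y)}$-cleanness hypothesis applies on \emph{both} sides. One must be slightly delicate because the $\ord$-clean element furnished by the monomial case need not a priori be $\ord_{(y)}$-clean; here I would use Lemma \ref{w_cleaning_preserves_v_cleaning} (a $\ord$-cleaning step preserves $\ord_{(y)}$-cleanness, and conversely after an $\ord_{(y)}$-cleaning step the order along $(y)$ is maximal) or, more directly, observe that in the monomial normal form $J_{2,\x} = (x^{r_x}y^{r_y})$ the ideal is already a monomial ideal so the $\ord$-clean element is automatically $\ord_{(y)}$-clean by inspection of the cleanness conditions $(1)_\w$–$(3)_\w$ with $\w = \ord_{(y)}$. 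Once this is pinned down the rest is the two short order computations displayed above.
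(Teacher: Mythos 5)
Your proposal follows the same overall route as the paper's proof: use Lemma~\ref{monomial_case} (2) and Lemma~\ref{small_residual_case} (2) to reduce to the situation $(y,z)=(y_1,z_1)$ with $r_y\geq c!$ and $r_x<c!$, obtain $l_y=l_{y_1}$ from Lemma~\ref{same_curve_same_l_y_lemma}, then transport the $\ord_{(y)}$-order of the coefficient ideal between the two parameter systems using cleanness and maximality. This is the right structure. However there is a genuine gap exactly where you yourself flag it: you never justify why the coordinate change from $(x,y,z)$ to $(x_1,y_1,z_1)$ decomposes into flag-stabilizing changes plus a single change $z_1=z+g(x,y)$. For Weierstrass preparation to produce such a $g$, one first needs $z_1$ to be $z$-regular with respect to $\x$, and this is not automatic from $(y,z)=(y_1,z_1)$ alone (a priori $\ol z_1$ could be a nonzero multiple of $\ol y$). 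The paper closes this hole with Lemma~\ref{z_regular_blocks_other_parameters}: since $I_3$ contains an element $f$ which is $z$-regular of order $c$ w.r.t.\ $\x$, if $z_1$ were not $z$-regular then the coefficient ideal taken w.r.t.\ $V(z_1)$ would have order at most $c!$, contradicting $\ord J_{2,\x_1}=mc!+\ord I>c!$ from the small residual normal form. A second, related step you omit is establishing $\ord g\geq 2$: once $\ord J_{2,\x}\geq\ord J_{2,\x_1}>c!$ holds via Proposition~\ref{w_cleaning_maximizes_w}, Lemma~\ref{coeff_ideal_and_directrix} forces $\Dir(I_3)=(\ol z)=(\ol z_1)$, hence $\ord g\geq 2$, and only then can one write $y_1=y+zG$ and invoke Proposition~\ref{residual_order_coord_indep}~(2) to identify $\ord_{(y_1)}J_{2,\x_1}$ with $\ord_{(y)}J_{2,(x,y,z_1)}$.

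Separately, your ``or, more directly'' alternative for obtaining the $\ord_{(y)}$-cleanness of $f$ is not correct: a monomial coefficient ideal does not by itself make an $\ord$-clean element $\ord_{(y)}$-clean. Over a field of characteristic zero take $f=z^2+x^3y^2z+x^4y^4$; then $J_{-1}=(x^4y^4)$ is monomial, $f$ is $\ord$-clean since $\ord f_1=5>4=\tfrac{1}{2}\ord J_{-1}$, but $\ord_{(y)}f_1=2=\tfrac{1}{2}\ord_{(y)}J_{-1}$, so condition $(2)_{\ord_{(y)}}$ fails and $f$ is not $\ord_{(y)}$-clean. Your primary route via Lemma~\ref{w_cleaning_preserves_v_cleaning} is the correct one and is exactly what the paper uses (``we may assume without loss of generality that the element $f$ is also $\ord_{(x)}$-clean and $\ord_{(y)}$-clean''), so discard the alternative.
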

\begin{proof}
 Let $\x=(x,y,z)$ be apposite parameters of monomial type. Let the coefficient ideal $J_{2,\x}$ have the form $J_{2,\x}=(x^{r_x}y^{r_y})$. Let $f\in I_3$ be an element that is $\ord$-clean with respect to $J_{2,\x}$. By Lemma \ref{w_cleaning_preserves_v_cleaning} we may assume without loss of generality that $f$ is also $\ord_{(x)}$-clean and $\ord_{(y)}$-clean with respect to $J_{2,\x}$.
 
 Further, let $\x_1=(x_1,y_1,z_1)$ be apposite parameters of small residual type. Let the coefficient ideal $J_{2,\x_1}$ have the form $J_{2,\x_1}=(y_1^{mc!})\cdot I$ for a positive integer $m>0$ and an ideal $I$ with $\ord_{(y_1)}I=0$ and $0<\ord I<c!$. Let $f_1\in I_3$ be an element that is $\ord_{(y_1)}$-clean with respect to $J_{2,\x_1}$.
 
 By Lemma \ref{monomial_case} (2) and Lemma \ref{small_residual_case} (2) we may assume that $(y,z)=(y_1,z_1)$, $r_y\geq c!$ and $r_x<c!$. The identity $l_y=l_{y_1}$ follows now from Lemma \ref{same_curve_same_l_y_lemma}.
 
 Since $\ord J_{2,\x_1}>c!$ and $f\in I_3$ is $z$-regular of order $c$, we may assume by Lemma \ref{z_regular_blocks_other_parameters} that $z_1=z+g$ for an element $g\in K[[x,y]]$. By Proposition \ref{w_cleaning_maximizes_w} we know that 
 \[\ord J_{2,\x}\geq\ord J_{2,\x_1}>c!.\]
 By Lemma \ref{coeff_ideal_and_directrix} this implies that $\ord g\geq2$. Due to the identity, $(y,z)=(y_1,z_1)$ we may hence assume that $y_1=y+zG$ for some element $G\in\hOWa$. Set $\wt\x_1=(x,y,z_1)$. Then
 \[mc!=\ord_{(y_1)} J_{2,\x_1}=\ord_{(y)} J_{2,\wt\x_1}\]
 by Proposition \ref{residual_order_coord_indep} (2). It then follows from Proposition \ref{w_cleaning_maximizes_w} that $r_y\geq mc!$. Using a symmetric argument, we conclude that $r_y=mc!$. Consequently,
 \[(r\mon,l\mon)=(r_y,l_y)=(mc!,l_{y_1})=(r\gp,l\gp).\]
\end{proof}

\section{Existence and construction of maximizing flags} \label{section_maximizing_flags}

 This section has two goals: First, we want to prove that maximizing flags exist. This ensures that the resolution invariant $\iv_\X$ is well-defined. Furthermore, we want to show how flags that maximize the flag invariant $\inv(\F)$ over certain subsets of $\FF$ can be constructed from apposite parameters. These explicit constructions will play an important role when proving that the invariant $\iv_\X$ decreases under point-blowup in the non-terminal case.
 
 The proofs in this section will make use of virtually all results we established in Chapter 5. 
 
 Again, let in the following $\X=(W,X,E)$ be a $3$-dimensional resolution setting and $a\in X$ a closed point such that $c=c\Xa>1$.

\subsection{Basic results on the associated invariants $m_\F$ and $d_\F$}

 We begin by proving several basic lemmas on the invariants $m_\F$ and $d_\F$. The first one guarantees that they are always finite.

\begin{lemma} \label{m_and_d_are_finite}
 For all flags $\F\in\FF$ the invariants $m_\F$ and $d_\F$ are finite.
\end{lemma}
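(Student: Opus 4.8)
The plan is to reduce the whole statement to the single observation that, for every regular system of parameters $\x=(x,y,z)$ for $\hOWa$ subordinate to $\F$, the coefficient ideal $J_{2,\x}=\coeff^c_{(x,y,z)}(I_3)$ is nonzero. By definition $\ord I_3=c=c\Xa$, so if $J_{2,\x}$ were the zero ideal then Lemma \ref{coeff_is_zero} would force $I_3=(z^c)$; since $c>1$ by assumption, this contradicts Lemma \ref{no_z^c}. Hence $J_{2,\x}\neq 0$ for every subordinate parameter system, and it remains only to unwind the definitions of $m_\F$ and $d_\F$ in the two cases.

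If $n_\F=0$, fix subordinate parameters $\x$ and write $J_{2,\x}=M_{2,\x}\cdot I_{2,\x}$ with $M_{2,\x}=(x^{r_x}y^{r_y})$. Because $J_{2,\x}\neq 0$, the exponents $r_x$ and $r_y$ are finite (each is either $0$ or the order of the nonzero ideal $J_{2,\x}$ at a principal prime of $K[[x,y]]$, which is finite by Krull's intersection theorem), so $m_\F=\ord M_{2,\x}=r_x+r_y<\infty$; and $I_{2,\x}$ is a nonzero ideal of $K[[x,y]]$, so $d_\F=\ord I_{2,\x}<\infty$ for the same reason.

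If $n_\F>0$, fix subordinate parameters $\x$. The weighted order function $\wFx$ is a valuation on $K[[x,y]]$, so $\wFx(J_{2,\x})<\infty$ precisely because $J_{2,\x}\neq 0$; thus $m_{\F,\x}<\infty$, and since $m_\F$ equals either $m_{\F,\x}$ or $n_\F\cdot c!$ it is finite as well. Finally $d_{\F,\x}\leq\frac{1}{n_\F}m_{\F,\x}<\infty$ by the inequality recorded in the definition of these invariants, and $d_\F$ equals either $d_{\F,\x}$ or $-1$, hence $d_\F<\infty$.

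There is no serious obstacle here: the only point that genuinely needs checking is the nonvanishing of $J_{2,\x}$, which is exactly where the standing hypothesis $c>1$ enters via Lemma \ref{no_z^c}; everything else is a direct consequence of the definitions together with the elementary fact that nonzero ideals of a Noetherian local ring have finite order.
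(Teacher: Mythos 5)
Your proof is correct and follows the same route as the paper: reduce to the nonvanishing of $J_{2,\x}$ via Lemma \ref{coeff_is_zero} and Lemma \ref{no_z^c}. The paper simply asserts that $m_\F=\infty$ or $d_\F=\infty$ would force $J_{2,\x}=0$, whereas you spell out the contrapositive by unwinding the definitions of $m_\F$ and $d_\F$ in the two cases $n_\F=0$ and $n_\F>0$; the added detail is accurate and fills in the ``it is clear'' step.
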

\begin{proof}
 Let $\F\in\FF$ be a flag with subordinate parameters $\x=(x,y,z)$. If either $m_\F=\infty$ or $d_\F=\infty$, it is clear that $J_{2,\x}=0$ has to hold. By Lemma \ref{coeff_is_zero} this implies that $I_3=(z^c)$. This contradicts Lemma \ref{no_z^c}.
\end{proof}

 The next two lemmas tell us to which extent the surface $\F_2$ already determines the associated invariants of a flag $\F\in\FF$.

\begin{lemma} \label{F_2_determines_numerals}
 Let $\F,\G\in\FF$ be comparable flags with $\F_2=\G_2$. The following hold:
 \begin{enumerate}[(1)]
  \item If $n_\F=0$, then $m_\F=m_\G$ and $d_\F=d_\G$.
  \item If $n_\F>0$, then $m_\F=m_\G$.
 \end{enumerate}
\end{lemma}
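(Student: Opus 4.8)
The plan is to reduce to a statement about the coefficient ideal $J_{2,\x}$ alone, since all the associated invariants $m_\F, d_\F$ (and $s_\F$ when $n_\F=0$) are extracted from $J_{2,\x}$ by applying coordinate-independent operations (orders, weighted orders, initial ideals, factoring monomials). The only freedom when passing from a subordinate parameter system $\x=(x,y,z)$ for $\F$ to a subordinate parameter system $\x'=(x',y',z')$ for $\G$ (given $\F_2=\G_2$) is a coordinate change fixing the hypersurface $\F_2=\G_2=V(z)=V(z')$; by the Weierstrass preparation theorem and the inverse function theorem, such a change is composed of multiplications of $z$ by units, permutations/multiplications/triangular changes in the remaining variables, plus — crucially — a change of the curve $\F_1=V(y,z)$ to $\G_1=V(y',z')$ inside $\F_2$. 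So the content is: how do $m_\F$ and $d_\F$ (resp. $d_\G$) depend on the choice of $\F_1$ inside the fixed $\F_2$, given that $\F$ and $\G$ are \emph{comparable}?

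For part (1), where $n_\F=n_\G=0$: here $\x$ and $\x'$ are both subordinate to their flags, meaning additionally $\Eg\subseteq V(xy)$ and $\Eg\subseteq V(x'y')$. Since $\F_2=\G_2$ and both $\F_1\cup(\F_2\cap\Eg)$ and $\G_1\cup(\G_2\cap\Eg)$ have simple normal crossings, the geometric configuration $\F_2\cap\Eg$ is the same for both; thus $\{V(z,x),V(z,y)\}$ and $\{V(z',x'),V(z',y')\}$ describe the same (at most two) components of $\F_2\cap\Eg$. First I would handle the case $\Eg$ has two components: then $\F_1$ is forced to be a fixed coordinate line transversal to both, and after a unit multiplication in $z$ we may take $\x=\x'$ up to permutation, so $J_{2,\x}=J_{2,\x'}$ and everything matches trivially. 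If $\Eg$ has one or zero components, the curve $\F_1$ is only partially constrained, but now I invoke Proposition \ref{residual_order_coord_indep}: it precisely asserts that $o_H=\ord J_{n,(\x,z)}$, the exceptional exponents $r_{H,D}$, and the residual order $d_{H,E}=\ord I_{n,(\x,z)}$ depend only on the hypersurface $H=V(z)$ and the divisor $E$, not on the full subordinate parameter system. Since $m_\F=\ord M_{2,\x}$ and $d_\F=\ord I_{2,\x}$ are exactly these quantities, part (1) follows directly — this is essentially a repackaging of Proposition \ref{residual_order_coord_indep} in the flag language.

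For part (2), where $n_\F=n_\G=n>0$: now $\F$ comparable to $\G$ means not only $n_\F=n_\G$ but also $D_\F=D_\G$ when $n>1$ (same associated component), and $\Eg$ has two components when $n=1$. Here subordinate parameters only require $\F_2=V(z)$, $\F_1=V(y,z)$ (no monomial condition on $\Eg$), and the invariant is $m_\F = m_{\F,\x}$ or $n c!$ according to whether $m_{\F,\x}=\w_{\F,\x}(J_{2,\x})\geq nc!$, where $\w_{\F,\x}(x)=1,\w_{\F,\x}(y)=n$. The subtlety is that changing $\F_1$ inside $\F_2$ changes the parameter $y$, and the weighted order $\w$ is \emph{not} coordinate independent in general. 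I would argue that because $\F$ and $\G$ have the same associated component/multiplicity, the admissible changes of $\F_1$ are governed by Lemma \ref{form_of_associated_components}: if $n>1$, writing $D_\F = V(y + Q(x) + zG)$ with $\ord Q = n$, and $\G_1$ must likewise have intersection multiplicity $n$ with $\F_2\cap D_\F$, the curve $\G_1$ differs from $\F_1$ by a change $y \mapsto y + \lambda x^n + (\text{higher})$ plus a triangular change $y\mapsto y + g(x,z)$ with $\ord_{(z)}g\geq 1$ — i.e. exactly the coordinate changes controlled by Proposition \ref{m_d_coord_indep}, which states $m_\F$ (as defined with the $\max$ against $nc!$) is independent of subordinate parameters. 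So part (2) is again a direct appeal, this time to Proposition \ref{m_d_coord_indep}. The one thing I'd need to check carefully — and this is the main obstacle — is that "comparable with $\F_2=\G_2$" really does imply the two parameter systems are related \emph{only} by the coordinate changes allowed in Propositions \ref{residual_order_coord_indep} and \ref{m_d_coord_indep}, rather than something wilder (e.g. a change permuting $z$ with $x$ or $y$); ruling this out uses that $V(z)=\F_2=\G_2=V(z')$ forces $z' = u z$ for a unit $u$, hence $z'$ is $z$-regular, and that the associated-multiplicity/component data pins down $y$ up to the permitted triangular moves. Once that geometric bookkeeping is in place, nothing further is needed: the two propositions from Chapter \ref{chapter_invariance} do all the work, and $d_\F=d_\G$ in case (2) is \emph{not} claimed (only $m_\F=m_\G$), which is consistent since $d_\F$ can genuinely change with $\F_1$ even for comparable flags.
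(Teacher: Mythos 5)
Your overall strategy and your treatment of part (1) match the paper exactly: $m_\F$, $d_\F$ for $n_\F=0$ are precisely the quantities $o_H$, $r_{H,D}$, $d_{H,E}$ from Proposition~\ref{residual_order_coord_indep}, which depend only on $\F_2=V(z)$ and $\Eg$, and the paper indeed dispenses with (1) by a one-line citation to that proposition.

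For part (2), however, the citation to Proposition~\ref{m_d_coord_indep} is a genuine misstep, and your own caveat at the end does not reach it. Proposition~\ref{m_d_coord_indep} establishes that $m_\F$ is well-defined independently of the choice of parameters \emph{subordinate to a fixed flag} $\F$; the admissible coordinate changes in its proof are precisely those that \emph{stabilize} both $\F_2=V(z)$ and $\F_1=V(y,z)$, namely (for $y$) triangular moves $y\mapsto y+g(x,z)$ with $\ord_{(z)}g\geq 1$, which forces $g$ to vanish on $V(z)$ and hence leaves $V(y,z)$ unchanged. The coordinate change you actually need — relating parameters subordinate to $\F$ to parameters subordinate to $\G$ — is of the form $y\mapsto y+h$ with $h\in K[[x]]$, $\ord h\geq n_\F$; this $h$ has $\ord_{(z)}h=0$, so it \emph{moves} the curve $V(y,z)$ and is precisely the kind of change Proposition~\ref{m_d_coord_indep} excludes. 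You cannot conflate ``this coordinate change satisfies the same numerical hypotheses as the permitted ones'' with ``this coordinate change is covered by the cited proposition.''

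The correct reference, and the one the paper actually uses, is Proposition~\ref{invariance_of_w}~(1) applied directly: with $\w=\w_n$ and $g=h\in K[[x]]$ of order $\geq n$, one has $g_0=h$ and $\w_n(g_0)=\ord h\geq n=\w_n(y)$, so the second hypothesis holds; the first hypothesis $\w_n(J_{2,\x})\geq c!\cdot\w_n(y)=nc!$ is the case $m_{\F,\x}\geq nc!$, and in that case $m_{\G,\x'}=m_{\F,\x}$. When $m_{\F,\x}<nc!$ the proposition as stated does not apply, and one must separately argue — as in the last paragraph of the proof of Proposition~\ref{m_d_coord_indep}~(1) — that then also $m_{\G,\x'}<nc!$ (else run the argument from $\G$ back to $\F$ to get a contradiction), whence $m_\F=m_\G=nc!$ by definition. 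So the content you need is in Proposition~\ref{invariance_of_w} together with the ``max against $nc!$'' case split, not in Proposition~\ref{m_d_coord_indep} itself. Once the citation is repaired along these lines, the rest of your proposal, including the geometric bookkeeping via Lemma~\ref{form_of_associated_components} and the observation that $z'=uz$ rules out anything wilder, is sound.
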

\begin{proof}
 (1): This is immediate from Proposition \ref{residual_order_coord_indep}.
 
 (2): Since $\F$ and $\G$ are comparable, it is easy to see that there exists a regular system of parameters $\x=(x,y,z)$ for $\hOWa$ such that $\F_2=\G_2=V(z)$, $\F_1=V(z,y)$ and $\G_1=V(z,y+h)$ for an element $h\in K[[x]]$ with $\ord h\geq n_\F$. The statement then follows from Proposition \ref{invariance_of_w} (1).
\end{proof}

\begin{lemma} \label{d_only_depends_on_strict_type}
 Let $\F\in\FF$ be a flag with $n_\F>0$. Let $\x=(x,y,z)$ be subordinate to $\F$. Consider a flag $\G\in\FF$ of the form $\G_2=\F_2=V(z)$, $\G_1=V(z,y+h)$ where $h\in K[[x]]$ has order $\ord h>n_\F$.
 
 Then $\F$ and $\G$ are comparable and $\inv(\F)=\inv(\G)$ holds.
\end{lemma}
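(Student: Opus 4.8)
The plan is to reduce the statement to an invariance property of the weighted orders of the coefficient ideal under the coordinate change $y \mapsto y+h$, which is essentially covered by Proposition \ref{invariance_of_w} (1). First I would observe that $\F$ and $\G$ are comparable: since $h \in K[[x]]$ with $\ord h > n_\F \geq 1$, the associated curves $\F_1 = V(z,y)$ and $\G_1 = V(z, y+h)$ meet each component of $\Eg \cap V(z)$ with the same intersection multiplicity. More precisely, for a component $D$ of $\Eg$, the intersection multiplicity $\mult_a(\F_1, D \cap \F_2)$ is computed from the form of $D$ given in Lemma \ref{form_of_associated_components}; if $D = V(x + g(y,z))$ then both $\F_1$ and $\G_1$ meet $D \cap \F_2$ with multiplicity $1$, and if $D = V(y + Q(x) + zG)$ with $\ord Q = n_\F$, then since $\ord h > n_\F = \ord Q$, the curve $\G_1 = V(z, y+h)$ still meets $D \cap \F_2 = V(z, y + Q(x))$ with intersection multiplicity $\ord Q = n_\F$ (the leading term of $Q$ is unaffected by adding $h$). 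Hence $n_\G = n_\F$ and, when $n_\F > 1$, also $D_\G = D_\F$, so $\F$ and $\G$ are comparable. Note also that $n_\G = n_\F > 0$, so $s_\F = s_\G = 0$ trivially, and it remains only to show $m_\F = m_\G$ and $d_\F = d_\G$.

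Next I would choose subordinate parameters. The system $\x = (x,y,z)$ is subordinate to $\F$ (i.e. $\F_2 = V(z)$, $\F_1 = V(z,y)$), and setting $\wt y = y + h$ gives that $\wt\x = (x, \wt y, z)$ is subordinate to $\G$. Since $h \in K[[x]]$, we have $J_{2,\x} = \coeff^c_{(x,y,z)}(I_3)$ and $J_{2,\wt\x} = \coeff^c_{(x,\wt y,z)}(I_3)$ are related by the coordinate change $y \mapsto \wt y = y - h$ of type (1) in Proposition \ref{invariance_of_w}, with $g = h \in K[[x]] \subseteq K[[x,z]]$ having $\ord g = \ord h \geq 1$. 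I would apply this proposition twice — once with the weighted order function $\w = \w_{\F,\x}$ (defined by $\w(x) = 1$, $\w(y) = n_\F$) and once with $\y = \y_{\F,\x}$ (defined by $\y(x) = (1,0)$, $\y(y) = (n_\F, 1)$) — to conclude that $\w_{\F,\x}(J_{2,\x}) = \w_{\G,\wt\x}(J_{2,\wt\x})$ and $\y_{\F,\x}(J_{2,\x}) = \y_{\G,\wt\x}(J_{2,\wt\x})$, i.e. $m_{\F,\x} = m_{\G,\wt\x}$ and $d_{\F,\x} = d_{\G,\wt\x}$.

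The main obstacle is verifying the hypotheses of Proposition \ref{invariance_of_w} (1), namely the two conditions $\w(J_{-1}) \geq c! \cdot \w(y)$ and $\w(g_0) \geq \w(y)$ (and their analogues for $\y$). For the second condition, $g = h \in K[[x]]$ has constant term $g_0 = h$, and $\w(h) = \w_n(x) \cdot \ord h = \ord h > n_\F = \w(y)$, so this holds; likewise $\y(h) = (\ord h, 0) > (n_\F, 1) = \y(y)$ when $\ord h > n_\F$, which holds by hypothesis. For the first condition I would distinguish cases according to the definition of $m_\F$: if $m_{\F,\x} \geq n_\F c!$ then $\w(J_{2,\x}) = m_{\F,\x} \geq n_\F c! = c! \cdot \w_n(y)$ directly, and Proposition \ref{invariance_of_w} applies, giving $m_{\G,\wt\x} = m_{\F,\x}$ and hence $m_\G = m_\F$; for the $\y$-version I would similarly check $\y(J_{2,\x}) \geq (n_\F c!, c!) = c! \cdot \y(y)$, which may fail, but in that case — arguing exactly as in the proof of Proposition \ref{m_d_coord_indep} (2) — one has $m_\F = n_\F c!$, hence $c! \mid m_\F$ and $d_{\F,\x} < c!$ for every subordinate $\x$ (using $d_{\F,\x} \leq \frac{1}{n_\F} m_{\F,\x}$), so $d_\F = -1 = d_\G$ regardless. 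In the remaining case $m_{\F,\x} < n_\F c!$ we again get $m_\F = n_\F c! = m_\G$ by the same reasoning applied to $\G$ (the invariance of the truncated value follows from the invariance of the untruncated value whenever the latter is $\geq n_\F c!$, and from the fact that it equals $n_\F c!$ by definition otherwise). Collecting these cases yields $m_\F = m_\G$ and $d_\F = d_\G$, and since $n_\F = n_\G$ and $s_\F = s_\G = 0$, we conclude $\inv(\F) = \inv(\G)$. $\qed$
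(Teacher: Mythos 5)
Your proposal is correct and follows essentially the same route as the paper: establish comparability via Lemma \ref{form_of_associated_components}, then apply Proposition \ref{invariance_of_w} (1) to the weighted order function $\y=\y_{\F,\x}$ for the coordinate change $y\mapsto y+h$, handling the case where the hypothesis $\y(J_{2,\x})\geq c!\cdot\y(y)$ fails by observing it forces $d_\F=-1$. The paper organizes this slightly more economically by noting that $\y(J_{2,\x})\geq(n_\F c!,c!)$ is \emph{equivalent} to $d_\F\neq-1$, applies $\y$-invariance directly in that case, and dispatches $d_\F=-1$ by symmetry; you instead case-split on $m_{\F,\x}$ and additionally invoke $\w$-invariance to track $m_\F=m_\G$, which is not strictly needed since $m$ is not a component of $\inv(\F)$. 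One small point you should make explicit: at the step ``so $d_\F=-1=d_\G$ regardless,'' the conclusion $d_\G=-1$ is not automatic from $m_\G=n_\G c!$ alone (that gives $c!\mid m_\G$ but only $d_{\G,\wt\x}\leq c!$); you need the symmetry argument (apply the same reasoning to $\G$ with $y=\wt y-h$, $\ord(-h)>n_\G$, to rule out $d_{\G,\wt\x}\geq c!$), which is exactly what the paper's final sentence ``As a consequence, the assertion automatically also holds in the case that $d_\F=-1$'' encodes.
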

\begin{proof}
 Using Lemma \ref{form_of_associated_components}, it is easy to see that the flags $\F$ and $\G$ are comparable.
 
 Consider first the case that $d_\F\neq-1$. Thus, the lexicographic inequality 
 \[\y(J_{2,\x})=(m_{\F,\x},d_{\F,\x})\geq (n_\F\cdot c!,c!)=c!\cdot\y(y)\]
 has to hold. Since also
 \[\y(h)\geq (n_\F+1,0)>(n_\F,1)=\y(y)\]
 holds, the assertion follows from Proposition \ref{invariance_of_w} (1) applied to the weighted order function $\y$.
 
 As a consequence, the assertion automatically also holds in the case that $d_\F=-1$.
\end{proof}

 Recall that the cleaning techniques we established in Chapter 5 only tell us how invariants associated to coefficient ideals behave under coordinate changes $z\mapsto z+g(\x)$. Thus, we have good control over the associated invariants of flags $\F\in\FF$ with $\F_2=V(z+g(x,y))$ when $\x=(x,y,z)$ are apposite parameters. While we cannot use the techniques of Chapter 5 to treat the flags which are not of this form, we will show in the following lemma that the associated invariants $m_\F$ and $d_\F$ of these flags are always minimal. Thus, they can mostly be ignored for questions of maximality.

\begin{lemma} \label{trivial_flags}
 Let $\x=(x,y,z)$ be apposite parameters for $\X$ at $a$. Let $\F\in\FF$ be a flag with $\F_2=V(z_1)$ for a parameter $z_1\in\hOWa$ that is not $z$-regular with respect to $\x$. Then the following hold:
 \begin{enumerate}[(1)]
  \item If $n_\F=0$, then $m_\F=0$ and $d_\F=c!$.
  \item If $n_\F>0$, then $m_\F=n_\F\cdot c!$ and $d_\F=-1$.
 \end{enumerate}
\end{lemma}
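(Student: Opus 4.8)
The statement to prove is Lemma \ref{trivial_flags}: given apposite parameters $\x=(x,y,z)$ and a flag $\F\in\FF$ whose defining hypersurface is $\F_2=V(z_1)$ with $z_1$ \emph{not} $z$-regular with respect to $\x$, the associated invariants take their minimal values. The plan is to exploit the existence of an element $f\in I_3$ that is $z$-regular of order $c=c_\X(a)$ — which is part of the definition of apposite parameters — together with Lemma \ref{z_regular_blocks_other_parameters}, which is precisely designed to give a strong bound on weighted orders of a coefficient ideal taken with respect to a hypersurface that is ``transverse'' to a $z$-regular direction.

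First I would reduce to a convenient normal form for $z_1$. Since $z_1$ is not $z$-regular with respect to $(x,y,z)$, after possibly multiplying $z_1$ by a unit (which leaves $V(z_1)$, hence $\F_2$, unchanged) and applying the Weierstrass preparation theorem, I may assume $z_1$ has order $1$ and its linear part is a nontrivial combination of $x$ and $y$; in particular, after a linear change among $x,y$ that fixes $V(z)$ and (because of the shape of $\Eg\subseteq V(xy)$ permitted by the apposite condition) is harmless, I may take $z_1$ to have the property that one of the original parameters $x$ or $y$ becomes ``the $z$-variable'' of a new system in which $\F_2=V(z_1)$. Concretely: there is a regular system of parameters of the form $(\x',z_1)$ for $\hOWa$ in which $z_1$ plays the role of $z$ and one of $x,y$ (call it the one not used to cut $\F_1$, or an appropriate combination) plays the role previously played by $z$. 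The key point is that $f\in I_3$ is $z$-regular of order $c$ with respect to the \emph{old} system, so it is not $z_1$-regular of order $c$ with respect to the new one; rather $z$ (or the surviving $x$/$y$) now occupies the distinguished transverse slot.

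Next I would apply Lemma \ref{z_regular_blocks_other_parameters}. In the notation there, with the roles matched up so that the ``$y$'' of that lemma is $z_1$ and the ``$z$'' of that lemma is the old $z$ (or whichever parameter $f$ is regular in), the lemma gives $\w(J_{-1})\le c!\cdot\w(z)$ for the coefficient ideal $J_{-1}=\coeff^c_{(\x',z_1)}(I_3)$ and \emph{any} weighted order function $\w$ defined on $(\x',z)$. For case (1), $n_\F=0$: choosing $\w$ to be the order function at the relevant monomial (i.e. taking the weighted order used to define $m_\F$, which assigns weight $1$ only to the exceptional variables among $\x'$ and weight $0$ to $z$), the bound $\w(J_{2,\x'})\le c!\cdot\w(z)=0$ forces $m_\F=\ord M_{2,\x'}=0$, and then since $\ord J_{2,\x'}\ge c!$ by Lemma \ref{ogeqc!}, the residual part absorbs everything and $d_\F=\ord I_{2,\x'}=\ord J_{2,\x'}$; I would argue $\ord J_{2,\x'}=c!$ exactly, using again that the $z_1^c$ coefficient cannot be a unit so no strict inequality can arise beyond $c!$ — more carefully, $\ord J_{2,\x'}>c!$ would by Lemma \ref{coeff_ideal_and_directrix} force $\Dir(I_3)=(\ol{z_1})$, contradicting that $f\in I_3$ is $z$-regular (hence $\ol z\in\Dir(I_3)$ up to a linear combination by Lemma \ref{directrix_from_z_regularity}, and $\ol z\notin(\ol{z_1})$). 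For case (2), $n_\F>0$: now $m_{\F,\x'}=\w_{\F,\x'}(J_{2,\x'})\le c!\cdot\w_{\F,\x'}(z)=0<n_\F\cdot c!$, so by the definition $m_\F=n_\F\cdot c!$ and hence $c!\mid m_\F$; and $d_{\F,\x'}=\ord_{(y)}\minit_{\w}(J_{2,\x'})\le \tfrac1{n_\F}m_{\F,\x'}=0$, i.e.\ $d_{\F,\x'}=0$, which by the definition of $d_\F$ (the last case, $d_{\F,\x'}=0$) yields $d_\F=-1$.

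The main obstacle I anticipate is the bookkeeping in the reduction step: making sure that a normal form for $z_1$ can be chosen compatibly with the constraint $\Eg\subseteq V(xy)$ (so that ``apposite'' is genuinely available and Lemma \ref{z_regular_blocks_other_parameters} applies with the right parameter playing the role of the blocking variable), and matching the weighted order functions $\w_{\F,\x}$, $\y_{\F,\x}$ used in the definition of $m_\F,d_\F$ against the generic weighted order function allowed in Lemma \ref{z_regular_blocks_other_parameters}. All of this is routine but must be done with care; once the correct parameter system is in place, the inequalities are immediate from the cited lemmas, and the only subtlety left is pinning down the \emph{exact} value $d_\F=c!$ in case (1), for which the directrix argument via Lemma \ref{coeff_ideal_and_directrix} and Lemma \ref{directrix_from_z_regularity} closes the gap.
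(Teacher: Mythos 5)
Your approach in case (1) matches the paper's (both apply Lemma \ref{z_regular_blocks_other_parameters} after reducing to a convenient subordinate system). Your directrix argument for $\ord J_{2,\x'}=c!$ is valid but unnecessary: the same lemma with $\w=\ord$ on the subordinate parameters already gives $\ord J_{2,\x'}\le c!\cdot\ord(z)=c!$, and then Lemma \ref{ogeqc!} closes the gap. So that case is fine.

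Case (2), however, has a genuine gap. You write $\w_{\F,\x'}(z)=0$, but this cannot hold: $\w_{\F,\x'}$ is defined on the subordinate parameters $(x',y')$ of $\hOWa/(z_1)$ by $\w(x')=1$, $\w(y')=n_\F$, and $z$ is a regular parameter of $\hOWa$ not proportional to $z_1$, so $\ol z$ has order $1$ in the residue ring and hence $\w_{\F,\x'}(z)\ge 1$. (It also contradicts Lemma \ref{ogeqc!}, which forces $m_{\F,\x'}\ge c!>0$.) The actual value is $\w_{\F,\x'}(z)=1$, and this is where the real content of case (2) lies: because the curve $\F_1$ is tangent to $\F_2\cap D$ for a component $D$ of $\Eg\subseteq V(xy)$, the curve-cutting parameter $y'$ of a subordinate system has linear part in $(\ol x,\ol y)$, so $z$ necessarily plays the $x'$-role (weight $1$), not the $y'$-role (weight $n_\F$). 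With $\w(z)=1$ the lemma yields only $m_{\F,\x'}\le c!$, and the desired strict inequality $m_{\F,\x'}<n_\F\cdot c!$ then requires $n_\F\ge 2$ — a step you omit. This too needs to be argued: compatibility of $\F$ with $E$ together with $z_1$ not being $z$-regular forces $\Eg$ to have at most one component, and then Lemma \ref{form_of_associated_components} (5) rules out $n_\F=1$. Once both points are in place, the conclusion $m_\F=n_\F c!$ and $d_{\F,\x'}\le\frac{c!}{n_\F}<c!$, hence $d_\F=-1$, follows exactly as in your write-up. Your final answer is correct, but only because your incorrect intermediate claim ($m_{\F,\x'}=0$) happens to be strong enough to bypass the two real ingredients of the argument.
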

\begin{proof}
 (1): Since $\F$ is compatible with $E$, we can assume without loss of generality that $\F_2=V(y)$ and $\Eg\subseteq V(x)$. Further, we can assume by Lemma \ref{F_2_determines_numerals} that $\F_1=V(y,x)$ since all the invariants in the statement only depend on the hypersurface $\F_2$. Thus, the parameters $\x_1=(z,x,y)$ are subordinate to $\F$. It follows from Lemma \ref{z_regular_blocks_other_parameters} that $\ord J_{2,\x_1}\leq c!$ and $\ord_{(x)} J_{2,\x_1}=0$. Since $\ord J_{2,\x_1}\geq c!$ by Lemma \ref{ogeqc!}, this implies that $m_\F=0$ and $d_\F=c!$.
 
 (2): Since $\F$ is compatible with $E$ and $\Eg\neq\emptyset$, we can assume without loss of generality that $\F_2=V(y)$ and $\Eg=V(x)$. Since $\Eg$ has only one component, we know that $n_\F\geq 2$. It is easy to see that $\F_1=V(y,x_1)$ where $x_1=x+Q(z)$ for some element $Q\in K[[z]]$ of order $\ord Q=n_\F$. Thus, $\x_1=(z,x_1,y)$ is a regular system of parameters for $\hOWa$ that is subordinate to $\F$. Since $\ord Q>1$, it is clear that $f$ is $z$-regular of order $c$ with respect to $\x_1$. The associated weighted order function $\w:K[[x_1,z]]\to\N_\infty$ is defined by $\w(z)=1$ and $\w(x_1)=n_\F$. By Lemma \ref{z_regular_blocks_other_parameters} we know that 
 \[m_{\F,\x_1}=\w(J_{2,\x_1})\leq c!<n_\F\cdot c!.\]
 Thus, also $d_{\F,\x}<c!$ holds. This proves the assertion.
\end{proof}

\subsection{Maximizing flags $\F$ with $n_\F=0$}

 Using cleanness with respect to weighted order functions, we can now give a sufficient criterion for a flag $\F\in\FF$ with $n_\F=0$ to be valid and to maximize $d_\F$ over all valid flags that are comparable to $\F$. Further, using the secondary $\ord$-cleanness property that was developed in Section \ref{section_s_cleaning}, we also give a sufficient criterion for a flag $\F\in\FF$ with $n_\F=0$ and subordinate parameters $\x=(x,y,z)$ to maximize the flag invariant $\inv(\F)$ over all coordinate changes $z\mapsto z+g$ with $g\in K[[x,y]]$.

\begin{proposition} \label{cleaning_transversal_flags}
 Let $\F\in\FF$ be a flag with $n_\F=0$ and $\x=(x,y,z)$ a regular system of parameters for $\hOWa$ that is subordinate to $\F$. Let $f\in I_3$ be an element with the following properties:
 \begin{itemize}
  \item $f$ is $\ord$-clean with respect to $J_{2,\x}$.
  \item $f$ is $\ord_{(x)}$-clean with respect to $J_{2,\x}$.
  \item $f$ is $\ord_{(y)}$-clean with respect to $J_{2,\x}$.
  \end{itemize}
 Then the following hold:
 \begin{enumerate}[(1)]
  \item The flag $\F$ is valid.
  \item Let $\G\in\FF$ be a valid flag which is comparable to $\F$. Then $d_\G\leq d_\F$.
  \item If $f$ is also secondary $\ord$-clean with respect to $\coeff_{(x,y)}^{d_\F}(I_{2,\x})$, then for all valid flags $\G$ of the form $\G_2=V(z+g)$, $\G_1=V(z+g,y)$ for some element $g\in K[[x,y]]$ the inequality $\inv(\G)\leq \inv(\F)$ holds.
 \end{enumerate}
\end{proposition}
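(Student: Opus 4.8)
The strategy is to combine the three cleaning-stability results from Chapter~\ref{chapter_cleaning} — Proposition~\ref{w_cleaning_maximizes_w} applied to three different weighted order functions, and Proposition~\ref{s_cleaning_maximizes_slope} — to show that each of the building blocks $m_\F$, $d_\F$, $s_\F$ is maximal over the relevant class of coordinate changes, and then assemble these lexicographically. Throughout, I fix the subordinate parameters $\x=(x,y,z)$ and compare against another valid flag $\G$ of the allowed form $\G_2=V(z+g)$, $\G_1=V(z+g,y)$ with $g\in K[[x,y]]$; write $\x_g=(x,y,z+g)$ for a subordinate system for $\G$.

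\textbf{Step 1: $d_\F$ is maximal, and $\F$ is valid.} First I would treat the case $\Eg=V(xy)$: here $r_x=\ord_{(x)}J_{2,\x}$ and $r_y=\ord_{(y)}J_{2,\x}$ and $m_\F=r_x+r_y$, so $m_\F=\ord M_{2,\x}$ is determined by two $x_i$-orders of $J_{2,\x}$. Since $f$ is $\ord_{(x)}$-clean and $\ord_{(y)}$-clean, Proposition~\ref{w_cleaning_maximizes_w} (applied with $\w=\ord_{(x)}$ and with $\w=\ord_{(y)}$) gives $\ord_{(x)}J_{2,\x_g}\le r_x$ and $\ord_{(y)}J_{2,\x_g}\le r_y$, hence $m_\G\le m_\F$ for every comparable $\G$; combined with Lemma~\ref{trivial_flags} (which shows non-$z$-regular hypersurfaces give the minimal value $m=0$) and Lemma~\ref{F_2_determines_numerals}, this yields assertion~(1). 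Since $f$ is $\ord$-clean, Proposition~\ref{w_cleaning_maximizes_w} applied with $\w=\ord$ gives $\ord J_{2,\x_g}\le\ord J_{2,\x}$; because $\G$ is valid we have $m_\G=m_\F$ (the monomial part did not drop), so subtracting gives $d_\G=\ord J_{2,\x_g}-m_\G\le\ord J_{2,\x}-m_\F=d_\F$, which is assertion~(2). The cases $\Eg=V(y)$, $\Eg=V(x)$, $\Eg=\emptyset$ are handled identically, with the understanding (as in the definition of $m_\F$, $d_\F$ for $n_\F=0$) that the relevant $r_i$ is set to $0$ when $V(x_i)\not\subseteq\Eg$, so the cleaning statements in those slots are simply not needed.

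\textbf{Step 2: $s_\F$ is maximal when $d_\G=d_\F$.} This is where the secondary $\ord$-cleanness hypothesis enters. Suppose $\G$ is valid and $d_\G=d_\F$; I want $s_\G\le s_\F$. Set $d=d_\F$. If $d\ge c!$, then $J_{1,\x}=\coeff_{(x,y)}^{d}(I_{2,\x})$ is the second coefficient ideal $J_{-2}$ of Section~\ref{section_invariance_of_slope}, and the passage $z\mapsto z+g$ is exactly a coordinate change of the type studied in Section~\ref{section_s_cleaning} (with $y$ fixed). Since $d_\G=d_\F$, the coordinate change ``preserves the setting'' in the sense of that section, so Proposition~\ref{s_cleaning_maximizes_slope} applied to the secondary-$\ord$-clean element $f$ gives $\ord J_{1,\x_g}\le\ord J_{1,\x}$, i.e. $s_\G\le s_\F$. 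If $0<d<c!$, then $J_{1,\x}$ is the companion-ideal coefficient ideal $\coeff_{(x,y)}^{d(c!-d)}(I_{2,\x}^{c!-d}+M_{2,\x}^{d})$, and by Lemma~\ref{coeff_ideal_of_powers} its order is $(d(c!-d))!\cdot\min\{s_0/d!,\, m_0/(c!-d)!\}$ where $s_0=\ord\coeff_{(x,y)}^d(I_{2,\x})$ and $m_0=\ord\coeff_{(x,y)}^{c!-d}(M_{2,\x})$; the monomial term $m_0$ is unchanged by $z\mapsto z+g$ (the monomial ideal is rigid, Proposition~\ref{residual_order_coord_indep}), and $s_0$ does not increase by Proposition~\ref{s_cleaning_maximizes_slope} as before, so again $s_\G\le s_\F$. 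If $d=0$, then $s_\F=0$ by definition but also $d_\G=0$ forces $s_\G=0$, so the inequality is trivial.

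\textbf{Step 3: assemble.} Given a valid $\G$ of the allowed form, Step~1 gives $d_\G\le d_\F$, and $n_\G=n_\F=0$ by hypothesis. If $d_\G<d_\F$ then $\inv(\G)=(d_\G,0,s_\G)<(d_\F,0,s_\F)=\inv(\F)$ lexicographically, regardless of $s_\G$. If $d_\G=d_\F$, Step~2 gives $s_\G\le s_\F$, hence $\inv(\G)\le\inv(\F)$. This is assertion~(3).

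\textbf{Anticipated main obstacle.} The bookkeeping in Step~1 across the four shapes of $\Eg$ is routine but must be done carefully so that the ``set to zero'' convention for $r_i$ with $V(x_i)\not\subseteq\Eg$ matches which cleaning hypotheses are actually being invoked. The genuinely delicate point is Step~2 in the range $0<d_\F<c!$: one must verify that $d_\G=d_\F$ is exactly the condition under which $z\mapsto z+g$ ``preserves the setting'' for the second coefficient ideal (so that Proposition~\ref{s_cleaning_maximizes_slope} applies), and that the companion-ideal formula via Lemma~\ref{coeff_ideal_of_powers} correctly isolates the $s_0$-dependence — the interplay between the rigidity of $M_{2,\x}$ and the semicontinuity of $\ord\coeff_{(x,y)}^d(I_{2,\x})$ under the coordinate change is the heart of why the companion ideal is used here rather than $\coeff_{(x,y)}^d(I_{2,\x})$ directly.
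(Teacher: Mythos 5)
Your proposal is correct and follows essentially the same route as the paper's own proof: Proposition~\ref{w_cleaning_maximizes_w} applied to the weighted order functions $\ord$, $\ord_{(x)}$, $\ord_{(y)}$ for assertions (1) and (2), Lemma~\ref{trivial_flags} for the non-$z$-regular hypersurfaces, and Proposition~\ref{s_cleaning_maximizes_slope} plus the companion-ideal decomposition via Lemma~\ref{coeff_ideal_of_powers} for assertion (3). Three small slips worth flagging: when $d_\F=0$ one has $J_{1,\x}=0$ so $s_\F=\infty$, not $0$ (the inequality $s_\G\le s_\F$ still holds, since $d_\G=0$ gives $s_\G=\infty$ too); your appeal to Proposition~\ref{residual_order_coord_indep} for $M_{2,\x_g}=M_{2,\x}$ is a misattribution — that result governs coordinate changes that fix the hypersurface $V(z)$, whereas here the hypersurface itself moves, and the correct justification is the one you sketched in Step~1 (cleanness forces $\ord_{(x)}J_{2,\x_g}\le r_x$ and $\ord_{(y)}J_{2,\x_g}\le r_y$, while validity forces $m_\G=m_\F$, so both exponents are reproduced exactly); and, relatedly, ``preserves the setting'' in the sense of Section~\ref{section_s_cleaning} requires both exponents $r_x,r_y$ to be preserved, not merely $m_\G=m_\F$ and $d_\G=d_\F$ — but again this follows from the individual $\ord_{(x)}$- and $\ord_{(y)}$-bounds. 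Finally, for assertion~(2) you should make explicit, as you did for~(1), that non-$z$-regular hypersurfaces are disposed of by Lemma~\ref{trivial_flags} together with Lemma~\ref{ogeqc!}, since~(2) concerns all valid comparable flags and not just those of the form $V(z+g)$.
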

\begin{proof}
 (1),(2): Let $\G\in\FF$ be another flag with $n_\G=0$ and subordinate parameters $\x_1=(x_1,y_1,z_1)$. We will show that the lexicographic inequality $(m_\G,d_\G)\leq (m_\F,d_\F)$ holds. 
 
 If $z_1$ is not $z$-regular with respect to $\x$, we know by Lemma \ref{trivial_flags} that $m_\G=0$ and $d_\G=c!$. Since $\ord J_{2,\x}\geq c!$ by Lemma \ref{ogeqc!}, this proves that $(m_\G,d_\G)\leq (m_\F,d_\F)$. 
 
 Now consider the case that $z_1$ is $z$-regular with respect to $\x$. By the Weierstrass preparation theorem, we may assume that $z_1=z+g$ for an element $g\in K[[x,y]]$. Since the invariants $m_\G$ and $d_\G$ only depend on the hypersurface $\G_2$ by Lemma \ref{F_2_determines_numerals}, we can assume without loss of generality that $y_1=y$ and $x_1=x$. The inequality $(m_\G,d_\G)\leq (m_\F,d_\F)$ now follows from Proposition \ref{w_cleaning_maximizes_w}. 
 
 (3): Let $\G$ be a valid flag of the form $\G_2=V(z+g)$, $\G_1=V(z+g,y)$. Set $\x_1=(x,y,z+g)$. Since $\G$ is valid, we know that $m_\G=m_\F$ and $d_\G\leq d_\F$. Assume from now on that $d_\G=d_\F$ holds. Set $d=d_\F$. Then we know by Proposition \ref{s_cleaning_maximizes_slope} that 
 \[\ord \coeff_{(x,y)}^{d}(I_{2,\x_1})\leq\ord \coeff_{(x,y)}^{d}(I_{2,\x})\] holds. If $d\geq c!$, this suffices to show that $s_\G\leq s_\F$.
 
 If $0<d<c!$, then we can compute with Lemma \ref{coeff_ideal_of_powers} that
 \[s_\G=\min\Big\{\frac{(d(c!-d))!}{d!}\ord\coeff_{(x,y)}^{d}(I_{2,\x_1}),\frac{(d(c!-d))!}{(c!-d)!}\ord\coeff_{(x,y)}^{c!-d}(M_{2,\x_1})\Big\}\]
 and an analogous formula holds for $s_\F$.
 
 Since $M_{2,\x}=M_{2,\x_1}$ holds, this proves that $s_\G\leq s_\F$. Hence, $\inv(\G)\leq\inv(\F)$ holds.
\end{proof}

 Using the previous proposition and the results that we devised in Section \ref{section_maximizing_over_y_and_z}, we can now show that there exists a valid flag $\F\in\FF$ with $n_\F=0$ that maximizes the flag invariant $\inv(\F)$ over all valid flags that are comparable to $\F$.

\begin{proposition} \label{transversal_flags_can_be_maximized_with_z}
 Let $\x=(x,y,z)$ be apposite parameters for $\X$ at $a$. Then there exists a valid flag $\F\in\FF$ with $n_\F=0$ such that $\F_2=V(z+g)$ for some element $g\in K[[x,y]]$ and for all valid flags $\G\in\FF$ which are comparable to $\F$, the inequality $\inv(\G)\leq\inv(\F)$ holds.
\end{proposition}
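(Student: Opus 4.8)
The plan is to construct the desired flag $\F$ directly from the apposite parameters $\x=(x,y,z)$ by a sequence of cleaning procedures, and then appeal to Proposition \ref{cleaning_transversal_flags} to conclude both validity and the maximizing property. First I would split into the two cases $\Eg=\emptyset$ and $\Eg\neq\emptyset$, since the notion of subordinate parameters for $n_\F=0$ requires $\Eg\subseteq V(xy)$, and the ``monomial'' restriction (if $\Eg=\emptyset$ then $r_x=0$ or $r_y=0$) only matters in the first case. In either case, by Lemma \ref{good_parameters} we may assume $x,y,z$ are already apposite and $\Eg\subseteq V(xy)$; let $f\in I_3(a)$ be the element which is $z$-regular of order $c$.

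The heart of the argument is to manufacture an element $f$ and a parameter $z_1=z+g$ with $g\in K[[x,y]]$ such that $f$ is simultaneously $\ord$-clean, $\ord_{(x)}$-clean and $\ord_{(y)}$-clean with respect to $J_{2,(x,y,z_1)}$, \emph{and} secondary $\ord$-clean with respect to $\coeff^{d_\F}_{(x,y)}(I_{2,(x,y,z_1)})$. For the first three cleanness properties I would run the $\ord$-cleaning process (Section \ref{section_w_cleaning}, Proposition \ref{w_cleaning_terminates}) starting from $f$ and $z$; by Proposition \ref{w_cleaning_terminates} this either terminates with a clean configuration after finitely many steps, or produces a $z_\infty$ with $\coeff^c_{(x,y,z_\infty)}(I_3)=0$, which by Lemma \ref{coeff_is_zero} would force $I_3=(z_\infty^c)$, contradicting Lemma \ref{no_z^c} since $c>1$. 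Having achieved $\ord$-cleanness, I invoke Lemma \ref{w_cleaning_preserves_v_cleaning}: cleaning with respect to $\ord_{(x)}$ and then $\ord_{(y)}$ preserves $\ord$-cleanness, so after at most three successive cleaning processes I obtain $z_1$ with all three properties (each a finite process, for the same no-$z^c$ reason). Then I would compute $d_\F$ from this configuration, form $I_{2,(x,y,z_1)}$, and run the secondary $\ord$-cleaning process (Proposition \ref{s_cleaning_terminates}), which again terminates by the same argument. By Lemma \ref{s_cleaning_preserves_v_clean} the secondary cleaning step preserves $\w$-cleanness for every weighted order function defined on $(x,y)$, hence in particular preserves $\ord$-, $\ord_{(x)}$- and $\ord_{(y)}$-cleanness; I should double-check that it also preserves the setting (Lemma \ref{s_cleaning_step_preserves_setting} requires $\ord$-cleanness of an element of $I_3$ with respect to $J_{-1}$, which we have). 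So after all cleaning I end up with a single parameter $z_1=z+g$, $g\in K[[x,y]]$, and $f\in I_3(a)$ enjoying all four cleanness properties.

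Now set $\F_2=V(z_1)$, $\F_1=V(z_1,y)$. Since $\Eg\subseteq V(xy)$, the parameters $(x,y,z_1)$ are subordinate to $\F$ and $n_\F=0$ (the union $\F_1\cup(\F_2\cap\Eg)$ consists of coordinate lines, hence is simple normal crossings). By Proposition \ref{cleaning_transversal_flags}(1) the flag $\F$ is valid; by (2), every valid flag $\G\in\FF$ comparable to $\F$ satisfies $d_\G\leq d_\F$; and by (3), since $f$ is also secondary $\ord$-clean with respect to $\coeff^{d_\F}_{(x,y)}(I_{2,\x})$, every valid flag $\G$ of the form $\G_2=V(z_1+h)$, $\G_1=V(z_1+h,y)$ with $h\in K[[x,y]]$ satisfies $\inv(\G)\leq\inv(\F)$. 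It remains to upgrade (2) and (3) into the full statement ``$\inv(\G)\leq\inv(\F)$ for \emph{all} valid $\G\in\FF$ comparable to $\F$''. Here the main obstacle is the flags $\G$ comparable to $\F$ (so $n_\G=0$) whose hypersurface $\G_2=V(z_2)$ is \emph{not} of the form $V(z_1+h)$ with $h\in K[[x,y]]$, i.e.\ where $z_2$ is not $z$-regular with respect to $\x$. For those, Lemma \ref{trivial_flags}(1) gives $m_\G=0$ and $d_\G=c!$, while $m_\F\geq 0$ and, when $m_\F=0$, one has $d_\F=\ord J_{2,\x}\geq c!$ by Lemma \ref{ogeqc!}; so the lexicographic pair $(m_\G,d_\G)\leq(m_\F,d_\F)$, and for comparability-with-$\F$ flags $d_\G=-1$ is impossible (they have $n_\G=0$), so $(d_\G,n_\G,s_\G)<(d_\F,n_\F,s_\F)$ unless $(m_\F,d_\F)=(0,c!)$, in which case one checks $s_\F=\ord J_{1,\x}$ is at least as large because $J_{2,\x}$ already has maximal order. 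Finally, for flags $\G$ comparable to $\F$ whose $\G_2$ \emph{is} $V(z_1+h)$ but with $\G_1$ having higher-order tangency, Lemma \ref{F_2_determines_numerals}(1) shows the invariants depend only on $\G_2$, reducing to the case already handled by (3). Assembling these cases finishes the proof; I expect the bookkeeping of ``which cleaning preserves which cleanness and the setting'' to be the only delicate point, all of it covered by Lemmas \ref{w_cleaning_preserves_v_cleaning}, \ref{s_cleaning_preserves_v_clean} and \ref{s_cleaning_step_preserves_setting}.
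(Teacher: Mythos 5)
There is a genuine gap, and it is exactly at the point you flag as "the only delicate point" — but not for the reason you suspect. Your cleaning program (run $\ord$-cleaning, then $\ord_{(x)}$, then $\ord_{(y)}$, then secondary $\ord$-cleaning, using Lemmas \ref{w_cleaning_preserves_v_cleaning}, \ref{s_cleaning_preserves_v_clean}, \ref{s_cleaning_step_preserves_setting} for compatibility and Lemma \ref{no_z^c} for termination) is fine, and Proposition \ref{cleaning_transversal_flags} indeed gives validity, $d_\G\le d_\F$ for all comparable $\G$, and $\inv(\G)\le\inv(\F)$ for all $\G$ with $\G_2=V(z_1+h)$ \emph{and} $\G_1=V(z_1+h,y)$. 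The problem is what comes next.

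Your reduction of the remaining comparable flags via Lemma \ref{F_2_determines_numerals}(1) does not work. A comparable $\G$ with $n_\G=0$ can have $\G_2=V(z_1+h)$ but $\G_1=V(z_1+h,y+k)$ for some $k\in K[[x]]$, or even $\G_1=V(z_1+h,x+l)$. Lemma \ref{F_2_determines_numerals}(1) only says $m_\G$ and $d_\G$ are determined by $\G_2$ when $n_\F=0$; it says nothing about $s_\G$. And indeed $s_\G$ genuinely depends on the curve $\G_1$: the second coefficient ideal $J_{1,\x}$ is taken with respect to the hypersurface $V(y)$ inside $K[[x,y]]$. The secondary $\ord$-cleaning process (Section \ref{section_s_cleaning}) maximizes $s$ only over coordinate changes $z\mapsto z+g(x,y)$ with $y$ fixed; it does not compare against flags whose curve $\G_1$ has been moved. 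The machinery that actually handles this is Proposition \ref{maximum_over_y_and_z_exists} (Section \ref{section_maximizing_over_y_and_z}), which establishes the existence of a maximizer of $\ord J_{-2}$ over \emph{simultaneous} coordinate changes $z\mapsto z+g(x,y)$, $y\mapsto y+h(x)$. That proposition is precisely what your proof is missing, and its proof is substantially harder than an iteration of cleaning steps (the paper proves it as an existence statement rather than via a terminating algorithm). The paper invokes it explicitly at this point and you would need to as well.

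Your handling of the last case (flags $\G$ with $\G_2$ not $z$-regular) also needs to be made rigorous. You correctly note via Lemma \ref{trivial_flags}(1) that such $\G$ have $m_\G=0$ and $d_\G=c!$, and that the interesting case is $d_\F=c!$; but your assertion that $s_\F\ge s_\G$ "because $J_{2,\x}$ already has maximal order" is not an argument. The paper's route here is: if $d_\F=d_\G=c!$ and an element $f$ is $\ord$-clean, then $\tau(I_3)\ge 2$ by Lemma \ref{cleanness_and_tau}; if moreover $s_\G>s_\F\ge c!!$, then Lemma \ref{J_2_tau=2}(3) forces $\Dir(I_3)=(\ol z_1,\ol y_1)$, whence (by Lemma \ref{directrix_from_z_regularity} and $z_1$ not $z$-regular) the parameter $y_1$ \emph{is} $z$-regular; one then builds the flag $\HH_2=V(y_1)$, $\HH_1=\G_1$ and uses Lemma \ref{J_2_tau=2}(1) ($\ord J_{z,y}=\ord J_{y,z}$) to get $s_\HH=s_\G$, contradicting the maximality already established for $\F$ over the class of $z$-regular hypersurfaces. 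This symmetry argument is the content you need; it does not follow from the cleaning machinery alone.
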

\begin{proof}
 By definition, there is an element $f\in I_3$ which is $z$-regular of order $c$. By Lemma \ref{w_cleaning_preserves_v_cleaning} and Lemma \ref{s_cleaning_step_preserves_setting} we can assume after a change of coordinates $z\mapsto z+g$ with $g\in K[[x,y]]$ that $f$ is $\ord$-clean, $\ord_{(x)}$-clean and $\ord_{(y)}$-clean with respect to $J_{2,\x}$.
 
 Set $\F_2=V(z)$ and $\F_1=V(z,y)$. By Lemma \ref{cleaning_transversal_flags}, $\F$ is valid and for all valid flags $\G$ which are comparable to $\G$, the inequality $d_\G\leq d_\F$ holds. It remains to find such a flag $\G$ which fulfills $d_\G=d_\F$ and maximizes $s_\G$.
 
 To this end, we will at first only consider the valid flags $\G$ which are comparable to $\F$ and are of the form $\G_2=V(z+g)$ for some element $g\in K[[x,y]]$. Then necessarily either $\G_1=V(z+g,y+h)$ with $h\in K[[x]]$ or $\G_1=V(z+g,x+k)$ with $k\in K[[y]]$. By Proposition \ref{maximum_over_y_and_z_exists} there is a valid flag $\HH$ of the form $\HH=V(z+g)$ which is comparable to $\F$, fulfills $d_\HH=d_\F$ and $s_\G\leq s_\HH$ for all such flags $\G$. We may assume after a change of coordinates that $\F=\HH$.
 
 Now assume that there exists a valid flag $\G$ which is comparable to $\F$ and fulfills $d_\G=d_\F$ and $s_\G>s_\F$. Let $\G$ be have the form $\G_2=V(z_1)$, $\G_1=V(z_1,y_1)$. By what we have just shown, we know that $z_1$ that is not $z$-regular with respect to $\x$. We know by Lemma \ref{trivial_flags} that $m_\F=m_\G=0$ and $d_\F=d_\G=c!$. By Lemma \ref{cleanness_and_tau} this implies that $\tau(I_3)\geq 2$. Using Lemma \ref{J_2_tau=2}, we see that $s_\G>s_\F\geq c!!$ and $\Dir(I_3)=(\ol z_1,\ol y_1)$. By Lemma \ref{directrix_from_z_regularity} there exists an element $g\in K[[x,y]]$ such that $\ol{z+g}\in\Dir(I_3)$. Since $z_1$ is not $z$-regular, this implies that $y_1$ is $z$-regular. We may assume that $y_1=z+g_1$ for some element $g_1\in K[[x,y]]$. Define the flag $\HH$ as $\HH_2=V(z+g_1)=V(y_1)$ and $\HH_1=\G_1=V(y_1,z_1)$. The flag $\HH$ is comparable to $\F$. Further, since $\F$ is valid, it is clear by Lemma \ref{ogeqc!} that $m_\HH=0$, $d_\HH=c!$ and $\HH$ is also valid. Further, we know already know that $s_\HH\leq s_\F$ holds. But by Lemma \ref{J_2_tau=2} (1) we also know that $s_\G=s_\HH$. This contradicts the assumption that $s_\G>s_\F$.
\end{proof}

\subsection{Maximizing flags $\F$ with $n_\F>0$}

 Similarly to the previous section we will now use cleanness with respect to weighted order functions to give a sufficient criterion for a flag $\F\in\FF$ with $n_\F>0$ to be valid and to maximize the flag invariant $\inv(\F)$ over all coordinate changes $z\mapsto z+g$ with $g\in K[[x,y]]$.

\begin{proposition} \label{cleaning_tangential_flags}
 Let $\F\in\FF$ be a flag with $n_\F>0$ and $\x=(x,y,z)$ a regular system of parameters for $\hOWa$ that is subordinate to $\F$. Let $f\in I_3$ be an element that is $\y_{\F,\x}$-clean with respect to $J_{2,\x}$. Then the following hold:
 \begin{enumerate}[(1)]
  \item The flag $\F$ is valid.
  \item For all valid flags $\G\in\FF$ of the form $\G_2=V(z+g)$, $\G_1=V(z+g,y)$ for some element $g\in K[[x,y]]$ the inequality $\inv(\G)\leq \inv(\F)$ holds.
 \end{enumerate}
\end{proposition}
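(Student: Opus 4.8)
The plan is to mirror the structure of the proof of Proposition~\ref{cleaning_transversal_flags}, but now in the weighted-order setting governed by the $\w$-cleaning machinery of Section~\ref{section_w_cleaning} (applied to the weighted order functions $\w_{\F,\x}$ and $\y_{\F,\x}$) and the blowup-stability results of Section~\ref{section_weighted_orders_under_blowup}. First I would fix subordinate parameters $\x=(x,y,z)$ for $\F$ and the element $f\in I_3$ that is $\y_{\F,\x}$-clean with respect to $J_{2,\x}$. Recall that by Lemma~\ref{double_weighted order function} we have $\y_{\F,\x}(J_{2,\x})=(m_{\F,\x},d_{\F,\x})$, so $\y_{\F,\x}$-cleanness simultaneously controls the pair $(m_{\F,\x},d_{\F,\x})$ under coordinate changes $z\mapsto z+g$ with $g\in K[[x,y]]$. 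Since $f$ is $z$-regular of order $c$ (it is $\y_{\F,\x}$-clean, which presupposes $z$-regularity), Lemma~\ref{z_regular_blocks_other_parameters} will handle the flags whose $\F_2$ is given by a parameter that is not $z$-regular.

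For part (1), I would take an arbitrary flag $\G\in\FF$ comparable to $\F$ (so $n_\G=n_\F$ and, if $n_\F>1$, $D_\G=D_\F$) with subordinate parameters $\x_1=(x_1,y_1,z_1)$, and show $m_\G\le m_\F$. If $z_1$ is not $z$-regular with respect to $\x$, then Lemma~\ref{trivial_flags}~(2) gives $m_\G=n_\G\cdot c!$, which is the minimal possible value of $m$ for flags with positive associated multiplicity, hence $m_\G\le m_\F$. If $z_1$ is $z$-regular, then by Weierstrass preparation $z_1=z+g$ for some $g\in K[[x,y]]$; by Lemma~\ref{F_2_determines_numerals}~(2) and Lemma~\ref{d_only_depends_on_strict_type} the value $m_\G$ depends only on $\G_2$ up to the comparability data, so I may assume $x_1=x$ and $y_1=y$ (adjusting $\G_1$ within its comparability class, which does not change $m_\G$ by Lemma~\ref{d_only_depends_on_strict_type}). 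Then $m_\G=\w_{\F,\x}(\coeff_{(x,y,z+g)}^c(I_3))$, and Proposition~\ref{w_cleaning_maximizes_w} applied to $\w_{\F,\x}$ gives $\w_{\F,\x}(\coeff_{(x,y,z+g)}^c(I_3))\le\w_{\F,\x}(J_{2,\x})=m_{\F,\x}$, whence $m_\G\le m_\F$. Here I should be slightly careful about the case $m_{\F,\x}<n_\F\cdot c!$, where $m_\F$ is set to $n_\F\cdot c!$ by definition; but then the inequality $m_\G\le m_\F$ is automatic once we know $m_\G\le m_{\F,\x}$ or $m_\G=n_\F c!$ from the non-$z$-regular case, and the same type of argument as in the proof of Proposition~\ref{m_d_coord_indep}~(1) closes this.

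For part (2), let $\G\in\FF$ be a valid flag of the form $\G_2=V(z+g)$, $\G_1=V(z+g,y)$ with $g\in K[[x,y]]$. Since $\G$ is valid and comparable to $\F$, we have $m_\G=m_\F$. Set $\x_1=(x,y,z+g)$. By Proposition~\ref{w_cleaning_maximizes_w} applied to $\y_{\F,\x}=\y_{\G,\x_1}$, using that $f$ is $\y_{\F,\x}$-clean with respect to $J_{2,\x}$, we get $\y_{\F,\x}(\coeff_{(x,y,z+g)}^c(I_3))\le\y_{\F,\x}(J_{2,\x})$, i.e.\ $(m_{\G,\x_1},d_{\G,\x_1})\le(m_{\F,\x},d_{\F,\x})$ lexicographically. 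Combined with $m_\G=m_\F$, this forces $d_{\G,\x_1}\le d_{\F,\x}$ in the relevant range, and then a case analysis on the definition of $d_\F$ (the four cases $d_{\F,\x}\ge c!$; $0<d_{\F,\x}<c!$ with $c!\nmid m_\F$; $0<d_{\F,\x}<c!$ with $c!\mid m_\F$; $d_{\F,\x}=0$), exactly parallel to the one in the proof of Proposition~\ref{m_d_coord_indep}~(2), yields $d_\G\le d_\F$. Since $s_\F=s_\G=0$ for flags with positive associated multiplicity, we conclude $\inv(\G)=(d_\G,n_\G,0)\le(d_\F,n_\F,0)=\inv(\F)$.

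The main obstacle I anticipate is the bookkeeping around the truncation rules in the definitions of $m_\F$ and $d_\F$: one has to verify that the comparison $(m_{\G,\x_1},d_{\G,\x_1})\le(m_{\F,\x},d_{\F,\x})$ obtained from $\w$-cleanness descends correctly to the truncated invariants $m_\G\le m_\F$ and $d_\G\le d_\F$ in every case, including the borderline cases where one side has $m_{\cdot,\x}<n_\F c!$ or $d_{\cdot,\x}<c!$. This is routine but delicate, and it is where most of the care will go; the underlying analytic inputs (Proposition~\ref{w_cleaning_maximizes_w}, Lemma~\ref{double_weighted order function}, Lemma~\ref{trivial_flags}, Lemma~\ref{d_only_depends_on_strict_type}) are already in place. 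A secondary point is ensuring that the flags $\G$ not covered directly by $\w$-cleanness (those with non-$z$-regular $\G_2$) genuinely have the minimal value of $m$, which is handled cleanly by Lemma~\ref{z_regular_blocks_other_parameters} and Lemma~\ref{trivial_flags}.
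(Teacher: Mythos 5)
Your proposal follows essentially the same route as the paper's proof: dispose of the flags $\G$ whose $\G_2$ is not $z$-regular via Lemma~\ref{trivial_flags}~(2), reduce the remaining ones to $\G_2=V(z+g)$ with $\G_1=V(z+g,y)$ using Lemma~\ref{F_2_determines_numerals}, and then invoke Proposition~\ref{w_cleaning_maximizes_w} to compare coefficient-ideal invariants, finishing with the truncation case analysis as in Proposition~\ref{m_d_coord_indep}.

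One small imprecision worth flagging in your part~(1): you write that Proposition~\ref{w_cleaning_maximizes_w} is ``applied to $\w_{\F,\x}$,'' but that proposition requires $f$ to be $\w_{\F,\x}$-clean, and the hypothesis only gives $\y_{\F,\x}$-cleanness; these are not a priori equivalent, since $\y_{\F,\x}$-cleanness of $f$ can hold via property $(2)_{\y_{\F,\x}}$ even when $\neg(2)_{\w_{\F,\x}}$ holds. The correct move (and what you do in part~(2), and what the paper does implicitly) is to apply Proposition~\ref{w_cleaning_maximizes_w} to $\y_{\F,\x}$ itself, obtaining the lexicographic inequality $(m_{\G,\x_1},d_{\G,\x_1})\le(m_{\F,\x},d_{\F,\x})$ by Lemma~\ref{double_weighted order function}, and then read off $m_{\G,\x_1}\le m_{\F,\x}$ from the first coordinate. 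With that adjustment your argument coincides with the paper's.
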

\begin{proof}
 (1): Let $\G$ be a flag that is comparable to $\F$. By Lemma \ref{trivial_flags} (2) we can assume without loss of generality that $\G_2=V(z+g)$ for some element $g\in K[[x,y]]$. Further, we can assume by Lemma \ref{F_2_determines_numerals} that $\G_1=V(z+g,y)$. Set $\x_1=(x,y,z+g)$. Then $m_{\G,\x_1}\leq m_{\F,\x}$ follows from Proposition \ref{w_cleaning_maximizes_w}. Hence, $m_\G\leq m_\F$.
 
 (2): Let $\x_1=(x,y,z+g)$ be subordinate to $\G$. We know by Proposition \ref{w_cleaning_maximizes_w} that the lexicographic inequality 
 \[(m_{\G,\x_1},d_{\G,\x_1})\leq (m_{\F,\x},d_{\F,\x})\]
 holds. If $m_{\G,\x_1}<n_\F\cdot c!$, then $d_\G=-1$ holds and it is clear that $\inv(\G)\leq\inv(\F)$ holds. Otherwise, we conclude that $m_{\G,\x_1}=m_{\F,\x}$ since $\G$ is valid and thus, $m_\G=m_\F$ holds. Hence, $d_{\G,\x_1}\leq d_{\F,\x}$. Since $m_\G=m_\F$, this implies that $d_\G\leq d_\F$. Thus, $\inv(\G)\leq \inv(\F)$.
\end{proof}

 To guarantee the existence of a maximizing flag $\F\in\FF$, there are two things that we still have to show. While we already know by Lemma \ref{m_and_d_are_finite} that the invariant $d_\F$ is finite for each flag $\F\in\FF$, we have not yet established a bound for $d_\F$. Further, we have to show that there is a bound for the associated multiplicity $n_\F$ of valid flags $\F\in\FF$ with maximal $d_\F$. Both of these will be established in the next proposition, using the results of Section \ref{section_kangaroo_calc}.

\begin{proposition} \label{maximal_n_and_d}
 There are numbers $d_*,N\in\N$ such that the following hold:
 \begin{enumerate}[(1)]
  \item For all valid flags $\F\in\FF$ with $n_\F>0$, the inequality $d_\F\leq d_*$ holds.
  \item For all valid flags $\F\in\FF$ with $n_\F\geq N$, the equality $d_\F=-1$ holds.
 \end{enumerate}
\end{proposition}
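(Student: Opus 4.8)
The plan is to fix apposite parameters $\x=(x,y,z)$ for $\X$ at $a$ and, after cleaning, extract from a single element $f\in I_3$ all the information we need to bound $d_\F$ and $n_\F$ uniformly. First I would apply Lemma~\ref{automatic_w_n_cleaning_for_large_n} to the ideal $J:=I_3$ with $c=c\Xa$: after performing a $\sigma$-cleaning step $z\mapsto z+g$ (for $\sigma$ as in that lemma, which preserves appositeness by Lemma~\ref{s_cleaning_step_preserves_setting} and Lemma~\ref{w_cleaning_preserves_v_cleaning}), there is an integer $N_0$ and an element $f\in I_3$ so that $f$ is $\w_n$-clean with respect to $J_{2,\x}$ for all $n\geq N_0$, and moreover $\minit_{\w_n}(J_{2,\x})=(x^{r}y^{s})$ is a fixed principal monomial ideal with $s=\ord_{(y)}J_{2,\x}$ for all such $n$. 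I may also assume, by further cleaning with respect to the finitely many weighted order functions $\w_1,\dots,\w_{N_0}$ (using Lemma~\ref{w_cleaning_preserves_v_cleaning} and Lemma~\ref{w_clean_stable_under_blowup}'s remark that these preserve one another, and Lemma~\ref{s_cleaning_step_preserves_setting} that appositeness is preserved), that $f$ is simultaneously $\w_n$-clean with respect to $J_{2,\x}$ for \emph{all} $n>0$.

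Next I would bound $d_\F$. Let $\F\in\FF$ be a valid flag with $n_\F=n>0$. By Lemma~\ref{trivial_flags}(2) and Lemma~\ref{F_2_determines_numerals}(2) it suffices to consider flags with $\F_2=V(z+g_1)$ for some $g_1\in K[[x,y]]$ and $\F_1=V(z+g_1,y+h)$ with $\ord h\geq n$; by Lemma~\ref{d_only_depends_on_strict_type} we may even take $\ord h=n$, i.e. $y_1=y+tx^n+\cdots$. Since $f$ is $\w$-clean with respect to $J_{2,\x}$ (with $\w=\w_n$) and since by Proposition~\ref{cleaning_tangential_flags}(1) a $\y_{\F,\x}$-clean element makes $\F$ valid, I would invoke Proposition~\ref{kangaroo_prop}: applying it to $J=I_3$, with $\minit_\w(J_{2,\x})=(x^ry^s)\cdot I$ where $d:=\ord I$ is \emph{independent of $n$} (because $\minit_{\w_n}(J_{2,\x})$ is the same monomial ideal for all $n$, hence $I=(1)$ and $d=0$ — wait, more carefully: $d$ is the residual part, which by the stabilization is $0$ for $n\ge N_0$ but may be positive for small $n$; in either case it is one of finitely many values). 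Proposition~\ref{kangaroo_prop}(1)--(2) then gives $d_{*,\F}\leq d+\frac{c!}{p}$ in characteristic $p>0$ and $d_{*,\F}\leq d$ in characteristic $0$, where $d_{*,\F}=\ord_{(y_1)}\minit_\w(\wt J_{2,\x})$ is (after a $\y$-cleaning step) exactly $d_{\F,\x}$. Taking $d_*:=\max_{n}\big(d^{(n)}+\tfrac{c!}{p}\big)$ over the finitely many relevant residual orders $d^{(n)}$ gives the uniform bound $d_\F\leq d_*$ in part~(1).

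Finally, for part~(2): for $n\geq N_0$ the stabilized monomial ideal has residual part $d^{(n)}=0$, so by Proposition~\ref{kangaroo_prop} we get $d_{\F,\x}\leq 0+\frac{c!}{p}<c!$ (since $p\geq 2$), i.e. $0\leq d_{\F,\x}<c!$. Moreover $\minit_{\w_n}(J_{2,\x})=(x^ry^s)$ gives $m_{\F,\x}=r+ns$; I would check that $c!\mid m_\F$ holds for $n$ large — indeed if $s=0$ then $m_\F=\max(m_{\F,\x},nc!)$ and for $n\ge$ something this is $nc!$, divisible by $c!$; if $s>0$ then $d_{\F,\x}\leq\frac{1}{n}m_{\F,\x}$ forces $d_{\F,\x}<c!$ anyway and by Lemma~\ref{c!_does_not_divide_m} applied appropriately, or directly from the definition of $m_\F$, one gets $c!\mid m_\F$ once $m_{\F,\x}<nc!$ (which happens for $n$ large when $s<c!$) — so $m_\F=nc!$. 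In both cases the definition of $d_\F$ yields $d_\F=-1$. Setting $N:=N_0$ (enlarged if necessary so that $m_{\F,\x}<nc!$ whenever $s<c!$, and noting $s\le c!$ automatically holds since $s=\ord_{(y)}J_{2,\x}$ and $\ord J_{2,\x}\geq c!$ would otherwise force a contradiction with the stabilization) finishes part~(2). The main obstacle I anticipate is the bookkeeping in this last paragraph: correctly relating the stabilized data $(r,s,d^{(n)})$ from Lemma~\ref{automatic_w_n_cleaning_for_large_n} to the divisibility condition $c!\mid m_\F$ in the definition of $d_\F$, and handling the degenerate sub-case $s=c!$ (where $m_{\F,\x}\geq nc!$ may persist and one must instead argue directly that $d_{\F,\x}\le\frac1n m_{\F,\x}$ still drives $d_\F$ to $-1$ or that validity forces the residual part to vanish). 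Everything else is an application of Proposition~\ref{kangaroo_prop} and the cleaning-stability lemmas.
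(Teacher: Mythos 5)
Your proposal follows essentially the same route as the paper's proof: fix apposite parameters, use Lemma~\ref{automatic_w_n_cleaning_for_large_n} together with Lemma~\ref{w_cleaning_preserves_v_cleaning} to arrange $\w_n$-cleanness of a single element $f$ for all $n>0$, invoke Lemma~\ref{d_only_depends_on_strict_type} and Proposition~\ref{cleaning_tangential_flags} to normalize the flag, and then deduce the bound from Proposition~\ref{kangaroo_prop}. The number $d_*$ you write down is exactly the paper's $d_*=\max_n d_n+\eps$ (your $d^{(n)}$ is the paper's $d_n=d(\minit_{\w_n}(J_{2,\x}))$). So the global strategy is right.

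Where your sketch differs is in part (2), and this is worth fixing. You try to establish $c!\mid m_\F$ by an ad hoc case analysis on $s=\ord_{(y)}J_{2,\x}$ ($s=0$, $0<s<c!$, $s=c!$), and you yourself flag the sub-case $s=c!$ as problematic. The paper bypasses this entirely: Proposition~\ref{kangaroo_prop}(3) is stated precisely to deliver the needed divisibility. For $n\ge N$ you have $d_n=0$, so if $d_{\F,\x_1}>0$ then Proposition~\ref{kangaroo_prop}(3) directly yields $m_{\F,\x_1}\in c!\cdot\N$ (and $m_{\F,\x_1}=m_{\F,\x}$ because the $\y$-cleaning step is in particular a $\w_n$-cleaning step, so $\w_n$-cleanness of $f$ plus Proposition~\ref{w_cleaning_maximizes_w} pins down the $\w_n$-order). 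Since $m_\F$ is by definition either $m_{\F,\x_1}$ or $nc!$, both are divisible by $c!$, and together with $d_{\F,\x_1}\le c!/p<c!$ from Proposition~\ref{kangaroo_prop}(2) this forces $d_\F=-1$. In characteristic zero you get $d_{\F,\x_1}=0$ outright. This replaces your entire final paragraph and eliminates the worry about $s=c!$; no separate argument is needed. Your citation of Lemma~\ref{c!_does_not_divide_m} there is also off the mark --- that lemma gives $\w$-cleanness from non-divisibility, which is not the implication you need.

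One smaller point: the paper first reduces to flags $\F$ with $n_\F=1$, or with $n_\F>1$ and $D_\F=V(y)$, which is why one may assume $V(y)\subseteq\Eg$ from the start. Your write-up leaves this reduction implicit; it is worth making explicit, since it justifies why a single parameter $y$ suffices.
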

\begin{proof}
 (1): Let $\x=(x,y,z)$ be a apposite parameters for $\hOWa$ and $f\in I_3$ an element that is $z$-regular of order $c$. Since the existence of flags $\F\in\FF$ with $n_\F>0$ implies that $\Eg\neq\emptyset$, we can assume without loss of generality that $V(y)\subseteq\Eg$. It is clear that is suffices to prove the assertion only for flags $\F\in\FF$ with $n_\F=1$ or $n_\F>1$ and $D_\F=V(y)$. Thus, we will only consider flags of this type in the following.
 
 By Lemma \ref{w_cleaning_preserves_v_cleaning} and Lemma \ref{automatic_w_n_cleaning_for_large_n} we can assume after a change of coordinates $z\mapsto z+g(x,y)$ that $f$ is $\w_n$-clean with respect to $J_{2,\x}$ for the weighted-order functions $\w_n:K[[x,y]]\to\Ni$ which are defined via $\w_n(x)=1$ and $\w_n(y)=n$.
 
 Define for each positive integer $n>0$ the number 
 \[d_n=d(\minit_{\w_n}(J_{2,\x}))\]
 where $d(I)=\ord I-\ord_{(x)}I-\ord_{(y)}I$ for an ideal $I\subseteq K[[x,y]]$. By Lemma \ref{automatic_w_n_cleaning_for_large_n} there exists a number $N\in\N$ such that $d_n=0$ for all $n\geq N$. Set 
 \[d_*=\max\{d_n:n\in\N\}+\eps\]
 where
 \[\eps=\begin{cases}
         0 & \text{if $\chara(K)=0$,}\\
         \frac{c!}{p} & \text{if $\chara(K)=p>0$.}
        \end{cases}
\]
 Now let $\F\in\FF$ be a valid flag with $n_\F=1$ or $n_\F>1$ and $D_\F=V(y)$. We want to show that $d_\F\leq d_*$. Set $n=n_\F$. By Lemma \ref{d_only_depends_on_strict_type} and Proposition \ref{cleaning_tangential_flags} we may assume without loss of generality that $\F_2=V(z_1)$ and $\F_1=V(z_1,y_1)$ where $y_1=y+tx^n$ for a constant $t\in K^*$ and the coordinate change $z_1=z+g$ is $\y$-cleaning with respect to $J_{2,\x}$ and $f$. Here, the weighted order function $\y:K[[x,y_1]]\to\Ni^2$ is defined via $\y(x)=(1,0)$ and $\y(y_1)=(n,1)$.
 
 It follows from Proposition \ref{kangaroo_prop} (1) and (2) that 
 \[d_\F\leq d_n+\eps\leq d_*.\]
 
 (2): Continuing the previous calculation, consider the case $n\geq N$. Thus, $d_n=0$. Set $\x_1=(x,y_1,z+g)$. Then we know by Proposition \ref{kangaroo_prop} (3) and (4) that either $d_{\F,\x_1}=0$ or $d_{\F,\x_1}<c!$ and $m_{\F,\x_1}\in c!\cdot\N$ hold. In either case, this implies that $d_\F=-1$.
\end{proof}

\begin{proposition} \label{tangential_flags_can_be_maximized_with_z}
 Let $\x=(x,y,z)$ be apposite parameters and $f\in I_3$ an element that is $z$-regular of order $c$. Assume that $V(y)\subseteq\Eg$ and let $n>0$ be a positive integer. If $n=1$, assume that $\Eg=V(xy)$.
 
 There is a valid flag $\F\in\FF$ with $n_\F=n$ and $D_\F=V(y)$ such that for all valid flags $\G\in\FF$ that are comparable to $\F$, the inequality $\inv(\G)\leq\inv(\F)$ holds. Further, there is a non-zero constant $t\in K^*$ such that $\F$ can be chosen in the following way: Set $y_1=y+tx^n$ and let $\y:K[[x,y_1]]\to\Ni^2$ be the weighted order function that is defined by $\y(x)=(1,0)$ and $\y(y_1)=(n,1)$. Then $\F$ can be chosen as $\F_2=V(z_1)$ and $\F_1=V(z_1,y_1)$ where the coordinate change $z=z_1+g$ is $\y$-cleaning with respect to $J_{2,\x}$ and $f$.
 
\end{proposition}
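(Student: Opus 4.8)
The plan is to construct $\F$ by optimising over the tangent direction of its curve and then applying the $\y$-cleaning process of Section~\ref{section_w_cleaning} to the apposite parameter $z$. For each $t\in K^*$ set $y_t=y+tx^n$ and let $\y_t:K[[x,y_t]]\to\Ni^2$ be given by $\y_t(x)=(1,0)$, $\y_t(y_t)=(n,1)$; by Lemma~\ref{double_weighted order function} its first component is the weighted order function $\w_n$ with $\w_n(x)=1$, $\w_n(y)=n$, and $\w_n$ does not distinguish $(x,y)$ from $(x,y_t)$. I would run the $\y_t$-cleaning process starting from $z$, with respect to $J_{2,\x}=\coeff^c_{(x,y,z)}(I_3)$ and the $z$-regular element $f\in I_3$ given by the hypothesis. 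By Proposition~\ref{w_cleaning_terminates} the process either terminates --- producing $z_1^{(t)}=z+g^{(t)}$, $g^{(t)}\in K[[x,y]]$, with $f$ being $\y_t$-clean with respect to $J_{2,\x_1^{(t)}}$, where $\x_1^{(t)}=(x,y_t,z_1^{(t)})$ --- or it does not, in which case $\coeff^c_{(x,y,z_\infty)}(I_3)=0$, so $I_3=(z_\infty^c)$ by Lemma~\ref{coeff_is_zero}, contradicting Lemma~\ref{no_z^c} because $c>1$. So the process terminates. Put $\F^{(t)}_2=V(z_1^{(t)})$, $\F^{(t)}_1=V(z_1^{(t)},y_t)$. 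A short intersection-multiplicity computation together with Lemma~\ref{form_of_associated_components} shows $\F^{(t)}\in\FF$, $n_{\F^{(t)}}=n$, and $D_{\F^{(t)}}=V(y)$ when $n>1$ (using $V(y)\subseteq\Eg$); when $n=1$ the hypothesis $\Eg=V(xy)$ makes $n_{\F^{(t)}}=1$ admissible by Lemma~\ref{form_of_associated_components}(5). The parameters $\x_1^{(t)}$ are subordinate to $\F^{(t)}$ and $\y_{\F^{(t)},\x_1^{(t)}}=\y_t$.

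Next I would pin down the correct direction. Since $f$ is $\y_t$-clean with respect to $J_{2,\x_1^{(t)}}$, Proposition~\ref{cleaning_tangential_flags} shows that $\F^{(t)}$ is valid and that $\inv(\G)\le\inv(\F^{(t)})$ for every valid flag $\G$ with $\G_1=V(\G_2,y_t)$ and $\G_2=V(z_1^{(t)}+g)$, $g\in K[[x,y]]$. Moreover, by Proposition~\ref{w_cleaning_maximizes_w} the pair $\y_t(J_{2,\x_1^{(t)}})$ is lexicographically maximal among all $\y_t(\coeff^c_{(x,y_t,z_1^{(t)}+g)}(I_3))$; since the hypersurfaces $V(z_1^{(t)}+g)$ exhaust all $z$-regular hypersurfaces and the coefficient ideal with respect to a fixed hypersurface is unchanged by coordinate changes among the complementary parameters not involving $z$ (immediate from the definition), the first component $m:=\w_n(J_{2,\x_1^{(t)}})$ equals $\max_{h\in K[[x,y]]}\w_n(\coeff^c_{(x,y,z+h)}(I_3))$ and is in particular independent of $t$. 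Therefore $d_{\F^{(t)}}\le\tfrac1n m_{\F^{(t)},\x_1^{(t)}}=\tfrac1n m$ (and $d_{\F^{(t)}}=-1$ once $m<nc!$), so $t\mapsto d_{\F^{(t)}}$ takes values in $\{-1\}\cup\N$ and is bounded, hence attains its maximum at some $t_0\in K^*$. Set $\F:=\F^{(t_0)}$, $y_1:=y_{t_0}$, $z_1:=z_1^{(t_0)}$ and $\y:=\y_{t_0}$; this is the flag, the constant, and the coordinate change demanded by the statement, and $\F$ is valid.

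It then remains to verify $\inv(\G)\le\inv(\F)$ for an arbitrary valid flag $\G$ comparable to $\F$. Since $n_\G=n$ and $s_\G=0=s_\F$, it suffices to show $d_\G\le d_\F$. If the subordinate hypersurface $\G_2$ of $\G$ is not $z$-regular with respect to $\x$, then $\inv(\G)=(-1,n,0)$ by Lemma~\ref{trivial_flags}(2), which is minimal and hence dominated. Otherwise $\G_2=V(z+h)$ with $h\in K[[x,y]]$ after Weierstrass preparation; analysing the regular curve $\G_1\subseteq\G_2$ by its prescribed intersection multiplicities with the curves $\G_2\cap D$ ($D$ a component of $\Eg$) via Lemma~\ref{form_of_associated_components}, and absorbing higher-order terms with Lemma~\ref{d_only_depends_on_strict_type}, we reduce to $\G_1=V(z+h,y+t'x^n)$ for some $t'\in K^*$. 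Writing $z+h=z_1^{(t')}+\tilde h$ with $\tilde h\in K[[x,y]]=K[[x,y_{t'}]]$, the flag $\G$ is exactly of the form to which Proposition~\ref{cleaning_tangential_flags}(2) applies for $\F^{(t')}$, so $\inv(\G)\le\inv(\F^{(t')})$, i.e. $d_\G\le d_{\F^{(t')}}\le d_{\F^{(t_0)}}=d_\F$ by the choice of $t_0$. Hence $\inv(\G)\le\inv(\F)$, completing the proof.

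The most delicate step is the normalisation in the last paragraph: discarding non-$z$-regular hypersurfaces via Lemma~\ref{trivial_flags}, and extracting a well-defined tangent direction $t'$ of $\G_1$ from the contact-order analysis of Lemma~\ref{form_of_associated_components} (which behaves slightly differently for $n=1$ and $n>1$) together with Lemma~\ref{d_only_depends_on_strict_type}. Tied to this is the point that makes the maximum over directions attainable --- that $\w_n$ is insensitive to coordinate changes among $x,y$ that do not involve $z$, so the bound $\tfrac1n m$ on $d_{\F^{(t)}}$ is uniform in $t$. Everything else is a direct appeal to the cleaning machinery of Chapter~\ref{chapter_cleaning} and to Propositions~\ref{cleaning_tangential_flags}, \ref{w_cleaning_maximizes_w}, \ref{w_cleaning_terminates} and Lemma~\ref{trivial_flags}.
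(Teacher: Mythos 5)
Your proof is correct and follows the same skeleton as the paper's (which simply cites Lemma~\ref{d_only_depends_on_strict_type}, Proposition~\ref{cleaning_tangential_flags} and Proposition~\ref{maximal_n_and_d}): run $\y_t$-cleaning to obtain, for each tangent direction $t$, a valid flag $\F^{(t)}$ that dominates all comparable flags sharing the curve $\F^{(t)}_1$ (Proposition~\ref{cleaning_tangential_flags}), then take the maximum over $t$ and appeal to Lemma~\ref{d_only_depends_on_strict_type} and Lemma~\ref{trivial_flags} to handle an arbitrary comparable $\G$. The genuine difference is how you justify that the maximum over $t$ is attained. The paper invokes Proposition~\ref{maximal_n_and_d}, whose proof rests on the modified Moh bound from Proposition~\ref{kangaroo_prop} and the uniform cleaning Lemma~\ref{automatic_w_n_cleaning_for_large_n}; that machinery is needed there because Proposition~\ref{maximal_n_and_d} bounds $d_\F$ and $n_\F$ over all valid flags at once. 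Here $n$ is fixed, and you exploit this to give a lighter argument: $d_{\F^{(t)},\x_1^{(t)}}\le \tfrac1n\, m_{\F^{(t)},\x_1^{(t)}}$ and, because $\w_n$ is insensitive to the substitution $y\mapsto y+tx^n$ while the coefficient ideal $\coeff^c_{(\x,z+g)}(I_3)$ depends only on $z+g$ and not on the choice of complementary parameters, the $\y_t$-cleaned value $m_{\F^{(t)},\x_1^{(t)}}=\max_h\w_n(\coeff^c_{(x,y,z+h)}(I_3))$ is independent of $t$. This gives a $t$-uniform bound on $d_{\F^{(t)}}\in\{-1\}\cup\N$ and hence attainment of the maximum, without touching the Kangaroo estimates. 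Both arguments buy the same conclusion, but yours is self-contained and more elementary for this statement; the paper's route packages the bound inside Proposition~\ref{maximal_n_and_d} because that stronger result is needed elsewhere (e.g.\ in Proposition~\ref{maximizing_flags_exist}) anyway.
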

\begin{proof}
 This follows from Lemma \ref{d_only_depends_on_strict_type}, Proposition \ref{cleaning_tangential_flags} and Proposition \ref{maximal_n_and_d}.
\end{proof}

\begin{proposition} \label{maximizing_flags_exist}
 There exists a valid flag $\F\in\FF$ such that for all other valid flags $\G\in\FF$ the inequality $\inv(\G)\leq\inv(\F)$ holds. Hence, $\F$ is a maximizing flag.
\end{proposition}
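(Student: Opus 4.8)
The plan is to reduce the assertion to the finitely many \emph{comparability classes} of flags and then combine the class-wise maximizing results already at hand. Recall that the comparison relation on $\FF$ depends only on the pair $(n_\F,D_\F)$ (with $D_\F$ relevant only when $n_\F\geq2$), and that $\Eg$ has at most two components. First I would pin down which classes can possibly contain a flag realizing the supremum. By Lemma \ref{good_parameters} there exist apposite parameters $\x=(x,y,z)$ for $\X$ at $a$, and Proposition \ref{transversal_flags_can_be_maximized_with_z} then produces a valid flag $\F_0\in\FF$ with $n_{\F_0}=0$ which dominates every valid flag comparable to it. For such a flag $d_{\F_0}=\ord I_{2,\x}\geq0$, so the supremum in the statement is at least $\inv(\F_0)=(d_{\F_0},0,s_{\F_0})$ with $d_{\F_0}\geq0$. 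On the other hand, Proposition \ref{maximal_n_and_d}~(2) provides an integer $N$ such that $d_\G=-1$ for every valid flag $\G$ with $n_\G\geq N$; for such $\G$ one has $\inv(\G)=(-1,n_\G,s_\G)<(d_{\F_0},0,s_{\F_0})=\inv(\F_0)$ since $-1<0\leq d_{\F_0}$. Hence any valid flag realizing the supremum must lie in a comparability class with $n_\F<N$, and there are only finitely many such classes: the class $n=0$; the class $n=1$, which by Lemma \ref{form_of_associated_components}~(5) occurs only when $\Eg$ has exactly two components; and, for each $2\leq k<N$ and each component $D$ of $\Eg$, the class $(k,D)$.

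Next I would produce, for each of these finitely many classes, a valid flag that is maximal within it. For the class $n=0$ this is exactly $\F_0$. For a class $(k,D)$ with $2\leq k<N$, choose apposite parameters $(x,y,z)$ by Lemma \ref{good_parameters}; since $D$ is a component of $\Eg\subseteq V(xy)$, after possibly interchanging $x$ and $y$ — which preserves appositeness, the appositeness conditions being symmetric in $x$ and $y$ — we may assume $D=V(y)$, and then Proposition \ref{tangential_flags_can_be_maximized_with_z} yields a valid flag $\F_{k,D}$ with $n_{\F_{k,D}}=k$ and $D_{\F_{k,D}}=D$ which dominates every valid flag comparable to it. For the class $n=1$, $\Eg$ has two components, so apposite parameters automatically satisfy $\Eg=V(xy)$, and Proposition \ref{tangential_flags_can_be_maximized_with_z} again applies (with the extra hypothesis $\Eg=V(xy)$ fulfilled) to give a class-maximal valid flag $\F_1$.

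Finally, let $\F_{\max}$ be a flag of largest flag invariant among the finite collection consisting of $\F_0$, $\F_1$ (when applicable), and the flags $\F_{k,D}$. I claim $\F_{\max}$ is a maximizing flag. Let $\G\in\FF$ be an arbitrary valid flag. If $n_\G\geq N$, then as above $\inv(\G)<\inv(\F_0)\leq\inv(\F_{\max})$. If $n_\G<N$, then $\G$ belongs to one of the classes enumerated above, hence is comparable to the corresponding representative $\F_i$ in the collection, so $\inv(\G)\leq\inv(\F_i)\leq\inv(\F_{\max})$. In either case $\inv(\G)\leq\inv(\F_{\max})$, so $\F_{\max}$ is a maximizing flag.

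The substantive estimates — existence of class-wise maxima (Propositions \ref{transversal_flags_can_be_maximized_with_z} and \ref{tangential_flags_can_be_maximized_with_z}, built on the cleaning machinery of Chapter \ref{chapter_cleaning}) and the fact that $n_\F$ cannot remain large while $d_\F$ is maximal (Proposition \ref{maximal_n_and_d}, built on the modified Moh bound of Section \ref{section_kangaroo_calc}) — are already available, so the remaining work is purely organizational. The point requiring the most care is the case distinction on the number of components of $\Eg$ (zero, one, or two): one must check that the enumeration of comparability classes with $n_\F<N$ is complete in each case and that for every relevant class one can select apposite parameters adapted to the associated component; the observation that interchanging $x$ and $y$ preserves appositeness is what makes the latter routine.
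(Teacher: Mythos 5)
Your proof is correct. The paper's own proof is a two-citation telegram — ``This follows from Proposition \ref{transversal_flags_can_be_maximized_with_z} and Proposition \ref{maximal_n_and_d}'' — whose most economical reading is a pure finiteness argument: since $s_\G=0$ whenever $n_\G>0$, Proposition~\ref{maximal_n_and_d} places $\inv(\G)$ for every valid $\G$ with $n_\G>0$ and $d_\G\geq0$ into the finite set $\{0,\ldots,d_*\}\times\{1,\ldots,N-1\}\times\{0\}$, so the supremum over that subset is attained; all remaining valid flags (those with $n_\G=0$, and those with $d_\G=-1$) are dominated by the flag supplied by Proposition~\ref{transversal_flags_can_be_maximized_with_z}, whose $d$-invariant is $\geq0$. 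You instead go via Proposition~\ref{tangential_flags_can_be_maximized_with_z} to produce an explicit class-wise maximizer for each of the finitely many comparability classes with $n_\F<N$ and then take the max of those representatives. Both routes work and use essentially the same underlying results; yours is somewhat more constructive and surfaces the bookkeeping (the case split on the number of components of $\Eg$, the observation that swapping $x\leftrightarrow y$ preserves appositeness) that the paper leaves implicit. One phrasing nit: the sentence ``any valid flag realizing the supremum must lie in a class with $n_\F<N$'' presupposes the supremum is attained, which is what you are proving; it would be cleaner to say ``any valid flag whose invariant is $\geq\inv(\F_0)$ has $n_\F<N$,'' which is what you actually use. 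This does not affect the correctness of the conclusion, since the final paragraph verifies domination by $\F_{\max}$ directly without appealing to a priori attainment.
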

\begin{proof}
 This follows from Proposition \ref{transversal_flags_can_be_maximized_with_z} and Proposition \ref{maximal_n_and_d}.
\end{proof}

\begin{lemma} \label{inv_finite}
 If $\X$ is not in a terminal case at $a$ and $\F\in\FF$ is a maximizing flag, then $(d_\F,n_\F,s_\F)\in\N^3$. Hence, all components of the flag invariant are finite. Also, $d_\F>0$ holds.
\end{lemma}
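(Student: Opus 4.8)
The statement to prove is: if $\X$ is not in a terminal case at $a$ and $\F\in\FF$ is a maximizing flag, then $(d_\F,n_\F,s_\F)\in\N^3$ (in particular $d_\F\neq -1$, so all components are finite and non-negative), and moreover $d_\F>0$. The plan is to proceed by case analysis on $n_\F$, using the explicit constructions of maximizing flags from Section \ref{section_maximizing_flags} together with the fact that the negation of $d_\F>0$ (i.e.\ $d_\F\in\{-1,0\}$) forces $\X$ into the monomial or small residual case.

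First I would fix local apposite parameters $\x=(x,y,z)$ for $\X$ at $a$ (these exist by Lemma \ref{good_parameters}), with an element $f\in I_3$ that is $z$-regular of order $c$. By Lemma \ref{m_and_d_are_finite} we already know $m_\F$ and $d_\F$ are finite for every flag, so the only thing to rule out for finiteness of the triple is $d_\F=-1$; and $s_\F$ is finite whenever $n_\F>0$ (it is $0$ by definition) or when $d_\F>0$ (by Lemma \ref{ogeqc!} the relevant coefficient ideals have finite order, since $I_3\neq(z^c)$ by Lemma \ref{no_z^c}). So both claims reduce to: \emph{a maximizing flag with $\X$ not in a terminal case must have $d_\F\geq 1$.} I would argue the contrapositive. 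Suppose $\F$ is maximizing and $d_\F\leq 0$, i.e.\ $d_\F\in\{-1,0\}$. By Proposition \ref{transversal_flags_can_be_maximized_with_z} there is a maximizing flag $\F'$ with $n_{\F'}=0$, $\F'_2=V(z+g)$ for some $g\in K[[x,y]]$, with the flag invariant maximal among comparable valid flags; since $\F$ is maximizing we may replace $\F$ by $\F'$ and hence assume $n_\F=0$ and the subordinate parameters are $\x=(x,y,z)$ (after renaming). Then $d_\F=\ord I_{2,\x}$.

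Now I split into the subcases. If $d_\F=0$, then $J_{2,\x}=M_{2,\x}=(x^{r_x}y^{r_y})$ is a principal monomial ideal, and by Proposition \ref{cleaning_transversal_flags} (applied after cleaning $f$ so that it is $\ord$-clean, $\ord_{(x)}$-clean and $\ord_{(y)}$-clean with respect to $J_{2,\x}$, which is possible by Lemma \ref{w_cleaning_preserves_v_cleaning} and does not destroy maximality since $\F$ was chosen maximizing via exactly this construction) there is an element of $I_3$ which is $\ord$-clean with respect to $J_{2,\x}$; one also checks the $\Eg=\emptyset$ side condition ($r_x=0$ or $r_y=0$) holds because $r_x$ (resp.\ $r_y$) is by definition $0$ unless $V(x)\subseteq\Eg$ (resp.\ $V(y)\subseteq\Eg$). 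Hence $\X$ is in the monomial case at $a$, contradicting our hypothesis. If $d_\F=-1$, then by the definition of $d_\F$ for $n_\F=0$ we have $d_{\F,\x}=0$ or ($0<d_{\F,\x}<c!$ and $c!\mid m_\F$); the first is the case just handled, so assume $0<\ord I_{2,\x}<c!$ with $c!\mid \ord M_{2,\x}$. Since $n_\F=0$ this can only happen with $\Eg$ non-empty and the monomial factor supported on a component of $\Eg$; after cleaning as above, $f$ is $\ord_{(y)}$-clean (say $V(y)\subseteq\Eg$) and $J_{2,\x}=(y^{mc!})\cdot I$ with $\ord_{(y)}I=0$, $0<\ord I<c!$, so $\X$ is in the small residual case at $a$, again a contradiction. (If instead $V(x)\subseteq\Eg$ carries the monomial factor, relabel $x\leftrightarrow y$; I would write out the bookkeeping that the monomial factor with exponent divisible by $c!$ must come from the $\Eg$-component because the $\ord_{(x)}$- and $\ord_{(y)}$-cleanness forces $\ord I$ to be the full residual order.) This exhausts the cases, so $d_\F\geq 1$, which gives both $d_\F\in\N$ and, together with the finiteness remarks above, $(d_\F,n_\F,s_\F)\in\N^3$.

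The main obstacle I anticipate is the careful verification that replacing $\F$ by the constructed maximizing flag $\F'$ from Proposition \ref{transversal_flags_can_be_maximized_with_z} is legitimate — i.e.\ that a maximizing flag can always be taken of the special form with $n_{\F'}=0$ and $\F'_2=V(z+g)$ — and in particular that the cleanness hypotheses needed to invoke the monomial-case and small-residual-case \emph{definitions} (which require an $\ord$-clean, resp.\ $\ord_{(y)}$-clean, element of $I_3$) are genuinely available for \emph{that} flag. This is where one must lean on Lemma \ref{w_cleaning_preserves_v_cleaning} (cleaning processes with respect to different weighted order functions preserve each other) and on the fact that the maximizing flag in Proposition \ref{transversal_flags_can_be_maximized_with_z} was itself built by exactly such a chain of cleaning steps, so the required cleanness of $f$ with respect to $J_{2,\x}$ holds by construction. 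The rest is routine unwinding of the definitions of the monomial and small residual cases.
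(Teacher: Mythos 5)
Your proposal contains a genuine gap at the very first step of the reduction. You claim that $s_\F$ is automatically finite whenever $d_\F>0$, citing Lemma~\ref{ogeqc!} and Lemma~\ref{no_z^c}, and on that basis you reduce the whole lemma to showing $d_\F\geq 1$. But Lemma~\ref{ogeqc!} only gives a \emph{lower} bound ($\ord J_{1,\x}\geq d_\F!$), and Lemma~\ref{no_z^c} only tells you $I_3\neq(z^c)$, i.e.\ $J_{2,\x}\neq 0$; neither of these implies $J_{1,\x}\neq 0$. In fact, for $d_\F\geq c!$ the ideal $J_{1,\x}=\coeff^{d_\F}_{(x,y)}(I_{2,\x})$ vanishes precisely when $I_{2,\x}=(y^{d_\F})$ (Lemma~\ref{coeff_is_zero}), which is perfectly compatible with $d_\F>0$. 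So $d_\F>0$ does \emph{not} imply $s_\F<\infty$ unconditionally; the implication is only valid under the hypothesis that $\X$ is not in a terminal case, which is exactly what must be exploited. This is the heart of the paper's argument, not a free fact: the paper assumes $s_\F=\infty$, passes to a flag with $n_\F=0$, $\F_2=V(z+g)$ and an $\ord$-clean $f$ (Propositions~\ref{transversal_flags_can_be_maximized_with_z} and~\ref{cleaning_transversal_flags}), deduces $I_{2,\x}=(y^{d_\F})$ from $J_{1,\x}=0$ via Lemma~\ref{coeff_is_zero}, so that $J_{2,\x}=M_{2,\x}\cdot(y^{d_\F})$ is a principal monomial ideal and $\x$ are of monomial type, contradicting the non-terminal hypothesis. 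Your proposal never runs this argument; instead it treats finiteness of $s_\F$ as a consequence of $d_\F>0$ and moves on, which is circular with respect to the conclusion you are trying to prove.

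A smaller, secondary issue: your treatment of $d_\F=-1$ is internally inconsistent. After you pass to a flag with $n_\F=0$, the invariant $d_\F=\ord I_{2,\x}$ is by definition a non-negative integer; the piecewise definition producing $-1$ applies only to flags with $n_\F>0$. So the ``case'' $d_\F=-1$ is vacuous for the flag you are working with, and the chain ``$d_{\F,\x}=0$ or ($0<d_{\F,\x}<c!$ and $c!\mid m_\F$)'' you invoke does not apply. The clean way to dismiss $d_\F=-1$ for a maximizing flag is simply that by Proposition~\ref{transversal_flags_can_be_maximized_with_z} there is always a valid flag with $n=0$, hence $d\geq 0$, and a maximizing flag maximizes $d$ over all valid flags, so $d_\F\geq 0$ automatically. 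Your analysis of the $d_\F=0$ subcase (principal monomial $J_{2,\x}$ implies monomial case after cleaning) is essentially correct and matches the paper, but it does not rescue the missing $s_\F=\infty$, $d_\F>0$ case.
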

\begin{proof}
 We know by Lemma \ref{m_and_d_are_finite} that $d_\F<\infty$. Also, $n_\F<\infty$ holds by definition. So assume that $s_\F=\infty$. Then necessarily, $n_\F=0$ has to hold. Let $\x=(x,y,z)$ be apposite parameters. By Proposition \ref{transversal_flags_can_be_maximized_with_z} we may assume that $\x$ are subordinate parameters to $\F$. Let $f\in I_3$ be an element that is $z$-regular of order $c$. By Proposition \ref{cleaning_transversal_flags} we may assume that $f$ is $\ord$-clean with respect to $J_{2,\x}$.
 
 Since $s_\F=\infty$ and consequently, $J_{1,\x}=0$, we know by Lemma \ref{coeff_is_zero} that $I_{2,\x}=(y^{d_\F})$. Thus, the coefficient ideal $J_{2,\x}$ is of the form $J_{2,\x}=M_{2,\x}\cdot(y^{d_\F})=(x^{m_x}y^{m_y})$ for certain $m_x,m_y\in\N$. Hence, the parameters $\x$ are of monomial type. This contradicts the assumption that $\X$ is not in a terminal case at $a$.
 
 Now assume that $d_\F=0$ holds. This implies again that $n_\F=0$. But then $J_{1,\x}=0$ holds by definition. This contradicts the finiteness of $s_\F$.
\end{proof}

\section{Definition of the resolution invariant $\iv_\X$} \label{section_def_of_resolution_invariant}

 Using the definitions and results of the previous sections, we can now define the resolution invariant $\iv_\X$ for surfaces in arbitrary characteristic.
 
 Let $\X=(W,X,E)$ be a $3$-dimensional resolution setting. We will now define a map $\iv_\X:X\to\N^7$ of the form
 \[\iv_\X=(o,c,d,n,s,r,l)\]
 where $\N^7$ is considered with respect to the lexicographic order. If we want to emphasize the dependence of the components of $\iv_\X$ on $\X$ and a point $a\in X$, we will denote them by $o\Xa$, $c\Xa$, $\ldots$
 
 Let $a\in X$ be a closed point. Let $(o,c)=(o_\X(a),c_\X(a))$ be defined as in Section \ref{section_resolution_settings}.
 
 If $(o,c)=(1,1)$, then $\X$ is resolved at $a$ by Lemma \ref{c=1_measures_resolved} and we set 
 \[\iv_\X(a)=(1,1,0,0,0,0,0).\]
 
 If $(o,c)>(1,1)$, then we know by Proposition \ref{maximizing_flags_exist} that there exists a maximizing flag $\F\in\FF(a)$.
 
 If $\X$ is not in a terminal case at $a$, we set
 \[\iv_\X(a)=(o,c,d_\F,n_\F,s_\F,0,0)\]
 where $\inv(\F)=(d_\F,n_\F,s_\F)$ is the flag invariant of a maximizing flag $\F\in\FF(a)$. Notice that $(d_\F,n_\F,s_\F)\in\N^3$ by Lemma \ref{inv_finite}.
 
 If $\X$ is in a terminal case at $a$, we set
 \[\iv_\X(a)=(o,c,0,0,0,r,l)\]
 where $(r,l)$ is the combinatorial pair as it was defined in Section \ref{section_monom_s_r_case}.
 
 For non-closed points $\xi\in X$ we define
 \[\iv_\X(\xi)=\min\{\ivXa:\text{$a\in\ol{\{\xi\}}$ is a closed point}\}.\]

\begin{remark}
 The first property that we required from a resolution invariant in Section \ref{section_resolution_via_usc_inv} was that it was a local geometric invariant. Since $\ivX$ depends on the labels on $E$, it is not an entirely geometric invariant. Instead, $\ivX$ is a local invariant of the setting $\X$ in the following sense:
 
 Let $\X=(W,X,E)$ and $\X_1=(W_1,X_1,E_1)$ be two $3$-dimensional resolution settings. Further, let $a\in X$ and $a_1\in X_1$ be points such that the following holds: There is an isomorphism of local rings $\OWa\cong\OO_{W_1,a_1}$ (it also suffices to have an isomorphism between the completions) that locally maps $X$ and $X_1$, as well as $E$ and $E_1$, into each other. Further, the isomorphism maps each component $D$ of $E$ that contains $a$ onto a components of $D_1$ of $E_1$ that fulfills $\age(D_1)=\age(D)$ and $\lab(D_1)=\lab(D)$. Then the equality $\ivXa=\iv_{\X_1}(a_1)$ holds.
 
 In particular, this applies to the following situation: Consider a blowup $\pi:W'\to W$ along a center $Z$ that is permissible for the setting $\X=(W,X,E)$. Let $\X'=(W',X',E')$ be the induced resolution setting. Further, let $a\in X\setminus Z$ be a point outside the center and $a'\in X'$ the unique point lying over $a$. Then $\ivXap=\ivXa$.
\end{remark}

\chapter{Upper-semicontinuity of $\iv_\X$ and permissibility of its top locus} \label{chapter_usc}

 The goal of this chapter is to prove two key properties of the resolution invariant $\iv_\X$ that we defined in the previous chapter: Its upper semicontinuity and the fact that the set $\Xmax$ of points $a\in X$ at which $\iv_\X$ is maximal is a permissible center of blowup for $\X$. To be able to prove that the invariant decreases under blowup along $\Xmax$, we also need to make sure that the invariant $\iv_\X$ determines the correct centers. In particular, the invariant $(d,s,n)$ that is used whenever $\X$ is not in a terminal case is only expected to decrease under point-blowups. Thus, we need to prove that there are only finitely points at which $\X$ is not in a terminal case. Further, particular centers are required to measure improvement during combinatorial resolution.
 
 To be more explicit, consider a curve $C$ which is contained in the set $X_{\geq(o,c)}$ for a pair $(o,c)>(1,1)$. Let $\xi$ be the generic point of $C$ and assume that $(o_\X(\xi),c_\X(\xi))=(o,c)$. By upper semicontinuity of the order function, we know that for all but finitely many closed points $a\in X$ the equality $(o\Xa,c\Xa)=(o,c)$ holds. In Proposition \ref{key_proposition_for_usc}, the central result of this chapter, we will show that $\X$ is in a terminal case at all but finitely many of these points. Further, there is a generic combinatorial pair $(r_\xi,l_\xi)$ which prescribes the combinatorial pair at all points $a\in C$ at which $\X$ is in a terminal case and which are not contained in any other components of $X_{\geq(o,c)}$. If $\iv_\X$ is constant along $C$, then we will show that the curve $C$ is a permissible center of blowup for $\X$.
 
 To prove these results, we will develop in Section \ref{section_invariant_in_monom_case} techniques to compute the combinatorial pair $(r,l)$ at a point $a\in X$. These will be used in the proof of Proposition \ref{key_proposition_for_usc} to prove the existence of the generic combinatorial pair $(r_\xi,l_\xi)$.
 
 Unless explicitly mentioned otherwise, we will always consider the following setting in this chapter: $\X=(W,X,E)$ is a $3$-dimensional resolution setting and $a\in X$ a closed point with $c=c\Xa>1$.

\section{Determining the combinatorial pair in the terminal cases}\label{section_invariant_in_monom_case}

 The next two lemmas will give us a way to compute the combinatorial pair $(r,l)$ from a coefficient ideal $J_{2,\x}$ without necessarily showing that the parameters $\x$ are of monomial or small residual type. We will make use of an element $f\in I_3$ which is clean with respect to $J_{2,\x}$ for certain weighted order functions.

\begin{lemma} \label{correct_m_lemma}
 Let $\X$ be in a terminal case at $a$. Let $\x=(x,y,z)$ be apposite parameters for $\X$ at $a$ such that $I_{\geq(o,c)}=(y,z)$ holds.
  
 Set $J_{2,\x}=\coeff^c_{\x}(I_3)$. Assume that $r_y:=\ord_{(y)} J_{2,\x}\geq c!$ and that there is an element $f\in I_3$ which is $\ord_{(y)}$-clean with respect to $J_{2,\x}$.
 
 Then the combinatorial pair for $\X$ at $a$ is $(r,l)=(r_y,l_y)$. 
\end{lemma}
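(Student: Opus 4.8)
The idea is to reduce this lemma to the definition of the combinatorial pair for the small residual case (or the monomial case), which requires exhibiting apposite parameters of small residual type. The plan is as follows. First I would exploit the hypothesis that $f\in I_3$ is $\ord_{(y)}$-clean with respect to $J_{2,\x}$ together with $r_y=\ord_{(y)}J_{2,\x}\geq c!$. By the definition of $\ord_{(y)}$-cleanness (taking $\w=\ord_{(y)}$ in Section~\ref{section_w_cleaning}), $\ord_{(y)}$-cleanness is exactly the statement that $r_y$ is maximal over all coordinate changes $z\mapsto z+g(x,y)$; this is the content of Proposition~\ref{w_cleaning_maximizes_w}. Consequently $J_{2,\x}$ admits a factorization $J_{2,\x}=(y^{r_y})\cdot I$ with $\ord_{(y)}I=0$, and the pair $(r_y,l_y)$ does not change under any coordinate change of the form $z\mapsto z+g(x,y)$ (Lemma~\ref{m_under_coord_changes} gives $r_y$ unchanged, and $l_y$ depends only on whether $V(y)\subseteq\Eg$, which is untouched).

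Next I would apply the cleaning process to $z$ to arrange that $f$ becomes $\ord$-clean (not just $\ord_{(y)}$-clean) with respect to the transformed coefficient ideal; by Lemma~\ref{w_cleaning_preserves_v_cleaning} the $\ord_{(y)}$-cleanness of $f$ is preserved by $\ord$-cleaning steps, so after finitely many steps (or in the limit, by Proposition~\ref{w_cleaning_terminates}) I may assume $f$ is simultaneously $\ord$-clean and $\ord_{(y)}$-clean with respect to the (new) $J_{2,\x}$, while still having $\ord_{(y)}J_{2,\x}=r_y\geq c!$ and $J_{2,\x}=(y^{r_y})\cdot I$ with $\ord_{(y)}I=0$. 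Now there is a dichotomy on $d:=\ord I$. If $d=0$ then $J_{2,\x}=(y^{r_y})$ is a principal monomial ideal and $\x$ are apposite parameters of monomial type; moreover $r_y\geq c!$ while $r_x=0$, so by the definition of the monomial combinatorial pair $(r\mon,l\mon)=\max\{(0,l_x),(r_y,l_y)\}=(r_y,l_y)$, and we are done (using Proposition~\ref{combinatorial_tuple_well_def} for well-definedness if $\X$ is also in the small residual case). If $0<d<c!$, then $\x$ are apposite parameters of small residual type, so by definition $(r\gp,l\gp)=(r_y,l_y)$ directly. The only remaining case is $d\geq c!$: but then I would argue this cannot happen under the hypothesis that $\X$ is in a terminal case with $I_{\geq(o,c)}=(y,z)$, because $\rad(\Diff^{c-1}(I_3))$ would then (via Proposition~\ref{top_locus_defined_via_coeff_ideal} and Proposition~\ref{z_in_top_locus_in_monomial_case}-type reasoning) be at most $(y,z)$ but the simple-residual/monomial structure forces $X_{\geq(o,c)}$ to match one of the prescribed forms; alternatively, one invokes that $\X$ being in a terminal case means there exist \emph{some} apposite parameters of monomial or small residual type, and $I_{\geq(o,c)}=(y,z)$ together with Lemma~\ref{monomial_case}(2) and Lemma~\ref{small_residual_case}(2) pins down which curve plays the role of the $y$-axis; then Lemma~\ref{same_curve_same_l_y_lemma} and Proposition~\ref{w_cleaning_maximizes_w} identify $(r_y,l_y)$ with the genuine combinatorial pair computed from those canonical parameters.

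The main obstacle I anticipate is the last step: showing that the hypotheses of the lemma genuinely force the "small residual or monomial" structure, i.e. that $d=\ord I<c!$, rather than merely being consistent with it. The clean way to handle this is to compare the given parameters $\x$ with a set of parameters $\x_1$ of monomial or small residual type (which exist since $\X$ is in a terminal case). Since $I_{\geq(o,c)}=(y,z)=(y_1,z_1)$ holds for both by Lemma~\ref{monomial_case}(2)/Lemma~\ref{small_residual_case}(2), the curves $V(y,z)$ and $V(y_1,z_1)$ coincide, so $l_y=l_{y_1}$ by Lemma~\ref{same_curve_same_l_y_lemma}. After an $\ord$-cleaning of $z$ (which, because $f\in I_3$ is $z$-regular of order $c$ and $\ord J_{2,\x_1}>c!$ in the small residual case, may be taken of the form $z\mapsto z+g(x,y)$ via Lemma~\ref{z_regular_blocks_other_parameters}), Proposition~\ref{w_cleaning_maximizes_w} and Lemma~\ref{m_under_coord_changes} give $\ord_{(y)}J_{2,\x}=\ord_{(y_1)}J_{2,\x_1}$, hence $r_y$ equals $mc!$ (small residual) or the $y$-exponent in the monomial case, which by the respective definitions is exactly $r$. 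Combining $r=r_y$ and $l=l_y$ completes the proof.
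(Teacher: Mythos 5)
Your primary dichotomy on $d=\ord I$ (with $J_{2,\x}=(y^{r_y})\cdot I$, $\ord_{(y)}I=0$) does not settle the lemma on its own, for two reasons. First, in the range $0<d<c!$ you claim $\x$ is of small residual type, but that definition demands $r_y\in c!\cdot\N$ (i.e.\ $r_y=mc!$), whereas you only know $r_y\geq c!$; $\ord_{(y)}$-cleanness of $f$ does not force $c!\mid r_y$, so this step is unjustified. Second, the range $d\geq c!$ must be excluded, and the sketch you give (via Proposition~\ref{top_locus_defined_via_coeff_ideal} and the shape of $I_{\geq(o,c)}$) still requires a real argument that the order-$\geq c!$ locus of $J_{2,\x}$ in $K[[x,y]]$ is exactly $V(y)$, which is not immediate from $r_y\geq c!$ together with $\ord_{(y)}I=0$.

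Your fallback comparison with genuine terminal-type parameters $\x_1$ is in fact the proof: this is what the paper does. After normalizing $f$ to be $\ord$-clean as well, one compares $\x$ with $\x_1$, uses $(y,z)=(y_1,z_1)$ (from Lemma~\ref{monomial_case}(2)/Lemma~\ref{small_residual_case}(2)) and Lemma~\ref{same_curve_same_l_y_lemma} to get $l_y=l_{y_1}$, and then Proposition~\ref{w_cleaning_maximizes_w}, Lemma~\ref{z_regular_blocks_other_parameters} and Proposition~\ref{residual_order_coord_indep}(2) to transfer the $y$-order. Two places you should tighten: (i) concluding $z_1=z+g(x,y)$ from $z$-regularity requires $\ord J_{2,\x_1}>c!$; this holds in the small residual case and, once you know $r_y>c!$, also in the monomial case, but it can fail when the orders are exactly $c!$ (e.g.\ $J_{2,\x_1}=(y_1^{c!})$) — the paper peels off the case $r_y=c!$ separately before the comparison for precisely this reason; (ii) after getting $z_1=z+g$ you still need $\ord g\geq 2$ so that $y_1$ can be written as $y+zG$ and Proposition~\ref{residual_order_coord_indep}(2) applies to replace $y_1$ by $y$ — the paper obtains this from $\ord J_{2,\x}>c!$ via Lemma~\ref{coeff_ideal_and_directrix}, a step your outline omits. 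With the separate treatment of $r_y=c!$ and the $\ord g\geq2$ step inserted, your comparison argument recovers the paper's proof.
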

\begin{proof}
 
 By Lemma \ref{w_cleaning_preserves_v_cleaning} we may assume without loss of generality that $f$ is also $\ord$-clean with respect to $J_{2,\x}$.
 
 We first consider the case $r_y=c!$. Then $J_{2,\x}$ is of the form $J_{2,\x}=(y^{c!})$. Hence, the parameters $\x$ are of monomial type and the result follows from the definition of the combinatorial pair for the monomial case. Hence, we assume for the remainder of the proof that $r_y>c!$. 
 
 Assume first that $\X$ is in the monomial case at $a$. Let $\x_1=(x_1,y_1,z_1)$ be a apposite parameters of monomial type. By Lemma \ref{monomial_case} (2) we may assume that $(y_1,z_1)=(y,z)$ and that the coefficient ideal $J_{2,\x_1}$ has the form $J_{2,\x_1}=(x_1^{s_x}y_1^{s_y})$ with $s_x<c!$ and $s_y\geq c!$. Consequently, $(r\mon,l\mon)=(s_y,l_{y_1})$. The identity $l_{y_1}=l_y$ follows from Lemma \ref{same_curve_same_l_y_lemma}.
 
 Let $f_1\in I_3$ be an element that is $\ord$-clean with respect to $J_{2,\x_1}$. By Lemma \ref{z_regular_blocks_other_parameters} we may assume that $z_1=z+g$ for an element $g\in K[[x,y]]$. By Proposition \ref{w_cleaning_maximizes_w} we know that
 \[\ord J_{2,\x_1}=\ord J_{2,\x}\geq r_y>c!.\]
 By Lemma \ref{coeff_ideal_and_directrix} this implies that $\ord g\geq2$. Consequently, we may assume that $y_1=y+zG$ for some element $G\in\hOWa$. Set $\wt\x_1=(x,y,z_1)$. By Lemma \ref{residual_order_coord_indep} (2) we know that
 \[s_y=\ord_{(y_1)}J_{2,\x_1}=\ord_{(y)}J_{2,\wt\x_1}.\]
 It follows from Proposition \ref{w_cleaning_maximizes_w} that $s_y\leq r_y$. By a symmetric argument we conclude that $s_y=r_y$ holds.
 
 Now assume that $\X$ is in the small residual case at $a$. Let $\x_1=(x_1,y_1,z_1)$ be a apposite parameters of small residual type. By Lemma \ref{small_residual_case} (2) we know that $(y_1,z_1)=(y,z)$ and that the coefficient ideal $J_{2,\x_1}$ has the form $J_{2,\x_1}=(y_1^{mc!})\cdot I$ for a positive integer $m>0$ and an ideal $I$ with $\ord_{(y_1)}I=0$ and $0<\ord I<c!$. Consequently, $(r\gp,l\gp)=(mc!,l_{y_1})$. Again, the identity $l_{y_1}=l_y$ follows from Lemma \ref{same_curve_same_l_y_lemma}.
 
 Let $f_1\in I_3$ be an element which is $\ord_{(y)}$-clean with respect to $J_{2,\x_1}$. By Lemma \ref{c!_does_not_divide_m} we know that $f_1$ is also $\ord$-clean with respect to $J_{2,\x_1}$. Hence, we may use the same arguments as in the monomial case to show that the identity $mc!=r_y$ holds.
\end{proof}

\begin{lemma} \label{correct_m_lemma_for_two_curves}
 Let $\X$ be in the monomial case at $a$ and let $\x=(x,y,z)$ be apposite parameters with the property that the coefficient ideal $J_{2,\x}=\coeff_{\x}^c(I_3)$ fulfills $r_x:=\ord_{(x)}J_{2,\x}\geq c!$ and $r_y:=\ord_{(y)}J_{2,\x}\geq c!$. Assume further that there exists an element $f\in I_3$ which is both $\ord_{(x)}$-clean and $\ord_{(y)}$-clean with respect to $J_{2,\x}$.
 
 Then the combinatorial pair for $\X$ at $a$ is $(r,l)=\max\{(r_x,l_x),(r_y,l_y)\}$ and $l_x\neq l_y$.
\end{lemma}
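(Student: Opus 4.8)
The plan is to reduce to the previous lemma by first making the coordinate system as explicit as possible and then playing the two parameters $x$ and $y$ off against each other symmetrically. First I would apply Lemma \ref{w_cleaning_preserves_v_cleaning} repeatedly so that, after a coordinate change $z \mapsto z+g(x,y)$ (which changes neither $\ord_{(x)}J_{2,\x}$ nor $\ord_{(y)}J_{2,\x}$ nor the $\ord_{(x)}$- and $\ord_{(y)}$-cleanness of $f$), we may assume in addition that $f$ is $\ord$-clean with respect to $J_{2,\x}$; this puts us squarely in the monomial case and, via Proposition \ref{w_cleaning_maximizes_w}, pins down $\ord J_{2,\x}$, $\ord_{(x)}J_{2,\x}=r_x$ and $\ord_{(y)}J_{2,\x}=r_y$ as invariants of the flag $\F$ given by $\F_2=V(z)$, $\F_1=V(y,z)$. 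Since $r_x,r_y\geq c!$, Corollary \ref{form_of_top_locus_in_monomial_case} (equivalently Lemma \ref{monomial_case} (2)) gives $I_{\geq(o,c)}(a)=\rad(\Diff^{c-1}_{\hOWa/K}(I_3))=(xy,z)$, so that both $V(y,z)$ and $V(x,z)$ are (the two branches of) the top locus at $a$.

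Next I would invoke Lemma \ref{correct_m_lemma} twice. Applied to the apposite parameters $\x=(x,y,z)$ — whose $I_{\geq(o,c)}$-ideal is $(xy,z)\supseteq (y,z)$, but the hypothesis of Lemma \ref{correct_m_lemma} only really needs $r_y\geq c!$ and an $\ord_{(y)}$-clean element, not that $I_{\geq(o,c)}$ equals $(y,z)$ — we get that the combinatorial pair contributed along the branch $V(y,z)$ is $(r_y,l_y)$; applied to the permuted parameters $\x'=(y,x,z)$ (for which the monomial coefficient ideal becomes $(y^{r_y}x^{r_x})$, so $\ord_{(x)}$ plays the role of $\ord_{(y)}$ and $f$ is $\ord_{(x)}$-clean), we get that the contribution along $V(x,z)$ is $(r_x,l_x)$. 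Here I should be slightly careful: Lemma \ref{correct_m_lemma} as stated assumes $I_{\geq(o,c)}=(y,z)$, so I may instead argue directly from the definition of the combinatorial pair in the monomial case, $(r_{\mathrm{mon}},l_{\mathrm{mon}})=\max\{(r_x,l_x),(r_y,l_y)\}$ when $r_x\geq c!$ or $r_y\geq c!$, together with Lemma \ref{monomial_case} (1) which guarantees the unordered set $\{(r_x,l_x),(r_y,l_y)\}$ is independent of the chosen parameters of monomial type. This is the cleanest route: Lemma \ref{monomial_case} does the heavy lifting, and it is exactly the place where cleanness of $f$ with respect to both $\ord_{(x)}$ and $\ord_{(y)}$ is used to identify $r_x,r_y$ and the labels $l_x,l_y$ correctly.

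Finally, for the claim $l_x\neq l_y$: by definition $l_x=\lab(V(x))$ if $V(x)\subseteq\Eg$ and $0$ otherwise, similarly for $l_y$. Since the map $\lab\colon\Comp(E)\to\N_{>0}$ is injective and takes values in the positive integers, two distinct components of $\Eg$ have distinct nonzero labels, and a component has label $0$ only when it is not present; in every case $V(x)$ and $V(y)$ are distinct hyperplanes through $a$, so $l_x=l_y$ would force both to be $0$, i.e. $\Eg\cap\{V(x),V(y)\}=\emptyset$. But then the monomial ideal $J_{2,\x}=(x^{r_x}y^{r_y})$ with $r_x,r_y>0$ violates the third condition in the definition of the monomial case (``if $\Eg=\emptyset$ then $r_x=0$ or $r_y=0$''), and more generally $r_x>0$ requires $V(x)\subseteq\Eg$ while $r_y>0$ requires $V(y)\subseteq\Eg$ by the way $r_x,r_y$ were defined for subordinate parameters. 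Hence both $V(x)$ and $V(y)$ lie in $\Eg$, they are distinct, and injectivity of $\lab$ gives $l_x\neq l_y$.

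The main obstacle I anticipate is purely bookkeeping: matching the somewhat rigid hypothesis ``$I_{\geq(o,c)}=(y,z)$'' of Lemma \ref{correct_m_lemma} against the present situation where $I_{\geq(o,c)}=(xy,z)$, and making sure the symmetric application to $\x'=(y,x,z)$ is legitimate (i.e. that $\x'$ is again apposite and $f$ is $z$-regular of order $c$ with respect to it, which it is since $z$-regularity only involves $f(0,0,z)$). Everything else follows by quoting Proposition \ref{w_cleaning_maximizes_w}, Lemma \ref{monomial_case}, and the injectivity of $\lab$.
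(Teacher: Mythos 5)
Your argument contains a genuine gap in the first step. You claim that after applying $\ord$-cleaning (via Lemma \ref{w_cleaning_preserves_v_cleaning}) so that $f$ becomes $\ord$-clean with respect to $J_{2,\x}$, ``this puts us squarely in the monomial case,'' i.e.\ that the cleaned $J_{2,\x}$ is a principal monomial ideal $(x^{r_x}y^{r_y})$. This does not follow. Being $\ord$-clean maximizes $\ord J_{2,\x}$ over coordinate changes $z\mapsto z+g(x,y)$ (Proposition \ref{w_cleaning_maximizes_w}), but it says nothing about whether the residual factor $I$ in the factorization $J_{2,\x}=(x^{r_x}y^{r_y})\cdot I$ is trivial. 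To conclude $I=(1)$ you would need to know $\ord J_{2,\x}=r_x+r_y$, which is exactly what has to be proved by comparison with a system $\x_1$ of monomial type (whose existence is what ``$\X$ is in the monomial case at $a$'' provides). Because of this, your fallback ``cleanest route'' via Lemma \ref{monomial_case}~(1) also does not close the gap: that statement is about parameters \emph{of monomial type}, which $\x$ has not been shown to be.

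The paper's proof instead takes a second system $\x_1=(x_1,y_1,z_1)$ of monomial type with $J_{2,\x_1}=(x_1^{s_x}y_1^{s_y})$, shows $\Ioc=(z,xy)=(z_1,x_1y_1)$ and $s_x,s_y\geq c!$ using Proposition \ref{top_locus_defined_via_coeff_ideal}~(1) and Lemma \ref{monomial_case}~(2), then proves $r_x=s_x$ and $r_y=s_y$ by playing off the $\ord_{(x)}$- and $\ord_{(y)}$-cleanness of $f$ (resp.\ $f_1$) against Proposition \ref{w_cleaning_maximizes_w} and the coordinate-independence result Proposition \ref{residual_order_coord_indep}~(2), after reducing via $V(y)\subseteq\Eg$ to $y=y_1$, $z_1=z+g$ with $g\in K[[x,y]]$, and handling $x_1=x+zG$ separately through the auxiliary system $\wt\x_1=(x,y,z_1)$. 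These identifications $r_x=s_x$, $r_y=s_y$ are precisely the missing content in your argument; the $\ord_{(x)}$- and $\ord_{(y)}$-cleanness hypotheses in the lemma are there to enable this comparison, not merely to stabilize $r_x$ and $r_y$ under further cleaning as you use them. Your final paragraph on $l_x\neq l_y$ is correct and matches the paper's reasoning (injectivity of $\lab$, distinctness of $V(x)$ and $V(y)$ as components of $\Eg$), and your observation that Lemma \ref{correct_m_lemma} does not directly apply because $\Ioc=(xy,z)$ rather than $(y,z)$ is sound; it is the bridge you propose across that gap which is not valid.
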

\begin{proof}
 By Lemma \ref{w_cleaning_preserves_v_cleaning} we may assume without loss of generality that $f$ is also $\ord$-clean with respect to $J_{2,\x}$. 
 
 Let $\x_1=(x_1,y_1,z_1)$ be a apposite parameters of monomial type. Let the coefficient ideal $J_{2,\x_1}$ have the form $J_{2,\x_1}=(x_1^{s_x}y_1^{s_y})$ and let $f_1\in I_3$ be an element which is $\ord$-clean with respect to $J_{2,\x_1}$. By Lemma \ref{w_cleaning_preserves_v_cleaning} we may assume that $f_1$ is also $\ord_{(x_1)}$-clean and $\ord_{(y_1)}$-clean with respect to $J_{2,\x_1}$.
 
 By Proposition \ref{top_locus_defined_via_coeff_ideal} (1) we can compute that that $\Ioc\subseteq (z,xy)$. By Lemma \ref{monomial_case} (1) this implies that $\Ioc=(z,xy)=(z_1,x_1y_1)$ and $s_x,s_y\geq c!$.
 
 By the definition of the monomial case, this implies that $\Eg\neq\emptyset$. Let us assume without loss of generality that $V(y)\subseteq\Eg$. Hence, we may assume $y=y_1$. Since $(z,y)=(z_1,y_1)$, this implies that $z_1=z+g$ for some element $g\in K[[x,y]]$. By Proposition \ref{w_cleaning_maximizes_w} this implies that $r_y=s_y$.
 
 Further, we know by Lemma \ref{coeff_ideal_and_directrix} that $\ord g\geq2$. Since $(z,x)=(z_1,x_1)$, we may assume that $x_1=x+zG$ for an element $G\in\hOWa$. Set $\wt\x_1=(x,y,z_1)$. By Proposition \ref{residual_order_coord_indep} (2) we know that
 \[s_x=\ord_{(x_1)}J_{2,\x_1}=\ord_{(x)} J_{2,\wt\x_1}.\]
 By Proposition \ref{w_cleaning_maximizes_w} we know that $s_x\leq r_x$. By a symmetric argument we conclude that $s_x=r_x$.
 
 The claimed form of $(r,l)$ follows from the definition of the combinatorial pair in the monomial case.
 
 Further, $l_x\neq l_y$ since $l_x=l_y$ would imply that $\lab(V(x))=\lab(V(y))$. Since $\lab:\Comp(E)\to\N$ is injective, this would contradict the fact that $E$ is a simple normal crossings divisor.
\end{proof}

\section{Generic value of $\iv_\X$ along curves in $X_{\geq(o,c)}$} \label{section_generic_value_of_inv_along_curves}

 The following result will enable us to prove upper semicontinuity of $\iv_\X$ and the fact that $\Xmax$ is a permissible center of blowup for $\X$. Further, we will use it in Section \ref{section_inv_drops_for_d=0} to determine the center of blowup during combinatorial resolution. The foundation for the proof of the following proposition is provided by the results we developed in Section \ref{section_global_local_expansion}.

\begin{proposition} \label{key_proposition_for_usc}
 Let $\X=(W,X,E)$ be a $3$-dimensional resolution setting. Let $C\subseteq X$ be a curve with generic point $\xi$. Set $o=o_\X(\xi)$ and $c=c_\X(\xi)$ and assume that $(o,c)>(1,1)$. Let $a\in C$ be a closed point with $(o\Xa,c\Xa)=(o,c)$. The following hold:
 \begin{enumerate}[(1)]
  \item If $\X$ is in a terminal case at $a$, then $C$ is regular at $a$ and $C\cup E$ has simple normal crossings at $a$.
  \item There is a \emph{generic combinatorial pair} $(r_\xi,l_\xi)\in\N^2$ with the property that
 \[\iv_\X(b)=(o,c,0,0,0,r_\xi,l_\xi)\]
 for all but finitely many closed points $b\in C$.
 \item If $\X$ is in a terminal case at $a$, then any other curve $C'\subseteq X$ with generic point $\zeta$ passing through $a$ that fulfills $(o_\X(\zeta),c_\X(\zeta))=(o,c)$ has a generic combinatorial pair $(r_\zeta,l_\zeta)\neq(r_\xi,l_\xi)$. The combinatorial pair $(r,l)$ at $a$ equals the maximum over all these pairs.
 \end{enumerate}
\end{proposition}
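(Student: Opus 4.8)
The plan is to work locally at the closed point $a$, pass to the completion $\hOWa$, and analyze the coefficient ideal $J_{2,\x}$ with respect to suitably chosen apposite parameters. The three assertions are linked: once we know (by Section \ref{section_form_of_top_locus}, via Corollary \ref{form_of_top_locus_in_monomial_case} and Proposition \ref{form_of_top_locus_in_g_p_case}) that in a terminal case the radical ideal $I_{\geq(o,c)}(a)$ is one of the monomial ideals $(xy,z)$, $(x,z)$, $(y,z)$, $(x,y,z)$, we get a precise description of $X_{\geq(o,c)}$ locally at $a$ as a union of coordinate lines/planes, which immediately yields regularity of $C$ and simple normal crossings with $E$ for (1), and also identifies exactly which curves through $a$ lie in $X_{\geq(o,c)}$ for (3). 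For (2), the key input is the cleaning-on-a-neighborhood machinery of Proposition \ref{neighborhood_cleaning}: along the curve $C$ we can, after shrinking to a nonempty open subset, find a single coordinate system and a single element $f\in I_3$ such that $f$ is $\ord_{(y)}$-clean (or $\ord$-clean) with respect to the fiberwise coefficient ideal $J_{2,\x}(b)$ at every point $b$ of that open set, and such that the relevant orders $\ord_{(y)}J_{2,\x}(b)$ and labels $l_y$, $l_x$ stabilize. Then Lemma \ref{correct_m_lemma} and Lemma \ref{correct_m_lemma_for_two_curves} compute the combinatorial pair generically, giving $(r_\xi, l_\xi)$.

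\textbf{Step 1: Reduce to the completion and fix coordinates.} By Lemma \ref{good_parameters} choose local apposite parameters $\x=(x,y,z)$ for $\X$ at $a$, so $\Eg\subseteq V(xy)$ and there is $f\in I_3(a)$ that is $z$-regular of order $c$. Since $a\in X_{\geq(o,c)}$, the order of $I_3$ is $c$ along a neighborhood of $a$ in $C$. The strategy is to replace $W$ by an affine neighborhood and $z$ by a global function, so that $f$, $\x$ and $J_{2,\x}$ make sense as algebraic objects along $C$; here Proposition \ref{neighborhood_cleaning} is used to characterize $z$-regularity, the order $\ord_{(y)}J_{2,\x}$, and $\ord_{(y)}$-cleanness via non-vanishing of explicit differential operators $\partial_{y^j}\partial_{z^i}(f)$, $\partial_{\x^\alpha}\partial_{y^{qm}}\partial_{z^{c-q}}(f)$, which are algebraic conditions. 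Each such condition holds on a nonempty Zariski-open subset of $C$ (it holds at $a$ or can be arranged at $a$ by an algebraic cleaning step $z\mapsto z+g$ with $g\in(y,z)$, per Proposition \ref{neighborhood_cleaning} (6)(8)). Intersecting finitely many such open sets, we obtain a dense open $C_1\subseteq C$ on which: $\ord I_3(b)=c$, $f$ is $z$-regular of order $c$, $I_{\geq(o,c)}(b)=(y,z)$ (or the appropriate monomial ideal), $\ord_{(y)}J_{2,\x}(b)$ is constant, $f$ is $\ord_{(y)}$-clean w.r.t.\ $J_{2,\x}(b)$, and — if two components of $\Eg$ pass through generic $b$ — also $\ord_{(x)}J_{2,\x}(b)$ is constant and $f$ is $\ord_{(x)}$-clean.

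\textbf{Step 2: Terminal case at generic points, and computation of $(r_\xi,l_\xi)$.} On $C_1$, by Lemma \ref{monomial_case} / Lemma \ref{small_residual_case} (2) the ideal $I_{\geq(o,c)}(b)$ has one of the standard forms, so by Corollary \ref{form_of_top_locus_in_monomial_case} or Proposition \ref{form_of_top_locus_in_g_p_case} (applied in reverse — or more precisely using that $\ord_{(y)}J_{2,\x}(b)$ is divisible by $c!$ and $f$ is $\ord_{(y)}$-clean, which forces the coefficient ideal into monomial-times-small-residual shape), $\X$ is in a terminal case at every $b\in C_1$. Shrinking $C_1$ further so that the labels $l_x(b), l_y(b)$ are constant (they are determined by which components of $\Eg$ pass through $b$, a constructible condition), Lemma \ref{correct_m_lemma} or Lemma \ref{correct_m_lemma_for_two_curves} gives the combinatorial pair at each $b\in C_1$ as a fixed pair $(r_\xi, l_\xi)$, equal to $(r_y, l_y)$ or $\max\{(r_x,l_x),(r_y,l_y)\}$ computed from the generic coefficient ideal. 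Since $d=n=s=0$ in a terminal case and $(o_\X(b),c_\X(b))=(o,c)$ on a dense open subset of $C$ (upper semicontinuity of the order function, Proposition \ref{order_is_usc}), this proves (2). Assertion (1): by Step 1, $I_{\geq(o,c)}(a)$ is $(y,z)$, $(x,z)$, $(xy,z)$ or $(x,y,z)$ in suitable parameters, so each curve through $a$ inside $X_{\geq(o,c)}$ — in particular $C$ — is a coordinate line $V(y,z)$ or $V(x,z)$, hence regular, and has simple normal crossings with $E\subseteq V(xyz)$. Assertion (3): any curve $C'\ni a$ with $(o,c)$ generic along it must likewise be one of these finitely many coordinate lines (there are at most two, namely $V(y,z)$ and $V(x,z)$, and these can both occur only when $I_{\geq(o,c)}(a)=(xy,z)$, i.e.\ $\Eg$ has two components and $r_x,r_y\ge c!$); the generic combinatorial pairs along the two lines are $(r_y,l_y)$ and $(r_x,l_x)$, which differ since $l_x\neq l_y$ by Lemma \ref{correct_m_lemma_for_two_curves}, and the combinatorial pair at $a$ is $\max\{(r_x,l_x),(r_y,l_y)\}$, i.e.\ the maximum over the two generic pairs.

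\textbf{Main obstacle.} The delicate point is Step 1: showing that a \emph{single} element $f$ and a \emph{single} coordinate system work simultaneously at all generic points of $C$, even though the example in Section \ref{section_generic_down} shows that a different formal hypersurface is needed at each point of a one-dimensional top locus. The resolution is that we do \emph{not} need to maximize the order of the coefficient ideal along all of $C$ — we only need enough cleanness to be in \emph{some} terminal case, and the terminal cases (monomial and small residual) are characterized by finitely many algebraic open conditions via differential operators (Proposition \ref{neighborhood_cleaning}); moreover, the algebraic $\ord$-cleaning step needed to reach that state at the generic point has the form $z\mapsto z+g$ with $g$ a global regular function vanishing on $(y,z)$, so it does not move the curve $C$ and extends over a Zariski neighborhood. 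Handling the two sub-cases (one vs.\ two components of $\Eg$ through generic $b$, and $d_{*}<c!$ leading to small residual vs.\ $c!\mid$ everything leading to monomial) requires carefully bookkeeping which of Lemma \ref{correct_m_lemma} and Lemma \ref{correct_m_lemma_for_two_curves} applies; this is where most of the routine verification lies.
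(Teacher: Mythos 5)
Your overall architecture matches the paper's proof quite closely: fix local apposite parameters at a nice closed point of $C$, pass to a Zariski neighborhood on which $z$, $f$ and the curve $C$ are simultaneously realized as global algebraic objects, invoke Proposition \ref{neighborhood_cleaning} to translate $z$-regularity and cleanness into non-vanishing of differential operators (hence Zariski-open conditions), shrink finitely often, and finish with Lemmas \ref{correct_m_lemma} and \ref{correct_m_lemma_for_two_curves}. Assertions (1) and (3) are handled the same way the paper handles them. The problem is in the middle of Step~2, which is where almost all the real work of the proof lives, and which you leave as an assertion.

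The genuine gap is the claim that $\X$ is in a terminal case at every point of the dense open $C_1$. You write that this follows ``using that $\ord_{(y)}J_{2,\x}(b)$ is divisible by $c!$ and $f$ is $\ord_{(y)}$-clean, which forces the coefficient ideal into monomial-times-small-residual shape.'' Both parts of this are off. First, $\ord_{(y)}J_{2,\x}(b)$ need not be divisible by $c!$; what one actually knows (via $J\subseteq I_{C,a}^c=(y,z)^c$, using Hochster's result Proposition \ref{hochster} together with Lemma \ref{w_coeff}) is only that $r_y:=\ord_{(y)}J_{2,\x}(a)\geq c!$. The case $r_y\notin c!\cdot\N$ is a genuine, and easy, subcase — there $f$ is automatically $\ord$-clean by Lemma \ref{c!_does_not_divide_m} and one is in the monomial case — but your argument does not reach it. Second, and more importantly, $\ord_{(y)}$-cleanness together with $r_y\geq c!$ does not by itself yield the terminal-case shape: you still need to show that at generic $b\in C$, the entire coefficient ideal $J_{2,\x_b}(b)$ collapses to the principal monomial ideal $(y^{r_y})$, not just that a factor of $(y^{r_y})$ can be extracted. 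This is the step the paper establishes by taking $i<c$ maximal with $\ord_{(y)}f_i=\frac{c-i}{c!}r_y$, passing to the differential operator $F_i=\partial_{y^j}\partial_{z^i}(f)$ with $j=\frac{c-i}{c!}r_y$, observing $F_i\notin(y,z)$ at $a$, and shrinking $U$ so that $F_i$ is a unit at all $b\neq a$; then Proposition \ref{neighborhood_cleaning}~(3) forces $J_{2,\x_b}(b)=(y^{r_y})$. Only after this does the case split on $i$ (whether $c-q<i<c$, $i<c-q$, or $i=c-q$ with $r_y\in c!\cdot\N$) let you run the cleaning step of Proposition \ref{neighborhood_cleaning}~(6)--(8) and land in either the monomial or the small residual case. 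Without this ``maximal index'' argument, you have only that the $y$-order of the coefficient ideal is constant along $C$, which does not imply monomial shape — the residual part of the ideal could still be proper at generic $b$, in which case $\X$ would not be in a terminal case there and your conclusion fails.

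A smaller issue in the same spirit: your appeal to ``Lemma \ref{monomial_case} / Lemma \ref{small_residual_case}~(2) applied in reverse'' is, as you yourself flag, not a valid inference — those lemmas describe $I_{\geq(o,c)}$ under the hypothesis of a terminal case, not conversely. You correctly sense this and attempt to patch it, but the patch is the flawed claim discussed above. To repair the proposal you would need to insert the $r_y\geq c!$ step from Proposition \ref{hochster}, the principal-monomial step via the maximal index $i$ and the differential operator $F_i$, and the explicit case distinction on $i$ and on divisibility of $r_y$ by $c!$ before concluding terminality.
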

 \begin{proof}
 (1): By Proposition \ref{monomial_case} (2) and Proposition \ref{small_residual_case} (3) there exists a regular system of parameters $\x=(x,y,z)$ for $\hOWa$ such that $\Eg\subseteq V(xy)$ and either $\Ioc(a)=(y,z)$ or $\Ioc(a)=(xy,z)$. If $\Ioc(a)=(y,z)$, then $C=V(y,z)$ locally at $a$ and it is clear that $C\cup\Eg$ has simple normal crossings at $a$.
 
 If $\Ioc(a)=(xy,z)$, we know that $\X$ is in the monomial case at $a$ and $\Eg\neq\emptyset$ by Lemma \ref{monomial_case}. Consequently, $V(x,z)$ and $V(y,z)$ cannot be two analytic branches of the same curve. Hence, either $C=V(y,z)$ or $C=V(x,z)$ locally at $a$. In either case, it is clear that $C\cup\Eg$ has simple normal crossings at $a$.
 
 The fact that $C\cup E$ has simple normal crossings at $a$ now follows from the fact that $C\cup \Eg$ and $\Eg\cup E_{>o}$ are simple normal crossings and $C\subseteq E_{>o}$ holds. 
 
 (2): Let $a\in C$ be a closed point such that $(o\Xa,c\Xa)=(o,c)$, the curve $C$ is regular at $a$ and $C\cup\Ego$ has simple normal crossings at $a$. (Such a point exists by Proposition \ref{order_is_usc}.) By Lemma \ref{good_parameters} there exists a regular system of parameters $\x=(x,y,z)$ for $\OWa$ which constitutes local apposite parameters for $\X$ at $a$.
 
 By Lemma \ref{directrix_from_z_regularity} there constants $\lambda_1,\lambda_2\in K$ such that $\ol{z+\lambda_1 x+\lambda_2 y}\in\Dir(I_3(a))$. This implies by Lemma \ref{directrix_and_top_locus} that there is an element $Q\in\OWa$ with $\ord Q\geq2$ such that $z+\lambda_1x+\lambda_2y+Q\in I_{C,a}$. By Lemma \ref{z_regular_under_coord_change} we may assume without loss of generality that $\lambda_1=\lambda_2=Q=0$. Hence, $C$ is contained in the regular local hypersurface $V(z)$ at $a$.
 
 Since $C\cup\Ego$ has simple normal crossings at $a$, we can choose an open affine neighborhood $U=\Spec(R)$ of $a$ and elements $x,y,z\in R$ such that the following hold:
 \begin{itemize}
  \item $\Ego\cap U\subseteq V(xy)$.
  \item $C\cap U=V(y,z)$.
  \item The module of differentials $\Omega_{R/K}$ is freely generated by $dx,dy,dz$. For each closed point $b\in C\cap U$ there is a constant $t_b\in K$ such that $\x_b=(x_b,y,z)$ with $x_b=x-t_b$ is a regular system of parameters for $\hOWb$. We can assume that $t_a=0$, $x_a=x$.
  \item $(X\cup\Ebo)\cap U=V(J)$ where $J\subseteq R$ is a principal ideal that is generated by an element $f$ which is $z$-regular of order $c$ in the completed local ring $\hOWa$ with respect to the regular system of parameters $\x=(x,y,z)$.
 \end{itemize}
 In the following, we will shrink $U$ several times until it is small enough to show that $\iv_\X$ is constant along $C\cap U\setminus\{a\}$. By Proposition \ref{order_is_usc} we can assume that $(o_\X(b),c_\X(b))=(o,c)$ for all closed points $b\in C\cap U$.
 
 By Proposition \ref{hochster} we know that $J\subseteq (y,z)^c$. This implies by Lemma \ref{w_coeff} that $\ord_{(y)} J_{2,\x}(a)\geq c!$.
 
 Since $f$ is $z$-regular of order $c$, we know by Proposition \ref{neighborhood_cleaning} (4) that $\partial_{z^c}(f)\notin m_{U,a}$. After shrinking $U$, we can assume that $\partial_{z^c}(f)\in R^*$ holds. Hence, we can assume by Proposition \ref{neighborhood_cleaning} (5) that $\partial_{z^c}(f)=1+F$ for an element $F\in (y,z)$. Using Proposition \ref{neighborhood_cleaning} (4) again, we see that $f$ is $z$-regular of order $c$ with respect to the parameters $\x_b$ as an element of the ring $\hOWb$ for all closed points $b\in C\cap U$.
 
 By Proposition \ref{neighborhood_cleaning} (6) we can assume without loss of generality that $f$ is $\ord_{(y)}$-clean with respect to $J_{2,\x}(a)$. Set $r_y=\ord_{(y)}J_{2,\x}(a)$. By Proposition \ref{neighborhood_cleaning} (1) we know that $\ord_{(y)}J_{2,\x_b}(b)=r_y$ holds for all closed points $b\in C\cap U$. 
 
 Set $(r_\xi,l_\xi)=(r_y,l_y)$.
 
 Let $f$ have the power series expansion $f=\sum_{i\geq0}f_iz^i$ with $f_i\in K[[x,y]]$. By Lemma \ref{w_coeff} there exists an index $i<c$ such that $\ord_{(y)} f_i=\frac{c-i}{c!}r_y$. Let $i<c$ be maximal with this property.
 
 Set 
 \[F_i=\partial_{y^j}\partial_{z^i}(f)\]
 where $j=\frac{c-i}{c!}r_y$. By Proposition \ref{neighborhood_cleaning} (2) we know that $F_i\notin (y,z)$. After shrinking $U$, we can assume that $F_i\notin m_{U,b}$ for all closed points $b\in C\cap U$ with $b\neq a$. Let $f$ have the expansion $f=\sum_{i\geq0}f_i^{(b)}z^i$ with $f_i^{(b)}\in K[[x_b,y]]$ for each closed point $b\in C\cap U$. Then we know by Proposition \ref{neighborhood_cleaning} (3) that $\ord f_i^{(b)}=\frac{c-i}{c!}r_y$. By Lemma \ref{w_coeff} this implies that the equality
 \[J_{2,\x_b}(b)=(y^{r_y})\]
 holds for all closed points $b\in C\cap U$ with $b\neq a$.
 
 If $c-q<i<c$, then $f$ is $\ord$-clean with respect to $J_{2,\x_b}(b)$ for all closed points $b\in C\cap U$ with $b\neq a$. Hence, the apposite parameters $\x_b$ are of monomial type. Consequently,
 \[\iv_\X(b)=(o,c,0,0,0,r_y,l_y).\]
 If $i<c-q$, then we know by maximality of $i$ that 
 \[\ord f_{c-q}^{(b)}\geq \ord_{(y)}f_{c-q}^{(b)}=\ord_{(y)}f_{c-q}>\frac{q}{c!}r_y\]
 for all closed points $b\in C\cap U$. Consequently, $f$ is $\ord$-clean with respect to $J_{2,\x_b}(b)$ for all closed points $b\in C\cap U$ with $b\neq a$ and the same argument as before applies. The same holds by Lemma \ref{c!_does_not_divide_m} if $r_y\notin c!\cdot\N$. 
 
 So assume from now on that $i=c-q$ and $r_y\in c!\cdot\N$ hold. Set $m=\frac{r_y}{c!}$. Since $f$ is $\ord_{(y)}$-clean with respect to $J_{2,\x}(a)$, but the properties $(1)_{\ord_{(y)}}$ and $(2)_{\ord_{(y)}}$ do not hold, we know by Proposition \ref{neighborhood_cleaning} (7) that there exists a number $0<k<q$ such that
 \[\partial_{x^k}(F_{c-q})=\partial_{x^k}\partial_{y^{qm}}\partial_{z^{c-q}}(f)\notin (y,z).\]
 Notice that $\partial_{x_b^k}(F_{c-q})=\partial_{x^k}(F_{c-q})$. Hence, $f$ is $\ord_{(y)}$-clean with respect to $J_{2,\x_b}(b)$ for all closed points $b\in C\cap U$ by Proposition \ref{neighborhood_cleaning} (7). Further, we can assume after shrinking $U$ further that $\partial_{x^k}(F_{c-q})\notin m_{U,b}$ for all closed points $b\in C\cap U$ with $b\neq a$. 
 
 Let $b\in C\cap U$ with $b\neq a$ be a closed point such that $f$ is not $\ord$-clean with respect to $J_{2,\x_b}(b)$. Let $z=z_b+g_b$ with $g_b\in K[[x_b,y]]$ be an $\ord$-cleaning step with respect to $f$ and $J_{2,\x_b}(b)$. Set $\wt\x_b=(x_b,y,z_b)$. By Lemma \ref{w_cleaning_improves_w} we know that either $\ord J_{2,\wt\x_b}(b)>\ord J_{2,\x_b}(b)$ or $\ord J_{2,\wt\x_b}(b)=\ord J_{2,\x_b}(b)$ and $f$ is $\ord$-clean with respect to $J_{2,\wt\x_b}(b)$. In the latter case, it follows from Lemma \ref{w_cleaning_preserves_v_cleaning} that $J_{2,\wt\x_b}(b)=(y^{r_y})$. Hence, the parameters $\wt\x_b$ are of monomial type and 
 \[\iv_\X(b)=(o,c,0,0,0,r_y,l_y).\]
 
 So assume now that $\ord J_{2,\wt\x_b}(b)>\ord J_{2,\x_b}(b)$. By Proposition \ref{neighborhood_cleaning} (8) we know that $J_{2,\wt\x_b}(b)$ has the form
 \[J_{2,\wt\x_b}(b)=(y^{mc!})\cdot I\]
 for an ideal $I$ with $\ord_{(y)}I=0$ and $0<\ord I<c!$. Since $f$ is $\ord_{(y)}$-clean with respect to $J_{2,\wt\x_b}(b)$ by Lemma \ref{w_cleaning_preserves_v_cleaning}, the parameters $\wt\x_b$ are of small residual type. Since $mc!=r_y$, this implies that
 \[\iv_\X(b)=(o,c,0,0,0,r_y,l_y).\]
 
 (3): Continuing the previous considerations, we now assume that $\X$ is in a terminal case at $a$. By what we have already shown, we know that $\ord_{(y)}J_{2,\x}(a)=r_\xi\geq c!$, $l_y=l_\xi$ and $f$ is $\ord_{(y)}$-clean with respect to $J_{2,\x}(a)$. Further, we know by Lemma \ref{monomial_case} (2) and Lemma \ref{small_residual_case} (2) that at most two components of $X_{\geq (o,c)}$ pass through $a$
 
 If only one component of $X_{\geq(o,c)}$ passes through $a$, this component coincides with $C$. Consequently, $I_{\geq(o,c)}(a)=(y,z)$. By Lemma \ref{correct_m_lemma} this implies that $(r,l)=(r_\xi,l_\xi)$.
 
 Now assume that another component $\wt C$ of $X_{\geq(o,c)}$ with generic point $\zeta$ passes through $a$. Using the same techniques as before, we can assume that $\wt C\cap U=V(x,z)$, $\ord_{(x)}J_{2,\x}(a)=r_\zeta$, $l_x=l_\zeta$ and $f$ is $\ord_{(x)}$-clean with respect to $J_{2,\x}(a)$. Hence, we know by Lemma \ref{correct_m_lemma_for_two_curves} that $(r,l)=\max\{(r_\xi,l_\xi),(r_\zeta,l_\zeta)\}$ and $(r_\xi,l_\xi)\neq (r_\zeta,l_\zeta)$.
\end{proof}

\begin{corollary} \label{only_finitely_many_points_with_d>0}
 Let $\X=(X,W,E)$ be a $3$-dimensional resolution setting and $o,c\in\N$ such that $(o,c)>(1,1)$. Then the set
 \[\{a\in X:(o\Xa,c\Xa)=(o,c),\text{$\X$ is not in a terminal case at $a$}\}\]
is discrete.
\end{corollary}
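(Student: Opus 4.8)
The plan is to deduce the statement from Proposition~\ref{key_proposition_for_usc}(2) by decomposing the closed level set $X_{\geq(o,c)}$ into irreducible components. Write $S$ for the set in question, so that $S$ consists of the closed points $a\in X$ with $(o_\X(a),c_\X(a))=(o,c)$ at which $\X$ is not in a terminal case. Since every such $a$ satisfies $(o_\X(a),c_\X(a))=(o,c)\geq(o,c)$, we have $S\subseteq X_{\geq(o,c)}$, and $X_{\geq(o,c)}$ is closed by upper semicontinuity of $(o_\X,c_\X)$ (Proposition~\ref{order_is_usc}). First I would record that $X_{\geq(o,c)}$ is a \emph{proper} closed subset of $X$: at the generic point $\eta$ of $X$ the hypersurface $X$ is reduced of codimension one, so $\ord_\eta X=1$, and $\eta\notin E_{>1}$ because $X\not\subseteq E$ in a resolution setting; hence $(o_\X(\eta),c_\X(\eta))=(1,1)$, which is not $\geq(o,c)$ since $(o,c)>(1,1)$. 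As $X$ is an irreducible surface, $X_{\geq(o,c)}$ therefore has dimension at most one, and being Noetherian it has finitely many irreducible components $Z_1,\dots,Z_k$, each of which is a closed point or an irreducible curve.

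Next I would analyse each component separately. A zero-dimensional $Z_i$ contributes at most one closed point to $S$. For a one-dimensional $Z_i$, let $\xi_i$ be its generic point, so that $C:=Z_i=\overline{\{\xi_i\}}$ is an irreducible curve in $X$ with $(o_\X(\xi_i),c_\X(\xi_i))\geq(o,c)>(1,1)$. Here there is a dichotomy. If the inequality is strict, then $\xi_i$, and hence all of $Z_i=\overline{\{\xi_i\}}$, lies in the set $\{(o_\X,c_\X)>(o,c)\}$, so no point of $Z_i$ has invariant pair equal to $(o,c)$ and $Z_i\cap S=\emptyset$. If instead $(o_\X(\xi_i),c_\X(\xi_i))=(o,c)$, then Proposition~\ref{key_proposition_for_usc}(2) applies with $C=Z_i$: there is a generic combinatorial pair $(r_{\xi_i},l_{\xi_i})$ such that $\iv_\X(b)=(o,c,0,0,0,r_{\xi_i},l_{\xi_i})$ for all but finitely many closed points $b\in Z_i$, so that in particular the third component of $\iv_\X(b)$ vanishes there. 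By the definition of $\iv_\X$ and Lemma~\ref{inv_finite}, whenever $\X$ is not in a terminal case at a closed point $b$ with $(o_\X(b),c_\X(b))=(o,c)$ the third component of $\iv_\X(b)$ equals $d_\F>0$ for a maximizing flag $\F$. Hence $\X$ is in a terminal case at all but finitely many closed points of $Z_i$, and $Z_i\cap S$ is finite.

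Combining the two cases, $S=\bigcup_{i=1}^{k}(Z_i\cap S)$ is a finite set of closed points, and is therefore discrete. The only substantial point is the dichotomy in the second paragraph, which is what makes each one-dimensional component of $X_{\geq(o,c)}$ eligible as the curve $C$ in Proposition~\ref{key_proposition_for_usc}(2); everything else (the Noetherian decomposition of $X_{\geq(o,c)}$, its properness, and the elementary remark that a vanishing third component of $\iv_\X$ at such a point forces a terminal case) is routine. I do not expect a serious obstacle here: the corollary is essentially a bookkeeping consequence of Proposition~\ref{key_proposition_for_usc}.
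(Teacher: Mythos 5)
Your proof is correct and follows the same route the paper takes: reduce by upper semicontinuity of $(o_\X,c_\X)$ to the one-dimensional components of $X_{\geq(o,c)}$ whose generic point has pair exactly $(o,c)$, then invoke Proposition~\ref{key_proposition_for_usc}(2) together with the fact that $d_\F>0$ (Lemma~\ref{inv_finite}) forces non-terminal points to be among the finitely many exceptions. You merely spell out the Noetherian decomposition, the properness of $X_{\geq(o,c)}$, and the dichotomy on the generic point of each component, all of which the paper leaves implicit.
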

\begin{proof}
 Since $(o_\X,c_\X):X\to\N^2$ is upper semicontinuous, it suffices to verify the statement for a curve $C\subseteq X$ with generic point $\xi$ that fulfills $(o_\X(\xi),c_\X(\xi))=(o,c)$. The statement then follows from Proposition \ref{key_proposition_for_usc} (2).
\end{proof}

\section{Conclusions} \label{section_inv_is_usc} 

\begin{keytheorem} \label{inv_is_usc}
 Let $\X=(W,X,E)$ be a $3$-dimensional resolution setting and $\ii\in\N^7$. Then the set
 \[X_{\geq \ii}=\{a\in X:\iv_\X(a)\geq \ii\}\]
 is closed. In other words, the function $\iv_\X:X\to\N^7$ is upper semicontinuous.
\end{keytheorem}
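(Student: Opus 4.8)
The plan is to reduce the theorem to the one‑dimensional strata $X_{\geq(o,c)}$ and then invoke Proposition~\ref{key_proposition_for_usc}. Since $\iv_\X$ at a non‑closed point equals the minimum of $\iv_\X$ over the closed points of its closure, a set of the form $X_{\geq\ii}$ is closed as soon as its trace on the closed points is, so I would argue with closed points throughout. Fix $\ii=(o_0,c_0,d_0,n_0,s_0,r_0,l_0)$. If $(o_0,c_0)\leq(1,1)$, then — using Lemma~\ref{c=1_measures_resolved}(1) and the fact that $\iv_\X(a)=(1,1,0,0,0,0,0)$ whenever $(o_\X(a),c_\X(a))=(1,1)$ — the set $X_{\geq\ii}$ is either all of $X$ or equals $\{a:(o_\X(a),c_\X(a))>(1,1)\}$, which is closed by Proposition~\ref{order_is_usc}; so assume $(o_0,c_0)>(1,1)$. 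Put
\[ B=\{a:(o_\X(a),c_\X(a))>(o_0,c_0)\},\qquad A=X_{\geq\ii}\setminus B. \]
Then $B\subseteq X_{\geq\ii}$, so $X_{\geq\ii}=B\cup A$, and $B=X_{\geq(o_0+1,0)}\cup X_{\geq(o_0,c_0+1)}$ is closed by Proposition~\ref{order_is_usc}; moreover, by the lexicographic order, $a\in A$ iff $(o_\X(a),c_\X(a))=(o_0,c_0)$ and the last five components of $\iv_\X(a)$ are $\geq(d_0,n_0,s_0,r_0,l_0)$. Since $X_{\geq\ii}=B\cup A$ with $B$ closed, $\overline{X_{\geq\ii}}=B\cup\overline A$, so it suffices to prove $\overline A\subseteq B\cup A$.

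Let $a\in\overline A$. As $\overline A\subseteq X_{\geq(o_0,c_0)}$ and the latter is closed, $(o_\X(a),c_\X(a))\geq(o_0,c_0)$; if the inequality is strict then $a\in B$, so assume $(o_\X(a),c_\X(a))=(o_0,c_0)=:(o,c)$. Here I would use that for $(o,c)>(1,1)$ the locus $X_{\geq(o,c)}$ has dimension at most one: for $o\geq2$ it lies in $\Sing(X)$ by Proposition~\ref{order_basic_properties}(2), and for $o=1,\ c\geq2$ every point of it lies, by Proposition~\ref{order_basic_properties}(3), on one of the finitely many intersection curves $X\cap D$ with $D$ a component of $E_{>o}$ (these are curves because $X$ is an irreducible surface and $X\not\subseteq E$). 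Hence $\overline A$ is a finite union of closed points and irreducible curves. If $a$ is isolated in $\overline A$, then $\{a\}$ is open in $\overline A$, and since $A$ is dense in $\overline A$ this forces $a\in A$. Otherwise $a$ lies on an irreducible curve $C$ that is a one‑dimensional component of $\overline A$; then $A\cap C$ is dense in $C$, hence cofinite among the closed points of $C$, and upper semicontinuity of $(o_\X,c_\X)$ forces the generic point $\xi$ of $C$ to satisfy $(o_\X(\xi),c_\X(\xi))=(o,c)>(1,1)$. Proposition~\ref{key_proposition_for_usc}(2) now applies to $C$: it produces $(r_\xi,l_\xi)\in\N^2$ with $\iv_\X(b)=(o,c,0,0,0,r_\xi,l_\xi)$ for all but finitely many closed points $b\in C$. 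Intersecting with the cofinite set $A\cap C$ and using the definition of $A$ gives $(0,0,0,r_\xi,l_\xi)\geq(d_0,n_0,s_0,r_0,l_0)$, whence $d_0=n_0=s_0=0$ and $(r_0,l_0)\leq(r_\xi,l_\xi)$.

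It remains to show $\iv_\X(a)\geq\ii$, which now amounts to $\iv_\X(a)\geq(o,c,0,0,0,r_0,l_0)$. If $\X$ is in a terminal case at $a$, then $\iv_\X(a)=(o,c,0,0,0,r,l)$ with $(r,l)$ the combinatorial pair at $a$, and by Proposition~\ref{key_proposition_for_usc}(3) this $(r,l)$ is the maximum of the generic combinatorial pairs of all curves through $a$ whose $(o_\X,c_\X)$‑value at the generic point is $(o,c)$; since $C$ is one such curve, $(r,l)\geq(r_\xi,l_\xi)\geq(r_0,l_0)$ and $\iv_\X(a)\geq\ii$. If $\X$ is \emph{not} in a terminal case at $a$, then by Lemma~\ref{inv_finite} $\iv_\X(a)=(o,c,d_\F,n_\F,s_\F,0,0)$ for a maximizing flag $\F$ with $d_\F\geq1$, so $\iv_\X(a)\geq(o,c,1,0,0,0,0)>\ii$ because $d_0=0$. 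In either case $a\in A$, completing the proof. I expect the only real subtlety in this assembly to be the lexicographic bookkeeping — in particular the passage from ``$A$ is dense in a curve $C$'' to ``$C$ carries a single well‑defined generic combinatorial pair and $X$ is generically in a terminal case along $C$'' — since all the genuine content, especially the local analysis along a curve carried out in Section~\ref{section_global_local_expansion}, is already packaged into Proposition~\ref{key_proposition_for_usc}, with Lemma~\ref{inv_finite} handling the non‑terminal points.
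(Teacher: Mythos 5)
Your proposal is correct, and it relies on exactly the same core ingredients as the paper's proof: the reduction to the one-dimensional strata $X_{\geq(o,c)}$ and Proposition~\ref{key_proposition_for_usc}. The only genuine difference is in the packaging. The paper argues directly, splitting into the cases $(d,n,s)>(0,0,0)$ (where Corollary~\ref{only_finitely_many_points_with_d>0} gives discreteness) and $(d,n,s)=(0,0,0)$ (where $X_{\geq\ii}$ is described as an explicit finite union of $X_{\geq(o,c+1)}$, curves with large generic combinatorial pair, and a discrete set). You instead set $B=\{(o_\X,c_\X)>(o_0,c_0)\}$, $A=X_{\geq\ii}\setminus B$, and reduce closedness to the single claim $\overline A\subseteq B\cup A$, which you verify by examining an arbitrary limit point $a$. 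This closure argument is a little more uniform: it subsumes Corollary~\ref{only_finitely_many_points_with_d>0} (you effectively reprove it in passing, since a $1$-dimensional component $C$ of $\overline A$ forces $(d_0,n_0,s_0)=(0,0,0)$ via Proposition~\ref{key_proposition_for_usc}(2)), it handles the reduction from all scheme points to closed points explicitly, and it makes the role of Lemma~\ref{inv_finite} in the non-terminal endpoint case visible rather than implicit. The paper's version is shorter because it leaves that bookkeeping to the reader; yours trades a few extra lines for making the lexicographic inequalities and the specialization argument completely explicit. Both are valid proofs of the same theorem by the same method.
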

\begin{proof}
 Set $\ii=(o,c,d,n,s,r,l)$. 
 Since $X_{\geq(o,c)}$ and $X$ is assumed to be irreducible, either $X_{\geq(o,c)}=X$ holds or $X_{\geq(o,c)}$ is at most $1$-dimensional. By Lemma \ref{c=1_measures_resolved}, $X_{\geq\ii}=X$ is equivalent to $(o,c)\leq(1,1)$ and $X_{\geq(o,c)}=X$.
 
 So assume that $(o,c)>(1,1)$ and $X_{\geq(o,c)}$ is at most $1$-dimensional. If $(d,n,s)>(0,0,0)$, we know by Corollary \ref{only_finitely_many_points_with_d>0} that $X_{\geq\ii}=X_{\geq(o,c+1)}\cup Y$ for a discrete set $Y$. Hence, $X_{\geq\ii}$ is closed.
 
 Now assume that $(d,n,s)=(0,0,0)$. Let $C$ be a $1$-dimensional component of $X_{\geq(o,c)}$ that is not entirely contained in $X_{\geq(o,c+1)}$. By Theorem \ref{order_is_usc} this implies for the generic point $\xi$ of $C$ that $(o_\X(\xi),c_\X(\xi))=(o,c)$. By Proposition \ref{key_proposition_for_usc} there is a pair $(r_\xi,l_\xi)\in\N^2$ such that $\iv_\X(a)=(o,c,0,0,0,r_\xi,l_\xi)$ for all but finitely many points $a\in C$ and $\iv_\X(a)\geq (o,c,0,0,0,r_\xi,l_\xi)$ at all points $a\in C$. Thus, $X_{\geq\ii}$ is the union of $X_{\geq(o,c+1)}$, all curves $C$ as above with $(r_\xi,l_\xi)\geq (r,l)$ and a discrete set $Y$. Hence, $X_{\geq\ii}$ is closed.
\end{proof}

\begin{keytheorem} \label{top_inv_is_permissible}
Let $\X=(W,X,E)$ be a $3$-dimensional resolution setting that is not already resolved. Set $\iv_{\max}=\max_{a\in X}\iv_\X(a)$ and 
\[X_{\max}=\{a\in X:\iv_\X(a)=\iv_{\max}\}.\]
Then $X_{\max}$ is a permissible center of blowup for $\X$.
\end{keytheorem}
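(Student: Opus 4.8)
The plan is to deduce everything from Proposition~\ref{key_proposition_for_usc}. Write $\iv_{\max}=(o,c,d,n,s,r,l)$. Since $\X$ is not resolved, at some point $\X$ fails to be resolved, so Lemma~\ref{c=1_measures_resolved} gives $(o,c)>(1,1)$; moreover $o=\max_{a\in X}o_\X(a)$ is the top order of $X$, whence $X_{\max}\subseteq\{a\in X:\ord_aX=o\}=\topp(X)$. Thus, by the definition of a permissible center for $\X$, it remains to show that $X_{\max}$ is regular and that $X_{\max}\cup E$ has simple normal crossings.

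First I would reduce to the case $\dim X_{\max}\leq 1$. If $X_{\max}=X$, then $\iv_\X$ is the constant $\iv_{\max}$; if $o\geq 2$ this forces $\Sing(X)=X$, impossible since $X$ is a reduced irreducible hypersurface, while if $o=1$ then $X$ is regular and the constant value $c$ is $\geq 2$ (otherwise $\X$ is resolved at every point by Lemma~\ref{c=1_measures_resolved}), which forces $\ord_a(X\cup E_{>1})\geq 2$ at every $a\in X$, hence $a\in E_{>1}$ for all $a\in X$, contradicting $X\not\subseteq E$. Since $X$ is irreducible of dimension $2$, the proper closed subset $X_{\max}$ therefore has dimension at most $1$.

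Next I would treat the $1$-dimensional components. Let $C$ be such a component, with generic point $\xi$. As $C\subseteq X_{\max}$, the invariant is constant equal to $\iv_{\max}$ along $C$, so $\iv_\X(\xi)=\iv_{\max}$ and $(o_\X(a),c_\X(a))=(o,c)$ for every closed point $a\in C$. Proposition~\ref{key_proposition_for_usc}(2) supplies a generic combinatorial pair $(r_\xi,l_\xi)$ with $\iv_\X(b)=(o,c,0,0,0,r_\xi,l_\xi)$ for all but finitely many $b\in C$; comparison with the constant value $\iv_{\max}$ yields $(d,n,s)=(0,0,0)$ and $(r,l)=(r_\xi,l_\xi)$. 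By the definition of $\iv_\X$ together with Lemma~\ref{inv_finite} (which gives $d_\F>0$ for a maximizing flag whenever $\X$ is not in a terminal case), the vanishing of the triple $(d,n,s)$ at a point with $(o,c)>(1,1)$ forces $\X$ to be in a terminal case there; hence $\X$ is in a terminal case at every closed point of $C$, and Proposition~\ref{key_proposition_for_usc}(1) shows that $C$ is regular and $C\cup E$ has simple normal crossings at each of its closed points.

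Finally I would establish disjointness and conclude. If two distinct $1$-dimensional components $C,C'$ of $X_{\max}$ met at a closed point $a$, then by the previous paragraph $\X$ is in a terminal case at $a$ and, by the same comparison, the generic combinatorial pairs of $C$ and $C'$ both equal $(r,l)$; but $C'$ is another curve through $a$ whose generic point $\zeta$ satisfies $(o_\X(\zeta),c_\X(\zeta))=(o,c)$, so Proposition~\ref{key_proposition_for_usc}(3) forces its generic combinatorial pair to differ from that of $C$ --- a contradiction. Hence the $1$-dimensional components of $X_{\max}$ are pairwise disjoint regular curves, and they do not meet its $0$-dimensional components (which are single closed points, trivially regular); since a scheme whose irreducible components are individually regular and pairwise disjoint is regular, $X_{\max}$ is regular, and being contained in $\topp(X)$ it is a permissible center for $X$ with respect to the order function. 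That $X_{\max}\cup E$ has simple normal crossings then follows point by point: on a curve component it is Proposition~\ref{key_proposition_for_usc}(1); at an isolated point $a$ of $X_{\max}$ it holds because $E$ is already simple normal crossings at $a$ and $X_{\max}$ is locally $\{a\}$ there; and away from $X_{\max}$ the union is just $E$. The bulk of the work has already been carried out in Proposition~\ref{key_proposition_for_usc}; the one genuinely delicate point remaining is the disjointness argument, which hinges on the uniqueness of generic combinatorial pairs asserted in Proposition~\ref{key_proposition_for_usc}(3).
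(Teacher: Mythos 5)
Your proof is correct and takes essentially the same route as the paper: both rely on Proposition~\ref{key_proposition_for_usc} to show that any one-dimensional component of $X_{\max}$ is a regular curve with simple normal crossings with $E$, and on its part (3) to derive that such components are pairwise disjoint. The paper is terser—it first disposes of the case $(d,n,s)>(0,0,0)$ by citing Corollary~\ref{only_finitely_many_points_with_d>0} (discreteness of the non-terminal locus) and then invokes Proposition~\ref{key_proposition_for_usc} for the remaining terminal case—whereas you derive $(d,n,s)=(0,0,0)$ along any curve component directly from part (2) of that proposition, spelling out the intermediate steps (including the reduction $X_{\max}\subsetneq X$ and the terminal-case deduction via Lemma~\ref{inv_finite}) that the paper leaves implicit.
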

\begin{proof}
 Set $\iv_{\max}=(o,c,d,n,s,m,l)$. By Lemma \ref{c=1_measures_resolved}, we can assume that $(o,c)>(1,1)$. By Theorem \ref{inv_is_usc} the set $X_{\max}$ is closed. If $(d,n,s)>(0,0,0)$, we know by Corollary \ref{only_finitely_many_points_with_d>0} that $X_{\max}$ is a discrete set. Hence, the assertion is trivial in this case.
 
 Now consider the case that $(d,n,s)=(0,0,0)$. By Proposition \ref{key_proposition_for_usc}, the set $X_{\max}$ is the union of a discrete set $Y$ and finitely many mutually disjoint regular curves $C$ with the property that $C\cup E$ has simple normal crossings. This proves the assertion.
\end{proof}

\chapter{Lexicographical decrease of $\iv_\X$ under blowup of its top locus} \label{chapter_decrease}

 In the previous chapter we established that set $\Xmax$ of points $a\in X$ at which the invariant $\iv_\X$ assumes its maximal value constitutes a permissible center of blowup. In this chapter we will show that blowing up this center always makes $\iv_\X$ decrease until $\X$ is resolved. This will complete the proof that $\iv_\X$ fulfills all the properties that were required for a resolution invariant in Section \ref{section_resolution_via_usc_inv}.
 
 The proof for showing that $\iv_\X$ decreases under blowup of its top locus is fundamentally different according to whether $\X$ is already in a terminal case along $\Xmax$ or not. For the terminal cases, it is only necessary to prove that the combinatorial pair decreases, which is quite straightforward to verify. In the non-terminal case, the proof is much more involved and in fact, most of the techniques that were developed in the thesis will be used for this proof. The arguments for the proof of Proposition \ref{inv_drops_in_non_terminal_case} were sketched in Section \ref{section_modifying_the_residual_order}. The techniques that will be used are mainly the cleaning techniques from Chapter 5, the results of Section \ref{section_weighted_orders_under_blowup} and Section \ref{section_kangaroo_calc} and the methods to construct maximizing flags we developed in Section \ref{section_maximizing_flags}.

\section{Decrease of $\iv_\X$ under point-blowup in the non-terminal case} \label{section_inv_drops_for_d>0}

 We consider the following setting in this section: Let $\X=(W,X,E)$ be a $3$-dimensional resolution setting and $a\in X$ a closed point with $c\Xa>1$. Consider the blowup $\pi:W'\to W$ at the point $a$ and let $\X'=(W',X',E')$ be the induced resolution setting. Set $\Dnew=\pi^{-1}(a)$. Further, let $a'\in\pi^{-1}(a)\cap X'$ be a closed point lying over $a$. We will denote the components of $\ivXa$ by $o,c,\ldots $ and the components of $\ivXap$ by $o',c',\ldots$
 
 In the following, we will always assume that $(o',c')=(o,c)$, $d>0$ and $d'>0$ hold. Thus, neither is $\X$ in a terminal case at $a$ nor is $\X'$ in a terminal case at $a'$ by Lemma \ref{inv_finite}.
 
 The goal of this section is to prove that $\ivXap<\ivXa$ holds. To this end, we need to show that for each valid flag $\G\in\FF(a')$ there exists a valid flag $\F\in\FF(a)$ such that $\inv(\G)<\inv(\F)$ holds.

\begin{definition}
 Let $\F\in\FF(a)$ be a formal flag at $a$ with the property that the point $a'$ lies on the strict transform of $\F_1$. Then we define the \emph{induced flag $\F'$ at $a'$} as the formal flag that consists of the strict transform $\F_2'$ of $\F_2$ and of the strict transform $\F_1'$ of $\F_1$.  
\end{definition}

 The following result guarantees that in almost all cases $\inv(\F')<\inv(\F)$ holds for the induced flag $\F'$.

\begin{proposition} \label{flag_invs_under_blowup}
Let $\F\in\FF(a)$ be a flag with subordinate parameters $\x=(x,y,z)$ such that $a'$ lies on $\F_1'$.
\begin{enumerate}[(1)]
 \item The induced flag $\F'$ is compatible with $E'$. In other words, $\F'\in\FF(a')$.
 \item The point $a'$ is the origin of the $x$-chart and the induced parameters $\x'=(x',y',z')$ for $\hOWap$ are subordinate to the induced flag $\F'$.
 \item If $n_\F=0$, the following hold:
 \begin{itemize}
  \item $n_{\F'}=0$.
  \item $d_{\F'}\leq d_{\F}$.
  \item If $d_{\F'}=d_\F$ and $s_\F<\infty$, then $s_{\F'}<s_\F$.
 \end{itemize}
\item If $n_\F=1$, the following hold:
\begin{itemize}
 \item $n_{\F'}=0$.
 \item If $d_\F\neq-1$, then $d_{\F'}\leq d_\F$.
\end{itemize}
 \item If $n_\F>1$, the following hold:
 \begin{itemize}
  \item $n_{\F'}=n_\F-1$.
  \item If $n_{\F'}>1$, then $D_{\F'}=(D_\F)'$ where $(D_\F)'$ denotes the strict transform of $D_\F$.
  \item $d_{\F'}=d_\F$.
 \end{itemize}
\end{enumerate}
\end{proposition}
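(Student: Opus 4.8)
The plan is to work in the $x$-chart of the point blowup, fix coordinates subordinate to $\F$, and then deduce everything from the transformation formulas for coefficient ideals (Lemma~\ref{coeff_ideal_under_blowup}, Proposition~\ref{w_clean_stable_under_blowup}, Proposition~\ref{s_clean_stable_under_blowup}) and from the geometric analysis of $\Eg$-components (Lemma~\ref{form_of_associated_components}). \emph{Coordinates; parts (1) and (2).} I would fix a regular system of parameters $\x=(x,y,z)$ for $\hOWa$ subordinate to $\F$, so $\F_2=V(z)$ and $\F_1=V(y,z)$. The regular curve $\F_1$ has tangent direction $\ol x$ at $a$, hence its strict transform meets $\pi^{-1}(a)\cong\mathbb P^2$ only at the point $[1:0:0]$, i.e.\ at the origin of the $x$-chart; since $a'$ lies on $\F_1'$ this identifies $a'$ and shows that $\hOWa\to\hOWap$ is the monomial map $x\mapsto x'$, $y\mapsto x'y'$, $z\mapsto x'z'$. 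Then $\F_2'=V(z')$ and $\F_1'=V(y',z')$, which is (2). For (1) I would compute $\Egp$: $\Dnew$ is locally $V(x')$; the strict transform of a component $V(x+g(y,z))$ of $\Eg$ misses $a'$, since $[1:0:0]$ is not in its tangent cone; and the strict transform of $V(y)$, when $V(y)\subseteq\Eg$, is $V(y')$. Thus $\Egp\subseteq V(x'y')$, so $\F_2'=V(z')$ has simple normal crossings with $\Egp$ and is not contained in it, i.e.\ $\F'\in\FF(a')$.

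\emph{Part (3): $n_\F=0$.} Here $\Eg\subseteq V(xy)$, and after the blowup $\F_1'\cup(\F_2'\cap\Egp)$ is a union of coordinate curves in $V(z')$, so $n_{\F'}=0$. Using the hypothesis $(o',c')=(o,c)$ I would identify $I_3(a')$ with the weak transform of $I_3(a)$ up to an exceptional factor $x'$ (present exactly when $\age(\Dnew)>o$, in which case one component of $E_{>o}$ is lost and the monomial part $M_{2,\x'}$ does not see $\Dnew$); then Lemma~\ref{coeff_ideal_under_blowup} together with the weighted-order equalities of Proposition~\ref{w_clean_stable_under_blowup}(1) shows that $M_{2,\x'}$ is the transform of $M_{2,\x}$ and that $I_{2,\x'}$ contains the weak transform of $I_{2,\x}$, whence $d_{\F'}\le d_\F$ by Proposition~\ref{ord_under_blowup}. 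If $d_{\F'}=d_\F$, the weak transform of $I_{2,\x}$ is equiconstant, and Proposition~\ref{s_clean_stable_under_blowup}(1) (when $d_\F\ge c!$), respectively Lemma~\ref{coeff_ideal_of_powers} applied to $I_{2,\x}^{c!-d_\F}+M_{2,\x}^{d_\F}$ together with the transform of $M_{2,\x}$ under $\pi$ (when $0<d_\F<c!$), gives $s_{\F'}\le s_\F-d!<s_\F$ provided $s_\F<\infty$.

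\emph{Parts (4) and (5): $n_\F\ge 1$.} For $n_\F>0$ the pair $(m_{\F,\x},d_{\F,\x})$ equals $\y_{\F,\x}(J_{2,\x})$ for the weighted order function with $\y_{\F,\x}(x)=(1,0)$, $\y_{\F,\x}(y)=(n_\F,1)$ (Lemma~\ref{double_weighted order function}), and under the monomial blowup its induced weighted order is exactly the function $\y_{\F',\x'}$ with values $(1,0)$ and $(n_\F-1,1)$; Proposition~\ref{w_clean_stable_under_blowup}(1) then yields $(m_{\F',\x'},d_{\F',\x'})=(m_{\F,\x}-c!,\,d_{\F,\x})$. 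A short check gives $c!\mid m_\F\iff c!\mid m_{\F'}$, so $d_{\F'}=d_\F$ when $n_\F>1$, while for $n_\F=1$ (where $\y_{\F,\x}(y)=(1,1)$) the induced data degenerates precisely to that computing the residual order of $\F_2'$, giving $d_{\F'}\le d_\F$. For the multiplicities I would apply Lemma~\ref{form_of_associated_components}: when $n_\F>1$, $D_\F=V(y+Q(x)+zG)$ with $\ord Q=n_\F$, so its strict transform is $V(y'+(x')^{n_\F-1}u+z'G')$ with $u$ a unit and $\mult_{a'}(\F_1',\F_2'\cap D_\F')=n_\F-1$; every other component of $\Eg$ has intersection multiplicity $1$ with $\F_1$ and its strict transform misses $a'$, so the only components of $\Egp$ meeting $\F_1'$ are $D_\F'$ (multiplicity $n_\F-1$) and, when $\age(\Dnew)=o$, $\Dnew$ (multiplicity $1$); hence $n_{\F'}=n_\F-1$ and $D_{\F'}=(D_\F)'$ when $n_\F-1>1$. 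When $n_\F=1$ both $\Eg$-components are lost and the configuration at $a'$ becomes simple normal crossings, so $n_{\F'}=0$.

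\emph{Main obstacle.} The delicate part is not any one computation but the exceptional bookkeeping: deciding which components lie in $\Egp$ — in particular whether $\Dnew$ counts, which hinges on whether $\age(\Dnew)=\max_{b\in X}\ord_b X$ equals $o$ — and propagating this through the definitions of $I_3(a')$, $M_{2,\x'}$ and the threshold cases ($d_\F=-1$, $m_{\F,\x}<n_\F c!$) so that the stated (in)equalities come out uniformly in every sub-case. I would therefore organize the whole proof as a case distinction on $n_\F\in\{0,1,\ge 2\}$ and, within each, on the age of $\Dnew$.
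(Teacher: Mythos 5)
Your plan follows essentially the same route as the paper: pass to the $x$-chart (identified via the tangent direction $\ol x$ of $\F_1$), apply the transformation law for weighted orders of coefficient ideals (Proposition~\ref{w_clean_stable_under_blowup}), and read off the multiplicity changes from the explicit form of the $\Eg$-components (Lemma~\ref{form_of_associated_components}). Two remarks.

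First, the $n_\F=1$ case is the one place where a genuine step is missing. You note that the induced weighted order $\y'$ has $\y'(x')=(1,0)$, $\y'(y')=(0,1)$ and claim the ``induced data degenerates to that computing the residual order, giving $d_{\F'}\le d_\F$''. But $\y'(J_{2,\x'})=\bigl(\ord_{(x')}J_{2,\x'},\ \ord_{(y')}\minit_{(x')}(J_{2,\x'})\bigr)$ does not by itself give $d_{\F'}=\ord J_{2,\x'}-\ord_{(x')}J_{2,\x'}$; you still need $\ord J_{2,\x'}$, together with the inequality $\ord J_{2,\x'}\le \ord_{(x')}J_{2,\x'}+\ord_{(y')}\minit_{(x')}(J_{2,\x'})$ (pick a monomial of minimal order in $\minit_{(x')}(J_{2,\x'})$ to see this). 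The paper avoids this by also transporting the auxiliary weighted order $\eta(x)=1$, $\eta(y)=2$, whose induced order in the $x$-chart is exactly $\ord$; this gives $\ord J_{2,\x'}=\eta(J_{2,\x})-c!$ and $\ord_{(x')}J_{2,\x'}=\ord J_{2,\x}-c!$ directly, and reduces the claim to the one-line inequality $\eta(J_{2,\x})-\ord J_{2,\x}\le \ord_{(y)}\minit(J_{2,\x})=d_{\F,\x}$.

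Second, the ``exceptional bookkeeping'' you flag as the main obstacle is not an issue in the paper's treatment. Since $o'=o$ (and, in the intended application to the top locus, $a$ realizes the maximal order of $X$), one has $\age(\Dnew)=o$, so $\Egp=(\Eg)^{\mathrm{st}}\cup\Dnew$ with no case distinction, and $I_3(a')$ never acquires an extra exceptional factor in this proposition. Your explicit observation that $m_{\F'}=m_\F-c!$ and hence $c!\mid m_\F\iff c!\mid m_{\F'}$ is a useful addition; the paper leaves this verification implicit when concluding $d_{\F'}=d_\F$ in the $n_\F>1$ case.
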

\begin{proof}
 Since $o'=o$, we know that 
 \[\Egp=(\Eg)\st\cup\Dnew\]
 where $(\Eg)\st$ denotes the strict transform of $\Eg$. Since $\F_2\cup\Eg$ has simple normal crossings, it is clear that $\F_2'\cup\Egp$ again has simple normal crossings. Since $\F_2'$ is defined as the strict transform of $\F_2$, it is also clear that $\F_2'\not\subseteq\Egp$. Thus, $\F'$ is compatible with $E'$.
 
 For simplicity, we will denote by abuse of notation the induced parameters as $\x'=(x,y,z)$. Since $\F_1=V(y,z)$ and $a'\in\F_1'$, it is clear that $a'$ is the origin of the $x$-chart and $\Dnew=V(x)$. Further, $\F_2'=V(z)$ and $\F_1'=V(z,y)$.
 
 Consider first the case that $n_\F=0$. Then $\Eg\subseteq V(xy)$ and consequently, $\Egp\subseteq V(xy)$. Thus, $n_{\F'}=0$ and the parameters $\x'$ are subordinate to $\F'$. Moreover, it follows from Proposition \ref{w_clean_stable_under_blowup} (1) that the ideal $I_{2,\x'}(a')$ has the same order as the weak transform of $I_{2,\x}(a)$. This proves that $d_{\F'}\leq d_\F$ by Proposition \ref{ord_under_blowup}.
 
 Now consider the case that $d_{\F'}=d_\F$ and $s_\F<\infty$. Set $d=d_\F$. If $d\geq c!$, then $s_{\F'}=s_\F-d!<s_\F$ by Proposition \ref{s_clean_stable_under_blowup} (1). Now consider the case $0<d<c!$. Let $M_{2,\x}(a)=(x^{r_x}y^{r_y})$. By Proposition \ref{w_clean_stable_under_blowup} (1) we know that 
 \[M_{2,\x'}(a')=(x^{r_x+r_y+d-c!}y^{r_y}).\]
 Set 
 \[P_{2,\x}(a)=I_{2,\x}(a)^{c!-d}+M_{2,\x}(a)^d,\]
 \[P_{2,\x'}(a')=I_{2,\x'}(a')^{c!-d}+M_{2,\x'}(a')^d.\]
 Notice that $\ord P_{2,\x}(a)=d(c-d!)$ since $\ord M_{2,\x}(a)\geq c-d!$ by Lemma \ref{ogeqc!}. Denote for an ideal $I\subseteq K[[x,y]]$ its total transform by $I^*$. Then we can compute with Lemma \ref{coeff_ideal_under_blowup} that
 \[x^{-d(c!-d)}\cdot P_{2,\x}(a)^*=x^{-d(c!-d)}\cdot(I_{2,\x}(a)^*)^{c!-d}+x^{-d(c!-d)}\cdot(x^{r_x+r_y}y^{r_y})^d\]
 \[=(x^{-d}\cdot I_{2,\x}(a)^*)^{c!-d}+M_{2,\x'}(a')^{d}\subseteq P_{2,\x'}(a').\]
 Hence, $P_{2,\x'}(a')$ contains the weak transform of $P_{2,\x}(a)$. By Proposition \ref{w_clean_stable_under_blowup} (1) this proves that
 \[s_{\F'}=\ord\coeff_{(x,y)}^{d(c!-d)}(P_{2,\x'}(a'))\]
 \[\leq \ord\coeff_{(x,y)}^{d(c!-d)}(P_{2,\x}(a))-(d(c!-d))!=s_\F-(d(c!-d))!<s_\F.\]
 
 Now consider the case $n_\F=1$. Let $D$ be a component of $\Eg$. By Lemma \ref{form_of_associated_components} (1), $D=V(x+g)$ for an element $g\in K[[y,z]]$ with $\ord g\geq1$. Thus, the strict transform $D'$ of $D$ has the form
 \[D=V(x^{-1}(x+g(xy,yz))=V(1+g')\]
 for some element $g'\in\hOWap$ with $\ord g'\geq1$. Consequently, $a'\notin D'$. Thus, $\Egp=V(x)$. This proves that $n_{\F'}=0$ and the parameters $\x'$ are subordinate to $\F'$. 
 
 Let $\eta:K[[x,y]]\to\N_\infty$ be the weighted order function defined by $\eta(x)=1$ and $\eta(y)=2$. Then we can compute with Proposition \ref{w_clean_stable_under_blowup} (1) that
 \[d_{\F'}=\ord J_{2,\x'}(a')-\ord_{x}J_{2,\x'}(a')\]
 \[=(\eta(J_{2,\x}(a))-c!)-(\ord J_{2,\x}(a)-c!)=\eta(J_{2,\x}(a))-\ord J_{2,\x}(a).\]
 It remains to show that $\eta(J_{2,\x}(a))-\ord J_{2,\x}(a)\leq \ord_{(y)}\minit(J_{2,\x}(a))$. It is clear that there appears a term $x^iy^j$ with non-zero coefficient in the expansion of an element of $J_{2,\x}(a)$ such that $i+j=\ord J_{2,\x}(a)$ and $j=\ord_{(y)}\minit(J_{2,\x}(a))$. Trivially, also $i+2j\geq \eta(J_{2,\x}(a))$ holds. Thus,
 \[\ord_{(y)}\minit(J_{2,\x}(a))=j=\underbrace{(i+2j)}_{\geq \eta(J_{2,\x}(a))}-\underbrace{(i+j)}_{=\ord J_{2,\x}(a)}\geq\eta(J_{2,\x}(a))-\ord J_{2,\x}(a).\]
 If $d_\F\neq-1$, this proves that $d_{\F'}\leq d_\F$.
 
 Finally, consider the case $n_\F>1$. By Lemma \ref{form_of_associated_components} (2) the associated component $D_\F$ of $\F$ has the form 
 \[D_\F=V(y+Q(x)+zG)\]
 for elements $Q\in K[[x]]$ with $\ord Q=n_\F$ and $G\in\hOWa$. Hence, the strict transform $(D_\F)'$ contains $a'$ and has the form 
 \[(D_\F)'=V(y+Q'(x)+z\wt G)\]
 where $Q'(x)=x^{-1}Q(x)$ and $\wt G\in\hOWap$. Since $\Dnew=V(x)$ and $\ord Q'\geq1$, it is clear that the union of 
 \[\F_1'=V(y,z)\text{ and }\F_2'\cap \Egp=V(x\cdot(y+Q'(x)),z)\]
 is not simple normal crossings. Hence, $n_{\F'}>0$. Since
 \[\mult_{a'}(\F_1',\F_2'\cap (D_\F)')=\ord Q'=\ord Q-1=n_\F-1\]
 and $\mult_{a'}(\F_1',\F_2'\cap\Dnew)=1$, it is clear that $n_{\F'}=n_\F-1$ and $D_{\F'}=(D_\F)'$.
 
 Further, we can compute by Proposition \ref{w_clean_stable_under_blowup} (1) that
 \[(m_{\F',\x'},d_{\F',\x'})=\y_{\F',\x'}(J_{2,\x'}(a'))=\y_{\F,\x}(J_{2,\x}(a))-(c!,0)=(m_{\F,\x}-c!,d_{\F,\x}).\]
 This proves that $d_{\F'}=d_\F$.
\end{proof}

 There are two kinds of flags $\G\in\FF(a')$ which cannot be induced by flags $\F\in\FF(a)$. These are on the one hand, flags $\G\in\FF(a')$ with $n_\G>1$ and $D_\G=\Dnew$ and on the other hand, flags $\G\in\FF(a')$ with $n_\G=0$ and $\G_1=\G_2\cap\Dnew$. To bound $\inv(\G)$ for flags with $n_\G>1$ and $D_\G=\Dnew$, we will use the special version of Moh's bound that we developed in Section \ref{section_kangaroo_calc}. On the other hand, flags with $n_\G=0$ and $\G_1=\G_2\cap\Dnew$ can be ignored due to the following lemma:

\begin{lemma} \label{forget_the_x_flag}
 In the setting of the previous proposition, let $\F\in\FF(a)$ be such that $n_\F=0$ and assume that $d_{\F'}=d_\F>0$. Call the induced parameters in the $x$-chart again $\x'=(x,y,z)$. Let $f'\in I_3(a')$ be an element that is $\ord$-clean, $\ord_{(x)}$-clean, $\ord_{(y)}$-clean and $\rho$-clean with respect to $J_{2,\x}(a')$, where the weighted order function $\rho:K[[x,y]]\to\Ni^2$ is defined via $\rho(x)=(1,1)$ and $\rho(y)=(1,0)$. 
 
 Let $\G\in\FF(a')$ be a flag of the form $\G_2=V(z+g)$, $\G_1=V(z+g,x)$ for some element $g\in K[[x,y]]$. Hence, $\G_2\cap\Dnew=\G_1$. If $\G$ is valid, then $\inv(\G)\leq\inv(\F')$.
\end{lemma}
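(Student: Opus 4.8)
The plan is to deduce the statement from the validity and maximization properties of flags with $n=0$ (Proposition \ref{cleaning_transversal_flags}) together with the coordinate-independence and $\ord$-cleaning results for second coefficient ideals (Sections \ref{section_invariance_of_slope} and \ref{section_s_cleaning}).

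First I would note that $\G$ has $n_\G=0$ and is comparable to $\F'$. By Proposition \ref{flag_invs_under_blowup} the point $a'$ is the origin of the $x$-chart, so $\Dnew=V(x)$, and since $o'=o$ and $n_\F=0$ we have $\Egp\subseteq V(xy)$ with $V(x)=\Dnew\subseteq\Egp$. Hence $\G_1=\G_2\cap\Dnew\subseteq\G_2\cap\Egp$, so $\G_1\cup(\G_2\cap\Egp)=\G_2\cap\Egp$ has simple normal crossings and $n_\G=0$; as $n_{\F'}=0$ as well, $\G$ and $\F'$ are comparable. Since $\x'=(x,y,z)$ is subordinate to $\F'$ and $f'$ is $\ord$-clean, $\ord_{(x)}$-clean and $\ord_{(y)}$-clean with respect to $J_{2,\x'}(a')$, Proposition \ref{cleaning_transversal_flags}(1) shows $\F'$ is valid, and Proposition \ref{cleaning_transversal_flags}(2), applied to the valid flag $\G$ comparable to $\F'$, yields $d_\G\le d_{\F'}$. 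Since $\inv(\G)=(d_\G,0,s_\G)$ and $\inv(\F')=(d_{\F'},0,s_{\F'})$, we are done as soon as $d_\G<d_{\F'}$, so the remaining case is $d_\G=d_{\F'}=:d$, where I must show $s_\G\le s_{\F'}$.

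In that case I would choose subordinate parameters $\x_1=(y,x,z+g)$ for $\G$; note that the roles of $x$ and $y$ are interchanged relative to $\x'$, and that $\x_1$ differs from a parameter system subordinate to the flag $\G''$ given by $\G''_2=V(z)$, $\G''_1=V(z,x)$ only by the hypersurface-moving change $z\mapsto z+g$ with $g\in K[[x,y]]$. Because $f'$ is $\ord$-clean, $\ord_{(x)}$-clean and $\ord_{(y)}$-clean with respect to $J_{2,\x'}(a')$, Proposition \ref{w_cleaning_maximizes_w} shows that the total order, the $x$-order and the $y$-order of $\coeff^c_{(x,y,z+g)}(I_3(a'))=J_{2,\x_1}(a')$ do not exceed the corresponding quantities for $J_{2,\x'}(a')$, and Lemma \ref{coord_changes_which_preserve_w_clean} shows that these cleanness properties are retained after the change. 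Feeding this into the formulas $d_\G=\ord J_{2,\x_1}(a')-\ord_{(x)}J_{2,\x_1}(a')-s_y$ and $d_{\F'}=\ord J_{2,\x'}(a')-r_x-r_y$ (with $(r_x,r_y)$ the exponent pair of $M_{2,\x'}(a')$ and $s_y$ the analogous $y$-exponent for $\x_1$), and using $\ord I\ge\ord_{(x)}I+\ord_{(y)}I$, the equality $d_\G=d_{\F'}$ constrains the residual ideals $I_{2,\x_1}(a')$ and $I_{2,\x'}(a')$: both have order $d$ and vanishing $x$- and $y$-orders, and both arise from $I_3(a')$ by taking coefficient ideals with respect to $V(z+g)$, resp. $V(z)$, and removing the (same, up to the normalization just made) exceptional monomial.

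The final step then compares the two second coefficient ideals. Both $s_{\F'}$ and $s_\G$ are orders of second coefficient ideals of $I_3(a')$; passing from $\F'$ to $\G$ moves the hypersurface by an element of $K[[x,y]]$ and, in the residual ideal, exchanges the transversal parameter of the curve from $y$ to the exceptional parameter $x=\Dnew$. Using the coordinate-independence of the order of the second coefficient ideal under the changes stabilizing the monomial and the geometric data (Proposition \ref{slope_coord_indep}) and the $\ord$-cleaning maximization of Section \ref{section_s_cleaning} (Proposition \ref{s_cleaning_maximizes_slope}), one normalizes $f'$ by an $\ord$-cleaning and is left with bounding the order of the second coefficient ideal along the parameter $x$. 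This is exactly what the tailored weighted order $\rho=(\ord,\ord_{(x)})$ is for: $\rho$-cleanness of $f'$ controls the behaviour of the relevant orders along $\Dnew$ and yields $s_\G\le s_{\F'}$. I expect this last step to be the main obstacle: in Proposition \ref{cleaning_transversal_flags}(3) the competing flag's curve is the "$y$-curve" $V(z+g,y)$ and secondary $\ord$-cleanness of $f'$ suffices, whereas here the curve is the "$x$-curve" $\G_2\cap\Dnew$, so that argument does not transfer verbatim; one must instead exploit the asymmetry of the exceptional configuration (the parameter $x$ defining $\Dnew$ always lies in $E'$, the parameter $y$ not necessarily) through $\rho$, and check the remaining bookkeeping cases ($V(y)\subseteq\Egp$ versus $V(y)\not\subseteq\Egp$, and $d\ge c!$ versus $0<d<c!$ in the definition of the second coefficient ideal).
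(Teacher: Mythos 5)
Your setup is correct and matches the paper's: you rightly observe $n_\G=0$, $\G$ comparable to $\F'$, $\F'$ valid by Proposition~\ref{cleaning_transversal_flags}(1), and $d_\G\le d_{\F'}$ by~(2), reducing the problem to $s_\G\le s_{\F'}$ when $d_\G=d_{\F'}=:d$. You also correctly flag that Proposition~\ref{cleaning_transversal_flags}(3) and secondary $\ord$-cleanness do not transfer because $\G_1$ is the ``$x$-curve.'' But at exactly that point your argument stops: you write that ``$\rho$-cleanness of $f'$ controls the behaviour of the relevant orders along $\Dnew$ and yields $s_\G\le s_{\F'}$'' and then concede this is ``the main obstacle.'' That is a genuine gap, not merely a detail to check: no exchange or transfer argument is offered, and the attempt to bound $s_\G$ directly by $s_{\F'}$ via some coordinate change is not the mechanism the lemma actually rests on.

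The missing idea is a reduction to \emph{minimality}. Since $s_{\F'}\ge d!$ when $d\ge c!$ (resp.\ $s_{\F'}\ge (d(c!-d))!$ when $0<d<c!$) by Lemma~\ref{ogeqc!}, it suffices to show that $\ord\coeff^d_{(y,x)}(I_{2,\x_1}(a'))=d!$, the smallest possible value, rather than to compare $s_\G$ and $s_{\F'}$ head-on. Assume instead that this order exceeds $d!$. Then by Lemma~\ref{coeff_ideal_and_directrix} one has $\Dir(I_{2,\x_1}(a'))=(\ol x)$, equivalently $\minit(I_{2,\x_1}(a'))=(x^d)$, equivalently $\ord_{(x)}\minit(J_{2,\x_1}(a'))=r_x+d$. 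Now the role of $\rho=(\ord,\ord_{(x)})$ becomes sharp: by Lemma~\ref{double_weighted order function}, $\rho(J)=(\ord J,\ord_{(x)}\minit(J))$, so $\rho$-cleanness of $f'$ with respect to $J_{2,\x'}(a')$ and Proposition~\ref{w_cleaning_maximizes_w} give $\ord_{(x)}\minit(J_{2,\x_1}(a'))\le\ord_{(x)}\minit(J_{2,\x'}(a'))$, since $\ord J_{2,\x_1}(a')=\ord J_{2,\x'}(a')$. Finally, the constraint you never bring to bear is the directrix-under-blowup argument: since $I_{2,\x'}(a')$ has the same order $d$ as the weak transform of $I_{2,\x}(a)$ (Proposition~\ref{w_clean_stable_under_blowup}(1)) and $a'$ lies in the $x$-chart, Lemma~\ref{directrix_under_blowup} forces $\minit(I_{2,\x}(a))=(y^d)$; this passes to the transform and rules out $\minit(I_{2,\x'}(a'))=(x^d)$, i.e.\ $\ord_{(x)}\minit(J_{2,\x'}(a'))<r_x+d$, a contradiction. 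Without this directrix step and the minimality reduction, the appeal to $\rho$-cleanness alone cannot close the argument.
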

\begin{proof}
 Let $\G$ be a flag of above form. Clearly, $n_\G=0$. By Proposition \ref{flag_invs_under_blowup} we know $n_{\F'}=0$. Thus, we know by Proposition \ref{cleaning_transversal_flags} that the flag $\F'$ is valid and $d_\G\leq d_{\F'}$ holds. Thus, assume that $d_\G=d_{\F'}=d_\F$ holds. Set $d=d_\F$. It then remains to verify that $s_\G\leq s_{\F'}$ holds. By Lemma \ref{coeff_ideal_of_powers} we know that $s_\F\geq d!$ if $d\geq c!$ and $s_\F\geq (d(c!-d))!$ if $0<d<c!$. Thus, it suffices to show that 
 \[\coeff_{(x,y)}^d(I_{2,\x_1}(a'))=d!\]
 where the parameters $\x_1=(y,x,z+g)$ are subordinate to $\G$.
 
 Assume that $\ord \coeff_{(x,y)}^d(I_{2,\x_1}(a'))>d!$. By Lemma \ref{coeff_ideal_and_directrix}, this is equivalent to $\Dir(I_{2,\x_1}(a'))=(\ol x)$. In other words, $\minit(I_{2,\x_1}(a'))=(x^d)$. Let $J_{2,\x_1}(a')$ have the factorization 
 \[J_{2,\x_1}(a')=(x^{r_x}y^{r_y})\cdot I_{2,\x_1}(a').\]
 Then $\minit(I_{2,\x_1}(a'))=(x^d)$ is equivalent to 
 \[\ord_{(x)}\minit(J_{2,\x_1}(a'))=r_x+d.\]
 By Lemma \ref{double_weighted order function} we know that
 \[\rho(J_{2,\x_1}(a'))=(\ord J_{2,\x_1}(a'),\ord_{(x)}\minit(J_{2,\x_1}(a'))).\]
 Since $\G$ is valid and $d_\G=d$, we know by Proposition \ref{cleaning_transversal_flags} that the coefficient ideal $J_{2,\x'}(a')$ also has a factorization 
 \[J_{2,\x'}(a')=(x^{r_x}y^{r_y})\cdot I_{2,\x'}(a').\]
 Since $f'$ is $\rho$-clean with respect to $J_{2,\x'}(a')$ and $\ord J_{2,\x_1}(a')=\ord J_{2,\x'}(a')$, we know by Proposition \ref{w_cleaning_maximizes_w} that
 \[\ord_{(x)}\minit(J_{2,\x_1}(a'))\leq \ord_{(x)}\minit(J_{2,\x'}(a')).\]
 Hence, it suffices to show that $\ord_{(x)}\minit(J_{2,\x'}(a'))<r_x+d$ holds. 
 
 But since $I_{2,\x'}(a')$ has the same order as the weak transform of $I_{2,\x}(a)$ by Proposition \ref{w_clean_stable_under_blowup} (1) and the equality $\ord I_{2,\x'}(a')=\ord I_{2,\x}(a)$ holds, we know by Lemma \ref{directrix_under_blowup} that $\minit(I_{2,\x}(a))=(y^d)$. Thus, it is immediate to see that
 \[\minit(I_{2,\x'}(a'))\neq (x^d).\]
 In particular,
 \[\ord_{(x)}\minit(J_{2,\x_1}(a'))<r_x+d.\]
 This proves the assertion.
\end{proof}

 Notice that we have no estimate for $\inv(\F')$ in the case that $n_\F=1$ and $d_\F=-1$. To deal with this case, we will use the following lemma. It tells us that under an additional cleanness assumption, $\X'$ is actually in a terminal case at $a'$. Thus, $d'=0$ holds and $\ivXap<\ivXa$ is clear.

\begin{lemma} \label{super_s_r_lemma}
 In the setting of the previous proposition, let $\F\in\FF(a)$ be such that $n_\F=1$ and $d_\F=-1$. Call the induced parameters in the $x$-chart again $\x'=(x,y,z)$. Let $f'\in I_3(a')$ be an element that is $\ord$-clean, $\ord_{(x)}$-clean and $\ord_{(y)}$-clean with respect to $J_{2,\x'}(a')$.
 
 Then $\X'$ is in a terminal case at $a'$.
\end{lemma}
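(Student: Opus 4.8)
The plan is to pass to the induced flag via Proposition~\ref{flag_invs_under_blowup}~(4) and then compute the coefficient ideal at $a'$ explicitly in terms of $J_{2,\x}(a)$, distinguishing the two reasons for which $d_\F = -1$ can occur.

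First I would record the structure forced by $n_\F = 1$. By Lemma~\ref{form_of_associated_components} the divisor $\Eg$ has exactly two components, and by Proposition~\ref{flag_invs_under_blowup}~(4) the induced flag $\F'$ lies in $\FF(a')$, satisfies $n_{\F'} = 0$ with $\Egp = \Dnew = V(x)$, and the induced parameters $\x' = (x,y,z)$ in the $x$-chart are subordinate to $\F'$; in particular $M_{2,\x'}(a') = (x^{r_x'})$ with $r_x' = \ord_{(x)} J_{2,\x'}(a')$ and $I_{2,\x'}(a') = J_{2,\x'}(a')/(x^{r_x'})$. Since $n_\F = 1$, the weighted order function $\wFx$ is the ordinary order, so $m_\F = \ord J_{2,\x}(a) \ge c!$ by Lemma~\ref{ogeqc!} and $d_{\F,\x} = \ord_{(y)}\minit(J_{2,\x}(a))$. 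Applying Proposition~\ref{w_clean_stable_under_blowup}~(1) to the point blowup in the $x$-chart with the ordinary order (respectively with the weighted order $\eta$, $\eta(x)=1$, $\eta(y)=2$), and following verbatim the computation in the proof of Proposition~\ref{flag_invs_under_blowup}~(4), I obtain $r_x' = \ord J_{2,\x}(a) - c! = m_\F - c!$ and $\ord I_{2,\x'}(a') = \eta(J_{2,\x}(a)) - \ord J_{2,\x}(a) \le \ord_{(y)}\minit(J_{2,\x}(a)) = d_{\F,\x}$. Write $e := \ord I_{2,\x'}(a')$, so $0 \le e \le d_{\F,\x}$.

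If $e = 0$ (this covers the case $d_{\F,\x} = 0$), then $I_{2,\x'}(a') = (1)$, so $J_{2,\x'}(a') = (x^{r_x'})$ is a principal monomial ideal; since $f'$ is $\ord$-clean with respect to $J_{2,\x'}(a')$, the parameters $\x'$ are apposite, and $\Egp \ne \emptyset$ (so the condition in the definition of the monomial case on $\Eg = \emptyset$ is vacuous), the parameters $\x'$ are of monomial type and $\X'$ is in the monomial case at $a'$. If $e > 0$, then $d_\F = -1$ can only hold because $0 < d_{\F,\x} < c!$ and $c! \mid m_\F$; thus $0 < e \le d_{\F,\x} < c!$. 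Since $\ord J_{2,\x'}(a') = r_x' + e \ge c!$ by Lemma~\ref{ogeqc!} and $e < c!$, we get $r_x' \ge c! - e > 0$, and as $r_x' = m_\F - c!$ is a nonnegative multiple of $c!$ (using $c! \mid m_\F$) this forces $r_x' = mc!$ with $m := m_\F/c! - 1 \ge 1$. Hence $J_{2,\x'}(a') = (x^{mc!})\cdot I_{2,\x'}(a')$ with $\ord_{(x)} I_{2,\x'}(a') = 0$ and $0 < \ord I_{2,\x'}(a') < c!$. Reordering to $\x'' := (y,x,z)$ leaves the coefficient ideal unchanged (it depends only on the distinguished third parameter $z$) and yields a regular system of parameters that is again apposite for $\X'$ at $a'$, since $\Egp = V(x)\subseteq V(xy)$ and $f'$ is $z$-regular of order $c$; with respect to $\x''$ the ideal $J_{2,\x''}(a') = J_{2,\x'}(a')$ is in the small residual normal form, and $f'$, being $\ord_{(x)}$-clean with respect to it, supplies exactly the cleanness required. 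Hence $\X'$ is in the small residual case at $a'$.

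The main obstacle I anticipate is making the chart-change computations fully rigorous: one must verify that the hypotheses of Proposition~\ref{w_clean_stable_under_blowup} hold for each weighted order function used, that the induced weighted order function in the $x$-chart is the expected one, and that the inequality $\eta(J_{2,\x}(a)) - \ord J_{2,\x}(a) \le \ord_{(y)}\minit(J_{2,\x}(a))$ applies. A related subtlety is the bookkeeping showing that after blowup the exceptional exponent migrates onto the variable $x$ rather than onto the top-locus direction $y$, so that the small residual normal form only becomes visible after exchanging $x$ and $y$; one must then confirm that this exchange is legitimate, i.e.\ that $(y,x,z)$ is still apposite and carries precisely the cleanness demanded by the definition of the small residual case.
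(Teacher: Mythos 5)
Your proof is correct and takes essentially the same route as the paper: both cite the computation in the proof of Proposition~\ref{flag_invs_under_blowup}~(4) to establish $\ord I_{2,\x'}(a') = \eta(J_{2,\x}(a))-\ord J_{2,\x}(a)\le d_{\F,\x}$, use Proposition~\ref{w_clean_stable_under_blowup} to track $\ord_{(x)}J_{2,\x'}(a')$, and then split into the monomial versus small-residual normal form. Your organization by $e=\ord I_{2,\x'}(a')$ is a cosmetic rearrangement of the paper's split on $d_{\F,\x}$, and you are somewhat more explicit than the paper about the swap to $\x''=(y,x,z)$ needed so that the factor $(x^{mc!})$ sits in the second slot as the definition of the small residual case requires; the paper states only that ``$\x'$ are parameters of small residual type'' without spelling this out, so your extra care is welcome but not a different argument.
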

\begin{proof}
 It was shown in the proof of Proposition \ref{flag_invs_under_blowup} that $n_{\F'}=0$ and $d_{\F'}\leq d_{\F,\x}$ hold. In particular, $\x'$ are apposite parameters for $\X'$ at $a'$.
 
 If $0<d_{\F,\x}<c!$, we know that $m_\F=\ord J_{2,\x}(a)$ is divisible by $c!$. Consequently, we know by Proposition \ref{w_clean_stable_under_blowup} that $\ord_{(x)} J_{2,\x'}(a')$ is also divisible by $c!$. Since $d_{\F'}\leq d_{\F,\x}$ and $\Egp=V(x)$, this implies that $J_{2,\x'}(a')$ has the form
 \[J_{2,\x'}(a')=(x^{nc!})\cdot I\]
 for a positive integer $n>0$ and an ideal $I$ with $\ord_{(x)}I=0$ and $\ord I<c!$. If $\ord I=0$, then the parameters $\x'$ are of monomial type. If $\ord I>0$, then $\x'$ are parameters of small residual type.
 
 If $d_{\F,\x}=0$, we know that $d_{\F'}=0$. Thus, the coefficient ideal $J_{2,\x'}(a')$ is a principal monomial ideal. Hence, $\x'$ are parameters of monomial type.
\end{proof}

\begin{proposition} \label{inv_drops_in_non_terminal_case}
 Let $\G\in\FF(a')$ be a valid flag. Then there exists a valid flag $\F\in\FF(a)$ such that
 \[\inv(\G)<\inv(\F)\]
 holds. Consequently, $\ivXap<\ivXa$.
\end{proposition}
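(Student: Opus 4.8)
The plan is to establish the flag-wise statement: for every valid flag $\G\in\FF(a')$ there is a valid flag $\F\in\FF(a)$ with $\inv(\G)<\inv(\F)$. Since neither $\X$ is in a terminal case at $a$ nor $\X'$ at $a'$, this yields $\ivXap<\ivXa$ by comparing a maximizing flag at $a'$ with the resulting flag at $a$. If $d_\G=-1$ we are done immediately: a maximizing flag $\F$ at $a$ satisfies $\inv(\F)=(d_\F,n_\F,s_\F)$ with $d_\F=d>0>-1=d_\G$. So I may assume $d_\G\geq1$. First I would fix apposite parameters $\x=(x,y,z)$ for $\X$ at $a$ together with an element $f\in I_3(a)$ that is $z$-regular of order $c$; after replacing $z$ by $z+g(x,y)$ (which keeps $\x$ apposite and, by Lemma~\ref{cleaning_preserves_directrix}, preserves the directrix information) I may assume $f$ is clean with respect to all weighted orders that occur below and that $V(z)$ is adjacent. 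By Proposition~\ref{directrix_under_blowup} the point $a'$ is then not in the $z$-chart, so up to interchanging $x$ and $y$ (which keeps $\Eg\subseteq V(xy)$) the point $a'$ lies in the $x$-chart, with induced parameters $\x'=(x',y',z')$ and $\Dnew=V(x')$.

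\textbf{Flags induced from $a$.} The main case is a valid flag $\G\in\FF(a')$ whose image curve $\overline{\pi(\G_1)}$ is regular at $a$; this covers everything except $n_\G>1$ with $D_\G=\Dnew$, and $n_\G=0$ with $\G_1=\G_2\cap\Dnew$. Then $\G$ is the induced flag of a flag at $a$. Using Proposition~\ref{transversal_flags_can_be_maximized_with_z} (when the relevant associated multiplicity is $0$) or Proposition~\ref{tangential_flags_can_be_maximized_with_z} (otherwise), I replace that parent flag by a valid flag $\F\in\FF(a)$ comparable to it and realized by cleaning of $f$. By Propositions~\ref{w_clean_stable_under_blowup} and~\ref{s_clean_stable_under_blowup} the transform $f'$ of $f$ stays clean, so $\F'$ is valid at $a'$ and, by Propositions~\ref{cleaning_transversal_flags}/\ref{cleaning_tangential_flags} applied at $a'$, maximizes $\inv$ among valid flags at $a'$ comparable to $\G$; hence $\inv(\G)\leq\inv(\F')$. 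It then remains to read off $\inv(\F')<\inv(\F)$ from Proposition~\ref{flag_invs_under_blowup}: if $n_\F=0$ then either $d_{\F'}<d_\F$, or $d_{\F'}=d_\F$, $n_{\F'}=n_\F=0$ and $s_{\F'}<s_\F$ (with $s_\F<\infty$ by Lemma~\ref{inv_finite}); if $n_\F=1$ with $d_\F\neq-1$ then $n_{\F'}=0<n_\F$ and $d_{\F'}\leq d_\F$; if $n_\F>1$ then $d_{\F'}=d_\F$ and $n_{\F'}=n_\F-1<n_\F$. The remaining case $n_\F=1$, $d_\F=-1$ cannot occur: by Lemma~\ref{super_s_r_lemma}, applied with the cleanness arranged for $f'$, the setting $\X'$ would be in a terminal case at $a'$, contradicting $d'>0$. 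Finally, the case $n_\G=0$, $\G_1=\G_2\cap\Dnew$ is disposed of by Lemma~\ref{forget_the_x_flag}, which gives $\inv(\G)\leq\inv(\F')$ for a suitable valid $\F\in\FF(a)$ with $n_\F=0$ and $d_{\F'}=d_\F$, combined once more with $\inv(\F')<\inv(\F)$.

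\textbf{Kangaroo flags.} There remains the case $n_\G>1$ and $D_\G=\Dnew$, where $\pi(\G_1)$ is singular at $a$ and $\G$ is not an induced flag. Here I would use the modified Moh bound: taking $V(y)\subseteq\Eg$ and combining Proposition~\ref{tangential_flags_can_be_maximized_with_z} with Propositions~\ref{kangaroo_prop} and~\ref{kangaroo_blowup_prop} (the latter controlling $d\bigl(\minit_{\w}(J_{2,\x'}(a'))\bigr)$ in terms of the coefficient ideal at $a$), one obtains a valid flag $\F\in\FF(a)$ with
\[d_\G\leq\frac{1}{n_\G}\,d_\F+\eps,\qquad \eps=\begin{cases}0 & \text{if }\chara(K)=0\text{ or }c!\nmid m_\G,\\ \dfrac{c!}{p} & \text{if }\chara(K)=p>0\text{ and }c!\mid m_\G.\end{cases}\]
If $\eps=0$, then $d_\F\geq n_\G\,d_\G>d_\G$ since $n_\G\geq2$ and $d_\G\geq1$, so $\inv(\G)<\inv(\F)$. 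If $\eps=\tfrac{c!}{p}$, then $c!\mid m_\G$ together with $d_\G\neq-1$ forces $d_\G\geq c!$ by the definition of $d_\G$, and $d_\G\leq\tfrac12 d_\F+\tfrac{c!}{2}$ gives $d_\F\geq 2d_\G-c!\geq d_\G$.

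\textbf{The main obstacle.} The hard part is to upgrade the last inequality $d_\F\geq d_\G$ to the strict inequality $d_\F>d_\G$ that is actually needed: the flag $\F$ produced in the kangaroo case has $n_\F>0$, hence $s_\F=0=s_\G$, so an equality $d_\F=d_\G$ together with $n_\F\leq n_\G$ would \emph{not} give $\inv(\G)<\inv(\F)$. Eliminating this requires the positive-characteristic refinement contained in Proposition~\ref{kangaroo_prop}~(4): if equality held, tracing the $p$-adic arithmetic of the binomial coefficients through the point-blowup (ultimately via Lemma~\ref{arithmetic_kang_lemma}) forces the coefficient ideal at $a$ into a monomial shape incompatible with $d_\G$. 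Carrying out this bookkeeping carefully --- transporting the cleanness of $f$ and the refined divisibility estimates of Proposition~\ref{kangaroo_prop} across the blowup via Propositions~\ref{w_clean_stable_under_blowup} and~\ref{kangaroo_blowup_prop} --- is the technical core of the argument and the one place where characteristic $p$ genuinely intervenes. Once $d_\G<d_\F$ is secured, $\inv(\G)<\inv(\F)$ follows, and collecting the three cases proves the proposition and hence $\ivXap<\ivXa$.
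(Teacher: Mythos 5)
Your decomposition into induced flags versus kangaroo flags ($n_\G>1$ with $D_\G=\Dnew$), the use of the cleaning propositions and the stability statements \ref{w_clean_stable_under_blowup}, \ref{s_clean_stable_under_blowup}, \ref{flag_invs_under_blowup}, and the modified Moh bound via Propositions \ref{kangaroo_prop} and \ref{kangaroo_blowup_prop}, is precisely the strategy of the paper's proof. The paper's case split (1)--(4) on $(n_\G,t,D_\G)$ is a finer organization of the same ideas, and your treatment of the induced-flag cases (including the invocations of Lemma \ref{super_s_r_lemma} to rule out $n_\F=1$, $d_\F=-1$, and of Lemma \ref{forget_the_x_flag} for the $\G_1=\G_2\cap\Dnew$ degenerate subcase) matches and is correct.

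The gap you flag in your last paragraph is real, and it is not closed in your proposal. In the kangaroo case with $\chara(K)=p>0$ and $c!\mid m_\G$, you correctly derive $d_\F\ge d_\G$ and correctly observe that strict inequality $d_\G<d_\F$ is what is actually needed (since $n_\G\ge2>n_\F$, so a tie in $d$ gives $\inv(\G)>\inv(\F)$), but you only gesture at Proposition \ref{kangaroo_prop}~(4) without carrying out the derivation. The paper's closing step is short but essential: assuming $d_\G=d_\F$ for contradiction, the inequality $c!\le d_\G\le\tfrac12(d_\F+c!)=\tfrac12(d_\G+c!)$ forces $d_\G=d_\F=c!$; then the chain $d_\G\le \tfrac{d_1}{n_\G}\le \tfrac{d_\F}{n_\G}+\eps$ with $n_\G\ge2$, $p\ge2$ squeezes the auxiliary quantity $d_1=d\bigl(\minit_\w(J_{2,\x'}(a'))\bigr)$ strictly below $c!$, so $d_1\notin c!\cdot\N$, and Proposition \ref{kangaroo_prop}~(4) applied at $a'$ then yields $d_\G<\bigl\lceil\tfrac{d_1}{c!}\bigr\rceil c!=c!$ --- a contradiction. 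This is where Lemma \ref{arithmetic_kang_lemma} enters, and it is the step your proposal skips. Your description (``forces the coefficient ideal at $a$ into a monomial shape incompatible with $d_\G$'') does not match what actually happens: the contradiction is an order estimate at $a'$, not a structural statement at $a$.

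One further inaccuracy, harmless for the conclusion but worth flagging: you assert in the ``main obstacle'' paragraph that the flag $\F$ at $a$ in the kangaroo case has $n_\F>0$. In the paper it has $n_\F\in\{0,1\}$, with $n_\F=0$ whenever $t=0$, and the relevant maximizing statement for $\F$ at $a$ is then Proposition \ref{cleaning_transversal_flags} rather than Proposition \ref{cleaning_tangential_flags}. The need for strict inequality $d_\G<d_\F$ persists in either case since $n_\G\ge2>n_\F$, so your conclusion stands, but the reason you give for it ($s_\F=0$) is wrong when $n_\F=0$.
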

\begin{proof}
 Let $\x=(x,y,z)$ be apposite parameters for $\hOWa$ and $f\in I_3(a)$ an element that is $z$-regular of order $c$. By Lemma \ref{directrix_from_z_regularity} and Lemma \ref{z_regular_under_coord_change} we can assume without loss of generality that $\ol z\in\Dir(I_3(a))$. This implies by Lemma \ref{directrix_under_blowup} that $a'$ is not contained in the $z$-chart. Assume that $a'$ has the affine coordinates $(t,0)$ with $t\in K$ in the $x$-chart. By Proposition \ref{directrix_under_blowup} this implies that either $\Dir(I_3(a))=(\ol z)$ or $\Dir(I_3(a))=(\ol z,\ol{y-tx})$. If $t\neq0$ and $\Eg\neq V(xy)$, we can (after possibly swapping $x$ and $y$) replace $y$ by $y-tx$ since the parameters $(x,y-tx,z)$ are still apposite. Thus, we assume from now on that either $t=0$ or $\Eg=V(xy)$ holds. Let $\x'=(x',y',z')$ be the induced parameters for $\hOWap$ in the $x$-chart. Hence, the blowup map $\pi:\hOWa\to\hOWap$ is of the form
 \[\begin{array}{ll}
 \pi(x)=x', &\\
 \pi(y)=x'(y'+t), &\\ 
 \pi(z)=x'z'. &
\end{array}\]
 By Lemma \ref{z_regular_stable_under_blowup}, the parameters $\x'$ are again apposite and $f'=x'^{-c}\pi(f)\in I_3(a')$ is $z'$-regular of order $c$. Denote by $\Dnew=V(x')\subseteq\Egp$ the exceptional divisor of the blowup.
 
 The proof will be divided into four cases:
 \begin{enumerate}[(1)]
  \item $n_\G=0$ and $t=0$.
  \item $n_\G=0$, $t\neq0$ and $\Eg=V(xy)$.
  \item $n_\G=1$ or $n_\G>1$ and $D_\G\neq\Dnew$.
  \item $n_\G>1$ and $D_\G=\Dnew$.
 \end{enumerate}

 (1): Assume that $n_\G=0$ and $t=0$. Notice that in this case $V(y')\subseteq\Egp$ holds if and only if $V(y)\subseteq\Eg$. By Proposition \ref{transversal_flags_can_be_maximized_with_z} we can assume without loss of generality that $\G$ has the form $\G_2=V(z'+g)$ and either $\G_1=V(z'+g,y'+h)$ or $\G_1=V(z'+g,x')$ for elements $g\in K[[x',y']]$, $h\in K[[x']]$. Since $h\neq0$ implies that $V(y)\not\subseteq\Eg$, we can replace $y$ by $y+xh(x)$ without losing generality. Thus, we may assume that $h=0$.
 
 Let the flag $\F\in\FF(a)$ be defined by $\F_2=V(z)$ and $\F_1=V(z,y)$. Thus, $n_\F=0$. By Lemma \ref{w_cleaning_preserves_v_cleaning} and Lemma \ref{cleaning_preserves_directrix} we can assume that $f$ is clean with respect $J_{2,\x}(a)$ for finitely many given weighted order function defined on $(x,y)$. Since only finitely many different weighted order functions will be considered in each part of this proof, we will express this in the following as saying that $f$ is clean with respect to \emph{any} weighted order function defined on $(x,y)$. By Lemma \ref{s_cleaning_preserves_v_clean} we may also assume that $f$ is secondary $\ord$-clean with respect to $\coeff_{(x,y)}^{d_\F}(I_{2,\x}(a))$. By Proposition \ref{cleaning_transversal_flags} (1) the flag $\F$ is valid. By Proposition \ref{w_clean_stable_under_blowup} we can also assume that $f'$ is clean with respect to $J_{2,\x'}(a')$ for any weighted order function defined on $(x',y')$. Since $n_{\F'}=0$ by Proposition \ref{flag_invs_under_blowup}, this implies by Proposition \ref{cleaning_transversal_flags} that $\F'$ is valid and $d_\G\leq d_{\F'}$. By Proposition \ref{flag_invs_under_blowup} we know that $d_{\F'}\leq d_\F$. If $d_{\F'}<d_\F$, then it follows that $\inv(\G)<\inv(\F)$. So assume from now on that $d_{\F'}=d_\F$ holds. By Proposition \ref{s_clean_stable_under_blowup} the element $f'$ is secondary $\ord$-clean with respect to $\coeff_{(x',y')}^{d_{\F'}}(I_{2,\x'}(a'))$. Using Proposition \ref{cleaning_transversal_flags} (3) and Lemma \ref{forget_the_x_flag}, this implies that $\inv(\G)\leq\inv(\F')$ holds. Further, $\inv(\F')<\inv(\F)$ holds by Proposition \ref{flag_invs_under_blowup} if $s_\F<\infty$. So assume that $s_\F=\infty$. In this case, $\inv(\F')\leq\inv(\F)$ holds. Since $\X$ is not in a terminal case at $a$, the flag $\F$ is not maximizing by Lemma \ref{inv_finite}. Hence, there exists a valid flag $\HH\in\FF(a)$ such that $\inv(\G)\leq\inv(\F)<\inv(\HH)$. 
 
 (2): Assume that $n_\G=0$, $t\neq0$ and $\Eg=V(xy)$. Set $y_1=y-tx$. Then $a'$ is the origin of the $x$-chart with respect to the parameters $\x_1=(x,y_1,z)$ and the induced parameters are $\x'=(x',y',z')$. By Proposition \ref{transversal_flags_can_be_maximized_with_z} we can assume without loss of generality that $\G$ has the form $\G_2=V(z'+g)$ and either $\G_1=V(z'+g,y'+h)$ or $\G_1=V(z'+g,x')$ for elements $g\in K[[x',y']]$, $h\in K[[x']]$. Replacing $y_1$ by $y_1+xh(x)$, we can assume without loss of generality that $h=0$.
 
 Let the flag $\F\in\FF(a)$ be defined by $\F_2=V(z)$ and $\F_1=V(z,y_1)$. Thus, $n_\F=1$. As before, we can assume that $f$ is clean with respect to $J_{2,\x_1}(a)$ for any weighted order function defined on $(x,y_1)$ and $f'$ is clean with respect to $J_{2,\x'}(a')$ for any weighted order function defined on $(x',y')$. By Proposition \ref{cleaning_tangential_flags} (1) we know that $\F$ is valid. If $d_\F\neq-1$, we know by Proposition \ref{flag_invs_under_blowup} that $d_{\F'}\leq d_\F$. By Proposition \ref{cleaning_transversal_flags} (2) we also know that $d_{\G}\leq d_{\F'}$ holds. Since $n_\G<n_\F$, this proves that $\inv(\G)<\inv(\F)$. So assume now that $d_\F=-1$. Then $\X'$ is in a terminal case at $a'$ by Lemma \ref{super_s_r_lemma}. This contradicts our assumption.
 
 (3): Assume that either $n_\G=1$ or $n_\G>1$ and $D_\G\neq\Dnew$ hold. Hence, we know that $\Egp$ has a second component other than $\Dnew=V(x')$. Consequently, we know that $t=0$ and $V(y)\subseteq\Eg$. Thus, $D_\G=V(y')$ if $n_\G>1$. By Proposition \ref{tangential_flags_can_be_maximized_with_z} we can assume without loss of generality that $\G$ has the form $\G_2=V(z'+g)$ and $\G_1=V(z'+g,y_1')$ where $g\in K[[x',y']]$, $y_1'=y'+\lambda x'^n$ for a constant $\lambda\in K^*$ and $n=n_\G$. Set $y_1=y+\lambda x^{n+1}$. Then $a'$ is the origin of the $x$-chart with respect to the parameters $\x_1=(x,y_1,z)$ and the induced parameters for $\hOWap$ are $\x_1'=(x',y_1',z')$. Notice that $\ol y_1=\ol y$.
 
 Let the flag $\F\in\FF(a)$ be defined by $\F_2=V(z)$ and $\F_1=V(z,y_1)$. Thus, $n_\F=n_\G+1$. As before, we can assume that $f$ is $\y_{\F,\x_1}$-clean with respect to $J_{2,\x_1}(a)$ and $f'$ is $\y_{\F',\x_1'}$-clean with respect to $J_{2,\x_1'}(a')$. By Proposition \ref{flag_invs_under_blowup} we know that $n_{\F'}=n_\G$ and $D_{\F'}=V(y')$ if $n_{\F'}>1$. Further, we know by Proposition \ref{cleaning_tangential_flags} that $\F$ is valid and $\inv(\G)\leq\inv(\F')$. Further, $\inv(\F')<\inv(\F)$ holds by Proposition \ref{flag_invs_under_blowup}.
 
 (4): Assume that $n_\G>1$ and $D_\G=\Dnew=V(x')$. We can assume without loss of generality that $d_\G\neq -1$. By Proposition \ref{tangential_flags_can_be_maximized_with_z} we can further assume that $\G$ has the form $\G_2=V(z_1')$ and $\G_1=V(z_1',x_1')$ where $x_1'=x'+\lambda y'^n$ for a constant $\lambda\in K^*$, $n=n_\G$ and the coordinate change $z'=z_1'+g$ is $\y$-cleaning with respect to $f'$ and $J_{2,\x'}(a')$ for the weighted order function $\y:K[[x_1',y']]\to\Ni^2$ that is defined by $\y(x_1')=n_\G$ and $\y(y')=1$. Set $y_1=y-tx$. Then $a'$ is the origin of the $x$-chart with respect to the parameters $\x_1=(x,y_1,z)$ and the induced parameters for $\hOWap$ are $\x'=(x',y',z')$. Notice that the weighted order function $\w:K[[x',y']]\to\Ni^2$, $\w(x')=\w(x_1')=n$, $\w(y')=1$ is defined both on the parameters $(x',y')$ and $(x_1',y')$.
 
 Let the flag $\F\in\FF(a)$ be defined by $\F_2=V(z)$ and $\F_1=V(z,y_1)$. Hence, $n_\F=0$ if $t=0$ and $n_\F=1$ if $t\neq0$ and $\Eg=V(xy)$. As before, we can assume that $f$ is clean with respect to $J_{2,\x_1}(a)$ for any weighted order function defined on $(x,y_1)$ and $f'$ is clean with respect to $J_{2,\x'}(a')$ for any weighted order function defined on $(x',y')$. By Proposition \ref{cleaning_transversal_flags} and Proposition \ref{cleaning_tangential_flags}, this implies that $\F$ is valid. Further, we can assume by Lemma \ref{super_s_r_lemma} that $d_\F\neq -1$.
 
 Define the number $d_1$ as 
 \[d_1=\ord \minit_{\w}(J_{2,\x'}(a'))-\ord_{(x)} \minit_{\w}(J_{2,\x'}(a'))-\ord_{(y)} \minit_{\w}(J_{2,\x'}(a')).\]
 
 It follows from Proposition \ref{kangaroo_prop} and Proposition \ref{kangaroo_blowup_prop} that
 \[d_\G\leq \frac{d_1}{n_\G}\leq \frac{d_\F}{n_\G}+\eps\]
 holds, where
 \[\eps=\begin{cases}
         0 & \text{if $\chara(K)=0$ or $c!\nmid m_\G$,}\\
         \frac{c!}{p} & \text{if $\chara(K)=p>0$ and $c!\mid m_\G$.}
        \end{cases}
\]
 If $\eps=0$, this already proves that $\inv(\G)<\inv(\F)$ since $n_\G\geq2$. So assume now that $\eps>0$. Thus, $m_\G$ is divisible by $c!$. Hence, we know that $d_\G\geq c!$ holds. Since $n_\G\geq2$ and $p\geq2$, above inequality implies that
 \[d_\G\leq \frac{1}{2}(d_\F+c!).\]
 Since $d_\G\geq c!$, this implies that $d_\G\leq d_\F$ holds. Now assume that the equality $d_\G=d_\F$ holds. This implies that $d_\F=d_\G=c!$ and $d_1\leq \frac{c!}{2}$. But this is a contradiction to Proposition \ref{kangaroo_prop} (4). Consequently, $\inv(\G)<\inv(\F)$ holds.
\end{proof}

\section{Decrease of $\ivX$ in terminal cases} \label{section_inv_drops_for_d=0}

 In this section we will always consider the following setting: Let $\X=(W,X,E)$ be a $3$-dimensional resolution setting that is not already resolved. Set $\iv_{\max}=\max\{\ivXa:a\in X\}$ and
\[\Xmax=\{a\in X:\iv_\X(a)=\iv_{\max}\}.\]
 By Theorem \ref{top_inv_is_permissible}, the set $\Xmax$ is a permissible center of blowup for $\X$.  Let $\pi:W'\to W$ be the blowup of $W$ with center $\Xmax$ and $\X'=(W',X',E')$ the induced setting.
 
 Further, let $a\in\Xmax$ be a closed point and $a'\in\pi^{-1}(a)\cap X'$ a closed point lying over $a$. We will denote the components of $\ivXa$ by $o,\ldots$ and the components of $\ivXap$ by $o',\ldots$.
 
 In the following, we will always assume that $(o',c')=(o,c)$ and that $\X$ is in a terminal case at $a$. Our goal is to show that $\ivXap<\ivXa$ holds.

\begin{proposition} \label{inv_drops_in_g_p_case}
 If $\X$ is in the small residual case at $a$, then $\X'$ is again in the small residual case at $a'$ and $\ivXap<\ivXa$ holds.
\end{proposition}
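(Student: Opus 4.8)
The plan is to unwind the definition of the small residual case and blow up the regular curve $\Xmax$ which, in this situation, is forced to be $V(y,z)$ by Lemma \ref{small_residual_case} (2) together with Proposition \ref{key_proposition_for_usc}. Concretely, I would first fix apposite parameters $\x=(x,y,z)$ for $\X$ at $a$ of small residual type, so that $J_{2,\x}(a)=(y^{mc!})\cdot I$ with $\ord_{(y)}I=0$, $0<\ord I<c!$, and an element $f\in I_3(a)$ which is $\ord_{(y)}$-clean (hence $\ord$-clean by Lemma \ref{c!_does_not_divide_m}, since $\ord J_{2,\x}(a)$ is not divisible by $c!$). Since $\X$ is in a terminal case along $\Xmax$ and the generic combinatorial pair governs the behaviour along the curve, the results of Section \ref{section_generic_value_of_inv_along_curves} let me assume $\Xmax$ is the regular curve $V(y,z)$ locally at $a$. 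The blowup map in the relevant chart (the $y$-chart, since $V(y,z)$ is the center and the $z$-chart contains no equiconstant points by adjacency) is monomial in $(x,y,z)$; after passing to the strict transform I use Proposition \ref{s_clean_stable_under_blowup} (if $d'=d$, which here means $\ord I'=\ord I$) to transfer $\ord_{(y)}$-cleanness of $f$ to its transform $f'$ and to compute the new combinatorial data.

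The key computational step is the transformation of the coefficient ideal under this monomial blowup. By Proposition \ref{w_clean_stable_under_blowup} (1) the weighted order of $J_{2,\x}$ drops by a controlled amount: writing $J_{2,\x}(a)=(y^{mc!})\cdot I$, the weak transform gives $J_{2,\x'}(a')=(y^{(m-1)c!})\cdot I'$ where $I'$ has the same order as the weak transform of $I$. Since $\ord_{(y)}I=0$ and $I$ does not involve $z$, blowing up $V(y,z)$ with center monomial in $y$ leaves $\ord I'=\ord I$ (the monomial $y$ is divided out exactly $c!$ times, which is precisely $\ord J_{2,\x}(a)-\ord J_{2,\x'}(a') = c!$ accounting only for the $y$-power, and nothing is taken from $I$ because $\ord_{(y)}I=0$). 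Thus $J_{2,\x'}(a')=(y^{(m-1)c!})\cdot I'$ with $\ord_{(y)}I'=0$ and $0<\ord I'<c!$. If $m-1>0$, the parameters $\x'$ are again apposite of small residual type for $\X'$ at $a'$, and the combinatorial pair becomes $(r\gp',l\gp')=((m-1)c!,l_y')$. Since $l_y'=l_y$ (the label is inherited by the strict transform of $V(y)$, or the curve lies on the new exceptional divisor in which case one checks the labels directly via Lemma \ref{same_curve_same_l_y_lemma}), we get $r\gp'<r\gp$, hence $\ivXap<\ivXa$. If $m-1=0$, then $J_{2,\x'}(a')=I'$ has order $<c!$, which by Lemma \ref{ogeqc!} forces $c'<c$, contradicting the standing assumption $(o',c')=(o,c)$; so this case cannot occur, or rather it yields an even stronger decrease.

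The main obstacle I anticipate is the bookkeeping around the exceptional divisor and the labels: after blowing up $V(y,z)$, the new exceptional component $\Dnew$ is created, $\Egp$ changes, and I must verify that the parameters $\x'$ still satisfy $\Egp\subseteq V(x'y')$ (or the appropriate inclusion) and that $f'$ is still $z'$-regular of order $c$ — the latter being Lemma \ref{z_regular_stable_under_blowup}, which applies since the blowup map is monomial. I also need to confirm that $a'$ lies in the $y$-chart and not elsewhere: this follows from $\Dir(I_3(a))\supseteq(\ol z)$ (by Lemma \ref{coeff_ideal_and_directrix}, since $\ord J_{2,\x}(a)>c!$ in the small residual case, $\Dir(I_3(a))=(\ol z)$), so by Proposition \ref{directrix_under_blowup} no equiconstant point lies in the $z$-chart, and comparing with the curve center one sees $a'$ is the origin of the $y$-chart. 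Finally, the identification of $\Xmax$ with $V(y,z)$ near $a$ relies on $I_{\geq(o,c)}(a)=(y,z)$ from Lemma \ref{small_residual_case} (2); the subtlety is ensuring that the chosen center (the \emph{global} top locus) coincides locally with this formal curve, which is exactly what Proposition \ref{key_proposition_for_usc} (1) guarantees. Once these pieces are assembled, the decrease of $\ivX$ is immediate from the drop in the combinatorial pair.
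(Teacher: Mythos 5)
Your proof follows essentially the same route as the paper: identify $\Xmax$ with $V(y,z)$ locally at $a$ via Lemma \ref{small_residual_case} (2) and Proposition \ref{key_proposition_for_usc}, use $\Dir(I_3(a))=(\ol z)$ (from Lemma \ref{coeff_ideal_and_directrix}, since $\ord J_{2,\x}(a)>c!$) to pin $a'$ as the origin of the $y$-chart, and then compute the transform of $J_{2,\x}$ via Proposition \ref{w_clean_stable_under_blowup} (1), obtaining $(y^{(m-1)c!})\cdot I'$ with $\ord I'=\ord I$, whence $r'=(m-1)c!<mc!=r$. You also correctly use Lemma \ref{same_curve_same_l_y_lemma} for the label persistence and observe that $m=1$ is excluded by the standing assumption $(o',c')=(o,c)$ via Lemma \ref{ogeqc!}.

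One citation is misattributed: to transfer $\ord_{(y)}$-cleanness of $f$ to $f'$ you invoke Proposition \ref{s_clean_stable_under_blowup}, but that proposition concerns \emph{secondary} $\ord$-cleanness of the second coefficient ideal $J_1$ under a \emph{point}-blowup, which is not what is needed here. The correct reference (and the one the paper uses) is Proposition \ref{w_clean_stable_under_blowup} (2), which states precisely that $\w$-cleanness of $f$ with respect to $J_{-1}$ passes to $\w'$-cleanness of $f'$ with respect to $J_{-1}'$ under a monomial blowup — here with $\w=\w'=\ord_{(y)}$, since the center is $(y,z)$ and the blowup is in the $y$-chart. Everything else in your argument is consistent with what the paper does, including the extra (unused but harmless) observation that $f$ is $\ord$-clean by Lemma \ref{c!_does_not_divide_m} and the detail work on $\Egp$, $z'$-regularity via Lemma \ref{z_regular_stable_under_blowup}, and the chart analysis.
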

\begin{proof}
 Let $\x=(x,y,z)$ be apposite parameters of small residual type. Hence, $J_{2,\x}(a)$ is of the form
 \[J_{2,\x}(a)=(y^{mc!})\cdot I\]
 for a positive integer $m>0$ and an ideal $I$ with $\ord_{(y)}I=0$ and $0<\ord I<c!$. Further, there is an element $f\in I_3(a)$ which is $\ord_{(y)}$-clean with respect to $J_{2,\x}(a)$.
 
 By Lemma \ref{small_residual_case} (2) we know that $\Ioc(a)=(y,z)$. By Proposition \ref{key_proposition_for_usc} this implies that $\Xmax$ has the form $\Xmax=V(y,z)$ locally at $a$.
 
 Since $\ord J_{2,\x}(a)>c!$, we know by Lemma \ref{coeff_ideal_and_directrix} that $\Dir(I_3(a))=(\ol z)$. Hence, $a'$ is the origin of the $y$-chart by Lemma \ref{directrix_under_blowup}. Call the induced parameters for $\hOWap$ again $\x'=(x,y,z)$. By Proposition \ref{w_clean_stable_under_blowup} (1) we know that
 \[J_{2,\x'}(a')=(y^{(m-1)c!})\cdot I'\]
 for an ideal $I'$ with $\ord_{(y)}I'=0$ and $\ord I'=\ord I$. By Proposition \ref{w_clean_stable_under_blowup} (2), there is an element $f'\in I_3(a')$ which is $\ord_{(y)}$-clean with respect to $J_{2,\x'}(a')$. Thus, the parameters $\x'$ are again of small residual type and
 \[r'=(m-1)c!<mc!=r.\]
 Thus, $\ivXap<\ivXa$ holds.
\end{proof}

\begin{proposition} \label{inv_drops_in_monom_case}
 If $\X$ is in the monomial case at $a$, then $\X'$ is again in the monomial case at $a'$ and $\ivXap<\ivXa$ holds.
\end{proposition}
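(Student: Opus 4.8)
The plan is to mirror closely the argument for the small residual case in Proposition \ref{inv_drops_in_g_p_case}, but with a case analysis according to the shape of the monomial coefficient ideal. So let $\x=(x,y,z)$ be apposite parameters of monomial type, write $J_{2,\x}(a)=(x^{r_x}y^{r_y})$, and let $f\in I_3(a)$ be $\ord$-clean with respect to $J_{2,\x}(a)$; by Lemma \ref{w_cleaning_preserves_v_cleaning} I may and will assume $f$ is in addition $\ord_{(x)}$-clean and $\ord_{(y)}$-clean. By Lemma \ref{monomial_case} (2), the ideal $\Ioc(a)$ is one of $(xy,z)$, $(x,z)$, $(y,z)$ or $(x,y,z)$, according to whether $r_x,r_y\geq c!$ or not. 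By Proposition \ref{key_proposition_for_usc} (1) the top locus $\Xmax$ is, locally at $a$, either a regular curve having simple normal crossings with $E$, or the point $a$ itself (the latter exactly when both $r_x,r_y<c!$, so $\Ioc(a)=(x,y,z)$). I would organise the proof around these possibilities.

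\textbf{The point-blowup case.} If $r_x,r_y<c!$, then $\Xmax=\{a\}$ locally, and $\Dir(I_3(a))=(\ol z)$ is impossible in general; rather, by Lemma \ref{another_tau=2_lemma}, $\tau(I_3(a))=3$ unless $r_x=0$ (resp.\ $r_y=0$), and in the latter degenerate case it is $2$ with an explicit directrix. In all cases the directrix contains $\ol z$, so by Lemma \ref{directrix_under_blowup} the equiconstant point $a'$ lies outside the $z$-chart; I take $a'$ to be (after a harmless translation in $y$, using that $\Eg=V(xy)$ or is smaller, exactly as in Proposition \ref{inv_drops_in_non_terminal_case}) the origin of, say, the $x$-chart. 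Calling the induced parameters again $(x,y,z)$, Proposition \ref{w_clean_stable_under_blowup} (1) gives $J_{2,\x'}(a')=(x^{r_x+r_y+d-c!}y^{r_y})$ — here $d=0$ since $\X$ is in the monomial case, so $J_{2,\x'}(a')=(x^{r_x+r_y-c!}y^{r_y})$ — and (2) provides an element $f'\in I_3(a')$ that is again $\ord$-clean. Thus $\x'$ is of monomial type (the condition ``if $\Eg=\emptyset$ then $r_x=0$ or $r_y=0$'' is preserved because a new exceptional component $\Dnew=V(x)$ always appears, so $\Egp\neq\emptyset$, and because the exponent of $y$ can only be nonzero when $V(y)\subseteq\Egp$), so $\X'$ is in the monomial case at $a'$. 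It then remains to compare the combinatorial pairs $(r,l)$ and $(r',l')$. Using Lemma \ref{correct_m_lemma_for_two_curves} or Lemma \ref{correct_m_lemma} to identify them from the coefficient ideals, one sees $r'=\max$ or sum of the new exponent-pairs, and a direct arithmetic check using $r_x,r_y<c!$ shows $r'<r$ or (when the new $x$-exponent equals the dominating old exponent and labels must be compared) $l'<l$.

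\textbf{The curve-blowup case and conclusion.} If at least one of $r_x,r_y$ is $\geq c!$ — say $r_y\geq c!$, and the curve is $C=\Xmax=V(y,z)$ locally — then, exactly as in Proposition \ref{inv_drops_in_g_p_case}, the blowup of the regular curve $V(y,z)$ is monomial in suitable parameters: $\Dir(I_3(a))$ contains $\ol z$ by Lemma \ref{another_tau=2_lemma} (or Lemma \ref{directrix_and_top_locus}), the center is $V(y,z)$ with $z\in I_{Z,a}$, so $a'$ lies outside the $z$-chart and is forced (by Proposition \ref{directrix_under_blowup}) into the $y$-chart (the $x$-chart cannot carry an equiconstant point since $x\notin I_{Z,a}$ when $r_x<c!$, but when $r_x\geq c!$ both charts are possible and one handles both identically). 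Calling the induced parameters $(x,y,z)$ again, Proposition \ref{w_clean_stable_under_blowup} (1) gives $J_{2,\x'}(a')=(x^{r_x}y^{r_y-c!})$ and (2) gives a new $\ord$-clean element, so $\X'$ is in the monomial case at $a'$; the relevant exponent of $y$ (if still $\geq c!$) dropped by $c!$, so $r'<r$, while if it dropped below $c!$ one re-derives $(r',l')$ from Lemma \ref{monomial_case} and checks the strict drop arithmetically. The main obstacle I anticipate is the careful bookkeeping of \emph{which} chart the equiconstant point lands in and how the exponent-pair and its label transform in each of the four sub-cases of Lemma \ref{monomial_case} (2) — in particular making sure that when the dominating exponent does not strictly decrease, the label component $l$ strictly decreases, which relies on the injectivity of $\lab$ and on the fact established in Lemma \ref{correct_m_lemma_for_two_curves} that the two boundary curves carry distinct labels, and on the exceptional divisor $\Dnew$ receiving the fresh (maximal) label so that $\lab(\Dnew)>\lab(D)$ for every old component $D$. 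Apart from that, everything is a routine combination of Proposition \ref{w_clean_stable_under_blowup}, Lemma \ref{monomial_case}, and Proposition \ref{key_proposition_for_usc}.
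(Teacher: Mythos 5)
Your overall organization — split into $r_x,r_y<c!$ (point center) versus $\max(r_x,r_y)\geq c!$ (curve center), apply Proposition \ref{w_clean_stable_under_blowup} to transform the monomial coefficient ideal, re-establish $\ord$-cleanness, and compare combinatorial pairs via the label rules — is exactly the paper's structure. But there are two genuine gaps in how you control where the equiconstant point lands.

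First, your directrix analysis in the point-blowup case is incorrect. You write that $\Dir(I_3(a))=(\ol z)$ ``is impossible in general'' and appeal to Lemma \ref{another_tau=2_lemma} unconditionally, but that lemma only applies when $\ord J_{2,\x}(a)$ is exactly $c!$. When $r_x+r_y>c!$, Lemma \ref{coeff_ideal_and_directrix} gives $\tau(I_3(a))=1$ and $\Dir(I_3(a))=(\ol z)$; when $r_x+r_y=c!$ (forcing $0<r_x,r_y<c!$ in this case, since $r_x=0$ would contradict Lemma \ref{ogeqc!}), Lemma \ref{another_tau=2_lemma}(2) gives $\tau=3$. The paper's point in the latter sub-case is not that $\ol z$ is in the directrix, but that $\tau=3$ together with Proposition \ref{directrix_under_blowup} rules out equiconstant points entirely, contradicting the running assumption $(o',c')=(o,c)$. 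The same trick is used in the curve-blowup case: if $r_y\geq c!$ but $\ord J_{2,\x}(a)=c!$, then $J_{2,\x}(a)=(y^{c!})$, Lemma \ref{another_tau=2_lemma}(1) gives $\Dir=(\ol y,\ol z)$, and since the center is $V(y,z)$ there are again no equiconstant points. You need this exclusion before the ``$a'$ is the origin of a chart'' step can be justified, and it cannot be skipped.

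Second, in the point-blowup case you dispose of the possibility that $a'$ has affine coordinates $(t,0)$ with $t\neq0$ by invoking ``a harmless translation in $y$, exactly as in Proposition \ref{inv_drops_in_non_terminal_case}.'' But that translation is only available there when $\Eg\neq V(xy)$; when $\Eg=V(xy)$ you cannot replace $y$ by $y-tx$ and keep apposite parameters, and the non-terminal proof handles $t\neq0$ differently (through a flag with $n_\F=1$). In the monomial case with $r_x,r_y<c!$ the correct argument is simpler and purely arithmetic: if $a'=(t,0)$ with $t\neq0$ in the $x$-chart, then $y+t$ is a unit at $a'$, so $J_{2,\x'}(a')$ is generated by $x^{r_x+r_y-c!}$ alone and has order $r_x+r_y-c!<r_x<c!$, contradicting Lemma \ref{ogeqc!}. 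This forces $a'$ to be the origin of the $x$- or $y$-chart. You should replace the translation appeal by this argument. After these two corrections, the remaining arithmetic of the pair $(r',l')$ — the $\max$ versus sum rule, and the label comparison using $\lab(\Dnew)$ and $l_x<l_y$ in the tie case — goes through exactly as you anticipated.
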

\begin{proof}
 Let $\x=(x,y,z)$ be apposite parameters of monomial type. Hence, $J_{2,\x}(a)$ is of the form
 \[J_{2,\x}(a)=(x^{r_x}y^{r_y})\]
 and there is an element $f\in I_3(a)$ which is $\ord$-clean with respect to $J_{2,\x}(a)$. By Lemma \ref{w_cleaning_preserves_v_cleaning} we can even assume that $f$ is clean with respect to $J_{2,\x}(a)$ for any weighted order function defined on $(x,y)$.
 
 We will now consider two cases:
 \begin{enumerate}[(1)]
  \item $r_x\geq c!$ or $r_y\geq c!$.
  \item $r_x,r_y<c!$.
 \end{enumerate}

 (1): Assume without loss of generality that $r_y\geq c!$. If $r_x<c!$, we know by Lemma \ref{monomial_case} (2) that $\Ioc(a)=(y,z)$. By Proposition \ref{key_proposition_for_usc} this implies $\Xmax=V(y,z)$ locally at $a$.
 
 If $r_x\geq c!$, we know by Lemma \ref{monomial_case} (2) that $\Ioc(a)=(xy,z)$. By Proposition \ref{key_proposition_for_usc}, this implies that there are two components of $X_{\geq(o,c)}$ passing through $a$ which are both regular at $a$. The pairs $(r_x,l_x)$, $(r_y,l_y)$ are the generic combinatorial pairs along these two components and $(r_x,l_x)\neq(r_y,l_y)$. Assume without loss of generality that $(r_y,l_y)>(r_x,l_x)$ holds. Thus, $\Xmax=V(y,z)$ locally at $a$.
 
 If $\ord J_{2,\x}=r_x+r_y>c!$, then we know by Lemma \ref{coeff_ideal_and_directrix} that $\Dir(I_3(a))=(\ol z)$. Thus, $a'$ is the origin of the $y$-chart by Proposition \ref{directrix_under_blowup}. On the other hand, if $\ord J_{2,\x}(a)=c!$, then necessarily $\ord J_{2,\x}(a)=(y^{c!})$. By Lemma \ref{another_tau=2_lemma} (1) this implies $\tau(I_3(a))=2$ and $\Dir(I_3(a))=(\ol y,\ol z)$. But this implies by Proposition \ref{directrix_under_blowup} that $\ord I_3(a')<\ord I_3(a)$ which contradicts our assumption.
 
 For simplicity call the induced parameters for $\hOWap$ again $\x'=(x,y,z)$. Then by Proposition \ref{w_clean_stable_under_blowup} (1) we know that 
 \[J_{2,\x'}(a')=y^{-c!}J_{2,\x}(a)=(x^{r_x}y^{r_y-c!}).\]
 
 By Proposition \ref{w_clean_stable_under_blowup} (2) we know that there is an element $f'\in I_3(a')$ which is $\ord$-clean with respect to $J_{2,\x'}(a')$. Hence, the induced parameters $\x'$ are of monomial type and $\X'$ is in the monomial case at $a'$. It remains to show that $(r',l')<(r,l)$. For this, we have to consider several cases.
 
 If $r_y-c!\geq c!$ or $r_x\geq c!$, then $(r',l')=\max\{(r_x,l_x),(r_y-c!,\lab(\Dnew))\}$. If $r_y-c!\geq r_x$ holds, then $r'=r_y-c!<r_y=r$. On the other hand, if $r_y-c!<r_x$ and $r_y>r_x$, then $r'=r_x<r_y=r$. Lastly, if $r_y=r_x$, then $r'=r_y=r_x=r$ and $l'=l_x<l_y=l$.
 
 If $r_y-c!<c!$ and $r_x<c!$, then $(r',l')=(r_x+r_y-c!,l_x+\lab(\Dnew))$. Thus, $r'=r_y+(r_x-c!)<r_y=r$.
 
 (2): By Lemma \ref{monomial_case} (2) we know that $a$ is an isolated point of $X_{\geq(o,c)}$. Hence, it also an isolated point of $\Xmax$.
 
 If $\ord J_{2,\x}(a)=r_x+r_y>c!$, then $\Dir(I_3(a))=(\ol z)$ by Lemma \ref{coeff_ideal_and_directrix}. Thus, $a'$ is not contained in the $z$-chart by Proposition \ref{directrix_under_blowup}. On the other hand, if $\ord J_{2,\x}(a)=c!$, then $\tau(I_3(a))=3$ by Lemma \ref{another_tau=2_lemma} (2). By Proposition \ref{directrix_under_blowup} this would imply that $\ord I_3(a')<\ord I_3(a)$, which is a contradiction to our assumption.
 
 Consider first the case that $a'$ has affine coordinates $(t,0)$ with $t\in K^*$ in the $x$-chart. For simplicity call the induced parameters for $\hOWap$ again $\x'=(x,y,z)$. By Proposition \ref{w_clean_stable_under_blowup} (1) we know that 
 \[J_{2,\x'}(a')=(x^{r_x+r_y-c!}(y+t)^{r_y}).\]
 Hence, 
 \[\ord J_{2,\x'}(a')=r_x+r_y-c!<r_x<c!.\]
 This contradicts Lemma \ref{ogeqc!}.
 
 So we know that $a'$ is either the origin of the $x$-chart or of the $y$-chart. Assume without loss of generality that $a'$ is the origin of the $x$-chart. Then
 \[J_{2,\x}(a')=(x^{r_x+r_y-c!}y^{r_y}).\]
 By Proposition \ref{w_clean_stable_under_blowup} we can assume that there is an element $f'\in I_3(a')$ which is $\ord$-clean with respect to $J_{2,\x'}(a')$. Hence, the parameters $\x'$ are of monomial type. Further,
 \[r'=r_x+2r_y-c!=r_x+r_y+\underbrace{(r_y-c!)}_{<0}<r_x+r_y=r.\]
 Consequently, $\ivXap<\ivXa$ holds.
\end{proof}

\section{Conclusion}

\begin{keytheorem} \label{inv_drops}
 Let $\X=(W,X,E)$ be a $3$-dimensional resolution setting that is not resolved already. Let $\pi:W'\to W$ be the blowup of $W$ along the center $\Xmax$ induced by $\iv_\X$. Denote by $\X'=(W',X',E')$ the induced $3$-dimensional resolution setting. Let $a\in\Xmax$ and $a'\in\pi^{-1}(a)\cap X'$ be points. Then
 \[\ivXap<\ivXa.\]
\end{keytheorem}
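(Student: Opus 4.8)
The statement asserts the lexicographic decrease $\ivXap < \ivXa$ for every point $a \in \Xmax$ and every point $a'$ lying over $a$ in the strict transform $X'$. The plan is to split the argument into several cases according to the values of the components of $\iv_\X$ at $a$, and invoke the propositions already established in this chapter for each case. First I would reduce to the case $(o',c') = (o,c)$: if $(o',c') < (o,c)$ then the lexicographic comparison of $\iv_\X$ is already won at the first two components, since these are the leading entries of the tuple $(o,c,d,n,s,r,l)$. (Here one uses Proposition~\ref{order_is_usc} for $o$, and the fact that $c' = \ord_{a'}(X' \cup E'_{>o'}) \le \ord_a(X \cup E_{>o}) = c$ when $o' = o$, which follows from $E'_{>o} = (E_{>o})\st$ together with the behaviour of the order under blowup; Proposition~\ref{ord_under_blowup} handles the inequality $o' \le o$, and $o' < o$ is the easy subcase.) So assume $(o',c') = (o,c)$. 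By Lemma~\ref{c=1_measures_resolved} we may also assume $(o,c) > (1,1)$, since $\X$ is not resolved along a non-resolved top locus.

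Next I would distinguish whether $\X$ is in a terminal case at $a$ or not, since the invariant $\iv_\X(a)$ is of the form $(o,c,d_\F,n_\F,s_\F,0,0)$ with $(d_\F,n_\F,s_\F) > (0,0,0)$ in the non-terminal case and of the form $(o,c,0,0,0,r,l)$ in a terminal case.

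\emph{Case 1: $\X$ is not in a terminal case at $a$.} Then $d = d_\F > 0$ for a maximizing flag $\F \in \FF(a)$, and the center $\Xmax$ is discrete near $a$ by Corollary~\ref{only_finitely_many_points_with_d>0}; hence $\Xmax = \{a\}$ locally, so $\pi$ is the point-blowup at $a$. If $\X'$ is also not in a terminal case at $a'$, then $d' = d_{\F'} > 0$ and Proposition~\ref{inv_drops_in_non_terminal_case} gives directly $\ivXap < \ivXa$ (the proof there produces, for every valid flag $\G \in \FF(a')$, a valid flag $\F \in \FF(a)$ with $\inv(\G) < \inv(\F)$). If instead $\X'$ \emph{is} in a terminal case at $a'$, then $\iv_\X(a') = (o,c,0,0,0,r',l')$ while $\iv_\X(a) = (o,c,d,n,s,0,0)$ with $d > 0$; comparing the third component, $0 < d$, already yields $\ivXap < \ivXa$. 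So in this case the terminal cases at $a'$ are actually \emph{better}, exactly as the resolution philosophy demands, and no further work is needed.

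\emph{Case 2: $\X$ is in a terminal case at $a$.} Then $\iv_\X(a) = (o,c,0,0,0,r,l)$ and I would apply Proposition~\ref{inv_drops_in_monom_case} if $\X$ is in the monomial case at $a$, and Proposition~\ref{inv_drops_in_g_p_case} if $\X$ is in the small residual case at $a$. (If both hold simultaneously, Proposition~\ref{combinatorial_tuple_well_def} ensures the combinatorial pair is the same regardless, so either proposition applies.) Both propositions assert that $\X'$ is again in the corresponding terminal case at $a'$ and that the combinatorial pair strictly decreases lexicographically, i.e.\ $(r',l') < (r,l)$, whence $\iv_\X(a') = (o,c,0,0,0,r',l') < (o,c,0,0,0,r,l) = \iv_\X(a)$. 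The only subtlety is that $\Xmax$ need not be a single point in the terminal case — it may be a disjoint union of regular curves and isolated points by Proposition~\ref{key_proposition_for_usc} and Theorem~\ref{top_inv_is_permissible}; but Propositions~\ref{inv_drops_in_monom_case} and~\ref{inv_drops_in_g_p_case} are stated for the blowup along $\Xmax$ (point or curve), using Proposition~\ref{key_proposition_for_usc} to identify $\Xmax$ locally at $a$ with $V(y,z)$ for suitable apposite parameters, so they cover this directly.

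\textbf{Main obstacle.} The genuinely hard content is entirely packaged inside Proposition~\ref{inv_drops_in_non_terminal_case}, which handles Case~1: that is where the modified Moh bound (Proposition~\ref{kangaroo_prop}, Proposition~\ref{kangaroo_blowup_prop}), the stability of cleanness under monomial blowup maps (Proposition~\ref{w_clean_stable_under_blowup}, Proposition~\ref{s_clean_stable_under_blowup}), and the construction of maximizing flags (Section~\ref{section_maximizing_flags}) are all brought to bear, together with the delicate kangaroo-point analysis for flags $\G$ with $n_\G > 1$ and $D_\G = \Dnew$. At the level of the present theorem, however, once those propositions are granted, the proof is just the case bookkeeping described above: reduce to $(o',c') = (o,c)$ and $(o,c) > (1,1)$, split on whether $\X$ (resp.\ $\X'$) is in a terminal case, and cite Proposition~\ref{inv_drops_in_non_terminal_case}, Proposition~\ref{inv_drops_in_monom_case}, or Proposition~\ref{inv_drops_in_g_p_case} accordingly. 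The one point requiring a little care is verifying that the ``mixed'' transitions — non-terminal at $a$ but terminal at $a'$, or vice versa — are resolved by the lexicographic comparison of the $(d,n,s)$-block against the all-zero block, which is immediate from the convention $\iv_\X = (0,0,0)$ in terminal cases together with $d > 0$ in the non-terminal case (Lemma~\ref{inv_finite}).
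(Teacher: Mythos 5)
Your proof is correct and follows essentially the same path as the paper's: reduce to $(o',c')=(o,c)$ with $(o,c)>(1,1)$, split on whether $\X$ is in a terminal case at $a$, and invoke Proposition~\ref{inv_drops_in_non_terminal_case}, Proposition~\ref{inv_drops_in_g_p_case}, and Proposition~\ref{inv_drops_in_monom_case} in the respective cases, handling the mixed transition (non-terminal at $a$, terminal at $a'$) by the lexicographic comparison of $(d,n,s)$ against $(0,0,0)$. The one small step the paper makes explicit that you leave implicit is the reduction to \emph{closed} points $a$ and $a'$ (via Theorem~\ref{inv_is_usc}), which is needed because $\iv_\X$ at a non-closed point is defined as a minimum over closed points in its closure; once one observes that closed points of $\ol{\{a\}}$ all lie in $\Xmax$ and closed points of $\ol{\{a'\}}$ map to such, the general case follows from the closed one.
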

\begin{proof}
 By Theorem \ref{inv_is_usc} we can assume without loss of generality that $a$ and $a'$ are closed points. Denote the components $o\Xa,\ldots$ of $\ivXa$ by $o,\ldots$ and the components $o\Xap,\ldots$ of $\ivXap$ by $o',\ldots$.
 
 Since $\X$ is not resolved, we know by Lemma \ref{c=1_measures_resolved} that $(o,c)>(1,1)$. Since $\Xmax$ is permissible with respect to the order function on $X$, we know that $o'\leq o$. If $o'=o$, then it is clear that also $c'\leq c$ holds. So assume from now on that $(o',c')=(o,c)$.
 
 If $\X$ is not in a terminal case at $a$, we know by Corollary \ref{only_finitely_many_points_with_d>0} that $a$ is an isolated point of $\Xmax$. If $\X'$ is in a terminal case at $a'$, then $\ivXap<\ivXa$ holds by definition. If $\X'$ is not in a terminal case at $a'$, the assertion is proved in Proposition \ref{inv_drops_in_non_terminal_case}. If $\X$ is in a terminal case at $a$, the assertion follows from Proposition \ref{inv_drops_in_g_p_case} and Proposition \ref{inv_drops_in_monom_case}.
\end{proof}

\bibliography{bibliography}
\bibliographystyle{amsalpha} %

\end{document}